\newtheorem{thm}{Theorem}[section]
\newtheorem{lem}[thm]{Lemma}
\newtheorem{cor}[thm]{Corollary}
\newtheorem{prop}[thm]{Proposition}
\newtheorem{claim}[thm]{Claim}
\theoremstyle{definition}
\newtheorem{defi}[thm]{Definition}
\newtheorem{example}[thm]{Example}
\newtheorem{pty}[thm]{Property}
\newtheorem{exer}[thm]{Exercise}
\theoremstyle{remark}
\newtheorem{rem}[thm]{Remark}
\numberwithin{equation}{section}
\newcommand{\bfzero}{{\bf 0}}
\newcommand{\bfone}{{\bf 1}}
\newcommand{\ed}{{\bullet\hspace{-0.05cm}-\hspace{-0.05cm}\bullet}}
\newcommand{\br}{{\}\hspace{-0.07cm}.\hspace{-0.03cm}.\hspace{-0.07cm}\} }}
\newcommand{\tri}{{~\blacktriangleleft~}}
\newcommand{\bb}{{\bullet\, \bullet}}
\newcommand{\cc}{{\circ\hspace{-0.05cm}-\hspace{-0.05cm}\circ}}
\newcommand{\ww}{{\circ\,\circ}}
\newcommand{\lp}{{\circlearrowleft}}
\newcommand{\nl}{\not{\circlearrowleft}}
\newcommand{\colim}{{\mathsf{colim}}}
\newcommand{\e}{{\mathsf{e}}}
\newcommand{\Graphs}{{\mathsf{Graphs}}}
\newcommand{\fGraphs}{{\mathsf{fGraphs}}}
\newcommand{\sfGraphs}{{\mathsf{fGraphs}^{\sharp}}}
\newcommand{\sGraphs}{{\mathsf{Graphs}^{\sharp}}}
\newcommand{\snlGraphs}{{\mathsf{Graphs}^{\sharp}_{\nl}}}
\newcommand{\snlgraphs}{{\mathsf{graphs}^{\sharp}_{\nl}}}
\newcommand{\sfgraphs}{{\mathsf{fgraphs}^{\sharp}}}
\newcommand{\sgraphs}{{\mathsf{graphs}^{\sharp}}}
\newcommand{\fgraphs}{{\mathsf{fgraphs}}}
\newcommand{\graphs}{{\mathsf{graphs}}}
\newcommand{\Gra}{{\mathsf{Gra}}}
\newcommand{\GC}{{\mathsf{GC} }}
\newcommand{\fGC}{{\mathsf{fGC}}}
\newcommand{\gra}{{\mathsf{g r a} }}
\newcommand{\Tree}{{\mathsf{T r e e}}}
\newcommand{\Lie}{{\mathsf{Lie}}}
\newcommand{\Com}{{\mathsf{Com}}}
\newcommand{\coLie}{{\mathsf{coLie}}}
\newcommand{\coCom}{{\mathsf{coCom}}}
\newcommand{\Ger}{{\mathsf{Ger}}}
\newcommand{\GRT}{{\mathsf{ GRT}}}
\newcommand{\Cobar}{{\mathrm{ C o b a r}}}
\newcommand{\Conv}{{\mathrm{Conv}}}
\newcommand{\Ch}{{\mathsf{C h}}}
\newcommand{\grVect}{{\mathsf{grVect}}}
\renewcommand{\c}{{\circ}}
\newcommand{\Tw}{{\mathrm{Tw}}}
\newcommand{\MC}{{\mathrm{MC} }}
\newcommand{\CH}{{\mathrm{CH}}}
\newcommand{\End}{{\mathsf {E n d} }}
\newcommand{\Hom}{{\mathrm {Hom}}}
\newcommand{\PV}{{\mathrm {PV}}}
\newcommand{\emb}{{\mathrm {emb}}}
\newcommand{\Cone}{{\mathrm {Cone}}}
\newcommand{\Aut}{{\mathrm {Aut}}}
\newcommand{\Av}{{\mathrm {Av}}}
\newcommand{\oub}{{\mathrm {oub}}}
\newcommand{\cab}{{-\hspace{-0.12cm}-}}
\newcommand{\Gr}{{\mathrm {Gr}}}
\newcommand{\coDer}{{\mathrm {coDer}}}
\newcommand{\id}{{\mathsf{ i d} }}
\newcommand{\ad}{{\rm a d }}
\newcommand{\Sh}{{\mathrm {S h} }}
\newcommand{\conn}{{\mathrm {conn} }}
\newcommand{\Frame}{{\mathsf{Frame}}}
\newcommand{\wt}[1]{{\widetilde{#1}}}
\newcommand{\ti}[1]{{\tilde{#1}}}
\newcommand{\wh}[1]{{\widehat{#1}}}
\newcommand{\und}[1]{{\underline{#1}}}
\newcommand{\dia}{\diamond}
\newcommand{\hrt}{\heartsuit}
\newcommand{\tcL}{\widetilde{\mathcal L}}
\newcommand{\Tp}{{\mathrm{Tp}}}
\newcommand{\Cbu}{C^{\bullet}}
\newcommand{\nod}{{\mathrm{nod}}}
\newcommand{\tp}{{\mathrm{tp}}}
\newcommand{\tG}{\widetilde{\Gamma}}
\newcommand{\al}{{\alpha}}
\newcommand{\Ups}{{\Upsilon}}
\newcommand{\la}{{\lambda}}
\newcommand{\bul}{{\bullet}}
\newcommand{\mR}{{\mathfrak{R}}}
\newcommand{\mI}{{\mathfrak{I}}}
\newcommand{\mb}{{\mathfrak{b}}}
\newcommand{\mc}{{\mathfrak{c}}}
\newcommand{\ml}{{\mathfrak{l}}}
\newcommand{\mj}{{\mathfrak{j}}}
\newcommand{\mL}{{\mathfrak{L}}}
\newcommand{\mC}{{\mathfrak{C}}}
\newcommand{\grt}{{\mathfrak{grt}}}
\newcommand{\vs}{{\varsigma}}
\newcommand{\vr}{{\varrho}}
\newcommand{\si}{{\sigma}}
\newcommand{\ga}{{\gamma}}
\newcommand{\io}{{\iota}}
\newcommand{\vf}{{\varphi}}
\newcommand{\ve}{{\varepsilon}}
\newcommand{\ka}{{\kappa}}
\newcommand{\G}{{\Gamma}}
\newcommand{\Gim}{{\gimel}}
\newcommand{\cF}{{\mathcal F}}
\newcommand{\pa}{{\partial}}
\newcommand{\bsi}{{\bf s}^{-1}\,}
\newcommand{\bs}{{\bf s}}
\newcommand{\bt}{{\bf t}}
\newcommand{\bq}{{\bf q}}
\newcommand{\bu}{{\bf u}}
\newcommand{\cK}{{\mathcal K}}
\newcommand{\cC}{{\mathcal C}}
\newcommand{\cQ}{{\mathcal Q}}
\newcommand{\cO}{{\mathcal O}}
\newcommand{\cN}{{\mathcal N}}
\newcommand{\cL}{{\mathcal L}}
\newcommand{\cD}{{\mathcal D}}
\newcommand{\cG}{{\mathcal G}}
\newcommand{\cH}{{\mathcal H}}
\newcommand{\cV}{{\mathcal V}}
\newcommand{\Op}{{\mathbb{O P}}}
\newcommand{\psop}{\Psi{\mathbb{O P}}}
\newcommand{\bbK}{{\mathbb K}}
\newcommand{\bbR}{{\mathbb R}}
\newcommand{\bbZ}{{\mathbb Z}}
\newcommand{\bbQ}{{\mathbb Q}}
\newcommand{\La}{{\Lambda}}
\newcommand{\te}{\theta}
\newcommand{\Te}{\Theta}
\newcommand{\de}{{\delta}}
\newcommand{\D}{{\Delta}}
\renewcommand{\Im}{{\mathrm{Im}}}
\newcommand{\sgn}{{\mathrm {s g n}}}
\newcommand{\onto}{\twoheadrightarrow}
\renewcommand{\L}{\langle}
\newcommand{\R}{\rangle}
\begin{document}





\title[Notes on Algebraic Operads]{Notes on Algebraic Operads, Graph
  Complexes, and Willwacher's Construction}


\author{Vasily A. Dolgushev}
\address{Department of Mathematics,
Temple University, \\
Wachman Hall Rm. 638\\
1805 N. Broad St.,\\
Philadelphia PA, 19122 USA}
\curraddr{}
\email{vald@temple.edu}
\thanks{V.A.D.\ was partially supported 
by the NSF grant DMS 0856196 and
the grant FASI RF 14.740.11.0347.}

\author{Christopher L. Rogers}
\address{Courant Research Centre\\
  ``Higher Order Structures in Mathematics''\\
  University  of G\"{o}ttingen \\
  Bunsenstra\ss e 3-5 \\
  D-37073 G\"{o}ttingen, Germany} \curraddr{}
\email{crogers@uni-math.gwdg.de} \thanks{C.L.R.\ was partially supported by the
  NSF grant DMS 0856196 and the German Research Foundation (DFG)
  through the Institutional Strategy of the University of
  G\"{o}ttingen.}

\subjclass[2000]{Primary 18D50, 18G55}

\date{}

\begin{abstract}
We give a detailed proof of T. Willwacher's 
theorem \cite{Thomas} which links the cohomology 
of the full graph complex $\fGC$ to the cohomology of the deformation 
complex of the operad $\Ger$, governing Gerstenhaber algebras.  
We also present various prerequisites required for understanding the 
material of \cite{Thomas}. In particular, we review  operads, cooperads, 
and the cobar construction. We give a detailed exposition of 
the convolution Lie algebra and its properties. We prove a useful 
lifting property for maps from a dg operad obtained via 
the cobar construction.  We describe in detail  Willwacher's 
twisting construction, and then use it to work  
with various operads assembled from graphs, in particular, the full graph 
complex and its subcomplexes. 
These notes are loosely based 
on lectures given by the first author at the Graduate and Postdoc 
Summer School at the Center for Mathematics at Notre Dame
(May 31 - June 4, 2011).
\end{abstract}

\maketitle





\begin{flushright}
 ~\\
{\it  To Orit and Rosie}

 ~\\
\end{flushright}

\tableofcontents

\section{Introduction}
In his seminal paper \cite{K}, M. Kontsevich constructed 
an $L_{\infty}$ quasi-iso\-mor\-phism from the graded Lie algebra $\PV_d$ of 
polyvector fields on the affine space $\bbR^d$ to the differential graded (dg) 
Lie algebra of Hochschild cochains 
\begin{equation}
\label{Cbu-A}
\Cbu(A) = \bigoplus_{m = 0}^{\infty} \Hom(A^{\otimes\, m},  A)
\end{equation}
for the polynomial algebra $A = \bbR[x^1,x^2, \dots, x^d]$\,. Among other things, this 
result implies that formal associative deformations of the algebra  $A$
can be described in terms of formal Poisson structures on $\bbR^d$.

According to \cite{K-conjecture}, there exist many homotopy inequivalent 
$L_{\infty}$ quasi-iso\-mor\-phisms 
\begin{equation}
\label{PV-d-Cbu-A}
\PV_d \leadsto \Cbu(A) 
\end{equation}
from $\PV_d$ to $\Cbu(A)$\,.
More precisely, the full graph complex $\fGC$ (see Section \ref{sec:fGC-first}) maps to 
the Chevalley-Eilenberg complex of $\PV_d$ and, using this map, one can
define an action of the Lie algebra $H^0(\fGC)$ on the homotopy classes of 
$L_{\infty}$ quasi-iso\-mor\-phisms \eqref{PV-d-Cbu-A}.

In 1998,  D. Tamarkin \cite{Hinich}, \cite{Dima-Proof} proposed a completely 
different approach to constructing $L_{\infty}$ quasi-iso\-mor\-phisms \eqref{PV-d-Cbu-A}.
His approach works for an arbitrary field $\bbK$ of characteristic zero and it is
based on several deep results such as a proof of Deligne's conjecture on Hochschild
complex \cite{swiss}, \cite{K-Soi}, \cite{M-Smith}, the formality for the operad of 
little discs \cite{Dima-Disk}, and the existence of a Drinfeld associator 
\cite{Drinfeld}.  

The main idea of Tamarkin's approach to Kontsevich's formality 
theorem is to use the existence of a $\Ger_{\infty}$-structure on 
the Hochschild complex $\Cbu(A)$ \eqref{Cbu-A}, 
whose structure maps are expressed in terms of the cup product 
and insertions of cochains into a cochain. Showing the existence 
of such a $\Ger_{\infty}$-structure  is the most difficult and 
the most interesting part of the proof. The construction of this  
$\Ger_{\infty}$-structure involves the choice of a Drinfeld associator. 
Furthermore, it is known \cite{Dima-GT} that different choices of Drinfeld associators 
result in homotopy inequivalent $\Ger_{\infty}$-structures. 

According to \cite{Drinfeld}, the set of Drinfeld associators forms a torsor
(i.e. principle homogeneous space) for an infinite dimensional algebraic 
group $\GRT$, which is called the Grothendieck-Teichmueller 
group\footnote{Following \cite{AT} we denote by $\GRT$ the unipotent 
radical of the group introduced by Drinfeld.}. This group is related to moduli 
of curves, to the absolute Galois group of the field of rationals, and to 
the theory of motives \cite{Furusho}.   

In preprint \cite{Thomas}, T. Willwacher established remarkable 
links\footnote{We believe that the same link between  the group $\GRT$ and 
the deformation complex of the operad $\Ger$ was established via different methods 
in paper \cite{Fresse-grt} by B. Fresse.} 
between three objects: the group $\GRT$, the full graph complex $\fGC$
and the deformation complex of the operad $\Ger$ governing Gerstenhaber algebras. 
Using these links one can connect the above seemingly unrelated stories: 
\begin{itemize}

\item Tamarkin's approach to Kontsevich's formality theorem based on the 
use of Drinfeld associators, and

\item the action of the full graph complex $\fGC$ on  $L_{\infty}$ quasi-isomorphisms 
\eqref{PV-d-Cbu-A}.

\end{itemize}
We refer the reader to \cite{stable1}, \cite{stable11}, and \cite{Br-infinity} for 
more details.

It is already clear that Willwacher's results have important consequences for
deformation quantization, and they will certainly play a very influential
role in future research. The details presented in \cite{Thomas},
however, are technically subtle and difficult to access -- even for
experts. Many intermediate steps in the proofs are either left for the reader, or embedded 
in remarks and comments throughout the text. Moreover, several key statements are 
proved for a particular case, and then used in their full generality.

The goal of these notes is to give a detailed proof of  T. Willwacher's 
theorem (See Theorem \ref{thm:main}) which links the cohomology 
of the full graph complex $\fGC$ to the cohomology of the deformation 
complex of the operad $\Ger$\,. 

In addition, we also present here various prerequisites
required for understanding the material of \cite{Thomas}. 
Thus, in Section \ref{sec:oper-coper}, we review operads, cooperads, 
and the cobar construction. This construction 
assigns to a coaugmented cooperad $\cC$ a free operad 
$\Cobar(\cC)$ with the differential defined in terms of the 
cooperad structure on $\cC$\,. 
In Section \ref{sec:convolution}, we give a detailed exposition of 
the convolution Lie algebra and its properties. 
In Section \ref{sec:to-invert}, we discuss homotopies of maps 
from $\Cobar(\cC)$ and prove a useful lifting property   
for such maps.

In Section \ref{sec:twist} we describe in detail Willwacher's twisting 
construction $\Tw$ which assigns to a dg operad $\cO$ and a 
map\footnote{The dg operad $\La\Lie_{\infty}$ differs from the dg 
operad $\Lie_{\infty}$ governing $L_{\infty}$-algebras by a degree shift.
Namely, $\La\Lie_{\infty}$-structures on a cochain complex $V$ 
are in bijection with $\Lie_{\infty}$-structures on $\bsi V$\,. }
(of dg operads)
\begin{equation}
\label{LaLie-cO}
\La\Lie_{\infty} \to \cO
\end{equation}
another dg operad $\Tw\cO$\,. We refer to $\Tw\cO$ as {\it the twisted version}
of the (dg) operad $\cO$\,.

Algebras over $\Tw\cO$ (satisfying minor technical conditions) can be identified 
with $\cO$-algebras equipped with a chosen Maurer-Cartan element for 
the  $\La\Lie_{\infty}$-structure induced by the map \eqref{LaLie-cO}. 
It is the twisting construction which gives us a convenient framework for working 
with various operads assembled from graphs, in particular, the full graph 
complex and its subcomplexes. 

In Section \ref{sec:Gra}, we introduce the operad $\Gra$ and 
define an embedding from the operad $\Ger$ to $\Gra$\,. 

In Section \ref{sec:fGC-first}, we introduce the full graph complex 
$\fGC$ and its ``connected part'' $\fGC_{\conn} \subset \fGC$\,. 
We also present a link between $\fGC$ and its subcomplex $\fGC_{\conn}$\,.
This link allows us to reduce the question of computing cohomology of $\fGC$
to the question of computing cohomology of $\fGC_{\conn}$\,.

Section \ref{sec:TwGra} is devoted to a thorough analysis of the 
dg operad $\Tw\Gra$ and its various suboperads. Several useful
statements about  suboperads of $\Tw\Gra$ and the operad $\Ger$ are assembled in 
the commutative diagram \eqref{master-TwGra} at the end of Section 
\ref{sec:TwGra}.
  
In Section \ref{sec:fGC-revisited}, we use the results of the previous 
section to deduce deeper statements about the full graph complex $\fGC$\,. 
In particular, we prove the decomposition theorem for the graph cohomology
(see Theorem \ref{thm:fGC-decomp}). 

In Section \ref{sec:Def-Ger}, we introduce the deformation complex \eqref{Def-Ger-new} of 
the operad $\Ger$ and prove a technical statement about this complex.

In Section \ref{sec:rigidity}, we consider the convolution Lie 
algebra  $\Conv(\Ger^{\vee}, \Gra)$ with the differential coming from 
a natural composition $\Cobar(\Ger^{\vee}) \to \Ger \to \Gra $\,.
We prove that the cohomology of $\Conv(\Ger^{\vee}, \Gra)$ is spanned by 
the class of a single given vector. In particular,  $\Conv(\Ger^{\vee}, \Gra)$
does not have non-zero cohomology classes ``coming from  
arities $\ge 3$''. This statement is a version of Tamarkin's rigidity theorem 
for the Gerstenhaber algebra $\PV_d$ of polyvector fields on $\bbK^d$\,, 
which is one of the corner stones of Tamarkin's proof of Kontsevich's 
formality theorem.

Section \ref{sec:DefGer-fGC} is the culmination of our notes.
In this section, we give a proof of Theorem \ref{thm:main} 
which links the cohomology of the ``connected part'' of the 
full graph complex $\fGC$ to the cohomology of the ``connected part'' of 
the deformation complex of the operad $\Ger$\,.  The cohomology of the 
full graph complex and the cohomology of the deformation complex of the operad $\Ger$
can be easily expressed in terms of the cohomology of their ``connected parts''.

The proof of Theorem \ref{thm:main} is assembled from several building blocks.
First, this proof relies on Corollary \ref{cor:Ger-fgraphs} which links the operad 
$\Ger$ to a suboperad of the dg operad $\Tw\Gra$\,. Second, it relies on 
technical Theorem \ref{thm:Xi} which is given in Subsection \ref{sec:thm-Xi}. 
This theorem states that the (extended) deformation complex of the operad 
$\Ger$ is quasi-isomorphic to a certain subcomplex. 
Finally,  the proof of Theorem \ref{thm:main} relies on a version 
of Tamarkin's rigidity (see Corollary \ref{cor:Conv-Ger-Gra-conn}).

We should remark that the proof of Theorem \ref{thm:main} given here is not 
different from the one outlined 
in Willwacher's preprint  \cite{Thomas}. We only make the logic ``more linear'' 
and fill in many omitted details.  
  
Appendices \ref{app:q-iso}, \ref{app:Harr}, \ref{app:GM} contain proofs 
of three useful statements: a lemma on a quasi-isomorphism between filtered complexes, 
the theorem on the Harrison homology of the cofree cocommutative coalgebra, 
and a version of the Goldman-Millson theorem \cite{GM}.  Although all these 
statements are well known, it is hard to find in the literature proofs which are formulated 
in the desired generality. 

Many minor steps in proofs are left as exercises, which are 
formulated in the body of the text. Appendix \ref{app:solutions} at 
the end of the paper contains solutions to some of these exercises.  

Theorem \ref{thm:main} accounts for only $30\%$ of results of
T. Willwacher's preprint  \cite{Thomas}. So we hope to write a separate 
paper, in which we will give a detailed proof of Willwacher's theorem 
which links the full graph complex to the 
Lie algebra $\grt$ of the Grothendieck-Teichmueller group $\GRT$.

In our exposition, we tried to follow (or rather not to follow)
Serre's suggestions from his famous lecture \cite{Serre}.  
We hope that this text will be useful both for specialists working on 
operads and deformation quantization, and for graduate students 
interested in this subject.

~\\
{\bf Acknowledgment.}
We would like to thank Thomas Willwacher for numerous 
illuminating discussions. We are also thankful to 
Thomas for his patience with explaining to us various 
unwritten but implied claims in his paper \cite{Thomas}. 
These notes are loosely based 
on lectures given by V.A.D. at the Graduate and Postdoc 
Summer School at the Center for Mathematics at Notre Dame
(May 31 - June 4, 2011). We would like to thank Samuel Evens 
and Michael Gekhtman for organizing such a wonderful summer school. 
We are thankful to our previous institution, the UC Riverside, 
in which we started discussing topics related to this paper.
V.A.D. would like to thank Ezra Getzler for his kind offer
to use his office during V.A.D.'s visit of Northwestern 
University in May of 2011. V.A.D. is also thankful to Brian Paljug 
for his remarks about early versions of the draft.

\subsection{Notation and Conventions}

The base field $\bbK$ has characteristic zero. 
For a set $X$ we denote by $\bbK\L X\R$ the $\bbK$-vector 
space of finite linear combinations of elements in $X$\,.

The underlying symmetric monoidal category $\mC$  is 
often the category $\Ch_{\bbK}$ of unbounded cochain complexes 
of $\bbK$-vector spaces or the category $\grVect_{\bbK}$ of 
$\bbZ$-graded $\bbK$-vector spaces. We will frequently use the ubiquitous
combination ``dg'' (differential graded) to refer to algebraic objects 
in $\Ch_{\bbK}$\,.  For a homogeneous vector $v$ in 
a cochain complex (or a graded vector space), $|v|$ denotes the degree of $v$\,.
We denote by $\bs$ (resp. $\bs^{-1}$) the operation of suspension 
(resp. desuspension).  Namely, for a cochain complex (or a graded vector space) 
$\cV$, we have 
$$
\big( \bs \, \cV \big)^{\bul} = \cV^{\bul-1}\,, \qquad
\big( \bsi  \cV \big)^{\bul} = \cV^{\bul+1}\,.
$$ 
The notation $\bfone$ is reserved for the unit of the  underlying  
 symmetric monoidal category $\mC$

By a commutative algebra we always mean commutative and 
associative algebra.
The notation $\Lie$ (resp. $\Com$, $\Ger$) is reserved for the 
operad governing Lie algebras (resp. commutative 
algebras without unit, Gerstenhaber algebras without unit).
Dually, the notation $\coLie$ (resp. $\coCom$) is reserved for the 
 cooperad governing Lie coalgebras (resp. cocommutative 
coalgebras without counit). The notation $\CH(x,y)$ is reserved for 
the Campbell-Hausdorff series in $x$ and $y$.

The notation $S_{n}$ is reserved for the symmetric group 
on $n$ letters and  $\Sh_{p_1, \dots, p_k}$ denotes 
the subset of $(p_1, \dots, p_k)$-shuffles 
in $S_n$, i.e.  $\Sh_{p_1, \dots, p_k}$ consists of 
elements $\si \in S_n$, $n= p_1 +p_2 + \dots + p_k$ such that 
$$
\begin{array}{c}
\si(1) <  \si(2) < \dots < \si(p_1),  \\[0.3cm]
\si(p_1+1) <  \si(p_1+2) < \dots < \si(p_1+p_2), \\[0.3cm]
\dots   \\[0.3cm]
\si(n-p_k+1) <  \si(n-p_k+2) < \dots < \si(n)\,.
\end{array}
$$
For $i \le j \le n$ we denote by $\vs_{i,j}$ the following cycle in $S_n$
\begin{equation}
\label{vs-i-j}
\vs_{i,j} = \begin{cases}
(i, i+1, \dots, j-1, j)  \qquad {\rm if} ~~ i < j\,, \\
 \id  \qquad {\rm if} ~~ i = j\,.
 \end{cases}  
\end{equation}
It is clear that 
\begin{equation}
\label{vs-i-j-inv}
\vs^{-1}_{i,j} = \begin{cases}
(j, j-1, \dots, i+1, i)  \qquad {\rm if} ~~ i < j\,, \\
 \id  \qquad {\rm if} ~~ i = j\,.
 \end{cases}  
\end{equation}
For example, the set $\{ \vs_{i,n} \}_{ 1 \le i \le n}$ is 
exactly the set $\Sh_{n-1,1}$ of $(n-1,1)$-shuffles and   
$\{ \vs^{-1}_{1,i} \}_{ 1 \le i \le n}$ is the set $\Sh_{1,n-1}$ of 
$(1, n-1)$-shuffles. 

For a group $G$ and a $G$-module $W$, the notation $W^{G}$ (resp. $W_{G}$)
is reserved for the subspace of $G$-invariants (resp. the quotient space of 
$G$-coinvariants).

For an operad $\cO$ (resp. a cooperad $\cC$) and a cochain complex $V$, the notation 
$\cO(V)$ (resp. $\cC(V)$) is reserved for the free $\cO$-algebra (resp. cofree $\cC$-coalgebra). 
Namely, 
\begin{equation}
\label{cO-V}
\cO(V) : = \bigoplus_{n \ge 0} \Big( \cO(n) \otimes V^{\otimes \, n} \Big)_{S_n}\,,
\end{equation}
\begin{equation}
\label{cC-V}
\cC(V) : = \bigoplus_{n \ge 0} \Big( \cC(n) \otimes V^{\otimes \, n} \Big)^{S_n}\,.
\end{equation}

For an augmented operad $\cO$ (in $\Ch_{\bbK}$) 
we denote by $\cO_{\c}$ the kernel of the augmentation.
Dually, for a coaugmented cooperad $\cC$ (in $\Ch_{\bbK}$) 
we denote by $\cC_{\c}$ the cokernel of the coaugmentation.
(We refer the reader to Subsections \ref{sec:aug-op} and
\ref{sec:coaug-coop} for more details.)

For a groupoid $\cG$ the notation $\pi_0(\cG)$ is reserved for 
the set of its isomorphism classes. 

A directed graph (resp. graph) $\G$ is a pair $(V(\G), E(\G))$, 
where $V(\G)$ is a finite non-empty 
set and $E(\G)$ is a set of ordered (resp. unordered) pairs 
of elements of $V(\G)$. Elements of $V(\G)$ are called vertices 
and elements of $E(\G)$ are called edges. 
We say that a directed graph (resp. graph) $\G$ is labeled 
if it is equipped with a bijection between the set $V(\G)$ and 
the set of numbers $\{1,2, \dots, |V(\G)|\}$\,. We allow a graph 
with the empty set of edges. 

\begin{figure}[htp] 
\centering
\begin{minipage}[t]{0.3\linewidth} 
\centering 
\begin{tikzpicture}[scale=0.5, >=stealth']
\tikzstyle{w}=[circle, draw, minimum size=4, inner sep=1]
\tikzstyle{b}=[circle, draw, fill, minimum size=4, inner sep=1]
\node [b] (b1) at (0,0) {};
\draw (-0.4,0) node[anchor=center] {{\small $v_1$}};
\node [b] (b2) at (1,2) {};
\draw (1,2.6) node[anchor=center] {{\small $v_2$}};
\node [b] (b3) at (1,-2) {};
\draw (1,-2.6) node[anchor=center] {{\small $v_3$}};
\node [b] (b4) at (2,0) {};
\draw (2,0.6) node[anchor=center] {{\small $v_4$}};
\draw (b1) edge (b2);
\draw (b1) edge (b3);
\draw (-1,0) circle (1);
\end{tikzpicture}
\caption{A graph $\G$} \label{fig:G-undir}
\end{minipage}
\hspace{0.03\linewidth}
\begin{minipage}[t]{0.3\linewidth} 
\centering 
\begin{tikzpicture}[scale=0.5, >=stealth']
\tikzstyle{w}=[circle, draw, minimum size=4, inner sep=1]
\tikzstyle{b}=[circle, draw, fill, minimum size=4, inner sep=1]
\node [b] (b1) at (0,0) {};
\draw (-0.4,0) node[anchor=center] {{\small $v_1$}};
\node [b] (b2) at (1,2) {};
\draw (1,2.6) node[anchor=center] {{\small $v_2$}};
\node [b] (b3) at (1,-2) {};
\draw (1,-2.6) node[anchor=center] {{\small $v_3$}};
\node [b] (b4) at (2,0) {};
\draw (2,0.6) node[anchor=center] {{\small $v_4$}};
\draw [->] (b1) edge (b2);
\draw [->] (b1) edge (b3);
\draw (-1,0) circle (1);
\end{tikzpicture}
\caption{A directed graph $\G'$} \label{fig:G-dir}
\end{minipage}
\hspace{0.03\linewidth}
\begin{minipage}[t]{0.3\linewidth} 
\centering 
\begin{tikzpicture}[scale=0.5, >=stealth']
\tikzstyle{w}=[circle, draw, minimum size=4, inner sep=1]
\tikzstyle{b}=[circle, draw, fill, minimum size=4, inner sep=1]
\node [b] (b1) at (0,0) {};
\draw (-0.4,0) node[anchor=center] {{\small $1$}};
\node [b] (b2) at (1,2) {};
\draw (1,2.6) node[anchor=center] {{\small $2$}};
\node [b] (b3) at (1,-2) {};
\draw (1,-2.6) node[anchor=center] {{\small $3$}};
\node [b] (b4) at (2,0) {};
\draw (2,0.6) node[anchor=center] {{\small $4$}};
\draw (b1) edge (b2);
\draw (b1) edge (b3);
\draw (-1,0) circle (1);
\end{tikzpicture}
\caption{A labeled graph $\G''$} \label{fig:G-labeled}
\end{minipage}
\end{figure}
For example, the graph $\G$ on figure \ref{fig:G-undir} has 
$$
V(\G) = \{v_1, v_2, v_3, v_4\} \qquad \textrm{and} \qquad
E(\G) = \{\{v_1, v_1\}, \{v_2, v_1\}, \{v_1, v_3\}\}\,.
$$
For the directed graph $\G'$ on figure \ref{fig:G-dir} we have 
$$
V(\G') = \{v_1, v_2, v_3, v_4\} \qquad \textrm{and} \qquad
E(\G') = \{(v_1, v_1), (v_1, v_2), (v_1, v_3) \}\,.
$$
Finally, figure \ref{fig:G-labeled} gives us an example of 
a labeled graph.

A valency of a vertex $v$ in a (directed) graph $\G$ is the
total number of its appearances in the pairs $E(\G)$. For example, 
vertex $v_1$ in the graph on figure \ref{fig:G-dir} has valency $4$.

\section{Trees} 
\label{sec:trees}
A connected graph without cycles is called a tree. 
In this paper all trees are {\it planted}, i.e. each tree has a marked 
vertex (called the {\it root}) and this marked vertex has valency $1$\,. 
(In particular, each tree has at least one edge.) The edge adjacent 
to the root vertex is called the {\it root edge}.
Non-root vertices of valency $1$ are 
called {\it leaves}.  A vertex is called {\it internal} if it is 
neither a root nor a leaf. We always orient trees in the 
direction towards the root. Thus every internal vertex 
has at least $1$ incoming edge and exactly $1$ outgoing edge. 
An edge adjacent to a leaf is called {\it external}.   
We allow a degenerate tree, that is a tree with exactly two vertices
(the root vertex and a leaf) connected by a single edge. 
A tree $\bt$ is called {\it planar} if, for every internal vertex $v$ of $\bt$, the set 
of edges terminating at $v$ carries a total order.

Let us recall that for every  planar tree $\bt$ the set $V(\bt)$ of 
all its vertices is equipped with a natural total order.
To define this total order on $V(\bt)$  
we introduce the function 
\begin{equation}
\label{cN}
\cN:   V(\bt) \to V(\bt)\,.
\end{equation}
To a non-root vertex $v$ the function 
$\cN$ assigns the next vertex along the (unique) path connecting $v$ to the 
root vertex. Furthermore $\cN$ sends the root vertex to the root vertex. 

Let $v_1, v_2$ be two distinct vertices of $\bt$\,. 
If $v_1$ lies on the path which connects $v_2$ to the root 
vertex then we declare that 
$$
v_1 < v_2\,.
$$
Similarly, if $v_2$ lies on the path which connects $v_1$ to the root 
vertex then we declare that 
$$
v_2 < v_1\,.
$$
If neither of the above options realize then there exist 
numbers $k_1$ and $k_2$ such that
\begin{equation}
\label{same-vertex}
\cN^{k_1}(v_1) = \cN^{k_2}(v_2)
\end{equation}
but 
$$
\cN^{k_1-1}(v_1) \neq \cN^{k_2-1}(v_2)\,. 
$$
Since the tree $\bt$ is planar the set of $\cN^{-1} (\cN^{k_1}(v_1))$
is equipped with a total order. Furthermore, since both 
vertices $\cN^{k_1-1}(v_1) $ and  $\cN^{k_2-1}(v_2)$ belong to 
the set  $\cN^{-1}(\cN^{k_1}(v_1))$, we may compare them with 
respect to this order. 

We declare that, if  $\cN^{k_1-1}(v_1) < \cN^{k_2-1}(v_2)$,  then
$$
v_1 <  v_2\,.
$$  
Otherwise we set $v_2 < v_1$\,. 

It is not hard to see that the resulting relation $<$ on
$V(\bt)$ is indeed a total order. 

The total order on $V(\bt)$ can be defined graphically.
Indeed, draw a planar tree $\bt$ on the plane.
Then choose a small tubular neighborhood of $\bt$ on 
the plane and walk along its boundary 
starting from a vicinity of the root vertex in the
clockwise direction. On our way, we will
meet each vertex of $\bt$ at least once. 
So we declare that $v_1 < v_2$ if the first occurrence of 
$v_1$ precedes the first occurrence of $v_2$. 

For example, consider the planar tree depicted on 
figure \ref{fig:walking}. Following the path drawn around this 
tree we get   
$$
r < v_1 < v_2 < v_3 < v_4 < v_5 < v_6\,.
$$

\begin{figure}[htp]
\centering 
\begin{tikzpicture}[scale=0.5, >=stealth']
\tikzstyle{w}=[circle, draw, minimum size=3, inner sep=1]
\tikzstyle{b}=[circle, draw, fill, minimum size=3, inner sep=1]
\tikzstyle{g}=[circle, draw, fill=gray, minimum size=4, inner sep=1]
\node[b] (r) at (0, 0) {};
\draw (0.5,0) node[anchor=center] {{\small $r$}};
\node[b] (v1) at (0, 2) {};
\draw (0,2.67) node[anchor=center] {{\small $v_1$}};
\node[b] (v2) at (-2, 4) {};
\draw (-2.6,4) node[anchor=center] {{\small $v_2$}};

\node[b] (v3) at (-4, 6) {};
\draw (-4,6.6) node[anchor=center] {{\small $v_3$}};
\node[b] (v4) at (-2, 6) {};
\draw (-2,6.6) node[anchor=center] {{\small $v_4$}};
\node[b] (v5) at (0, 6) {};
\draw (0,6.6) node[anchor=center] {{\small $v_5$}};
\node[b] (v6) at (2, 4) {};
\draw (2,4.6) node[anchor=center] {{\small $v_6$}};

\node[g] (beg) at (-0.5, 0) {};

\draw (r) edge (v1);
\draw (v1) edge (v2);
\draw (v2) edge (v3);
\draw (v2) edge (v4);
\draw (v2) edge (v5);
\draw (v1) edge (v6);
\draw [dashed, ->] (beg) -- (-0.5,1);
\draw [dashed] (-0.5,1) -- (-0.5,2);
\draw [dashed, ->] (-0.5,2) -- (-3.2,3.6);
\draw [dashed, ->] (-3.2,3.6) --  (-5,6.5);

\draw [dashed, ->] (-5,6.5)  .. controls (-5,7) and (-4.5,7.5) .. (-4,7.5);

\draw [dashed] (-4, 7.5) ..controls (-3.5,7.5) and (-3,6) .. (-2.7,5.5); 

\draw [dashed, ->]  (-2.7,5.5)   ..controls (-2.7,6) and (-2.6,7.5) .. (-2,7.5); 
\draw [dashed]  (-2,7.5)  ..controls (-1.7,7.5) and (-1.3 , 6) .. (-1.3, 5.5); 
\draw [dashed, ->]  (-1.3, 5.5)   ..controls (-1,7.5) and (0, 7.5) .. (0.5, 7.5); 

\draw [dashed] (0.5, 7.5) ..controls (0.8,7.3) and (1.2, 6.8) .. (1, 6); 
\draw [dashed, ->] (1,6) --  (0,5);
\draw [dashed] (0,5) --  (-1,4);
\draw [dashed, ->] (-1, 4) ..controls (-0.7,3) and (0.7, 3) .. (1, 4);
\draw [dashed, ->] (1, 4) ..controls (1.5,6) and (2.8, 6) .. (3, 4.5); 
\draw [dashed, ->] (3,4.5) --  (1,2);
\draw [dashed, ->] (1,2) --  (1,0);
\draw [dashed] (1,0)  ..controls (0.8,-1) and (-0.3, -1) .. (beg);
\end{tikzpicture}
\caption{We start walking around the planar tree 
from the gray circle} \label{fig:walking}
\end{figure}

Keeping this order in mind, we can say 
things like ``the first vertex'', ``the second  
vertex'', and ``the $i$-th vertex'' of a planar tree $\bt$\,.
In fact, the first vertex of a tree is always its root vertex. 

We have an obvious bijection between the set of 
edges $E(\bt)$ of a tree $\bt$ and the subset of vertices: 
\begin{equation}
\label{no-root}
V(\bt) \setminus \{\textrm{root vertex}\}\,.
\end{equation}
This bijection assigns to a vertex $v$ in (\ref{no-root}) its 
outgoing edge. 

Thus the canonical total order on the set (\ref{no-root}) gives 
us a natural total order on the set of edges $E(\bt)$\,. 

For our purposes we also extend the total orders 
on the sets $V(\bt) \setminus \{{\rm root~vertex}\}$ 
and $E(\bt)$ to the disjoint union
\begin{equation}
\label{vertices-edges}
V(\bt) \setminus \{{\rm root~vertex}\}  \sqcup  E(\bt)
\end{equation}
by declaring that a vertex is bigger than its outgoing edge. 
For example, the root edge is the minimal element 
in the set (\ref{vertices-edges}).

\subsection{Groupoid of labeled planar trees}
Let $n$ be a non-negative integer.
An $n$-labeled planar tree $\bt$ is a planar tree equipped with 
an injective map 
\begin{equation}
\label{labeling}
\ml : \{1,2, \dots, n\} \to L(\bt)
\end{equation}
from the set $\{1,2, \dots, n\}$ to the set $L(\bt)$ of leaves of $\bt$\,.
Although the set $L(\bt)$ has a natural total order we do not require 
that the map \eqref{labeling} is monotonous. 

The set $L(\bt)$ of leaves of an $n$-labeled planar tree $\bt$
splits into the disjoint union of the image $\ml(\{1,2, \dots, n\})$
and its complement. We call leaves in the image of $\ml$
{\it labeled}.

A vertex $x$ of an $n$-labeled planar tree $\bt$ is called 
{\it nodal} if it is neither a root vertex, nor a labeled leaf. 
We denote by $V_{\nod}(\bt)$ the set of all nodal vertices of 
$\bt$. Keeping in mind the canonical total order on 
the set of all vertices of $\bt$ we can say things like
``the first nodal vertex'', ``the second nodal vertex'', and
``the $i$-th nodal vertex''. 

\begin{example}
\label{ex:labeled}
An example of a $4$-labeled planar tree is depicted on 
figure \ref{fig:labeled}.
\begin{figure}[htp] 
\centering 
\begin{tikzpicture}[scale=0.5]
\tikzstyle{w}=[circle, draw, minimum size=3, inner sep=1]
\tikzstyle{b}=[circle, draw, fill, minimum size=3, inner sep=1]
\node[b] (r) at (0, 0) {};
\node[w] (v1) at (0, 1) {};
\node[w] (v2) at (-1, 2) {};
\node[b] (v3) at (0, 2) {};
\draw (0,2.5) node[anchor=center] {{\small $1$}};
\node[w] (v4) at (2, 2) {};
\node[b] (v5) at (-1.5, 3) {};
\draw (-1.5,3.5) node[anchor=center] {{\small $2$}};
\node[w] (v6) at (-0.5, 3) {};
\node[b] (v7) at (1, 3) {};
\draw (1,3.5) node[anchor=center] {{\small $4$}};
\node[b] (v8) at (2, 3) {};
\draw (2,3.5) node[anchor=center] {{\small $3$}};
\node[w] (v9) at (3, 3) {};
\draw (r) edge (v1);
\draw (v1) edge (v2);
\draw (v1) edge (v3);
\draw (v1) edge (v4);
\draw (v2) edge (v5);
\draw (v2) edge (v6);
\draw (v4) edge (v7);
\draw (v4) edge (v8);
\draw (v4) edge (v9);
\end{tikzpicture}
\caption{A $4$-labeled planar tree} \label{fig:labeled}
\end{figure}
On figures we use small white circles for 
nodal vertices and small black circles for labeled 
leaves and the root vertex.  

\end{example}

For our purposes we need to upgrade the set of $n$-labeled 
planar trees to 
the groupoid  $\Tree(n)$\,. 
Objects of $\Tree(n)$ are $n$-labeled planar trees and 
morphisms are \und{non-planar} isomorphisms of the corresponding 
(non-planar) trees compatible with labeling. 
The groupoid   $\Tree(n)$ is equipped with an obvious left action 
of the symmetric group $S_n$\,. 

As far as we know the groupoid  $\Tree(n)$ was originally 
introduced by E. Getzler and M. Kapranov in \cite{GetKap}. 
However, here we do not exactly follow the notation 
from  \cite{GetKap}.

The notation $\Tree_2(n)$ is reserved 
for the full sub-category
of $\Tree(n)$ whose objects are $n$-labeled planar 
trees with exactly $2$ nodal vertices. It is not hard to see that 
every object in  $\Tree_2(n)$ has at most $n+1$ leaves. 
Due to Exercise \ref{exer:shuffles}, isomorphism classes of $\Tree_2(n)$ are 
in bijection with the union 
\begin{equation}
\label{shuffles}
\bigsqcup_{0 \le p \le n} \Sh_{p, n-p}\,.
\end{equation}

\begin{exer}
\label{exer:shuffles}
Let us assign to a shuffle $\tau\in \Sh_{p, n-p}$
the $n$-labeled planar tree depicted on figure \ref{fig:shuffle}. 
\begin{figure}[htp]
\centering
\begin{tikzpicture}[scale=0.5]
\tikzstyle{w}=[circle, draw, minimum size=3, inner sep=1]
\tikzstyle{b}=[circle, draw, fill, minimum size=3, inner sep=1]
\node[b] (l1) at (-1, 4) {};
\draw (-1,4.6) node[anchor=center] {{\small $\tau(1)$}};
\draw (0,3.8) node[anchor=center] {{\small $\dots$}};
\node[b] (lp) at (1, 4) {};
\draw (1,4.6) node[anchor=center] {{\small $\tau(p)$}};
\node[w] (vv) at (0, 2) {};
\node[b] (lp1) at (3, 2.5) {};
\draw (3,3.1) node[anchor=center] {{\small $\tau(p+1)$}};
\draw (4.25,2.4) node[anchor=center] {{\small $\dots$}};
\node[b] (ln) at (5.5, 2.5) {};
\draw (5.5,3.1) node[anchor=center] {{\small $\tau(n)$}};
\node[w] (v) at (3, 1) {};
\node[b] (r) at (3, 0) {};
\draw (vv) edge (l1);
\draw (vv) edge (lp);
\draw (v) edge (vv);
\draw (v) edge (lp1);
\draw (v) edge (ln);
\draw (r) edge (v);
\end{tikzpicture}
\caption{\label{fig:shuffle} Here $\tau$ is a $(p, n-p)$-shuffle}
\end{figure} 
Prove that this assignment gives us a bijection between the 
set \eqref{shuffles} and the set of isomorphism 
classes in $\Tree_2(n)$\,.
\end{exer}

\begin{rem}
\label{rem:degen-Tree2}
The groupoid $\Tree_2(0)$ has exactly one 
object (see figure \ref{fig:ww}) and hence exactly one isomorphism class.
The groupoid $\Tree_2(1)$ has three objects and two isomorphisms classes.  
Representatives of isomorphism classes in  $\Tree_2(1)$ are 
depicted on figures \ref{fig:ww1}  and \ref{fig:w1w}. 
\begin{figure}[htp]  
\begin{minipage}[t]{0.4\linewidth} 
\centering 
\begin{tikzpicture}[scale=0.5]
\tikzstyle{w}=[circle, draw, minimum size=3, inner sep=1]
\tikzstyle{b}=[circle, draw, fill, minimum size=3, inner sep=1]
\node[b] (r) at (0, 0) {};
\node[w] (v1) at (0, 1) {};
\node[w] (v2) at (0, 2) {};
\draw (r) edge (v1);
\draw (v1) edge (v2);
\end{tikzpicture}
\caption{The unique object in $\Tree_2(0)$} \label{fig:ww}
\end{minipage}
\begin{minipage}[t]{0.28\linewidth} 
\centering 
\begin{tikzpicture}[scale=0.5]
\tikzstyle{w}=[circle, draw, minimum size=3, inner sep=1]
\tikzstyle{b}=[circle, draw, fill, minimum size=3, inner sep=1]
\node[b] (r) at (0, 0) {};
\node[w] (v1) at (0, 1) {};
\node[w] (v2) at (-1, 1.5) {};
\node[b] (l1) at (-1, 2.5) {};
\draw (-1,3) node[anchor=center] {{\small $1$}};
\draw (r) edge (v1);
\draw (v1) edge (v2);
\draw (v2) edge (l1);
\end{tikzpicture}
\caption{A tree in $\Tree_2(1)$} \label{fig:ww1}
\end{minipage}
\begin{minipage}[t]{0.28\linewidth} 
\centering 
\begin{tikzpicture}[scale=0.5]
\tikzstyle{w}=[circle, draw, minimum size=3, inner sep=1]
\tikzstyle{b}=[circle, draw, fill, minimum size=3, inner sep=1]
\node[b] (r) at (0, 0) {};
\node[w] (v1) at (0, 1) {};
\node[w] (v2) at (-0.5, 1.8) {};
\node[b] (l1) at (0.5, 1.8) {};
\draw (0.5,2.3) node[anchor=center] {{\small $1$}};
\draw (r) edge (v1);
\draw (v1) edge (v2);
\draw (v1) edge (l1);
\end{tikzpicture}
\caption{A tree in $\Tree_2(1)$} \label{fig:w1w}
\end{minipage}
\end{figure}  
\end{rem}

\subsection{Insertions of trees}
Let $\wt{\bt}$ be an $n$-labeled planar tree with a non-empty 
set of nodal vertices. 
If the $i$-th nodal vertex of $\wt{\bt}$
has $m_i$ incoming edges
then for every $m_i$-labeled planar tree 
$\bt$ we can define the insertion $\bul_i$ of the tree $\bt$ into 
the $i$-th nodal vertex of $\wt{\bt}$\,. The resulting planar 
tree $\wt{\bt} \bul_i \bt$ is also $n$-labeled. 

If $m_i=0$ then  $\wt{\bt} \bul_i \bt$ is obtained via identifying 
the root edge of $\bt$ with edge originating at the 
$i$-th nodal vertex.  

If $m_i > 0$ then the tree $\wt{\bt} \bul_i \bt$ is 
built following these steps: 
\begin{itemize}

\item First, we denote by $E_i(\wt{\bt})$ the set of edges 
terminating at the $i$-th nodal vertex of $\wt{\bt}$\,. 
Since $\wt{\bt}$ is planar, the set $E_i(\wt{\bt})$ comes 
with a total order;

\item second, we erase the $i$-th nodal vertex of $\wt{\bt}$;

\item third, we identify the root edge of $\bt$ with the edge
of $\wt{\bt}$ which originates at the $i$-th nodal vertex;

\item finally, we identify the edges of $\bt$ adjacent to labeled 
leaves with the edges in the set $E_i(\wt{\bt})$ following 
this rule: the external edge with label $j$ gets identified 
with the $j$-th edge in the set  $E_i(\wt{\bt})$\,. In doing this, 
we keep the same planar structure on $\bt$, so, in general,  
branches of $\wt{\bt}$ move around.

\end{itemize}

\begin{example}
\label{ex:insertion}
Let $\wt{\bt}$ be the $4$-labeled planar tree depicted on 
figure \ref{fig:labeled} and $\bt$ be the $3$-labeled planar 
tree depicted on figure \ref{fig:bt}. Then the insertion $\wt{\bt}\, \bul_1\, \bt$
of $\bt$ into the first nodal vertex of $\wt{\bt}$ is shown on figure 
\ref{fig:ins}. Figure \ref{fig:algo} illustrates the construction algorithm of $\wt{\bt} \,\bul_1\, \bt$
step by step.

\begin{figure}[htp]
\begin{minipage}[t]{0.4\linewidth} 
\centering 
\begin{tikzpicture}[scale=0.5]
\tikzstyle{w}=[circle, draw, minimum size=3, inner sep=1]
\tikzstyle{b}=[circle, draw, fill, minimum size=3, inner sep=1]
\node[b] (r) at (0, 0) {};
\node[w] (v1) at (0, 1) {};
\node[w] (v2) at (-1, 2) {};
\node[b] (v3) at (1, 2) {};
\draw (1,2.5) node[anchor=center] {{\small $1$}};
\node[b] (v4) at (-2, 3) {};
\draw (-2,3.5) node[anchor=center] {{\small $2$}};
\node[w] (v5) at (-1, 3) {};
\node[b] (v6) at (0, 3) {};
\draw (0,3.5) node[anchor=center] {{\small $3$}};
\draw (r) edge (v1);
\draw (v1) edge (v2);
\draw (v1) edge (v3);
\draw (v2) edge (v4);
\draw (v2) edge (v5);
\draw (v2) edge (v6);
\end{tikzpicture}
\caption{A $3$-labeled planar tree $\bt$} \label{fig:bt}
\end{minipage}
~
\begin{minipage}[t]{0.45\linewidth} 
\centering 
\begin{tikzpicture}[scale=0.5]
\tikzstyle{w}=[circle, draw, minimum size=3, inner sep=1]
\tikzstyle{b}=[circle, draw, fill, minimum size=3, inner sep=1]
\node[b] (r) at (0, 0) {};
\node[w] (v1) at (0, 1) {};
\node[w] (v2) at (-1, 2) {};
\node[w] (v3) at (2, 2) {};
\node[b] (vv1) at (2, 3) {};
\draw (2,3.5) node[anchor=center] {{\small $2$}};
\node[w] (vv2) at (3, 3) {};
\node[b] (v4) at (-2, 3) {};
\draw (-2,3.5) node[anchor=center] {{\small $1$}};
\node[w] (v5) at (-1, 3) {};
\node[w] (v6) at (0, 3) {};
\node[b] (v61) at (-1, 4) {};
\draw (-1,4.5) node[anchor=center] {{\small $4$}};
\node[b] (v62) at (0, 4) {};
\draw (0,4.5) node[anchor=center] {{\small $3$}};
\node[w] (v63) at (1, 4) {};
\draw (r) edge (v1);
\draw (v1) edge (v2);
\draw (v1) edge (v3);
\draw (v2) edge (v4);
\draw (v2) edge (v5);
\draw (v2) edge (v6);
\draw (v3) edge (vv1);
\draw (v3) edge (vv2);
\draw (v6) edge (v61);
\draw (v6) edge (v62);
\draw (v6) edge (v63);
\end{tikzpicture}
\caption{The $4$-labeled planar tree $\wt{\bt} \,\bul_1\, \bt$} \label{fig:ins}
\end{minipage}
\end{figure}

\begin{figure}[htp]
\centering
\begin{minipage}[t]{0.3\linewidth}
\centering 
\begin{tikzpicture}[scale=0.5]
\tikzstyle{w}=[circle, draw, minimum size=3, inner sep=1]
\tikzstyle{b}=[circle, draw, fill, minimum size=3, inner sep=1]
\node[b] (r) at (0, 0) {};
\node[w] (v1) at (0, 1) {};
\node[w] (v2) at (-1, 2) {};
\node[b] (v3) at (0, 2) {};
\draw (0,2.5) node[anchor=center] {{\small $1$}};
\node[w] (v4) at (2, 2) {};
\node[b] (v5) at (-1.5, 3) {};
\draw (-1.5,3.5) node[anchor=center] {{\small $2$}};
\node[w] (v6) at (-0.5, 3) {};
\node[b] (v7) at (1, 3) {};
\draw (1,3.5) node[anchor=center] {{\small $4$}};
\node[b] (v8) at (2, 3) {};
\draw (2,3.5) node[anchor=center] {{\small $3$}};
\node[w] (v9) at (3, 3) {};

\draw (4,2) node[anchor=center] {{$\longrightarrow$}};
\draw (r) edge (v1);
\draw (v1) edge (v2);
\draw (v1) edge (v3);
\draw (v1) edge (v4);
\draw (v2) edge (v5);
\draw (v2) edge (v6);
\draw (v4) edge (v7);
\draw (v4) edge (v8);
\draw (v4) edge (v9);
\end{tikzpicture}
\end{minipage}
\begin{minipage}[t]{0.3\linewidth}
\centering 
\begin{tikzpicture}[scale=0.5]
\tikzstyle{w}=[circle, draw, minimum size=3, inner sep=1]
\tikzstyle{b}=[circle, draw, fill, minimum size=3, inner sep=1]
\tikzstyle{m}=[circle, minimum size=3, inner sep=1]

\node[b] (r) at (0, 0) {};
\node[w] (v1) at (0, 1) {};
\node[m] (m1) at (-1, 2) {{\small $1$}};
\node[w] (v2) at (-2, 3) {};
\node[b] (v3) at (-3, 4) {};
\draw (-3,4.5) node[anchor=center] {{\small $2$}};
\node[w] (v4) at (-1, 4) {};

\node[m] (m2) at (0, 2) {{\small $2$}};

\node[b] (v5) at (0, 3) {};
\draw (0,3.5) node[anchor=center] {{\small $1$}};

\node[m] (m3) at (1, 2) {{\small $3$}};
\node[w] (v6) at (2, 3) {};

\node[b] (v7) at (1, 4) {};
\draw (1,4.5) node[anchor=center] {{\small $4$}};
\node[b] (v8) at (2, 4) {};
\draw (2,4.5) node[anchor=center] {{\small $3$}};
\node[w] (v9) at (3, 4) {};

\draw (4,2) node[anchor=center] {{$\longrightarrow$}};
\draw (r) edge (v1);
\draw (v1) edge (m1); \draw (m1) edge (v2);
\draw (v1) edge (m2); \draw (m2) edge (v5);
\draw (v1) edge (m3); \draw (m3) edge (v6);
\draw (v2) edge (v3);
\draw (v2) edge (v4);
\draw (v6) edge (v7);
\draw (v6) edge (v8);
\draw (v6) edge (v9);
\end{tikzpicture}
\end{minipage}
\begin{minipage}[t]{0.3\linewidth}
\centering 
\begin{tikzpicture}[scale=0.5]
\tikzstyle{w}=[circle, draw, minimum size=3, inner sep=1]
\tikzstyle{b}=[circle, draw, fill, minimum size=3, inner sep=1]
\tikzstyle{m}=[circle, minimum size=3, inner sep=1]

\node[b] (r) at (0, 0) {};

\node[m] (m1) at (-1, 2) {{\small $1$}};
\node[w] (v2) at (-2, 3) {};
\node[b] (v3) at (-3, 4) {};
\draw (-3,4.5) node[anchor=center] {{\small $2$}};
\node[w] (v4) at (-1, 4) {};

\node[m] (m2) at (0, 2) {{\small $2$}};

\node[b] (v5) at (0, 3) {};
\draw (0,3.5) node[anchor=center] {{\small $1$}};

\node[m] (m3) at (1, 2) {{\small $3$}};
\node[w] (v6) at (2, 3) {};

\node[b] (v7) at (1, 4) {};
\draw (1,4.5) node[anchor=center] {{\small $4$}};
\node[b] (v8) at (2, 4) {};
\draw (2,4.5) node[anchor=center] {{\small $3$}};
\node[w] (v9) at (3, 4) {};

\draw (4,2) node[anchor=center] {{$\longrightarrow$}};
\draw (r) edge (0, 0.5);
\draw (m1) edge (v2);
\draw (m2) edge (v5);
\draw (m3) edge (v6);
\draw (v2) edge (v3);
\draw (v2) edge (v4);
\draw (v6) edge (v7);
\draw (v6) edge (v8);
\draw (v6) edge (v9);
\end{tikzpicture}
\end{minipage}
\vspace{0.26cm}
\begin{minipage}[t]{0.45\linewidth}
\centering 
\begin{tikzpicture}[scale=0.5]
\tikzstyle{w}=[circle, draw, minimum size=3, inner sep=1]
\tikzstyle{b}=[circle, draw, fill, minimum size=3, inner sep=1]
\tikzstyle{m}=[circle, minimum size=3, inner sep=1]
\node[b] (r) at (0, 0) {};
\node[w] (vv1) at (0, 1) {};
\node[w] (vv2) at (-1, 2) {};
\node[b] (vv3) at (1, 2) {};
\draw (1,2.5) node[anchor=center] {{\small $1$}};
\node[b] (vv4) at (-2, 3) {};
\draw (-2,3.5) node[anchor=center] {{\small $2$}};
\node[w] (vv5) at (-1, 3) {};
\node[b] (vv6) at (0, 3) {};
\draw (0,3.5) node[anchor=center] {{\small $3$}};
\draw (r) edge (vv1);
\draw (vv1) edge (vv2);
\draw (vv1) edge (vv3);
\draw (vv2) edge (vv4);
\draw (vv2) edge (vv5);
\draw (vv2) edge (vv6);


\node[m] (m1) at (-2, 5) {{\small $1$}};
\node[w] (v2) at (-2, 6) {};
\node[b] (v3) at (-3, 7) {};
\draw (-3,7.5) node[anchor=center] {{\small $2$}};
\node[w] (v4) at (-1, 7) {};

\node[m] (m2) at (0, 5) {{\small $2$}};

\node[b] (v5) at (0, 6) {};
\draw (0,6.5) node[anchor=center] {{\small $1$}};

\node[m] (m3) at (2, 5) {{\small $3$}};
\node[w] (v6) at (2, 6) {};

\node[b] (v7) at (1, 7) {};
\draw (1,7.5) node[anchor=center] {{\small $4$}};
\node[b] (v8) at (2, 7) {};
\draw (2,7.5) node[anchor=center] {{\small $3$}};
\node[w] (v9) at (3, 7) {};

\draw (6,2) node[anchor=center] {{$\longrightarrow$}};
\draw (r) edge (0, 0.5);
\draw (m1) edge (v2);
\draw (m2) edge (v5);
\draw (m3) edge (v6);
\draw (v2) edge (v3);
\draw (v2) edge (v4);
\draw (v6) edge (v7);
\draw (v6) edge (v8);
\draw (v6) edge (v9);
\end{tikzpicture}
\end{minipage}
\begin{minipage}[t]{0.3\linewidth} 
\centering 
\begin{tikzpicture}[scale=0.5]
\tikzstyle{w}=[circle, draw, minimum size=3, inner sep=1]
\tikzstyle{b}=[circle, draw, fill, minimum size=3, inner sep=1]
\node[b] (r) at (0, 0) {};
\node[w] (v1) at (0, 1) {};
\node[w] (v2) at (-1, 2) {};
\node[w] (v3) at (2, 2) {};
\node[b] (vv1) at (2, 3) {};
\draw (2,3.5) node[anchor=center] {{\small $2$}};
\node[w] (vv2) at (3, 3) {};
\node[b] (v4) at (-2, 3) {};
\draw (-2,3.5) node[anchor=center] {{\small $1$}};
\node[w] (v5) at (-1, 3) {};
\node[w] (v6) at (0, 3) {};
\node[b] (v61) at (-1, 4) {};
\draw (-1,4.5) node[anchor=center] {{\small $4$}};
\node[b] (v62) at (0, 4) {};
\draw (0,4.5) node[anchor=center] {{\small $3$}};
\node[w] (v63) at (1, 4) {};
\draw (r) edge (v1);
\draw (v1) edge (v2);
\draw (v1) edge (v3);
\draw (v2) edge (v4);
\draw (v2) edge (v5);
\draw (v2) edge (v6);
\draw (v3) edge (vv1);
\draw (v3) edge (vv2);
\draw (v6) edge (v61);
\draw (v6) edge (v62);
\draw (v6) edge (v63);
\end{tikzpicture}
\end{minipage}
\caption{Algorithm for constructing $\wt{\bt} \,\bul_1\, \bt$} \label{fig:algo}
\end{figure}
\end{example}


\section{Operads, pseudo-operads, and their dual versions}
\label{sec:oper-coper}

\subsection{Collections} 
\label{sec:coll}

By a {\it collection} we mean the sequence $\{ P(n) \}_{n \ge 0}$ 
of objects of the underlying symmetric monoidal category $\mC$
such that for each $n$, the object $P(n)$ is equipped
with a left action of the symmetric group $S_n$\,.

Given a collection $P$ we form covariant functors for 
$n \ge 0$
$$
\und{P}_n :  \Tree(n) \to \mC\,.
$$
To an $n$-labelled planar tree $\bt$ the functor $\und{P}_n$ assigns 
the object
\begin{equation}
\label{und-P-n-def}
\und{P}_n  (\bt) = \bigotimes_{x\in V_{\nod}(\bt)} P(m(x))\,, 
\end{equation}
where $V_{\nod}(\bt)$ is the set of all nodal vertices
of $\bt$, the notation $m(x)$ is reserved for the number 
of edges terminating at the vertex $x$, and
the order of the factors in the right hand side 
of the equation agrees with the natural order on 
the set  $V_{\nod}(\bt)$.  

To define the functor $\und{P}_n$ on the level of morphisms 
we use the actions of the symmetric 
groups and the structure of the 
symmetric monoidal category $\mC$ in the obvious way. 
\begin{example}
\label{exam:und-P}
Let $\bt_1$ (resp. $\bt_2$) be a $2$-labeled planar tree depicted 
on figure \ref{fig:bt1} (resp. figure \ref{fig:bt2}). There is a unique 
morphism $\la$ from $\bt_1$ to $\bt_2$ in $\Tree(2)$\,.
\begin{figure}[htp]
\begin{minipage}[t]{0.45\linewidth} 
\centering 
\begin{tikzpicture}[scale=0.5]
\tikzstyle{w}=[circle, draw, minimum size=3, inner sep=1]
\tikzstyle{b}=[circle, draw, fill, minimum size=3, inner sep=1]
\node[b] (r) at (0, 0) {};
\node[w] (v1) at (0, 1) {};
\node[w] (v2) at (-1, 2) {};
\node[w] (v3) at (1, 2) {};
\node[b] (v4) at (-2, 3) {};
\draw (-2,3.5) node[anchor=center] {{\small $1$}};
\node[w] (v5) at (-1, 3) {};
\node[b] (v6) at (0, 3) {};
\draw (0,3.5) node[anchor=center] {{\small $2$}};
\draw (r) edge (v1);
\draw (v1) edge (v2);
\draw (v1) edge (v3);
\draw (v2) edge (v4);
\draw (v2) edge (v5);
\draw (v2) edge (v6);
\end{tikzpicture}
\caption{A $2$-labeled planar tree $\bt_1$} \label{fig:bt1}
\end{minipage}
~
\begin{minipage}[t]{0.45\linewidth} 
\centering 
\begin{tikzpicture}[scale=0.5]
\tikzstyle{w}=[circle, draw, minimum size=3, inner sep=1]
\tikzstyle{b}=[circle, draw, fill, minimum size=3, inner sep=1]
\node[b] (r) at (0, 0) {};
\node[w] (v1) at (0, 1) {};
\node[w] (v2) at (1, 2) {};
\node[w] (v3) at (-1, 2) {};
\node[b] (v4) at (0, 3) {};
\draw (0,3.5) node[anchor=center] {{\small $2$}};
\node[w] (v5) at (1, 3) {};
\node[b] (v6) at (2, 3) {};
\draw (2,3.5) node[anchor=center] {{\small $1$}};
\draw (r) edge (v1);
\draw (v1) edge (v2);
\draw (v1) edge (v3);
\draw (v2) edge (v4);
\draw (v2) edge (v5);
\draw (v2) edge (v6);
\end{tikzpicture}
\caption{A $2$-labeled planar tree $\bt_2$} \label{fig:bt2}
\end{minipage}
\end{figure}
For these trees we have 
$$
\und{P}_2(\bt_1) = P(2) \otimes P(3) \otimes P(0) \otimes P(0)\,,
$$
$$
\und{P}_2(\bt_2) = P(2) \otimes P(0) \otimes P(3) \otimes P(0)\,,
$$
and the morphism 
$$
\und{P}_2(\la) :  P(2) \otimes P(3) \otimes P(0) \otimes P(0) 
\to  P(2)  \otimes P(0) \otimes P(3) \otimes P(0)
$$
is the composition 
$$
\und{P}_2 (\la) =   (1 \otimes \beta) \circ  (\si_{12} \otimes \si_{13} \otimes 1 \otimes 1),  
$$
where $\si_{12}$ (resp. $\si_{13}$) is the corresponding transposition in $S_2$
(resp. in $S_3$) and $\beta$ is the braiding 
$$
\beta:  \big(P(3) \otimes P(0) \big)\otimes P(0)  \to 
P(0) \otimes \big(P(3) \otimes P(0) \big)\,.
$$ 
\end{example}

\subsection{Pseudo-operads}

\label{sec:ps-op}

We now recall that a {\it pseudo-operad} is a collection $\{P(n)\}_{n\ge 0}$
equipped with multiplication maps 
\begin{equation}
\label{mu-bt}
\mu_{\bt} : \und{P}_n (\bt) \to P(n) 
\end{equation}
for all $n$-labeled trees $\bt$ and for all $n\ge 0$\,.
These multiplications should satisfy the axioms which 
we list below. 

First, we require that for the standard 
corolla $\bq_n$ (see figures \ref{fig:corolla0}, \ref{fig:corolla12n})
\begin{figure}[htp]  
\begin{minipage}[t]{0.45\linewidth} 
\centering 
\begin{tikzpicture}[scale=0.5]
\tikzstyle{w}=[circle, draw, minimum size=3, inner sep=1]
\tikzstyle{b}=[circle, draw, fill, minimum size=3, inner sep=1]
\node[b] (r) at (0, 0) {};
\node[w] (v) at (0, 1) {};
\draw (r) edge (v);
\end{tikzpicture}
\caption{The corolla $\bq_0$} \label{fig:corolla0}
\end{minipage}
\begin{minipage}[t]{0.45\linewidth} 
\centering 
\begin{tikzpicture}[scale=0.5]
\tikzstyle{w}=[circle, draw, minimum size=3, inner sep=1]
\tikzstyle{b}=[circle, draw, fill, minimum size=3, inner sep=1]
\node[b] (r) at (0, 0) {};
\node[w] (v) at (0, 1) {};
\node[b] (v1) at (-1.5, 2) {};
\draw (-1.5,2.5) node[anchor=center] {{\small $1$}};
\node[b] (v2) at (-0.5, 2) {};
\draw (-0.5,2.5) node[anchor=center] {{\small $2$}};
\draw (0.3,1.8) node[anchor=center] {{\small $\dots$}};
\node[b] (vn) at (1, 2) {};
\draw (1,2.5) node[anchor=center] {{\small $n$}};
\draw (r) edge (v);
\draw (v) edge (v1);
\draw (v) edge (v2);
\draw (v) edge (vn);
\end{tikzpicture}
\caption{The corolla $\bq_n$ for $n \ge 1$} \label{fig:corolla12n}
\end{minipage}
\end{figure}  
the multiplication map $\mu_{\bq_n}$ is the identity morphism 
on $P(n)$
\begin{equation}
\label{corolla-iden}
\mu_{\bq_n} = \id_{P(n)}\,.
\end{equation}

Second, we require that the operations \eqref{mu-bt}
are $S_n$-equivariant  
\begin{equation}
\label{mu-S-n-equiv}
 \mu_{\si(\bt)} = \si \circ \mu_{\bt}\,, \qquad \forall~~ \si \in S_n, ~~ \bt \in \Tree(n)\,.
\end{equation}

Third, for every morphism $\la: \bt \to \bt'$ in $\Tree(n)$ we have
\begin{equation}
\label{la-equiv}
 \mu_{\bt'} \circ   \und{P}_n(\la) =  \mu_{\bt}\,.
\end{equation}

Finally, we need to formulate the associativity axiom 
for multiplications \eqref{mu-bt}.
For this purpose we consider the following 
quadruple $(\wt{\bt}, i, m_i, \bt)$ where $\wt{\bt}$ is an 
$n$-labeled planar tree with $k$ nodal vertices, 
$1 \le i \le k$, $m_i$ is the number of edges terminating 
at the $i$-th nodal vertex of $\wt{\bt}$, and $\bt$ is 
an $m_i$-labeled planar tree. 

The associativity axioms states that for each such 
quadruple  $(\wt{\bt}, i, m_i, \bt)$ we have
\begin{equation}
\label{assoc}
\mu_{\wt{\bt}} \circ  (\id  \otimes \dots \otimes \id \otimes 
\underbrace{\mu_{\bt}}_{i\textrm{-th spot}}
  \otimes \id
\otimes \dots \otimes \id)  \circ \beta_{\,\wt{\bt}, i, m_i, \bt}  = \mu_{\,\wt{\bt} \bullet_i \bt} 
\end{equation}
where $\wt{\bt} \bullet_i \bt$ is the $n$-labeled planar tree obtained 
by inserting $\bt$ into the $i$-th nodal vertex of $\wt{\bt}$ and
$\beta_{\,\wt{\bt}, i, m_i, \bt}$ is the isomorphism in $\mC$ which 
is responsible for ``putting tensor factors in the correct order''. 

To define the isomorphism  $\beta_{\,\wt{\bt}, i, m_i, \bt}$ we 
observe that the source of the map $\mu_{\,\wt{\bt} \bullet_i \bt} $ is 
\begin{equation}
\label{source}
\bigotimes_{x \in V_{\nod} (\wt{\bt} \bullet_i \bt)} P(m(x)) 
\end{equation}
where $m(x)$ denotes the number of edges 
of $\wt{\bt} \bullet_i \bt$ terminating at the nodal vertex $x$
and the order of factors agrees with the total order on 
the set  $V_{\nod} (\wt{\bt} \bullet_i \bt)$\,. The source of 
the map 
\begin{equation}
\label{mu-circ-mu}
\mu_{\wt{\bt}} \circ  (\id  \otimes \dots \otimes \id \otimes 
\underbrace{\mu_{\bt}}_{i-th~spot}
  \otimes \id
\otimes \dots \otimes \id) 
\end{equation}
is also the product (\ref{source})
with a possibly different order of tensor factors. 
The map $\beta_{\,\wt{\bt}, i, m_i, \bt}$ in (\ref{assoc}) is the isomorphism 
in $\mC$ which connects the source of $\mu_{\,\wt{\bt} \bullet_i \bt}$ 
to the source of (\ref{mu-circ-mu}).

Given integers $n \ge 1$, $k \ge 0$, $1 \le i \le n$ and a permutation
$\si \in S_{n+k-1}$ we can form the $(n+k-1)$-labeled planar tree
$\bt^{n,k,i}_{\si}$ shown on figure  \ref{fig:bt-si-and}.
\begin{figure}[htp]
\centering 
\begin{tikzpicture}[scale=0.5]
\tikzstyle{w}=[circle, draw, minimum size=3, inner sep=1]
\tikzstyle{b}=[circle, draw, fill, minimum size=3, inner sep=1]
\node[b] (v1) at (0, 2) {};
\draw (0,2.6) node[anchor=center] {{\small $\si(1)$}};
\node[b] (v2) at (1.5, 2) {};
\draw (1.5,2.6) node[anchor=center] {{\small $\si(2)$}};
\draw (2.8,2) node[anchor=center] {{\small $\dots$}};
\node[b] (v1i) at (4, 2) {};
\draw (4,2.6) node[anchor=center] {{\small $\si(i-1)$}};
\node[w] (vv) at (6, 2) {};
\node[b] (vi) at (4.5, 4) {};
\draw (4.5,4.6) node[anchor=center] {{\small $\si(i)$}};
\draw (5.8,4) node[anchor=center] {{\small $\dots$}};
\node[b] (v1ik) at (7, 4) {};
\draw (7.2,4.6) node[anchor=center] {{\small $\si(i+k-1)$}};
\node[b] (vik) at (8, 2) {};
\draw (8,2.6) node[anchor=center] {{\small $\si(i+k)$}};
\draw (10,2) node[anchor=center] {{\small $\dots$}};
\node[b] (v1nk) at (12, 2) {};
\draw (12.2,2.6) node[anchor=center] {{\small $\si(n+k-1)$}};
\node[w] (v) at (6, 0.5) {};
\node[w] (r) at (6, -0.5) {};
\draw (vv) edge (vi);
\draw (vv) edge (v1ik);
\draw (v) edge (v1);
\draw (v) edge (v2);
\draw (v) edge (v1i);
\draw (v) edge (vv);
\draw (v) edge (vik);
\draw (v) edge (v1nk);
\draw (r) edge (v);
\end{tikzpicture}
\caption{The $(n+k-1)$-labeled planar tree $\bt^{n,k,i}_{\si}$} \label{fig:bt-si-and}
\end{figure}

In the case when $\mC = \Ch_{\bbK}$ or $\mC= \grVect_{\bbK}$, 
it is convenient to use a slightly different notation for values of
the multiplication map $\mu_{\bt^{n,k,i}_{\si}}$ corresponding to
the tree $\bt^{n,k,i}_{\si}$\,.
More precisely, for a vector $v \in P(n)$ and $w \in P(k)$ of 
a pseudo-operad $P$ we set  
\begin{equation}
\label{new-notation}
v\big(\si(1), \dots \si(i-1), w\big(\si(i), \dots, \si(i+k-1) \big), 
\si(i+k), \dots, \si(n+k-1)\big) : = \mu_{\bt^{n,k,i}_{\si}} (v,w)\,. 
\end{equation}

Recall that, for $\si = \id \in S_{n+k-1}$, the multiplication 
$$
\mu_{\bt^{n,k,i}_{\id}} : P(n) \otimes P(k) \to P(n+k-1)
$$
is called  {\it the elementary insertion} and often 
denoted by $\circ_i$\,. Namely,
for $v \in P(n)$ and $w \in P(k)$ we 
have\footnote{Numbers $n$ and $k$ are suppressed from 
the notation.} 
\begin{equation}
\label{circ-i}
v \circ_i w  :=  \mu_{\bt^{n,k,i}_{\id}} (v, w)\,.
\end{equation}

It is not hard to see that a pseudo-operad structure on 
a collection $P$ (in $\Ch_{\bbK}$ or $\grVect_{\bbK}$) 
is uniquely determined by elementary 
insertions \eqref{circ-i}.
All the remaining multiplications \eqref{mu-bt} can be 
expressed in terms of \eqref{circ-i} using the axioms 
of a pseudo-operad. 

Thus, it is not hard to see that, the following definition of 
a pseudo-operad is equivalent to ours. 
\begin{defi}[Definition 17, \cite{Markl}]
\label{dfn:ps-op}
A pseudo-operad in $\Ch_{\bbK}$ (resp. $\grVect_{\bbK}$)
is a collection $P$ in $\Ch_{\bbK}$ (resp. $\grVect_{\bbK}$) equipped with maps
\begin{equation}
\label{circ-i-dfn}
\circ_i : P(n) \otimes P(k) \to P(n+k-1)\,, 
\qquad 1\le i \le n
\end{equation}
satisfying the associativity axiom and equivariance axioms. 
The associativity axiom states that for all 
homogeneous vectors $a,b,c$ in $P(n_a), P(n_b)$, and $P(n_c)$, 
respectively and for all $1\le i\le n_a$ and $1 \le j \le n_a + n_b -1$
\begin{equation}
\label{circ-i-assoc}
(a \circ_i b) \circ_j c = 
\begin{cases}
(-1)^{|b| |c|}(a \circ_j c) \circ_{i + n_c -1} b  \qquad {\rm if}~~  j < i\,,  \\[0.3cm]
 a \circ_i (b \circ_{j-i+1} c) \qquad {\rm if}~~ i \le j \le i + n_b - 1\,,  \\[0.3cm]
(-1)^{|b| |c|} (a\circ_{j-n_b+1} c) \circ_i b   \qquad {\rm if}~~ j \ge i+n_b \,.
\end{cases}
\end{equation}
The equivariance axioms state that for all $1 \le p \le n_b-1$ and  
$1 \le k \le n_a-1$ we have
\begin{equation}
\label{equiv-1}
a \circ_i (\si_{p\, (p+1)} b) = \si_{(p+i-1)\, (p+i)}(a \circ_i b)\,,
\end{equation}
\begin{equation}
\label{equiv-11}
(\si_{k \, (k+1)} a) \circ_i b = 
\begin{cases}
 \si_{k \, (k+1)} (a\circ_i b) \qquad {\rm if} ~~k+1 < i  \\[0.3cm]
\vs_{i-1,\, i+n_b-1} (a \circ_{i-1} b) \qquad {\rm if} ~~ k+1 = i \\[0.3cm]
\vs^{-1}_{i,\, i+n_b} (a \circ_{i+1} b) \qquad {\rm if} ~~ k+1 = i \\[0.3cm]
\si_{(k+n_b-1) \, (k+n_b)} (a\circ_i b) \qquad {\rm if} ~~ k > i\,.
\end{cases}
\end{equation}
Here $\si_{i_1\, i_2}$ denotes the transposition $(i_1, i_2)$ 
and $\vs_{i_1, i_2}$ is the cycle defined in \eqref{vs-i-j}.
\end{defi}
In \cite{Markl} a pseudo-operad is called non-unital Markl's operad.

\subsection{Operads}
\label{sec:operads}  

An operad is a pseudo-operad $P$ with unit, that is 
a map 
\begin{equation}
\label{unit}
\bu :  \bfone \to P(1) 
\end{equation} 
for which the compositions
\begin{equation}
\label{unit-axiom}
\begin{array}{c}
P(n) \cong P(n) \otimes \bfone  \,
\stackrel{\id \otimes \bu ~~}{\,\longrightarrow\,}\,
 P(n) \otimes P(1)\,
\stackrel{\c_i~}{\,\longrightarrow\,}\, P(n)\\[0.3cm]
P(n) \cong  \bfone \otimes P(n)  \,
\stackrel{\bu \otimes \id ~~}{\,\longrightarrow\,}\,
 P(1) \otimes P(n)\,
\stackrel{\c_1 ~}{\,\longrightarrow\,}\,
 P(n)
\end{array}
\end{equation} 
coincide with the identity map on $P(n)$\,.

Morphisms of pseudo-operads and operads are 
defined in the obvious way. 

\begin{example}
\label{ex:End}
For an object $\cV$ of $\mC$ we denote by 
$\End_{\cV}$ the following collection\footnote{We tacitly assume that the 
symmetric monoidal category $\mC$ has inner $\Hom$.} 
\begin{equation}
\label{End-cV}
\End_{\cV} (n) = \Hom(\cV^{\otimes\, n}, \cV)\,.
\end{equation}
This collection is equipped with the obvious structure of
an operad. Namely, the elementary insertions 
$$
\circ_i \colon \End_{\cV} (n) \otimes \End_{\cV} (m) \to \End_{\cV} (n+m-1)
$$
are defined by the equation 
$$
f \circ_i g := f \circ (\id^{\otimes\, (i-1)} \otimes g \otimes \id^{\otimes\, (n-i)} )
$$
and the unit 
$$
\bu :  \bfone \to \Hom(\cV, \cV)
$$
corresponds to the isomorphism $\bfone \otimes \cV \cong \cV$\,.
We call $\End_{\cV}$ the {\it endomorphism 
operad} of $\cV$\,.
\end{example}

This example plays an important role because it is 
used in the definition of an algebra over an operad. 
Namely, an {\it algebra over an operad} $P$ (in $\mC$)
is an object $\cV$ of $\mC$ together with an operad 
map 
$$
P \to \End_{\cV}\,.
$$

It is not hard to see that an object $\cV$ in $\mC$ is 
an algebra over an operad $P$ if and only if $\cV$ is equipped 
with a collection of multiplication maps
\begin{equation}
\label{mu-cV}
\mu_{\cV} : P(n) \otimes \cV^{\otimes \, n} \to \cV  \qquad n \ge 0\,,
\end{equation}
satisfying the associativity axiom, the equivariance axiom 
and the unitality axiom formulated, for instance, in \cite[Proposition 24]{Markl}\,.

\begin{exer}
\label{exer:Com}
Let $\mC = \grVect_{\bbK}$\,.
Consider the collections
\begin{equation}
\label{Com-u}
\Com_u(n) = \bbK \,,
\end{equation}
and
\begin{equation}
\label{Com}
\Com(n) = \begin{cases}
  \bbK \qquad {\rm if} ~~ n \ge 1\,,  \\
  \bfzero \qquad {\rm if} ~~ n = 0\,.
\end{cases}
\end{equation} 
with the trivial $S_n$-action on $\Com(n)$ 
(resp. $\Com_u(n)$)\,. 
The collections $\Com$ and $\Com_u$ are equipped with 
the obvious operad structures. For $\Com_{u}$ we have 
$$
\c_i  = \id ~:~ \Com_u(n) \otimes \Com_u (k) \cong 
\bbK \otimes \bbK \to \Com_{u}(n+k-1) \cong \bbK\,,
$$
$$
 \bu  = \id ~:~ \bbK ~ \to ~ \Com_u(1) \cong \bbK\,, 
$$
and for $\Com$ we have
$$
\c_i  = \id ~:~ \Com(n) \otimes \Com (k) \cong 
\bbK \otimes \bbK \to \Com(n+k-1) \cong \bbK\,,
$$
if $k \neq 0$, and
$$
 \bu  = \id ~:~ \bbK ~ \to ~ \Com(1) \cong \bbK\,. 
$$
Show that $\Com_{u}$-algebras (resp. $\Com$-algebras) are 
exactly unital (resp. non-unital) commutative algebras. 
\end{exer}

\begin{exer}
\label{exer:tensor}
Let $P$ and $\cO$ be operads (resp. pseudo-operad) 
in $\mC$\,. Show that the collection $P\otimes \cO$ 
with
$$
P \otimes \cO(n) = P(n) \otimes \cO(n)
$$
is naturally an operad (resp. pseudo-operad)\,. 
{\it For this exercise it may be more convenient to 
use Markl's definition \cite[Definition 17]{Markl}.}
\end{exer}

\begin{exer}[The operad $\La$]
\label{exer:La}
Let $\mC = \grVect_{\bbK}$ or 
$\mC =  \Ch_{\bbK}$\,.
Consider the collection $\La$
\begin{equation}
\label{La}
\La(n) = \begin{cases}
\bs^{1-n} \sgn_{n}  \qquad {\rm if} ~~ n \ge 1  \\
 \bfzero  \qquad {\rm if} ~~ n = 0
\end{cases}
\end{equation}
with $\sgn_n$ being the sign representation of $S_n$\,.
Let 
$$
\c_i ~:~ \La(n) \otimes \La(k) \to \La(n+k-1)
$$ 
be the operations defined by 
\begin{equation}
\label{circ-i-La}
1_n \,\c_i\, 1_k  = (-1)^{(1-k) (n-i)} 1_{n+k-1}\,,
\end{equation}
where $1_m$ denotes the generator 
$\bs^{1-m} 1 \in \bs^{1-m} \sgn_{m}$\,.
Prove that \eqref{circ-i-La} together with the 
obvious unit map $\bu = \id :  \bbK \to \La(1) \cong \bbK$
equip the collection $\La$ with a structure of an operad. 
Show that $\La$-algebra structures on $\cV$ are in bijection 
with $\Com$-algebra structure on $\bs^{-1} \cV$\,.
\end{exer}

\begin{exer}
\label{exer:La-oper}
For an operad $\cO$ in the category $\Ch_{\bbK}$ (resp. $\grVect_{\bbK}$)
we denote by $\La\cO$ the operad 
\begin{equation}
\label{La-cO}
\La \cO := \La \otimes \cO\,. 
\end{equation}
Show that $\La\cO$-algebra structures on a cochain 
complex (resp. graded vector space) $\cV$ are in bijection 
with $\cO$-algebra structures on $\bsi \cV$\,.
\end{exer}

\begin{example}
\label{exam:LaLie}
Let $\Lie$ be the operad which governs Lie algebras.
An algebra over $\La\Lie$ in $\grVect_{\bbK}$ is
a graded vector space $\cV$ equipped with the binary operation: 
$$
\{\,,\, \} : \cV \otimes  \cV \to \cV 
$$
of degree $-1$ satisfying the identities:  
$$
\{v_1,v_2\} = (-1)^{|v_1| |v_2|} \{v_2, v_1\}\,,
$$
$$
\{\{v_1, v_2\} , v_3\} +  
(-1)^{|v_1|(|v_2|+|v_3|)} \{\{v_2, v_3\} , v_1\} +
(-1)^{|v_3|(|v_1|+|v_2|)} \{\{v_3, v_1\} , v_2\}  = 0\,,
$$
where $v_1, v_2, v_3 $ are homogeneous vectors in $\cV$\,.
\end{example}

\begin{exer}[Free algebra over an operad $\cO$]
\label{exer:free-alg}
Let $\cO$ be an operad in the category $\Ch_{\bbK}$ 
(resp. $\grVect_{\bbK}$). Show that for every cochain 
complex (resp. graded vector space) $\cV$ the direct sum 
\begin{equation}
\label{cO-cV}
\cO(\cV) : = \bigoplus_{n=0}^{\infty} \Big( \cO(n) \otimes \cV^{\otimes \, n} \Big)_{S_n}   
\end{equation}
carries a natural structure of an algebra over $\cO$\,.
Prove that the $\cO$-algebra $\cO(\cV)$ is free. In other words, the assignment 
$$
\cV \to \cO(\cV)
$$
upgrades to a functor which is left adjoint to 
the forgetful functor from the category of $\cO$-algebras 
to the category $\Ch_{\bbK}$ (resp. $\grVect_{\bbK}$).
\end{exer}

\subsubsection{Augmented operads}
\label{sec:aug-op}
In this subsection $\mC$ is either $\Ch_{\bbK}$ or 
$\grVect_{\bbK}$\,.

Let us observe that the collection 
\begin{equation}
\label{ast}
\ast(n) = \begin{cases}
 \bbK \qquad {\rm if} ~~ n = 1 \\
 \bfzero \qquad {\rm otherwise}
\end{cases}
\end{equation}
is equipped with the unique structure of an operad. 
In fact, $\ast$ is the initial object in the category 
of operads (in $\mC$)\,.

An augmentation of an operad $\cO$ is an 
operad morphism
$$
\ve : \cO \to \ast\,.
$$ 
 
Given a pseudo-operad $P$ in $\mC$  we can always 
form an operad by formally adjoining a unit. 
The resulting operad is naturally augmented. 

Furthermore, the kernel of the augmentation 
for any augmented operad is naturally a pseudo-operad. 
According to \cite[Proposition 21]{Markl}, 
these two constructions give us an equivalence between
the category of augmented operads and the category 
pseudo-operads.  

For an augmented operad $\cO$ we denote by $\cO_{\c}$ the kernel 
of its augmentation. 

\begin{exer}
\label{ex:aug-nonaug}
Show that the operads $\Com$ and $\Lie$ have natural augmentations. 
Prove that the operad $\Com_{u}$  (from Exercise \ref{exer:Com}) 
does not admit an augmentation.
\end{exer}

\subsubsection{Example: the operad $\Ger$}
\label{sec:Ger}

Let us recall that a {\it Gerstenhaber algebra} is a graded vector space $V$
equipped with a commutative (and associative) product 
(without identity) and a \und{degree $-1$} binary operation $\{\,,\,\}$ which 
satisfies the following relations: 
\begin{equation}
\label{Ger-axiom}
\{v_1 , v_2 \}  = (-1)^{|v_1||v_2|} \{ v_2, v_1\}\,,
\end{equation}
\begin{equation}
\label{Ger-axiom1}
\{v , v_1 v_2 \}  = \{v, v_1\} v_2 + (-1)^{|v_1||v| + |v_1|} v_1 \{v, v_2\} \,,
\end{equation}
\begin{equation}
\label{Ger-axiom-Jac}
\{\{v_1, v_2\} , v_3\} +  
(-1)^{|v_1|(|v_2|+|v_3|)} \{\{v_2, v_3\} , v_1\} +
(-1)^{|v_3|(|v_1|+|v_2|)} \{\{v_3, v_1\} , v_2\}  = 0\,.
\end{equation}
In particular, $(V, \{\,,\,\})$ is a $\La\Lie$-algebra.

To define spaces of the operad $\Ger$ governing Gerstenhaber algebras 
we introduce the free Gerstenhaber algebra $\Ger_n$ in $n$ dummy 
variables  $a_1, a_2, \dots, a_n$ of degree $0$\,.  Next we set
$\Ger(0) =  \bfzero$ and $\Ger(1) = \bbK$\,. Then we declare that, 
for $n\ge 2$, $\Ger(n)$ is spanned by monomials of $\Ger_n$ in which 
each dummy variable $a_i$ appears exactly once. 

The symmetric group $S_n$ acts on $\Ger(n)$ by permuting 
the dummy variables and the elementary insertions are 
defined in the obvious way. 
\begin{example}
\label{exam:Ger-insertions}
Let us consider the monomials $u = \{a_2, a_3\} a_1 \{a_4,a_5\} \in \Ger(5)$ and
$w =\{a_1,a_2\}\in \Ger(2)$ and compute the insertions 
$u \circ_2 w$, $u \circ_4 w$ and $w \circ_1 u$\,. We get 
$$
u \circ_2 w =  - \{\{a_2, a_3\}, a_4\} a_1 \{a_5,a_6\}\,, 
\qquad 
u \circ_4 w =  \{a_2, a_3\} a_1 \{\{a_4, a_5 \},a_6\}\,,
$$
$$
w \circ_1 u =  \{ \{a_2, a_3\} a_1 \{a_4,a_5\} , a_6\} =
$$
$$
=  \{a_6,  \{a_2, a_3\} a_1 \{a_4,a_5\} \} =  \{a_6,  \{a_2, a_3\} \} a_1 \{a_4,a_5\} 
$$
$$
-  \{a_2, a_3\} \{a_6, a_1\} \{a_4,a_5\} - \{a_2, a_3\} a_1 \{a_6, \{a_4,a_5\}\}\,.  
$$
(Note that the insertions obey the usual Koszul rule for signs.)
\end{example}

It is easy to see that the operad $\Ger$ is generated by the monomials
$a_1 a_2, \{a_1,a_2\} \in \Ger(2)$ and algebras over the operad $\Ger$ 
are Gerstenhaber algebras. It is also easy to see that the monomial 
$\{a_1,a_2\}$ generates a suboperad of $\Ger$ isomorphic to $\La\Lie$\,.
The operad $\Ger$ carries the obvious augmentation.

We would like to remark that the space $\Ger(n)$ is spanned 
by monomials $v \in \Ger(n)$ of the form 
\begin{equation}
\label{monomial-Ger}
v = v_1 (a_{i_{11}}, a_{i_{12}}, \dots, a_{i_{1p_1}}) \, v_2 (a_{i_{21}}, a_{i_{22}}, \dots, a_{i_{2p_2}})
\, \dots \, v_t (a_{i_{t1}}, a_{i_{t2}}, \dots, a_{i_{t p_t}})\,,
\end{equation}
where $v_1, v_2, \dots, v_t$ are $\La\Lie$-words in $p_1$, $p_2$, \dots, $p_t$ variables, 
respectively, without repetitions and 
$$
\{i_{11}, i_{12}, \dots, i_{1 p_1}\} \sqcup 
\{i_{21}, i_{22}, \dots, i_{2 p_2}\} \sqcup \dots \sqcup \{i_{t1}, i_{t 2}, \dots, i_{t p_t}\}
$$
is a partition of the set of indices $\{1, 2, \dots, n\}$\,. So, from now on, by a monomial 
in $\Ger(n)$ we mean a monomial of the form \eqref{monomial-Ger}\,.

\begin{exer}
\label{exer:Ger-n-basis}
Consider the ordered partitions of the set $\{1, 2, \dots, n\}$ 
\begin{equation}
\label{sp-partition}
\{i_{11}, i_{12}, \dots, i_{1 p_1}\} \sqcup 
\{i_{21}, i_{22}, \dots, i_{2 p_2}\} \sqcup \dots \sqcup \{i_{t1}, i_{t 2}, \dots, i_{t p_t}\}
\end{equation}
satisfying the following properties: 
\begin{itemize}

\item for each $1 \le \beta \le t$ the index $i_{\beta p_{\beta}}$ is 
the biggest among $i_{\beta 1}, \dots, i_{\beta (p_{\beta}-1)}$

\item $i_{1 p_1}  <  i_{2 p_2} < \dots <  i_{t p_t}$ (in particular, $i_{t p_t} = n$).

\end{itemize}

Prove that the monomials 
\begin{equation}
\label{Ger-n-basis}
\{ a_{i_{11}},  \dots, \{ a_{i_{1 (p_1-1)}}, a_{i_{1 p_1}} \br \dots
\{ a_{i_{t1}},  \dots, \{ a_{i_{t (p_t-1)}}, a_{i_{t p_t}} \br
\end{equation}
corresponding to all ordered partitions \eqref{sp-partition} satisfying the above properties
form a basis of $\Ger(n)$\,. Use this fact to show that
$$
\dim(\Ger(n)) = n!\,.
$$

\end{exer}

\subsection{Pseudo-cooperads}

Reversing the arrows in the definition of a pseudo-operad 
we get the definition of a pseudo-cooperad.
More precisely, a pseudo-cooperad is a collection $Q$ 
equipped with comultiplication maps
\begin{equation}
\label{D-bt}
\D_{\bt} :  Q(n) \to \und{Q}_n (\bt)\,, 
\end{equation}
which satisfy a similar list of axioms. 

Just as for pseudo-operads, we have 
\begin{equation}
\label{corolla-iden-co}
\D_{\bq_n} = \id_{Q(n)}\,, 
\end{equation}
where $\bq_n$ is the standard corolla 
(see figures \ref{fig:corolla0}, \ref{fig:corolla12n}).

We also require that the operations \eqref{D-bt}
are $S_n$-equivariant  
\begin{equation}
\label{D-S-n-equiv}
 \D_{\si(\bt)} \circ  \si =  \D_{\bt}\,,  \qquad \forall~~ \si \in S_n, ~~ \bt \in \Tree(n)\,. 
\end{equation}

For every morphism $\la: \bt \to \bt'$ in $\Tree(n)$ we have
\begin{equation}
\label{la-equiv-co}
 \D_{\bt'} = \und{Q}_n(\la) \circ \D_{\bt}\,.
\end{equation}

Finally,  to formulate the coassociativity axiom 
for \eqref{D-bt}, we consider the following 
quadruple $(\wt{\bt}, i, m_i, \bt)$ where $\wt{\bt}$ is an 
$n$-labeled planar tree with $k$ nodal vertices, 
$1 \le i \le k$, $m_i$ is the number of edges terminating 
at the $i$-th nodal vertex of $\wt{\bt}$, and $\bt$ is 
an $m_i$-labeled planar tree. 

The coassociativity axioms states that for each such 
quadruple  $(\wt{\bt}, i, m_i, \bt)$ we have
\begin{equation}
\label{coassoc} 
(\id  \otimes \dots \otimes \id \otimes 
\underbrace{\D_{\bt}}_{i\textrm{-th spot}}
  \otimes \id
\otimes \dots \otimes \id)  \circ
\D_{\wt{\bt}} 
= \beta_{\,\wt{\bt}, i, m_i, \bt} \circ \D_{\,\wt{\bt} \bullet_i \bt}\,, 
\end{equation}
where $\wt{\bt} \bullet_i \bt$ is the $n$-labeled planar tree obtained 
by inserting $\bt$ into the $i$-th nodal vertex of $\wt{\bt}$ and
$\beta_{\,\wt{\bt}, i, m_i, \bt}$ is the isomorphism in $\mC$ which 
is responsible for ``putting tensor factors in the correct order''. 

Just as for pseudo-operads, a pseudo-cooperad structure on 
a collection $Q$  is 
uniquely determined by the comultiplications: 
\begin{equation}
\label{D-i}
\D_i  := D_{\bt^{n,k,i}_{\id}} : Q(n+k-1) \to Q(n) \otimes Q(k)\,,
\end{equation}
where  $\{\bt^{n,k,i}_{\si}\}_{\si \in S_{n+k-1}}$ is
the family of labeled planar trees depicted on
figure \ref{fig:bt-si-and}. 

The comultiplications \eqref{D-i} are called {\it elementary co-insertions}.

\subsection{Cooperads}

We recall that a cooperad is a pseudo-cooperad $Q$ with counit, that is 
a map 
\begin{equation}
\label{counit}
\bu^* :  Q(1) \to \bfone
\end{equation} 
for which the compositions
\begin{equation}
\label{counit-axiom}
\begin{array}{c}
Q(n) \stackrel{\D_i}{\,\longrightarrow\,}\,  Q(n)\otimes Q(1)
\stackrel{\id \otimes \bu^* ~~}{\,\longrightarrow\,}\, Q(n)\otimes \bfone 
\cong Q(n)\\[0.3cm]
Q(n) \stackrel{\D_1}{\,\longrightarrow\,}\,  Q(1)\otimes Q(n)
\stackrel{\bu^* \otimes \id ~~~}{\,\longrightarrow\,}\, \bfone\otimes Q(n)
\cong Q(n)
\end{array}
\end{equation} 
coincide with the identity map on $Q(n)$\,.

Morphisms of pseudo-cooperads and cooperads are 
defined in the obvious way.

Unfortunately there is no natural notion of ``endomorphism cooperad''. 
So a coalgebra over a cooperad $Q$ is defined as an object $\cV$ in $\mC$
equipped with a collection of comultiplication maps
\begin{equation}
\label{D-cV}
\D_{\cV} : \cV \to Q(n) \otimes \cV^{\otimes \, n}  \qquad n \ge 0\,,
\end{equation}
satisfying axioms which are dual to the associativity axiom, 
the equivariance axiom and the unitality axiom from 
\cite[Proposition 24]{Markl}.

\subsubsection{Coaugmented cooperads} 
\label{sec:coaug-coop}

In this subsection $\mC$ is either $\Ch_{\bbK}$ or 
$\grVect_{\bbK}$\,.

It is not hard to see that 
the collection $\ast$ \eqref{ast} is equipped with the unique 
cooperad structure. Furthermore,   $\ast$ is the terminal object 
in the category of cooperads. 

We say that a cooperad $\cC$ is coaugmented if we have 
a cooperad morphism 
\begin{equation}
\label{co-aug}
\ve' : \ast \to \cC\,.
\end{equation}

Given a pseudo-cooperad $\cC$ we can always form 
a cooperad by formally adjoining a counit. The resulting cooperad 
is naturally coaugmented. 

Furthermore, the cokernel of the coaugmentation for any coaugmented 
cooperad is naturally a pseudo-cooperad. Dualizing the line of 
arguments in \cite[Proposition 21]{Markl} we see that these two 
constructions give an equivalence between the category of 
coaugmented cooperads and the category of pseudo-cooperads. 

For a coaugmented cooperad $\cC$ we will denote by $\cC_{\c}$ 
the cokernel of the coaugmentation.

Just as for operads (see Exercise \ref{exer:tensor}), the tensor 
product of two cooperads is naturally a cooperad. 
Furthermore, the collection $\La$ \eqref{La} introduced 
in Exercise  \ref{exer:La} carries a cooperad structure with 
the following elementary co-insertions:
\begin{equation}
\label{La-co-ins}
\D_i (1_{n+k-1}) =  (-1)^{(1-k) (n-i)} 1_n \otimes 1_k \,,
\end{equation}
where  $1_m$ denotes the generator $\bs^{1-m} 1 \in \bs^{1-m} \sgn_{m}$\,.

For a cooperad $\cC$ in the category $\Ch_{\bbK}$ or $\grVect_{\bbK}$ 
we denote by $\La \cC$ the cooperad 
\begin{equation}
\label{La-cC}
\La \cC : = \La \otimes \cC\,.
\end{equation}
Just as for operads (see Exercise \ref{exer:La-oper}), it is easy 
to see that $\La\cC$-coalgebra structures on a cochain complex 
(or a graded vector space) $\cV$ are in bijection with $\cC$-coalgebra 
structures on $\bs^{-1}\cV$\,.

\begin{exer}[Cofree coalgebra over a cooperad $\cC$]
\label{exer:cofree-coalg}
Let $\cC$ be a cooperad in the category $\Ch_{\bbK}$ 
(resp. $\grVect_{\bbK}$). Show that for every cochain 
complex (resp. graded vector space) $\cV$ the direct sum 
\begin{equation}
\label{cC-cV}
\cC(\cV) : = \bigoplus_{n=0}^{\infty} \Big( \cC(n) \otimes \cV^{\otimes \, n} \Big)^{S_n}   
\end{equation}
carries a natural structure of a coalgebra over $\cC$\,.
Prove that the $\cC$-coalgebra $\cC(\cV)$ is cofree\footnote{All $\cC$-coalgebras are
assumed to be nilpotent in the sense of \cite[Section 2.4.1]{Hinich}.}. 
In other words, the assignment 
$$
\cV \to \cC(\cV)
$$
upgrades to a functor which is right adjoint to 
the forgetful functor from the category of $\cC$-coalgebras 
to the category $\Ch_{\bbK}$ (resp. $\grVect_{\bbK}$).
\end{exer}

\subsection{Free operad}
In this section $\mC= \Ch_{\bbK}$ or $\grVect_{\bbK}$\,.

Let $Q$ be a collection. Following \cite[Section 5.8]{BM} the 
spaces $\big\{\psop(Q)(n)\big\}_{n\ge 0}$ of the free pseudo-operad generated by 
the collection $Q$ are 
\begin{equation}
\label{psop-Q-n}
\psop(Q)(n) = \colim \,\und{Q}_n
\end{equation}
where $\und{Q}_n$ is the functor from the groupoid $\Tree(n)$
to $\mC$ defined in Subsection \ref{sec:coll}. 

The pseudo-operad structure on $\psop(Q)$ is defined in 
the obvious way using grafting of trees and the free operad 
$\Op(Q)$ generated by $Q$ is obtained from $\psop(Q)$ by 
formally adjoining the unit. 

Unfolding \eqref{psop-Q-n} we see that $\psop(Q)(n)$ is 
the quotient of the direct sum 
\begin{equation}
\label{oplus-undQ}
\bigoplus_{\bt \in  \Tree(n)} \und{Q}_n(\bt)  
\end{equation}
by the subspace spanned by vectors of the form 
$$
(\bt, X) - (\bt', \und{Q}_n(\la)(X))
$$
where $\la : \bt \to \bt'$ is a morphism in $\Tree(n)$
and $X \in \und{Q}_n(\bt)$\,.

Thus it is convenient to represent vectors in  $\psop(Q)$ and in $\Op(Q)$
by labeled  planar trees with nodal vertices decorated 
by vectors in $Q$\,. The decoration is subject to this rule: if $m(x)$
is the number of edges which terminate at a nodal vertex $x$ 
then $x$ is decorated by a vector $v_x \in Q(m(x))$\,.

If a decorated tree $\bt'$ is obtained from a decorated tree $\bt$
by applying an element $\si \in S_{m(x)}$ to incoming edges 
of a vertex $x$ and replacing the vector $v_x$ by $\si^{-1} (v_x)$
then $\bt'$ and $\bt$ represent the same vectors  in $\psop(Q)$ 
(and in $\Op(Q)$).

\begin{example}
\label{ex:decor-trees}
Let $Q$ be a collection.
Figure \ref{fig:bt-decor} shows a $4$-labeled planar
tree $\bt$ decorated by vectors $v_1 \in Q(3)$, 
$v_2 \in Q(2)$ and $v_3 \in Q(1)$\,. 
Figure \ref{fig:wtbt-decor} shows another decorated 
tree with  $v'_1 = \si_{23}(v_1)$ and 
$v'_2 = \si_{12} (v_2)$, where $\si_{23}$ and 
$\si_{12}$ are the corresponding transpositions 
in $S_3$ and $S_2$, respectively. 
According to our discussion, these decorated trees represent the 
same vector in $\Op(Q)(4)$\,.
\begin{figure}[htp] 
\begin{minipage}[t]{0.45\linewidth}
\centering 
\begin{tikzpicture}[scale=0.5]
\tikzstyle{lab}=[circle, draw, minimum size=4, inner sep=1]
\tikzstyle{n}=[circle, draw, fill, minimum size=4]
\tikzstyle{vt}=[circle, draw, fill, minimum size=0, inner sep=1]
\node[vt] (l1) at (1, 4) {};
\draw (1,4.5) node[anchor=center] {{\small $2$}};
\node[vt] (l2) at (2, 4) {};
\draw (2,4.5) node[anchor=center] {{\small $1$}};
\node[vt] (l3) at (3, 4) {};
\draw (3,4.5) node[anchor=center] {{\small $4$}};
\node[vt] (l4) at (5, 4) {};
\draw (5,4.5) node[anchor=center] {{\small $3$}};
\node[lab] (v2) at (1.5, 3) {{\small $v_2$}};
\node[lab] (v1) at (3, 2) {{\small $v_1$}};
\node[lab] (v3) at (4.5, 3) {{\small $v_3$}};
\node[vt] (r) at (3, 0.5) {};
\draw  (r) edge (v1);
\draw (v1) edge (v2);
\draw (v2) edge (l1);
\draw (v2) edge (l2);
\draw (v1) edge (l3);
\draw (v1) edge (v3);
\draw (v3) edge (l4);
\end{tikzpicture}
\caption{A $4$-labeled decorated tree $\bt$} \label{fig:bt-decor}
\end{minipage}
\begin{minipage}[t]{0.45\linewidth}
\centering 
\begin{tikzpicture}[scale=0.5]
\tikzstyle{lab}=[circle, draw, minimum size=4, inner sep=1]
\tikzstyle{n}=[circle, draw, fill, minimum size=4]
\tikzstyle{vt}=[circle, draw, fill, minimum size=0, inner sep=1]
\node[vt] (l1) at (1, 5) {};
\draw (1,5.5) node[anchor=center] {{\small $1$}};
\node[vt] (l2) at (2, 5) {};
\draw (2,5.5) node[anchor=center] {{\small $2$}};
\node[vt] (l3) at (5, 3.5) {};
\draw (5,4) node[anchor=center] {{\small $4$}};
\node[vt] (l4) at (3, 5) {};
\draw (3,5.5) node[anchor=center] {{\small $3$}};
\node[lab] (v2) at (1.5, 3.5) {{\small $v'_2$}};
\node[lab] (v1) at (3, 2) {{\small $v'_1$}};
\node[lab] (v3) at (3, 3.8) {{\small $v_3$}};
\node[vt] (r) at (3, 0.5) {};
\draw (r) edge (v1);
\draw (v1) edge (v2);
\draw (v2) edge (l1);
\draw (v2) edge (l2);
\draw (v1) edge (l3);
\draw (v1) edge (v3);
\draw (v3) edge (l4);
\end{tikzpicture}
\caption{A $4$-labeled decorated tree $\wt{\bt}$. 
Here $v'_1 = \si_{23}(v_1)$ and $v'_2 = \si_{12}(v_2)$} \label{fig:wtbt-decor}
\end{minipage}
\end{figure}
\end{example}

\begin{rem}
\label{rem:gener-of-free}
In view of the above description, generators  $X \in Q(n)$  of the free operad $\Op(Q)$ can be 
also written in the form  
$$
(\bq_n, X)\,,
$$ 
where $\bq_n$ is the standard $n$-corolla (see figures \ref{fig:corolla0}, \ref{fig:corolla12n}).
\end{rem}

\subsection{Cobar construction}
The underlying symmetric monoidal category $\mC$ is 
the category $\Ch_{\bbK}$ of unbounded cochain complexes
of $\bbK$-vector spaces. 

The cobar construction $\Cobar$ \cite{Fresse}, \cite{GJ}, \cite{GK}, \cite[Section 6.5]{LodayVallette} 
is a functor from the category of coaugmented cooperads in $\Ch_{\bbK}$ to the category 
of augmented operads  in $\Ch_{\bbK}$. It is used to construct free resolutions 
for operads.  

Let $\cC$ be a coaugmented cooperad in $\Ch_{\bbK}$\,.
As an operad in the category $\grVect_{\bbK}$, 
$\Cobar(\cC)$ is freely generated by the collection $\bs\, \cC_{\c}$ 
\begin{equation}
\label{Cobar-cC}
\Cobar(\cC) = \Op(\bs \, \cC_{\c})\,, 
\end{equation}
where $\cC_{\c}$ denotes the cokernel of the coaugmentation.

To define the differential $\pa^{\Cobar}$ on $\Cobar(\cC)$, we recall that 
 $\Tree_2(n)$ is the full subcategory of
$\Tree(n)$ which consists of $n$-labeled planar trees 
with exactly $2$ nodal vertices and $\pi_0(\Tree_2(n))$
is the set of isomorphism
classes in the groupoid $\Tree_2(n)$. 
Due to Exercise 
\ref{exer:shuffles}, the set  $\pi_0(\Tree_2(n))$ is in bijection with 
$(p,n-p)$-shuffles for all $0 \le p \le n$\,.

Since the operad $\Cobar(\cC)$ is freely generated by 
the collection $\bs\,\cC_{\c}$, it suffices to define the differential 
$\pa^{\Cobar}$ on generators. 

We have 
$$
\pa^{\Cobar} = \pa' + \pa''\,, 
$$
with 
\begin{equation}
\label{pa-pr}
\pa' (X) = - \bs\, \pa_{\cC} \bsi X\,,
\end{equation}
and 
\begin{equation}
\label{pa-prpr}
\pa'' (X) = - \sum_{z \in \pi_0(\Tree_2(n)) } (\bs \otimes \bs)
\big( \bt_z ; \D_{\bt_z} (\bsi X) \big)
\,,
\end{equation}
where $X \in \bs\, \cC_{\c}(n)$, $\bt_z$ is any representative of 
the isomorphism class $z\in \pi_0(\Tree_2(n))$, and $\pa_{\cC}$ is 
the differential on $\cC$\,.
The axioms of a pseudo-cooperad imply that the right hand side of
\eqref{pa-prpr} does not depend on the choice of representatives 
$\bt_z$\,.

\begin{exer}
\label{ex:Cobar-diff}
Identity 
$$
\pa' \circ \pa' =0
$$
readily follows from $(\pa_{\cC})^2 = 0$\,.
Use the compatibility of the differential $\pa_{\cC}$
with the cooperad structure and the coassociativity 
axiom (\ref{coassoc}) to deduce the identities
\begin{equation}
\label{papr-paprpr}
\pa' \circ \pa'' + \pa'' \circ \pa' = 0 
\end{equation}
and
\begin{equation}
\label{pa-prpr-square}
\pa'' \circ  \pa'' = 0\,. 
\end{equation}
\end{exer}

\section{Convolution Lie algebra}
\label{sec:convolution}

Let $P$ (resp.  $Q$) be a dg pseudo-operad  (resp. a dg pseudo-cooperad).

We consider the following cochain complex
\begin{equation}
\label{Conv}
\Conv(Q, P) =  \prod_{n \ge 0}
\Hom_{S_n} (Q(n), P(n))\,. 
\end{equation}
with the binary operation $\bul$ defined 
by the formula
\begin{equation}
\label{Conv-bullet}
f \bullet g (X) =  
\sum_{z\in \pi_0(\Tree_2(n))} 
\mu_{\bt_z} (f \otimes g  \,\, \D_{\bt_z} (X))\,, 
\end{equation}
$$
f, g \in  \Conv(Q, P), \qquad X \in Q(n)\,,  
$$
where $\bt_z$ is any representative of the 
isomorphism class $z\in \pi_0(\Tree_2(n))$\,.
The axioms of pseudo-operad (resp. pseudo-cooperad)
imply that the right hand side of \eqref{Conv-bullet} does not 
depend on the choice of representatives $\bt_z$\,.

It follows directly from the definition that 
the operation $\bullet$ is compatible with the 
differential on $\Conv(Q, P)$ coming from 
$Q$ and $P$\,.
Furthermore, 
we claim that 
\begin{prop}
\label{prop:ConvLie}
The bracket 
$$
[f,g] = \big( f \bullet g - (-1)^{|f|\, |g|} g \bullet f \big)
$$
satisfies the Jacobi identity. 
\end{prop}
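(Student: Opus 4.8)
The plan is to show that $\bullet$ is a graded right pre-Lie product on $\Conv(Q,P)$, i.e. that for homogeneous $f,g,h$ the associator
\[
a(f,g,h) \;:=\; (f \bullet g)\bullet h - f\bullet(g\bullet h)
\]
is symmetric in $g,h$ up to the Koszul sign, $a(f,g,h) = (-1)^{|g|\,|h|}\,a(f,h,g)$, and then to invoke the standard algebraic fact that the commutator of a graded pre-Lie product satisfies the graded Jacobi identity. This last step is a short, purely formal computation: expanding the graded Jacobiator of $[\,,\,]$ produces exactly the six reorderings of $a(f,g,h)$ with alternating signs, and the right-symmetry of $a$ makes the three ``associator pairs'' cancel termwise. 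So the real content is the pre-Lie identity for $\bullet$.

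\textbf{The pre-Lie identity via trees with three nodal vertices.} I would prove it by expanding both iterated products as sums indexed by iso-classes of $n$-labeled planar trees with \emph{three} nodal vertices. Starting from \eqref{Conv-bullet}, the value $(f\bullet g)\bullet h(X)$ is computed by first applying $\D_{\bt_z}$ for $\bt_z$ running over $\pi_0(\Tree_2(n))$, decorating the lower nodal vertex by $f\bullet g$ and the upper one by $h$, and then — since $f\bullet g$ is itself given by \eqref{Conv-bullet} — applying a further comultiplication to the argument fed into $f\bullet g$. Using the coassociativity axiom \eqref{coassoc} for $Q$ to combine the two comultiplications into a single $\D_{\bt}$ with $\bt$ a tree with three nodal vertices, and the associativity axiom \eqref{assoc} for $P$ to combine the two $\mu$'s into a single $\mu_{\bt}$, one rewrites $(f\bullet g)\bullet h(X)$ as a sum over iso-classes of such $\bt$, decorated by $f,g,h$. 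Exactly two shapes occur: either the $h$-vertex sits directly above the $g$-vertex which sits directly above the $f$-vertex (one incoming branch of each is used, i.e. $h$ is ``nested inside'' $g$), or the $g$-vertex and the $h$-vertex lie in two distinct incoming branches of the $f$-vertex, neither above the other. The first family assembles, again by \eqref{coassoc} and \eqref{assoc}, into $f\bullet(g\bullet h)(X)$; call the second family $R(f;g,h)$. Running the same analysis for $(f\bullet h)\bullet g$ yields $f\bullet(h\bullet g)$ plus a remainder over trees with $g$ and $h$ in distinct branches above $f$ — the \emph{same} set of tree shapes as for $R(f;g,h)$, with the two branches interchanged. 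Swapping the tensor factors decorated by $g$ and $h$ costs precisely the Koszul sign $(-1)^{|g|\,|h|}$, so $R(f;g,h) = (-1)^{|g|\,|h|}R(f;h,g)$, which is the asserted pre-Lie identity.

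\textbf{Main obstacle.} The difficulty is bookkeeping, not structure. Two points need care: (i) one must check that iterating \eqref{coassoc} (together with the identifications of Exercise~\ref{exer:shuffles}, extended to trees with three nodal vertices) really matches the two ways of building such a tree, with each iso-class counted exactly once and each of the two shapes correctly sorted into ``$f\bullet(g\bullet h)$ part'' versus ``$R$ part''; and (ii) the Koszul signs must be tracked consistently — the signs from applying $f\otimes g$ (resp.\ $f\otimes g\otimes h$) to a comultiplied element, the signs carried by the braiding isomorphisms $\beta_{\,\wt{\bt},i,m_i,\bt}$ appearing in \eqref{assoc} and \eqref{coassoc}, and the sign $(-1)^{|f|\,|g|}$ in the definition of $[\,,\,]$, must all combine to produce exactly $(-1)^{|g|\,|h|}$ in the comparison of the two remainders. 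Finiteness of every sum is automatic, since by (the argument underlying) Exercise~\ref{exer:shuffles} there are only finitely many iso-classes of $n$-labeled planar trees with a bounded number of nodal vertices; $S_n$-equivariance of $[f,g]$ follows from the equivariance axioms for $P$ and $Q$ already used to make $\bullet$ well defined; and compatibility of $[\,,\,]$ with the differential is inherited from that of $\bullet$, noted just before the statement.
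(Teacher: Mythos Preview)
Your proposal is correct and follows essentially the same route as the paper: both reduce Jacobi to the graded pre-Lie identity for $\bullet$, compute the associator $(f\bullet g)\bullet h - f\bullet(g\bullet h)$ as a sum over isomorphism classes of $n$-labeled trees with three nodal vertices in the ``two-branches-above-$f$'' configuration (the paper parametrizes these explicitly by $(p,q,n-p-q)$-shuffles, figure~\ref{fig:razdvo}), and then swap the $g$- and $h$-branches using the isomorphism $\la$ and the equivariance axioms \eqref{la-equiv}, \eqref{la-equiv-co} to pick up the sign $(-1)^{|g||h|}$. The paper's contribution beyond your outline is precisely the explicit sign computation you flag as the main obstacle: it tracks the Koszul signs through $\und{P}_n(\la)$ and $\und{Q}_n(\la)$ and verifies directly that the accumulated exponent $\wt{\ve}(\tau,\al,g,h)$ reduces to $|g||h|\bmod 2$.
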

\begin{proof}
We will prove the proposition by showing that 
the operation (\ref{Conv-bullet}) satisfies the axiom of the 
pre-Lie algebra
\begin{equation}
\label{pre-Lie}
(f \bullet g ) \bullet h - f \bullet (g \bullet h) 
= (-1)^{|g| |h|} (f \bullet h) \bullet g  -  (-1)^{|g| |h|} f \bullet (h \bullet g)\,, 
\end{equation}
where $f,g,h$ are homogeneous vectors in $\Conv(Q, P)$\,.

The expression $\big( (f \bullet g ) \bullet h  - f \bullet (g \bullet h) \big) \, (X) $
can be rewritten as
$$
\big( (f \bullet g ) \bullet h  - f \bullet (g \bullet h) \big) \, (X) = 
$$
$$
\sum_{z \in  \pi_0(\Tree_2(n))} ~
\sum_{z' \in  \pi_0(\Tree_2(m_1(z))) }~
\mu_{\bt_z} \circ (\mu_{\bt_{z'}} \otimes \id) \circ 
(f \otimes  g \otimes  h) \circ  (\D_{\bt_{z'}} \otimes \id) 
\circ \D_{\bt_z} (X) -
$$
$$
\sum_{z \in  \pi_0(\Tree_2(n))}~ 
\sum_{z' \in \pi_0(\Tree_2(m_2(z))) }~
\mu_{\bt_z} \circ (\id \otimes \mu_{\bt_{z'}}) \circ 
(f \otimes  g \otimes  h) \circ  (\id \otimes \D_{\bt_{z'}}) 
\circ \D_{\bt_z} (X)\,,
$$
where $m_1(z)$ (resp. $m_2(z)$) is the number of edges
terminating at the first (resp. the second) nodal vertex 
of the planar tree $\bt_z$\,.

Due to the axioms for the maps $\mu_{\bt}$ and $\D_{\bt}$, we get 
$$
\big( (f \bullet g ) \bullet h  - f \bullet (g \bullet h) \big) \, (X)
$$
$$
\sum_{p, q \ge 0}
\sum_{\tau\in \Sh_{p, q, n-p-q}} 
\mu_{\bt_{\tau}}
\big( (f \otimes  g \otimes  h) \, \D_{\bt_{\tau}} (X) \big)
$$
where $\bt_{\tau}$ is the $n$-labeled planar 
tree depicted on figure  \ref{fig:razdvo}.
\begin{figure}[htp]
\centering
\begin{tikzpicture}[scale=0.5]
\tikzstyle{w}=[circle, draw, minimum size=3, inner sep=1]
\tikzstyle{b}=[circle, draw, fill, minimum size=3, inner sep=1]
\node[b] (b1) at (0, 4) {};
\draw (0,4.6) node[anchor=center] {{\small $\tau(1)$}};
\draw (1,4) node[anchor=center] {{\small $\dots$}};
\node[b] (bp) at (2, 4) {};
\draw (2,4.6) node[anchor=center] {{\small $\tau(p)$}};
\node[b] (bp1) at (4, 4) {};
\draw (4,4.6) node[anchor=center] {{\small $\tau(p+1)$}};
\draw (5.5,4) node[anchor=center] {{\small $\dots$}};
\node[b] (bpq) at (7, 4) {};
\draw (7,4.6) node[anchor=center] {{\small $\tau(p+q)$}};
\node[w] (w2) at (1, 2) {};
\node[w] (w3) at (5.5, 2) {};
\node[b] (bpq1) at (8.5, 2) {};
\draw (8.5,2.6) node[anchor=center] {{\small $\tau(p+q+1)$}};
\draw (9.5,1.8) node[anchor=center] {{\small $\dots$}};
\node[b] (bn) at (11.5, 2) {};
\draw (11.5,2.6) node[anchor=center] {{\small $\tau(n)$}};
\node[w] (w1) at (6, 0) {};
\node[b] (r) at (6, -1) {};
\draw (w2) edge (b1);
\draw (w2) edge (bp);
\draw (w3) edge (bp1);
\draw (w3) edge (bpq);
\draw (w1) edge (w2);
\draw (w1) edge (w3);
\draw (w1) edge (bpq1);
\draw (w1) edge (bn);
\draw (r) edge (w1);
\end{tikzpicture}
\caption{\label{fig:razdvo} $\tau$ is a $(p,q, n-p-q)$-shuffle}
\end{figure} 

The set $\{\bt_{\tau} ~|~ \tau \in \Sh_{p, q, n-p-q}\}$ is stable under the 
obvious isomorphism $\la$ which switches the second nodal vertex with 
the third one. Hence, we have 
$$
\big( (f \bullet g ) \bullet h  - f \bullet (g \bullet h) \big) \, (X) =
$$
$$
\sum_{p, q \ge 1}
\sum_{\tau\in \Sh_{p, q, n-p-q}} 
\mu_{\la(\bt_{\tau})} \big(
(f \otimes  g \otimes  h) \, \D_{\la(\bt_{\tau})} (X) \big)\,.
$$

Using axioms (\ref{la-equiv}) and (\ref{la-equiv-co})
and the fact that $f$ is equivariant with respect to the action of 
the symmetric group,  we 
can rewrite the latter expression as follows
$$
\big( (f \bullet g ) \bullet h  - f \bullet (g \bullet h) \big) \, (X) =
$$
$$
\sum_{p, q \ge 1}
\sum_{\tau\in \Sh_{p, q, n-p-q}} 
\mu_{\bt_{\tau}} \circ \und{P}_n(\la) \circ 
(f \otimes  g \otimes  h) \circ  \und{Q}_n(\la) \,  \D_{\bt_{\tau}} (X)= 
$$
$$
\sum_{p, q \ge 1}
\sum_{\tau, \al}
(-1)^{| X^{\tau, \al}_2| \, | X^{\tau, \al}_3 |} \,
\mu_{\bt_{\tau}} \circ \und{P}_n(\la) \circ 
(f \otimes  g \otimes  h) \,
(\si_{12}(X^{\tau, \al}_1), X^{\tau, \al}_3,  X^{\tau, \al}_2) =
$$  
\begin{equation}
\label{associator}
\sum_{p, q \ge 1}
\sum_{\tau, \al}
(-1)^{\ve(\tau, \al, g,h)} \,
\mu_{\bt_{\tau}} \circ \und{P}_n(\la) \,
(\si_{12} f(X^{\tau, \al}_1), g(X^{\tau, \al}_3),  h(X^{\tau, \al}_2))
\end{equation}
where $\si_{12}$ is the transposition $(1,2)$, 
\begin{equation}
\label{Del-bt-tau}
\D_{\bt_{\tau}} (X) = \sum_{\al} (X^{\tau, \al}_1, X^{\tau, \al}_2,  X^{\tau, \al}_3),
\end{equation}
and
\begin{equation}
\label{ve-tau-al-g-h}
\ve(\tau, \al, g,h) = | X^{\tau, \al}_2| \, | X^{\tau, \al}_3 | + 
|h| ( | X^{\tau, \al}_1 |+ | X^{\tau, \al}_3 |) + 
|g|  | X^{\tau, \al}_1 |\,.
\end{equation}

Applying $\und{P}_n(\la)$ to 
$(\si_{12} f(X^{\tau, \al}_1), g(X^{\tau, \al}_3),  h(X^{\tau, \al}_2))$
in (\ref{associator}) we get 
$$
\big( (f \bullet g ) \bullet h  - f \bullet (g \bullet h) \big) \, (X) =
$$
$$
\sum_{p, q \ge 1}
\sum_{\tau, \al}
(-1)^{\ve(\tau, \al, g,h)} (-1)^{(|g|+| X^{\tau, \al}_3|)(|h|+| X^{\tau, \al}_2|)} \,
\mu_{\bt_{\tau}}  \,
(f(X^{\tau, \al}_1),  h(X^{\tau, \al}_2), g(X^{\tau, \al}_3)) =
$$
$$
\sum_{p, q \ge 1}
\sum_{\tau, \al}
(-1)^{\wt{\ve}(\tau, \al, g,h)}\,
\mu_{\bt_{\tau}}  \,
\big( (f\otimes h \otimes g)\, 
\D_{\bt_{\tau}} (X) \big)\,,
$$
where 
\begin{equation}
\label{wt-ve-tau-al-g-h}
\wt{\ve}(\tau, \al, g,h) = \ve(\tau, \al, g,h) +
(|g|+| X^{\tau, \al}_3|)(|h|+| X^{\tau, \al}_2|) + 
|h| |X^{\tau, \al}_1| + |g| (  |X^{\tau, \al}_1| + |X^{\tau, \al}_2|)\,.
\end{equation}
The direct computation shows that 
$$
\wt{\ve}(\tau, \al, g,h) = |g| |h|\quad  \text{mod} \quad  2
$$
and the proposition follows. 
\end{proof}

\subsection{A useful modification $\Conv^{\oplus}(Q,P)$}
\label{sec:Conv-oplus}

Let us observe that for every  dg pseudo-operad $P$  and 
every  dg pseudo-cooperad $Q$ the subcomplex
\begin{equation}
\label{Conv-oplus}
\Conv^{\oplus}(Q,P) :=
  \bigoplus_{n \ge 0}
\Hom_{S_n} (Q(n), P(n)) \subset \Conv(Q,P) 
\end{equation}
is closed with respect to the pre-Lie operation \eqref{Conv-bullet}. 
Thus $\Conv^{\oplus}(Q,P)$ is a dg Lie subalgebra of $\Conv(Q,P)$\,.

We often use this subalgebra in our notes to prove facts 
about its completion  $\Conv(Q,P)$\,.

\subsection{Example: the dg Lie algebra $\Conv(\cC_{\c}, \End_V)$} 
Let $V$ be a cochain complex, $\cC$ be a coaugmented dg cooperad, 
and $\cC_{\c}$ be the cokernel of the coaugmentation. 
We denote by $\cC(V)$ the cofree $\cC$-coalgebra cogenerated 
by $V$\,. Furthermore, we denote by  $p_V$ the natural projection
\begin{equation}
\label{proj-n}
p_V : \cC(V) \to V\,.
\end{equation}
In this subsection we interpret  $\Conv(\cC_{\c}, \End_V)$
as a subalgebra in the dg Lie algebra $\coDer(\cC(V))$ of
coderivations of $\cC(V)$\,.

Let us recall \cite[Proposition 2.14]{GJ} that the map
\begin{equation}
\label{Der-Hom}
\cD \mapsto p_V \circ \cD
\end{equation}
defines an isomorphism of cochain complexes
$$
\coDer(\cC(V)) \cong \Hom(\cC(V), V)\,.
$$

Then we observe that coderivations $\cD \in \coDer(\cC(V))$
satisfying the property 
\begin{equation}
\label{coder-coaug}
\cD \Big|_{V} =  0
\end{equation}
form a dg Lie subalgebra of $\coDer(\cC(V))$\,.
We denote this dg Lie subalgebra by $\coDer'(\cC(V))$\,.

Next, we remark that the formula
\begin{equation}
\label{Conv-Der}
p \circ \cD_{f} (\ga; v_1, v_2,  \dots, v_n) = f(\ga) (v_1, v_2, \dots, v_n)  
\end{equation}
$$
f\in \Conv(\cC_{\c}, \End_V), \qquad  \ga \in \cC_{\c}(n)\,, 
\qquad v_1, v_2, \dots, v_n \in V
$$
defines a map (of graded vector spaces)
\begin{equation}
\label{Conv-Der-map}
f \mapsto \cD_f : \Conv(\cC_{\c}, \End_V) \to \coDer'(\cC(V))\,.
\end{equation}

Finally, we claim that\footnote{Proposition \ref{prop:Conv-Der} is a version 
of \cite[Proposition 2.15]{GJ}.} 
\begin{prop}
\label{prop:Conv-Der}
For every cochain complex $V$ and for 
every  coaugmented dg cooperad $\cC$ the 
map \eqref{Conv-Der-map} is an isomorphism 
of dg Lie algebras
$$
 \Conv(\cC_{\c}, \End_V) \cong \coDer'(\cC(V))\,.
$$
\end{prop}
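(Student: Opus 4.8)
The plan is to reduce the statement to the corestriction isomorphism $\cD\mapsto p_V\circ\cD$ of \cite[Proposition 2.14]{GJ}, which identifies the dg Lie algebra $\coDer(\cC(V))$ with the Hom-complex $\Hom(\cC(V),V)$ and, under this identification, carries $\coDer'(\cC(V))$ onto the subspace of those $\phi\colon\cC(V)\to V$ that vanish on $V\subset\cC(V)$.

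First I would check that $f\mapsto\cD_f$ is an isomorphism of graded vector spaces (and a chain map). Injectivity is immediate from \eqref{Conv-Der}: if $\cD_f=0$ then $f(\ga)(v_1,\dots,v_n)=0$ for all $\ga\in\cC_{\c}(n)$ and all $v_i\in V$, hence $f=0$. For surjectivity, let $\cD\in\coDer'(\cC(V))$; then $p_V\circ\cD$ vanishes on $V$, so it is the same datum as a family of $S_n$-equivariant maps out of $\cC_{\c}(n)\otimes V^{\otimes n}$, and --- since $\bbK$ has characteristic zero, so that $S_n$-coinvariants and $S_n$-invariants are naturally identified --- the same datum as a family $f_n\in\Hom_{S_n}(\cC_{\c}(n),\End_V(n))$. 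Setting $f=(f_n)_{n\ge0}$ gives $p_V\circ\cD_f=p_V\circ\cD$, whence $\cD=\cD_f$ by \cite[Proposition 2.14]{GJ}. That $f\mapsto\cD_f$ is a morphism of complexes is routine, since on both sides the differential is the one induced by $\pa_{\cC}$ and by the differential on $V$ (equivalently on $\End_V$), and the corestriction isomorphism is compatible with differentials.

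The heart of the argument is compatibility with the brackets, i.e. $\cD_{[f,g]}=[\cD_f,\cD_g]$. Although $\cD_f\circ\cD_g$ is not itself a coderivation, the graded commutator $[\cD_f,\cD_g]=\cD_f\cD_g-(-1)^{|f||g|}\cD_g\cD_f$ is a coderivation in $\coDer'(\cC(V))$; so by \cite[Proposition 2.14]{GJ} it suffices to prove the equality of corestrictions $p_V\circ[\cD_f,\cD_g]=p_V\circ\cD_{[f,g]}$. The right-hand side is, by \eqref{Conv-Der}, the map sending $(\ga;v_1,\dots,v_n)$ to $[f,g](\ga)(v_1,\dots,v_n)$. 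For the left-hand side I would compute $p_V\circ\cD_f\circ\cD_g$ explicitly, using the standard description of the coderivation $\cD_g$ of the cofree $\cC$-coalgebra in terms of the cooperad cocompositions $\D_{\bt_z}$ indexed by the two-nodal-vertex trees $\bt_z$, $z\in\pi_0(\Tree_2(n))$: evaluated on a monomial $(\ga;v_1,\dots,v_n)$, the element $\cD_g(\ga;v_1,\dots,v_n)$ is the signed sum, over such cocompositions $\D_{\bt_z}(\ga)=\sum\ga^{(1)}\otimes\ga^{(2)}$, of the terms obtained by feeding the appropriate $v_i$'s into $g(\ga^{(2)})$ and inserting the resulting vector of $V$ into the matching slot of $\ga^{(1)}$, together with the $V$-valued term $g(\ga)(v_1,\dots,v_n)$ coming from the counit-type cocomposition. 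Applying $p_V\circ\cD_f$, the $V$-valued term is annihilated because $\cD_f\in\coDer'(\cC(V))$, and on the remaining terms $p_V\circ\cD_f$ reads off $f$ on the top via \eqref{Conv-Der}; comparing with the defining formula \eqref{Conv-bullet} for $\bullet$ shows that $p_V\circ\cD_f\circ\cD_g$ is precisely the map $(\ga;v_1,\dots,v_n)\mapsto(f\bullet g)(\ga)(v_1,\dots,v_n)$. Subtracting the same identity with $f$ and $g$ interchanged yields $p_V\circ[\cD_f,\cD_g]=(\ga;v_1,\dots,v_n)\mapsto[f,g](\ga)(v_1,\dots,v_n)=p_V\circ\cD_{[f,g]}$, and the proposition follows. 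This computation is essentially \cite[Proposition 2.15]{GJ}; the main obstacle I anticipate is the sign bookkeeping needed to match the coderivation formula for $\cD_f,\cD_g$ with the $\mu_{\bt_z}$/$\D_{\bt_z}$ description of the convolution product $\bullet$.
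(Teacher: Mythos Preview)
Your proposal is correct and follows exactly the approach the paper intends: the paper does not give a proof at all, leaving it as Exercise~\ref{exer:Conv-Der}, but the footnote explicitly identifies the proposition as ``a version of \cite[Proposition 2.15]{GJ},'' which is precisely the result you reduce to. Your argument via the corestriction isomorphism $\cD\mapsto p_V\circ\cD$ of \cite[Proposition 2.14]{GJ}, followed by the computation of $p_V\circ\cD_f\circ\cD_g$ in terms of the cocompositions $\D_{\bt_z}$ to match \eqref{Conv-bullet}, is the standard route and is exactly what the exercise is asking for.
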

A proof of this proposition is straightforward so we leave 
it as an exercise.
\begin{exer}
\label{exer:Conv-Der}
Prove Proposition \ref{prop:Conv-Der}.
\end{exer}

\subsection{What if $Q(n)$ is finite dimensional for all $n$?}
Let us assume that the pseudo-cooperad $Q$
satisfies the property 
\begin{pty}
\label{P:fin-dim}
For each $n$ the graded vector space $Q(n)$ is 
finite dimensional. 
\end{pty}

Due to this property we have 
\begin{equation}
\label{Conv-fin}
\Conv(Q, P)  \cong \prod_{n \ge 0} 
\big( P(n) \otimes Q^*(n) \big)^{S_n}\,.
\end{equation}
where $Q^*(n)$ denotes the linear dual of 
the vector space $Q(n)$\,.

The collection $Q^* := \{Q^*(n)\}_{n \ge 0}$ is naturally 
a pseudo-operad and we can express the pre-Lie structure  
\eqref{Conv-bullet} in terms of elementary insertions on 
$P$ and $Q^*$. Namely, given two vectors 
$$
X = \sum_{n \ge 0} v_n \otimes w_n, \qquad 
X' = \sum_{n \ge 0} v'_n \otimes w'_n
$$
in 
$$
\prod_{n \ge 0} \big( P(n) \otimes Q^*(n) \big)^{S_n}\,,
$$
we have 
\begin{equation}
\label{bullet-fin}
X \bullet X' = \sum_{n \ge 1, m \ge 0} (-1)^{|v'_m| |w_n|}
\sum_{\si \in \Sh_{m, n-1}}
\si(v_n \circ_1 v'_m) \otimes \si(w_n \circ_1 w'_m)\,.
\end{equation}

\subsection{The functors $\Conv(Q, ?)$ and $\Conv(?, P)$ preserve quasi-isomorphisms}
\label{sec:Conv-exact} 

It often happens that a pseudo-cooperad 
$Q$ is equipped with a cocomplete ascending filtration
\begin{equation}
\label{filtr-Q}
\bfzero =  \cF^0 Q \subset \cF^1 Q \subset \cF^2 Q \subset  \dots  
\end{equation}
$$
{\rm colim}_m ~ \cF^m Q(n) = Q(n) \qquad \forall~ n
$$
which is compatible with comultiplications $\D_{\bt}$
in the following sense: 
\begin{equation}
\label{filtr-compat}
\D_{\bt} \big( \cF^m Q(n) \big) \subset
\bigoplus_{q_1 + q_2 + \dots + q_k = m}
 \cF^{q_1} Q(r_1) \otimes \cF^{q_2}  Q(r_2) 
 \otimes \dots \otimes  \cF^{q_k} Q(r_k)\,, 
\end{equation}
where $\bt$ is an $n$-labeled planar tree with $k$ nodal 
vertices and $r_i$ is the number of edges terminating at 
the $i$-th nodal vertex of $\bt$\,.

\begin{defi}
\label{dfn:cofiltr}
If a pseudo-cooperad $Q$ is equipped with such 
a filtration then we say that $Q$ is \und{cofiltered}. 
\end{defi}

\begin{exer}
\label{ex:arity-filtr}
Let us recall that a coaugmented (dg) cooperad $\cC$ is 
called \und{reduced} if 
$$
\cC(0) = \bfzero\,, \qquad  \textrm{and} \qquad \cC(1) = \bbK\,.
$$
For every reduced coaugmented cooperad $\cC$, 
the cokernel of the coaugmentation $\cC_{\c}$ carries the 
ascending filtration ``by arity'': 
\begin{equation}
\label{filtr-arity}
\cF^m \cC_{\c}(n) = \begin{cases}
 \cC_{\c}(n) \qquad {\rm if} \quad n \le m + 1   \\
 \bfzero \qquad \qquad {\rm otherwise}\,.
\end{cases}
\end{equation}
Show that this filtration is cocomplete 
and compatible with comultiplications $\D_{\bt}$
in the sense of \eqref{filtr-compat}.
\end{exer}

For any dg operad $P$ and for any cofiltered 
dg pseudo-cooperad $Q$ the dg Lie algebra 
$\Conv(Q, P)$ is equipped with the descending
filtration  
\begin{equation}
\label{filtr-Conv}
\Conv(Q, P) = \cF_1 \Conv(Q, P) \supset \cF_2 \Conv(Q, P) \supset 
\cF_3 \Conv(Q, P) \supset  \dots\,,  
\end{equation}
$$
\cF_m \Conv(Q, P) = 
\{f \in   \Conv(Q, P) ~|~ f(X) =0\quad \forall~ X \in \cF^{m-1}(Q)\}\,.  
$$

Inclusion  \eqref{filtr-compat} implies that the filtration 
on $\Conv(Q, P)$ is compatible with the Lie bracket.  
Furthermore, since the filtration on $Q$ is cocomplete,   
the filtration \eqref{filtr-Conv} is complete
\begin{equation}
\label{Conv-complete}
\lim_m ~ \Conv(Q, P) \big /
\cF_m \Conv(Q, P) = \Conv(Q, P)\,. 
\end{equation}

Any morphism of dg pseudo-operads
\begin{equation}
\label{f}
f : P \to P'\,. 
\end{equation}
induces the obvious map of dg Lie algebras
\begin{equation}
\label{f-star}
f_* : \Conv(Q, P) \to \Conv(Q, P')\,. 
\end{equation}

We claim that 
\begin{thm}
\label{thm:Conv}
If the map (\ref{f}) is a quasi-isomorphism
then so is the map (\ref{f-star}). In addition, if
$Q$ is cofiltered, then the restriction of $f_*$ onto 
$\cF_m \Conv(Q, P)$
$$
f_* ~\Big|_{\cF_m \Conv(Q, P)} ~:~
\cF_m \Conv(Q, P) \to \cF_m\Conv(Q, P')
$$
is a quasi-isomorphism for all $m$\,.
\end{thm}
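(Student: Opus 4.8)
The plan is to observe that, since the differential on $\Conv(Q,P)$ is the plain Hom-differential induced by those on $Q$ and $P$ (there is no twisting and no infinite summation, so no convergence issue arises), the functor $\Conv(Q,-)$ is assembled from functors on $\Ch_{\bbK}$ that each preserve quasi-isomorphisms, and the statement then follows formally. No spectral sequence argument is needed; in particular the cofiltered hypothesis plays no role in the first assertion.

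First I would isolate three elementary facts about $\Ch_{\bbK}$. (a) For an arbitrary cochain complex $W$, the functor $\Hom(W,-)$ on $\Ch_{\bbK}$ preserves quasi-isomorphisms: it is additive and exact (checked degreewise, since each $\Hom(W^{j},-)$ is exact on $\bbK$-vector spaces and arbitrary products of vector spaces are exact), and it commutes with the shift and with the mapping cone, so $\Hom(W,\Cone(g))\cong\Cone(\Hom(W,g))$ for every chain map $g$; when $g$ is a quasi-isomorphism, $\Cone(g)$ is acyclic, hence contractible over the field $\bbK$, so $\Cone(\Hom(W,g))$ is contractible and in particular acyclic, i.e.\ $\Hom(W,g)$ is a quasi-isomorphism. (b) For each $n$ the $S_n$-invariants functor $W\mapsto W^{S_n}$ is exact, since in characteristic zero the averaging operator $e_n=\frac{1}{n!}\sum_{\si\in S_n}\si$ is an idempotent chain map realizing $W^{S_n}$ as a direct summand of $W$; as a retract of the forgetful functor it preserves quasi-isomorphisms. (c) An arbitrary product of cochain complexes of $\bbK$-vector spaces commutes with cohomology, so a levelwise quasi-isomorphism induces a quasi-isomorphism on products.

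Now for each fixed $n$ the functor $\Hom_{S_n}(Q(n),-)=\big(\Hom(Q(n),-)\big)^{S_n}$ preserves quasi-isomorphisms by (a) and (b); hence applying the quasi-isomorphism $f\colon P\to P'$ arity by arity yields quasi-isomorphisms $\Hom_{S_n}(Q(n),P(n))\to\Hom_{S_n}(Q(n),P'(n))$, and then (c) shows that $f_*\colon\Conv(Q,P)\to\Conv(Q,P')$ is a quasi-isomorphism. This settles the first claim.

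For the refinement, observe that $\cF^{m-1}Q$ is a subcollection by dg subspaces $\cF^{m-1}Q(n)\subset Q(n)$, so a class $g\in\Conv(Q,P)$ vanishes on $\cF^{m-1}Q$ exactly when each component $g_n$ factors — uniquely, $S_n$-equivariantly, and as a chain map — through the quotient complex $Q(n)/\cF^{m-1}Q(n)$; thus $\cF_m\Conv(Q,P)\cong\prod_{n\ge 0}\Hom_{S_n}\big(Q(n)/\cF^{m-1}Q(n),P(n)\big)$, naturally in $P$. Since $f_*$ is post-composition with $f$ it automatically preserves this filtration, and the very same three facts — now with $W=Q(n)/\cF^{m-1}Q(n)$ in (a) — show that $f_*\big|_{\cF_m\Conv(Q,P)}$ is a quasi-isomorphism for every $m$. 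The one step deserving a little care is (a): that $\Hom$ out of a possibly infinite-dimensional complex, together with arbitrary products, preserves quasi-isomorphisms — this rests on exactness of arbitrary products of $\bbK$-vector spaces. Once these exactness facts are in place the theorem is immediate, and in particular there is no need to invoke the filtered-complex lemma of Appendix \ref{app:q-iso}.
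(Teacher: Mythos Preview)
Your proof is correct and, at its core, uses the same fact as the paper: over a field every quasi-isomorphism of complexes is a chain homotopy equivalence, and the functors $\Hom(W,-)$, $(-)^{S_n}$, and $\prod$ all preserve homotopy equivalences. The paper makes this explicit by choosing homotopy inverses $g_n\colon P'(n)\to P(n)$ and homotopies $\chi_n,\chi'_n$ to the identity, then averaging over $S_n$ to force equivariance; the resulting $\wt g_*$ is a (not necessarily multiplicative) homotopy inverse to $f_*$, and the very same homotopy operators witness the filtered version.

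Your route differs in packaging rather than in substance, but the packaging is cleaner in two places. First, instead of averaging the maps $g_n$ and the homotopies by hand, you observe that $(-)^{S_n}$ is a retract of the identity (via the idempotent $\tfrac{1}{n!}\sum_\si\si$), hence preserves quasi-isomorphisms automatically. Second, your identification $\cF_m\Conv(Q,P)\cong\prod_{n}\Hom_{S_n}\big(Q(n)/\cF^{m-1}Q(n),\,P(n)\big)$ reduces the filtered statement to a special case of the unfiltered one, whereas the paper re-runs the explicit homotopy argument on each $\cF_m$. What the paper's approach buys is an actual equivariant homotopy inverse $\wt g_*$ sitting in $\Conv(Q,P')\to\Conv(Q,P)$, which can be convenient if one later wants to manipulate it; your argument is nonconstructive but shorter. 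You are also right that neither the cofiltered hypothesis nor Lemma~\ref{lem:q-iso} is needed for either conclusion.
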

\begin{proof}
According to \cite[Section 1.4]{Weibel}, every cochain complex 
of $\bbK$-vector spaces is chain homotopy equivalent to 
its cohomology.

Therefore, there exist collections of maps
\begin{equation}
\label{g-n}
g_n : P'(n) \to P(n)
\end{equation}
\begin{equation}
\label{chi-n}
\chi_n : P(n) \to P(n)
\end{equation}
\begin{equation}
\label{chi-prime-n}
\chi'_n : P'(n) \to P'(n)
\end{equation}
such that 
\begin{equation}
\label{f-g-homot}
f_n  \circ g_n - \id_{P'(n)} = \pa \chi'_n + \chi'_n \pa\,,  
\end{equation}
and
\begin{equation}
\label{g-f-homot}
g_n \circ f_n - \id_{P(n)} =  \pa \chi_n + \chi_n \pa\,.  
\end{equation}
In other words, $f_n  \circ g_n $ (resp. $g_n \circ f_n$) is 
homotopic to $ \id_{P'(n)}$ (resp.  $ \id_{P(n)}$).

In general, the set of maps $\{g_n\}_{n\ge 0}$ gives 
us neither a map of operads nor a map of 
the underlying collections. Similarly,
the maps \eqref{chi-n} and \eqref{chi-prime-n} may not be 
$S_n$-equivariant. 

For this reason we switch  
 from the set $\{g_n\}_{n \ge 0}$ to 
the set
\begin{equation}
\label{wt-g}
\wt{g}_n = \frac{1}{(n!)^2}\sum_{\si, \tau \in S_n}
\si \circ g_n \circ \tau\,.  
\end{equation}

It  is easy to see these new maps  $\wt{g}_n$
give us a morphism of the underlying 
collections. Moreover, equations \eqref{f-g-homot}
and  \eqref{g-f-homot} imply the identities
\begin{equation}
\label{f-wtg-homot}
f_n  \circ \wt{g}_n - \id_{P'(n)} = \pa \wt{\chi'}_n + \wt{\chi'}_n \pa\,,  
\end{equation}
and
\begin{equation}
\label{wtg-f-homot}
\wt{g}_n \circ f_n - \id_{P(n)} =  \pa \wt{\chi}_n + \wt{\chi}_n \pa
\end{equation}
with $S_n$-equivariant homotopy operators
\begin{equation}
\label{wtchi-n}
\wt{\chi}_n  =  \frac{1}{(n!)^2}\sum_{\si, \tau \in S_n}
\si \circ \chi_n \circ \tau\,,
\end{equation}
\begin{equation}
\label{wtchi-prime-n}
\wt{\chi'}_n = \frac{1}{(n!)^2}\sum_{\si, \tau \in S_n}
\si \circ \chi'_n \circ \tau\,.   
\end{equation}

Let us now consider the map
\begin{equation}
\label{wt-g-star}
\wt{g}_* :   \Conv(Q, P') \to  \Conv(Q, P) 
\end{equation}

In general $\wt{g}_*$ is not compatible with the Lie 
brackets. Regardless, using equations  \eqref{f-wtg-homot}, 
\eqref{wtg-f-homot}  and $S_n$-equivariance of the homotopy 
operators \eqref{wtchi-n}, \eqref{wtchi-prime-n}, it is not hard 
to see that the compositions  $f_* \circ \wt{g}_*$ and 
$\wt{g}_* \circ f_* $ are homotopic to $\id_{  \Conv(Q, P')}$
and $\id_{\Conv(Q, P)}$, respectively.  

Thus $f_*$ is indeed a quasi-isomorphism.     

To prove the second statement we denote by $f^m_*$ and $\wt{g}^m_* $ the 
restriction of $f_*$ and $\wt{g}_*$ onto 
$$
\cF_m \Conv(Q, P) \qquad \textrm{and} \qquad 
\cF_m \Conv(Q, P') 
$$
respectively. 
 
Using the same homotopy 
operators \eqref{wtchi-n}, \eqref{wtchi-prime-n}, it is not hard 
to see that the compositions  $f^m_* \circ \wt{g}^m_*$ and 
$\wt{g}^m_* \circ f^m_* $ are homotopic to $\id_{\cF_m \Conv(Q, P')}$
and $\id_{\cF_m \Conv(Q, P)}$, respectively.

Theorem \ref{thm:Conv} is proved. 
\end{proof}

\begin{exer}
\label{ex:Conv}
Using the ideas of the above proof, show 
that the (contravariant) functor $\Conv(? , P)$
also preserves quasi-isomorphisms.
\end{exer}

\section{To invert, or not to invert: that is the question}
\label{sec:to-invert}
Let $\cC$ be a coaugmented dg cooperad and $\cC_{\c}$
be the cokernel of the coaugmentation. 
This section is devoted to the lifting property for 
maps from the dg operad $\Cobar(\cC)$\,.
The material contained in this section is 
an adaptation of constructions from \cite{MV-nado}
to the setting of dg operads. 

First, we observe that, since $\Cobar(\cC)$ is 
freely generated by $\bs\,\cC_{\c}$, 
any map of dg operads
\begin{equation}
\label{CobarC-O}
F : \Cobar(\cC) \to \cO
\end{equation}
is uniquely determined by its restriction to 
generators: 
$$
F \Big|_{\bs\, \cC_{\c}} ~:~ \bs\, \cC_{\c} \to \cO\,.
$$

Hence, composing the latter map with the suspension 
operator $\bs$, we get a degree one 
element 
\begin{equation}
\label{al-F}
\al_F  \in \Conv(\cC_{\c}, \cO)
\end{equation}
in the dg Lie algebra $\Conv(\cC_{\c}, \cO)$\,.

\begin{exer}
\label{exer:cobar-conv}
Prove that the compatibility of $F$ with 
the differentials on $\Cobar(\cC)$ and $\cO$
is equivalent to the Maurer-Cartan equation for  
the element $\al_F$ \eqref{al-F} 
in $\Conv(\cC_{\c},\cO)$\,.
\end{exer}

Thus we arrive at the following proposition
\begin{prop}
\label{prop:from-Cobar}
For an arbitrary coaugmented dg cooperad $\cC$
and for an arbitrary dg operad $\cO$, the correspondence
$$
F \mapsto \al_F
$$ 
is a bijection between the set of maps (of dg operads)
\eqref{CobarC-O} and the set of Maurer-Cartan elements in 
 $\Conv(\cC_{\c},\cO)$\,.    $~~~\Box$
\end{prop}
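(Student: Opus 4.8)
The plan is to split the definition of a map of dg operads $F\colon\Cobar(\cC)\to\cO$ into its two independent halves---being a morphism of \emph{graded} operads, and being compatible with the differentials---and handle each half separately. For the first half I would use that, as an operad in $\grVect_\bbK$, $\Cobar(\cC)=\Op(\bs\,\cC_\c)$ is the free operad on the collection $\bs\,\cC_\c$ (see \eqref{Cobar-cC}). By the universal property of $\Op$---it is left adjoint to the forgetful functor from operads to collections, just as in the algebra case of Exercise \ref{exer:free-alg}, and concretely because every generator of $\Op(\bs\,\cC_\c)$ has the form $(\bq_n,\bs\ga)$ with $\ga\in\cC_\c(n)$ (Remark \ref{rem:gener-of-free})---restriction to generators is a bijection between morphisms of graded operads $\Op(\bs\,\cC_\c)\to\cO$ and morphisms of collections $\bs\,\cC_\c\to\cO$, i.e.\ families of $S_n$-equivariant degree-zero maps $\bs\,\cC_\c(n)\to\cO(n)$. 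Precomposing such a family with the suspension $\bs\colon\cC_\c\to\bs\,\cC_\c$, which is a degree $+1$ operation, is again a bijection, now onto the set of degree-one elements of $\Conv(\cC_\c,\cO)=\prod_{n\ge 0}\Hom_{S_n}(\cC_\c(n),\cO(n))$. The composite of these two bijections is precisely the assignment $F\mapsto\al_F$ of \eqref{al-F}; thus, before imposing any differential condition, $F\mapsto\al_F$ is already a bijection between maps of graded operads $\Cobar(\cC)\to\cO$ and degree-one elements of $\Conv(\cC_\c,\cO)$.

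It then remains only to check that, under this bijection, the $F$ that \emph{in addition} intertwine $\pa^{\Cobar}$ with the differential of $\cO$ correspond exactly to the Maurer-Cartan elements. This is the content of Exercise \ref{exer:cobar-conv}, which I would invoke: $F$ is a chain map if and only if $\al_F$ satisfies the Maurer-Cartan equation in the dg Lie algebra $\Conv(\cC_\c,\cO)$. Granting it, the bijection restricts to the one asserted, and the proposition follows. For later use I would also record the inverse explicitly: given a Maurer-Cartan element $\al$, the corresponding $F$ is the unique operad morphism with $F(\bq_n,\bs\ga)=\al(\ga)$ for all $\ga\in\cC_\c(n)$, and Exercise \ref{exer:cobar-conv} certifies that this $F$ commutes with the differentials.

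The real work---and the step I expect to be the main obstacle---is hidden inside that cited exercise: one must match the two summands of $\pa^{\Cobar}=\pa'+\pa''$ with the two terms of the Maurer-Cartan equation. Concretely, $\pa'$ from \eqref{pa-pr}, together with the internal differential of $\cO$, should reproduce the linear term $\pa\al_F$, whereas $\pa''$ from \eqref{pa-prpr}, which is assembled from the elementary co-insertions $\D_{\bt_z}$ ranging over $z\in\pi_0(\Tree_2(n))$, should reproduce $\tfrac12[\al_F,\al_F]$---this being legitimate precisely because the pre-Lie product \eqref{Conv-bullet} on $\Conv(\cC_\c,\cO)$ is built from those same two-nodal-vertex trees. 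Verifying this compatibility requires care with three overlapping sign sources: the suspension shifts built into $\bs\,\cC_\c$, the explicit signs $-\bs\,\pa_\cC\,\bs^{-1}$ and $-(\bs\otimes\bs)(\,\cdots\,)$ appearing in $\pa'$ and $\pa''$, and the Koszul signs in the bracket on $\Conv$. Everything outside this sign bookkeeping is formal and flows from the universal property of the free operad.
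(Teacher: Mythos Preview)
Your proposal is correct and follows essentially the same approach as the paper: the paper also derives the proposition directly from the free-operad universal property (giving the bijection with degree-one elements) together with Exercise~\ref{exer:cobar-conv} (giving the equivalence of the differential compatibility with the Maurer--Cartan equation), and in fact presents no further argument beyond that. Your additional remarks about where the sign bookkeeping sits inside the exercise are accurate and match the solution the paper provides in Appendix~\ref{app:solutions}.
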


Combining Proposition \ref{prop:Conv-Der} with \ref{prop:from-Cobar}
we deduce the following Corollary
\begin{cor}
\label{cor:Cobar-alg}
For every coaugmented dg cooperad $\cC$ and 
for every cochain complex $V$ the set of $\Cobar(\cC)$-algebra 
structures on $V$ is in bijection with the set of Maurer-Cartan elements 
in the dg Lie algebra
$$
\coDer'(\cC(V)) := \Big\{ \cD \in \coDer(\cC(V)) ~\Big|~  \cD \Big|_{V} = 0  \Big\} \,. ~~~~~~~~ \Box
$$
\end{cor}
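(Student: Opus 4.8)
The plan is to chain together the two results that immediately precede the corollary, so the proof will be essentially a one-line deduction together with one routine observation. First I would unwind the definition: a $\Cobar(\cC)$-algebra structure on $V$ is, by definition, a morphism of dg operads $\Cobar(\cC) \to \End_V$ (recall that an algebra over an operad $P$ is an operad map $P \to \End_V$). Since $V$ is a cochain complex, $\End_V$ is a dg operad, so Proposition \ref{prop:from-Cobar} applies with $\cO = \End_V$ and yields a bijection $F \mapsto \al_F$ between the set of such morphisms and the set of Maurer-Cartan elements of the convolution dg Lie algebra $\Conv(\cC_{\c}, \End_V)$.

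Second, I would transport Maurer-Cartan elements along the isomorphism of dg Lie algebras $f \mapsto \cD_f \colon \Conv(\cC_{\c}, \End_V) \erarrow \coDer'(\cC(V))$ provided by Proposition \ref{prop:Conv-Der}. The only thing to note here is the elementary fact that an isomorphism of dg Lie algebras carries the Maurer-Cartan locus of the source bijectively onto that of the target: it commutes with the differential and with the bracket, hence with the operator $x \mapsto \pa x + \frac{1}{2}[x,x]$ whose zero set cuts out the Maurer-Cartan elements. Composing this bijection with the one from the previous paragraph gives the asserted bijection between $\Cobar(\cC)$-algebra structures on $V$ and Maurer-Cartan elements of $\coDer'(\cC(V))$. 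If desired one can record the composite explicitly: the coderivation associated to $F$ is $\cD_{\al_F}$, determined on $\cC_{\c}(n) \otimes V^{\otimes n}$ by the restriction of $F$ to the generators $\bs\,\cC_{\c}(n)$ via formula \eqref{Conv-Der}.

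There is no real obstacle: all the work has already been done in Propositions \ref{prop:from-Cobar} and \ref{prop:Conv-Der} (the latter through Exercise \ref{exer:Conv-Der}). The write-up therefore reduces to spelling out the definition of an algebra over an operad as a map into $\End_V$, and to the observation above that dg Lie isomorphisms preserve Maurer-Cartan sets, so that the two bijections genuinely compose.
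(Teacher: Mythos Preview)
Your proposal is correct and follows exactly the approach the paper takes: the paper simply says ``Combining Proposition \ref{prop:Conv-Der} with \ref{prop:from-Cobar} we deduce the following Corollary'' and marks the statement with a $\Box$, giving no further argument. Your write-up just makes explicit the two steps implicit in that sentence.
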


\subsection{Homotopies of maps from $\Cobar(\cC)$}

Let $\Omega^{\bul}(\bbK) = \bbK[t] \oplus \bbK[t] dt$ be the polynomial
de Rham algebra on the affine line with $d t$ sitting in degree 1.  
We denote by
\[
\cO^{I} := \cO \otimes \Omega^{\bul}(\bbK) 
\]
the dg operad with underlying collection 
\[
\{ \cO(n) \otimes \Omega^{\bul}(\bbK)\}_{n \geq 0}.
\]
We also denote by $p_0$, $p_1$ the obvious 
maps of dg operads
\begin{equation}
\label{p0-p1}
\begin{array}{c}
p_0 : \cO^I \to \cO, \quad p_{0}(X) = X
\Big|_{t=0,~dt=0}\,,\\[0.3cm]
p_1 : \cO^I \to \cO, \quad p_{1}(X) = X
\Big|_{t=1,~dt=0}\,.
\end{array}
\end{equation}

For our purposes we will use the following 
``pedestrian'' definition of homotopy between 
maps $F, \wt{F} : \Cobar(\cC) \to \cO$\,.
\begin{defi}
\label{dfn:from-Cobar}
We say that maps of dg operads 
$$
F,  \wt{F} : \Cobar(\cC) \to \cO
$$
are \und{homotopic} if there exists a map of dg operads
$$
H :   \Cobar(\cC) \to \cO^I
$$
such that  
$$
F = p_0 \circ H  \qquad \textrm{and} \qquad
\wt{F} = p_1 \circ H\,.
$$
\end{defi}
\begin{rem}
\label{rem:dfn-homot}
Definition \ref{dfn:from-Cobar} leaves out many questions and 
some of these questions may be answered by constructing 
a closed model structure on a subcategory of dg operads 
satisfying certain technical conditions. Unfortunately, many 
dg operads which show up in applications do not satisfy required 
technical conditions. We hope that all such issues will be 
resolved in yet another ``infinity'' treatise \cite{HA} of J. Lurie.
\end{rem}

We now state a theorem which characterizes 
homotopic maps from $\Cobar(\cC)$ in terms 
of the corresponding Maurer-Cartan elements in $\Conv(\cC_{\c}, \cO)$\,.
\begin{thm}
\label{thm:from-Cobar}
Let $\cO$ be an arbitrary dg operad and $\cC$ be a 
coaugmented dg cooperad for which the pseudo-operad $\cC_{\c}$ 
is cofiltered (see Definition \ref{dfn:cofiltr}) and the vector 
space 
\begin{equation}
\label{finite}
\bigoplus_{n \ge 0} \cF^m \cC_{\c}(n)
\end{equation}
is finite dimensional for all $m$\,.
Then two maps of dg operads 
$$
F,  \wt{F} : \Cobar(\cC) \to \cO
$$
are homotopic if and only if the corresponding Maurer-Cartan elements 
$$
\al_{F}, \al_{\wt{F}}  \in \Conv(\cC_{\c}, \cO)
$$
are isomorphic\footnote{We view Maurer-Cartan elements as objects of the Deligne groupoid.
See Appendix \ref{app:GM} for details.}. 
\end{thm}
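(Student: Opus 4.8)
The plan is to translate both directions of the claimed equivalence into statements about Maurer--Cartan elements in the filtered dg Lie algebra $L := \Conv(\cC_{\c}, \cO)$, using Proposition \ref{prop:from-Cobar} systematically. First I would record the observation that under the finiteness hypothesis on $\bigoplus_{n\ge 0}\cF^m\cC_{\c}(n)$, the complex $\Omega^{\bul}(\bbK)$ behaves well with respect to $\Conv$: one has a natural isomorphism of dg Lie algebras $\Conv(\cC_{\c}, \cO^I) \cong \Conv(\cC_{\c},\cO)\otimes\Omega^{\bul}(\bbK) = L^I$ (the completed tensor product, though by the cofiltered structure this is controlled level by level and the finiteness makes each graded piece an honest finite tensor product, so there is no subtlety). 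Moreover $p_0, p_1 : \cO^I \to \cO$ induce the evaluation maps $L^I \to L$ at $t=0,dt=0$ and $t=1,dt=0$. Under this identification, Proposition \ref{prop:from-Cobar} applied to $\cO^I$ says that maps of dg operads $H : \Cobar(\cC)\to\cO^I$ correspond bijectively to Maurer--Cartan elements of $L^I = L\otimes\Omega^{\bul}(\bbK)$, and the conditions $F = p_0\circ H$, $\wt F = p_1\circ H$ become $\al_F = \mathrm{ev}_0(\al_H)$, $\al_{\wt F} = \mathrm{ev}_1(\al_H)$.

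Having set this up, the statement reduces to a purely Lie-algebraic fact: two Maurer--Cartan elements $\al_0,\al_1$ in a complete filtered dg Lie algebra $L$ (here complete by \eqref{Conv-complete}, using that $\cC_{\c}$ is cofiltered) are isomorphic in the Deligne groupoid if and only if there is a Maurer--Cartan element $\tilde\al \in L\otimes\Omega^{\bul}(\bbK)$ with $\mathrm{ev}_0(\tilde\al) = \al_0$ and $\mathrm{ev}_1(\tilde\al) = \al_1$. This is precisely a version of the Goldman--Millson theorem, a proof of which the paper promises in Appendix \ref{app:GM}; so I would invoke that (or reprove it in the present generality). The key point for the ``$\Leftarrow$'' direction is the standard homotopy-invariance of the functor sending a complete filtered dg Lie algebra to its Deligne groupoid, evaluated on the path object $L\otimes\Omega^{\bul}(\bbK)$ — the two evaluation maps $\mathrm{ev}_0,\mathrm{ev}_1$ induce the same map on $\pi_0$ of Deligne groupoids, hence $\al_0$ and $\al_1$ lie in the same gauge-equivalence class. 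For the ``$\Rightarrow$'' direction, given a gauge equivalence $\al_1 = e^{\xi}\cdot\al_0$ with $\xi\in L^0$, one writes down an explicit path: solve the ODE $\frac{d}{dt}\al(t) = d\xi + [\al(t),\xi]$ (or equivalently take $\al(t)$ the gauge transform of $\al_0$ by $t\xi$) and package $\al(t) + (\text{correction})\,dt$ into an element of $L\otimes\Omega^{\bul}(\bbK)$; the Maurer--Cartan equation there is checked degree by degree, converging because $L$ is complete and $\xi$ has positive filtration degree.

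The main obstacle I anticipate is not the Lie-algebraic core — that is classical — but rather the bookkeeping needed to make $\Conv(\cC_{\c},\cO^I) \cong \Conv(\cC_{\c},\cO)\otimes\Omega^{\bul}(\bbK)$ genuinely precise, since $\Conv$ is defined as a \emph{product} over arities $\prod_{n\ge 0}\Hom_{S_n}(\cC_{\c}(n), -)$, and $\Omega^{\bul}(\bbK)$ is infinite-dimensional, so one must be careful that the tensor product commutes with the relevant Hom's and the infinite product. This is exactly where the finiteness hypothesis \eqref{finite} is used: it guarantees that on each filtration layer only finitely many arities contribute in a bounded way, so that polynomials in $t$ (and $t\,dt$) with coefficients in $L$ are the same as elements of $\Conv$ valued in $\cO^I$. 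I would isolate this as a lemma, prove it using the cofiltered structure and the finite-dimensionality, and then the rest of the argument is a clean appeal to the Goldman--Millson machinery of Appendix \ref{app:GM} together with Proposition \ref{prop:from-Cobar}.
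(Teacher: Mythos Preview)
Your proposal is correct and follows essentially the same route as the paper. The paper makes your ``completed tensor product'' precise by introducing the subalgebra $\Conv(\cC_{\c},\cO)\{t\}\subset L[[t]]$ of power series whose coefficients eventually lie in arbitrarily high filtration levels, and uses the finiteness hypothesis exactly as you anticipate to identify this with $\Conv(\cC_{\c},\cO[t])$; note that this is strictly larger than the naive $L\otimes\bbK[t]$, so your parenthetical about completion is essential rather than cosmetic. One small correction: Appendix~\ref{app:GM} proves a \emph{different} Goldman--Millson statement (that a filtered quasi-isomorphism of dg Lie algebras induces a bijection on $\pi_0(\MC)$), not the path-object characterization of gauge equivalence you invoke here; for the ``only if'' direction the paper instead cites \cite[Theorem~C.1, App.~C]{stable1}, and the ``if'' direction is left as an explicit ODE construction (Exercise~\ref{exer:if}), just as you sketch.
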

\begin{proof}
Let  
\begin{equation}
\label{Conv-t}
\Conv(\cC_{\c}, \cO)\{t\}
\end{equation} 
be the  dg Lie subalgebra of $\Conv(\cC_{\c}, \cO)[[t]]$
which consists of infinite series 
\begin{equation}
\label{f-sum}
f =\sum_{k = 0}^{\infty} f_k t^k\,, \qquad f_k \in \cF_{m_k} \Conv(\cC_{\c}, \cO)
\end{equation}
satisfying the condition
\begin{equation}
\label{cond-m-s}
m_1 \le m_2 \le m_3 \le \dots \qquad  \lim_{k\to \infty} m_k = \infty\,. 
\end{equation}

Combining condition \eqref{cond-m-s} together with 
the fact that the filtration on $\cC_{\c}$ is cocomplete,
we conclude that, for every $X\in \cC_{\c}(n)$ and 
for every $f$ in \eqref{Conv-t}, the sum 
$$
 \sum_{k = 0}^{\infty} f_k(X) t^k
$$
has only finitely many non-zero terms. 
 
Therefore, the formula 
\begin{equation}
\label{Psi-dfn}
\Psi(f)(X) = \sum_{k = 0}^{\infty} f_k(X) t^k\,, \qquad 
X \in \cC_{\c}(n) 
\end{equation}
defines a map 
$$
\Psi :  \Conv(\cC_{\c}, \cO)\{t\} \to 
\Conv(\cC_{\c}, \cO[t])\,.  
$$

Let us now consider a vector $g \in \Conv(\cC_{\c}, \cO[t])$\,. 

Since the vector spaces
$$
\bigoplus_{n \ge 0} \cF^{m-1} \cC_{\c}(n)
$$
are finite dimensional, for each $m$ there exists
a positive integer $N_m$ such that the polynomials
$$
g(X) = \sum_{k \ge 0} g_k (X) t^k
$$ 
have degrees $\le N_m$ for all $X \in \cF^{m-1} \cC_{\c}(n)$ and for all $n$\,.
 Moreover, the integers $\{N_m\}_{m \ge 1}$
can be chosen in such a way that 
$$
N_1 \le N_2 \le N_3 \le \dots\,.
$$

Therefore, the formula: 
\begin{equation}
\label{Psi-inv}
\Psi'(g) = \sum_{k=0}^{\infty} \Psi'_k(g) t^k
\end{equation}
$$
\Psi'_k(g) (X) : = g_k (X)\,, \qquad X \in \cC_{\c}
$$
defines a map 
$$
\Psi' : \Conv(\cC_{\c}, \cO[t]) \to \Conv(\cC_{\c}, \cO)\{t\}\,.
$$
Furthermore, it is easy to see that $\Psi'$ is the inverse of $\Psi$\,.

Thus the dg Lie algebras $\Conv(\cC_{\c}, \cO[t])$ 
and $\Conv(\cC_{\c}, \cO)\{t\}$ are naturally isomorphic.

To prove the ``only if'' part we start with 
a map of dg operads 
$$
H : \Cobar(\cC) \to \cO^I
$$
which establishes a homotopy between $F$ and $\wt{F}$
and let 
\begin{equation}
\label{al-H}
\al_{H} = \al^{(1)}_H + \al^{(0)}_H\, d t \in \Conv(\cC_{\c}, \cO^{I})
\end{equation}
be the Maurer-Cartan element corresponding to $H$\,.
Here $\al^{(1)}_H$ (resp. $\al^{(0)}_H$) is a degree $1$ (resp. degree $0$) vector in 
$\Conv(\cC_{\c}, \cO[t]) \cong \Conv(\cC_{\c}, \cO)\{t\}$\,.

The Maurer-Cartan equation for $\al_H$
$$
dt \frac{d}{d t} \al_H + \pa \al_H  + \frac{1}{2}[\al_H, \al_H] =0
$$
is equivalent to the pair of equations
\begin{equation}
\label{MC-al-H1}
\pa \al^{(1)}_H  + \frac{1}{2}[\al^{(1)}_H, \al^{(1)}_H] = 0
\end{equation}
and
\begin{equation}
\label{MC-al-H0}
\frac{d}{d t} \al^{(1)}_H =  \pa \al^{(0)}_H  - [\al^{(0)}_H,  \al^{(1)}_H]\,. 
\end{equation}

Using equations \eqref{MC-al-H1} and \eqref{MC-al-H0} we 
deduce from \cite[Theorem C.1, App. C]{stable1} that
the Maurer-Cartan elements 
$$
\al^{(1)}_H \Big|_{t=0} \qquad \textrm{and} \qquad 
\al^{(1)}_H \Big|_{t=1}
$$
in $\Conv(\cC_{\c}, \cO)$ are connected by the action of the 
group 
$$
\exp \Big(\Conv(\cC_{\c}, \cO) \Big)\,.
$$

Since 
$$
\al^{(1)}_H \Big|_{t=0} = \al_F \qquad \textrm{and} \qquad 
\al^{(1)}_H \Big|_{t=1} = \al_{\wt{F}}
$$
we conclude that the ``only if'' part is proved. 

We leave the easier ``if'' part as an exercise.
(See Exercise \ref{exer:if} below.)  
\end{proof}
\begin{exer}
\label{exer:if}
Prove the ``if'' part of Theorem \ref{thm:from-Cobar}.
\end{exer}

We now deduce the following corollary.
\begin{cor}
\label{cor:lifting}
Let $\cC$ be a coaugmented dg cooperad which
satisfies the conditions of Theorem  \ref{thm:from-Cobar}.
If $U: \cO \to \cO'$ is a quasi-isomorphism of 
dg operads then for every operad morphism 
$F' : \Cobar(\cC) \to \cO'$ there exists 
a morphism $F : \Cobar(\cC) \to \cO$
such that the diagram 
\begin{equation}
\label{lifting}
\xymatrix@M=0.5pc{
~~~ & \Cobar(\cC)\ar@{.>}[ld]_{F} \ar[d]^{F'} \\
\cO\ar[r]^{U} & \cO'   
}
\end{equation}
commutes up to homotopy. 
Moreover the morphism $F$ is determined 
uniquely up to homotopy.   
\end{cor}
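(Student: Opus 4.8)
The plan is to translate the statement into the language of Maurer--Cartan elements of convolution Lie algebras, where it becomes an instance of the Goldman--Millson theorem proved in Appendix~\ref{app:GM}. By Proposition~\ref{prop:from-Cobar}, operad maps $\Cobar(\cC)\to\cO$ (resp.\ $\to\cO'$) are the same data as Maurer--Cartan elements of $\Conv(\cC_{\c},\cO)$ (resp.\ of $\Conv(\cC_{\c},\cO')$), and under this correspondence the restriction of $U\circ F$ to the generators $\bs\,\cC_{\c}$ is obtained from that of $F$ by postcomposing with $U$, so that $\al_{U\circ F}=U_*(\al_F)$, where $U_*:\Conv(\cC_{\c},\cO)\to\Conv(\cC_{\c},\cO')$ is the dg Lie algebra morphism induced by $U$. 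Moreover, by Theorem~\ref{thm:from-Cobar}, two maps $\Cobar(\cC)\to\cO$ are homotopic precisely when the associated Maurer--Cartan elements are isomorphic in the Deligne groupoid. Hence the whole corollary will follow once I know that $U_*$ induces an equivalence of Deligne groupoids.

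To obtain that equivalence, I would first observe that, since $U:\cO\to\cO'$ is a quasi-isomorphism of dg operads, it is in particular a quasi-isomorphism of the underlying dg pseudo-operads, so Theorem~\ref{thm:Conv} applies. Because $\cC_{\c}$ is cofiltered --- part of the hypotheses of Theorem~\ref{thm:from-Cobar}, which we are assuming --- the filtered refinement of Theorem~\ref{thm:Conv} tells us that $U_*$ restricts to a quasi-isomorphism $\cF_m\Conv(\cC_{\c},\cO)\to\cF_m\Conv(\cC_{\c},\cO')$ for every $m$. Recall also, from the discussion around \eqref{filtr-Conv}--\eqref{Conv-complete}, that the filtration \eqref{filtr-Conv} on each of $\Conv(\cC_{\c},\cO)$ and $\Conv(\cC_{\c},\cO')$ is complete (since the cofiltration on $\cC_{\c}$ is cocomplete), that $\cF_1$ is the whole algebra, and that it is compatible with the Lie bracket by \eqref{filtr-compat}. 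Thus $U_*$ is a filtered quasi-isomorphism of complete filtered dg Lie algebras, and the version of the Goldman--Millson theorem of Appendix~\ref{app:GM} yields that $U_*$ induces an equivalence of the corresponding Deligne groupoids.

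Granting this, both assertions are immediate. \emph{Existence:} given $F'$, form $\al_{F'}\in\Conv(\cC_{\c},\cO')$; essential surjectivity supplies a Maurer--Cartan element $\beta\in\Conv(\cC_{\c},\cO)$ together with a gauge transformation from $U_*\beta$ to $\al_{F'}$. Let $F:\Cobar(\cC)\to\cO$ be the map corresponding to $\beta$ under Proposition~\ref{prop:from-Cobar}; then $\al_{U\circ F}=U_*\al_F=U_*\beta$ is isomorphic to $\al_{F'}$, so by Theorem~\ref{thm:from-Cobar} the map $U\circ F$ is homotopic to $F'$, i.e.\ \eqref{lifting} commutes up to homotopy. \emph{Uniqueness:} if $F$ and $\wt{F}$ both make \eqref{lifting} commute up to homotopy, then $U_*\al_F$ and $U_*\al_{\wt F}$ are both isomorphic to $\al_{F'}$, hence to one another; since $U_*$ is fully faithful, $\al_F$ and $\al_{\wt F}$ are isomorphic in the Deligne groupoid, and Theorem~\ref{thm:from-Cobar} gives that $F$ and $\wt F$ are homotopic.

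The one delicate point --- the \textbf{main obstacle}, such as it is --- is making sure that the hypotheses feeding the Goldman--Millson theorem are genuinely in force: that the convolution Lie algebras are complete for the filtration coming from the cofiltration on $\cC_{\c}$, so that no convergence issue arises for the gauge action, and that $U_*$ is not merely a quasi-isomorphism but a \emph{filtered} one. The first is exactly \eqref{Conv-complete}; the second is precisely the second, filtered, assertion of Theorem~\ref{thm:Conv}, which is why that refinement was recorded there. Everything else is formal transport of structure through Propositions~\ref{prop:from-Cobar} and Theorem~\ref{thm:from-Cobar}.
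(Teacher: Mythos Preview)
Your proof is correct and follows essentially the same route as the paper: pass to Maurer--Cartan elements via Proposition~\ref{prop:from-Cobar}, invoke the filtered statement of Theorem~\ref{thm:Conv} to see that $U_*$ is a quasi-isomorphism on each $\cF_m$, apply the Goldman--Millson theorem (Theorem~\ref{thm:GM}) to get a bijection on isomorphism classes of Maurer--Cartan elements, and translate back via Theorem~\ref{thm:from-Cobar}. One small remark: Theorem~\ref{thm:GM} as stated in the paper only asserts a bijection on $\pi_0$, not a full equivalence of groupoids, so your phrase ``since $U_*$ is fully faithful'' slightly overclaims; however, your uniqueness argument only actually uses injectivity on $\pi_0$, which is exactly what the bijection gives you, so the reasoning stands.
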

\begin{proof}
The map $U$ induces the homomorphism of dg Lie algebras 
$$
U_* : \Conv(\cC_{\c}, \cO) \to \Conv(\cC_{\c}, \cO')\,.
$$ 

Due to Theorem \ref{thm:Conv} $U_*$ is a quasi-isomorphism 
of dg Lie algebras. Moreover, the restriction of $U_*$
$$
U_*  \Big|_{ \cF_m \Conv(\cC_{\c}, \cO)} :
\cF_m \Conv(\cC_{\c}, \cO) \to  \cF_m \Conv(\cC_{\c}, \cO')
$$
is also a quasi-isomorphism of dg Lie algebras for all $m$\,.  

Hence, Theorem \ref{thm:GM} from Appendix \ref{app:GM} implies that 
$U_*$ induces a bijection between the isomorphism  
classes of Maurer-Cartan elements in  $\Conv(\cC_{\c}, \cO)$ 
and in $ \Conv(\cC_{\c}, \cO')$\,.

Thus, the statements of the corollary follow immediately 
from Theorem \ref{thm:from-Cobar}. 
\end{proof}

\subsection{Models for homotopy algebras}
\label{sec:HA}
Developing the machinery of algebraic operads is 
partially motivated by the desire to blend 
together concepts of abstract algebra and concepts of 
homotopy theory \cite{Leinster}, \cite{HA}, \cite{Markl-HA}. 

Thus, in homotopy theory, the notions of Lie algebra, commutative
algebra, and Gerstenhaber algebra are replaced by 
their $\infty$-versions (a.k.a homotopy versions): $L_{\infty}$-algebras, $\Com_{\infty}$-algebras
and $\Ger_{\infty}$-algebras, respectively. These are examples of  
{\it homotopy algebras}.

In this paper we will go into a general philosophy for homotopy 
algebras and instead limit ourselves to conventional definitions.

\begin{defi}
\label{dfn:Lie-infty}
An \und{$\Lie_{\infty}$-algebra}  (a.k.a. \und{$L_{\infty}$-algebra}) is an 
algebra (in $\Ch_{\bbK}$) over the operad 
\begin{equation}
\label{Lie-infty}
\Lie_{\infty} = \Cobar(\La\coCom)\,.
\end{equation}
\end{defi}
\begin{defi}
\label{dfn:Com-infty}
A \und{$\Com_{\infty}$-algebra} is an 
algebra (in $\Ch_{\bbK}$) over the operad 
\begin{equation}
\label{Com-infty}
\Com_{\infty} = \Cobar(\La\coLie)\,.
\end{equation}
\end{defi}
Finally, 
\begin{defi}
\label{dfn:Ger-infty}
A \und{$\Ger_{\infty}$-algebra} is an 
algebra (in $\Ch_{\bbK}$) over the operad 
\begin{equation}
\label{Ger-infty}
\Ger_{\infty} = \Cobar(\Ger^{\vee})\,,
\end{equation}
where 
\begin{equation}
\label{Ger-vee}
\Ger^{\vee} = \big( \La^{-2} \Ger \big)^{*}\,,
\end{equation}
and $~^{*}$ is the operation of taking linear dual. 
\end{defi}

The above definitions are partially motivated by the observation that 
the operads $\Lie_{\infty}$, $\Com_{\infty}$ and $\Ger_{\infty}$ are 
free resolutions of the operads $\Lie$, $\Com$ and $\Ger$, respectively. 

Thus the canonical quasi-isomorphism of dg operads
\begin{equation}
\label{Lie-infty-Lie}
U_{\Lie} : \Lie_{\infty} = \Cobar(\La\coCom) ~\to~ \Lie
\end{equation}
corresponds to the Maurer-Cartan element\footnote{Recall that, due to Proposition \ref{prop:from-Cobar}, 
operads maps from $\Cobar(\cC)$ to $\cO$ are identified with 
Maurer-Cartan element of $\Conv(\cC_{\c}, \cO)$.} 
$$
\al_{\Lie} = [a_1, a_2] \otimes b_1 b_2  \in \Conv(\La\coCom_{\c}, \Lie) 
\cong  \prod_{n \ge 2} \Big( \Lie(n) \otimes \La^{-1}\Com(n) \Big)^{S_n}\,,
$$
where $[a_1, a_2]$ (resp.  $b_1b_2$) denotes the 
canonical generator of $\Lie(2)$ (resp. $\La^{-1}\Com(2)$)\,.

Similarly, the canonical quasi-isomorphism of dg operads
\begin{equation}
\label{Com-infty-Com}
U_{\Com} : \Com_{\infty} = \Cobar(\La\coLie) ~\to~ \Com
\end{equation}
corresponds to the Maurer-Cartan element 
$$
\al_{\Com} = a_1 a_2 \otimes \{b_1, b_2\} \in \Conv(\La\coLie_{\c}, \Com) 
\cong  \prod_{n \ge 2} \Big( \Com(n) \otimes \La^{-1}\Lie(n) \Big)^{S_n}\,,
$$
where $a_1 a_2$ (resp.  $\{b_1, b_2 \}$) denotes the 
canonical generator of $\Com(2)$ (resp. $\La^{-1}\Lie(2)$)\,.

Finally the  canonical quasi-isomorphism of dg operads
\begin{equation}
\label{Ger-infty-Ger}
U_{\Ger} : \Ger_{\infty} = \Cobar(\Ger^{\vee}) ~\to~ \Ger
\end{equation}
corresponds to the Maurer-Cartan element 
$$
\al_{\Ger} = a_1 a_2 \otimes \{b_1, b_2\} + \{a_1, a_2\} \otimes b_1 b_2 
\in \Conv(\Ger^{\vee}_{\c}, \Ger) = 
\prod_{n\ge 2}  \Big( \Ger(n) \otimes \La^{-2}\Ger(n) \Big)^{S_n}\,,
$$
where  $a_1 a_2$,  $\{a_1, a_2 \}$ are the canonical 
generators of $\Ger(2)$ and  $b_1 b_2$,  $\{b_1, b_2 \}$ are the canonical 
generators of $\La^{-2}\Ger(2)$\,. 

We should remark that here, instead of Lie algebras and $L_{\infty}$-algebras 
we often deal with $\La\Lie_{\infty}$-algebras. It is not hard to see that 
$\La\Lie_{\infty}$-algebras are algebras in $\Ch_{\bbK}$ over the 
operad 
\begin{equation}
\label{LaLie-infty}
\La\Lie_{\infty} = \Cobar(\La^2 \coCom)\,.
\end{equation} 
Furthermore, the canonical quasi-isomorphism 
 \begin{equation}
\label{LaLie-infty-LaLie}
U_{\La\Lie} : \La\Lie_{\infty} = \Cobar(\La^2\coCom) ~\to~ \La\Lie
\end{equation}
corresponds to the Maurer-Cartan element 
$$
\al_{\La\Lie} = \{a_1, a_2\} \otimes b_1 b_2  \in \Conv(\La^2\coCom_{\c}, \La\Lie) 
\cong  \prod_{n \ge 2} \Big( \La\Lie(n) \otimes \La^{-2}\Com(n) \Big)^{S_n}\,,
$$
where $\{a_1, a_2\}$ (resp.  $b_1b_2$) denotes the 
canonical generator of $\La\Lie(2)$ (resp. $\La^{-2}\Com(2)$)\,.

\subsubsection{$\La\Lie_{\infty}$-algebras}
\label{sec:La-Lie-infty}

Let $V$ be a cochain complex.
 
Since $\La\Lie_{\infty} = \Cobar(\La^2\coCom)$, 
Corollary \ref{cor:Cobar-alg} implies that $\La\Lie_{\infty}$-algebra 
structure on $V$ is a choice of degree $1$ coderivation 
$$
\cD \in \coDer(\La^2\coCom(V)) 
$$
satisfying the Maurer-Cartan equation 
\begin{equation}
\label{MC-cD}
\pa \cD + \frac{1}{2} [\cD, \cD] = 0
\end{equation}
together with the condition 
$$
\cD \Big|_{V} = 0\,.
$$

On the other hand, according to \cite[Proposition 2.14]{GJ}, 
any coderivation of $\La^2\coCom(V)$ is uniquely determined  by 
its composition $p_V \circ \cD$ with the projection 
$$
p_V : \La^2 \coCom(V) \to V\,.
$$

Thus, since
$$
\La^2 \coCom(V) = \bs^2 S(\bs^{-2} V),
$$
an $\La\Lie_{\infty}$-structure on $V$ is determined by the 
infinite sequence of multi-ary operations 
\begin{equation}
\label{p-cD-n}
\{ , , \dots, \}_n = p_V \circ \cD\, \bs^{2n-2} : S^n(V) \to V\,, \qquad n \ge 2 
\end{equation}
where the $n$-th operation $\{ , , \dots, \}_n$ carries degree $3-2n$\,.

The Maurer-Cartan equation \eqref{MC-cD} is equivalent to the sequence 
of the following quadratic relations on operations \eqref{p-cD-n}: 
\begin{equation}
\label{n-th-relation}
\pa \{v_1,v_2, \dots, v_n\}_n  + 
\sum_{i=1}^{n} (-1)^{|v_1|+ \dots + |v_{i-1}|} \{v_1, \dots, v_{i-1}, \pa v_i, v_{i+1}, \dots, v_n\}_n  
+
\end{equation}
$$
\sum_{p = 2}^{n-1} \sum_{\si \in \Sh_{p,n-p}} (-1)^{\ve(\si, v_1, \dots, v_n)}
\{\{v_{\si(1)}, \dots, v_{\si(p)} \}_p, v_{\si(p+1)}, \dots,  v_{\si(n)}\}_{n-p+1} = 0\,,
$$
where $\pa$ is the differential on $V$ and  $(-1)^{\ve(\si, v_1, \dots, v_n)}$
is the sign factor determined by 
the usual Koszul rule.

\begin{rem}
\label{rem:LaLie-Lie}
Even though there is an obvious bijection between 
$\La\Lie_{\infty}$-structures on $V$ and $L_{\infty}$-structures 
on $\bsi V$,  it is often easier to deal with signs in formulas 
for $\La\Lie_{\infty}$-structures.  
\end{rem}

\section{Twisting of operads}
\label{sec:twist}
Let $\cO$ be a dg operad equipped with a map
\begin{equation}
\label{from-hoLie}
\wh{\vf} : \La \Lie_{\infty} \to \cO\,.
\end{equation}

Let $V$ be an algebra over $\cO$\,.
Using the map $\wh{\vf}$, we equip $V$
with an $\La\Lie_{\infty}$-structure. 

If we assume, in addition, that $V$ is equipped with 
a complete descending filtration 
\begin{equation}
\label{filtr-V}
V \supset F_1 V \supset F_2 V \supset F_3 V \supset \dots \,, 
\qquad  V = \lim_{k} V  \Big / F_k V 
\end{equation}
and the $\cO$-algebra structure on $V$ is compatible 
with this filtration then we may define Maurer-Cartan elements of $V$ as 
degree $2$ elements $\al \in  F_1 V $ satisfying the equation
\begin{equation}
\label{MC-eq}
\pa (\al) + \sum_{n \ge 2} \frac{1}{n!} \{\al, \al, \dots, \al\}_n = 0\,,
\end{equation}
where $\pa$ is the differential on $V$ and $\{\cdot, \cdot, \dots, \cdot\}_n$
is the $n$-th operation of the $\La\Lie_{\infty}$-structure on $V$\,. 

Given such a Maurer-Cartan element $\al$ we can twist the differential 
on $V$ and insert $\al$ into various $\cO$-operations on $V$\,. 
This way we get a new algebra structure on $V$\,. 
 
It turns out that this new algebra structure is governed by 
an operad $\Tw \cO$ which is built from the pair 
$(\cO, \wh{\vf})$\,. 

This section is devoted to the construction of $\Tw \cO$\,.

\subsection{Intrinsic derivations of an operad}
Let $\cO$ be an dg operad. We recall that a $\bbK$-linear map
$$
\de : \bigoplus_{n \ge 0} \cO(n)  \to  \bigoplus_{n \ge 0} \cO(n) 
$$ 
is an {\it operadic derivation} if for every $a \in \cO(n)$, $\de(a) \in \cO(n)$
and for all homogeneous vectors $a_1\in \cO(n)$, $a_2\in \cO(k)$
$$
\de (a_1 \circ_i a_2) = \de(a_1) \circ_i a_2  + 
(-1)^{|\de| |a_1|} a_1 \circ_i \de (a_2)\,, 
\qquad \forall~~ 1 \le i \le n\,.
$$

Let us now observe that the 
operation $\circ_1$ equips $\cO(1)$ with a structure of a dg associative 
algebra.  We consider $\cO(1)$
as a dg Lie algebra with the Lie bracket being the commutator. 

We claim that 
\begin{prop}
\label{prop:cO-1-deriv}
The formula 
\begin{equation}
\label{cO-1-deriv}
\de_b (a) = b \circ_1 a - (-1)^{|a| |b|} \sum_{i=1}^n a \circ_i b   
\end{equation}
with 
$$
b \in \cO(1), \qquad \textrm{and} \qquad  a\in \cO(n)
$$ 
defines an operadic derivation of $\cO$ for every $b \in \cO(1)$\,.
\end{prop}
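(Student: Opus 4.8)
The plan is to check the defining Leibniz identity directly from Markl's axioms for the operad $\cO$ (Definition \ref{dfn:ps-op}); this is legitimate since, as noted before that definition, an operad structure is completely encoded in the elementary insertions $\circ_i$. First I would observe that $\de_b$ preserves arity, is $\bbK$-linear, and has degree $|b|$, so the statement to prove is
\[
\de_b(a_1 \circ_j a_2) = \de_b(a_1) \circ_j a_2 + (-1)^{|b||a_1|}\, a_1 \circ_j \de_b(a_2)
\]
for homogeneous $a_1 \in \cO(n)$, $a_2 \in \cO(k)$, and $1 \le j \le n$.

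The main computation is the expansion of the ``right'' part $\sum_{i=1}^{n+k-1}(a_1\circ_j a_2)\circ_i b$ of $\de_b(a_1\circ_j a_2)$. I would split the index range $\{1,\dots,n+k-1\}$ into the three blocks of inputs of $a_1\circ_j a_2$ inherited, respectively, from the first $j-1$ slots of $a_1$, from $a_2$, and from the last $n-j$ slots of $a_1$; on these three blocks the first, second and third cases of the associativity axiom \eqref{circ-i-assoc} apply, turning the sum into $(-1)^{|a_2||b|}\sum_{i\neq j,\, 1\le i\le n}(a_1\circ_i b)\circ_j a_2 + \sum_{i''=1}^{k} a_1\circ_j(a_2\circ_{i''}b)$. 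Likewise $b\circ_1(a_1\circ_j a_2) = (b\circ_1 a_1)\circ_j a_2$ by the same axiom (using that $b$ has arity $1$). Doing the analogous expansions on the right-hand side of the identity --- where one also needs the companion fact $a_1\circ_j(b\circ_1 a_2) = (a_1\circ_j b)\circ_j a_2$ --- the term $(-1)^{|b||a_1|}(a_1\circ_j b)\circ_j a_2$ is exactly the $i=j$ summand of $\sum_{i=1}^{n}(a_1\circ_i b)\circ_j a_2$ occurring with the opposite sign, so these cancel; after matching the scalars, $(-1)^{|b|(|a_1|+|a_2|)+|a_2||b|}=(-1)^{|b||a_1|}$, both sides coincide. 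I would also remark that the degenerate cases $k=0$ (empty middle block) and $n=1$ (empty outer blocks) are covered uniformly by the same formulas.

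I do not expect a genuine obstacle: there is nothing beyond sign bookkeeping, and the only delicate point is tracking the Koszul factor $(-1)^{|a_2||b|}$ produced each time $a_2$ is moved past $b$ in the three cases of \eqref{circ-i-assoc}. A more conceptual argument (for $\cO=\End_V$, $\de_b$ is the infinitesimal conjugation of all operations by $e^{tb}\in\End_V(1)$) reduces to the same axiom-chasing, so I would just carry out the direct verification; equivariance of $\de_b$ under the $S_n$-actions, if one wants it, follows in the same fashion from \eqref{equiv-1}--\eqref{equiv-11}.
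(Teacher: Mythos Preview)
Your proposal is correct and follows essentially the same route as the paper: both proofs verify the Leibniz identity directly by expanding $\sum_{i}(a_1\circ_j a_2)\circ_i b$ via the three cases of the associativity axiom \eqref{circ-i-assoc}, use $b\circ_1(a_1\circ_j a_2)=(b\circ_1 a_1)\circ_j a_2$ and $a_1\circ_j(b\circ_1 a_2)=(a_1\circ_j b)\circ_j a_2$, and cancel the $i=j$ term against its counterpart. The paper additionally remarks that the Lie-homomorphism identity $[\de_{b_1},\de_{b_2}]=\de_{[b_1,b_2]}$ holds (left as an exercise), but this is not part of the proposition as stated.
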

Operadic derivations of the form \eqref{cO-1-deriv} are called {\it intrinsic}.\\

\begin{proof}
Let $a_1\in \cO(n_1)$ and  $a_2\in \cO(n_2)$\,. 
Then for every $b \in \cO(1)$ and $1 \le j \le n_1$ we have
$$
\de_b (a_1  \circ _j a_2) = 
b \circ_1 (a_1  \circ _j a_2) - 
(-1)^{(|a_1|+|a_2|) |b|} \sum_{i=1}^{n_1+n_2-1} (a_1  \circ _j a_2) \circ_i b  = 
$$
$$
(b \circ_1 a_1)  \circ _j a_2 - 
(-1)^{|a_1| |b|} \sum_{i \neq j}^{1 \le i \le n_1} (a_1 \circ_i b) \circ _j a_2 - 
(-1)^{ (|a_1|+|a_2|) |b|} \sum_{i=1}^{n_2} a_1  \circ _j (a_2 \circ_i b)
$$
$$
= 
(b \circ_1 a_1)  \circ _j a_2 - 
(-1)^{|a_1| |b|} \sum_{i =1}^{n_1} (a_1 \circ_i b) \circ _j a_2 
$$
$$ 
+  (-1)^{|a_1| |b|} (a_1 \circ_j b) \circ _j a_2 - 
(-1)^{ (|a_1|+|a_2|) |b|} \sum_{i=1}^{n_2} a_1  \circ _j (a_2 \circ_i b)
$$
$$
= (b \circ_1 a_1)  \circ _j a_2 - 
(-1)^{|a_1| |b|} \sum_{i =1}^{n_1} (a_1 \circ_i b) \circ _j a_2 
$$
$$ 
+  (-1)^{|a_1| |b|} a_1 \circ_j (b \circ _1 a_2) - 
(-1)^{ (|a_1|+|a_2|) |b|} \sum_{i=1}^{n_2} a_1  \circ _j (a_2 \circ_i b)
$$
$$
= \de_{b}(a_1) \circ_j a_2 + (-1)^{|a_1||b|} a_1 \circ_j \de_b(a_2) \,.
$$
Hence $\de_b$ is indeed an operadic derivation of $\cO$\,.

It remains to verify the identity 
\begin{equation}
\label{de-Lie}
[\de_{b_1}, \de_{b_2}] = \de_{[b_1,b_2]}
\end{equation}
and we leave this step as an exercise.
\end{proof}

\begin{exer}
\label{ex:de-Lie}
Verify identity  \eqref{de-Lie}.
\end{exer}

\subsection{Construction of the operad $\wt{\Tw} \cO$}
Let us recall that, since $\La\Lie_{\infty}= \Cobar(\La^2 \coCom)$, 
the morphism \eqref{from-hoLie}
is determined by a Maurer-Cartan element 
\begin{equation}
\label{vf}
\vf \in \Conv(\La^2 \coCom_{\c}, \cO)\,. 
\end{equation}

The $n$-th space of $\La^2 \coCom_{\c}$ is
the trivial $S_{n}$-module placed in degree $2-2n$: 
$$
\La^2 \coCom (n) = \bs^{2-2n} \bbK\,.
$$
So we have
$$
\Conv(\La^2 \coCom_{\c}, \cO) = 
 \prod_{n \ge 2}  \Hom_{S_n}( \bs^{2-2n}\bbK, \cO(n)) 
 =  \prod_{n \ge 2}   \bs^{2n-2} \big( \cO(n) \big)^{S_n}\,.
$$

For our purposes we will need to extend the dg Lie algebra 
$\Conv(\La^2 \coCom_{\c}, \cO)$ to 
\begin{equation}
\label{cL-cO}
\cL_{\cO} = \Conv(\La^2 \coCom, \cO) =
 \prod_{n \ge 1}  \Hom_{S_n}( \bs^{2-2n}\bbK, \cO(n))\,. 
\end{equation}
It is clear that 
$$
\cL_{\cO}  =   \prod_{n \ge 1}   \bs^{2n-2} \big( \cO(n) \big)^{S_n}\,.
$$

For $n, r \ge 1$ we realize the group $S_r$ 
as the following subgroup of $S_{r+n}$
\begin{equation}
\label{S-r-realize}
S_r \cong \big\{ \si\in S_{r+n} ~|~ \si(i) = i\,, \quad \forall ~ i > r \big\}\,.
\end{equation}
In other words, for every $n \ge 1$, the group $S_r$ may be 
viewed as subgroup of $S_{r+n}$ permuting the 
first $r$ letters. We set $S_0$ to be the trivial group.

Using this embedding of $S_{r}$ into $S_{n+r}$
we introduce the following collection ($n \ge 0$)
\begin{equation}
\label{pre-Tw-cO}
\wt{\Tw} \cO(n) =  \prod_{r \ge 0}  \Hom_{S_r} ( \bs^{-2r} \bbK, \cO(r+n))\,.
\end{equation}
It is clear that 
$$
\wt{\Tw} \cO(n) = \prod_{r \ge 0}  \bs^{2r} \big(\cO(r+n)\big)^{S_r}\,. 
$$

To define an operad structure on \eqref{pre-Tw-cO} we denote by 
$1_{r}$ the generator $ \bs^{-2r} \,1 \in  \bs^{-2r} \bbK$\,.  Then 
the identity element $\bu$ in $\wt{\Tw} \cO(1)$ is given by 
\begin{equation}
\label{bu}
\bu(1_r) = \begin{cases}
\bu_{\cO}
  \qquad {\rm if} ~~ r = 0\,, \\
 0 \qquad {\rm otherwise}\,,
\end{cases}
\end{equation}
where $\bu_{\cO}\in \cO(1)$ is the identity element 
for the operad $\cO$\,.

Next, for $f \in \wt{\Tw} \cO(n)$ and $g \in \wt{\Tw} \cO(m)$,
we define the $i$-th elementary insertion $\c_{i}$ 
$1 \le i \le n$ by the formula 
\begin{equation}
\label{circ-i-for-Tw-cO}
f \,\c_i\, g (1_r) = \sum_{p = 0}^{r} 
\sum_{\si \in \Sh_{p, r-p}} \mu_{\bt_{\si,i}} \big( f(1_p) \otimes g (1_{r-p})  \big)\,. 
\end{equation}
where the tree $\bt_{\si,i}$ is depicted on figure \ref{fig:bt-si-i}.
\begin{figure}[htp]
\centering
\begin{tikzpicture}[scale=0.6]
\tikzstyle{w}=[circle, draw, minimum size=3, inner sep=1]
\tikzstyle{b}=[circle, draw, fill, minimum size=3, inner sep=1]
\node[b] (si1) at (0, 2) {};
\draw (0,2.5) node[anchor=center] {{\small $\si(1)$}};
\draw (1,2) node[anchor=center] {{\small $\dots$}};
\node[b] (sip) at (2, 2) {};
\draw (2,2.5) node[anchor=center] {{\small $\si(p)$}};
\node[b] (r1) at (3.5, 2) {};
\draw (3.7,2.5) node[anchor=center] {{\small $r+1$}};
\draw (5,2) node[anchor=center] {{\small $\dots$}};
\node[b] (r1i) at (6.5, 2) {};
\draw (6.4,2.5) node[anchor=center] {{\small $r+i-1$}};
\node[w] (w2) at (8.5, 3) {};
\node[b] (sip1) at (6, 5) {};
\draw (5.8,5.5) node[anchor=center] {{\small $\si(p+1)$}};
\draw (7.3,5) node[anchor=center] {{\small $\dots$}};
\node[b] (sir) at (8, 5) {};
\draw (8,5.5) node[anchor=center] {{\small $\si(r)$}};
\node[b] (ri) at (9.5, 5) {};
\draw (9.5,5.5) node[anchor=center] {{\small $r+i$}};
\draw (10.5,5) node[anchor=center] {{\small $\dots$}};
\node[b] (ri1m) at (12, 5) {};
\draw (12.7,5.5) node[anchor=center] {{\small $r+i+m-1$}};
\node[b] (rim) at (10.5, 2) {};
\draw (10.5,2.5) node[anchor=center] {{\small $r+i+m$}};
\draw (11.7,2) node[anchor=center] {{\small $\dots$}};
\node[b] (rn1m) at (13, 2) {};
\draw (13.8,2.5) node[anchor=center] {{\small $r+n+m-1$}};
\node[w] (w1) at (7.5, 0) {};
\node[b] (r) at (7.5, -1) {};
\draw (w2) edge (sip1);
\draw (w2) edge (sir);
\draw (w2) edge (ri);
\draw (w2) edge (ri1m);
\draw (w1) edge (si1);
\draw (w1) edge (sip);
\draw (w1) edge (r1);
\draw (w1) edge (r1i);
\draw (w1) edge (w2);
\draw (w1) edge (rim);
\draw (w1) edge (rn1m);
\draw (r) edge (w1);
\end{tikzpicture}
\caption{\label{fig:bt-si-i} Here $\si$ is a $(p, r-p)$-shuffle}
\end{figure} 

To see that the element $f \,\c_i\, g (1_r)\in \cO(r+n+m -1)$ is 
$S_r$-invariant one simply needs to use the fact that 
every element $\tau\in S_r$ can 
be uniquely presented as the composition 
$\tau_{sh} \c \tau_{p,r-p} $, where $\tau_{sh}$ is 
a $(p, r-p)$-shuffle and $\tau_{p,r-p} \in S_p \times S_{r-p}$\,.

Let  $f \in \wt{\Tw} \cO(n)$, $g \in \wt{\Tw} \cO(m)$, 
$h \in \wt{\Tw} \cO(k)$, $1\le i \le n$, and $1 \le j \le m$\,.  
To check the identity 
\begin{equation}
\label{fgh-assoc}
f\,\c_i \, (g \,\c_j\, h) =
(f\,\c_i \, g ) \,\c_{j+i-1}\, h 
\end{equation}
we observe that
$$
f\,\c_i \, (g \,\c_j\, h) (1_r) =   \sum_{p = 0}^{r} 
\sum_{\si \in \Sh_{p, r-p}} 
\mu_{\bt_{\si,i}} \big( f(1_p) \otimes   (g \,\c_j\, h) (1_{r-p})  \big) 
$$ 
$$
=\sum_{p_1+ p_2+ p_3 =r}\,
\sum_{\si \in \Sh_{p_1, p_2+ p_3} }\,
\sum_{\si' \in  \Sh_{p_2, p_3} }
\mu_{\bt_{\si,i}} \circ
(1 \otimes \mu_{\bt_{\si',j}} )
\big( f(1_{p_1}) \otimes   g(1_{p_2}) \otimes h(1_{p_3}) \big) 
$$
$$
= \sum_{p_1+ p_2+ p_3 =r}\,
\sum_{\tau \in \Sh_{p_1, p_2, p_3} }
\mu_{\bt_{\tau,i,j}} \big( f(1_{p_1}) \otimes   g(1_{p_2}) \otimes h(1_{p_3}) \big)\,, 
$$
where the tree $\bt_{\tau,i,j}$ is depicted on figure \ref{fig:bt-tau-ij}.  
\begin{figure}[htp]
\centering
\makeatletter
\def\Ddots{\mathinner{\mkern1mu\raise\p@
\vbox{\kern7\p@\hbox{.}}\mkern2mu
\raise4\p@\hbox{.}\mkern2mu\raise7\p@\hbox{.}\mkern1mu}}
\makeatother
\begin{tikzpicture}[
xst/.style={draw, cross out,  minimum size=3, inner sep=1 }, arr/.style={-triangle 60},]
\tikzstyle{w}=[circle, draw, minimum size=3, inner sep=1]
\tikzstyle{b}=[circle, draw, fill, minimum size=3, inner sep=1]
\node[b] (v1) at (4,0) {};
\node[w] (v2) at (4,1) {};
\node[b, label=90:{\small $\tau(1)$}] (v3) at (0.5,3) {};
\node[b, label=90:{\small $\tau(p_1)$}] (v4) at (2,3) {};
\node[b, label=90:{\small $r+1$}] (v5) at (2.9,3) {};
\node[b, label=90:{\small $r+i-1$}] (v6) at (4.3,3) {};
\node[w] (v7) at (6,4) {};
\node[b, label=20:{\small $r + i + m  + k -1$}] (v8) at (6.5,3) {};
\node[b, label=90:{\small $r + n + m + k -2$}] (v9) at (8,2) {};
\node[b, label=120:{\small $\tau(p_1+1)$}] (v10) at (4,5) {};
\node[b, label=120:{\small $\tau(p_1+p_2)$}] (v11) at (4.5,5.7) {};
\node[b, label=120:{\small $r+i$}] (v12) at (5.2,6.5) {};
\node[b, label=90:{\small $r + i +j -2$}] (v13) at (6.5,6.5) {};
\node[w] (v16) at (8,7.5) {};
\node[b, label=30:{\small $r + i + j + k-1 $}] (v14) at (8,6) {};
\node[b, label=30:{\small $r + i + m + k - 2$}] (v15) at (9,5) {};
\node[b, label=120:{\small $\tau(p_1+p_2+1)$}] (v17) at (6,8) {};
\node[b, label=90:{\small $\tau(r)$}] (v18) at (7,9) {};
\node[b, label=90:{\small $r + i + j -1$}] (v19) at (9,9) {};
\node[b, label=30:{\small $r + i + j + k -2$}] (v20) at (10,8) {};

\node at (4.6451,5.014) {$\Ddots$};
\node at (6.9122,8.2302) {$\Ddots$};
\node at (7.914,5.3128) {$\ddots$};
\node at (9.1618,8.4059) {$\ddots$};
\node at (6.235,2.1974) {$\ddots$};
\node at (5.928,5.9279) {$\cdots$};
\node at (3.7,2.5) {$\cdots$};
\node at (2,2.5) {$\cdots$};

\draw (v1) edge (v2);
\draw (v2) edge (v3);
\draw (v2) edge (v4);
\draw (v2) edge (v5);
\draw (v2) edge (v6);
\draw (v2) edge (v7);
\draw (v2) edge (v8);
\draw (v2) edge (v9);
\draw (v7) edge (v10);
\draw (v7) edge (v11);
\draw (v7) edge (v12);
\draw (v7) edge (v13);
\draw (v7) edge (v14);
\draw (v7) edge (v15);
\draw (v7) edge (v16);
\draw (v16) edge (v17);
\draw (v16) edge (v18);
\draw (v16) edge (v19);
\draw (v16) edge (v20);
\end{tikzpicture}
\caption{\label{fig:bt-tau-ij} Here $\tau$ is a $(p_1, p_2, p_3)$-shuffle and 
$r= p_1 + p_2 + p_3$}
\end{figure} 
Similar calculations show that
$$
(f\,\c_i \, g ) \,\c_{j+i-1}\, h  =  \sum_{p_1+ p_2+ p_3 =r}\,
\sum_{\tau \in \Sh_{p_1, p_2, p_3} }
\mu_{\bt_{\tau,i,j}} \big( f(1_{p_1}) \otimes   g(1_{p_2}) \otimes h(1_{p_3}) \big)\,, 
$$
with $\bt_{\tau,i,j}$ being the tree depicted on figure \ref{fig:bt-tau-ij}.

We leave the verification of the remaining axioms of the operad 
structure for the reader.

Our next goal is to define an auxiliary action of 
$\cL_{\cO}$ on the operad $\wt{\Tw}\cO$\,. 
For a vector $f \in \wt{\Tw}\cO(n)$ the 
action of $v \in \cL_{\cO}$ (\ref{cL-cO}) on $f$ 
is defined by the formula
\begin{equation}
\label{eq:cL-action1}
v \cdot f(1_r) = -(-1)^{|v||f|} \sum_{p=1}^r \, 
\sum_{\si \in \Sh_{p,r-p}} 
\mu_{\bt'_{\si, p, r-p}} ( f(1_{r-p+1}) \otimes v(1^{\mc}_p) )\,,
\end{equation}
where $1^{\mc}_p$ is the generator $\bs^{2-2p}\, 1 \in  \La^2 \coCom(p) \cong \bs^{2-2p} \, \bbK$
and the tree $\bt'_{\si, p, r-p}$ is depicted on figure 
\ref{fig:bt-si-p-r}.
\begin{figure}[htp]
\centering
\begin{tikzpicture}[
xst/.style={draw, cross out,  minimum size=3, inner sep=1 }, 
arr/.style={-triangle 60},]
\tikzstyle{w}=[circle, draw, minimum size=3, inner sep=1]
\tikzstyle{b}=[circle, draw, fill, minimum size=3, inner sep=1]
\node[b] (v1) at (3,0) {};
\node[w] (v2) at (3,1) {};
\node[w] (v3) at (1,2) {};
\node[b, label=90:{\small $\si(p+1)$}] (v4) at (2,2) {};
\node[b, label=90:{\small $\si(r)$}] (v5) at (3.5,2) {};
\node[b, label=90:{\small $r+1$}] (v8) at (4.5,2.5) {};
\node[b, label=90:{\small $r+n$}] (v9) at (6,2.5) {};
\node[b, label=90:{\small $\si(1)$}] (v6) at (0.5,3) {};
\node[b, label=90:{\small $\si(p)$}] (v7) at (1.5,3) {};
\draw (v1) edge (v2);
\draw (v2) edge (v3);
\draw (v2) edge (v4);
\draw (v2) edge (v5);
\draw (v3) edge (v6);
\draw (v3) edge (v7);
\draw (v2) edge (v8);
\draw (v2) edge (v9);
\node at (4.7,2.1) {$\cdots$};
\node at (2.9,1.7) {$\cdots$};
\node at (1,2.8) {$\cdots$};
\end{tikzpicture}
\caption{\label{fig:bt-si-p-r} Here $\si$ is a $(p, r-p)$-shuffle}
\end{figure} 

We claim that 
\begin{prop}
\label{prop:cL-action1}
Formula \eqref{eq:cL-action1} defines an action of 
$\cL_{\cO}$ (\ref{cL-cO}) on the operad $\wt{\Tw}\cO$\,. 
\end{prop}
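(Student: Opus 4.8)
The plan is to show that the assignment $v \mapsto (f \mapsto v\cdot f)$ is a morphism of dg Lie algebras from $\cL_{\cO}$ to the dg Lie algebra $\Der(\wt{\Tw}\cO)$ of operadic derivations of the dg operad $\wt{\Tw}\cO$ (with its entrywise differential induced from $\pa_{\cO}$). Unwinding this, there are four things to check: (i) for every $v\in\cL_{\cO}$ and every $f\in\wt{\Tw}\cO(n)$ the element $v\cdot f$ of \eqref{eq:cL-action1} is again $S_r$-equivariant, so that it genuinely lies in $\wt{\Tw}\cO(n)$; (ii) $f\mapsto v\cdot f$ is an operadic derivation, i.e. it satisfies the graded Leibniz rule with respect to the elementary insertions $\c_i$ of \eqref{circ-i-for-Tw-cO}; (iii) $v\mapsto v\cdot(-)$ carries the Lie bracket on $\cL_{\cO}$ --- induced by the pre-Lie product \eqref{Conv-bullet} on $\Conv(\La^2\coCom,\cO)$ --- to the commutator bracket on $\Der(\wt{\Tw}\cO)$; and (iv) it commutes with differentials. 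Items (i) and (iv) are painless: (i) is the same argument used for $\c_i$, namely that each $\tau\in S_r$ factors uniquely as $\tau_{sh}\c\tau_{p,r-p}$ with $\tau_{sh}$ a $(p,r-p)$-shuffle and $\tau_{p,r-p}\in S_p\times S_{r-p}$, combined with the $S_p$-invariance of $v(1^{\mc}_p)$ and the equivariance of $\mu_{\bt}$; and (iv) follows because every differential occurring is induced from $\pa_{\cO}$ and $\mu_{\bt}$ is a chain map, so the chain-map property of $v\mapsto v\cdot(-)$ is just graded Leibniz applied to $f(1_{r-p+1})\otimes v(1^{\mc}_p)$.

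For (ii) I would expand both sides of the desired identity $v\cdot(f\,\c_i\, g) = (v\cdot f)\,\c_i\, g + (-1)^{|v||f|}\, f\,\c_i\,(v\cdot g)$ on a generator $1_r$, using \eqref{circ-i-for-Tw-cO}, \eqref{eq:cL-action1} and the associativity \eqref{fgh-assoc} of $\wt{\Tw}\cO$ (equivalently, the associativity axiom \eqref{assoc} for the maps $\mu_{\bt}$). Both sides become a single sum over splittings $r = p_1 + p_2 + p_3$ and over the relevant shuffles of $\mu_{\bt}$ applied to an explicit labelled planar tree with a top vertex carrying a value of $f$, one of whose real inputs carries a value of $g$, and one of whose MC-inputs carries a vertex decorated by a value of $v$. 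The crux is that, in the shuffle splitting defining $f\,\c_i\, g$, the first of the $r$ MC-slots of $f\,\c_i\, g$ is attached either to the $f$-vertex or to the $g$-vertex, and to exactly one of them; collecting the trees in which $v$'s vertex sits over $f$ produces $(v\cdot f)\,\c_i\, g$, and those in which it sits over $g$ produce $f\,\c_i\,(v\cdot g)$, the Koszul sign incurred by commuting $v$ past the value of $f$ in the second family accounting for the factor $(-1)^{|v||f|}$. This is a bookkeeping exercise of the same nature as the associativity verification already carried out for $\wt{\Tw}\cO$.

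For (iii) I would compute $v_1\cdot(v_2\cdot f)$ on $1_r$. Inserting $v_2$ into the first MC-slot of $f$ creates a vertex $u_2$ decorated by a value of $v_2$; by the defining shuffle, the first MC-slot of $v_2\cdot f$ is then either one of the inputs of $u_2$ or one of the remaining MC-slots of $f$. In the former case the subsequent action of $v_1$ inserts a vertex into the first input of $u_2$, and re-indexing the nested shuffles via the unique-factorisation trick identifies this contribution, up to sign, with $(v_2\bullet v_1)\cdot f$, where $\bullet$ is the pre-Lie product \eqref{Conv-bullet} on $\cL_{\cO}$. In the latter case one obtains a tree with two separate MC-vertices, decorated by values of $v_1$ and $v_2$, both attached to the $f$-vertex; this family is symmetric in $(v_1,v_2)$ up to the Koszul sign $(-1)^{|v_1||v_2|}$, hence cancels in the graded commutator $v_1\cdot(v_2\cdot f) - (-1)^{|v_1||v_2|}\, v_2\cdot(v_1\cdot f)$. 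The surviving ``former-case'' terms of the two summands then recombine --- once the prefactors $-(-1)^{|v||f|}$ from \eqref{eq:cL-action1} and the sign from commuting $v_1$ past $v_2$ are accounted for --- into $[v_1,v_2]\cdot f$, which is the assertion of (iii).

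The main obstacle will be the sign bookkeeping in (ii), and above all in (iii): one must track the Koszul signs coming from the prefactor $-(-1)^{|v||f|}$ in \eqref{eq:cL-action1}, from the degree $2-2p$ of the generators $1^{\mc}_p$, and from reordering tensor factors inside $\mu_{\bt}$, and verify that the ``double-insertion'' family cancels exactly while the remaining two families reassemble, up to the overall sign just mentioned, into the pre-Lie difference $v_1\bullet v_2 - (-1)^{|v_1||v_2|}v_2\bullet v_1$. As in the proof of Proposition \ref{prop:ConvLie}, I expect all of this to collapse to a single mod-$2$ congruence. The underlying combinatorics --- the distribution of the first MC-slot across the two insertion sites via the unique factorisation of a permutation into a shuffle times a block permutation --- is exactly the mechanism already used to establish the $S_r$-invariance of \eqref{circ-i-for-Tw-cO} and the associativity \eqref{fgh-assoc}, so no genuinely new idea beyond careful accounting should be required.
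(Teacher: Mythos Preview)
Your proposal is correct and follows essentially the same approach as the paper. The paper proves item~(iii) exactly as you describe---splitting $v\cdot(w\cdot f)(1_r) - (-1)^{|v||w|}\,w\cdot(v\cdot f)(1_r)$ into contributions from ``nested'' trees $\bt^{p,q}_{\tau}$ (where one MC-vertex sits above the other) and ``separate'' trees $\wt{\bt}^{p,q}_{\tau}$ (both MC-vertices attached directly to the $f$-vertex), observing that the latter cancel and the former assemble into $[v,w]\cdot f$---and then leaves your item~(ii), the operadic Leibniz rule, as Exercise~\ref{exer:oper-deriv}. One small sharpening: the cancellation of the ``separate'' family is not merely a Koszul symmetry in $(v_1,v_2)$; the two MC-vertices occupy \emph{different} input slots of the $f$-vertex, and the paper explicitly invokes the $S_{r-p-q+2}$-invariance of $f(1_{r-p-q+2})$ to swap them before the terms can be matched.
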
  
\begin{proof}
A simple degree bookkeeping shows that the degree 
of $v\cdot f$ is $|v|+ |f|$\,.

Then we need to check that for two homogeneous vectors  
$v,w\in \cL_{\cO}$ we have  
\begin{equation}
\label{need-action}
[v,w] \cdot f (1_r)  = (v\cdot  (w \cdot f)) (1_r) - 
(-1)^{|v||w|} (w \cdot (v \cdot f))  (1_r) 
\end{equation}

Using the definition of the operation $\cdot$ and 
the associativity axiom for the operad structure on $\cO$
we get
\begin{equation}
\label{rhs-equals}
 (v\cdot  (w \cdot f)) (1_r) - 
(-1)^{|v||w|} (w \cdot (v \cdot f))  (1_r) =  
\end{equation}
$$
(-1)^{|f| (|v|+|w|) + |v||w|}
\sum_{p \ge 1 \, q \ge 0} ~ 
\sum_{\tau \in \Sh_{p, q, r- p- q}} ~
\mu_{\bt^{p, q}_{\tau}} (f(1_{r-p-q+1}) \otimes w(1^{\mc}_{q+1}) 
\otimes v(1^{\mc}_{p}))
$$
$$
+(-1)^{|f| (|v|+|w|) + |v||w|}
\sum_{p, q \ge 1} ~ 
\sum_{\tau \in \Sh_{p, q, r- p- q}} ~
\mu_{\wt{\bt}^{p, q}_{\tau}} (f(1_{r-p-q+2}) \otimes w(1^{\mc}_{q}) 
\otimes v(1^{\mc}_{p})) 
$$
$$
-(-1)^{|v| |w|} (v \leftrightarrow w)\,,
$$ 
where the trees $\bt^{p, q}_{\tau}$ and $\wt{\bt}^{p, q}_{\tau}$ 
are depicted on figures \ref{fig:bt-tau-p-q}  and \ref{fig:wtbt-tau-p-q} , respectively.
\begin{figure}[htp]
\centering 
\begin{tikzpicture}[arr/.style={-triangle 60}]
\tikzstyle{w}=[circle, draw, minimum size=3, inner sep=1]
\tikzstyle{b}=[circle, draw, fill, minimum size=3, inner sep=1]
\node[b] (v1) at (4,0) {};
\node[w] (v2) at (4,1) {};
\node[w] (v3) at (2,3) {};
\node[b, label=90:{\small $\tau(p+q+1)$}] (v4) at (3.5,3) {};
\node[b, label=90:{\small $\tau(r)$}] (v5) at (5.5,3) {};
\node[b, label=90:{\small $r+1$}] (v6) at (7,4) {};
\node[b, label=90:{\small $r+n$}] (v7) at (9,4) {};

\node[w] (v8) at (1,4.5) {};
\node[b, label=90:{\small $\tau(p+1)$}] (v11) at (2,4.5) {};
\node[b, label=90:{\small $\tau(p+q)$}] (v12) at (3.5,4.5) {};
\node[b, label=90:{\small $\tau(1)$}] (v9) at (0.3,5.5) {};
\node[b, label=90:{\small $\tau(p)$}] (v10) at (1.7,5.5) {};

\node at (7.5,3.6) {$\cdots$};
\node at (4.4,2.7) {$\cdots$};
\node at (2.6,4.2) {$\cdots$};
\node at (1.1,5.3) {$\cdots$};

\draw (v1) edge (v2);
\draw (v2) edge (v3);
\draw (v2) edge (v4);
\draw (v2) edge (v5);
\draw (v2) edge (v6);
\draw (v2) edge (v7);
\draw (v8) edge (v9);
\draw (v8) edge (v10);
\draw (v3) edge (v8);
\draw (v3) edge (v11);
\draw (v3) edge (v12);
\end{tikzpicture}
\caption{\label{fig:bt-tau-p-q} The tree  $\bt^{p, q}_{\tau}$}
\end{figure} 
\begin{figure}[htp]
\centering 
\begin{tikzpicture}[
xst/.style={draw, cross out, minimum size=5, }, 
arr/.style={-triangle 60}]
\tikzstyle{w}=[circle, draw, minimum size=3, inner sep=1]
\tikzstyle{b}=[circle, draw, fill, minimum size=3, inner sep=1]
\node[b] (v1) at (4,0) {};
\node[w] (v2) at (4,1) {};
\node[w] (v3) at (1.5,2) {};
\node[w] (v6) at (3,3) {};
\node[b , label=90:{\small $\tau(p+q+1)$}] (v9) at (5,3) {};
\node[b , label=90:{\small $\tau(r)$}] (v10) at (6.5,3) {};
\node[b , label=90:{\small $r+1$}] (v11) at (9,4) {};
\node[b, label=90:{\small $r+n$}] (v12) at (11,4) {};
\node[b , label=90:{\small $\tau(1)$}] (v4) at (0.5,3) {};
\node[b , label=90:{\small $\tau(p)$}] (v5) at (2,3) {};
\node[b , label=90:{\small $\tau(p+1)$}] (v7) at (2.2,4) {};
\node[b , label=90:{\small $\tau(p+q)$}] (v8) at (3.8,4) {};
\node at (9.2,3.6) {$\cdots$};
\node at (5.4,2.6) {$\cdots$};
\node at (3.1,3.8) {$\cdots$};
\node at (1.4,2.8) {$\cdots$};
\draw (v1) edge (v2);
\draw (v2) edge (v3);
\draw (v3) edge (v4);
\draw (v3) edge (v5);
\draw (v2) edge (v6);
\draw (v6) edge (v7);
\draw (v6) edge (v8);
\draw (v2) edge (v9);
\draw (v2) edge (v10);
\draw (v2) edge (v11);
\draw (v2) edge (v12);
\end{tikzpicture}
\caption{\label{fig:wtbt-tau-p-q} The tree $\wt{\bt}^{p, q}_{\tau}$}
\end{figure}

Since $f(1_{r-p-q+2})$ is invariant with respect to the action 
of $S_{r-p-q+2}$ the sums involving $\mu_{\wt{\bt}^{p, q}_{\tau}}$
cancel each other.  
  
Furthermore, it is not hard to see that the sums involving 
$\mu_{\bt^{p, q}_{\tau}}$ form the expression 
$$
[v,w] \cdot f (1_r)\,.
$$
Thus equation \eqref{need-action} follows. 

Due to Exercise \ref{exer:oper-deriv} below, the operation 
$f \mapsto v \cdot f$ is an operadic derivation. 
Proposition \ref{prop:cL-action1} is proved. 
\end{proof}
\begin{exer}
\label{exer:oper-deriv}
Prove that for every triple of homogeneous vectors 
$f \in \wt{\Tw}\cO(n)$,  $g \in \wt{\Tw}\cO(k)$, and 
$v \in \cL_{\cO}$ we have 
$$
v \cdot (f \circ_i g) = (v \cdot f) \circ_i g + 
(-1)^{|v||f|} f \circ_i (v \cdot g) \qquad 
\forall~~  1 \le i \le n\,.
$$
\end{exer}

\subsection{The action of $\cL_{\cO}$ on $\wt{\Tw}\cO$}   
   
Let us view   $\wt{\Tw}\cO(1)$ as the dg Lie algebra with 
the bracket being commutator. 
 
We have an obvious degree zero map 
$$
\ka : \cL_{\cO} \to \wt{\Tw}\cO(1)
$$
defined by the formula
\begin{equation}
\label{kappa}
\ka(v)(1_{r}) = v(1^{\mc}_{r+1})\,,
\end{equation}
where, as above, $1_r$ is the generator $\bs^{-2r}\, 1 \in \bs^{-2r}\, \bbK$ 
and $1^{\mc}_r$ is the generator $\bs^{2-2r}\, 1 \in \La^2 \coCom(r) \cong \bs^{2-2r}\, \bbK$\,.
 
We have the following proposition. 
\begin{prop}
\label{prop:Theta}
Let us form the semi-direct product $\cL_{\cO} \ltimes \wt{\Tw}\cO(1)$
of the dg Lie algebras $\cL_{\cO}$ and $\wt{\Tw}\cO(1)$ 
using the action of $\cL_{\cO}$ on $\wt{\Tw}\cO$ defined 
in Proposition \ref{prop:cL-action1}. Then the formula 
\begin{equation}
\label{Theta}
\Te(v) = v + \ka(v)
\end{equation}
defines a Lie algebra homomorphism 
$$
\Te: \cL_{\cO} \to  \cL_{\cO} \ltimes \wt{\Tw}\cO(1)\,.
$$
\end{prop}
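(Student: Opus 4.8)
The plan is to unwind the semi-direct product bracket and reduce the statement to a single combinatorial tree identity. Recall that for dg Lie algebras $\mg$ acting on $\mh$ the bracket on $\mg \ltimes \mh$ is $[(v,a),(w,b)] = \big([v,w],\,[a,b] + v\cdot b - (-1)^{|v||w|} w\cdot a\big)$. Since $\Te(v) = (v,\ka(v))$ has the identity as its first component, the equality $\Te([v,w]) = [\Te(v),\Te(w)]$ holds automatically in the $\cL_{\cO}$-slot, so Proposition \ref{prop:Theta} is equivalent to two assertions: (i) $\ka$ is a morphism of cochain complexes, and (ii) for homogeneous $v,w\in\cL_{\cO}$ one has
\[
\ka([v,w]) \;=\; [\ka(v),\ka(w)] \;+\; v\cdot\ka(w) \;-\; (-1)^{|v||w|}\, w\cdot\ka(v)
\]
in $\wt{\Tw}\cO(1)$, where the bracket on the right is the commutator for $\c_1$.

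For (i) I would observe that $\La^2\coCom$ carries the zero differential and that the shifts $\bs^{2n-2}$, $\bs^{2r}$ in \eqref{cL-cO} and \eqref{pre-Tw-cO} are even, so the differentials on $\cL_{\cO}=\Conv(\La^2\coCom,\cO)$ and on $\wt{\Tw}\cO(1)$ are both induced componentwise from $\pa_{\cO}$. Since $\ka$ merely selects, for each $r$, the arity-$(r+1)$ component of $v$ (an $S_{r+1}$-, hence $S_r$-, invariant vector of $\cO(r+1)$) and since $1^{\mc}_{r+1}$ is of even degree, $\ka$ commutes with $\pa_{\cO}$; thus $\ka\pa=\pa\ka$.

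For (ii) the main simplification is that, because $\La^2=\La\otimes\La$, the co-insertion signs of $\La^2\coCom$ are all trivial (the sign of \eqref{La-co-ins} gets squared) and all generators $1^{\mc}_n$ are even, so the cooperad contributes no Koszul sign in the convolution product \eqref{Conv-bullet}. Writing $v_n:=v(1^{\mc}_n)\in\cO(n)$, one expands $\ka([v,w])(1_r)=[v,w](1^{\mc}_{r+1})$ as a sum over the two-nodal trees of Figure \ref{fig:shuffle} indexed by shuffles, getting $\sum_{p\ge 1}\sum_{\tau\in\Sh_{p,r+1-p}}\mu_{\bt_\tau}(v_{r-p+2}\otimes w_p) - (-1)^{|v||w|}(v\leftrightarrow w)$. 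On the other side, \eqref{circ-i-for-Tw-cO} with $n=m=1$, $i=1$ yields $\ka(v)\c_1\ka(w)(1_r)=\sum_p\sum_{\si\in\Sh_{p,r-p}}\mu_{\bt_{\si,1}}(v_{p+1}\otimes w_{r-p+1})$, while \eqref{eq:cL-action1}, applied with $f=\ka(w)$ resp. $f=\ka(v)$, gives $-(-1)^{|v||w|}w\cdot\ka(v)(1_r)=\sum_{p\ge 1}\sum_{\si\in\Sh_{p,r-p}}\mu_{\bt'_{\si,p,r-p}}(v_{r-p+2}\otimes w_p)$ and symmetrically for $v\cdot\ka(w)$. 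Grouping the four families on the right-hand side according to whether $v$ or $w$ decorates the lower nodal vertex, assertion (ii) reduces to the single identity
\[
v\bullet w(1^{\mc}_{r+1}) = \sum_{p=0}^{r}\sum_{\si\in\Sh_{p,r-p}}\mu_{\bt_{\si,1}}(v_{p+1}\otimes w_{r-p+1}) + \sum_{p=1}^{r}\sum_{\si\in\Sh_{p,r-p}}\mu_{\bt'_{\si,p,r-p}}(v_{r-p+2}\otimes w_p),
\]
which I would prove by splitting the left-hand sum according to whether the leaf labelled $r+1$ lies at the upper nodal vertex of $\bt_\tau$ or at the lower one: the first case recovers, after matching planar structures via the $S$-symmetry of the $v_\bullet$ and $w_\bullet$, the $\bt_{\si,1}$-family of Figure \ref{fig:bt-si-i}, and the second the $\bt'_{\si,p,r-p}$-family of Figure \ref{fig:bt-si-p-r}.

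I expect the principal obstacle to be the precise matching of planar structures together with the Koszul-sign bookkeeping in this last identity: one must verify that, under the leaf-position dichotomy, the shuffles $\tau$ on the left correspond bijectively to the pairs $(\si,p)$ on the right and that the reindexing introduces no stray sign, and one must treat the degenerate summands ($p=0$, $p=r+1$, and small $r$) by hand. This is bookkeeping rather than conceptual; the cases $r=0$ (where only the $\bt_{\si,1}$-family survives and the identity collapses to $[v_1,w_1]=[\ka(v)(1_0),\ka(w)(1_0)]$ in $\cO(1)$) and $r=1$ (where both families already appear) serve as useful sanity checks.
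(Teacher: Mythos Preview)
Your proposal is correct and follows essentially the same route as the paper's proof: reduce $\Te$ being a Lie algebra map to the identity $\ka([v,w]) = [\ka(v),\ka(w)] + v\cdot\ka(w) - (-1)^{|v||w|} w\cdot\ka(v)$, then verify this at the $\bullet$-level by splitting the shuffle sum defining $v\bullet w(1^{\mc}_{r+1})$ according to whether the leaf labelled $r+1$ is attached to the upper or lower nodal vertex, using the $S_t$-invariance of $v(1^{\mc}_t)$ to match the resulting families with the trees $\bt_{\si,1}$ and $\bt'_{\si,p,r-p}$. The paper omits your step (i) on compatibility with differentials, as only the Lie algebra (not dg) homomorphism property is claimed, but your observation there is correct and harmless.
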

\begin{proof}
First, let us prove that 
for every pair of homogeneous vectors 
$v, w \in \cL_{\cO}$ we have 
\begin{equation}
\label{ka-property}
\ka([v,w])
= [\ka(v), \ka(w)] + v \cdot \ka(w) 
- (-1)^{|v||w|} w \cdot \ka(v)\,.
\end{equation}

Indeed, unfolding the definition of $\ka$ we 
get\footnote{Here we use the notation \eqref{new-notation} introduced in Subsection \ref{sec:ps-op}.}  
\begin{equation}
\label{ka-bracket} 
\ka([v,w])(1_r) =   \sum_{p = 1}^{r}
\sum_{\tau \in \Sh_{p,r-p}}  v_{r-p+2} \big( w_p(\tau(1), \dots,  \tau(p)), \tau(p+1), 
\dots, \tau(r), r+1 \big) 
\end{equation}
$$
 + \sum_{p = 0}^{r}
\sum_{\tau \in \Sh_{p,r-p}}  v_{r-p+1} \big( w_{p+1}(\tau(1), \dots,  \tau(p), r+1), \tau(p+1), 
\dots, \tau(r) \big) 
$$
$$
- (-1)^{|v||w|} (v \leftrightarrow w)\,,
$$
where $v_{t} = v(1^{\mc}_{t})$ and $w_t = w(1^{\mc}_t)$\,. 

The first sum in \eqref{ka-bracket} equals 
$$
- (-)^{|v||w|}( w \cdot \ka(v)) (1_r)\,.
$$
Furthermore, since $v(1^{\mc}_{t})$ is invariant under 
the action of $S_t$, we see that the second sum 
in \eqref{ka-bracket} equals
$$
\big(\ka(v) \circ_1  \ka(w) \big)\, (1_r)\,.
$$
Thus equation \eqref{ka-property} holds.

Now, using  \eqref{ka-property}, it is easy to see that 
$$
[ v + \ka(v),  w + \ka(w)] = [v,w] + v \cdot \ka(w) - 
(-1)^{|v||w|} w \cdot \ka(v) + [\ka(v), \ka(w)]= 
$$
$$
= [v,w] + \ka([v,w])
$$
and the statement of proposition follows.
\end{proof}

The following corollaries are immediate consequences 
of Proposition  \ref{prop:Theta}
\begin{cor}
\label{cor:the-action}
For  $v \in \cL_{\cO}$ and $f \in \wt{\Tw}\cO(n)$ 
the formula
\begin{equation}
\label{the-action}
f \to v \cdot f + \de_{\ka(v)} (f)   
\end{equation}
defines an action of  the Lie algebra $\cL_{\cO}$
on the operad $\wt{\Tw}\cO$\,.
\end{cor}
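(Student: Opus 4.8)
The plan is to factor the assignment \eqref{the-action} through the semi-direct product $\cL_{\cO}\ltimes\wt{\Tw}\cO(1)$ of Proposition \ref{prop:Theta}. First I would exhibit an action of this semi-direct product on $\wt{\Tw}\cO$ by operadic derivations: to a homogeneous pair $(v,q)$ assign the $\bbK$-linear endomorphism $\rho(v,q)$ of $\bigoplus_{n\ge 0}\wt{\Tw}\cO(n)$ given by $f\mapsto v\cdot f + \de_q(f)$, where $v\cdot f$ is the operation \eqref{eq:cL-action1} and $\de_q$ is the intrinsic derivation \eqref{cO-1-deriv} of the dg operad $\wt{\Tw}\cO$ attached to $q\in\wt{\Tw}\cO(1)$. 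By Exercise \ref{exer:oper-deriv} the summand $f\mapsto v\cdot f$ is an operadic derivation, and by Proposition \ref{prop:cO-1-deriv} (applied to $\wt{\Tw}\cO$) so is $\de_q$; hence $\rho(v,q)$ is an operadic derivation, of degree $|v|+|q|$ and $\bbK$-linear in $(v,q)$. Once $\rho$ is known to be a homomorphism of graded Lie algebras into the Lie algebra $\Der(\wt{\Tw}\cO)$ of operadic derivations of $\wt{\Tw}\cO$, composing it with the homomorphism $\Te$ of Proposition \ref{prop:Theta} yields a homomorphism $\cL_{\cO}\to\Der(\wt{\Tw}\cO)$ whose value on $v$ is $f\mapsto v\cdot f + \de_{\ka(v)}(f)$ --- precisely \eqref{the-action}. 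So the task reduces to showing $\rho$ is a Lie algebra homomorphism.

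For this I would expand, for homogeneous $(v,p)$ and $(w,q)$, the graded commutator $[\rho(v,p),\rho(w,q)]$ into $[v\cdot(-),w\cdot(-)] + [v\cdot(-),\de_q] + [\de_p,w\cdot(-)] + [\de_p,\de_q]$. The first term equals $[v,w]\cdot(-)$ by the action identity \eqref{need-action} of Proposition \ref{prop:cL-action1}, and the last equals $\de_{[p,q]}$ by identity \eqref{de-Lie} for $\wt{\Tw}\cO$. The two cross-terms are governed by a single compatibility identity,
\[ \bigl[\, v\cdot(-)\,,\, \de_q \,\bigr] \;=\; \de_{v\cdot q}\,,\qquad v\in\cL_{\cO},\ q\in\wt{\Tw}\cO(1), \]
together with graded antisymmetry of the commutator (and $|\ka(v)|=|v|$). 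Granting it, $[\rho(v,p),\rho(w,q)] = [v,w]\cdot(-) + \de_{v\cdot q} - (-1)^{|p|\,|w|}\de_{w\cdot p} + \de_{[p,q]}$, which is $\rho$ evaluated on the semi-direct product bracket $\bigl([v,w],\ v\cdot q - (-1)^{|p|\,|w|}w\cdot p + [p,q]\bigr)$; for $p=\ka(v)$, $q=\ka(w)$ this pair equals $\Te([v,w])$ by \eqref{ka-property}, which is the desired sanity check on $\rho\circ\Te$.

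The only genuine computation is the compatibility identity. I would unfold $[v\cdot(-),\de_q](f) = v\cdot\de_q(f) - (-1)^{|v|\,|q|}\de_q(v\cdot f)$ for $f\in\wt{\Tw}\cO(n)$, write out $\de_q$ via \eqref{cO-1-deriv}, and move $v\cdot(-)$ across each elementary insertion using the Leibniz rule of Exercise \ref{exer:oper-deriv}; the contributions $q\circ_1(v\cdot f)$ and $\sum_{i}(v\cdot f)\circ_i q$ cancel in pairs, leaving exactly $(v\cdot q)\circ_1 f - (-1)^{|f|\,|v\cdot q|}\sum_{i=1}^n f\circ_i(v\cdot q) = \de_{v\cdot q}(f)$. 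No combinatorics of the trees $\bt'_{\si,p,r-p}$ is needed beyond the already-proved Leibniz rule, so the only point requiring care is the Koszul sign bookkeeping --- reconciling the signs coming from Exercise \ref{exer:oper-deriv} and \eqref{cO-1-deriv} with those built into the semi-direct product bracket. That is the main (minor) obstacle; everything else is assembly of results already in hand.
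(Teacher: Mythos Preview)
Your proposal is correct and is exactly the argument the paper has in mind: the corollary is stated as an ``immediate consequence'' of Proposition~\ref{prop:Theta} with no further proof, and the implicit reasoning is precisely to let the semi-direct product $\cL_{\cO}\ltimes\wt{\Tw}\cO(1)$ act on $\wt{\Tw}\cO$ by combining the action of Proposition~\ref{prop:cL-action1} with intrinsic derivations, and then to pull back along $\Te$. You have simply made explicit the one step the paper omits---that the combined assignment $(v,q)\mapsto v\cdot(-)+\de_q$ is a Lie algebra homomorphism, which reduces via Exercise~\ref{exer:oper-deriv} to the compatibility identity $[v\cdot(-),\de_q]=\de_{v\cdot q}$---and your sign verification for that identity is correct.
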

\begin{cor}
\label{cor:MC-MC}
For every Maurer-Cartan element $\vf \in \cL_{\cO}$, the sum
$$
\vf + \ka(\vf)
$$
is a Maurer-Cartan element of the Lie algebra $ \cL_{\cO} \ltimes \wt{\Tw}\cO(1)$\,.
\end{cor}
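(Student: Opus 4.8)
The plan is to deduce this at once from Proposition \ref{prop:Theta}, once we observe that the homomorphism $\Te = \id + \ka$ is not merely a morphism of Lie algebras but a morphism of \emph{dg} Lie algebras $\cL_{\cO} \to \cL_{\cO} \ltimes \wt{\Tw}\cO(1)$. Indeed, any morphism $g$ of dg Lie algebras carries Maurer-Cartan elements to Maurer-Cartan elements: if $\pa \vf + \frac{1}{2}[\vf,\vf] = 0$ then $\pa\, g(\vf) + \frac{1}{2}[g(\vf), g(\vf)] = g\big(\pa \vf + \frac{1}{2}[\vf,\vf]\big) = 0$. So, granting Proposition \ref{prop:Theta}, the only remaining point is the compatibility of $\Te$ with the differentials.

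Since $\Te(v) = v + \ka(v)$ and the differential on the semi-direct product $\cL_{\cO} \ltimes \wt{\Tw}\cO(1)$ is the componentwise one, this compatibility reduces to the identity $\pa\, \ka(v) = \ka(\pa v)$ for all $v \in \cL_{\cO}$. This is essentially immediate from the explicit formula \eqref{kappa}: the differential on $\cL_{\cO} = \Conv(\La^2 \coCom, \cO)$ and the differential on $\wt{\Tw}\cO(1) = \prod_{r \ge 0} \bs^{2r}\big(\cO(r+1)\big)^{S_r}$ are both induced componentwise by $\pa_{\cO}$ (the cooperad $\La^2\coCom$ being concentrated in a single internal degree in each arity and carrying the zero differential), while $\ka$ is given by the plain reindexing $\ka(v)(1_r) = v(1^{\mc}_{r+1})$. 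Thus $\pa$ and $\ka$ operate on different slots and commute on the nose, the degree shifts $\bs^{2-2(r+1)} = \bs^{-2r}$ matching precisely so that $\ka$ is of degree $0$, as already noted. Hence $\Te$ is a morphism of dg Lie algebras, and $\Te(\vf) = \vf + \ka(\vf)$ is a Maurer-Cartan element of $\cL_{\cO} \ltimes \wt{\Tw}\cO(1)$.

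I expect no genuine obstacle here: the argument is bookkeeping rather than conceptual, the one substantive computation (identity \eqref{ka-property} for the bracket) already being contained in the proof of Proposition \ref{prop:Theta}. The only care needed is to be explicit that the differentials on $\cL_{\cO}$, on $\wt{\Tw}\cO(1)$, and on their semi-direct product are all the obvious ones induced by $\pa_{\cO}$, so that Proposition \ref{prop:Theta} — proved while ignoring differentials — upgrades for free to the dg setting.
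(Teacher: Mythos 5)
Your proposal is correct and follows exactly the route the paper intends: Corollary \ref{cor:MC-MC} is stated as an immediate consequence of Proposition \ref{prop:Theta}, and the only detail left implicit there is the one you supply, namely that $\Te = \id + \ka$ commutes with the differentials (which holds since $\La^2\coCom$ carries the zero differential, so the differentials on $\cL_{\cO}$, on $\wt{\Tw}\cO(1)$, and on the semi-direct product are all post-composition with $\pa_{\cO}$, and $\ka$ is a plain reindexing). With that, the standard fact that a dg Lie algebra morphism preserves Maurer-Cartan elements finishes the argument, just as in the paper.
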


We finally give the definition of the operad $\Tw \cO$. 
\begin{defi}
\label{dfn:Tw-cO}
Let $\cO$ be an operad in $\Ch_{\bbK}$ and 
$\vf$ be a Maurer-Cartan element in $\cL_{\cO}$ \eqref{cL-cO} corresponding 
to an operad morphism  $\wh{\vf}$ \eqref{from-hoLie}. 
Let us also denote by $\pa^{\cO}$ the differential on 
$\wt{\Tw}\cO$ coming from the one on $\cO$\,. 
We define the operad $\Tw \cO$ in $\Ch_{\bbK}$ 
by declaring that $\Tw\cO = \wt{\Tw}\cO$ as operads 
in $\grVect_{\bbK}$ and letting 
\begin{equation}
\label{pa-Tw}
\pa^{\Tw} = \pa^{\cO} + \vf \cdot  \,+ \, \de_{\ka(\vf)}
\end{equation}
be the differential on $\Tw\cO$\,.
\end{defi}
Corollaries \ref{cor:the-action} and \ref{cor:MC-MC}
imply that  $\pa^{\Tw}$ is indeed a differential on $\Tw\cO$\,.

\begin{rem}
\label{rem:Tw-cO-0}
It is easy to see that, if $\cO(0)= \bfzero$ then 
the cochain complexes $\bs^{-2}\Tw\cO(0)$ and 
$\cL_{\cO}$ \eqref{cL-cO} are tautologically 
isomorphic. 
\end{rem}

\subsection{Algebras over $\Tw \cO$}
Let us assume that  $V$ is an algebra over $\cO$ equipped with 
a complete descending filtration  (\ref{filtr-V}). We also
assume that the $\cO$-algebra structure on $V$ is 
compatible with this filtration. 

Given a Maurer-Cartan element $\al\in F_1 V$ the formula
\begin{equation}
\label{tw-diff}
\pa^{\al} (v) = \pa(v) + \sum_{r=1}^{\infty} \frac{1}{r!} 
\vf(1^{\mc}_{r+1})(\,\underbrace{\al, \dots, \al}_{r\textrm{ times}}, v)
\end{equation}
defines a new (twisted) differential on $V$\,.

We will denote by $V^{\al}$ the cochain complex 
$V$ with this new differential. 

In this setting we have the following theorem
\begin{thm}
\label{thm:twisting}
If  $V^{\al}$ is the cochain complex obtained 
from $V$ via twisting the differential by $\al$
then the formula 
\begin{equation}
\label{twisting}
f(v_1, \dots, v_n) = 
\sum_{r=0}^{\infty} \frac{1}{r!} f(1_r) (\al, \dots, \al, v_1, \dots, v_n)
\end{equation}
$$
f\in \Tw\cO(n)\,,  \qquad v_i \in V\,, \qquad 1_r = \bs^{-2r}\, 1 \in  \bs^{-2r}\, \bbK
$$
defines a $\Tw\cO$-algebra structure on $V^{\al}$\,. 
\end{thm}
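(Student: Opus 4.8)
The plan is to prove that the formula \eqref{twisting} defines a morphism of operads $\Tw\cO \to \End_{V^\al}$ in the sense of Example \ref{ex:End}; unwinding the definition of an algebra over an operad, this is precisely the content of the theorem. Write $\rho\colon\cO\to\End_V$ for the operad map giving the $\cO$-algebra structure, $\mu_\cV$ for the corresponding multiplications \eqref{mu-cV}, and, for $f\in\Tw\cO(n)$, denote by $\bar f\in\End_{V^\al}(n)$ the operation produced by the right-hand side of \eqref{twisting}; abbreviate $v_\bullet=v_1,\dots,v_n$. First I would dispose of two bookkeeping points. Convergence: since $\al\in F_1V$ and $\rho$ is compatible with the complete filtration \eqref{filtr-V}, the term of \eqref{twisting} indexed by $r$ lies in $F_rV$, so the series converges in $V=\lim_k V/F_kV$. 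Degrees: an element $f\in\Tw\cO(n)$ of degree $|f|$ has component $f(1_r)\in\cO(r+n)$ of degree $|f|-2r$, and $\al$ has degree $2$, so $\bar f$ has degree $|f|$; in particular $(-1)^{|f(1_r)|}=(-1)^{|f|}$ since $2r$ is even, and no sign is ever produced by permuting copies of $\al$.

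Next I would check that $f\mapsto\bar f$ is a morphism of operads in $\grVect_\bbK$, i.e.\ forgetting differentials. Equivariance is immediate, as permuting $v_1,\dots,v_n$ is just the action of $S_n\subset S_{r+n}$ on the last $n$ inputs; the unit is handled by \eqref{bu}, since only the $r=0$ term of $\bar\bu$ survives and equals $\bu_\cO$, whence $\bar\bu=\id$. The substantive point is compatibility with $\circ_i$. Expanding $\overline{f\circ_i g}(v_1,\dots,v_{n+m-1})$ by means of \eqref{circ-i-for-Tw-cO} and the tree $\bt_{\si,i}$ of Figure \ref{fig:bt-si-i}, one observes that once every $\al$-slot is filled with the single vector $\al$ the shuffle sum over $\Sh_{p,r-p}$ collapses into $\binom{r}{p}$ equal summands; combining the weights via $\tfrac1{r!}\binom{r}{p}=\tfrac1{p!\,(r-p)!}$ and invoking the associativity of the $\cO$-action (that $\mu_\cV\big((a\circ_j b)\otimes-\big)$ equals the iterated multiplication) turns the expression into $\bar f\big(v_1,\dots,v_{i-1},\bar g(v_i,\dots,v_{i+m-1}),v_{i+m},\dots\big)$, which is exactly $\bar f\circ_i\bar g$ in $\End_{V^\al}$.

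The heart of the argument is that $f\mapsto\bar f$ is a chain map, i.e.\ $\overline{\pa^{\Tw}f}=\pa^\al\circ\bar f-(-1)^{|f|}\bar f\circ\pa^\al_\otimes$, where $\pa^\al_\otimes$ is the induced differential on $(V^\al)^{\otimes n}$ and $\pa^{\Tw}=\pa^\cO+\vf\cdot(-)+\de_{\ka(\vf)}$ by \eqref{pa-Tw}. I would split this into three contributions. For $\de_{\ka(\vf)}$: by \eqref{cO-1-deriv} and $|\vf|=1$ we have $\de_{\ka(\vf)}(f)=\ka(\vf)\circ_1 f-(-1)^{|f|}\sum_i f\circ_i\ka(\vf)$, and the $\circ_i$-compatibility just proved, together with \eqref{kappa} and \eqref{tw-diff}, shows that $\overline{\ka(\vf)}$ is precisely the operation $\pa^\al-\pa_V$ on $V$ (the $r=0$ term drops out because $\vf$ has no $\cO(1)$-component); hence this summand reproduces exactly the part of $\pa^\al\circ\bar f-(-1)^{|f|}\bar f\circ\pa^\al_\otimes$ coming from $\pa^\al-\pa_V$. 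For $\pa^\cO$: since $\rho$ is a chain map, $\mu_\cV\big(\pa_\cO(f(1_r))\otimes\al^{\otimes r}\otimes v_\bullet\big)=\pa_V\mu_\cV\big(f(1_r)\otimes\al^{\otimes r}\otimes v_\bullet\big)-(-1)^{|f|}\mu_\cV\big(f(1_r)\otimes\pa_\otimes(\al^{\otimes r}\otimes v_\bullet)\big)$; summing over $r$ with weights $1/r!$, the $\pa_Vv_i$ terms assemble into $-(-1)^{|f|}\bar f\circ\pa_{V,\otimes}$, leaving the defect $-(-1)^{|f|}\sum_{r\ge1}\tfrac1{(r-1)!}\mu_\cV\big(f(1_r)\otimes\al^{\otimes(r-1)}\otimes\pa_V\al\otimes v_\bullet\big)$. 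Finally, substituting the Maurer--Cartan equation \eqref{MC-eq} in the form $\pa_V\al=-\sum_{m\ge2}\tfrac1{m!}\mu_\cV\big(\vf(1^{\mc}_m)\otimes\al^{\otimes m}\big)$ into this defect and using associativity of $\rho$ together with the $S_r$-invariance of $f(1_r)$ and the $S_m$-invariance of $\vf(1^{\mc}_m)$ to move the insertion to the correct slot, one matches it against $\overline{\vf\cdot f}(v_\bullet)$ computed from \eqref{eq:cL-action1} and the tree $\bt'_{\si,p,r-p}$ of Figure \ref{fig:bt-si-p-r} (again the shuffle sums collapse, the weights agree since $\tfrac1{r!}\binom{r}{p}=\tfrac1{p!\,(r-p)!}$, and the prefactor $-(-1)^{|\vf||f|}=-(-1)^{|f|}$ of \eqref{eq:cL-action1} is the right sign). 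Combining the three contributions yields the chain-map identity, and together with the previous paragraph this completes the proof.

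I expect the main obstacle to be the sign bookkeeping in the last step: one must line up the Koszul signs arising from grafting the decorated trees $\bt'_{\si,p,r-p}$ with those produced by the compatibility of $\rho$ with differentials and by the Maurer--Cartan equation for $\al$, and be scrupulous about the degree-shift conventions ($\bs^{2r}$ on $\Tw\cO(n)$, $\bs^{2-2m}$ on $\La^2\coCom(m)$) that enter the identification of $\{\al,\dots,\al,v\}_m$ with $\mu_\cV\big(\vf(1^{\mc}_m)\otimes\al^{\otimes(m-1)}\otimes v\big)$; cf.\ \eqref{p-cD-n}. (That $\pa^{\Tw}$ and $\pa^\al$ square to zero --- needed only so that source and target are genuine cochain complexes --- is not part of this computation: the former is Corollaries \ref{cor:the-action}--\ref{cor:MC-MC}, the latter follows from the Maurer--Cartan equations for $\vf$ and for $\al$.)
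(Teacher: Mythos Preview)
Your proposal is correct and follows essentially the same route as the paper: first verify compatibility with $\circ_i$ (the paper's equation \eqref{operad-ok}) by collapsing the shuffle sum over identical $\al$-insertions into a binomial coefficient via \eqref{number-shuffles}, then verify the chain-map identity (the paper's equation \eqref{diff-ok}) by matching the three summands of $\pa^{\Tw}$ against the pieces of $\pa^\al$ and invoking the Maurer--Cartan equation \eqref{MC-eq} for $\al$. Your treatment of the $\de_{\ka(\vf)}$ contribution via the observation $\overline{\ka(\vf)}=\pa^\al-\pa_V$ (leveraging the $\circ_i$-compatibility just established) is a slight organizational shortcut over the paper's more direct expansion, but the underlying computation is the same.
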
     
\begin{proof}
Let $f\in \Tw\cO(n)$, $g \in \Tw\cO(k)$, 
$$
f_r := f(1_r) \in \big(\cO(r+n)\big)^{S_r}\,, 
\qquad \textrm{and} \qquad g_r = g(1_r) \in  \big(\cO(r+k)\big)^{S_r}\,. 
$$  

Our first goal is to verify that 
\begin{equation}
\label{operad-ok}
(-1)^{|g| (|v_{i}| + \dots + |v_{i-1}|) }
f(v_1, \dots, v_{i-1}, g(v_i, \dots, v_{i+k-1}), v_{i+k}, \dots, v_{n+k-1}) 
\end{equation}
$$
= f\, \c_i \, g (v_1, \dots, v_{n+k-1})\,. 
$$

The left hand side of \eqref{operad-ok} can be rewritten 
as 
$$
(-1)^{|g| (|v_{i}| + \dots + |v_{i-1}|) }
f(v_1, \dots, v_{i-1}, g(v_i, \dots, v_{i+k-1}), v_{i+k}, \dots, v_{n+k-1})=  
$$
$$
\sum_{p,q \ge 0} \frac{(-1)^{|g| (|v_{i}| + \dots + |v_{i-1}|) } }{p! q!}
f_p(\al, \dots, \al, v_1, \dots, v_{i-1}, g_{q}(\al, \dots, \al, v_i, \dots, v_{i+k-1}), 
 v_{i+k}, \dots, v_{n+k-1})\,.
$$

Using the obvious combinatorial identity
\begin{equation}
\label{number-shuffles}
|\Sh_{p,q}| = \frac{(p+q)!}{p! q!}
\end{equation}
we rewrite the  left hand side of \eqref{operad-ok}  further
$$
\textrm{L.H.S. of \eqref{operad-ok}} = 
$$
$$
\sum_{p, q \ge 0} \frac{ (-1)^{|g| (|v_{i}| + \dots + |v_{i-1}|) }  }{(p+q)!}
|\Sh_{p,q}| f_p(\al, \dots, \al, v_1, \dots, v_{i-1}, g_{q}(\al, \dots, \al, v_i, \dots, v_{i+k-1}), 
 v_{i+k}, \dots, v_{n+k-1}) = 
$$
$$
\sum_{r=0}^{\infty}
\frac{1}{r!} 
\sum_{p= 0}^r
\sum_{\si \in \Sh_{p, r-p}}
\si \circ \vr_{r,p,i} (f_p \circ_{p+i} g_{r-p})  
(\underbrace{\al, \dots, \al}_{r~\textrm{arguments}}, v_1, \dots, v_{n+k-1}),
$$
where $\vr_{r,p,i}$ is the following permutation in $S_r$ 
\begin{equation}
\label{vr-r-p-i}
 \vr_{r,p,i} = 
\left(
\begin{array}{cccccc}
p+1  & \dots  & p+i-1 & p+i & \dots & r+i-1    \\
r+1  &  \dots  & r+i-1 & p+1 & \dots & r  
\end{array}
\right)\,.
 \end{equation}

Thus 
$$
\textrm{L.H.S. of \eqref{operad-ok}} = f\, \c_i \, g (v_1, \dots, v_{n+k-1})
$$
and equation  \eqref{operad-ok} holds.

Next, we need to show that 
\begin{equation}
\label{diff-ok}
\pa^{\Tw}(f)(v_1, \dots, v_n)  = \pa^{\al} f (v_1, \dots, v_n) 
\end{equation}
$$
- (-1)^{|f|} \sum_{i=1}^n (-1)^{|v_{i}| + \dots + |v_{i-1}|}
 f (v_1, \dots, v_{i-1}, \pa^{\al}(v_i),  v_{i+1}, \dots, v_n)
$$

The right hand side of  \eqref{diff-ok} can be rewritten as 
$$
\textrm{R.H.S. of \eqref{diff-ok}} =
$$
$$
\sum_{p\ge 0}\frac{1}{p!} \pa f_p(\al, \dots, \al, v_1, \dots, v_n) + 
\sum_{p\ge 0, q\ge 1}\frac{1}{p! q!} \vf_q(\al, \dots, \al, f_p(\al, \dots, \al, v_1, \dots, v_n))
$$
$$
- (-1)^{|f|} \sum_{i=1}^n \sum_{p\ge 0}
 \frac{(-1)^{|v_{i}| + \dots + |v_{i-1}|}}{p!}
 f_p (\al, \dots, \al, v_1, \dots, v_{i-1}, \pa(v_i),  v_{i+1}, \dots, v_n)
$$
$$
- (-1)^{|f|} \sum_{i=1}^n \sum_{p\ge 0, q\ge 1}
 \frac{(-1)^{|v_{i}| + \dots + |v_{i-1}|}}{p! q!}
 f_p (\al, \dots, \al, v_1, \dots, v_{i-1}, \vf_q(\al, \dots, \al, v_i),  v_{i+1}, \dots, v_n)
$$
where $f_p = f(1_p)$ and $\vf_q = \vf(1^{\mc}_q)$\,.

Let us now add to and subtract from the
right hand side of  \eqref{diff-ok} the sum 
$$
- (-1)^{|f|}\sum_{p\ge 0}\frac{1}{p!} f_{p+1}(\pa \al, \al, \dots, \al, v_1, \dots, v_n)\,.
$$
We get
$$
\textrm{R.H.S. of \eqref{diff-ok}} =
$$
$$
\sum_{p\ge 0}\frac{1}{p!} \pa f_p(\al, \dots, \al, v_1, \dots, v_n) 
- (-1)^{|f|}\sum_{p\ge 0}\frac{1}{p!} f_{p+1}(\pa \al, \al, \dots, \al, v_1, \dots, v_n)
$$
$$
- (-1)^{|f|} \sum_{i=1}^n \sum_{p\ge 0}
 \frac{(-1)^{|v_{i}| + \dots + |v_{i-1}|}}{p!}
 f_p (\al, \dots, \al, v_1, \dots, v_{i-1}, \pa(v_i),  v_{i+1}, \dots, v_n)
$$
$$
+ (-1)^{|f|} \sum_{p\ge 0}\frac{1}{p!} f_{p+1}(\pa \al, \al, \dots, \al, v_1, \dots, v_n)
$$
$$
+\sum_{p\ge 0, q\ge 1}\frac{1}{p! q!} \vf_q(\al, \dots, \al, f_p(\al, \dots, \al, v_1, \dots, v_n))
$$
$$
- (-1)^{|f|} \sum_{i=1}^n \sum_{p\ge 0, q\ge 1}
 \frac{(-1)^{|v_{i}| + \dots + |v_{i-1}|}}{p! q!}
 f_p (\al, \dots, \al, v_1, \dots, v_{i-1}, \vf_q(\al, \dots, \al, v_i),  v_{i+1}, \dots, v_n) = 
$$
$$
(\pa^{\cO} f) (v_1, \dots, v_n) 
$$
$$
+ (-1)^{|f|} \sum_{p\ge 0}\frac{1}{p!} f_{p+1}(\pa \al, \al, \dots, \al, v_1, \dots, v_n)
$$
$$
+\sum_{p\ge 0, q\ge 1}\frac{1}{p! q!} \vf_q(\al, \dots, \al, f_p(\al, \dots, \al, v_1, \dots, v_n))
$$
$$
- (-1)^{|f|} \sum_{i=1}^n \sum_{p\ge 0, q\ge 1}
 \frac{(-1)^{|v_{i}| + \dots + |v_{i-1}|}}{p! q!}
 f_p (\al, \dots, \al, v_1, \dots, v_{i-1}, \vf_q(\al, \dots, \al, v_i),  v_{i+1}, \dots, v_n)\,. 
$$

Due to the Maurer-Cartan equation for $\al$
$$
\pa (\al) + \frac{1}{q!} \vf_q(\al, \al, \dots, \al) = 0
$$
we have 
$$
+ (-1)^{|f|} \sum_{p\ge 0}\frac{1}{p!} f_{p+1}(\pa \al, \al, \dots, \al, v_1, \dots, v_n)=
$$
$$
-(-1)^{|f|} \sum_{p\ge 0, q \ge 2}\frac{1}{p! q!}  
f_{p+1}( \vf_q(\al, \dots, \al), \al, \dots, \al, v_1, \dots, v_n)\,.
$$ 
Hence, using \eqref{number-shuffles} we get 
$$
\textrm{R.H.S. of \eqref{diff-ok}} =
$$
$$
(\pa^{\cO} f) (v_1, \dots, v_n) 
+
(\vf \cdot f )(v_1, \dots, v_n) 
$$
$$
+ \ka(\vf) \,\c_1\, f  (v_1, \dots, v_n)
- (-1)^{|f|} f \, \c_1 \, \ka(\vf) (v_1, \dots, v_n)\,.
$$

Theorem \ref{thm:twisting} is proved.  

\end{proof} 

Let us now observe that the dg operad $\Tw\cO$ is equipped with 
complete descending filtration. Namely,
\begin{equation}
\label{Tw-cO-filtr}
\cF_k \Tw\cO(n) = \{f \in  \Tw\cO(n) ~|~ f(1_r) = 0 \quad \forall ~ r < k \}
\end{equation}
It is clear that the operad structure on $\Tw\cO$ is compatible with 
this filtration.

The endomorphism  operad $\End_V$ also carries 
a complete descending filtration since so does $V$\,.

For this reason it makes sense to give this definition:
\begin{defi}
\label{dfn:}
A \und{filtered  $\Tw\cO$-algebra} is a cochain complex 
$V$ equipped with a complete descending filtration for 
which the operad map 
$$
\Tw\cO \to \End_V
$$
is compatible with the filtrations. 
\end{defi} 
It is easy to see that the $\Tw\cO$-algebra $V^{\al}$ 
from Theorem  \ref{thm:twisting} 
is a filtered $\Tw\cO$-algebra in the sense of this definition. 

Thus Theorem   \ref{thm:twisting} gives us a functor to
the category of filtered  $\Tw\cO$-algebras 
from the category of pairs 
$$
(V, \al)\,,
$$ 
where $V$ is a filtered 
cochain complex equipped with an 
action of the operad $\cO$ and $\al$ is a Maurer-Cartan element in $\cF_1 V$\,.

According to \cite{DeligneTw} this functor establishes an equivalence of 
categories. 

\subsection{A useful modification $\Tw^{\oplus}\cO$}
\label{sec:Tw-oplus}
In practice the morphism \eqref{from-hoLie} often
comes from the map (of dg operads)
$$
\mj : \La\Lie \to \cO\,.
$$

In this case, the above construction of twisting 
is well defined for the suboperad $\Tw^{\oplus}(\cO)\subset \Tw\cO$ with 
\begin{equation}
\label{Tw-oplus}
\Tw^{\oplus}(\cO)(n) = \bigoplus_{r \ge 0} \bs^{2r} \big( \cO(r+n) \big)^{S_r}\,. 
\end{equation}

It is not hard to see that the Maurer-Cartan element 
$$
\vf \in \Conv(\La^2 \coCom, \cO)
$$
corresponding to the composition 
$$
\mj \circ U_{\La\Lie} : \Cobar(\La^2 \coCom) \to \cO
$$
is given by the formula:
\begin{equation}
\label{al-coCom-cO}
\vf  = \mj (\{a_1, a_2\}) \otimes b_1 b_2\,.
\end{equation}

Hence
\begin{equation}
\label{cL-oplus}
\cL^{\oplus}_{\cO} =  \bigoplus_{r \ge 0} \bs^{2r-2} \big( \cO(r) \big)^{S_r}
\end{equation}
is a sub- dg Lie algebra of $\cL_{\cO}$ \eqref{cL-cO}\,.

Specifying general formula \eqref{pa-Tw} to this particular 
case, we see that  the differential $\pa^{\Tw}$ on \eqref{Tw-oplus}
is given by the equation:
\begin{equation}
\label{diff-Tw-cO}
\pa^{\Tw} (v) = - (-1)^{|v|} \sum_{\si \in \Sh_{2, r-1}}
\si \big(v \circ_1 \mj (\{a_1,a_2\}) \big) + 
\sum_{\tau \in \Sh_{1, r}} \tau \big( \mj(\{a_1, a_2\}) \circ_2  v \big)
\end{equation}
$$
- (-1)^{|v|}  \sum_{\tau' \in \Sh_{r, 1}}  \sum_{i=1}^{n}  
\tau'  \circ \vs_{r+1,r+i} \big( v \circ_{r+i} \mj(\{a_1,a_2\}) \big)\,,
$$
where 
$$
v \in  \bs^{2r} \big( \cO(r+n) \big)^{S_r}\,,
$$
and $\vs_{r+1,r+i}$ is the cycle $(r+1, r+2, \dots, r+i)$\,.

\begin{rem}
\label{rem:diff-Tw}
We should remark that, when we apply elementary insertions
in the right hand side of \eqref{diff-Tw-cO}, we view
$v$ and $\mj(\{a_1, a_2\})$ as vectors in $\cO(r+n)$ and 
$\cO(2)$ respectively. The resulting sum in the  right hand 
side of \eqref{diff-Tw-cO} is viewed as a vector in $\Tw\cO(n)$\,.  
\end{rem}

\subsection{Example: The operad $\Tw \Ger$}
\label{sec:TwGer}

Let $\Ger$ be the operad which governs Gerstenhaber algebras 
(see Subsection \ref{sec:Ger}). Since $\La \Lie$ 
receives a quasi-isomorphism \eqref{LaLie-infty-LaLie} from 
$\La\Lie_{\infty}$ and embeds into $\Ger$,  
we have a canonical map 
\begin{equation}
\label{LaLieinfty-Ger}
\La\Lie_{\infty} \to  \Ger\,.
\end{equation}
This section is devoted to the dg operad $\Tw\Ger$ which 
is associated to the operad $\Ger$ and the map  \eqref{LaLieinfty-Ger}.

According to the general procedure of twisting 
\begin{equation}
\label{cL-Ger}
\cL_{\Ger} = \Conv (\La^2 \coCom, \Ger)  = 
\prod_{r =1}^{\infty} \bs^{2r-2} \big( \Ger(r)  \big)^{S_r}
\end{equation}
and the Maurer-Cartan element $\al \in \cL_{\Ger}$  corresponding to the 
map \eqref{LaLieinfty-Ger} equals
\begin{equation}
\label{MC-cL-Ger}
\al =  \{a_1, a_2\}\,.
\end{equation}
The graded vector space $\Tw\Ger(n)$ is the product
\begin{equation}
\label{TwGer-n}
\Tw\Ger(n) = \prod_{r \ge 0} \bs^{2r} \big( \Ger(r+n) \big)^{S_r}\,.
\end{equation}
Furthermore, adapting \eqref{diff-Tw-cO} to this case we get
\begin{equation}
\label{diff-Tw-Ger}
\pa^{\Tw} (v) = - (-1)^{|v|} \sum_{\si \in \Sh_{2, r-1}}
\si \big(v \circ_1 \{a_1,a_2\} \big) + 
\sum_{\tau \in \Sh_{1, r}} \tau \big( \{a_1, a_2\} \circ_2  v \big)
\end{equation}
$$
- (-1)^{|v|}  \sum_{\tau' \in \Sh_{r, 1}}  \sum_{i=1}^{n}  
\tau'  \circ \vs_{r+1,r+i} \big( v \circ_{r+i} \{a_1,a_2\} \big)\,,
$$
where 
$$
v \in  \bs^{2 r} \big( \Ger(r+n) \big)^{S_r}\,,
$$
and $\vs_{r+1,r+i}$ is the cycle $(r+1, r+2, \dots, r+i)$\,.

\begin{exer}
\label{exer:TwGer-sum}
Prove that for every $v \in   \bs^{2r} \Ger(r+n)$
\begin{equation}
\label{ineq:TwGer}
r \le |v| + n  -1\,.
\end{equation}
Similarly, prove that, for every vector  $v \in   \bs^{2r-2} \Ger(r)$
\begin{equation}
\label{ineq:cL-Ger}
r \le |v| + 1\,.
\end{equation}
\end{exer}

Inequalities \eqref{ineq:TwGer} and \eqref{ineq:cL-Ger} imply that 
\begin{equation}
\label{TwGer-n-sum}
\Tw\Ger(n) = \bigoplus_{r = 0}^{\infty} \bs^{2r} \big( \Ger(r+n) \big)^{S_r}
\end{equation}
and 
\begin{equation}
\label{cL-Ger-sum}
\cL_{\Ger} = \bigoplus_{r =1}^{\infty} \bs^{2r-2} \big( \Ger(r)  \big)^{S_r}\,.
\end{equation} 
In other words, $\Tw^{\oplus}\Ger = \Tw\Ger$ and $\cL^{\oplus}_{\Ger} = \cL_{\Ger}$\,.

To give a simpler description of the cochain complexes $\Tw\Ger(n)$
\eqref{TwGer-n-sum} we consider the free Gerstenhaber 
algebra  
$$
\Ger(a, a_1, \dots, a_n)
$$
in $n$ variables $a_1, \dots, a_n$ of degree zero and 
one additional variable $a$ of degree $2$\,. 

We introduce the following (degree $1$) derivation 
\begin{equation}
\label{pa-Ger-a}
\de (a) = \frac{1}{2} \{a,a\}\,, \qquad 
\de (a_i) = 0 \qquad \forall~~ 1 \le i \le n 
\end{equation}
of $\Ger(a, a_1, \dots, a_n)$ and observe that 
$$
\de^2 = 0
$$ 
in virtue of the Jacobi identity. 

Then we denote by $\cG_{n}$ the subspace
\begin{equation}
\label{cG-n}
\cG_{n}  \subset \Ger(a, a_1, \dots, a_n)
\end{equation}
spanned by monomials in which each variable $a_1, a_2, \dots, a_n$ appears 
exactly once. It is obvious that  $\cG_{n}$ is a subcomplex of  $\Ger(a, a_1, \dots, a_n)$\,.

We claim that
\begin{prop}
\label{prop:cG-n}
The cochain complex $\cG_n$ is isomorphic 
to $\Tw\Ger(n)$\,. 
\end{prop}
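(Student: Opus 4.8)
The plan is to establish the isomorphism first at the level of graded vector spaces and then to check it intertwines the differentials $\de$ and $\pa^{\Tw}$. For the first step I would decompose $\cG_n = \bigoplus_{r\ge 0}\cG_n^{(r)}$ according to the number $r$ of occurrences of the variable $a$, so that $\cG_n^{(r)}$ is spanned by monomials containing $a$ exactly $r$ times and each of $a_1,\dots,a_n$ exactly once. Writing the free Gerstenhaber algebra as $\Ger(V) = \bigoplus_{m\ge 0}\big(\Ger(m)\otimes V^{\otimes m}\big)_{S_m}$ with $V = \bbK a\oplus\bbK a_1\oplus\dots\oplus\bbK a_n$, the subspace $\cG_n^{(r)}$ is cut out inside the $m = r+n$ summand by the $S_{r+n}$-submodule $W\subset V^{\otimes(r+n)}$ spanned by the tensor words using $a$ exactly $r$ times and each $a_i$ once. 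Here $W\cong\mathrm{Ind}_{S_r}^{S_{r+n}}\bbK$ with \emph{trivial} (not sign-twisted) coefficients --- this is precisely where $|a| = 2$ being even is used, since transposing two $a$-factors produces no Koszul sign --- and $W$ lives in internal degree $2r$. Hence $\cG_n^{(r)}\cong\bs^{2r}\big(\Ger(r+n)\otimes\mathrm{Ind}_{S_r}^{S_{r+n}}\bbK\big)_{S_{r+n}}\cong\bs^{2r}\big(\Ger(r+n)\big)_{S_r}\cong\bs^{2r}\big(\Ger(r+n)\big)^{S_r}$, the last isomorphism because $\mathrm{char}\,\bbK = 0$. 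Summing over $r$, and using the inequality $r\le|v|+n-1$ of Exercise \ref{exer:TwGer-sum} so that $\Tw\Ger(n)$ really is the direct sum \eqref{TwGer-n-sum}, one obtains a degree-preserving linear isomorphism $\Phi\colon\Tw\Ger(n)\erarrow\cG_n$ which on the level-$r$ component is ``substitute $a$ into inputs $1,\dots,r$ and $a_i$ into input $r+i$'', renormalized by a scalar $\la_r$ per level (morally $\la_r = 1/r!$ as in \eqref{twisting}, possibly up to a sign depending on $r$; the exact value will be forced by the chain-map requirement below).

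For the second step I would show $\de\circ\Phi = \Phi\circ\pa^{\Tw}$. On $\cG_n$ the derivation $\de$ acts on a monomial by the graded Leibniz rule, replacing each occurrence of $a$ by $\frac12\{a,a\}$ with the Koszul sign coming from commuting $\de$ past the preceding factors. Under the identification above, replacing the copy of $a$ in operadic input $j\le r$ of a representative $w$ is the elementary insertion $\frac12\,w\circ_j\{a_1,a_2\}$, where $\{a_1,a_2\}$ now denotes the bracket generator of $\Ger(2)$; afterwards the $r+1$ resulting $a$-inputs must be re-symmetrized. Since $w$ is $S_r$-invariant and $\{a_1,a_2\}$ is $S_2$-invariant, the coset-counting identity $|S_{r+1}|/|S_2\times S_{r-1}| = |\Sh_{2,r-1}|$ shows that the symmetrization of $\sum_{j=1}^r w\circ_j\{a_1,a_2\}$ is a scalar multiple of $\sum_{\si\in\Sh_{2,r-1}}\si\big(w\circ_1\{a_1,a_2\}\big)$, which is exactly the first term of the twisting differential \eqref{diff-Tw-Ger}.

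The heart of the proof is then to account for the two remaining terms of \eqref{diff-Tw-Ger} --- the one built from $\{a_1,a_2\}\circ_2 v$ and the one built from $v\circ_{r+i}\{a_1,a_2\}$, $1\le i\le n$. These correspond to no Leibniz operation on $\cG_n$; instead I expect to prove that, as elements of $\big(\Ger(r+n+1)\big)^{S_{r+1}}$, they add up to a fixed scalar multiple of the first term. The ingredients are the Jacobi identity \eqref{Ger-axiom-Jac} --- bracketing a fresh $a$ against $v$ and symmetrizing the $a$-inputs reproduces, up to a constant, ``substitute $\{a,a\}$ for an existing $a$'' --- together with the Gerstenhaber Leibniz rule \eqref{Ger-axiom1}, which handles the $v\circ_{r+i}\{a_1,a_2\}$ terms when input $r+i$ of $v$ is buried inside a product; in particular at level $r = 0$ it forces the cancellation $\{a,v(a_1,\dots,a_n)\} = \sum_i\big(v$ with $a_i$ replaced by $\{a,a_i\}\big)$, matching the vanishing of $\de$ on $\cG_n^{(0)}$. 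Carrying out this identification while simultaneously keeping track of the Koszul signs --- delicate because $|a| = 2$ and because of the explicit $(-1)^{|v|}$ prefactors in \eqref{diff-Tw-Ger} --- and of the combinatorial coefficients $|\Sh_{\bullet,\bullet}|$ and the factorials will pin down the scalars $\la_r$ and yield the chain-map identity $\de\Phi = \Phi\pa^{\Tw}$. I expect this sign-and-bookkeeping step to be the only genuine obstacle; granting it, $\Phi$ is a degree-preserving bijection commuting with the differentials, hence an isomorphism of cochain complexes $\Tw\Ger(n)\cong\cG_n$, which is the assertion of the proposition.
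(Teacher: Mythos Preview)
Your proposal is correct and follows essentially the same route as the paper. The paper defines the map in the opposite direction, $f\colon\cG_n\to\Tw\Ger(n)$, by sending a monomial with $r$ copies of $a$ to the $S_r$-symmetrization $\sum_{\si\in S_r}\bs^{2r}\si(v')$ of the element $v'\in\Ger(r+n)$ obtained by labeling the $a$'s as $a_1,\dots,a_r$ and shifting the remaining labels up by $r$; this is the inverse of your $\Phi$, with $\la_r=1$ rather than $1/r!$. The chain-map check is left as Exercise~\ref{exer:cG-n-TwGer}, whose solution in Appendix~\ref{app:solutions} carries out exactly the computation you outline: the $\{a_1,a_2\}\circ_2 v$ term is expanded by the Leibniz/Jacobi rules, the summands where the outer $a$ lands on an $a_i$-slot cancel the $v\circ_{r+i}\{a_1,a_2\}$ terms, and what remains combines with the $\Sh_{2,r-1}$ term to give the $\tfrac{1}{2}\{a,a\}$-substitution that is $\de$.
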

\begin{proof}
Indeed, given a monomial $v \in \cG_{n}$ of degree $r$
in $a$ we shift the indices of $a_i$ up by $r$ and replace the 
$r$ factors $a$ in $v$ by $a_1, a_2, \dots, a_r$ in an arbitrary order.
This way we get a monomial $v' \in \Ger(r+n)$\,. It is easy to 
see that the formula 
\begin{equation}
\label{cG-n-TwGer}
f(v) = \sum_{\si \in S_r} \bs^{2r} \si (v')
\end{equation}
defines a linear map of vector spaces 
$$
f : \cG_n \to \Tw\Ger(n) = \bigoplus_{r = 0}^{\infty} \bs^{2r} \big( \Ger(r+n) \big)^{S_r}\,.
$$
For example, 
$$
f(\{a,a\} a_1) = \{a_1,a_2\} a_3 +  \{a_2,a_1\} a_3\,, 
\qquad 
 f(\{a,a_1\} a a_2) = \{a_1,a_3\} a_2 a_4 + \{a_2,a_3\} a_1 a_4\,. 
$$

It is not hard to see that $f$ is an isomorphism of graded vector 
spaces. Furthermore,  $f$ is compatible with the differentials due 
to the following exercise.
\begin{exer}
\label{exer:cG-n-TwGer}
Show that the map 
$$
f : \cG_n \to \Tw\Ger(n) = \bigoplus_{r = 0}^{\infty} \bs^{2r} \big( \Ger(r+n) \big)^{S_r}
$$
defined by \eqref{cG-n-TwGer} is compatible with the differentials $\pa^{\Tw}$ and $\de$\,. 
In other words, 
\begin{equation}
\label{f-pa-de}
f(\de v) = \pa^{\Tw} f (v) \qquad \forall ~~ v \in \cG_n\,.
\end{equation}
\end{exer}

Thus the proposition is proved. 

\end{proof}

Proposition \ref{prop:cG-n} implies that every vector 
$v \in \Ger(n)\subset \Tw\Ger(n)$ is $\pa$-closed.
Therefore, the obvious embedding
\begin{equation}
\label{Ger-TwGer}
i : \Ger \to \Tw\Ger
\end{equation}
is a map of dg operads.

We claim that\footnote{This statement is also proved in \cite{DeligneTw}.} 
\begin{thm}
\label{thm:Ger-TwGer}
The map \eqref{Ger-TwGer} is a quasi-isomorphism of dg operads. 
In particular, the dg Lie algebra $\Conv(\La^2 \coCom, \Ger)$ is acyclic. 
\end{thm}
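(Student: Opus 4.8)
The plan is to compute the cohomology of $\Tw\Ger(n)$ by hand using the explicit model $\cG_n$ of Proposition \ref{prop:cG-n}. Under the isomorphism $\cG_n\cong\Tw\Ger(n)$ of that proposition the suboperad $\Ger\subset\Tw\Ger$ is carried onto the subcomplex of $\cG_n$ spanned by the monomials not involving $a$. Writing $\Ger(a,a_1,\dots,a_n)_{\mathrm m}$ for the span of those monomials of the free Gerstenhaber algebra in which each $a_i$ occurs \emph{exactly once} (with no constraint on $a$), we have $\cG_n=\Ger(a,a_1,\dots,a_n)_{\mathrm m}$ and $\Ger(n)=\Ger(a_1,\dots,a_n)_{\mathrm m}$, the latter carrying zero differential since $\de$ acts only on $a$. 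Because $\de$ preserves the multigrading counting occurrences of each $a_i$, it suffices to prove that the inclusion
$$
\iota : \Ger(a_1,\dots,a_n) \hookrightarrow \big(\Ger(a,a_1,\dots,a_n),\de\big)
$$
is a quasi-isomorphism and then to pass to the direct summand of multidegree $(1,\dots,1)$.

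First I would linearize $\Ger$. Using the classical description of the free non-unital Gerstenhaber algebra on a graded vector space $W$ as $\Ger(W)=\Com\big(\La\Lie(W)\big)$, the free non-unital commutative algebra on the free $\La\Lie$-algebra (this is exactly the PBW-type basis underlying Exercise \ref{exer:Ger-n-basis}), one sees that $\de(a)=\frac{1}{2}\{a,a\}$, being a bracket of generators, preserves the sub-$\La\Lie$-algebra $\La\Lie(W)$, and that the differential on $\Ger(W)=\Com(\La\Lie(W))$ is the one induced on the reduced symmetric algebra by the complex $(\La\Lie(W),\de)$. In characteristic zero the functor $\Com(-)$ preserves quasi-isomorphisms (K\"unneth plus exactness of $S_m$-coinvariants), so it is enough to show the inclusion of sub-$\La\Lie$-algebras $\La\Lie(a_1,\dots,a_n)\hookrightarrow(\La\Lie(a,a_1,\dots,a_n),\de)$ is a quasi-isomorphism. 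Desuspending (a free $\La\Lie$-algebra is the suspension of a free Lie algebra), this becomes the assertion that $\Lie(V')\hookrightarrow\big(\Lie(\bbK x\oplus V'),\de\big)$ is a quasi-isomorphism, where $x$ has degree $1$, $V'=\langle\bsi a_1,\dots,\bsi a_n\rangle$, $\de(x)=\frac{1}{2}[x,x]$ and $\de|_{V'}=0$.

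This last statement I would reduce to the enveloping algebra. The symmetrization map is a natural isomorphism of complexes $S(\mg)\cong U(\mg)$ for any dg Lie algebra $\mg$, so — using $H(S(-))\cong S(H(-))$ in characteristic zero and the faithfulness of $S$ — a map of dg Lie algebras that becomes a quasi-isomorphism after applying $U$ is already one. Hence it suffices to show $T(V')\hookrightarrow\big(T(\bbK x\oplus V'),\de\big)$ is a quasi-isomorphism, where now, $x$ being odd, $\de(x)=x^2$ and $\de|_{V'}=0$. Filtering the tensor algebra by the number of letters drawn from $V'$ — a quantity $\de$ does not change — splits $T(\bbK x\oplus V')$ into a direct sum of subcomplexes, each isomorphic to $\bbK[x]^{\otimes(k+1)}\otimes(V')^{\otimes k}$ with $\de$ acting only through the $\bbK[x]$ factors. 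A one-line computation shows $(\bbK[x],\de x=x^2)$ has cohomology $\bbK$ in degree $0$ (since $\de x^{2j+1}=x^{2j+2}$ and $\de x^{2j}=0$), so K\"unneth gives $H^{\bullet}\big(T(\bbK x\oplus V'),\de\big)=T(V')$, with the quasi-isomorphism realized by the algebra map $x\mapsto 0$; as that map splits the inclusion, the inclusion is a quasi-isomorphism.

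Assembling the steps shows $i:\Ger\to\Tw\Ger$ is a quasi-isomorphism. For the last clause, Remark \ref{rem:Tw-cO-0} gives $\bs^{-2}\Tw\Ger(0)\cong\cL_{\Ger}=\Conv(\La^2\coCom,\Ger)$ because $\Ger(0)=\bfzero$, and the case $n=0$ of what we proved says precisely that $\Tw\Ger(0)$ has the cohomology of $\Ger(0)=\bfzero$, i.e. is acyclic; hence $\Conv(\La^2\coCom,\Ger)$ is acyclic. The main obstacle is the cohomology computation for the twisted free (shifted) Lie algebra; the decisive moves are the factorization $\Ger=\Com\circ\La\Lie$, the passage to the enveloping algebra which turns the self-bracket $\frac{1}{2}[x,x]$ into the square $x^2$ and makes the complex split as a tensor product, and the Koszul-type computation on $\bbK[x]$. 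Keeping track of the various suspensions, and checking that all intermediate quasi-isomorphisms are induced by the evident inclusions and by $a\mapsto 0$ (so that one really concludes that $i$ itself is a quasi-isomorphism, not merely that source and target have isomorphic cohomology), is routine but requires some care.
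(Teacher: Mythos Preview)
Your proof is correct and follows the same overall architecture as the paper: reduce via Proposition \ref{prop:cG-n} to $\cG_n$, use the factorisation $\Ger=\Com\circ\La\Lie$ to pass to the free $\La\Lie$-algebra, and then compute inside a tensor algebra with the differential $\de(x)=x^2$, where the result is immediate by K\"unneth. The one methodological difference is in the Lie-to-tensor step. The paper fixes, for each subset $\{a_{i_1},\dots,a_{i_k}\}$, an explicit linear isomorphism $\nu$ (right-nested brackets with the largest generator innermost) between the relevant Lie subspace $\La\Lie''(a,a_{i_1},\dots,a_{i_k})$ and a subspace $T'$ of the tensor algebra, and checks by hand that $\nu$ intertwines $\de$ with $\de_T$. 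You instead pass to the universal enveloping algebra via PBW and use that the symmetric-algebra functor reflects quasi-isomorphisms in characteristic zero, which lets you work with the whole free Lie algebra at once and avoids choosing a basis. Your route is a bit more conceptual and avoids the case-by-case decomposition by subsets; the paper's route is more hands-on and avoids invoking PBW and the ``$S$ reflects quasi-isomorphisms'' argument. Either way, the heart of the computation---the acyclicity of $(\bbK[x],\de x=x^2)$ in positive degrees and the splitting of the tensor algebra by the number of non-$x$ letters---is the same.
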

\begin{proof}
Let us observe that $\La\Lie(a,a_1, \dots, a_n)$ is 
a subcomplex of $\Ger(a,a_1, \dots, a_n)$\,. Moreover,  
\begin{equation}
\label{Ger-Sym-a}
\Ger(a,a_1, \dots, a_n) = \und{S}(\La\Lie(a,a_1, \dots, a_n))\,, 
\end{equation}
where $\und{S}$ is the notation for the truncated symmetric 
algebra. 

Let us denote by 
\begin{equation}
\label{LaLie-pr}
\La\Lie'(a,a_1, \dots, a_n)
\end{equation}
the subspace of $\La\Lie(a,a_1, \dots, a_n)$ which 
is spanned by monomials involving each variable 
in the set $\{a_1, a_2, \dots, a_n\}$ at most once. 
It is clear that  $\La\Lie'(a,a_1, \dots, a_n)$ is a subcomplex 
in $\La\Lie(a,a_1, \dots, a_n)$. Hence, the subspace 
\begin{equation}
\label{Ger-pr-a}
\Ger'(a,a_1, \dots, a_n) : = \und{S}(\La\Lie'(a,a_1, \dots, a_n))
\end{equation}
is a subcomplex of $\Ger(a,a_1, \dots, a_n)$\,.

We will prove every cocycle in $ \Ger'(a,a_1, \dots, a_n)$ is 
cohomologous to a (unique) cocycle in the intersection
$$
 \Ger'(a,a_1, \dots, a_n) ~\cap~  \Ger(a_1, \dots, a_n)
$$
and then we will deduce statements of the theorem.

Let us, first, show that every cocycle in $\La\Lie'(a,a_1, \dots, a_n)$
is cohomologous to a cocycle in the intersection 
$$
\La\Lie'(a,a_1, \dots, a_n) ~\cap~ \La\Lie(a_1, \dots, a_n)\,.
$$  

For this purpose we consider a 
non-empty ordered subset  $\{i_1< i_2 < \dots < i_k\}$ of $\{1,2,\dots, n\}$
and denote by 
\begin{equation}
\label{La-Lie-prpr}
\La\Lie''(a,a_{i_1}, \dots, a_{i_k})
\end{equation}
the subcomplex of $ \La\Lie'(a,a_1, \dots, a_n)$ which is spanned by $\La\Lie$-monomials
in $\La\Lie(a,a_{i_1}, \dots, a_{i_k})$ involving each variable in the set 
$\{a_{i_1}, \dots, a_{i_k} \}$ exactly once. 

It is clear that $ \La\Lie'(a,a_1, \dots, a_n)$ splits into the direct
sum of subcomplexes: 
\begin{equation}
\label{LaLiepr-decomp}
\La\Lie'(a,a_1, \dots, a_n) = \bbK\L a, \{a,a\}\R ~\oplus~ \bigoplus_{\{i_1< i_2 < \dots < i_k\}}
\La\Lie''(a,a_{i_1}, \dots, a_{i_k})\,,
\end{equation}
where the summation runs over all non-empty 
ordered subsets $\{i_1< i_2 < \dots < i_k\}$ of $\{1,2,\dots, n\}$\,.

It is not hard to see that the subcomplex $ \bbK\L a, \{a,a\}\R$ is 
acyclic. Thus our goal is to show that every cocycle in $\La\Lie''(a,a_{i_1}, \dots, a_{i_k})$
is cohomologous to cocycle in the intersection 
$$
\La\Lie''(a,a_{i_1}, \dots, a_{i_k}) \cap \La\Lie(a_{i_1}, \dots, a_{i_k})\,. 
$$

To prove this fact we consider the tensor algebra
\begin{equation}
\label{T-1-2-k}
T\big( \bbK \L \bsi a, \bsi a_{i_1}, \bsi a_{i_2}, \dots, \bsi a_{i_{k-1}} \R \big) 
\end{equation}
in the variables $\bsi a, \bsi a_{i_1}, \bsi a_{i_2}, \dots, \bsi a_{i_{k-1}}$ and denote by 
\begin{equation}
\label{Tpr-1-2-k}
T'(\bsi a, \bsi a_{i_1}, \bsi a_{i_2}, \dots, \bsi  a_{i_{k-1}}) 
\end{equation}
the subspace of \eqref{T-1-2-k} which is spanned by monomials 
involving each variable from the set $\{\bsi a_{i_1}, \bsi a_{i_2}, \dots, \bsi a_{i_{k-1}}\}$
exactly once. 

It is not hard to see that the formula 
\begin{equation}
\label{nu}
\nu (x_{j_1} \otimes x_{j_2} \otimes \dots \otimes x_{j_N}) = 
\{\bs\,x_{j_1}, \{\bs\,x_{j_2} ,\{ \dots \{\bs\, x_{j_{N}}, a_{i_k} \br 
\end{equation}
defines an isomorphism of the graded vector spaces
$$
\nu : T'(\bsi a, \bsi a_{i_1}, \bsi a_{i_2}, \dots, \bsi  a_{i_{k-1}})  \stackrel{\cong}{\longrightarrow} 
\La\Lie''(a,a_{i_1}, \dots, a_{i_k})\,.
$$ 

Let us denote by $\de_T$ a degree $1$ derivation of 
the tensor algebra \eqref{T-1-2-k} defined by the equations 
\begin{equation}
\label{de-T}
\de_T (\bsi a_{i_t}) = 0\,, \qquad
\de_T (\bsi a) = \bsi a \otimes \bsi a\,. 
\end{equation}
It is not hard to see that $(\de_T)^2=0$\,. Thus, $\de_T$ is 
a differential on the tensor algebra \eqref{T-1-2-k}\,.

The subspace \eqref{Tpr-1-2-k} is obviously a subcomplex of \eqref{T-1-2-k}. 
Furthermore, using the following consequence of Jacobi identity 
$$
\{a,\{a, X\}\} = - \frac{1}{2} \{\{a,a\},X\}\,, \qquad \forall ~~X \in \La\Lie(a,a_1, \dots, a_n),
$$
it is easy to show that 
$$
\de \circ \nu = \nu \circ \de_T\,.
$$

Thus $\nu$ is an isomorphism from the cochain complex 
$$
\big( T'(\bsi a, \bsi a_{i_1}, \bsi a_{i_2}, \dots, \bsi  a_{i_{k-1}}), \de_T \big)
$$
to the cochain complex 
$$
\big( \La\Lie''(a,a_{i_1}, \dots, a_{i_k}), \de \big)\,.
$$

To compute cohomology of the cochain complex 
\begin{equation}
\label{T-de-T}
\Big( T\big( \bbK \L \bsi a, \bsi a_{i_1}, \bsi a_{i_2}, \dots, \bsi a_{i_{k-1}} \R \big), 
 \de_T
\Big)
\end{equation}
we observe that the truncated tensor algebra 
\begin{equation}
\label{undT-bsi-a}
\und{T}_{\,\bsi a}: = \und{T}\big(\bbK\L \bsi a \R\big) 
\end{equation}
form an acyclic subcomplex of \eqref{T-de-T}.
 
We also observe that the cochain complex 
 \eqref{T-de-T}
splits into the direct sum of subcomplexes
\begin{equation}
\label{T-de-T-sum}
T\big( \bbK \L \bsi a, \bsi a_{i_1}, \bsi a_{i_2}, \dots, \bsi a_{i_{k-1}} \R \big)  = 
T \big( \bbK \L \bsi a_{i_1}, \bsi a_{i_2}, \dots, \bsi a_{i_{k-1}} \R \big)  ~ \oplus 
\end{equation}
$$
\bigoplus_{m \ge 2,\, p_1, \dots, p_m}
V^{\otimes\, p_1}_{a_{\bul}} \otimes \und{T}_{\,\bsi a}
\otimes V^{\otimes\, p_2}_{a_{\bul}}\otimes 
\und{T}_{\,\bsi a} \otimes \dots  \otimes 
V^{\otimes\, p_{m-1}}_{a_{\bul}}\otimes \und{T}_{\,\bsi a} \otimes  V^{\otimes\, p_m}_{a_{\bul}}\,,
$$
where $V_{a_{\bul}}$ is the cochain complex 
$$ 
V_{a_{\bul}}: = \bbK \L \bsi a_{i_1}, \bsi a_{i_2}, \dots, \bsi a_{i_{k-1}}  \R
$$
with the zero differential and the summation runs over all 
combinations $(p_1, \dots, p_m)$ of integers satisfying the conditions
$$
p_1, p_m \ge 0, \qquad p_2, \dots, p_{m-1} \ge 1\,. 
$$

By K\"unneth's theorem all the subcomplexes 
$$
V^{\otimes\, p_1}_{a_{\bul}} \otimes \und{T}_{\,\bsi a}
\otimes V^{\otimes\, p_2}_{a_{\bul}}\otimes 
\und{T}_{\,\bsi a} \otimes \dots  \otimes 
V^{\otimes\, p_{m-1}}_{a_{\bul}}\otimes \und{T}_{\,\bsi a} \otimes  V^{\otimes\, p_m}_{a_{\bul}}
$$
are acyclic. Hence for every cocycle $c$  in \eqref{T-de-T}
there exists a vector $c_1$ in  \eqref{T-de-T}
such that 
$$
c - \de_T (c_1) \in T \big( \bbK \L \bsi a_{i_1}, \bsi a_{i_2}, \dots, \bsi a_{i_{k-1}} \R \big)\,.
$$

Combining this observation with the fact that the 
subcomplex  \eqref{Tpr-1-2-k} is a direct summand in  \eqref{T-de-T}, 
we conclude that, for every cocycle $c$ in   \eqref{Tpr-1-2-k} there 
exists a vector $c_1$  in   \eqref{Tpr-1-2-k}  such that 
$$
c- \de_T(c_1) \in  T'(\bsi a, \bsi a_{i_1}, \bsi a_{i_2}, \dots, \bsi  a_{i_{k-1}})
~\cap~  T \big( \bbK \L \bsi a_{i_1}, \bsi a_{i_2}, \dots, \bsi a_{i_{k-1}} \R \big)\,.
$$

Since the map $\nu$ \eqref{nu} is an isomorphism from the 
cochain complex   \eqref{Tpr-1-2-k} with the differential $\de_T$ to 
the cochain complex \eqref{La-Lie-prpr} with the differential $\de$, 
we deduce that every cocycle in \eqref{La-Lie-prpr} is cohomologous to 
a unique cocycle in the intersection
$$
\La\Lie''(a,a_{i_1}, \dots, a_{i_k}) \cap \La\Lie(a_{i_1}, \dots, a_{i_k})\,. 
$$

Therefore  every cocycle in $\La\Lie'(a,a_1, \dots, a_n)$
is cohomologous to a unique cocycle in the intersection 
$$
\La\Lie'(a,a_1, \dots, a_n) ~\cap~ \La\Lie(a_1, \dots, a_n)\,.
$$  

Combining the latter observation with decomposition \eqref{Ger-pr-a}
we conclude that  every cocycle in $ \Ger'(a,a_1, \dots, a_n)$ is 
cohomologous to a (unique) cocycle in the intersection
$$
\Ger'(a,a_1, \dots, a_n) ~\cap~  \Ger(a_1, \dots, a_n)\,.
$$

Thus, using the isomorphism 
$$
\cG_n \cong \Tw\Ger(n)
$$
together with the fact that the cochain complex 
$\cG_n$ is a direct summand in $\Ger'(a,a_1, \dots, a_n)$\,, 
we deduce the first statement of Theorem  \ref{thm:Ger-TwGer}\,.

On the other hand, since $\Ger(0) = \bfzero$, 
Remark \ref{rem:Tw-cO-0} implies that 
$$
\Conv(\La^2 \coCom, \Ger) \cong \bs^{-2}\, \Tw\Ger(0)\,. 
$$ 
Hence the second statement of Theorem \ref{thm:Ger-TwGer} follows as well. 

The theorem is proved. 
\end{proof}

\subsection{The dg Lie algebra $\Conv^{\oplus}(\Ger^{\vee}, \cO)$. The filtration by Lie words of length $1$}
\label{sec:Ger-cO}

Let $\cO$ be a dg operad and $\io$ be a map (of dg operads)
\begin{equation}
\label{Ger-to-cO}
\io : \Ger \to \cO\,.
\end{equation}
(Here, we assume that $\cO(0) = \bfzero$.)

In this subsection we will describe an auxiliary construction related to 
the pair $(\cO, \io)$\,. In these notes, we will use this construction twice. 
First, we will use it in the case when $\cO = \Ger$. Second, we will use it 
in the case when\footnote{The operad $\Gra$ is introduced 
in Section \ref{sec:Gra} below.} $\cO = \Gra$.

Restricting $\io$ to the suboperad $\La\Lie$ we get a morphism
of dg operads 
\begin{equation}
\label{LaLie-to-cO}
\mj = \io \Big|_{\La \Lie} : \La \Lie \to \cO\,.
\end{equation}
 
Thus, following Section \ref{sec:Tw-oplus}, we may construct the dg operad 
$\Tw\cO$ as well as its suboperad $\Tw^{\oplus}(\cO)\subset \Tw\cO$
\eqref{Tw-oplus}. 

On the other hand composing $\io$ with $U_{\Ger}$ 
\eqref{Ger-infty-Ger} we get a morphism 
\begin{equation}
\label{Ger-infty-to-cO}
\io \circ U_{\Ger} : \Cobar(\Ger^{\vee}) \to \cO\,. 
\end{equation}

It is not hard to see that the Maurer-Cartan element 
$\al \in \Conv(\Ger^{\vee}, \cO)$ corresponding to 
the  morphism \eqref{Ger-infty-to-cO} is given by the 
formula
\begin{equation}
\label{MC-Ger-cO}
\al = \io(\{a_1, a_2\}) \otimes b_1 b_2 + \io(a_1 a_2) \otimes \{b_1, b_2\}\,.
\end{equation}

Since $\al \in  \Conv^{\oplus}(\Ger^{\vee}, \cO)$, it makes sense to 
consider the cochain complex 
\begin{equation}
\label{oplus-Ger-cO}
 \Conv^{\oplus}(\Ger^{\vee}, \cO) = \bigoplus_{n \ge 1} \Big( \cO(n) \otimes \La^{-2}\Ger(n) \Big)^{S_n}
\end{equation}
with the differential 
\begin{equation}
\label{diff-Ger-cO}
\pa =  [\al, ~]\,.
\end{equation}

Let us denote by $\mL_1(w)$ the number of Lie words of length $1$ in 
a monomial  $w \in \La^{-2} \Ger(n)$. For example, $\mL_1(b_1 b_2) = 2$ and
$\mL_1(\{b_1, b_2\}) =0$\,.

Next we consider a vector $v \in \cO(n)$ and observe that 
for every vector $ v_i\otimes w_i$ in the linear combination 
$$
\pa \left( \sum_{\si \in S_n} \si (v) \otimes \si(w)  \right) 
$$
we have $\mL_1 (w_i) = \mL_1 (w)$ or $\mL_1 (w_i) = \mL_1 (w)+1$\,.

This observation allows us to introduce the following ascending filtration
\begin{equation}
\label{filtr-Ger-cO-oplus}
\dots \subset \cF^{m-1}\, \Conv^{\oplus}(\Ger^{\vee}, \cO) \subset 
\cF^m\, \Conv^{\oplus}(\Ger^{\vee}, \cO) \subset \dots\,,
\end{equation}
where $\cF^m\, \Conv^{\oplus}(\Ger^{\vee}, \cO)$ consists of 
sums
$$
\sum_i v_i \otimes w_i \in  \bigoplus_n \big( \cO(n) \otimes \La^{-2} \Ger(n) \big)^{S_n}
$$
which satisfy 
$$
\mL_1 (w_i) - |\, v_i \otimes w_i \,| \le  m\,, \qquad \forall ~~ i\,.
$$

It is clear that the associated graded complex
\begin{equation}
\label{Gr-Ger-cO}
\Gr\, \Conv^{\oplus}(\Ger^{\vee}, \cO)  \cong 
\bigoplus_{n=1}^{\infty} \Big( \cO(n) \otimes \La^{-2} \Ger(n)
\Big)^{S_n}
\end{equation}
as a graded vector space, 
and the differential $\pa^{\Gr}$ on $\Gr\, \Conv^{\oplus}(\Ger^{\vee}, \cO)$
is obtained from the differential $\pa$ \eqref{diff-Ger-cO} on $\Conv^{\oplus}(\Ger^{\vee}, \cO)$ 
by keeping only terms which raise the number of Lie brackets of length $1$
in the second tensor factors. For example, the adjoint action 
$$
[ \,\io(a_1 a_2) \otimes \{b_1, b_2\} , ~ ]
$$
of $ \io(a_1 a_2) \otimes \{b_1, b_2\}$ does not contribute to the differential 
$\pa^{\Gr}$ at all. 

To give a convenient description of the cochain complex \eqref{Gr-Ger-cO} we introduce 
the collection 
\begin{equation}
\label{Ger-hrt}
\{ \La^{-2}\Ger^{\hrt}(n) \}_{n \ge 0} 
\end{equation}
where 
$$
\La^{-2}\Ger^{\hrt}(0) = \bs^{-2} \bbK
$$
and
$$
\La^{-2}\Ger^{\hrt}(n), \qquad n \ge 1
$$
is the $S_n$-submodule of $\La^{-2}\Ger(n)$ spanned by 
monomials $w \in \La^{-2}\Ger(n)$ for which $\mL_1(w) = 0$\,. 

Next, we introduce the cochain complex
\begin{equation}
\label{Ger-hrt-TwO}
\bigoplus_{n \ge 1}  \Big( \Tw^{\oplus}\cO(n)
\otimes \La^{-2}\Ger^{\hrt}(n)  \Big)^{S_n} =
\bigoplus_{r \ge 0}
\bigoplus_{n \ge 1} \Big( \big( \bs^{2r} \cO(r + n) \big)^{S_r} 
\otimes \La^{-2}\Ger^{\hrt}(n)  \Big)^{S_n}
\end{equation}
with the differential $\pa^{\Tw}$ coming from $\Tw\cO$\,.

We observe that the formula
\begin{equation}
\label{Ups-cO-def}
\Ups_{\cO} \left( \sum_i v_i \otimes w_i \right) : =
\sum_{\si \in \Sh_{r,n}} \sum_i
\si (v_i) \otimes
\si (b_1 \dots b_r \, w_i(b_{r+1}, \dots, b_{r+n})) 
\end{equation}
$$
 \sum_i v_i \otimes w_i  \in  \Big(\bs^{2r} \cO(r+n)^{S_r}
\otimes \La^{-2}\Ger^{\hrt}(n)  \Big)^{S_n}
$$ 
defines a morphism of graded vector spaces
\begin{equation}
\label{Ups-cO}
\Ups_{\cO} ~: ~ \bigoplus_{n \ge 0}  \Big( \Tw^{\oplus}\cO(n)
\otimes \La^{-2}\Ger^{\hrt}(n)  \Big)^{S_n} ~~\to~~ 
\Gr\, \Conv^{\oplus}(\Ger^{\vee}, \cO)\,.
\end{equation}

We claim that 
\begin{prop}
\label{prop:Gr-Ger-cO}
The map $\Ups_{\cO}$ \eqref{Ups-cO} is an isomorphism of 
cochain complexes. 
\end{prop}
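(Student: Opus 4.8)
We outline a proof of Proposition \ref{prop:Gr-Ger-cO} in two stages. The plan is to show first that $\Ups_{\cO}$ is an isomorphism of the underlying graded vector spaces, and then that it intertwines the two differentials.

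For the first stage I would use the following combinatorial decomposition. Every monomial $w$ of $\La^{-2}\Ger(m)$, written as in \eqref{monomial-Ger} as a product of $\La\Lie$-words, is \emph{uniquely} of the form $b_{j_1}\cdots b_{j_s}\, w'$, where $j_1<\cdots<j_s$ are precisely the indices of the length-one Lie words occurring in $w$ (so $s=\mL_1(w)$) and $w'$ is a monomial of $\La^{-2}\Ger^{\hrt}$ in the remaining $m-s$ variables. Grouping the monomials according to their set of length-one factors, and recalling that the $(s,m-s)$-shuffles form a complete set of coset representatives for $S_s\times S_{m-s}$ in $S_m$, this yields, for every $m\ge 1$, an isomorphism of graded $S_m$-modules
\[
\La^{-2}\Ger(m)\;\cong\;\bigoplus_{s=0}^{m}\mathrm{Ind}^{S_m}_{S_s\times S_{m-s}}\big(\bbK\boxtimes\La^{-2}\Ger^{\hrt}(m-s)\big),
\]
where $\bbK$ is the trivial $S_s$-module and the top term $s=m$ uses $\La^{-2}\Ger^{\hrt}(0)=\bs^{-2}\bbK$ to account for the purely commutative word $b_1\cdots b_m$. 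Tensoring with $\cO(m)$, taking $S_m$-invariants, and applying the transitivity of invariants together with the shuffle description of induced modules, one identifies $(\cO(m)\otimes\La^{-2}\Ger(m))^{S_m}$ with $\bigoplus_{r+n=m}\big((\bs^{2r}\cO(r+n))^{S_r}\otimes\La^{-2}\Ger^{\hrt}(n)\big)^{S_n}$ — i.e.\ with the degree-$m$ part of the domain of $\Ups_{\cO}$, the spurious term $r=n=0$ being killed by $\cO(0)=\bfzero$ — and the resulting explicit map is exactly \eqref{Ups-cO-def}. For well-definedness one checks that $v\otimes\big(b_1\cdots b_r\,w'(b_{r+1},\dots,b_{r+n})\big)$ is $(S_r\times S_n)$-invariant whenever $\sum v\otimes w'\in\big((\bs^{2r}\cO(r+n))^{S_r}\otimes\La^{-2}\Ger^{\hrt}(n)\big)^{S_n}$, so that the sum over $(r,n)$-shuffles is $S_{r+n}$-invariant; a short degree count (the prefix $b_1\cdots b_r$ raises the degree in $\La^{-2}\Ger(r+n)$ by $2r$, matching the shift $\bs^{2r}$) shows $\Ups_{\cO}$ is degree-preserving.

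For the second stage, write $\al=\al^{(1)}+\al^{(2)}$ with $\al^{(1)}=\io(\{a_1,a_2\})\otimes b_1b_2$ and $\al^{(2)}=\io(a_1a_2)\otimes\{b_1,b_2\}$ as in \eqref{MC-Ger-cO}. I would first confirm the description of $\pa^{\Gr}$ from the text: using the Leibniz rule \eqref{Ger-axiom1} and the symmetry \eqref{Ger-axiom} of the bracket on $\La^{-2}\Ger$, every term produced by the adjoint action of $\al^{(2)}$ either preserves $\mL_1$ or lowers it, whereas inserting the factor $b_1b_2$ coming from $\al^{(1)}$ always appends a new length-one factor; hence $\pa^{\Gr}=\pa^{\cO}+[\al^{(1)},\,\cdot\,]$, where $\pa^{\cO}$ denotes the (possibly zero) internal differential of $\cO$. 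The proposition then reduces to the identity $\Ups_{\cO}\circ\pa^{\Tw}=(\pa^{\cO}+[\al^{(1)},\,\cdot\,])\circ\Ups_{\cO}$. Its $\pa^{\cO}$-part is immediate, since $\Ups_{\cO}$ is assembled from operadic insertions and shuffle symmetrizations, which are chain maps for $\pa^{\cO}$. For the remaining part I would unwind the pre-Lie product \eqref{bullet-fin} (legitimate because $\Ger^{\vee}(n)$ is finite dimensional) and compare, on the canonical representative $w=b_1\cdots b_r\,w'$ whose first $r$ variables are the length-one factors, with the three summands of \eqref{diff-Tw-cO}: the term $\al^{(1)}\bullet(\,\cdot\,)$ matches the middle summand of \eqref{diff-Tw-cO} (wrapping $v$ inside $\{a_1,\,\cdot\,\}$ creates a new virtual slot, which on the coalgebra side becomes a new $b$-factor), while the term $(\,\cdot\,)\bullet\al^{(1)}$ splits — according to whether $b_1b_2$ is inserted into a length-one factor or, after a Leibniz expansion, into a variable lying inside a genuine bracket — into the first summand of \eqref{diff-Tw-cO} (insertion of $\{a_1,a_2\}$ into a virtual slot) and the third summand (insertion into a real slot, the cycle $\vs_{r+1,r+i}$ recording the re-indexing).

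The hard part will be precisely this term-by-term matching in the second stage. One must reconcile the several layers of shuffle symmetrization — those built into $\Ups_{\cO}$, into $\pa^{\Tw}$ via \eqref{diff-Tw-cO}, and into \eqref{bullet-fin} — and check that, after passing to $S_{r+n+1}$-invariants, the combinatorial coefficients agree; simultaneously one has to track the Koszul signs arising from commuting the degree-$2r$ shifts, the degree-$(-1)$ bracket, and the vectors $v$ past one another. The single most delicate point is the Leibniz expansion of $w\circ_1 b_1b_2$ when the first variable of $w$ sits inside a bracket, since this is exactly the mechanism that produces the third summand of \eqref{diff-Tw-cO} together with its $\vs$-twist. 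Once these bookkeeping matters are dispatched, the chain-map identity follows and, together with the first stage, proves that $\Ups_{\cO}$ is an isomorphism of cochain complexes.
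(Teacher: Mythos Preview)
Your overall strategy matches the paper's: establish the vector-space isomorphism via the coset decomposition $S_{r+n}=\Sh_{r,n}\cdot(S_r\times S_n)$ (the paper does this by writing down the explicit inverse $\wt\Ups_{\cO}$ in \eqref{wt-Ups-dfn}, while you package the same combinatorics as Frobenius reciprocity for an induced module), and then verify the chain-map identity by matching the three pieces of $[\al^{(1)},\,\cdot\,]$ against the three summands of \eqref{diff-Tw-cO}. That term-by-term correspondence is exactly what the paper carries out in \eqref{diff-Gr-Ger-cO}--\eqref{sum3-4-cO-upshot}.

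There is, however, one genuine gap. You assert that ``inserting the factor $b_1b_2$ coming from $\al^{(1)}$ always appends a new length-one factor,'' and conclude $\pa^{\Gr}=\pa^{\cO}+[\al^{(1)},\,\cdot\,]$. This fails for the term $(\,\cdot\,)\bullet\al^{(1)}$ when $b_1b_2$ is inserted into a variable sitting inside a Lie word of length $\ge 3$: the iterated Leibniz expansion then produces \emph{some} terms in which one of the two new variables is ejected as a fresh length-one factor, and \emph{others} in which the Lie word merely splits into two shorter Lie words, each still of length $\ge 2$. For example, inserting $b_1b_2$ at the innermost slot of $\{b_1,\{b_2,b_3\}\}$ yields, among its four Leibniz terms, $\{b_2,b_3\}\{b_1,b_4\}$, which has $\mL_1=0$. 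Such terms do not raise $\mL_1$ and must be discarded when passing to the associated graded; so $\pa^{\Gr}$ is only the $\mL_1$-raising part of $[\al^{(1)},\,\cdot\,]$, not the whole bracket. The paper handles this correctly: the last two sums in \eqref{diff-Gr-Ger-cO} record precisely (and only) the Leibniz terms in which one of $b_1,b_2$ becomes a new length-one prefix. Once you make this correction, your proposed matching with the third summand of \eqref{diff-Tw-cO} is right on target --- the discarded Leibniz terms are exactly what vanishes in $\Gr$, and the surviving ones are what the cycle $\vs_{r+1,r+i}$ re-indexes.
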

\begin{proof}
It is clear that \eqref{Gr-Ger-cO} is spanned by vectors of the form 
\begin{equation}
\label{spanning-GrGer-cO}
\sum_{\tau \in S_{r+n}} \tau (v)\otimes \tau(b_1 \dots b_r \, w(b_{r+1}, \dots, b_{r+n}))\,,
\end{equation}
where $v$ is a vector in $\cO(r+n)$, $w$ is a monomial  
$\La^{-2}\Ger^{\hrt}(n)$, and numbers $r, n$ vary within the 
range $r, n \ge 0$, $r+n \ge 1$\,.

Using the obvious identity
$$
\sum_{\tau \in S_{r+n}} \tau (v)\otimes \tau(b_1 \dots b_r \, w(b_{r+1}, \dots, b_{r+n})) = 
$$
$$
\sum_{\si \in \Sh_{r,n}} \si 
\left( \sum_{(\tau', \tau'') \in S_{r} \times S_n \subset S_{r+n} }
 (\tau' , \tau'')(v)\otimes b_1 \dots b_r \, (\tau'' w)(b_{r+1}, \dots, b_{r+n})
\right)
$$
we see that the formula
\begin{equation}
\label{wt-Ups-dfn}
\wt{\Ups}_{\cO} \left( \sum_{\si \in S_{r+n}} \si (v)\otimes \si(b_1 \dots b_r \, 
w(b_{r+1}, \dots, b_{r+n})) \right) =
\end{equation}
$$
\sum_{\tau'' \in S_n} 
\tau'' \left( \sum_{\tau' \in S_r} \tau' (v) \right) ~ \otimes ~ \tau''(w)
$$
gives us a well-defined map 
\begin{equation}
\label{wt-Ups}
\wt{\Ups}_{\cO} ~:~ \Gr\, \Conv^{\oplus}(\Ger^{\vee}, \cO) \to
\bigoplus_{n \ge 0}  \Big( \Tw\cO(n)
\otimes \La^{-2}\Ger^{\hrt}(n)  \Big)^{S_n}\,. 
\end{equation}
Furthermore, it is obvious that $\wt{\Ups}_{\cO}$ is the inverse of $\Ups_{\cO}$\,.

Thus $\Ups_{\cO}$ is an isomorphism of graded vector spaces. 

Before proving that $\Ups$ is compatible with the differentials, 
let us recall that, for $i<j$, $\vs_{i,j}$ denotes the cycle
$(i,i+1, \dots, j) \in S_n$ for any $n \ge j$\,. Furthermore, 
$S_{i, i+1, \dots, n}$ denotes the permutation group of the set 
$\{i, i+1 \dots, n\}$\,.

Let, as above,  $v$ be a vector in $\cO(r+n)$ and 
$w$ be a monomial in $\La^{-2}\Ger^{\hrt}(n)$. Due to the above consideration, 
\begin{equation}
\label{Ups-v-w}
\sum_{\tau \in S_{r+n}} \tau (v)\otimes \tau(b_1 \dots b_r \, w(b_{r+1}, \dots, b_{r+n})) = 
\Ups_{\cO} \left( \,
 \sum_{\la \in S_n} \la\big( \Av_r(v) \big)  \otimes \la(w)
 \,\right)\,,
\end{equation}
where
$$
\Av_r(v) = \sum_{\la_1 \in S_r} \la_1(v)
$$
is viewed as a vector in $\Tw\cO(n)$\,.

Thus our goal is to show that
\begin{equation}
\label{goal-Gr-Ger-cO}
\pa^{\Gr} \left( 
\sum_{\tau \in S_{r+n}} \tau (v)\otimes \tau(b_1 \dots b_r \, w(b_{r+1}, \dots, b_{r+n})) 
\right) =
\end{equation}
$$
\Ups_{\cO} \left( \,
 \sum_{\la \in S_n} \pa^{\Tw} \circ \la \big(\Av_r(v)\big)   \otimes \la(w)
 \,\right)\,.
$$

Collecting terms with $r+1$ Lie words of length $1$ in the 
second tensor factors in 
$$
\Big[~ \io(\{a_1, a_2\})  \otimes b_1 b_2 ~,~ 
  \sum_{\tau \in S_{r+n}} \tau (v)\otimes \tau(b_1 \dots b_r \, w(b_{r+1}, \dots, b_{r+n}))  ~\Big]
$$
and using the obvious identity
$$
\sum_{\tau \in S_{r+n}} \tau (v)\otimes \tau(b_1 \dots b_r \, w(b_{r+1}, \dots, b_{r+n})) =
$$
$$
\sum_{\tau' \in S_{2,3,\dots, r+n}}
\sum_{i=1}^{r+n} \tau'\big( 
\vs_{1,i}(v) \otimes \vs_{1,i} (b_1 \dots b_r \, w(b_{r+1}, \dots, b_{r+n})) \big)
$$
we get 
\begin{equation}
\label{diff-Gr-Ger-cO}
\pa^{\Gr}  \left( \sum_{\tau \in S_{r+n}} \tau (v)\otimes \tau(b_1 \dots b_r \, w(b_{r+1}, \dots, b_{r+n}))
\right) =
\end{equation}
$$
= \sum_{\si \in \Sh_{r+n,1} }\sum_{\tau \in S_{r+n}} 
\si \big(\io(\{a_1, a_2\}) \circ_1 \tau(v) \big) \otimes 
\si \big( \tau(b_1 \dots b_r \, w(b_{r+1}, \dots, b_{r+n})) b_{r+n+1} \big)
$$

$$
-(-1)^{|v|}
~\sum_{\la \in \Sh_{2, r+n-1}}^{\tau' \in S_{3,4, \dots, r+n+1}}~
\sum_{i=1}^{r} 
\la\Big(\tau' \circ \te_{i} \big( v \circ_i \io( \{a_1, a_2 \}) \big)
$$
$$
\otimes\, b_1b_2 b_{\tau'(3)} \dots b_{\tau'(r+1)} ~ w(b_{\tau'(r+2)}, \dots, b_{\tau'(r+1+n)}) \Big)
$$

$$
-(-1)^{|v|}
~\sum_{\la \in \Sh_{2, r+n-1}}^{\tau' \in S_{3,4, \dots, r+n+1}}~
\sum_{i=r+1}^{r+n}  
\la\Big( \tau' \circ \te_{i} \big( v \circ_i \io( \{a_1, a_2 \}) \big)
$$
$$
\otimes\, b_{\tau'(3)} \dots b_{\tau'(r+2)} b_1 ~ w(b_{\tau'(r+3)},\dots, 
b_{\tau'(i+1)}, b_2, b_{\tau'(i+2)}, \dots, b_{\tau'(r+1+n)})
 \Big)
$$

$$
-(-1)^{|v|}
~\sum_{\la \in \Sh_{2, r+n-1}}^{\tau' \in S_{3,4, \dots, r+n+1}}~
\sum_{i=r+1}^{r+n}  
\la\Big( \tau' \circ \te_{i} \big( v \circ_i \io( \{a_1, a_2 \}) \big)
$$
$$
\otimes\, b_{\tau'(3)} \dots b_{\tau'(r+2)} b_2 ~ w(b_{\tau'(r+3)},\dots, 
b_{\tau'(i+1)}, b_1, b_{\tau'(i+2)}, \dots, b_{\tau'(r+1+n)})
 \Big)\,,
$$
where $\te_i$ is the following permutation in $S_{r+1+n}$
\begin{equation}
\label{te-i}
\te_i = 
\left(
\begin{array}{cccccc}
 1 & 2  & \dots & i-1 & i & i+1  \\
 3 & 4  & \dots & i+1 & 1 & 2 
\end{array}
\right)\,.
\end{equation}

The first sum in the right hand side of \eqref{diff-Gr-Ger-cO}
can be simplified as follows.
$$ 
 \sum_{\si \in \Sh_{r+n,1} }\sum_{\tau \in S_{r+n}} 
\si \big(\io(\{a_1, a_2\}) \circ_1 \tau(v) \big) \otimes 
\si \big( \tau(b_1 \dots b_r \, w(b_{r+1}, \dots, b_{r+n})) b_{r+n+1} \big) =
$$
\begin{equation}
\label{sum1-cO}
\sum_{\la \in S_{r+1+n}}
\la   \big(\io(\{a_1, a_2\}) \circ_1  v \big)  \otimes 
\la (b_1 \dots b_r \, w(b_{r+1}, \dots, b_{r+n}) b_{r+n+1} ) =
\end{equation}
$$
\sum_{\la \in S_{r+1+n}}
\la \circ \vs_{1, r+1+n}  \big(\io(\{a_1, a_2\}) \circ_1 v \big) \otimes 
\la \circ \vs_{1, r+1+n} (b_1 \dots b_r \, w(b_{r+1}, \dots, b_{r+n}) b_{r+n+1} ) =
$$
$$
\sum_{\la \in S_{r+1+n}} 
\la   \big(\io(\{a_1, a_2\}) \circ_2  v \big) \otimes 
\la  (b_1 \dots b_{r+1} \, w(b_{r+2}, \dots, b_{r+1+n}) ) =
$$

$$
\sum_{\si \in \Sh_{r+1, n}}^{(\la_1, \la_2) \in S_{r+1} \times S_{n}} \,
\si \big(
(\la_1, \la_2)  \big(\io(\{a_1, a_2\}) \circ_2  v \big) \otimes 
\la_2(b_1 \dots b_{r+1} \, w(b_{r+2}, \dots, b_{r+1+n}) )
\big) =
$$

$$
\sum_{\si \in \Sh_{r+1, n}}^{\tau \in \Sh_{1,r}}
\sum_{\la' \in S_{2, \dots, r+1}}^{\la'' \in S_{r+2, \dots, r+1+n}}
\si \big( \tau \circ \la' \circ \la''  \big(\io(\{a_1, a_2\}) \circ_2 v \big) 
$$
$$ 
\otimes~ b_1 \dots b_{r+1} \, w(b_{\la''(r+2)}, \dots, b_{\la''(r+1+n)})
\big)\,.
$$

Thus 
\begin{equation}
\label{sum1-cO-upshot}
\textrm{The first sum in the R.H.S. of \eqref{diff-Gr-Ger-cO}} =
\end{equation}
$$
\Ups_{\cO} \left(\,
\sum_{\tau\in \Sh_{1,r}} \sum_{\la \in S_n}
\tau \big( \{a_1, a_2\} \circ_2  \la (\Av_r(v)) \big)  \otimes \la (w)
\,\right)\,.
$$

Using the symmetry of the bracket $\{ \,,\, \}$, we rewrite the second 
sum in the right hand side of \eqref{diff-Gr-Ger-cO} as follows

$$
-(-1)^{|v|}
~\sum_{\la \in \Sh_{2, r+n-1}}^{\tau' \in S_{3,4, \dots, r+n+1}}~
\sum_{i=1}^{r} 
\la\Big( \tau' \circ \te_i \big( v \circ_i  \io(\{a_1, a_2 \}) \big)
$$
\begin{equation}
\label{sum2-cO}
\otimes\, b_1b_2 b_{\tau'(3)} \dots b_{\tau'(r+1)} ~ w(b_{\tau'(r+2)}, \dots, b_{\tau'(r+1+n)}) \Big) =
\end{equation}

$$
-\frac{(-1)^{|v|}}{2} 
\sum_{\la \in S_{r+1+n}} 
\sum_{i=1}^{r} 
\la \Big( \te_i \big( v \circ_i  \io(\{a_1, a_2 \}) \big)
~\otimes ~ b_{1} b_{2} b_{3} \dots b_{r+1} ~ w(b_{r+2}, \dots, b_{r+1+n}) 
\Big)=
$$
$$
-\frac{(-1)^{|v|}}{2} 
\sum_{\si \in \Sh_{r+1,n}}
\sum_{\la' \in S_{r+1}}^{\la'' \in S_{r+2, \dots, r+1+n}} 
\sum_{i=1}^{r}   \si \circ \la'' \circ \la' \Big( \te_i \big( v \circ_i  \io(\{a_1, a_2 \}) \big)
$$
$$ 
\otimes ~ b_{1} b_{2} \dots b_{r+1} ~ w(b_{r+2}, \dots, b_{r+1+n})  \Big)=
$$

$$
-(-1)^{|v|} 
\sum_{\si \in \Sh_{r+1,n}}^{\tau \in \Sh_{2, r-1}}~
\sum_{\la_1 \in S_{3, \dots, r+1}}^{\la_2 \in S_{r+2, \dots, r+1+n}} ~
\sum_{i=1}^{r}  \si \circ \tau \Big(
\la_2 \circ \la_1 \circ \te_i \big( v \circ_i  \io(\{a_1, a_2 \}) \big)
$$
$$ 
~ \otimes ~ b_{1} b_{2} \dots b_{r+1} ~ w(b_{\la_2(r+2)}, \dots, b_{\la_2(r+1+n)})  \Big)=
$$

$$
-(-1)^{|v|}  \sum_{\si \in \Sh_{r+1,n}}^{\tau \in \Sh_{2, r-1}}
~
\sum_{\la'_1 \in S_r }^{\la_2 \in S_{r+2, \dots, r+1+n}} ~
\si \Big( \la_2 \circ \tau \big(\la'_1(v) \circ_1 \{a_1, a_2\} \big)
~ \otimes ~ b_{1} b_{2} \dots b_{r+1} ~ w(b_{\la_2(r+2)}, \dots, b_{\la_2(r+1+n)})
\Big)\,,
$$
where $\te_i$ is defined in \eqref{te-i}.

Thus
\begin{equation}
\label{sum2-cO-upshot}
\textrm{The second sum in the R.H.S. of \eqref{diff-Gr-Ger-cO}} =
\end{equation}
$$
-(-1)^{|v|} 
\Ups_{\cO} \left(\,
\sum_{\tau \in \Sh_{2,r-1}} \sum_{\la \in S_n}
\la \Big(\tau \big(  \Av_r(v) \circ_1 \{a_1,a_2\} \big) \Big) \otimes \la (w)
\,\right)\,,
$$
where $ \Av_r(v)$ is viewed as a vector in $\cO(r+n)$ and 
$\tau \big(  \Av_r(v) \circ_1 \{a_1,a_2\} \big) $ is viewed as 
a vector in $\Tw\cO(n)$\,.

Due to Exercise \ref{exer:sum3-4} below, 
\begin{equation}
\label{sum3-4-cO-upshot}
\textrm{The combination of the 
last two sums in the R.H.S. of \eqref{diff-Gr-Ger-cO}} =
\end{equation}
$$
-(-1)^{|v|} 
\Ups_{\cO} \left(\,
\sum_{\tau \in \Sh_{r,1}} \sum_{\la \in S_n}
\sum_{i=1}^{n}
\la \Big(\tau\circ \vs_{r+1,r+i} \big( \Av_r(v) \circ_{r+i} \io(\{a_1,a_2\}) \big) \Big) \otimes \la (w)
\,\right)\,,
$$
where $\Av_r(v)$ is viewed as a vector 
in $\cO(r+n)$ and 
$$
\tau\circ \vs_{r+1,r+i} \big( \Av_r(v) \circ_{r+i} \io(\{a_1,a_2\}) \big) 
$$
is viewed as a vector in $\Tw\cO(n)$\,.

Comparing \eqref{sum1-cO-upshot}, \eqref{sum2-cO-upshot}, and \eqref{sum3-4-cO-upshot}
with the second, the first and the third sums, respectively, in the right hand side of 
equation \eqref{diff-Tw-cO} from Section \ref{sec:Tw-oplus},
we see that the equation \eqref{goal-Gr-Ger-cO} indeed holds.

Proposition \ref{prop:Gr-Ger-cO} is proved. 
\end{proof}
\begin{exer}
\label{exer:sum3-4}
Let  $v$ be a vector in $\cO(r+n)$ and $w$ be a monomial 
in $\La^{-2}\Ger^{\hrt}(n)$\,.
Prove that 

\begin{equation}
\label{sum3-4-exer}
\textrm{The combination of the 
last two sums in the R.H.S. of \eqref{diff-Gr-Ger-cO}} =
\end{equation}
$$
-(-1)^{|v|} 
\Ups_{\cO} \left(\,
\sum_{\tau \in \Sh_{r,1}} \sum_{\la \in S_n}
\sum_{i=1}^{n}
\la \Big(\tau\circ \vs_{r+1,r+i} \big( \Av_r(v) \circ_{r+i} \io(\{a_1,a_2\}) \big) \Big) \otimes \la (w)
\,\right)\,,
$$
where 
$$
\Av_r (v) = \sum_{\la_1 \in S_r} \la_1(v)\,.
$$
\end{exer}

{\it Hint for Exercise \ref{exer:sum3-4}:}
Using the symmetry of the bracket $\{~,~\}$ we can 
rewrite the combination of the last two sums in the right hand side 
of  \eqref{diff-Gr-Ger-cO} as follows:
\begin{equation}
\label{sum3-4-hint}
\textrm{The combination of the 
last two sums in the R.H.S. of \eqref{diff-Gr-Ger-cO}} =
\end{equation}
$$
-(-1)^{|v|} \sum_{i=r+1}^{r+n} 
\sum_{\la\in S_{r+1+n}} 
\la \Big(
\te_i \big( v \circ_i  \io(\{a_1, a_2 \}) \big)
$$
$$
~\otimes~ b_2 b_3 \dots b_{r+2} \, w (b_{r+3}, \dots, b_{i+1}, b_1, b_{i+2}, 
\dots, b_{r+1+n}) 
\Big)=
$$

$$
-(-1)^{|v|} 
\sum_{i=r+1}^{r+n} 
\sum_{\la\in S_{r+1+n}} 
\la \circ \vs^{-1}_{1, i+1} \Big(
\te_i \big( v \circ_i  \io(\{a_1, a_2 \}) \big)
$$
$$
~\otimes~ b_2 b_3 \dots b_{r+2} \, w (b_{r+3}, \dots, b_{i+1}, b_1, b_{i+2}, 
\dots, b_{r+1+n}) 
\Big)=
$$

$$
-(-1)^{|v|} 
\sum_{i=r+1}^{r+n}  
\sum_{\la \in S_{r+1+n}} 
\la  \Big( \vs_{1,i} \big(v \circ_i \io(\{a_1, a_2\})\big)
~\otimes~ b_1 b_2 \dots b_{r+1} \, w (b_{r+2}, \dots, b_{r+1+n}) 
\Big) =
$$

$$
-(-1)^{|v|} 
\sum_{i=r+1}^{r+n}  
\sum_{\la \in S_{r+1+n}} 
\la \circ \vs^{-1}_{1, r+1} \Big( \vs_{1,i} \big(v \circ_i \io(\{a_1, a_2\})\big)
~\otimes~ b_1 b_2 \dots b_{r+1} \, w (b_{r+2}, \dots, b_{r+1+n}) 
\Big)=
$$

$$
-(-1)^{|v|} 
\sum_{i=r+1}^{r+n}  
\sum_{\la \in S_{r+1+n}} 
\la  \Big( \vs_{r+1,i} \big(v \circ_i \io(\{a_1, a_2\})\big)
~\otimes~ b_1 b_2 \dots b_{r+1} \, w (b_{r+2}, \dots, b_{r+1+n}) 
\Big)\,.
$$

\section{The operad $\Gra$ and its link to the operad $\Ger$}
\label{sec:Gra}

Let us recall from \cite{Thomas} the operad (in $\grVect_{\bbK}$) of 
labeled graphs $\Gra$\,. 

To define the space $\Gra(n)$ (for $n \ge 1$) we introduce an auxiliary set $\gra_{n}$.
An element of $\gra_{n}$ is a labelled graph $\G$
with $n$ vertices and with the additional piece 
of data: the set of edges of $\G$ is equipped with a 
total order. An example of an element in $\gra_4$ is 
shown on figure \ref{fig:exam}. 
\begin{figure}[htp] 
\centering 
\begin{tikzpicture}[scale=0.5, >=stealth']
\tikzstyle{w}=[circle, draw, minimum size=4, inner sep=1]
\tikzstyle{b}=[circle, draw, fill, minimum size=4, inner sep=1]
\node [b] (b1) at (0,0) {};
\draw (-0.4,0) node[anchor=center] {{\small $1$}};
\node [b] (b2) at (1,2) {};
\draw (1,2.6) node[anchor=center] {{\small $2$}};
\node [b] (b3) at (1,-2) {};
\draw (1,-2.6) node[anchor=center] {{\small $3$}};
\node [b] (b4) at (2,0) {};
\draw (2,0.6) node[anchor=center] {{\small $4$}};
\draw (b1) edge (b2);
\draw (0.2,1.4) node[anchor=center] {{\small $ii$}};
\draw (b1) edge (b3);
\draw (0.2,-1.4) node[anchor=center] {{\small $iii$}};
\draw (-1,0) circle (1);
\draw (-2.3,0) node[anchor=center] {{\small $i$}};
\end{tikzpicture}
\caption{The Roman numerals 
indicate that we chose the total order on 
the set of edges $(1,1) < (1,2) < (1,3)$} \label{fig:exam}
\end{figure}
We will often use Roman numerals to specify total orders 
on sets of edges. Thus the Roman numerals on figure \ref{fig:exam} 
indicate that we chose the total order $(1,1) < (1,2) < (1,3)$\,.

The space  $\Gra(n)$ (for $n \ge 1$) is spanned by elements of 
$\gra_n$, modulo the relation $\G^{\si} = (-1)^{|\si|} \G$
where the elements $\G^{\si}$ and $\G$ correspond to the same
labelled graph but differ only by permutation $\si$
of edges. We also declare that 
the degree of a graph $\G$ in $\Gra(n)$ equals 
$-e(\G)$, where $e(\G)$ is the number of edges in $\G$\,.
For example, the graph $\G$ on figure \ref{fig:exam} has 
$3$ edges. Thus its degree is $-3$\,. 

Finally, we set 
\begin{equation}
\label{Gra-0}
\Gra(0) = \bfzero\,.
\end{equation}

\begin{rem}
\label{rem:double-edges}
It clear that, if a graph $\G \in \gra_n$ has
multiple edges, then
$$
\G = - \G
$$
in $\Gra(n)$\,. Thus for every graph  $\G \in \gra_n$
with multiple edges $\G = 0$ in $\Gra(n)$\,.
\end{rem}

We will now define elementary insertions for the collection
$\{\Gra(n)\}_{n\ge 0}$\,.

Let $\G$ and $\wt{\G}$ be graphs representing vectors 
in $\Gra(n)$ and $\Gra(m)$, respectively. 
Let  $1\le i \le m$\,.

The vector $\wt{\G}\, \circ_{i}\, \G \in \Gra(n+m-1)$ is represented 
by the sum of graphs $\G_{\al} \in \gra_{n+m-1}$
 \begin{equation}
\label{circ-i-mc}
\wt{\G} \,\circ_{i}\, \G  = \sum_{\al} \G_{\al}\,,
\end{equation}
where $\G_{\al}$ is obtained by  
 ``plugging in'' the graph $\G$ into the $i$-th vertex of the graph $\tG$ and   
reconnecting the edges incident to the $i$-th vertex of $\tG$ to vertices of $\G$ in 
all possible ways. (The index $\al$ refers to a particular way of connecting the 
edges incident to the $i$-th vertex of $\tG$ to vertices of $\G$. )
After reconnecting edges we label vertices of $\G_{\al}$ as follows: 
\begin{itemize}

\item we leave the same labels on the first $i-1$ vertices 
of $\wt{\G}$; 

\item we shift all labels on vertices of $\G$ up by $i-1$;

\item finally, we shift the labels on the last $m-i$ vertices 
of $\wt{\G}$ up by $n-1$\,.

\end{itemize}
To define the total order on edges of the graph $\G_{\al}$ we declare 
that all edges of $\wt{\G}$ are smaller than all edges of the graph $\G$\,.

\begin{example}
\label{ex:Gra-ins}
Let $\wt{\G}$ (resp. $\G$) be the graph depicted on figure 
\ref{fig:wtG} (resp. figure \ref{fig:G12})\,. The vector 
$\wt{\G} \,\c_2\, \G$ is shown on figure \ref{fig:dGra-ins}.
\begin{figure}[htp]
\begin{minipage}[t]{0.45\linewidth}
\centering 
\begin{tikzpicture}[scale=0.5, >=stealth']
\tikzstyle{w}=[circle, draw, minimum size=4, inner sep=1]
\tikzstyle{b}=[circle, draw, fill, minimum size=4, inner sep=1]
\node [b] (b1) at (0,0) {};
\draw (0,-0.6) node[anchor=center] {{\small $1$}};
\node [b] (b2) at (1,2) {};
\draw (1.4,2.2) node[anchor=center] {{\small $2$}};
\node [b] (b3) at (-1,2) {};
\draw (-1.4,2.2) node[anchor=center] {{\small $3$}};
\draw (b1) edge (b2);
\draw (0.8,0.8) node[anchor=center] {{\small $i$}};
\draw (b1) edge (b3);
\draw (-0.9,0.8) node[anchor=center] {{\small $ii$}};
\draw (b3) edge (b2);
\draw (0,2.5) node[anchor=center] {{\small $iii$}};
\end{tikzpicture}
~\\[0.3cm]
\caption{A graph $\wt{\G} \in \gra_{3}$} \label{fig:wtG}
\end{minipage}
\begin{minipage}[t]{0.45\linewidth}
\centering 
\begin{tikzpicture}[scale=0.5, >=stealth']
\tikzstyle{w}=[circle, draw, minimum size=4, inner sep=1]
\tikzstyle{b}=[circle, draw, fill, minimum size=4, inner sep=1]
\node [b] (b1) at (0,0) {};
\draw (0,-0.6) node[anchor=center] {{\small $1$}};
\node [b] (b2) at (1,2) {};
\draw (1.4,2.2) node[anchor=center] {{\small $2$}};
\draw (b1) edge (b2);
\end{tikzpicture}
~\\[0.3cm]
\caption{A graph $\G \in \gra_{2}$} \label{fig:G12}
\end{minipage}
\end{figure} 
\begin{figure}[htp]
\begin{minipage}[t]{0.3\linewidth}
\centering 
\begin{tikzpicture}[scale=0.5, >=stealth']
\tikzstyle{w}=[circle, draw, minimum size=4, inner sep=1]
\tikzstyle{b}=[circle, draw, fill, minimum size=4, inner sep=1]
\draw (-3.5,1) node[anchor=center] {{$\wt{\G} \,\c_2\, \G \quad = $}};
\node [b] (b1) at (0,0) {};
\draw (0,-0.6) node[anchor=center] {{\small $1$}};
\node [b] (b2) at (1,2) {};
\draw (1.3,1.6) node[anchor=center] {{\small $2$}};
\node [b] (b3) at (2.5,2.5) {};
\draw (2.9,2.6) node[anchor=center] {{\small $3$}};
\node [b] (b4) at (-1,2) {};
\draw (-1.4,2.2) node[anchor=center] {{\small $4$}};
\draw (b1) edge (b2);
\draw (0.8,0.8) node[anchor=center] {{\small $i$}};
\draw (b1) edge (b4);
\draw (-0.9,0.8) node[anchor=center] {{\small $ii$}};
\draw (b4) edge (b2);
\draw (0,2.5) node[anchor=center] {{\small $iii$}};
\draw (b2) edge (b3);
\draw (1.7,2.67) node[anchor=center] {{\small $iv$}};
\end{tikzpicture}
~\\[0.3cm]
\end{minipage}
\begin{minipage}[t]{0.2\linewidth}
\centering 
\begin{tikzpicture}[scale=0.5, >=stealth']
\tikzstyle{w}=[circle, draw, minimum size=4, inner sep=1]
\tikzstyle{b}=[circle, draw, fill, minimum size=4, inner sep=1]
\draw (-2.5,1) node[anchor=center] {{$ + $}};
\node [b] (b1) at (0,0) {};
\draw (0,-0.6) node[anchor=center] {{\small $1$}};
\node [b] (b2) at (1,2) {};
\draw (1.3,1.6) node[anchor=center] {{\small $3$}};
\node [b] (b3) at (2.5,2.5) {};
\draw (2.9,2.6) node[anchor=center] {{\small $2$}};
\node [b] (b4) at (-1,2) {};
\draw (-1.4,2.2) node[anchor=center] {{\small $4$}};
\draw (b1) edge (b2);
\draw (b1) edge (b4);
\draw (b4) edge (b2);
\draw (b2) edge (b3);
\draw (0.8,0.8) node[anchor=center] {{\small $i$}};
\draw (-0.9,0.8) node[anchor=center] {{\small $ii$}};
\draw (0,2.5) node[anchor=center] {{\small $iii$}};
\draw (1.7,2.67) node[anchor=center] {{\small $iv$}};
\end{tikzpicture}
~\\[0.3cm]
\end{minipage}
\begin{minipage}[t]{0.2\linewidth}
\centering 
\begin{tikzpicture}[scale=0.5, >=stealth']
\tikzstyle{w}=[circle, draw, minimum size=4, inner sep=1]
\tikzstyle{b}=[circle, draw, fill, minimum size=4, inner sep=1]
\draw (-3,1) node[anchor=center] {{$ + $}};
\node [b] (b1) at (0,0) {};
\draw (0,-0.6) node[anchor=center] {{\small $1$}};
\node [b] (b2) at (1.5,1.5) {};
\draw (1.6,1) node[anchor=center] {{\small $2$}};
\node [b] (b3) at (0,3) {};
\draw (0,3.5) node[anchor=center] {{\small $3$}};
\node [b] (b4) at (-1.5,1.5) {};
\draw (-1.6,1) node[anchor=center] {{\small $4$}};
\draw  (b1) edge (b2);
\draw (1,0.5) node[anchor=center] {{\small $i$}};
\draw  (b2) edge (b3);
\draw (1.1,2.6) node[anchor=center] {{\small $iv$}};
\draw  (b4) edge (b3);
\draw (-1.1,2.6) node[anchor=center] {{\small $iii$}};
\draw  (b1) edge (b4);
\draw (-1,0.45) node[anchor=center] {{\small $ii$}};
\end{tikzpicture}
~\\[0.3cm]
\end{minipage}
\begin{minipage}[t]{0.2\linewidth}
\centering 
\begin{tikzpicture}[scale=0.5, >=stealth']
\tikzstyle{w}=[circle, draw, minimum size=4, inner sep=1]
\tikzstyle{b}=[circle, draw, fill, minimum size=4, inner sep=1]
\draw (-2.8,1) node[anchor=center] {{$ + $}};
\node [b] (b1) at (0,0) {};
\draw (0,-0.6) node[anchor=center] {{\small $1$}};
\node [b] (b2) at (1.5,1.5) {};
\draw (1.6,1) node[anchor=center] {{\small $3$}};
\node [b] (b3) at (0,3) {};
\draw (0,3.5) node[anchor=center] {{\small $2$}};
\node [b] (b4) at (-1.5,1.5) {};
\draw (-1.6,1) node[anchor=center] {{\small $4$}};
\draw  (b1) edge (b2);
\draw  (b2) edge (b3);
\draw  (b4) edge (b3);
\draw  (b1) edge (b4);
\draw (1,0.5) node[anchor=center] {{\small $i$}};
\draw (1.1,2.6) node[anchor=center] {{\small $iv$}};
\draw (-1.1,2.6) node[anchor=center] {{\small $iii$}};
\draw (-1,0.45) node[anchor=center] {{\small $ii$}};
\end{tikzpicture}
~\\[0.3cm]
\end{minipage}
\caption{The vector $\wt{\G} \,\c_2 \, \G \in \Gra(4)$} \label{fig:dGra-ins}
\end{figure} 
\end{example}

The symmetric group $S_n$ acts on $\Gra(n)$ in the 
obvious way by rearranging the labels on vertices. 
It is not hard to see that insertions \eqref{circ-i-mc}
together with this action of $S_n$ give on $\Gra$ an 
operad structure with the identity element being the unique
graph in $\gra_1$ with no edges.

It is clear that if two graphs $\wt{\G}$ and $\G$ representing vectors 
in $\Gra$ do not have loops (i.e. cycles of length $1$) then each graph in the linear 
combination  $\wt{\G} \circ_i \G$ does not have loops either.  
Thus, by discarding graphs with loops, we arrive at a suboperad 
$\Gra_{\nl}$ of $\Gra$\,.

The graphs depicted below represent vectors in $\Gra_{\nl}(2)$
and in $\Gra(2)$\,.
\begin{equation}
\label{binary}
\G_{\ed} =   \begin{tikzpicture}[scale=0.5, >=stealth']
\tikzstyle{w}=[circle, draw, minimum size=4, inner sep=1]
\tikzstyle{b}=[circle, draw, fill, minimum size=4, inner sep=1]
\node [b] (b1) at (0,0) {};
\draw (0,0.6) node[anchor=center] {{\small $1$}};
\node [b] (b2) at (1.5,0) {};
\draw (1.5,0.6) node[anchor=center] {{\small $2$}};
\draw (b1) edge (b2);
\end{tikzpicture}
\qquad \quad
\G_{\bb} =   \begin{tikzpicture}[scale=0.5, >=stealth']
\tikzstyle{w}=[circle, draw, minimum size=4, inner sep=1]
\tikzstyle{b}=[circle, draw, fill, minimum size=4, inner sep=1]
\node [b] (b1) at (0,0) {};
\draw (0,0.6) node[anchor=center] {{\small $1$}};
\node [b] (b2) at (1.5,0) {};
\draw (1.5,0.6) node[anchor=center] {{\small $2$}};
\end{tikzpicture}
\end{equation}
Later they will play a special role.

\subsection{``Graphical'' interpretation of the operad $\Ger$} 

Since $\Ger$ is generated by the monomials $a_1 a_2, ~ \{a_1,a_2\} \in \Ger(2)$,
any map of operads 
$$
f : \Ger \to \cO
$$
is uniquely determined by its values on $a_1 a_2$ and $\{a_1, a_2\}$\,.

\begin{exer}
\label{exer:Ger-Gra}
Let $\G_{\ed}$ and $\G_{\bb}$ be the vectors in $\Gra(2)$ 
introduced in \eqref{binary}. Prove that the assignment 
\begin{equation}
\label{io}
\io(a_1 a_2) = \G_{\bb}, \qquad \io( \{a_1, a_2\}) = \G_{\ed} 
\end{equation}
defines a map of operads (in $\grVect_{\bbK}$)
\begin{equation}
\label{io-Ger-Gra}
\io : \Ger \to \Gra\,.
\end{equation}
Notice that, one only has to check that 
$$
\io \big( (a_1 a_2) a_3   - a_1 (a_2 a_3)  \big) = 0\,,
$$
$$
\io \big( \{a_1 , a_2 a_3 \}  -  \{a_1, a_2\} a_3 + a_2 \{a_1, a_3\} \big) = 0\,,
$$
$$
\io \big(  \{\{a_1, a_2\} , a_3\} +  \{\{a_2, a_3\} , a_1\} +
 \{\{a_3, a_1\} , a_2\}  \big) = 0\,.
$$
\end{exer}

We claim that 
\begin{prop}
\label{prop:Ger-Gra}
The map of operads $\io:  \Ger \to \Gra$
is injective. 
\end{prop}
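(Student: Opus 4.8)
The plan is to deduce injectivity of $\io$ from a ``leading term'' argument applied to the basis of $\Ger(n)$ exhibited in Exercise \ref{exer:Ger-n-basis}. Recall that this basis consists of monomials of the form \eqref{Ger-n-basis}, one for each ordered partition $B_1\sqcup\dots\sqcup B_t$ of $\{1,\dots,n\}$ in which $\max B_\al$ is the last entry of the $\al$-th block and $\max B_1<\dots<\max B_t$; the $\al$-th factor of such a monomial is the right-nested $\La\Lie$-word on the variables indexed by $B_\al$, ending in the variable indexed by $\max B_\al$. I would first record two elementary computations. (i) If $w$ is a right-nested $\La\Lie$-word in pairwise distinct variables, $w=\{a_{j_1},\{a_{j_2},\dots,\{a_{j_{p-1}},a_{j_p}\}\cdots\}\}$, then, using $\io(\{a_1,a_2\})=\G_{\ed}$ and unwinding the insertion \eqref{circ-i-mc}, $\io(w)$ (transported along $k\mapsto j_k$ to the vertex set $\{j_1,\dots,j_p\}$) is a signed sum of \emph{trees} on $\{j_1,\dots,j_p\}$, each of which is obtained from the single edge $\{j_{p-1},j_p\}$ by successively adjoining the vertices $j_{p-2},j_{p-3},\dots,j_1$, each new vertex being joined by one edge to an arbitrary vertex already present; this is a straightforward induction on $p$. (ii) Since $\io(a_1 a_2)=\G_{\bb}$, so that $\io(a_1\cdots a_t)$ is the graph on $t$ vertices with no edges, inserting graphs into its vertices produces no reconnections; hence for a product $w_1\cdots w_t$ of words as in (i) one has $\io(w_1\cdots w_t)$ equal, up to the relabelling dictated by the operad structure, to the signed sum of the disjoint unions $G_1\sqcup\dots\sqcup G_t$ with $G_\al$ a tree occurring in $\io(w_\al)$. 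In particular every graph occurring in $\io(w_1\cdots w_t)$ has the variable sets of $w_1,\dots,w_t$ as its connected components.

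Now to a basis element $v$ as above I would attach the graph $\G_v$ on $\{1,\dots,n\}$ that is the disjoint union, over $\al$, of the path joining the entries of $B_\al$ in the order in which they appear in the $\al$-th factor of $v$ (with any chosen total order on its edges, which only affects a global sign). By (i), this path occurs in $\io$ of the $\al$-th factor: start from the edge between the two innermost variables and attach each further variable to the preceding one. The crucial claim is that $\G_v$ occurs in $\io(v)$ with coefficient $\pm 1$ and occurs in $\io(v')$ with coefficient $0$ for every other basis element $v'$. For the vanishing: by (ii), if $v'$ has a different underlying set partition than $v$, no graph of $\io(v')$ has the blocks $B_\al$ as components, so $\G_v$ does not occur; and if $v'$ has the same set partition, then by (ii) the coefficient of $\G_v$ in $\io(v')$ is a product over $\al$ of the coefficient of the $\al$-th path of $\G_v$ in $\io(w'_\al)$, where $w'_\al$ is the $\al$-th factor of $v'$, so it suffices to treat one block. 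There the point is purely combinatorial: every tree occurring in $\io$ of a right-nested word contains the edge between that word's two innermost variables, and a short induction (the innermost variable of the block being $\max B_\al$, which is a leaf of the path) shows that the given path is obtainable by the adjunction process of (i) from one and only one right-nested word ending in the variable indexed by $\max B_\al$, namely from the $\al$-th factor of $v$ itself, and then only by a forced sequence of choices; hence the coefficient is $\pm 1$ for $v$ and $0$ for every other $v'$.

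With the claim in hand, injectivity follows at once: the graph $\G_v$ has no multiple edges, so it is a nonzero element of the standard basis of $\Gra(n)$, and if $\sum_v c_v\,\io(v)=0$ then extracting the coefficient of $\G_v$ yields $\pm c_v=0$ for every $v$; thus $\{\io(v)\}_v$ is linearly independent, and since the $v$ form a basis of $\Ger(n)$, $\io$ is injective in every arity. (Arities $0$ and $1$ are trivial, as $\Ger(0)=\Gra(0)=\bfzero$ and $\io$ carries the generator of $\Ger(1)=\bbK$ to the unit of $\Gra$.) I expect the only real work to be the careful verification of (i) and (ii) from the definition of the insertion \eqref{circ-i-mc} --- in particular checking that $\G_v$ really occurs inside $\io(v)$ with a nonzero coefficient, i.e. that the forced construction of the path from $v$ is genuinely unique among all the ways $\io(v)$ is assembled, so that the contributing $\pm 1$'s cannot cancel --- together with keeping track of the vertex relabellings and of the signs induced by the chosen orders on edge sets; this is bookkeeping rather than anything conceptually hard.
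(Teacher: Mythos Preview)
Your proof is correct and takes essentially the same approach as the paper's: both attach to each basis monomial $v$ (from Exercise \ref{exer:Ger-n-basis}) the disjoint-union-of-paths graph $\G_v$ of figure \ref{fig:lines}, observe that it appears in $\io(v)$ with a nonzero coefficient, and deduce injectivity from the linear independence of the $\G_v$'s. Your write-up is more explicit than the paper's about why $\G_v$ cannot occur in $\io(v')$ for $v'\neq v$---the paper leaves this implicit in the phrase ``since such graphs are linearly independent''---so your additional verification is a welcome clarification rather than a deviation.
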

\begin{proof}
Recall that due to Exercise \ref{exer:Ger-n-basis}, 
the monomials 
\begin{equation}
\label{Ger-n-basis-here}
\{ a_{i_{11}},  \dots, \{ a_{i_{1 (p_1-1)}}, a_{i_{1 p_1}} \br \dots
\{ a_{i_{t1}},  \dots, \{ a_{i_{t (p_t-1)}}, a_{i_{t p_t}} \br
\end{equation}
corresponding to the ordered partitions \eqref{sp-partition}
form a basis of $\Ger(n)$\,.

Let us observe that for every ordered partition 
 \eqref{sp-partition}
the graph depicted on figure \ref{fig:lines}
enters the linear combination 
$$
\io\big(  \{ a_{i_{11}},  \dots, \{ a_{i_{1 (p_1-1)}}, a_{i_{1 p_1}} \br \dots
\{ a_{i_{t1}},  \dots, \{ a_{i_{t (p_t-1)}}, a_{i_{t p_t}}  \br \big)
$$
with the coefficient $1$.
\begin{figure}[htp] 
\begin{minipage}[t]{\linewidth}
\centering 
\begin{tikzpicture}[scale=0.5, >=stealth']
\tikzstyle{w}=[circle, draw, minimum size=4, inner sep=1]
\tikzstyle{b}=[circle, draw, fill, minimum size=4, inner sep=1]
\node [b] (b1) at (0,0) {};
\draw (0,0.6) node[anchor=center] {{\small $i_{11}$}};
\node [b] (b2) at (4,0) {};
\draw (4,0.6) node[anchor=center] {{\small $i_{12}$}};
\node [b] (b3) at (8,0) {};
\draw (8,0.6) node[anchor=center] {{\small $i_{13}$}};
\node [b] (b1t1) at (12,0) {};
\draw (12,0.6) node[anchor=center] {{\small $i_{1 (p_1-1)}$}};
\node [b] (bt1) at (16,0) {};
\draw (16,0.6) node[anchor=center] {{\small $i_{1 p_1}$}};
\draw (b1) edge (b2);
\draw (b2) edge (b3);
\draw (b3) edge (9,0);
\draw (10,0) node[anchor=center] {{$\dots$}};
\draw (11,0) edge (b1t1);
\draw (b1t1) edge (bt1);
\end{tikzpicture}
\end{minipage}
\begin{minipage}[t]{\linewidth}
~\\[0.3cm]
\end{minipage}
\begin{minipage}[t]{\linewidth}
\centering 
\begin{tikzpicture}[scale=0.5, >=stealth']
\tikzstyle{w}=[circle, draw, minimum size=4, inner sep=1]
\tikzstyle{b}=[circle, draw, fill, minimum size=4, inner sep=1]
\node [b] (b1) at (0,0) {};
\draw (0,0.6) node[anchor=center] {{\small $i_{21}$}};
\node [b] (b2) at (4,0) {};
\draw (4,0.6) node[anchor=center] {{\small $i_{22}$}};
\node [b] (b3) at (8,0) {};
\draw (8,0.6) node[anchor=center] {{\small $i_{23}$}};
\node [b] (b1t) at (12,0) {};
\draw (12,0.6) node[anchor=center] {{\small $i_{2 (p_2-1)}$}};
\node [b] (bt) at (16,0) {};
\draw (16,0.6) node[anchor=center] {{\small $i_{2 p_2}$}};
\draw (b1) edge (b2);
\draw (b2) edge (b3);
\draw (b3) edge (9,0);
\draw (10,0) node[anchor=center] {{$\dots$}};
\draw (11,0) edge (b1t);
\draw (b1t) edge (bt);
\end{tikzpicture}
\end{minipage}
\begin{minipage}[t]{\linewidth}
~\\[0.1cm]
\begin{center}
$\vdots$ ~~~$\vdots$~~~  $\vdots$
\end{center}
~\\[0.1cm]
\end{minipage}
\begin{minipage}[t]{\linewidth}
\centering 
\begin{tikzpicture}[scale=0.5, >=stealth']
\tikzstyle{w}=[circle, draw, minimum size=4, inner sep=1]
\tikzstyle{b}=[circle, draw, fill, minimum size=4, inner sep=1]
\node [b] (b1) at (0,0) {};
\draw (0,0.6) node[anchor=center] {{\small $i_{t1}$}};
\node [b] (b2) at (4,0) {};
\draw (4,0.6) node[anchor=center] {{\small $i_{t2}$}};
\node [b] (b3) at (8,0) {};
\draw (8,0.6) node[anchor=center] {{\small $i_{t3}$}};
\node [b] (b1t) at (12,0) {};
\draw (12,0.6) node[anchor=center] {{\small $i_{t (p_t-1)}$}};
\node [b] (bt) at (16,0) {};
\draw (16,0.6) node[anchor=center] {{\small $i_{t p_t}$}};
\draw (b1) edge (b2);
\draw (b2) edge (b3);
\draw (b3) edge (9,0);
\draw (10,0) node[anchor=center] {{$\dots$}};
\draw (11,0) edge (b1t);
\draw (b1t) edge (bt);
\end{tikzpicture}
\end{minipage}
\caption{The edges are ordered ``left to right'', ``top to bottom''} 
\label{fig:lines}
\end{figure} 

Since such graphs are linearly independent in $\Gra(n)$,  we 
conclude that $\io$ is indeed injective.

The proposition is proved. 
\end{proof}

\section{The full graph complex $\fGC$: the first steps}
\label{sec:fGC-first}

Let $\G_{\ed}$ and $\G_{\bb}$ be the vectors in $\Gra(2)$ introduced 
in \eqref{binary}. Following Exercise \ref{exer:Ger-Gra} and 
Proposition \ref{exer:Ger-Gra}
the formulas 
$$
\io(\{a_1, a_2\}) = \G_{\ed}\,, \qquad 
\io (a_1 a_2) = \G_{\bb}
$$ 
define an embedding $\io$ of the operad $\Ger$ into the operad $\Gra$. 

Thus, restricting $\io$ to the suboperad $\La\Lie \subset \Ger$
we get an embedding 
$$
\La\Lie  \hookrightarrow  \Gra\,. 
$$
Hence we have a canonical map of (dg) operads
\begin{equation}
\label{LaLie-infty-Gra}
\vf_{\Gra} : \La\Lie_{\infty} \to \Gra\,.
\end{equation}

Applying the general procedure of twisting (see Section \ref{sec:twist})
to the pair $(\Gra, \vf_{\Gra})$ we get a dg operad $\Tw\Gra$ 
and a dg Lie algebra 
\begin{equation}
\label{cL-Gra}
\cL_{\Gra} = \Conv( \La^2 \coCom, \Gra)
\end{equation}
which acts on the operad $\Tw\Gra$\,.

Following \cite{Thomas} we denote the dg Lie algebra 
$\cL_{\Gra}$ by $\fGC$. In other words, 
\begin{equation}
\label{fGC}
\fGC = \Conv( \La^2 \coCom, \Gra)
\end{equation}

The vector 
\begin{equation}
\label{MC-fGC}
\G_{\ed} \in \fGC
\end{equation}
is a Maurer-Cartan element in $\fGC$ and the differential on
$\fGC$ is given by the formula: 
\begin{equation}
\label{diff-fGC}
\pa =  \ad_{\G_{\ed}}\,.
\end{equation}

\begin{defi}
\label{dfn:fGC}
The cochain complex $\fGC$ \eqref{fGC} with the differential \eqref{diff-fGC}
is called the {\it full graph complex}.
\end{defi}

In this subsection we take a first few steps towards analyzing
the cochain complex $\fGC$.

Unfolding the definition of the convolution Lie algebra 
we get
 \begin{equation}
\label{fGC-unfold}
\fGC = \prod_{n=1}^{\infty} \Hom_{S_n}\big(\La^2 \coCom(n), \Gra(n) \big) = 
 \prod_{n=1}^{\infty} \Hom_{S_n}\big( \bs^{2-2n} \bbK, \Gra(n) \big) = 
\end{equation}
$$
= \prod_{n=1}^{\infty} \bs^{2n-2} \big(\Gra(n) \big)^{S_n}\,.
$$

In other words, vectors in $\fGC$ are infinite sums
\begin{equation}
\label{sum-fGC}
\ga = \sum_{n=1}^{\infty} \ga_n
\end{equation}
of $S_n$-invariant vectors $\ga_n \in \bs^{2n-2}\Gra(n)$\,.

The vector space  
\begin{equation}
\label{Gra-n-inv}
\bs^{2n-2} \big(\Gra(n) \big)^{S_n} 
\end{equation}
is spanned by vectors of the  form
\begin{equation}
\label{ave-G}
\Av(\G) =  \sum_{\si \in S_n}  \si(\G)
\end{equation}
where $\G$ is an element in $\gra_n$\,.
In other words, formula \eqref{ave-G}
defines a surjective $\bbK$-linear map
\begin{equation}
\label{ave-map}
\Av : \bbK \L \gra_n \R \onto  \bs^{2n-2}\big(\Gra(n) \big)^{S_n}\,. 
\end{equation}

To describe the kernel of the map $\Av$, we observe that 
$\Av(\G) = 0$ if and only if the underlying 
unlabeled graph has an 
automorphism which induces an odd permutation on 
the set of edges. In this case we say that the element 
$\G \in \gra_n$ is {\it odd}. Otherwise, we say that 
the element $\G \in \gra_n$ is {\it even}\,.  For example, 
the square depicted on figure \ref{fig:square} is odd 
and the pentagon depicted on figure \ref{fig:penta} 
is even. It is obvious that the property of being even 
or odd depends only on the isomorphism class of 
the underlying unlabeled graph.
\begin{figure}[htp]
\begin{minipage}[t]{0.45\linewidth}
\centering 
\begin{tikzpicture}[scale=0.5, >=stealth']
\tikzstyle{w}=[circle, draw, minimum size=4, inner sep=1]
\tikzstyle{b}=[circle, draw, fill, minimum size=4, inner sep=1]
\node [b] (b1) at (0,0) {};
\draw (0,-0.6) node[anchor=center] {{\small $1$}};
\node [b] (b2) at (2,0) {};
\draw (2,-0.6) node[anchor=center] {{\small $2$}};
\node [b] (b3) at (2,2) {};
\draw (2,2.6) node[anchor=center] {{\small $3$}};
\node [b] (b4) at (0,2) {};
\draw (0,2.6) node[anchor=center] {{\small $4$}};
\draw  (b1) edge (b2);
\draw  (b2) edge (b3);
\draw  (b3) edge (b4);
\draw  (b4) edge (b1);
\end{tikzpicture}
~\\[0.3cm]
\caption{We choose this order on the set of edges: 
$(1,2)< (2,3) < (3,4) < (4,1)$} \label{fig:square}
\end{minipage}
\hspace{0.08\linewidth}
\begin{minipage}[t]{0.45\linewidth}
\centering 
\begin{tikzpicture}[scale=0.5, >=stealth']
\tikzstyle{w}=[circle, draw, minimum size=4, inner sep=1]
\tikzstyle{b}=[circle, draw, fill, minimum size=4, inner sep=1]
\node [b] (b1) at (2,0) {};
\draw (2.5, 0) node[anchor=center] {{\small $1$}};
\node [b] (b2) at (0.62,1.90) {};
\draw (0.77, 2.4) node[anchor=center] {{\small $2$}};
\node [b] (b3) at (-1.62,1.18) {};
\draw (-2.02, 1.47) node[anchor=center] {{\small $3$}};
\node [b] (b4) at (-1.62,-1.18) {};
\draw (-2.02, -1.47) node[anchor=center] {{\small $4$}};
\node [b] (b5) at (0.62,-1.90) {};
\draw (0.77, -2.4) node[anchor=center] {{\small $5$}};
\draw (b1) edge (b2);
\draw (b2) edge (b3);
\draw (b3) edge (b4);
\draw (b4) edge (b5);
\draw (b5) edge (b1);
\end{tikzpicture}
~\\[0.3cm]
\caption{We choose this order on the set of edges: 
$(1,2)< (2,3) < (3,4) < (4,5)< (5,1)$} \label{fig:penta}
\end{minipage}
\end{figure} 

Let us consider a
pair of even elements $\G, \G' \in  \gra_n$
whose underlying unlabeled graphs are 
isomorphic. 
Any isomorphism of the underlying  unlabeled graphs
gives us a bijection from the set $E(\G)$ of edges of $\G$ to 
the set $E(\G')$ of edges of $\G'$\,. Since both sets 
$E(\G)$  and  $E(\G')$ are totally ordered, this bijection 
determines a permutation $\si \in S_m$ where $m = |E(\G)|$\,.  
Furthermore, since $\G$ and $\G'$ are even, such permutations
$\si$ are either all even or all odd. 
In the later case, we say that even elements $\G$ and $\G'$ are {\it opposite} 
 and the former case we say that even elements $\G$ and 
$\G'$ are {\it concordant}.

It is clear that 
\begin{prop}
\label{prop:ker-Av}
The kernel of the map $\Av$ \eqref{ave-map} 
is spanned by vectors of the form 
\begin{equation}
\label{rel-Gra-inv}
\G, \qquad \G_1 - \G_2, \qquad \G'_1 + \G'_2\,,
\end{equation}
where $\G$ is odd, $(\G_1, \G_2)$ is a 
pair of concordant (even) graphs, and  $(\G'_1, \G'_2)$ is a 
pair of opposite (even) graphs. $\Box$
\end{prop}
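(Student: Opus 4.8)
The plan is to identify $\bs^{2n-2}(\Gra(n))^{S_n}$ with the coinvariants $(\bbK\L\gra_n\R)_{S_n}$ and then analyze the averaging map explicitly. First I would observe that the map $\Av \colon \bbK\L\gra_n\R \onto \bs^{2n-2}(\Gra(n))^{S_n}$ factors through the $S_n$-coinvariants of $\bbK\L\gra_n\R$, and that the induced map on coinvariants is an isomorphism; this is a standard consequence of the fact that over a field of characteristic zero the norm map identifies invariants with coinvariants for any finite group. So computing $\ker(\Av)$ amounts to understanding which linear combinations of elements of $\gra_n$ become zero in $(\bbK\L\gra_n\R)_{S_n}$ modulo the sign relations $\G^{\si} = (-1)^{|\si|}\G$ that are built into $\Gra(n)$.

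Next I would decompose $\bbK\L\gra_n\R$ (together with the edge-sign relations) as a direct sum over isomorphism classes of the underlying unlabeled graphs. For a fixed isomorphism class, the span of the corresponding labeled-and-edge-ordered graphs modulo the edge-sign relation is a single line or zero: pick any representative $\G_0$; every other element in that class is of the form $\si(\G_0)$ for some $\si \in S_n$ up to an edge permutation, and the edge-sign relation forces $\si(\G_0) = \pm \G_0$ depending on the parity of the edge permutation induced by the graph automorphism realizing $\si$ on $\G_0$. If the graph has an automorphism inducing an odd permutation on edges — i.e.\ $\G_0$ is odd — then already $\G_0 = -\G_0$, hence $\G_0 = 0$, so every such element lies in the kernel. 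Otherwise the class contributes a one-dimensional space, spanned by $\Av(\G_0) \neq 0$.

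Then, working within a single even isomorphism class, I would note that two elements $\G_1, \G_2$ of that class satisfy $\Av(\G_1) = \pm\Av(\G_2)$, with the sign being $+$ exactly when $\G_1$ and $\G_2$ are concordant and $-$ exactly when they are opposite; this follows because the bijection of edge sets induced by a chosen graph isomorphism is well-defined up to composition with an automorphism of $\G_1$, and all such automorphisms induce even edge-permutations by the evenness hypothesis, so the parity class of the induced permutation $\si \in S_m$ is an invariant of the pair. Consequently $\G_1 - \G_2$ (concordant case) and $\G'_1 + \G'_2$ (opposite case) map to zero. Conversely, any element of $\ker(\Av)$, when projected to the direct summand indexed by a given isomorphism class, is a multiple of the one-dimensional relation just described, or zero; collecting these over all classes shows that the vectors listed in \eqref{rel-Gra-inv} span the kernel.

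The main obstacle, such as it is, is the bookkeeping in the middle step: making precise that the parity of the edge-permutation attached to a pair $(\G_1,\G_2)$ of even graphs in the same isomorphism class is independent of all the choices (the choice of graph isomorphism, and implicitly the choice of total orders on the edges), and that this parity is precisely what distinguishes "concordant" from "opposite". This is exactly where the evenness hypothesis is used — it guarantees the automorphism group of the underlying graph acts on the set of edge-orderings through the alternating group only — and once it is stated carefully the rest is a routine decomposition-into-isomorphism-classes argument. I would present the proof as: (1) reduce to coinvariants; (2) decompose by isomorphism class; (3) treat odd classes; (4) treat even classes via the concordant/opposite dichotomy; (5) conclude that \eqref{rel-Gra-inv} spans the kernel.
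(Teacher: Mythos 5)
The paper offers no proof of this proposition at all --- it is stated with ``It is clear that'' and a $\Box$ --- so there is no ``paper's route'' to compare against; your argument is the natural one that the surrounding definitions (odd/even, concordant/opposite) are clearly designed to support, and it is correct in substance. Two small points deserve attention. First, your opening claim that ``the induced map on coinvariants is an isomorphism'' is literally false as stated: the map $(\bbK\L \gra_n\R)_{S_n} \to \bs^{2n-2}(\Gra(n))^{S_n}$ is not injective, since its kernel also contains the images of the edge-sign relations $\G^{\si} - (-1)^{|\si|}\G$. What the norm map gives you is only that $\Gra(n)_{S_n} \to \Gra(n)^{S_n}$ is an isomorphism, so that $\Av$ factors as
$$
\bbK\L\gra_n\R \onto \Gra(n) \onto \Gra(n)_{S_n} \stackrel{\cong}{\longrightarrow} \Gra(n)^{S_n},
$$
and $\ker(\Av)$ is generated by the edge-sign relations together with the coinvariant relations $\G - \si(\G)$. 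Your very next sentence shows you intend exactly this, so it is an expository slip rather than a gap, but the first sentence should be rewritten. Second, in the even case you assert without argument that the class contributes a one-dimensional (rather than zero) space, i.e.\ that $\Av(\G_0)\neq 0$; this needs the small no-cancellation observation that when the terms $\si(\G_0)$, $\si\in S_n$, are grouped according to their underlying labeled graph, the evenness hypothesis forces all terms in a group to carry the same sign in $\Gra(n)$. With those two repairs, the remaining steps --- the direct-sum decomposition over isomorphism classes of unlabeled graphs, the well-definedness of the parity attached to a pair of even elements, and the elementary computation of the kernel of a surjection $\bbK^N \to \bbK$, $e_i\mapsto \pm 1$, as the span of $e_i - e_j$ (same sign) and $e_i + e_j$ (opposite sign) --- are all sound and give exactly the spanning set \eqref{rel-Gra-inv}.
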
 

In view of Proposition \ref{prop:ker-Av}, we may identify
the vector space \eqref{Gra-n-inv} with the quotient of 
$\bbK\L \gra_n \R$ by the subspace spanned by vectors 
\eqref{rel-Gra-inv}. 

The following proposition gives us a convenient description 
of the differential  \eqref{diff-fGC} on $\fGC$:
\begin{prop}
\label{prop:dfGC}
For every (even) element $\G\in  \gra_n$ we have
\begin{equation}
\label{pa-dfGC-simple}
\pa \big( \Av(\G) \big) =  \Av( \G_{\ed} \circ_1 \G )  - (-1)^{e(\G)}\,
\frac{1}{2}
\sum_{i=1}^n \Av(\G \circ_i \G_{\ed}) 
\end{equation}
where $e(\G)$ is the number of edges of $\G$\,. Moreover, if $\G$ is 
a connected (even) graph in $\gra_n$ with at least one edge, then   
\begin{equation}
\label{pa-dfGC-simpler}
\pa \big( \Av(\G) \big)  =  - \frac{ (-1)^{e(\G)} }{2}
\sum_{i=1}^n \Av(\G'_i)
\end{equation}
where $\G'_i$ is obtained from   $\G \circ_i \G_{\ed}$
by discarding all graphs in which either vertex $i$ or vertex 
$i+1$ has valency $1$\,.
\end{prop}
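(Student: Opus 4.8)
The plan is to unpack the differential $\pa = \ad_{\G_{\ed}}$ using the explicit formula for the twisting differential $\pa^{\Tw}$ on $\Tw\Gra$ restricted to arity-zero (equivalently, using Proposition \ref{prop:dfGC}'s relation to the bracket in $\fGC = \cL_{\Gra}$), and then to show that the ``extra'' terms appearing in $\G_{\ed}\circ_1\G$ and in $\G\circ_i\G_{\ed}$ beyond those counted by $\G_i'$ all vanish after averaging, by matching them in cancelling pairs. First I would recall that for $\G\in\gra_n$ even, $\Av(\G)\in\bs^{2n-2}(\Gra(n))^{S_n}$ and that by the definition of the bracket on $\Conv(\La^2\coCom,\Gra)$, specialized to the Maurer–Cartan element $\G_{\ed}\in\fGC$, we have, up to the sign bookkeeping from the suspensions,
\begin{equation}
\label{eq:pa-via-bracket}
\pa\big(\Av(\G)\big) = [\G_{\ed}, \Av(\G)] = \Av\big(\G_{\ed}\bullet\G\big) - (-1)^{e(\G)}\Av\big(\G\bullet\G_{\ed}\big),
\end{equation}
where $\bullet$ is the pre-Lie product \eqref{Conv-bullet}. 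Using formula \eqref{bullet-fin} (which applies since $\La^2\coCom(n)$ is finite-dimensional), the term $\G_{\ed}\bullet\G$ reduces to $\Av(\G_{\ed}\circ_1\G)$ and $\G\bullet\G_{\ed}$ reduces to $\tfrac12\sum_{i=1}^n\Av(\G\circ_i\G_{\ed})$, the factor $\tfrac12$ coming from the fact that $\G_{\ed}$ is symmetric in its two vertices so each insertion is counted twice among the shuffles. This establishes \eqref{pa-dfGC-simple}.

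For the second, sharper formula \eqref{pa-dfGC-simpler}, the key observation is that when $\G$ is connected with at least one edge, $\G_{\ed}\circ_1\G$ produces graphs in which we attach a new univalent vertex (labelled $1$) to one vertex of $\G$; but every such graph has its new vertex $1$ of valency $1$, and — here is the crucial point — for each such graph there is a matching graph occurring in $\sum_i\Av(\G\circ_i\G_{\ed})$ (namely one of the terms of $\G\circ_i\G_{\ed}$ in which the extra edge of $\G_{\ed}$ is not reconnected into $\G$ but stays as a pendant edge), and these two contributions cancel after using the sign $(-1)^{e(\G)}$ and the averaging $\Av$. Concretely, in $\G\circ_i\G_{\ed}$ one inserts the edge-graph $\G_{\ed}$ at vertex $i$ and reconnects the edges formerly at $i$ to the two new vertices $i,i+1$ in all ways; the terms where \emph{all} old edges at $i$ go to vertex $i$ leave vertex $i+1$ univalent (a pendant), and those where they all go to $i+1$ leave vertex $i$ univalent. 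After $\Av$, the pendant-at-$i+1$ configurations are isomorphic to graphs $\Av(\G_{\ed}\circ_1\G')$ for suitable relabelings, and a careful sign count (tracking the edge order: in $\G_{\ed}\circ_1\G$ the pendant edge is smallest, in $\G\circ_i\G_{\ed}$ it is largest, differing by the cyclic permutation of $e(\G)+1$ edges, i.e.\ by $(-1)^{e(\G)}$) shows the first term of \eqref{pa-dfGC-simple} cancels exactly the valency-one contributions of the second term. What survives is precisely $-\tfrac{(-1)^{e(\G)}}{2}\sum_i\Av(\G_i')$ with $\G_i'$ as defined.

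\textbf{Main obstacle.}
The hard part will be the precise sign bookkeeping: one must carefully track three sources of signs — the suspension shifts $\bs^{2-2n}$ on the cooperad side versus $\bs^{2r}$ on the twisted operad side, the Koszul signs from the $S_n$-equivariant $\Hom$ in $\Conv$, and the sign $(-1)^{|\si|}$ from reordering edges in $\Gra$ — and verify that the cyclic reindexing identifying a pendant-smallest edge configuration with a pendant-largest one contributes exactly $(-1)^{e(\G)}$, so that the two terms in \eqref{pa-dfGC-simple} cancel on the nose rather than up to sign. A secondary subtlety is checking that when $\G$ is \emph{not} connected the cancellation can fail (so the hypothesis is genuinely used): a pendant edge attached across two components is not the same as reconnecting, so one cannot assume \eqref{pa-dfGC-simpler} in general. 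I would handle the signs by fixing once and for all the identification $\fGC(n)\cong\bs^{2n-2}(\Gra(n))^{S_n}$ and computing $[\G_{\ed},-]$ on a representative, deferring the fully general shuffle-sum sign verification to a short lemma or an exercise in the style already used throughout the paper.
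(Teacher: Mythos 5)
Your proposal follows essentially the same route as the paper: the first identity is obtained by unfolding the Lie bracket $[\G_{\ed},\Av(\G)]$ in $\Conv(\La^2\coCom,\Gra)$, and the second follows from the observation that $\Av(\G_{\ed}\circ_1\G)$ equals $\tfrac{(-1)^{e(\G)}}{2}\sum_i\Av(\wt{\G}_i)$, where $\wt{\G}_i$ collects exactly the graphs in $\G\circ_i\G_{\ed}$ with vertex $i$ or $i+1$ univalent, so these terms cancel. Your sign accounting (the pendant edge being smallest in one composition and largest in the other, differing by a cyclic permutation of $e(\G)+1$ edges) correctly identifies the source of the factor $(-1)^{e(\G)}$.
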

\begin{proof} It is straightforward to verify 
the first claim by unfolding the definition of 
the Lie bracket on $\Conv(\La^2\coCom, \Gra)$\,.
The second claim follows from the observation
that 
$$
  \Av( \G_{\ed} \circ_1 \G )  = \frac{(-1)^{e(\G)}}{2}
\sum_{i=1}^n \Av( \wt{\G}_i )
$$
where $\wt{\G}_i$ is obtained from the linear combination 
$\G \circ_i \G_{\ed}$ by keeping only the graphs in which either
vertex $i$ or vertex $i+1$ has valency $1$\,.
\end{proof}

\begin{exer}
\label{exer:G-bul-G-lp}
Let $\G_{\bul}$ be the graph in $\gra_1$ which consists of a
single vertex. Show that 
\begin{equation}
\label{G-bul-G-ed}
\pa \G_{\bul} = \G_{\ed}\,. 
\end{equation}
Let $\G_{\lp}$ be the graph in $\gra_1$ with consists of 
a single loop. Show that 
$$
\pa \G_{\lp} = 0\,.
$$
Thus $\G_{\lp}$ represents a degree $-1$ (non-trivial) cocycle in $\fGC$\,. 
\end{exer}

\subsection{The subcomplex of cables} 
\label{sec:cables}
 
Let us denote by $\cK_{\cab}$ the subspace of $\fGC$ which 
is spanned by vectors 
$$
\Av(\G)
$$
where $\G$ is either the single vertex graph $\G_{\bul}$
or a graph  $\G^{-}_{l}$ depicted on figure \ref{fig:G-l} for 
$l \ge 2$\,. For example, $\G^{-}_{2} = \G_{\ed}$\,.
\begin{figure}[htp]
\centering 
\begin{tikzpicture}[scale=0.5, >=stealth']
\tikzstyle{w}=[circle, draw, minimum size=4, inner sep=1]
\tikzstyle{b}=[circle, draw, fill, minimum size=4, inner sep=1]
\node [b] (b1) at (0,0) {};
\draw (0,-0.6) node[anchor=center] {{\small $1$}};
\node [b] (b2) at (2,0) {};
\draw (2,-0.6) node[anchor=center] {{\small $2$}};
\node [b] (b3) at (4,0) {};
\draw (4,-0.6) node[anchor=center] {{\small $3$}};
\node [b] (b4) at (6,0) {};
\draw (6,-0.6) node[anchor=center] {{\small $4$}};
\node [b] (bl) at (10,0) {};
\draw (10,-0.6) node[anchor=center] {{\small $l$}};
\draw  (b1) edge (b2);
\draw (1,0.6) node[anchor=center] {{\small $i$}};
\draw  (b2) edge (b3);
\draw (3,0.6) node[anchor=center] {{\small $ii$}};
\draw  (b3) edge (b4);
\draw (5,0.6) node[anchor=center] {{\small $iii$}};
\draw  (b4) edge (7,0);
\draw (8,0) node[anchor=center] {{$\dots$}};
\draw  (9,0) edge (bl);
\end{tikzpicture}
~\\[0.3cm]
\caption{The graph $\G^{-}_{l}$} \label{fig:G-l}
\end{figure} 

It is easy to see that the vectors $\Av(\G^{-}_l)$ have degrees
$$
|\Av(\G^{-}_l)| = l-1\,,
$$
$$
\Av(\G^{-}_l) = 0\,, \qquad \textrm{if} \qquad l = 0, 3~ \textrm{mod}~ 4,
$$
and 
$$
\Av(\G^{-}_l) \neq 0\,, \qquad  \textrm{if} \qquad l = 1, 2~ \textrm{mod}~ 4\,.
$$
Furthermore, due to Exercise  \ref{exer:cables} below, 
$$
\pa \Av(\G^{-}_{4k+1}) = \Av(\G^{-}_{4k+2})
$$ 
for all $k \ge 1$\,.

Combining these observations with equation \eqref{G-bul-G-ed}
we conclude that 
\begin{prop}
\label{prop:cables}
The subspace $\cK_{\cab}$ is subcomplex of $\fGC$. 
Moreover $\cK_{\cab}$ is acyclic. $\Box$
\end{prop}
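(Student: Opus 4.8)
\emph{Plan.} The plan is to first describe $\cK_{\cab}$ explicitly as a graded vector space and then read off both assertions with a minimum of computation. By definition $\cK_{\cab}$ is spanned by $\Av(\G_{\bul})$ together with the vectors $\Av(\G^{-}_{l})$ for $l \ge 2$. Since $|\Av(\G_{\bul})| = 0$, $|\Av(\G^{-}_{l})| = l-1$, and $\Av(\G^{-}_{l}) = 0$ unless $l \equiv 1, 2 \pmod 4$, the nonzero spanning vectors occupy the pairwise distinct degrees $0, 1, 4, 5, 8, 9, \dots$. In particular every graded component of $\cK_{\cab}$ is at most one-dimensional, so the whole content of the proposition is the behaviour of the differential on these generators.

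The second step is to check $\pa\big(\cK_{\cab}\big)\subset\cK_{\cab}$. For the single-vertex graph this is \eqref{G-bul-G-ed}, which gives $\pa\Av(\G_{\bul}) = \Av(\G_{\ed}) = \Av(\G^{-}_{2})\in\cK_{\cab}$. For $\G^{-}_{l}$ with $l \ge 2$ I would invoke the second formula of Proposition \ref{prop:dfGC}: as $\G^{-}_{l}$ is connected with $e(\G^{-}_{l}) = l-1\ge 1$ edges,
$$
\pa\Av(\G^{-}_{l}) = -\frac{(-1)^{l-1}}{2}\sum_{i=1}^{l}\Av(\G'_i),
$$
where $\G'_i$ is what remains of $\G^{-}_{l}\circ_i \G_{\ed}$ after one discards every graph in which one of the two fresh vertices $i, i+1$ has valency $1$. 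A short case analysis of this insertion then finishes the step: the endpoints $1$ and $l$ of the path have valency $1$, so inserting the edge-graph $\G_{\ed}$ there always leaves a fresh valency-one vertex, whence $\G'_1 = \G'_l = 0$; at an interior vertex $i$, of the four ways to reconnect the two incident edges to $\{i,i+1\}$, the two that send both edges to the same fresh vertex are again discarded, while the remaining two produce paths on $l+1$ vertices. Hence $\pa\Av(\G^{-}_{l}) = c_l\,\Av(\G^{-}_{l+1})$ for some scalar $c_l$, so $\cK_{\cab}$ is a subcomplex and its differential is carried entirely by the maps $\Av(\G^{-}_{l})\mapsto c_l\,\Av(\G^{-}_{l+1})$ together with $\Av(\G_{\bul})\mapsto\Av(\G^{-}_{2})$.

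For acyclicity I would combine this with \eqref{G-bul-G-ed} and Exercise \ref{exer:cables}, which give $\pa\Av(\G_{\bul}) = \Av(\G^{-}_{2})$ and $\pa\Av(\G^{-}_{4k+1}) = \Av(\G^{-}_{4k+2})$ for $k\ge 1$. All other differentials out of the generators vanish, because for each remaining index $l$ either the source $\Av(\G^{-}_{l})$ (when $l\equiv 0,3\pmod 4$) or the target $\Av(\G^{-}_{l+1})$ (when $l\equiv 2,3\pmod 4$) is already zero. Therefore $\cK_{\cab}$ is the direct sum of the two-term complexes $\bbK\Av(\G_{\bul})\erarrow\bbK\Av(\G^{-}_{2})$ and $\bbK\Av(\G^{-}_{4k+1})\erarrow\bbK\Av(\G^{-}_{4k+2})$ for $k\ge 1$, each an isomorphism between one-dimensional spaces and hence acyclic; so $\cK_{\cab}$ is acyclic. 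The one genuinely nontrivial point is the middle step: one must track the relabelling and edge-ordering conventions in the definition of $\circ_i$ on $\Gra$ carefully enough to be sure that, after applying the ``discard valency-one vertices'' rule of Proposition \ref{prop:dfGC}, precisely the path graphs $\G^{-}_{l+1}$ remain; once that is in hand, acyclicity is a formal consequence of the degree pattern and the two identities already recorded.
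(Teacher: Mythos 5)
Your proposal is correct and follows essentially the same route as the paper: the paper's argument is precisely the sequence of observations preceding the proposition (the degree count $|\Av(\G^{-}_l)|=l-1$, the parity vanishing for $l\equiv 0,3\bmod 4$, equation \eqref{G-bul-G-ed}, and Exercise \ref{exer:cables} giving $\pa\Av(\G^{-}_{4k+1})=\Av(\G^{-}_{4k+2})$), from which both the subcomplex property and acyclicity follow by the same decomposition into two-term complexes that you describe. Your case analysis of the insertions $\G^{-}_l\circ_i\G_{\ed}$ is just the content of Exercise \ref{exer:cables}, which the paper defers to the reader, so nothing is missing.
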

We call $\cK_{\cab}$ the subcomplex of {\it cables}.

\begin{exer}
\label{exer:cables}
Let $\G^{-}_{l}$ be the family of graphs for $l \ge 2$ defined 
on figure \ref{fig:G-l}. Prove that for every $k \ge 1$
$$
\pa \Av(\G^{-}_{4k+1}) = \Av(\G^{-}_{4k + 2})\,. 
$$
\end{exer}

\subsection{The subcomplex of polygons} 
\label{sec:dia} 
Let us denote by $\cK_{\dia}$ the subspace of $\fGC$ which 
is spanned by vectors of the form 
$$
\Av(\G^{\dia}_{m}),
$$
where $\G^{\dia}_m$ is the element of $\gra_m$ depicted 
on figure \ref{fig:dia-m}. 
\begin{figure}[htp]
\centering 
\begin{tikzpicture}[scale=0.5, >=stealth']
\tikzstyle{w}=[circle, draw, minimum size=4, inner sep=1]
\tikzstyle{b}=[circle, draw, fill, minimum size=4, inner sep=1]
\node [b] (b1) at (2,0) {};
\draw (2.5, 0) node[anchor=center] {{\small $1$}};
\node [b] (b2) at (0.62,1.90) {};
\draw (0.77, 2.4) node[anchor=center] {{\small $2$}};
\node [b] (b3) at (-1.62,1.18) {};
\draw (-2.02, 1.47) node[anchor=center] {{\small $3$}};

\node [b] (b5) at (0.62,-1.90) {};
\draw (0.77, -2.4) node[anchor=center] {{\small $m$}};
\draw (b1) edge (b2);
\draw (b2) edge (b3);
\draw (b3) edge (-1.82,0.2);
\draw (-1.9, -0.62 ) node[anchor=center] {{\small $\cdot$}};
\draw (-1.62, -1.18) node[anchor=center] {{\small $\cdot$}};
\draw (-1.18, -1.62) node[anchor=center] {{\small $\cdot$}};
\draw (-0.5,-1.90) edge (b5);
\draw (b5) edge (b1);
\end{tikzpicture}
~\\[0.3cm]
\caption{The edges are ordered as follows $(1,2) < (2,3) < \dots < (m-1,m) < (m,1)$} \label{fig:dia-m}
\end{figure}
For example, $\G^{\dia}_{1}$ is the graph $\G_{\lp}$ in $\gra_1$
which consists of a single loop. 

Due to Exercise \ref{exer:polygons} below, $\cK_{\dia}$ is 
a subcomplex of $\fGC$ with 
\begin{equation}
\label{H-cK-dia}
H^{\bul} (\cK_{\dia}) \cong \bigoplus_{q \ge 1} \bs^{4q-1}\, \bbK\,. 
\end{equation}
We call $\cK_{\dia}$ the subcomplex of polygons. 

\begin{exer}
\label{exer:polygons}
Show that the graph $\G^{\dia}_m$ is odd if $m \neq 1~\textrm{mod}~ 4$
and even if $m = 1~\textrm{mod}~ 4$\,. Using equation \eqref{pa-dfGC-simpler}, 
prove that for every $q \ge 0$ 
$$
\Av(\G^{\dia}_{4 q+1})
$$
is a non-trivial cocycle of $\fGC$ of degree $4q-1$\,.
\end{exer}

\subsection{The connected part $\fGC_{\conn}$ of $\fGC$}
\label{sec:fGC-conn}

Let us denote by  $\fGC_{\conn}$ the subspace of $\fGC$ 
which consists of infinite sums
$$
\ga = \sum_{n=1}^{\infty} \ga_n\,, \qquad 
\ga_n \in \bs^{2n-2} \big( \Gra(n) \big)^{S_n}
$$
where $\ga_n$ is a linear combination of 
connected graphs in $\Gra(n)$\,.

It is clear that $ \fGC_{\conn}$ is a Lie subalgebra of $\fGC$ and
hence a subcomplex. It is also clear that 
\begin{equation}
\label{fGC-fGC-conn}
\fGC = \bs^{-2}  \wh{S}\big(\bs^2 \, \fGC_{\conn}\big)\,,
\end{equation}
where $\wh{S}$ denotes the completed symmetric algebra. 
Thus the question of computing cohomology of $\fGC$
reduces to the question of computing cohomology 
of its connected part $\fGC_{\conn}$\,.

\section{Analyzing the dg operad $\Tw\Gra$}
\label{sec:TwGra}

According to the general procedure of twisting 
\begin{equation}
\label{TwGra-n}
\Tw\Gra(n) = \prod_{r=0}^{\infty} \bs^{2r} \Big( \Gra(r+n) \Big)^{S_r}\,.
\end{equation}
In other words, vectors in  $\Tw\Gra(n)$ are infinite linear 
combinations
\begin{equation}
\label{sum}
\ga =  \sum_{r=0}^{\infty} \ga_r\,, 
\end{equation}
where $\ga_r$ is an $S_r$ invariant vector in 
$\bs^{2r}\Gra(r+n)$\,.

It is clear that the first $r$ vertices and the 
last $n$ vertices in graphs of $\ga_r$ play different roles. 
We call the first $r$ vertices {\it neutral} and the remaining 
$n$ vertices {\it operational}. It is convenient to represent 
neutral (reps. operational) vertices on figures by small black circles 
(reps. small white circles). In this way, the same element of $\gra_m$ may be treated 
as a vector in different spaces of the operad $\Tw\Gra$.
For example, the graph on figure \ref{fig:G-bb} represents a vector
in $\Tw\Gra(0)$, the graph on figure \ref{fig:G-bw} represents a vector 
in $\Tw\Gra(1)$, and the graph on figure \ref{fig:G-ww} represents a vector 
in $\Tw\Gra(2)$. 
\begin{figure}[htp]
\centering 
\begin{minipage}[t]{0.28\linewidth}
\centering 
\begin{tikzpicture}[scale=0.5, >=stealth']
\tikzstyle{w}=[circle, draw, minimum size=4, inner sep=1]
\tikzstyle{b}=[circle, draw, fill, minimum size=4, inner sep=1]
\node [b] (b1) at (0,0) {};
\draw (0,0.6) node[anchor=center] {{\small $1$}};
\node [b] (b2) at (2,0) {};
\draw (2,0.6) node[anchor=center] {{\small $2$}};
\draw  (b1) edge (b2);
\end{tikzpicture}
~\\[0.3cm]
\caption{A vector in $\Tw\Gra(0)$} \label{fig:G-bb}
\end{minipage}
\hspace{0.03\linewidth}
\begin{minipage}[t]{0.28\linewidth}
\centering 
\begin{tikzpicture}[scale=0.5, >=stealth']
\tikzstyle{w}=[circle, draw, minimum size=4, inner sep=1]
\tikzstyle{b}=[circle, draw, fill, minimum size=4, inner sep=1]
\node [b] (b1) at (0,0) {};
\draw (0,0.6) node[anchor=center] {{\small $1$}};
\node [w] (b2) at (2,0) {};
\draw (2,0.6) node[anchor=center] {{\small $2$}};
\draw  (b1) edge (b2);
\end{tikzpicture}
~\\[0.3cm]
\caption{A vector in $\Tw\Gra(1)$} \label{fig:G-bw}
\end{minipage}
\hspace{0.03\linewidth}
\begin{minipage}[t]{0.28\linewidth}
\centering 
\begin{tikzpicture}[scale=0.5, >=stealth']
\tikzstyle{w}=[circle, draw, minimum size=4, inner sep=1]
\tikzstyle{b}=[circle, draw, fill, minimum size=4, inner sep=1]
\node [w] (b1) at (0,0) {};
\draw (0,0.6) node[anchor=center] {{\small $1$}};
\node [w] (b2) at (2,0) {};
\draw (2,0.6) node[anchor=center] {{\small $2$}};
\draw  (b1) edge (b2);
\end{tikzpicture}
~\\[0.3cm]
\caption{A vector in $\Tw\Gra(2)$} \label{fig:G-ww}
\end{minipage}
\end{figure}

It is obvious that the vector space  
$$
\bs^{2r}\big( \Gra(r+n)\big)^{S_r}  \subset \Tw\Gra(n)
$$
is spanned by vectors of the form 
\begin{equation}
\label{Av-r-G}
\Av_r(\G) = \sum_{\si \in S_r } \si(\G)\,,
\end{equation}
where $\G$ is an element in $\gra_{r+n}$\,.

In other words, equation \eqref{Av-r-G} defines 
a surjective map 
\begin{equation}
\label{Av-r}
\Av_r : \bbK \L \gra_{r+n} \R  \onto  \bs^{2r}\big( \Gra(r+n)\big)^{S_r}\,. 
\end{equation}

For an element $\G\in \gra_{r+n}$ we denote by 
$\G^{\oub}$ the partially labeled graph which is obtained from 
$\G$ by forgetting labels on neutral vertices and shifting 
labels on operational vertices down by $r$\,. Note that, 
since $\G^{\oub}$ has unlabeled vertices, it may have 
non-trivial automorphisms. 

It is obvious that $\Av_r(\G)=0$ if and only if $\G^{\oub}$
has an automorphism which induces an odd permutation on 
the set of edges. In this case, we say that an element 
$\G \in \gra_{r+n}$ is {\it $r$-odd}. Otherwise, we say that $\G$ is 
{\it $r$-even}. 

Let us consider two $r$-even elements $\G, \G'\in \gra_{r+n}$ whose 
underlying partially labeled graphs $\G^{\oub}$ and $(\G')^{\oub}$
are isomorphic. 
Any isomorphism from $\G^{\oub}$ to $(\G')^{\oub}$
gives us a bijection from the set $E(\G)$ of edges of $\G$ to 
the set $E(\G')$ of edges of $\G'$\,. Since both sets 
$E(\G)$  and  $E(\G')$ are totally ordered, this bijection 
determines a permutation $\si \in S_e$ where $e = |E(\G)|$\,.  
Furthermore, since $\G$ and $\G'$ are $r$-even, such permutations
$\si$ are either all even or all odd. 
In the latter case, we say that $r$-even elements $\G$ and $\G'$ are {\it $r$-opposite} 
 and in the former case we say that even elements $\G$ and 
$\G'$ are {\it $r$-concordant}.

It is clear that 
\begin{prop}
\label{prop:ker-Av-r}
The kernel of the map $\Av_r$ \eqref{Av-r} 
is spanned by vectors of the form 
\begin{equation}
\label{rel-Gra-inv-r}
\G, \qquad \G_1 - \G_2, \qquad \G'_1 + \G'_2\,,
\end{equation}
where $\G$ is $r$-odd, $(\G_1, \G_2)$ is a 
pair of $r$-concordant ($r$-even) graphs, and  $(\G'_1, \G'_2)$ is a 
pair of $r$-opposite ($r$-even) graphs in $\gra_{r+n}$\,. $\Box$
\end{prop}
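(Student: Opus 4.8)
The plan is to follow the same two-step pattern one would use for the untwisted statement, Proposition \ref{prop:ker-Av}: first verify that each of the three families in \eqref{rel-Gra-inv-r} is annihilated by $\Av_r$, and then show these families already span $\ker\Av_r$ by exhibiting an explicit basis of the target $\bs^{2r}\big(\Gra(r+n)\big)^{S_r}$ indexed by isomorphism classes of $r$-even graphs.

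First I would record the basic computation of $\Av_r$ on a single element. Given $\G\in\gra_{r+n}$, the group $\Aut(\G^{\oub})$ is realized as a subgroup of $S_r$ (permutations of the $r$ neutral vertices preserving the underlying edge-unordered labeled graph of $\G$); each $\gamma\in\Aut(\G^{\oub})$ permutes the totally ordered edge set $E(\G)$, and by the defining relation $\G^\tau=(-1)^{|\tau|}\G$ of $\Gra$ one has $\gamma(\G)=\ve_\G(\gamma)\,\G$ in $\Gra(r+n)$, where $\gamma\mapsto\ve_\G(\gamma)\in\{\pm1\}$ is a homomorphism. Splitting $\sum_{\si\in S_r}\si(\G)$ into left cosets of $\Aut(\G^{\oub})$ gives
\[
\Av_r(\G)=\Big(\sum_{\gamma\in\Aut(\G^{\oub})}\ve_\G(\gamma)\Big)\ \sum_{[\si]\in S_r/\Aut(\G^{\oub})}\si(\G)\,,
\]
so $\Av_r(\G)=|\Aut(\G^{\oub})|\cdot\sum_{[\si]}\si(\G)\neq0$ when $\ve_\G$ is trivial and $\Av_r(\G)=0$ when $\ve_\G$ is non-trivial --- that is, $\Av_r(\G)=0$ exactly when $\G$ is $r$-odd. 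This handles the first family. For the other two, observe that when $\G_1,\G_2$ are $r$-even with isomorphic $\G_1^{\oub},\G_2^{\oub}$, any isomorphism lifts to a $\pi\in S_r$ with $\pi(\G_1)=\pm\G_2$ in $\Gra(r+n)$, whence $\Av_r(\G_1)=\Av_r(\pi(\G_1))=\pm\Av_r(\G_2)$; because $\ve_{\G_i}$ is trivial the sign is independent of the chosen isomorphism, and it is $+1$ in the $r$-concordant case and $-1$ in the $r$-opposite case, so $\Av_r(\G_1-\G_2)=0$ and $\Av_r(\G_1'+\G_2')=0$.

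Next I would prove the spanning statement. Fix a basis of $\Gra(r+n)$ consisting of labeled graphs with chosen edge orders, and choose one representative $\G_0\in\gra_{r+n}$ for each isomorphism class of $r$-even partially labeled graph (the edge order being irrelevant to $r$-evenness). The displayed formula shows that the vectors $\Av_r(\G_0)$ have pairwise disjoint supports in that basis, hence are linearly independent; and since $\bs^{2r}\big(\Gra(r+n)\big)^{S_r}$ is spanned by all the $\Av_r(\G)$, each of which is either $0$ or proportional to some $\Av_r(\G_0)$, these vectors form a basis. On the other hand, modulo the subspace $R\subset\bbK\L \gra_{r+n} \R$ spanned by the elements \eqref{rel-Gra-inv-r}, every $\G$ is congruent to $0$ if $\G$ is $r$-odd and to $\pm\G_0$ if $\G$ is $r$-even (the sign recording whether $\G$ and its chosen representative $\G_0$ are $r$-concordant or $r$-opposite). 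Hence $\dim\big(\bbK\L \gra_{r+n} \R/R\big)$ is at most the number of $r$-even classes, while the induced surjection $\bbK\L \gra_{r+n} \R/R\onto\bs^{2r}\big(\Gra(r+n)\big)^{S_r}$ (well defined by the previous paragraph) has target of exactly that dimension. Comparing dimensions forces this map to be an isomorphism, so $R=\ker\Av_r$, which is the assertion.

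The only genuinely delicate point is the well-definedness of the $r$-concordant versus $r$-opposite dichotomy used throughout: for a fixed $r$-even graph two distinct isomorphisms $\G^{\oub}\to(\G')^{\oub}$ could a priori induce edge permutations of opposite parity, and ruling this out is precisely the content of $r$-evenness (triviality of the homomorphism $\ve_\G$). Everything else is the bookkeeping indicated above, in direct parallel with Proposition \ref{prop:ker-Av}.
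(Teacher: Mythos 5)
Your argument is correct and supplies, in full, the standard coset-averaging proof that the paper omits entirely: Proposition \ref{prop:ker-Av-r} is stated there with only a $\Box$, and your two steps (the character sum $\sum_{\gamma\in\Aut(\G^{\oub})}\ve_{\G}(\gamma)$ detecting $r$-oddness, then matching the relations against the basis $\{\Av_r(\G_0)\}$ indexed by $r$-even isomorphism classes) are exactly what is being taken as obvious. The one place to tighten the wording is the closing dimension comparison: since $\bbK\L \gra_{r+n}\R$ is graded by the number of edges but not finite dimensional once multiple edges are allowed, either restrict to the finite-dimensional subspaces of fixed edge number, or argue directly that an element of $\ker\Av_r$ is congruent modulo $R$ to a finite combination $\sum_i c_i\,\G_0^{(i)}$ whose image $\sum_i c_i\,\Av_r(\G_0^{(i)})$ vanishes only if every $c_i=0$ by the linear independence you already established.
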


In the following proposition we give a convenient 
formula for the differential on $\Tw\Gra$\,.
\begin{prop}
\label{prop:diff-TwGra}
Let $\G$ be an $r$-even element in $\gra_{r+n}$\,.
Then
\begin{equation}
\label{diff-TwGra}
\pa^{\Tw} \Av_r (\G) = \Av_{r+1} \big( \G_{\ed} \circ_2 \G \big) 
- (-1)^{e(\G)}  \Av_{r+1} \Big( 
\sum_{i = 1}^{n}  \vs_{r+1, r+i} \big( \G \circ_{r+i} \G_{\ed} \big)  \Big)
\end{equation}
$$
- \frac{(-1)^{e(\G)}}{2} \sum_{i=1}^{r} 
 \Av_{r+1} \big( \G \circ_i \G_{\ed} \big)\,,
$$
where $e(\G)$ is the number of edges of $\G$, 
$\G_{\ed}$ is defined in \eqref{binary}, and $\vs_{r+1, r+i}$ 
is the cycle $(r+1, r+2, \dots, r+i)\in S_{r+1+n}$\,.
\end{prop}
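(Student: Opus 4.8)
The plan is to derive \eqref{diff-TwGra} directly from the general twisting differential \eqref{diff-Tw-cO} of Section~\ref{sec:Tw-oplus}. The twisting of $\Gra$ uses the embedding $\mj=\io|_{\La\Lie}\colon\La\Lie\to\Gra$, which sends $\{a_1,a_2\}$ to $\G_{\ed}$ by Exercise~\ref{exer:Ger-Gra}; and the element $v:=\Av_r(\G)$ lies in the single summand $\bs^{2r}\big(\Gra(r+n)\big)^{S_r}$ of $\Tw\Gra(n)$ on which $\pa^{\Tw}$ is described by \eqref{diff-Tw-cO}. So the task reduces to rewriting the three sums of \eqref{diff-Tw-cO} (with $\cO=\Gra$ and $\mj(\{a_1,a_2\})=\G_{\ed}$) in terms of the maps $\Av_{r+1}$. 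First I would fix signs: $|v|=2r-e(\G)$, hence $(-1)^{|v|}=(-1)^{e(\G)}$, which produces the prefactors in \eqref{diff-TwGra}. Everything else is three applications of operad equivariance (axioms \eqref{equiv-1} and \eqref{equiv-11}) together with the unique factorisations $S_{r+1}=\Sh_{1,r}\cdot S_{\{2,\dots,r+1\}}=\Sh_{r,1}\cdot S_{\{1,\dots,r\}}=\Sh_{2,r-1}\cdot\big(S_{\{1,2\}}\times S_{\{3,\dots,r+1\}}\big)$ of a permutation as a shuffle times a product of symmetric groups (Section~\ref{sec:ps-op}).

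For the term $\sum_{\tau\in\Sh_{1,r}}\tau\big(\G_{\ed}\circ_2 v\big)$ I would write $v=\sum_{\rho\in S_r}\rho(\G)$ and use equivariance for the insertion $\circ_2$, which does not move the first vertex of $\G_{\ed}$: this gives $\G_{\ed}\circ_2(\rho\G)=\rho'\big(\G_{\ed}\circ_2\G\big)$ where $\rho'$ is the image of $\rho$ under the relabelling isomorphism $S_r\cong S_{\{2,\dots,r+1\}}$, so $\G_{\ed}\circ_2 v=\sum_{\rho'}\rho'\big(\G_{\ed}\circ_2\G\big)$; applying $\sum_{\tau\in\Sh_{1,r}}\tau$ and using $S_{r+1}=\Sh_{1,r}\cdot S_{\{2,\dots,r+1\}}$ turns the double sum into $\sum_{\pi\in S_{r+1}}\pi\big(\G_{\ed}\circ_2\G\big)=\Av_{r+1}\big(\G_{\ed}\circ_2\G\big)$, the first term of \eqref{diff-TwGra}. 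The term $\sum_{\tau'\in\Sh_{r,1}}\sum_{i=1}^n\tau'\circ\vs_{r+1,r+i}\big(v\circ_{r+i}\G_{\ed}\big)$ goes the same way: the insertion $\circ_{r+i}$ is at one of the last $n$ vertices of $v$, so $(\rho\G)\circ_{r+i}\G_{\ed}=\rho''\big(\G\circ_{r+i}\G_{\ed}\big)$ with $\rho''\in S_{\{1,\dots,r\}}$, and the cycle $\vs_{r+1,r+i}$ has support $\{r+1,\dots,r+i\}$ disjoint from $\{1,\dots,r\}$, hence commutes with $\rho''$; then $\tau'\circ\vs_{r+1,r+i}\circ\rho''=(\tau'\rho'')\circ\vs_{r+1,r+i}$ and, using $S_{r+1}=\Sh_{r,1}\cdot S_{\{1,\dots,r\}}$, the sum over $\tau'$ and $\rho''$ becomes $\sum_{\pi\in S_{r+1}}\pi$, which yields $\sum_{i=1}^n\Av_{r+1}\big(\vs_{r+1,r+i}(\G\circ_{r+i}\G_{\ed})\big)$ and, with the prefactor $-(-1)^{e(\G)}$, the second term of \eqref{diff-TwGra}.

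The remaining term $-(-1)^{|v|}\sum_{\si\in\Sh_{2,r-1}}\si\big(v\circ_1\G_{\ed}\big)$ is the one I expect to cause trouble, because the insertion point $1$ is now one of the vertices permuted in passing from $\G$ to $\rho\G$, so equivariance produces a genuine block permutation $\wh{\rho}\in S_{r+1}$ and a shifted insertion point: $(\rho\G)\circ_1\G_{\ed}=\wh{\rho}\big(\G\circ_{\rho^{-1}(1)}\G_{\ed}\big)$. I would split the computation into two steps. Step one: since $\G_{\ed}$ is $S_2$-symmetric and $v=\Av_r(\G)$ is $S_r$-invariant, equivariance shows $v\circ_1\G_{\ed}$ is invariant under the Young subgroup $S_{\{1,2\}}\times S_{\{3,\dots,r+1\}}$, of order $2(r-1)!$; since $S_{r+1}=\Sh_{2,r-1}\cdot\big(S_{\{1,2\}}\times S_{\{3,\dots,r+1\}}\big)$ this gives $\sum_{\si\in\Sh_{2,r-1}}\si\big(v\circ_1\G_{\ed}\big)=\frac{1}{2(r-1)!}\sum_{\pi\in S_{r+1}}\pi\big(v\circ_1\G_{\ed}\big)$. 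Step two: substituting the equivariance identity and using that right translation by a fixed permutation is a bijection of $S_{r+1}$, I get $\sum_{\pi\in S_{r+1}}\pi\big(v\circ_1\G_{\ed}\big)=\sum_{\rho\in S_r}\Av_{r+1}\big(\G\circ_{\rho^{-1}(1)}\G_{\ed}\big)=(r-1)!\sum_{i=1}^r\Av_{r+1}\big(\G\circ_i\G_{\ed}\big)$, because exactly $(r-1)!$ permutations $\rho$ satisfy $\rho^{-1}(1)=i$. Multiplying the two displays, the $(r-1)!$ cancels and one is left with the factor $\frac12$; together with the prefactor $-(-1)^{e(\G)}$ this is the third term of \eqref{diff-TwGra}. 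The only conceptual content is this extra $\frac12$, which is exactly the symmetry factor of the edge $\G_{\ed}$ (equivalently the $\si_{i(i+1)}$-invariance of $\G\circ_i\G_{\ed}$, induced by the $S_2$-symmetry of $\G_{\ed}$ with trivial edge-permutation, hence trivial sign); everything else is the same bookkeeping of relabellings and Koszul signs as in the proof of Proposition~\ref{prop:dfGC}, and since all identities already hold at the level of $S_k$-averages they automatically respect the relations describing $\ker\Av_r$ in Proposition~\ref{prop:ker-Av-r}.
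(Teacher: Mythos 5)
Your proposal is correct and takes essentially the same route as the paper's proof: both adapt the general formula \eqref{diff-Tw-cO} to $\cO=\Gra$, identify $(-1)^{|v|}=(-1)^{e(\G)}$, and convert each of the three shuffle sums into $\Av_{r+1}$-averages via operadic equivariance and the factorizations of $S_{r+1}$ into shuffles times Young subgroups. Your fiber-counting derivation of the factor $\tfrac12$ in the third term is just a repackaging of the paper's observation that, by the $S_2$-invariance of $\G_{\ed}$, the sum over $\si\in S_{r+1}$ with $\si(i)<\si(i+1)$ is half of the full average.
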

\begin{rem}
\label{rem:neutral-in-gra}
We should remark that 
the vector $\pa^{\Tw} \Av_r (\G) $ is a linear combination 
of graphs in $\gra_{r+1+n}$ in which the first $r+1$ vertices 
are treated as neutral. Thus vertices with labels $r+1$ and 
$r+i+1$ in graphs in 
$ \vs_{r+1, r+i} \big( \G \circ_{r+i} \G_{\ed} \big) $ come from 
$\G_{\ed}$. The vertex with label $r+1$ is treated as 
neutral and the vertex with label $r+ i +1$ is treated as 
operational.   
\end{rem}

\begin{proof}
Adapting general formula \eqref{diff-Tw-cO} to the case
when $\cO= \Gra$ we get
$$
\pa^{\Tw} \Av_r (\G) = \sum_{\tau \in \Sh_{1, r}} 
\tau \big(\G_{\ed} \circ_2 \Av_r(\G)  \big)
$$
\begin{equation}
\label{pa-Tw-work} 
- (-1)^{e(\G)}  
\sum_{\tau' \in \Sh_{r,1}}\sum_{i=1}^{n} \tau' \circ \vs_{r+1, r+i} 
\big( \Av_r(\G)  \circ_{r+i} \G_{\ed} \big)
\end{equation}
$$
- (-1)^{e(\G)}  \sum_{\la \in \Sh_{2, r-1}} \big( \Av_r(\G) \circ_{1} \G_{\ed} \big) \,. 
$$

Using the obvious identity 
$$
\Av_r(\G) = \sum_{i=1}^{r}\sum_{\si' \in S_{2, \dots, r}} 
\si' \circ \vs_{1,i} (\G),  
$$ 
axioms of operad, and $S_2$-invariance of $\G_{\ed}$
we rewrite the last sum in \eqref{pa-Tw-work}
as follows
$$
 \sum_{\la \in \Sh_{2, r-1}} \la \big( \Av_r(\G) \circ_{1} \G_{\ed} \big)= 
\sum_{i =1}^r \sum_{\si \in S_{r+1}}^{\si(i) < \si(i+1)} 
\si \big( \G \circ_i \G_{\ed} \big) =
$$
$$
\frac{1}{2}  \sum_{\si \in S_{r+1}} \sum_{i =1}^r 
\si \big( \G \circ_i \G_{\ed} \big) =\frac{1}{2} \Av_{r+1}
\Big( \sum_{i=1}^r \G \circ_i \G_{\ed}  \Big)\,.
$$

The first sum in the right hand side of \eqref{pa-Tw-work}
can be rewritten as 
$$
\sum_{\tau \in \Sh_{1, r}} 
\tau \big(\G_{\ed} \circ_2 \Av_r(\G)  \big)
= \sum_{\tau \in \Sh_{1, r}} 
\sum_{\si' \in S_{2, \dots, r+1}}
 \tau \circ \si' \big(\G_{\ed} \circ_2 \G \big) =
$$
$$
\sum_{\si \in S_{r+1}} 
\big(\G_{\ed} \circ_2 \G \big) = \Av_{r+1} \big(\G_{\ed} \circ_2 \G \big)\,,
$$
where $S_{2, \dots, r+1}$ denotes the permutation group
of the set $\{ 2, \dots, r+1\}$\,.

As for the second sum in the right hand side of \eqref{pa-Tw-work},  
we have 
$$
\sum_{\tau' \in \Sh_{r,1}}\sum_{i=1}^{n} \tau' \circ \vs_{r+1, r+i} 
\big( \Av_r(\G)  \circ_{r+i} \G_{\ed} \big) = 
\sum_{\tau' \in \Sh_{r,1}}\sum_{i=1}^{n} 
\sum_{\si' \in S_r} \tau' \circ \vs_{r+1, r+i} 
\circ \si' \big( \G \circ_{r+i} \G_{\ed}   \big) =
$$
$$
\sum_{i =1}^{n} \sum_{\si \in S_{r+1}} \si \circ \vs_{r+1, r+i} 
\big( \G \circ_{r+i} \G_{\ed}   \big) = \Av_{r+1}
\Big(  \sum_{i=1}^n  \vs_{r+1, r+i} 
\big( \G \circ_{r+i} \G_{\ed}  \big) \Big)\,.
$$

Thus, equation \eqref{diff-TwGra} indeed holds. 
\end{proof}

\subsection{The Euler characteristic trick}
\label{sec:Euler}
Let us consider sums \eqref{sum} satisfying
\begin{pty}
\label{P:Euler}
For every $r \ge 0$, each graph in the linear combination
$\ga_r$ has Euler characteristic $\chi$\,. 
\end{pty} 

Using equation \eqref{diff-TwGra}, it is not hard to see that the 
subspace of such sums is a subcomplex in $\Tw\Gra(n)$\,. 
We denote this subcomplex by 
\begin{equation}
\label{TwGra-chi}
\Tw\Gra_{\chi}(n)\,.
\end{equation}

We claim that 
\begin{prop}
\label{prop:Euler}
For every triple of integers $n \ge 0, m$ and $\chi$ the subspace 
$\Tw\Gra_{\chi}(n)^m$ of degree $m$ vectors in $\Tw\Gra_{\chi}(n)$
is spanned by graphs with 
\begin{equation}
\label{e-m-chi-n}
e = 2(n - \chi) + m
\end{equation}
edges and
\begin{equation}
\label{r-m-chi-n}
r = n + m - \chi
\end{equation}
neutral vertices. In particular, the subspace $\Tw\Gra_{\chi}(n)^m$ 
is finite dimensional.
\end{prop}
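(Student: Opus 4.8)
The plan is to reduce the statement to a pair of linear relations among the degree $m$, the Euler characteristic $\chi$, the number $r$ of neutral vertices, and the number $e$ of edges, and then to invoke the elementary fact that on a fixed finite vertex set only finitely many graphs have a prescribed number of edges.

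First I would recall from \eqref{TwGra-n} that $\Tw\Gra(n) = \prod_{r \ge 0} \bs^{2r}\big(\Gra(r+n)\big)^{S_r}$, so a degree $m$ vector of $\Tw\Gra_{\chi}(n)$ is a sum $\ga = \sum_{r \ge 0} \ga_r$ in which each $\ga_r$ is an $S_r$-invariant, degree $m$ vector of $\bs^{2r}\Gra(r+n)$ all of whose constituent graphs have Euler characteristic $\chi$ (this is Property \ref{P:Euler}). Since a graph $\G \in \gra_{r+n}$ with $e$ edges has degree $-e$ in $\Gra(r+n)$, the degree $m$ piece of $\bs^{2r}\Gra(r+n)$ is spanned by those $\G$ with exactly $e = 2r - m$ edges. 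On the other hand such a $\G$ has $r+n$ vertices, so its Euler characteristic is $\chi = (r+n) - e$. I would then solve these two relations: subtracting gives $m - \chi = (2r-e) - (r+n-e) = r - n$, hence $r = n + m - \chi$, which is determined uniquely by $(n,m,\chi)$, so only this one value of $r$ can contribute to $\Tw\Gra_{\chi}(n)^m$; substituting back yields $e = 2r - m = 2(n-\chi) + m$. These are exactly \eqref{r-m-chi-n} and \eqref{e-m-chi-n}. (If either of the computed numbers $r$, $e$ is negative the relevant subspace is simply $\bfzero$, which is of course finite dimensional.)

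Finally I would note that $\Tw\Gra_{\chi}(n)^m$ is spanned by the vectors $\Av_r(\G)$ with $r = n+m-\chi$ and $\G$ ranging over elements of $\gra_{r+n}$ with exactly $e = 2(n-\chi)+m$ edges, and that there are only finitely many such $\G$: the edge set of a graph in $\gra_{r+n}$ is a subset of the finite set of unordered pairs (loops included) of the $r+n$ vertices, there are finitely many such subsets of size $e$, and finitely many total orders on each; hence $\Tw\Gra_{\chi}(n)^m$ is finite dimensional. I do not anticipate a genuine obstacle: the only points requiring care are bookkeeping the degree shift $\bs^{2r}$ correctly and checking that a single value of $r$ survives, both of which are handled by the linear algebra above.
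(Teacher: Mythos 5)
Your proposal is correct and follows essentially the same argument as the paper: both derive the two linear relations $m = 2r - e$ and $\chi = (r+n) - e$ from the degree convention on $\bs^{2r}\Gra(r+n)$, solve them to pin down $r = n+m-\chi$ and $e = 2(n-\chi)+m$, and conclude finite-dimensionality from the finiteness of the set of graphs with a fixed number of vertices and edges. No further comment is needed.
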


\begin{proof}
Recall that for every graph $\G \in \gra_{r+n}$ the vector 
$\Av_r(\G)\in \Tw\Gra(n)$ has degree
$$
\big| \Av_{r}(\G) \big| = 2 r - e\,,
$$
where $e$ is the number of edges of $\G$\,.

Hence, if $\Av_r(\G) \in \Tw\Gra_{\chi}(n)^{m}$ then 
\begin{equation}
\label{m-equals}
m = 2r - e\,,
\end{equation}
and
\begin{equation}
\label{chi-equals}
\chi = n+r - e\,.
\end{equation}

Subtracting \eqref{m-equals} from \eqref{chi-equals}, we get 
$$
\chi - m = n - r\,.
$$
Therefore, 
$$
r = n +m - \chi
$$
and 
$$
e = 2n - 2 \chi + m \,.
$$

Thus the proposition follows from the fact that
the number of graphs with a fixed number of 
vertices and a fixed number of edges is finite.
\end{proof}

Proposition \ref{prop:Euler} has the following useful corollary.
\begin{cor}
\label{cor:Euler}
The cochain complex $\Tw\Gra(n)$ decomposes into 
the product of sub-complexes
\begin{equation}
\label{Euler}
\Tw\Gra(n) = \prod_{\chi \in \bbZ} \Tw\Gra_{\chi}(n)\,.
\end{equation}
\end{cor}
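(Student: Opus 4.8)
The statement to prove is Corollary \ref{cor:Euler}: that $\Tw\Gra(n)$ decomposes as the product $\prod_{\chi \in \bbZ} \Tw\Gra_{\chi}(n)$ of the subcomplexes $\Tw\Gra_{\chi}(n)$ of sums \eqref{sum} all of whose constituent graphs have Euler characteristic $\chi$. The plan is to read off the decomposition at the level of graded vector spaces first, and then observe that the differential respects it, which is exactly Proposition \ref{prop:Euler} (or rather the remark preceding it).

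First I would recall that $\Tw\Gra(n) = \prod_{r \ge 0} \bs^{2r}(\Gra(r+n))^{S_r}$, and that each $\Gra(r+n)$ is spanned by (images of) labeled graphs $\G \in \gra_{r+n}$. Every such graph $\G$ has a well-defined Euler characteristic $\chi(\G) = (r+n) - e(\G)$, and hence we may split $\bs^{2r}(\Gra(r+n))^{S_r}$ as the direct sum over $\chi \in \bbZ$ of the spans of $S_r$-averages $\Av_r(\G)$ with $\chi(\G) = \chi$; note $\chi(\G)$ depends only on the isomorphism class of the underlying graph, so this splitting is $S_r$-equivariant and well-defined modulo the relations of Proposition \ref{prop:ker-Av-r} (those relations only involve graphs with the same underlying unlabeled graph, hence the same $\chi$). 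Taking the product over $r$ and regrouping, a vector $\ga = \sum_r \ga_r \in \Tw\Gra(n)$ decomposes canonically as $\ga = \sum_{\chi} \ga^{(\chi)}$ where $\ga^{(\chi)} = \sum_r \ga_r^{(\chi)}$ collects the Euler-characteristic-$\chi$ part of each $\ga_r$. This gives the vector-space decomposition, and one must check it is a direct product (not a direct sum): since for fixed $\ga$ only the componentwise data $\ga_r^{(\chi)}$ matter and these are genuinely unconstrained across $\chi$, the natural map $\Tw\Gra(n) \to \prod_\chi \Tw\Gra_\chi(n)$ is an isomorphism of graded vector spaces.

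Next I would verify that each $\Tw\Gra_{\chi}(n)$ is a subcomplex, i.e. that $\pa^{\Tw}$ preserves Property \ref{P:Euler}. This is immediate from the explicit formula \eqref{diff-TwGra}: the three types of terms are obtained from an $r$-even $\G \in \gra_{r+n}$ by inserting the single edge $\G_{\ed}$, i.e. by forming $\G_{\ed} \circ_2 \G$, $\vs_{r+1,r+i}(\G \circ_{r+i} \G_{\ed})$, and $\G \circ_i \G_{\ed}$, each of which adds exactly one vertex and one edge to $\G$. Hence the Euler characteristic $(r+n) - e$ is unchanged, and $\pa^{\Tw}$ maps $\Tw\Gra_\chi(n)$ to itself. (This observation is already recorded in the sentence preceding Proposition \ref{prop:Euler}.) Since the decomposition $\Tw\Gra(n) = \prod_\chi \Tw\Gra_\chi(n)$ is a decomposition of graded vector spaces and each factor is stable under the differential, it is a decomposition of cochain complexes, which is precisely the assertion of Corollary \ref{cor:Euler}.

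I do not anticipate a serious obstacle here; the only mild subtlety is bookkeeping about whether the decomposition is a product or a sum, and confirming that the splitting descends past the linear relations among graphs (Proposition \ref{prop:ker-Av-r}) — but as noted, those relations never mix graphs of different Euler characteristic, so this is automatic. The heart of the matter, namely the finite-dimensionality of each $\Tw\Gra_\chi(n)^m$ and the edge/vertex count \eqref{e-m-chi-n}--\eqref{r-m-chi-n}, is Proposition \ref{prop:Euler}, which I would simply cite; the corollary itself is then a one-line consequence.
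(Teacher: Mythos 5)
Your proposal is correct and follows essentially the same route as the paper: the paper's own proof takes a homogeneous vector $\ga=\sum_r\ga_r$ of degree $m$ and observes, via equations \eqref{m-equals} and \eqref{chi-equals}, that each $\ga_r$ is automatically pure of Euler characteristic $\chi=n+m-r$, which is the same bookkeeping you perform by splitting each $\Gra(r+n)$ by edge count (your extra remark that the relations of Proposition \ref{prop:ker-Av-r} never mix Euler characteristics, and that the differential preserves each $\Tw\Gra_{\chi}(n)$, is exactly what the paper records in the sentences preceding Proposition \ref{prop:Euler}). No gap.
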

\begin{proof}
Let 
$$
\ga= \sum_{r =1}^{\infty} \ga_r\,, \qquad 
\ga_r \in \bs^{2r}\big( \Gra(r+n) \big)^{S_r}
$$
be a vector of degree $m$\,. 

Equations \eqref{m-equals} and \eqref{chi-equals} imply that
for every $r$
$$
\ga_r \in  \Tw\Gra_{\chi}(n)
$$
where 
$$
\chi = n+m-r\,.
$$

Thus
$$
\Tw\Gra(n) \subset \prod_{\chi \in \bbZ} \Tw\Gra_{\chi}(n)\,.
$$

The inclusion 
$$
\prod_{\chi \in \bbZ} \Tw\Gra_{\chi}(n) \subset \Tw\Gra(n)
$$
is proved in a similar way.  
\end{proof}

\begin{rem}
\label{rem:Euler-trick}
We will often need to prove that any cocycle in $\Tw\Gra(n)$ or 
a similar cochain complex is cohomologous to a cocycle 
satisfying a certain property.  Proposition \ref{prop:Euler} 
and Corollary \ref{cor:Euler}
(or its corresponding versions) will allow us to reduce such 
questions to the corresponding questions for \und{finite} sums of graphs. 
We will refer to this maneuver as {\it the Euler characteristic trick}. 
\end{rem}

\subsection{The suboperads $\sGraphs \subset \sfGraphs \subset \Tw\Gra$}
\label{sec:tele-sharp}

Let us denote by $\sfGraphs(n)$ the subspace of $\Tw\Gra(n)$
which consists of linear combinations \eqref{sum} satisfying 
\begin{pty}
\label{P:no-cab:no-poly}
If a connected component of a graph in $\ga_r$ for some $r > 0$
has no operational vertices then this connected component
has at least one vertex of valency $\ge 3$.
\end{pty}
\begin{rem}
\label{rem:bival-uni}
It is not hard to see that, if all vertices of 
a connected graph $\G$ have valencies $\le 2$
then $\G$ is isomorphic to one of the graphs   
in the list: $\G_{\bul}$, $\G^{-}_l$ (see figure \ref{fig:G-l}), or $\G^{\dia}_m$
(see figure \ref{fig:dia-m}). In other words, $\sfGraphs(n)$ is 
obtained from $\Tw\Gra(n)$ by ``throwing away'' graphs which have 
connected components  $\G_{\bul}$, $\G^{-}_l$ (see figure \ref{fig:G-l}), or $\G^{\dia}_m$
(see figure \ref{fig:dia-m}) with all neutral vertices. 
\end{rem}

Let us also denote by  $\sGraphs(n)$ the subspace of $\Tw\Gra(n)$
which consists of linear combinations \eqref{sum} whose neutral 
vertices all have valencies $\ge 3$\,. 

We claim that 
\begin{prop}
\label{prop:sfGraphs}
Both  $\sGraphs(n)$  and $\sfGraphs(n)$ are subcomplexes 
of $\Tw\Gra(n)$
\begin{equation}
\label{inclusions}
\sGraphs(n) \subset \sfGraphs(n) \subset \Tw\Gra(n)\,.
\end{equation}
Moreover, the collections
$$
\{\sGraphs(n) \}_{n \ge 0}\,, \qquad 
\{\sfGraphs(n) \}_{n \ge 0}
$$
are suboperads of $\Tw\Gra$\,. 
\end{prop}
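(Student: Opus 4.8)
The defining conditions for $\sGraphs(n)$ and $\sfGraphs(n)$ are invariant under permuting the operational vertices, so it suffices to test the operadic insertions and the differential on vectors of the form $\Av_r(\G)$, $\G \in \gra_{r+n}$, satisfying the relevant condition. Recall from \eqref{circ-i-for-Tw-cO} that, graphically, the elementary insertion $f \circ_i g$ in $\Tw\Gra$ amounts to plugging the graph $g$ into the $i$-th operational vertex of the graph $f$ and reconnecting the edges incident to that vertex to the vertices of $g$ in all possible ways, the neutral vertices of $f$ and of $g$ becoming the neutral vertices of the result. Recall also from Proposition \ref{prop:diff-TwGra} that $\pa^{\Tw}\Av_r(\G)$ is a signed sum of three types of graphs: those obtained by attaching a new neutral pendant (i.e.\ valency-one) vertex to an arbitrary vertex of $\G$; those obtained by splitting an operational vertex of $\G$ into an operational and a neutral vertex joined by a new edge, the old edges at that vertex being redistributed among the two in all possible ways; and those obtained in the same manner by splitting a neutral vertex of $\G$ into two neutral vertices.

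The suboperad claim is the easy part. In a graph occurring in $f \circ_i g$, a neutral vertex of $f$ keeps precisely the edges it had (only the other endpoints of some of them may migrate onto $g$), while a neutral vertex of $g$ can only acquire extra edges; hence if all neutral vertices of $f$ and of $g$ have valency $\ge 3$, then so do all neutral vertices of $f \circ_i g$, which shows that the collection $\{\sGraphs(n)\}$ is closed under elementary insertions. For $\sfGraphs$ one inspects the connected components of a resulting graph $\G_{\al}$: a component containing no operational vertex is either a component of $f$ left untouched by the insertion --- which carries a vertex of valency $\ge 3$ by Property \ref{P:no-cab:no-poly}, since it had no operational vertex --- or else it contains a component of $g$ that had no operational vertex, and the vertex of valency $\ge 3$ of the latter still has valency $\ge 3$ in $\G_{\al}$, valencies only increasing under the insertion. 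Since the unit of $\Tw\Gra$ is a single operational vertex (with no neutral vertices), it lies in both collections, and the inclusions in \eqref{inclusions} are immediate; thus both collections are suboperads.

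For the subcomplex claim, $\sfGraphs(n)$ is again direct: attaching a pendant vertex or splitting a vertex never deprives a connected component of an operational vertex and never creates a new connected component, and, when it splits a neutral vertex of valency $d \ge 3$, it produces two neutral vertices of valencies $k+1$ and $(d-k)+1$, which cannot both be $\le 2$; hence Property \ref{P:no-cab:no-poly} is preserved. For $\sGraphs(n)$ the essential point is that, when $\G$ has all neutral vertices of valency $\ge 3$, every graph occurring in $\pa^{\Tw}\Av_r(\G)$ that possesses a neutral vertex of valency $1$ or $2$ occurs a second time, with the opposite sign, and therefore cancels: a neutral pendant attached to a vertex $w$ (from the first term of \eqref{diff-TwGra}) is the very graph produced by splitting off from $w$ a neutral vertex carrying none of $w$'s edges (from the second or third term, according as $w$ is operational or neutral), and a graph with a bivalent neutral vertex subdividing an edge $\{x,y\}$ arises once with $x$ as the split vertex and once with $y$. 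Granting these cancellations, the surviving graphs only ever split a valency-$d$ ($d \ge 3$) neutral vertex of $\G$ into pieces of valencies $k+1$ and $(d-k)+1$ with $2 \le k \le d-2$, hence have all neutral vertices of valency $\ge 3$; so $\sGraphs(n)$ is a subcomplex.

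The genuinely delicate step is checking that the paired ``bad'' graphs in $\pa^{\Tw}\Av_r(\G)$ really do occur with opposite signs, and this is where I expect the work to lie. One must reconcile the coefficient $-(-1)^{e(\G)}$ and the signs contributed by the relabelings $\vs_{r+1,r+i}$ in \eqref{diff-TwGra}, the factor $\frac{1}{2}$ in the third term (which compensates a two-fold over-counting there), and the edge-ordering signs implicit in $\Av_{r+1}$. In particular, the convention ``all edges of $\wt{\G}$ are smaller than all edges of $\G$'' used in the insertion \eqref{circ-i-mc} puts the newly created edge first in the first term of \eqref{diff-TwGra} but last in the second and third, which contributes a compensating factor of $(-1)^{e(\G)}$; tracking this and the analogous signs for the subdivision terms is a bookkeeping computation in the spirit of the proof of Proposition \ref{prop:diff-TwGra}, after which all cancellations fall out and the proposition follows.
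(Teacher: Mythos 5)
Your proof is correct and follows essentially the same route as the paper's: the paper also treats closure under insertions and the $\sfGraphs(n)$ case as immediate, and reduces the only nontrivial point to the cancellation, in $\pa^{\Tw}\Av_r(\G)$, of the univalent neutral vertices (pendant from the first term of \eqref{diff-TwGra} against the ``empty redistribution'' terms of the second and third) and of the bivalent neutral vertices (the two subdivisions of each edge against one another). Like the paper, you defer the final sign verification, so the two arguments are at the same level of detail on that point; your treatment of the parts the paper declares obvious is, if anything, more explicit.
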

\begin{proof}
The only non-obvious statement in this proposition
is that the subspace $\sGraphs(n)$
is closed with respect to the differential $\pa^{\Tw}$\,.

So let us denote by $\G$ an $r$-even graph in $\gra_{r+n}$ whose 
neutral vertices all have valencies $\ge 3$ and 
analyze the right hand side of \eqref{diff-TwGra}.  

All graphs in the first linear combination in the right hand side of  \eqref{diff-TwGra}
have a univalent neutral vertex.  However, it is not hard to 
see that they cancel with the corresponding terms in the second and the third linear 
combinations in the right hand side of  \eqref{diff-TwGra}. 

Graphs with bivalent neutral vertices come from both the second 
and third linear combinations of the right hand side of  \eqref{diff-TwGra}. 
Again, it is not hard to see that these contributions cancel each other. 

The proposition is proved. 
\end{proof}

The goal of this subsection is to prove that 
\begin{prop}
\label{prop:sGra-sfGra}
The embedding 
\begin{equation}
\label{emb-1-sharp}
\emb^{\sharp}_1 : \sGraphs(n) \hookrightarrow \sfGraphs(n)
\end{equation}
is a quasi-isomorphism.
\end{prop}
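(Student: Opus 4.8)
The plan is to prove that the quotient complex $Q(n) := \sfGraphs(n)/\sGraphs(n)$ is acyclic; by the long exact sequence associated with the short exact sequence of complexes $0 \to \sGraphs(n) \to \sfGraphs(n) \to Q(n) \to 0$, this is equivalent to the assertion that $\emb^{\sharp}_1$ is a quasi-isomorphism. A spanning set of $Q(n)$ is given by the classes of the vectors $\Av_r(\G)$, where $\G \in \gra_{r+n}$ satisfies Property \ref{P:no-cab:no-poly} and has at least one neutral vertex of valency $\le 2$.

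First I would run the Euler characteristic trick: Corollary \ref{cor:Euler} and Proposition \ref{prop:Euler} apply to $\sfGraphs(n)$ and $\sGraphs(n)$, hence to $Q(n)$, so that $Q(n) = \prod_{\chi} Q_{\chi}(n)$ with each $Q_{\chi}(n)$ finite dimensional in every degree. It therefore suffices to prove that each $Q_{\chi}(n)$ is acyclic, and on such a complex a filtration that is bounded in each degree yields a convergent spectral sequence, so it is enough to find one whose associated graded is acyclic.

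On $Q_{\chi}(n)$ I would take the decreasing filtration $\cF_p$ defined by requiring the underlying graph to have at least $p$ vertices of valency $\ge 3$. Each of the three operations in the formula \eqref{diff-TwGra} for $\pa^{\Tw}$ — attaching a univalent neutral vertex, splitting an operational vertex into an operational one plus a neutral one, and splitting a neutral vertex into two — can only raise or leave fixed the number of vertices of valency $\ge 3$, so $\cF_{\bullet}$ is a filtration; it is bounded in each degree because within a fixed Euler characteristic and a fixed degree the number of edges is fixed. On the associated graded the surviving differential $\pa^{(0)}$ retains only the terms that do not alter the set of valency-$\ge 3$ vertices nor the topological type of the graph relative to its operational and valency-$\ge 3$ vertices; concretely $\pa^{(0)}$ grows hairs at those vertices, lengthens the antennae built out of valency-$1$ and valency-$2$ neutral vertices, and lengthens the chains of bivalent neutral vertices subdividing an edge. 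Hence $\mathrm{gr}\, Q_{\chi}(n)$ splits as a direct sum, over the ``cores'' obtained by erasing all such decorations, of the complex of decorations of a fixed core, which is a tensor product of elementary complexes — one per vertex that may carry a hair, one per antenna, one per edge that may be subdivided. Each such factor is a Koszul-type ``line'' complex (for instance the complex counting the number of subdivision points of a fixed edge, with differential ``insert one more point''), and is acyclic via the evident contracting homotopy; by the Künneth theorem the decoration complex is acyclic as soon as a decoration is present, which is precisely the situation for the graphs spanning $Q_{\chi}(n)$. Thus $\mathrm{gr}\, Q_{\chi}(n)$ is acyclic, the spectral sequence collapses, $Q_{\chi}(n)$ is acyclic, and $\emb^{\sharp}_1$ is a quasi-isomorphism.

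The step I expect to be the main obstacle is the last one: identifying precisely which terms of $\pa^{\Tw}$ survive in $\pa^{(0)}$, choosing the notion of ``core'' so that $\mathrm{gr}\, Q_{\chi}(n)$ decomposes as claimed, and checking the signs (so that, e.g., two parallel hairs at a single vertex genuinely vanish and the antenna-lengthening and edge-subdivision maps square to zero) together with the acyclicity of each elementary line complex. The surrounding reductions — the long exact sequence, the Euler characteristic trick, and the spectral sequence of a bounded filtration — are routine.
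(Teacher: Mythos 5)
Your proof has the same architecture as the paper's: reduce to finite sums by the Euler characteristic trick, filter so that only the ``decoration-growing'' part of \eqref{diff-TwGra} survives on the associated graded, decompose that associated graded over combinatorial skeleta into tensor products of elementary one-variable complexes, and finish with K\"unneth plus the filtered-complex Lemma \ref{lem:q-iso}. The two variations are harmless: you compare via the quotient complex rather than via the inclusion of filtered complexes, and you filter by the number of valency-$\ge 3$ vertices where the paper filters by the number of bivalent \emph{neutral} vertices. Your filtration does work (no term of the differential lowers your count), but it keeps the hair-creation and hair-peel-off terms alive on the associated graded, so you must check there that they cancel; the paper's filtration discards them for filtration-degree reasons. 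Also, the paper's version of your ``cores'' are the frames of Subsection \ref{sec:Gr-sfgraphs}, which keep the univalent neutral antenna stubs as part of the skeleton, and the decomposition is over isomorphism classes with $\Aut(\text{core})$-coinvariants -- the coinvariants are what make parallel hairs and the polygons of the wrong length vanish, and should be said explicitly.

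The one claim that is false as stated is that \emph{each} elementary factor is acyclic. The factor attached to an ordinary (non-antenna, non-loop) edge of the core -- the complex of its subdivisions, which is $\bsi T_a$ with $\de(a)=a^2$ in the paper's notation -- is \emph{not} acyclic: the two ways of inserting the first subdivision point (splitting either endpoint) cancel, so the unsubdivided edge is a cocycle and spans the cohomology \eqref{H-T-a}; your model ``insert one more point, contract by deleting one'' is a different complex. Likewise the loop factor $L$ is not a line (the cyclic symmetry kills polygons of length $2,3 \bmod 4$) and has cohomology spanned by the plain loop. If every factor were acyclic, all of $\Gr(\sfgraphs(n))$ would be acyclic -- too strong, since $\Gr(\sgraphs(n))$ sits inside it with zero induced differential and must survive. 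What you actually need, and what your final clause implicitly uses, is that the \emph{positively decorated} part of each factor is an acyclic direct summand; this requires $\de(\text{undecorated})=0$ in each factor, i.e., precisely the cancellations above. Then K\"unneth makes every summand containing at least one genuine decoration acyclic, while the undecorated states survive and reproduce $\sgraphs(n)$ -- the content of Lemma \ref{lem:Gr}. The only honestly acyclic factor is the antenna factor $\und{T}_{\,a}$, which is why frames with univalent neutral vertices contribute nothing. With that correction your argument closes.
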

\begin{proof}
Let us denote by $\sgraphs(n)$ (resp.  $\sfgraphs(n)$) the subcomplex 
of $\sGraphs(n)$ (resp. $\sfGraphs(n)$) which consists of {\bf finite} linear 
combinations \eqref{sum} in $\sGraphs(n)$ (resp. $\sfGraphs(n)$).
In other words, 
\begin{equation}
\label{sgraphs-sfgraphs}
\sgraphs(n) : = \sGraphs(n) \cap \Tw^{\oplus} \Gra\,, 
\qquad
\sfgraphs(n) : = \sfGraphs(n) \cap \Tw^{\oplus} \Gra\,.
\end{equation}

Next we observe 
that the cochain complexes  $\sGraphs(n)$ and $\sfGraphs(n)$
admit decompositions with respect to the Euler characteristic 
$$
\sGraphs(n) = \prod_{\chi\in \bbZ} \sGraphs(n) \cap \Tw\Gra_{\chi}(n)\,, 
\qquad 
\sfGraphs(n) = \prod_{\chi\in \bbZ} \sfGraphs(n) \cap \Tw\Gra_{\chi}(n)
$$
and Proposition \ref{prop:Euler} implies that the subspace of 
elements of fixed degree in $ \sGraphs(n) \cap \Tw\Gra_{\chi}(n)$ and in 
$\sfGraphs(n) \cap \Tw\Gra_{\chi}(n)$ is spanned by a finite number of 
graphs. 

Thus, in virtue of  Remark \ref{rem:Euler-trick}, it suffices to prove that 
the embedding 
\begin{equation}
\label{sgra-sfgra}
\sgraphs(n) \hookrightarrow \sfgraphs(n)
\end{equation}
is a quasi-isomorphism. 

Let $\G$ be an element in $\gra_{r+n}$ such that 
$\Av_r(\G)$ represents a vector in $\sfgraphs(n)$\,.  
Let us denote by $\nu_2(\G)$ the number of 
neutral vertices having valency $2$\,.

It is clear that the linear combination  
$$
\pa^{\Tw} \Av(\G)
$$
may involve only graphs $\G'$ with $\nu_2(\G') = \nu_2(\G)$
or  $\nu_2(\G') = \nu_2(\G) + 1$\,.

Thus we may introduce on the complex $\sfgraphs(n)$ an 
ascending filtration 
\begin{equation}
\label{cF-sfgraphs}
\dots \subset \cF^{m-1} \sfgraphs(n)
\subset  \cF^m \sfgraphs(n) \subset  \cF^{m+1} \sfgraphs(n)
\subset \dots
\end{equation}
where $ \cF^m \sfgraphs(n)$ consists of vectors  
$\ga \in \sfgraphs(n)$
which only involve graphs $\G$ satisfying the inequality
$$
\nu_2(\G)  -  |\ga|  \le  m\,. 
$$

It is clear that 
$$
 \cF^{m} \sfgraphs(n)
$$
does not have non-zero vectors in degree $< -m$\,.
Therefore, the filtration \eqref{cF-sfgraphs} is locally bounded 
from the left. Furthermore, since $\sfgraphs(n)$ consists of 
finite sums of graphs, 
$$
 \sfgraphs(n) = \bigcup_{m} \cF^{m} \sfgraphs(n)\,.
$$
In other words,  the filtration \eqref{cF-sfgraphs} is cocomplete.
 
It is also clear that the differential $\pa^{\Gr}$ on the associated graded 
complex 
\begin{equation}
\label{Gr-sfgraphs}
\Gr( \sfgraphs(n) ) =
\bigoplus_m \cF^m \sfgraphs(n)  ~ \Big/~ \cF^{m-1} \sfgraphs(n)\,. 
\end{equation} 
is obtained from $\pa^{\Tw}$ by keeping only the terms 
which raise the number of the bivalent neutral vertices. 

Thus, since $\sgraphs(n)$ is a subcomplex of $\sfgraphs(n)$, 
we conclude that
$$
\sgraphs(n)^k \subset  \cF^{-k} \sfgraphs(n)^k~ \cap ~ \ker \pa^{\Gr}\,, 
$$ 
where $\sgraphs(n)^k$ (resp. $ \cF^{-k} \sfgraphs(n)^k$) denotes the 
subspace of degree $k$ vectors in $\sgraphs(n)$  (resp. in $ \cF^{-k} \sfgraphs(n)$)\,.

To complete the proof of the proposition, we need the 
following technical lemma which is proved in Subsection 
\ref{sec:lem:Gr} below.
\begin{lem}
\label{lem:Gr}
For the filtration  \eqref{cF-sfgraphs}  on $\sfgraphs(n)$ we have 
\begin{equation}
\label{Gr-sfgraphs-k-m}
 H^{k}\Big( \cF^m \sfgraphs(n)  \big/  \cF^{m-1} \sfgraphs(n) \Big) = 0 
\end{equation}
for all $m > - k$\,. Moreover, 
\begin{equation}
\label{sfgraphs-OK}
\sgraphs(n)^k =  \cF^{-k} \sfgraphs(n)^k~ \cap ~ \ker \pa^{\Gr}\,.
\end{equation} 
\end{lem}

It is easy to see that the restriction of  
\eqref{cF-sfgraphs} to the subcomplex 
$\sgraphs(n)$ gives us the ``silly'' filtration: 
\begin{equation}
\label{silly-filtr}
\cF^{m} \sgraphs(n)^k  = 
\begin{cases}
 \sgraphs(n)^k \qquad {\rm if} ~~ m \ge -k\,,  \\
 \bfzero  \qquad {\rm otherwise}\,.
\end{cases} 
\end{equation}
The associated graded complex $\Gr(\sgraphs(n))$ for this 
filtration has the zero differential.

Since 
$$
 \cF^{m} \sfgraphs(n)^k = \bfzero \qquad \forall ~~ m < -k\,,
$$
we have 
$$
\cF^{-k} \sfgraphs(n)^k~ \cap ~ \ker \pa^{\Gr} = 
  H^k \big( \, \cF^{-k} \sfgraphs(n) \big/  \cF^{-k-1} \sfgraphs(n)  \, \big)\,.
$$

Thus, Lemma \ref{lem:Gr} implies that, the embedding  \eqref{sgra-sfgra} 
induces a quasi-isomorphism of cochain complexes 
$$
\Gr (\sgraphs(n))  \stackrel{\sim}{\longrightarrow}  \Gr(\sfgraphs(n))\,.
$$

On the other hand, both filtrations \eqref{cF-sfgraphs} and 
\eqref{silly-filtr} are locally bounded from 
the left and cocomplete. 

Therefore the embedding \eqref{sgra-sfgra}
satisfies all the conditions of Lemma \ref{lem:q-iso} from Appendix \ref{app:q-iso}
and Proposition \ref{prop:sGra-sfGra} follows. 
\end{proof}

\subsubsection{An alternative description of $\Gr( \sfgraphs(n) )$}
\label{sec:Gr-sfgraphs}

In order to prove Lemma \ref{lem:Gr} we need a convenient  
description of the associated graded complex \eqref{Gr-sfgraphs}.

For this purpose we introduce three cochain complexes: 
\begin{itemize}

\item The first cochain complex is the tensor algebra 
\begin{equation}
\label{T-a}
T_a = T(\bbK \L a \R) 
\end{equation}
in a single variable $a$ carrying degree $1$ with the 
differential  $\de$ defined by the formula
\begin{equation}
\label{de-T-a}
\de (a) =  a^2\,.
\end{equation}

\item The second cochain complex is 
the truncation of the above tensor algebra
\begin{equation}
\label{tT-a}
\und{T}_{\,a} = \und{T}(\bbK \L a \R) = \bbK \L a \R \oplus \bbK \L a \otimes a \R 
\oplus \bbK \L a \otimes a \otimes a \R \oplus \dots
\end{equation}
with the same differential \eqref{de-T-a}.

\item Finally, the third cochain complex 
\begin{equation}
\label{L}
L = \bbK\L \{ \ml_{n} \}_{n > 0,~ n =0,1 \textrm{ mod } 4} \R 
\end{equation}
has the basis vector  $\{ \ml_{n} \}_{n > 0,~ n =0,1 \textrm{ mod } 4}$
carrying degrees 
$$
| \ml_n|= n -2\,.
$$
The differential on \eqref{L} is given by the formulas:
\begin{equation}
\label{de-L}
\de (\ml_{4k}) = - \ml_{4k+1}\,, \qquad 
\de (\ml_{4k+1}) = 0\,. 
\end{equation}

\end{itemize}
It is easy to see that the cochain complex 
$(\und{T}_{\,a}, \de)$ is acyclic,
 \begin{equation}
\label{H-T-a}
H^{\bul}(T_a) = 
\begin{cases}
\bbK \qquad {\rm if} ~~ \bul = 0\,,  \\
\bfzero \qquad {\rm otherwise}\,,
\end{cases}
\end{equation}
and
\begin{equation}
\label{H-L}
H^{\bul}(L) = 
\begin{cases}
\bbK \qquad {\rm if} ~~ \bul = -1\,,  \\
\bfzero \qquad {\rm otherwise}\,.
\end{cases}
\end{equation}
Moreover $H^0(T_a, \de)$ is 
spanned by the class of $1$ and  $H^{-1}(L) $ is spanned by the cohomology class 
of $\ml_1$\,.

Next, to every pair of non-negative integers $r, n$ satisfying $r+n > 0$ 
we assign an auxiliary groupoid $\Frame_{r,n}$\,. 
An object of this groupoid is a labeled \und{directed} graph $\Gim$ with $r+n$ vertices
and with an additional piece of data: the set $E(\Gim)$ of edges of $\Gim$ is 
equipped with a total order.  For our purposes, we call
the first $r$ vertices neutral and the last $n$ vertices operational. 
(On figures we use small black circles (resp. small white circles) for
neural (resp. operational) vertices.)  
Each object $\Gim \in \Frame_{r,n}$ obeys the following properties: 
\begin{itemize}

\item $\Gim$ does not have bivalent neutral 
vertices;

\item $\Gim$ does not have a 
connected component which consists of a single neutral 
vertex; 

\item $\Gim$ does not have a connected component which 
consists of a single edge which connects two neutral vertices;

\item each edge adjacent to a univalent neutral vertex (if any)
of $\Gim$ originates at this univalent neutral vertex; 
 
\item the set $E(\Gim)$ is ordered in 
such a way that edges adjacent to  univalent neutral vertices (if any)
are smaller than all the remaining edges; 

\item finally, loops of $\Gim$ (if any) are bigger than all the remaining 
edges.

\end{itemize}
Objects of the groupoid $\Frame_{r,n}$ are called {\it frames}.

A morphism from a frame $\Gim$ to a frame $\Gim'$ is 
an isomorphism of the underlying graphs which respects  
labels only on the operational vertices and respects
neither labels on neutral vertices, nor the total 
order on the set of edges, nor the directions of
edges. 

\begin{example}
\label{exam:gimel}
Let  $\Gim$ be the frame in $\Frame_{3,4}$  depicted on 
figure \ref{fig:Gim-3-4}. Let $g_1$ be the automorphism 
of $\Gim$ which  swaps the first edge 
with the second edge and $g_2$ be  the automorphism 
of $\Gim$ which 
swaps the fifth edge with the sixth edge.
It is obvious that $\Aut(\Gim)$ is generated by $g_1$ and $g_2$. 
Moreover, $\Aut(\Gim) \cong S_2 \times S_2$\,.
\begin{figure}[htp]
\centering 
\begin{tikzpicture}[scale=0.5, >=stealth']
\tikzstyle{w}=[circle, draw, minimum size=4, inner sep=1]
\tikzstyle{b}=[circle, draw, fill, minimum size=4, inner sep=1]
\node [b] (b1) at (0,1) {};
\draw (0,1.6) node[anchor=center] {{\small $1$}};
\node [b] (b2) at (0,-1) {};
\draw (0,-1.6) node[anchor=center] {{\small $2$}};
\node [b] (b3) at (2,0) {};
\draw (2,-0.6) node[anchor=center] {{\small $3$}};
\node [w] (b4) at (4,1) {};
\draw (4,1.6) node[anchor=center] {{\small $4$}};
\node [w] (b5) at (4,-1) {};
\draw (4,-1.6) node[anchor=center] {{\small $5$}};
\node [w] (b6) at (7,1) {};
\draw (7,1.6) node[anchor=center] {{\small $6$}};
\node [w] (b7) at (7,-1) {};
\draw (7,-1.6) node[anchor=center] {{\small $7$}};
\draw [->] (b1) edge (b3);
\draw (1,1) node[anchor=center] {{\small $i$}};
\draw  [->] (b2) edge (b3);
\draw (1,-1) node[anchor=center] {{\small $ii$}};
\draw [<-] (b3) edge (b4);
\draw (3,1) node[anchor=center] {{\small $iii$}};
\draw [->] (b3) edge (b5);
\draw [<-] (b4) .. controls (6,0.5) and (6,-0.5) .. (b5); 
\draw (6,0) node[anchor=center] {{\small $vi$}};
\draw (3,-0.85) node[anchor=center] {{\small $iv$}};
\draw [->] (b4) edge (b5);
\draw (3.7,0) node[anchor=center] {{\small $v$}};
\draw (b6) .. controls (7.5,1.5) and (8,1.5) .. (8,1) ..
controls (8,0.5) and (7.5,0.5) .. (b6);
\draw (8.5,1) node[anchor=center] {{\small $vii$}};
\end{tikzpicture}
~\\[0.3cm]
\caption{As above, we use Roman numerals to 
specify the total order on the set of edges} \label{fig:Gim-3-4}
\end{figure} 
\end{example}

The total number of edges $e$ of any frame $\Gim$ splits into the sum
$$
e = e_{\bul}  + e_{\c} + e_{-}\,,
$$
where $e_{\bul}$ is the number of 
edges of $\Gim$ adjacent to univalent neutral vertices (if any), 
$e_{\c}$ is the number of loops of $\Gim$ and $e_{-}$ is the number of 
the remaining edges. Thus, for the frame $\Gim$ on figure \ref{fig:Gim-3-4}
we have $e_{\bul}=2$, $e_{\c}=1$, and $e_{-} = 4$.

For every frame $\Gim \in \Frame_{r,n}$ we construct a linear map 
\begin{equation}
\label{F-Gim}
F_{\Gim} : \bs^{2r - 2e_{\bul}} \big( \und{T}_{\,a} \big)^{\otimes\, e_{\bul}}  
~\otimes  ~ \big( \bsi T_a \big)^{\otimes\, e_{-}}
~ \otimes ~  L^{\otimes\, e_{\c}}  
\to  \Gr (\sfgraphs(n))\,.
\end{equation}

Namely, given a collection of monomials 
$a^{k_1}, a^{k_2}, \dots, a^{ k_{e_{\bul} + e_{-}} } $   
with $k_i > 0$ for all $i \le k_{e_{\bul}}$  and 
vectors $\ml_{k_{e_{\bul} + e_{-}+1} }, \dots, \ml_{k_e}$ in $L$ 
we form a graph $\G \in \gra_{(r+r') + n}$ with
$$
r' = \sum_{i=1}^{e_{\bul}} (k_i-1) + \sum_{i={e_{\bul}+1}}^{e_{\bul} + e_{-}} k_i
+ \sum_{i = e_{\bul} + e_{-} +1}^{e}  (k_i -1) 
$$
following these steps: 

\begin{itemize}

\item  first, for each $1 \le i \le e_{\bul}$, we 
divide the $i$-th edge into $k_i$ sub-edges;

\item second, for each $e_{\bul} < i \le e_{\bul} + e_{-} $, we 
divide the $i$-th edge into $k_i + 1$ sub-edges; 

\item third, for each  $e_{\bul} + e_{-} < i \le e$,
we divide the $i$-th edge\footnote{Note that, for 
each  $e_{\bul} + e_{-} < i \le e$ the $i$-th edge is necessarily 
a loop.} into $k_i$ sub-edges;

\item we declare that the additional $r'$ vertices obtained 
in the above steps are neutral, label them 
by numbers $r+1, r+2, \dots, r+r'$  in an 
arbitrary possible way and shift labels on all operational 
vertices up by $r'$\,;

\item we order the set $E(\G)$ of edges 
of $\G$ in the following way\footnote{
The order on the set $E(\G)$ is defined up to an even 
permutation.}:
{if $s_1, s_2 \in E(\G)$ 
are parts of different edges of $\Gim$ then $s_1 < s_2$ provided 
$s_1$ is a part of a smaller edge; if  $s_1, s_2 \in E(\G)$ 
are parts of the same edge of $\Gim$ which is not a loop
then $s_1 < s_2$ provided $s_1$ is closer to the origin of its edge;
finally, we order sub-edges of each loop of  $\Gim$ by 
choosing one of the two possible directions of walking 
around the loop.}

\end{itemize}
We will refer to this graph $\G$ as the graph {\it reconstructed}
from the monomial 
$$
 (a^{k_1}, a^{k_2}, \dots, a^{k_{e_{\bul}  + e_{-}}}, \ml_{k_{e_{\bul} + e_{-} +1}}, \
\dots, \ml_{k_e}) \in  \bs^{2r - 2e_{\bul}} \big( \und{T}_{\,a} \big)^{\otimes\, e_{\bul}}  
~\otimes  ~ \big( \bsi T_a \big)^{\otimes\, e_{-}}
~ \otimes ~  L^{\otimes\, e_{\c}}  
$$
using the frame $\Gim$\,.

It is not hard to see that the equation 
\begin{equation}
\label{F-Gim-def}
F_{\Gim} (a^{k_1}, a^{k_2}, \dots, a^{k_{e_{\bul}  + e_{-}}}, \ml_{k_{e_{\bul} + e_{-} +1}}, \
\dots, \ml_{k_e}) = 
\sum_{\si \in S_{r+r'}} \si(\G)
\end{equation}
defines a (degree zero) map of graded vector spaces \eqref{F-Gim}.

\begin{example}
\label{exam:frame-graph}
Let $\Gim$ be the frame in $\Frame_{3,4}$ depicted on figure 
\ref{fig:Gim-3-4}. Then 
$$
F_{\Gim}(a,a^3, a, 1, 1, a, \ml_4) = \sum_{\si \in S_{10}} \si(\G)\,,
$$
where $\G$ is the element in $\gra_{10+4}$ depicted on figure 
\ref{fig:G-10-4}.
\begin{figure}[htp]
\centering 
\begin{tikzpicture}[scale=0.5, >=stealth']
\tikzstyle{w}=[circle, draw, minimum size=4, inner sep=1]
\tikzstyle{b}=[circle, draw, fill, minimum size=4, inner sep=1]
\node [b] (b1) at (0,3) {};
\draw (0,3.6) node[anchor=center] {{\small $1$}};
\node [b] (b2) at (0,-3) {};
\draw (0.3,-2.1) node[anchor=center] {{\small $ii$}};
\draw (0,-3.6) node[anchor=center] {{\small $2$}};
\node [b] (b3) at (1,-2) {};
\draw (1,-2.6) node[anchor=center] {{\small $3$}};
\draw (1.2,-1.1) node[anchor=center] {{\small $iii$}};

\node [b] (b4) at (2,-1) {};
\draw (2,-1.6) node[anchor=center] {{\small $4$}};
\draw (2.3,-0.1) node[anchor=center] {{\small $iv$}};

\node [b] (b5) at (3,0) {};
\draw (3,-0.6) node[anchor=center] {{\small $5$}};
\node [w] (b11) at (6,3) {};
\draw (6,3.6) node[anchor=center] {{\small $11$}};
\draw (5,2.6) node[anchor=center] {{\small $v$}};
\node [b] (b6) at (4.5,1.5) {};
\draw (4.5,1) node[anchor=center] {{\small $6$}};
\draw (3.6,1.25) node[anchor=center] {{\small $vi$}};

\node [b] (b7) at (9,0) {};
\draw (9.4,0) node[anchor=center] {{\small $7$}};

\draw (4.4,-2) node[anchor=center] {{\small $vii$}};
\node [w] (b12) at (6,-3) {};
\draw (6,-3.6) node[anchor=center] {{\small $12$}};
\draw (6.7,0) node[anchor=center] {{\small $viii$}};

\node [w] (b13) at (10,3) {};
\draw (9.4,3) node[anchor=center] {{\small $13$}};
\node [b] (b8) at (12,1) {};
\draw (12,0.4) node[anchor=center] {{\small $8$}};
\node [b] (b9) at (14,3) {};
\draw (14.5,3) node[anchor=center] {{\small $9$}};
\node [b] (b10) at (12,5) {};
\draw (12,5.6) node[anchor=center] {{\small $10$}};

\node [w] (b14) at (10,-3) {};
\draw (10,-3.6) node[anchor=center] {{\small $14$}};
\draw (b1) edge (b5);
\draw (1.6,2) node[anchor=center] {{\small $i$}};
\draw  (b2) edge (b5);
\draw  (b5) edge (b11);
\draw  (b5) edge (b12);
\draw  (b11) edge (b12);
\draw  (b11) edge (b7);
\draw (8,1.7) node[anchor=center] {{\small $x$}};
\draw  (b7) edge (b12);
\draw (8,-1.8) node[anchor=center] {{\small $ix$}};
\draw  (b13) edge (b8);
\draw (10.7,1.7) node[anchor=center] {{\small $xi$}};
\draw  (b8) edge (b9);
\draw (13.3,1.7) node[anchor=center] {{\small $xii$}};
\draw  (b9) edge (b10);
\draw (13.5,4.4) node[anchor=center] {{\small $xiii$}};
\draw  (b10) edge (b13);
\draw (10.7,4.5) node[anchor=center] {{\small $xiv$}};
\end{tikzpicture}
~\\[0.3cm]
\caption{The graph $\G \in \gra_{10+4}$ corresponding to 
the vector $(a,a^3, a, 1, 1, a, \ml_4)$} \label{fig:G-10-4}
\end{figure} 
\end{example}

Let us denote by $\pa^{\Gr}$ the differential on 
the associated graded complex $\Gr( \sfgraphs(n) )$\,.
It is clear from the definition of the filtration \eqref{cF-sfgraphs}
on $\sfgraphs(n)$ that  $\pa^{\Gr}$
is obtained from $\pa^{\Tw}$ by keeping only the terms 
which raise the number of 
the bivalent neutral vertices. Hence the image of the map 
$F_{\Gim}$ is closed with respect to the action the differential 
$\pa^{\Gr}$\,. Furthermore, going through the steps of the 
definition of $F_{\Gim}$, it is not hard to verify that 
\begin{equation}
\label{F-Gim-diff}
\pa^{\Gr} \circ F_{\Gim} = F_{\Gim} \circ \de \,.
\end{equation}

Our next goal is to describe the  kernel of the map $F_{\Gim}$\,.
For this purpose, we introduce the semi-direct 
product 
\begin{equation}
\label{group-rearrange}
S_e \ltimes \big(S_2\big)^{e}
\end{equation}
of the groups $S_e$ and $\big(S_2\big)^{e}$
with the multiplication rule: 
\begin{equation}
\label{group-law}
(\tau; \si_1, \dots, \si_{e}) \cdot (\la; \si'_1, \dots, \si'_{e}) =  
(\tau \la; \si_{\la(1)} \si'_1, \dots, \si_{\la(e)} \si'_e)\,.
\end{equation}

Next we observe that 
the group $\Aut(\Gim)$ admits an obvious homomorphism 
to the subgroup 
\begin{equation}
\label{really-rearrange}
\Big( S_{e_{\bul}} \times S_{e_{-}} \times S_{e_{\c}} \Big) 
 \ltimes \Big( \{\id\}^{e_{\bul}}  \times \big(S_2\big)^{e_{-}} \times  \{\id\}^{e_{\c}} \Big)
\end{equation}
of \eqref{group-rearrange}, where $\{\id\}$ denotes the trivial group.
Namely, this homomorphism assigns to an element $g\in \Aut (\Gim)$
the string 
$$
(\tau; \si_1, \dots, \si_{e}), \qquad \tau \in S_e, ~ \si_1, \dots, \si_e \in S_2   
$$
in \eqref{group-rearrange} according to this rule: $\tau(i) =j$ if 
the automorphism $g$ sends the 
$i$-th edge to the $j$-th edge; 
$\si_i$ is non-trivial (for $e_{\bul} +1 \le i \le e_{\bul} + e_{-}$) if $g$  sends the 
$i$-th edge to the $j$-th edge and the directions of these edges are opposite.
It is clear that this homomorphism lands in the subgroup \eqref{really-rearrange}. 

The group \eqref{really-rearrange} acts on the 
graded vector space
\begin{equation}
\label{kotovasija}
 \bs^{2r - 2e_{\bul}} \big( \und{T}_{\,a} \big)^{\otimes\, e_{\bul}}  
~\otimes  ~ \big( \bsi T_a \big)^{\otimes\, e_{-}}
~ \otimes ~   L ^{\otimes\, e_{\c}}\,. 
\end{equation}
Namely, if $\si$ is the non-trivial element of $S_2$ and $e_{\bul} < i  \le  e_{\bul} +e_{-}$  
then
$$
(1, \dots, 1,
\underbrace{\si}_{i\textrm{-th spot}},1, \dots 1) (a^{k_1}, a^{k_2}, \dots, a^{ k_{e_{\bul} +e_{-}} }, 
\ml_{ k_{e_{\bul} +e_{-} +1}}, \dots, \ml_{k_{e}}) = 
$$
$$
(-1)^{\frac{k_i(k_i+1)}{2}}  (a^{k_1}, a^{k_2}, \dots, a^{ k_{e_{\bul} +e_{-}} }, 
\ml_{ k_{e_{\bul} +e_{-} +1}}, \dots, \ml_{k_{e} })\,.
$$
Furthermore, for every $\tau \in  S_{e_{\bul}} \times S_{e_{-}} \times S_{e_{\c}}$ we set
$$
\tau (a^{k_1}, a^{k_2}, \dots, a^{ k_{e_{\bul} +e_{-}} }, \ml_{ k_{e_{\bul} +e_{-} +1}}, \dots, \ml_{k_{e}}) 
= 
$$
$$
(-1)^{\ve(\tau, k_1, \dots, k_e)}
(a^{k_{\tau^{-1}(1)}}, a^{k_{\tau^{-1}(2)}}, \dots, a^{k_{\tau^{-1}(e_{\bul} +e_{-})}},
  \ml_{k_{\tau^{-1}(e_{\bul} +e_{-} +1)}}, \dots, \ml_{k_{\tau^{-1}(e)}}  )\,,
$$
where the sign factor $(-1)^{\ve(\tau, k_1, \dots, k_e)}$ is determined by 
the usual Koszul rule. 

Thus the graded vector space  \eqref{kotovasija} is equipped with 
a left action of the group $\Aut(\Gim)$\,.

\begin{example}
\label{exam:rearrange}
Let us consider the frame
$\Gim$ depicted on figure 
\ref{fig:Gim-3-4}. Let $g_1$ be the generator 
of  $\Aut(\Gim)$ which swaps the first edge with the second edge
and let $g_2$ be the generator 
of  $\Aut(\Gim)$ which swaps the fifth edge with the sixth edge.
Then for the vector  $(a,a^3, a, 1, 1, a, \ml_4)$ we have 
\begin{equation}
\label{g-1-acts}
g_1(a,a^3, a, 1, 1, a, \ml_4) = - (a^3,a, a, 1, 1, a, \ml_4)\,,
\end{equation}
and
\begin{equation}
\label{g-2-acts}
g_2(a,a^3, a, 1, 1, a, \ml_4) = - (a, a^3, a, 1, a, 1, \ml_4)\,.
\end{equation}
The sign in \eqref{g-1-acts} comes from the fact that $a$ ``jumps'' over $a^3$
and the sign in \eqref{g-2-acts} appears due to the fact that  the fifth edge and 
the sixth edge carry opposite directions.
\end{example}

We can now describe the kernel of the map $F_{\Gim}$ \eqref{F-Gim}. 
\begin{claim}
\label{cl:ker-F-Gim}
Let $\Gim \in \Frame_{r,n}$ be a frame with $e$ edges 
$$
e = e_{\bul} + e_{-} + e_{\c}\,,
$$
where $e_{\bul}$ is the number of edges of $\Gim$ 
adjacent to univalent neutral vertices, $e_{\c}$ is the 
number of loops and $e_{-} = e - e_{\bul} - e_{\c}$\,.
Then the kernel of $F_{\Gim}$ is spanned 
by vectors of the form 
\begin{equation}
\label{ker-F-Gim}
X -  g (X)\,,
\end{equation}
where $X$ is a vector in \eqref{kotovasija} and 
$g$ is an automorphism of $\Gim$ in $\Frame_{r,n}$\,.
\end{claim}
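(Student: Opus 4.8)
The plan is to prove the two inclusions separately. For the inclusion $\mathrm{span}\{X - g(X)\}\subseteq \ker F_{\Gim}$ I would verify directly that $F_{\Gim}$ is $\Aut(\Gim)$-invariant, i.e.\ $F_{\Gim}(g(X)) = F_{\Gim}(X)$ for every monomial $X$ in \eqref{kotovasija} and every $g\in\Aut(\Gim)$. Indeed, $g$ permutes the edges of $\Gim$ by some $\tau\in S_e$ and reverses the directions of some of the $e_{-}$ non-loop, non-$\bul$-adjacent edges; accordingly the graph reconstructed from $g(X)$ differs from the one reconstructed from $X$ only by a relabeling of the neutral vertices and a reordering of the edge set. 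After applying the averaging operator $\sum_{\si\in S_{r+r'}}\si(-)$ the relabeling disappears, and the sign incurred by reordering the edges is, upon inspection, exactly the sign by which $g$ acts on $X$ in \eqref{kotovasija} --- which is precisely what the factors $(-1)^{k_i(k_i+1)/2}$ (for a direction reversal) and the Koszul sign $(-1)^{\ve(\tau,k_1,\dots,k_e)}$ (for an edge permutation) were designed to record. Hence $F_{\Gim}(X - g(X)) = 0$, and in particular $F_{\Gim}(X) = 0$ whenever $g(X) = -X$.

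For the reverse inclusion the main device is a \emph{smoothing} operation inverse to the reconstruction of \eqref{F-Gim-def}. Given a monomial $X$, write $\G = \G_X$ for the reconstructed graph; one recovers $\Gim$ from $\G^{\oub}$ by suppressing every bivalent neutral vertex and merging its two incident edges, the induced total order on the resulting edges and the (for non-loop edges, not uniquely determined) directions being accounted for by the $\big(S_2\big)^{e_{-}}$ factor. A valency-preserving isomorphism $\G_X^{\oub}\cong\G_{X'}^{\oub}$ must carry bivalent neutral vertices to bivalent neutral vertices, hence descends to an isomorphism of the smoothed graphs, i.e.\ an automorphism of $\Gim$; since each maximal chain of bivalent neutral vertices is rigid, this descended map determines, and lifts uniquely to, an element $g\in\Aut(\Gim)$ together with a choice of direction flips. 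I would thus conclude: (i) $\G_X^{\oub}$ and $\G_{X'}^{\oub}$ are isomorphic precisely when $X' = \pm g(X)$ for some $g\in\Aut(\Gim)$; and (ii) $\G_X$ is $(r+r')$-odd --- equivalently $F_{\Gim}(X) = \Av_{r+r'}(\G_X) = 0$ by Proposition~\ref{prop:ker-Av-r} --- precisely when some $g\in\Aut(\Gim)$ satisfies $g(X) = -X$.

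With this dictionary the computation of $\ker F_{\Gim}$ reduces to routine linear algebra. I would split the source \eqref{kotovasija} into the direct sum of the $\Aut(\Gim)$-subrepresentations spanned by the monomial basis vectors in each orbit (taken up to sign). By (i) and Proposition~\ref{prop:ker-Av-r}, $F_{\Gim}$ sends subrepresentations from distinct orbits into linearly independent lines of $\Gr(\sfgraphs(n))$ (or to $0$), so $\ker F_{\Gim}$ is the direct sum of its intersections with these subrepresentations and one may argue one orbit at a time. If the orbit carries a sign stabilizer (some $g$ with $g(X) = -X$ on it) then $F_{\Gim}$ vanishes on that subrepresentation by the first paragraph, while the subrepresentation equals $\mathrm{span}\{X - g(X)\}$ since $2X = X - g(X)$; otherwise $F_{\Gim}$ has rank one there and the standard averaging argument for a permutation action (valid since $\mathrm{char}\,\bbK = 0$) shows that its kernel is spanned by the differences $X - g(X)$. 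Summing over orbits gives the claim.

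I expect the real obstacle to be the second paragraph: checking carefully that subdividing the edges of a frame introduces no spurious symmetries, so that $\Aut(\G_X^{\oub})$ is captured exactly by $\Aut(\Gim)$ and the direction flips $\big(S_2\big)^{e_{-}}$, and --- more tediously --- tracking the edge-ordering signs through reconstruction so that they match the Koszul and $(-1)^{k_i(k_i+1)/2}$ signs in the $\Aut(\Gim)$-action on \eqref{kotovasija}. The three kinds of edges ($e_{\bul}$, $e_{-}$, $e_{\c}$), cut into $k_i$, $k_i+1$ and $k_i$ sub-edges respectively and having different local automorphism behaviour, will have to be handled case by case; the loop-direction ambiguity, however, is already removed by the defining properties of $\Frame_{r,n}$, which simplifies the $e_{\c}$ case.
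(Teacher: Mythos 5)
Your proposal is correct and follows essentially the same route as the paper: both rest on the observation that an isomorphism of the reconstructed partially labeled graphs descends to an automorphism of the frame $\Gim$, combined with Proposition \ref{prop:ker-Av-r}. The only differences are cosmetic --- you additionally verify the easy inclusion $\mathrm{span}\{X-g(X)\}\subseteq\ker F_{\Gim}$ explicitly, and you organize the converse by $\Aut(\Gim)$-orbits rather than by the paper's pairing of concordant/opposite graphs, which if anything handles orbits containing more than two monomials a bit more cleanly.
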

\begin{proof}
Let $Y$ be a monomial in  \eqref{kotovasija} such that 
\begin{equation}
\label{F-Gim-Y-0}
F_{\Gim}(Y) = 0\,.
\end{equation}
The latter means that the graph $\G \in \gra_{(r+r') +n}$ which
is constructed from the monomial $Y$ using the frame $\Gim$
is $(r+r')$-odd. 

In other words, there exists an automorphism $\wt{g}$ of $\G$
which respects labels only on operational vertices and induces an odd permutation 
on the set of edges of $\G$\,.

It is clear that $\wt{g}$ induces an automorphism $g$ of the frame $\Gim$. 
Furthermore, since $\wt{g}$ induces an odd permutation on 
the set of edges of $\G$ we have 
$$
Y = - g (Y)\,. 
$$ 
Hence, 
\begin{equation}
\label{Y-in-ker}
Y = \frac{1}{2} (Y - g(Y))\,. 
\end{equation}
Thus every monomial $Y$  in  \eqref{kotovasija}
satisfying equation \eqref{F-Gim-Y-0} belongs to the span
of vectors of the form \eqref{ker-F-Gim}. 

Let us now consider a linear combination 
\begin{equation}
\label{combin-Ys}
c_1 Y_1 + c_2 Y_2 + \dots + c_m Y_m\,, \qquad c_i \in \bbK 
\end{equation}
of monomials $Y_1, \dots, Y_m$ in   \eqref{kotovasija} such that 
\begin{equation}
\label{F-Gim-Y-s}
\sum_i c_i F_{\Gim}(Y_i) = 0\,.
\end{equation}

Due to the above observation about monomials 
satisfying \eqref{F-Gim-Y-0} we may assume, without loss 
of generality, that 
$$
F_{\Gim}(Y_i)  \neq 0 \qquad \forall~~ 1 \le i \le m\,. 
$$ 

We may also assume, without loss of generality, that 
the graphs $\{\G_i \}_{1 \le i \le m}$ reconstructed
from the monomial  $\{Y_i \}_{1 \le i \le m}$ have the same number 
of neutral vertices $r+r'$.

Thus, for every $1 \le i \le m$, the graph $\G_i \in \gra_{(r+r') + n}$
is $(r+r')$-even.

Combining this observation with Proposition \ref{prop:ker-Av-r}
we conclude that the number $m$ is even and the set of graphs 
$\{\G_i \}_{1 \le i \le m}$ splits into pairs
$$
(\G_{i_{t}}, \G_{i'_{t}})\,, \qquad t \in  \{1, \dots, m/2 \}
$$
such that for every $t$ the graphs $\G_{i_{t}}$  and $ \G_{i'_{t}}$
are either $(r+r')$-opposite or  $(r+r')$-concordant. For every 
pair $(\G_{i_{t}}, \G_{i'_{t}})$ of $(r+r')$-opposite graphs we have 
\begin{equation}
\label{for-opposite}
c_{i_t} = c_{i'_t}\,.  
\end{equation}
For every pair $(\G_{i_{t}}, \G_{i'_{t}})$ of $(r+r')$-concordant graphs we have 
\begin{equation}
\label{for-concordant}
c_{i_t} = -c_{i'_t}\,.  
\end{equation}

Let $e_t$ denote the number of edges of $\G_{i_t}$ (or  $\G_{i'_t}$) and 
let $\wt{g}_t$ be the isomorphism from $\G_{i_t}$ to $\G_{i'_t}$ which 
induces an odd or even permutation in $S_{e_t}$
depending on whether  $\G_{i_t}$ and $\G_{i'_t}$ are  $(r+r')$-opposite or 
$(r+r')$-concordant. Let $g_t$ be the automorphism of the frame $\Gim$ which 
is induced by the isomorphism $\wt{g}_t$\,.

Equations \eqref{for-opposite} and \eqref{for-concordant} imply that
$$
\sum_{i=1}^m c_i Y_i = 
\sum_{t=1}^{m/2} c_{i_t} (Y_{i_t} - g_t (Y_{i_t}))\,.
$$

In other words, the linear combination \eqref{combin-Ys} belongs 
to the span of vectors of the form \eqref{ker-F-Gim} and the claim 
follows. 
\end{proof}

Now we are ready to give a convenient description of 
the associated graded complex  $\Gr( \sfgraphs(n) )$\,. 
\begin{claim}
\label{cl:Gr-sfgraphs}
Let us choose a representative $\Gim_z$ for every isomorphism class 
$z \in \pi_0(\Frame_{r,n})$\,. Let $e^z_{\bul}$ be the number of edges 
of $\Gim_z$ adjacent to univalent neutral vertices,  $e^z_{\c}$ be the 
number of loops of $\Gim_z$ and 
$$
e^z_{-} = |E(\Gim_z)| - e^z_{\bul} - e^z_{\c}\,.
$$
Then the cochain complex  $\Gr( \sfgraphs(n) )$ splits into the 
direct sum 
$$
\Gr( \sfgraphs(n) ) \cong 
$$
\begin{equation}
\label{Gr-sfgraphs-desc}
\bigoplus_{r \ge 0}~
\bigoplus_{z \in \pi_0(\Frame_{r,n})}~
 \bs^{2r - 2e^z_{\bul}} 
\Big( \big( \und{T}_{\,a} \big)^{\otimes\, e^z_{\bul}}  
~\otimes  ~ \big( \bsi T_a \big)^{\otimes\, e^z_{-}}
~ \otimes ~  L ^{\otimes\, e^z_{\c}} \Big)_{\Aut(\Gim_z)}\,.
\end{equation}\,.
\end{claim}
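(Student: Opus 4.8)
<br>

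The plan is to prove Claim \ref{cl:Gr-sfgraphs} by combining the pieces that have already been assembled: the map $F_{\Gim}$ from \eqref{F-Gim}, its compatibility with differentials \eqref{F-Gim-diff}, the description of its kernel in Claim \ref{cl:ker-F-Gim}, and the surjection $\Av_r$ from \eqref{Av-r}. The first step is to observe that every vector in $\Gr(\sfgraphs(n))$ is a linear combination of vectors $\Av_{r+r'}(\G)$ where $\G$ is an $(r+r')$-even graph in $\gra_{(r+r')+n}$, and that contracting all bivalent neutral vertices of $\G$ produces a well-defined frame $\Gim \in \Frame_{r,n}$ together with a decoration of its edges by monomials in $\und{T}_a$, $\bsi T_a$, and $L$. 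Concretely, each edge of $\Gim$ adjacent to a univalent neutral vertex carries the monomial $a^{k}$ recording the number $k$ of sub-edges (with $k>0$), each ordinary edge carries a monomial $a^k$ with $k\ge 0$ recording the number of \emph{interior} bivalent neutral vertices, and each loop carries a basis vector $\ml_k$ of $L$ (the constraint $k\equiv 0,1 \bmod 4$ being exactly the non-vanishing condition for the cyclic symmetry of a neutral polygon/loop). This gives that the sum over all $r$ and all $\Gim_z$ of the images of the maps $F_{\Gim_z}$ is all of $\Gr(\sfgraphs(n))$.

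Next I would check that this decomposition is a direct sum of \emph{subcomplexes}. That the images of $F_{\Gim_z}$ for distinct $z$ (or distinct $r$) are linearly independent follows because $\pa^{\Gr}$, being the part of $\pa^{\Tw}$ that strictly raises the number of bivalent neutral vertices (see the discussion preceding \eqref{F-Gim-diff}), never changes the underlying frame: the three types of terms in \eqref{diff-TwGra} that survive in $\pa^{\Gr}$ subdivide an existing edge of $\G$ by inserting one more bivalent neutral vertex, which contracts back to the same $\Gim$. Hence $\Gr(\sfgraphs(n))$ is the direct sum over $(r,z)$ of the images of $F_{\Gim_z}$, and each image is $\pa^{\Gr}$-stable by \eqref{F-Gim-diff}. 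Then Claim \ref{cl:ker-F-Gim} identifies the kernel of $F_{\Gim_z}$ with the span of $X - g(X)$ for $g \in \Aut(\Gim_z)$, i.e.\ $F_{\Gim_z}$ descends to an isomorphism of graded vector spaces from the $\Aut(\Gim_z)$-coinvariants
$$
\bs^{2r-2e^z_{\bul}}\Big( \big(\und{T}_{\,a}\big)^{\otimes e^z_{\bul}} \otimes \big(\bsi T_a\big)^{\otimes e^z_{-}} \otimes L^{\otimes e^z_{\c}} \Big)_{\Aut(\Gim_z)}
$$
onto the image. Because the differential $\de$ on the tensor factors is $\Aut(\Gim_z)$-equivariant (the $\Aut(\Gim_z)$-action on \eqref{kotovasija} was defined precisely so that this holds, using the Koszul signs and the sign $(-1)^{k_i(k_i+1)/2}$ for direction-reversal), $\de$ descends to the coinvariants, and \eqref{F-Gim-diff} shows the descended map is an isomorphism of cochain complexes. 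Assembling these isomorphisms over all $r\ge 0$ and all $z\in\pi_0(\Frame_{r,n})$ yields \eqref{Gr-sfgraphs-desc}.

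The main obstacle will be the bookkeeping in the surjectivity and well-definedness step: one must verify carefully that the edge-ordering conventions built into the definition of the reconstructed graph $\G$ (edges adjacent to univalent neutral vertices first, loops last, sub-edges ordered along each edge or around each loop) match, up to an even permutation, the ambient ordering on $E(\G)$ that comes from viewing $\Av_{r+r'}(\G)$ inside $\Tw\Gra(n)$, so that no spurious sign is introduced; and that the parity constraints on $L$ (degrees $n\equiv 0,1\bmod 4$) really do capture all relations forced on a loop with bivalent neutral vertices, which is where Exercise \ref{exer:polygons} and the computation of $H^\bullet(L)$ in \eqref{H-L} are used implicitly. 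Everything else is a direct consequence of Claim \ref{cl:ker-F-Gim}, equation \eqref{F-Gim-diff}, and the observation that $\pa^{\Gr}$ preserves frames, so the bulk of the argument is organizational rather than conceptual.
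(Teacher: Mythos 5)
Your proposal is correct and follows essentially the same route as the paper: both use the chain maps $F_{\Gim_z}$ together with equation \eqref{F-Gim-diff} and Claim \ref{cl:ker-F-Gim} to identify each image with the coinvariants, and then observe that $\Gr(\sfgraphs(n))$ is the direct sum of these images. The paper dismisses the direct-sum decomposition as ``obvious,'' whereas you supply the justification (contracting bivalent neutral vertices recovers the frame, and $\pa^{\Gr}$ preserves the underlying frame), which is a welcome expansion of the same argument rather than a different one.
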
 
\begin{proof}
Let us recall that the map $F_{\Gim_z}$ \eqref{F-Gim} is a morphism 
from the cochain complex 
$$
\bs^{2r - 2e^z_{\bul}}\, \big( \und{T}_{\,a} \big)^{\otimes\, e^z_{\bul}}  
~\otimes  ~ \big( \bsi T_a \big)^{\otimes\, e^z_{-}}
~ \otimes ~  L ^{\otimes\, e^z_{\c}} 
$$
with the differential $\de$ to $\Gr( \sfgraphs(n) )$\,. 

Thus, Claim \ref{cl:ker-F-Gim} implies that $F_{\Gim_z}$ induces 
an isomorphism from the cochain complex of 
coinvariants
$$
\bs^{2r - 2e^z_{\bul}}\,
\Big( \big( \und{T}_{\,a} \big)^{\otimes\, e^z_{\bul}}  
~\otimes  ~ \big( \bsi T_a \big)^{\otimes\, e^z_{-}}
~ \otimes ~  L ^{\otimes\, e^z_{\c}} \Big)_{\Aut(\Gim_z)}
$$
to the subcomplex  
$$
\Im(F_{\Gim_z}) \subset  \Gr( \sfgraphs(n) )\,.
$$

On the other hand, the cochain complex $\Gr( \sfgraphs(n) )$ 
is obviously the direct sum  
\begin{equation}
\label{sum-over-Gim}
\Gr( \sfgraphs(n) ) =
\bigoplus_{r \ge 0}~
\bigoplus_{z \in \pi_0(\Frame_{r,n})}~ \Im(F_{\Gim_z})\,.
\end{equation}

Thus, the desired statement follows. 
\end{proof}

\subsubsection{Proof of Lemma \ref{lem:Gr}}
\label{sec:lem:Gr}

We will now use the above description of  the cochain complex $\Gr( \sfgraphs(n) )$ 
to prove  Lemma \ref{lem:Gr}. 

First, we observe that, since the cochain complex $\und{T}_{\,a}$ is acyclic, 
the direct summand 
\begin{equation}
\label{acyclic-Im-Gim}
\Im (F_{\Gim}) 
\end{equation}
of  $\Gr( \sfgraphs(n) )$ is acyclic for every frame $\Gim$ with at least 
one univalent neutral vertex. 

So let us consider a frame $\Gim$ with $e_{\bul} = 0$\,. 

It is easy to see that the cochain complex
\begin{equation}
\label{kitten}
\bs^{2r} \Big( \big( \bsi T_a \big)^{\otimes\, e_{-}} ~ \otimes ~  L ^{\otimes\, e_{\c}}
\Big)_{\Aut(\Gim)}
\end{equation}
is concentrated is degrees 
$$
 \ge 2r -e_{-} -e_{\c}\,.
$$

Furthermore, using \eqref{H-T-a}, \eqref{H-L}, K\"unneth's theorem,   
 and the fact that the cohomology 
functor commutes with taking coinvariants, we conclude that every cocycle 
$X$ in \eqref{kitten} of degree $ > 2r -e_{-} -e_{\c} $ is trivial and the space
\begin{equation}
\label{H-kitten}
H^{2r - e_{-} -e_{\c}} \Big(\, \bs^{2r} \Big( \big( \bsi T_a \big)^{\otimes\, e_{-}} ~ \otimes ~  L^{\otimes\, e_{\c}}
\Big)_{\Aut(\Gim)} \,\Big) = \bbK
\end{equation}
is spanned by the class of the vector 
\begin{equation}
\label{the-lowest-deg}
\bs^{2r} \, (\bsi 1)^{\otimes\, e_{-}} ~\otimes ~ (\ml_1)^{\otimes\, e_{\c}}\,.
\end{equation}

Since images of cocycles $X$ in \eqref{kitten} of degrees $ > 2r -e_{-} -e_{\c} $ 
lie in 
$$
\Big( \cF^m \sfgraphs(n)  \big/  \cF^{m-1} \sfgraphs(n) \Big)^{k}
$$
for $m > -k $ and images of the vectors \eqref{the-lowest-deg}
belong to $\sgraphs(n)^{2r -e_{-} -e_{\c}}$, Lemma \ref{lem:Gr}
follows from Claim \ref{cl:Gr-sfgraphs}.

\subsection{We are getting rid of loops}
\label{sec:remove-loops}

Let us denote by $\snlGraphs(n)$ the subspace of 
$\sGraphs(n)$ which consists of vectors in $\sGraphs(n)$ 
involving exclusively graphs without loops. 

Since the differential $\pa^{\Tw}$ ``does not create'' loops, 
the subspace $\snlGraphs(n)$ is a subcomplex of 
$\sGraphs(n)$ for every $n$\,. Moreover the collection 
\begin{equation}
\label{snlGraphs}
\snlGraphs =
\{ \snlGraphs(n)\}_{n \ge 0}
\end{equation}
is obviously a suboperad $\sGraphs$\,.

The goal of this section is to prove that 
\begin{prop}
\label{prop:get-rid-of-loops}
The embedding 
\begin{equation}
\label{snl-into-sGraphs}
\emb^{\sharp}_2 :
\snlGraphs \hookrightarrow \sGraphs 
\end{equation}
is a quasi-isomorphism (of dg operads).
\end{prop}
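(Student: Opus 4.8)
The plan is to follow the strategy used for Proposition \ref{prop:sGra-sfGra}. First, observe that $\pa^{\Tw}$ never creates a new loop: in formula \eqref{diff-TwGra} every new edge produced by an elementary insertion joins two \emph{distinct} vertices, and whenever a reconnection would close a pre-existing loop into a double edge the resulting graph vanishes in $\Gra$. Consequently the number of loops is a grading on $\sGraphs(n)$ that is preserved by the differential on nonzero terms, so $\sGraphs(n)=\prod_{\ell\ge 0}\sGraphs_\ell(n)$, where $\sGraphs_\ell(n)\subset\sGraphs(n)$ is the subcomplex spanned by graphs with exactly $\ell$ loops, and $\snlGraphs(n)=\sGraphs_0(n)$. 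Since cohomology commutes with products, $\emb^{\sharp}_2$ is a quasi-isomorphism if and only if $\sGraphs_\ell(n)$ is acyclic for every $\ell\ge 1$. Exactly as in the proof of Proposition \ref{prop:sGra-sfGra}, the Euler characteristic trick (Remark \ref{rem:Euler-trick}, via Proposition \ref{prop:Euler} and Corollary \ref{cor:Euler}) reduces this to the finite subcomplex $\sgraphs_\ell(n)\subset\sgraphs(n)=\sGraphs(n)\cap\Tw^{\oplus}\Gra$ of graphs with exactly $\ell$ loops, so it is enough to prove that $\sgraphs_\ell(n)$ is acyclic for all $\ell\ge 1$.

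To do this I would equip $\sgraphs_\ell(n)$ with an ascending, cocomplete, locally bounded (from the left) filtration analogous to \eqref{cF-sfgraphs}, this time by a statistic counting the \emph{loop-light} neutral vertices, i.e.\ the neutral trivalent vertices carrying a loop (so that the loop hangs off such a vertex by a single thread). Call a vertex \emph{substantial} if it is operational, or neutral but not loop-light. Using \eqref{diff-TwGra} one checks that $\pa^{\Tw}$ changes the number of loop-light vertices by $0$ or $1$: blowing up a substantial vertex at which a loop sits can split off a fresh loop-light neutral trivalent vertex carrying that loop, while the univalent- and bivalent-neutral-vertex cancellations of Proposition \ref{prop:sfGraphs} kill everything else that would leave $\sGraphs$. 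On the associated graded the induced differential $\pa^{\Gr}$ keeps only the terms that raise the count, i.e.\ the terms that ``slide'' a loop from a substantial vertex onto a new loop-light vertex.

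The next step is to describe $\Gr\big(\sgraphs_\ell(n)\big)$ concretely, in the spirit of Claim \ref{cl:Gr-sfgraphs}. One introduces a groupoid of ``loop frames'': labelled graphs obtained from the graphs in $\sgraphs_\ell(n)$ by contracting every thread carrying a loop-light vertex, so that each of the $\ell$ loops becomes marked at a substantial vertex. Then $\Gr\big(\sgraphs_\ell(n)\big)$ splits, over isomorphism classes of loop frames, into a direct sum of coinvariant complexes in which each marked loop contributes an independent tensor factor: the two-term complex $\bbK\langle\text{loop at its substantial vertex}\rangle\to\bbK\langle\text{loop on a loop-light vertex}\rangle$, together (on edges or loop-light vertices that can accommodate several loop-light vertices in a chain) with its natural elaboration as a truncated tensor algebra of the type $\und{T}_{\,a}$ from Subsection \ref{sec:Gr-sfgraphs}. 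Each such tensor factor is acyclic, so by K\"unneth's theorem the coinvariants of a tensor product of $\ell\ge 1$ of them are acyclic as well, whence $H^{\bul}\big(\Gr(\sgraphs_\ell(n))\big)=0$. Since the filtration is cocomplete and locally bounded from the left, Lemma \ref{lem:q-iso} of Appendix \ref{app:q-iso} (or a direct spectral sequence argument) yields $H^{\bul}\big(\sgraphs_\ell(n)\big)=0$ for $\ell\ge 1$, and the proposition follows.

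The main obstacle is the bookkeeping of the third paragraph: making the ``loop frame'' groupoid precise, verifying that the cancellations of Proposition \ref{prop:sfGraphs} really do reduce $\pa^{\Gr}$ to the pure loop-sliding operator with the correct signs, and identifying the per-loop tensor factor as an explicit acyclic complex (in particular handling edges and loop-light vertices that can carry a whole chain of loop-light vertices, where the factor is a truncated tensor algebra rather than a two-term complex). Everything else is a direct transcription of the arguments already carried out for $\sfGraphs$ and $\sGraphs$.
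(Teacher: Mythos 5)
Your proposal follows the paper's proof almost step for step through its main structural moves: the reduction to the finite-sum complexes $\sgraphs(n)$, $\snlgraphs(n)$ via the Euler characteristic trick, the ascending filtration by the number of trivalent neutral vertices supporting a loop (your ``loop-light'' count is exactly the paper's statistic $\tp_r(\G)$), and the identification of the associated graded differential as the loop-sliding operator $\pa^{\tp}$. The divergence is only in the final acyclicity step. The paper does not attempt a frame decomposition in the style of Claim \ref{cl:Gr-sfgraphs}; it splits $\Gr(\sgraphs(n)) \cong \Gr(\snlgraphs(n)) \oplus \sgraphs_{\lp}(n)$ and exhibits an explicit degree $-1$ operator $h$ on $\sgraphs_{\lp}(n)$ --- contract the single non-loop edge of a trivalent neutral loop-vertex and re-attach its loop to the neighboring vertex --- satisfying $\pa^{\tp}\circ h + h\circ \pa^{\tp} = \la_{\G}\cdot\id$ with $\la_{\G}$ the total number of loops, whence acyclicity on the loopy part is immediate. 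This is worth preferring over your K\"unneth argument, because the ``independent tensor factor per loop'' picture is not literally correct: whether a vertex can shed a loop depends on its current valency, which changes as loops slide off it (a neutral vertex with two loops and no other edges is in $V^r_{\lp}$, but after one loop slides it becomes trivalent and the remaining loop is stuck, producing the dumbbell configuration rather than two independent pendants), so the putative factors interact; moreover loop-light vertices cannot actually chain beyond length two, so the $\und{T}_{\,a}$-type elaboration you invoke does not arise here. Your own ``bookkeeping'' caveat correctly locates the difficulty, but the clean fix is precisely the homotopy $h$, which handles all these degenerate configurations uniformly through the global count $\la_{\G}$.
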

\begin{proof}
Let us introduce the subcomplex 
$\snlgraphs(n) $ of $ \snlGraphs $ which consists of finite sums of 
graphs, i.e.
\begin{equation}
\label{snlgraphs}
\snlgraphs(n) : = \snlGraphs(n) \cap \Tw^{\oplus} \Gra(n)\,. 
\end{equation}

We will prove that the embedding 
\begin{equation}
\label{snl-in-sgraphs}
\snlgraphs(n) \hookrightarrow \sgraphs(n)
\end{equation}
is a quasi-isomorphism of cochain complexes.
Then the desired statement can be easily deduced from this fact
using the Euler characteristic trick (see Remark \ref{rem:Euler-trick}).

Let $\G$ be a $r$-even graph in $\gra_{r+n}$ whose first $r$ vertices have valency $\ge 3$\,.
Let us denote by $\tp_r(\G)$ the number of loops (if any) of $\G$ which 
are based on a trivalent vertex whose label $\le r$\,.  For example, 
the graph $\G\in \gra_{3+3}$ depicted on figure \ref{fig:with-loops}
has $\tp_3(\G) = 1$\,. Indeed, the vertex with label $1$ supports a loop 
but it has valency $4$; the vertex with label $2$ does not support a loop; 
finally, the vertex with label $3$ supports a loop and has valency $3$. 
\begin{figure}[htp]
\centering 
\begin{tikzpicture}[scale=0.5, >=stealth']
\tikzstyle{w}=[circle, draw, minimum size=4, inner sep=1]
\tikzstyle{b}=[circle, draw, fill, minimum size=4, inner sep=1]
\node [b] (b1) at (0,0) {};
\draw (0.3,0.5) node[anchor=center] {{\small $1$}};
\node [b] (b2) at (0,3) {};
\draw (0,3.6) node[anchor=center] {{\small $2$}};
\node [w] (b4) at (3,3) {};
\draw (3, 3.6) node[anchor=center] {{\small $4$}};
\node [w] (b5) at (3,0) {};
\draw (3.5,0) node[anchor=center] {{\small $5$}};
\node [b] (b3) at (6,3) {};
\draw (6,2.4) node[anchor=center] {{\small $3$}};
\node [w] (b6) at (-3,3) {};
\draw (-3,2.4) node[anchor=center] {{\small $6$}};
\draw (b6) .. controls (-3.5, 3.5) and (-3.5,4.5) .. (-3,4.5) ..
controls (-2.5, 4.5) and (-2.5, 3.5) .. (b6);
\draw (-3, 5) node[anchor=center] {{\small $ix$}};

\draw (b3) .. controls (5.25, 3.75) and (5.25,4.5) .. (6,4.5) ..
controls (6.75, 4.5) and (6.75, 3.75) .. (b3);
\draw (6,5) node[anchor=center] {{\small $vii$}};

\draw (b1) edge (b2);
\draw (-0.4,1.5) node[anchor=center] {{\small $ii$}};

\draw (b2) edge (b4);
\draw (1.5,3.5) node[anchor=center] {{\small $iii$}};

\draw (b4) edge (b5);
\draw (3.4,1.5) node[anchor=center] {{\small $iv$}};

\draw (b5) edge (b1);
\draw (1.5,-0.5) node[anchor=center] {{\small $v$}};

\draw (-0.4,-0.4) circle (0.6);
\draw (-1.1,-1.1) node[anchor=center] {{\small $i$}};

\draw (b4) edge (b3);
\draw (4.5,3.5) node[anchor=center] {{\small $vi$}};

\draw (b2) edge (b6);
\draw (-1.5, 3.5) node[anchor=center] {{\small $viii$}};
\end{tikzpicture}
~\\[0.3cm]
\caption{It is the vertex with label $3$ which contributes to $\tp_3(\G)$} \label{fig:with-loops}
\end{figure} 

It is obvious that the expression 
$$
\pa^{\Tw} (\Av_r(\G)) 
$$
involves graphs $\G' \in \gra_{(r+1)+n}$ with $\tp_{r+1}(\G') = \tp_r(\G)$ or 
$\tp_{r+1}(\G') = \tp_r(\G) + 1$\,. 

Thus the cochain complex $\sgraphs(n)$ carries the following 
ascending filtration 
\begin{equation}
\label{filtr-sgraphs}
\dots \subset \cF^{m-1} \sgraphs(n) \subset 
 \cF^{m} \sgraphs(n) \subset  \cF^{m+1} \sgraphs(n) \subset \dots
\end{equation}
where $\cF^{m} \sgraphs(n)$ is spanned by vectors in $\sgraphs(n)$
of the form
$$
\Av_r(\G) \qquad  \G \in \gra_{r+n}
$$  
with 
$$
\tp_r(\G) - |\Av_r(\G)| \le m\,.
$$

It is clear that the differential $\pa^{\tp}$ on the associated 
graded complex 
\begin{equation}
\label{Gr-sgraphs}
\Gr( \sgraphs(n) )  = 
\bigoplus_m ~ \cF^{m} \sgraphs(n) \big/ \cF^{m-1} \sgraphs(n)
\end{equation}
is obtained from $\pa^{\Tw}$ by keeping only terms which 
raise the number of loops based on trivalent neutral vertices. 

It is also clear that the restriction of \eqref{filtr-sgraphs} to 
the subcomplex $\snlgraphs(n)$ gives us the ``silly'' filtration 
\begin{equation}
\label{silly-snl}
\cF^{m} \snlgraphs(n)^k  = 
\begin{cases}
 \snlgraphs(n)^k \qquad {\rm if} ~~ m \ge -k\,,  \\
 \bfzero  \qquad {\rm otherwise}
\end{cases} 
\end{equation}
with the associated graded complex $\Gr(\snlgraphs(n))$
carrying the zero differential. 

It is not hard to see that the cochain complex 
$\Gr( \sgraphs(n) ) $ splits into the direct sum of 
subcomplexes
\begin{equation}
\label{Gr-sgraphs-sum}
\Gr(\sgraphs(n)) \cong \Gr(\snlgraphs(n)) ~\oplus~
\sgraphs_{\lp}(n)\,,
\end{equation}
where $\sgraphs_{\lp}(n)$ is spanned by vectors in  $\sgraphs(n)$ of the form
$$
\Av_r(\G)\,, \qquad \G \in \gra_{r+n}
$$
with $\G$ having at least one loop. 

Let $\G$ be graph in $\gra_{r+n}$ for which $\Av_r(\G)\in \sgraphs_{\lp}(n)$
and let $V^r_{\lp}(\G)$ denote the following subset of vertices of $\G$
$$
V^r_{\lp}(\G) = 
$$
\begin{equation}
\label{V-lp-G}
\big\{ v \in V(\G) ~\big|~  v \textrm{ carries label } \le r, \textrm{ has valency } > 3, 
\textrm{ and supports a loop} \big\} ~\cup
\end{equation}
$$
\big\{ v \in V(\G) ~\big|~  v \textrm{ carries label } > r \textrm{ and supports a loop} \big\}\,.
$$
For example, if $\G$ is the graph depicted on figure \ref{fig:with-loops} then 
$V^3_{\lp}(\G)$ consists of vertices labeled by $1$ and $6$\,.

Collecting terms in \eqref{diff-TwGra} which raise the number of 
loops based on trivalent neutral vertices we see that
\begin{equation}
\label{diff-tp}
\pa^{\tp}(\Av_r(\G)) = 
\begin{cases}
\displaystyle - \sum_{v \in V_{\lp}(\G)} \Av_{r+1} \big( \Tp_v (\G) \big)\,,  \qquad {\rm if} ~~   
V_{\lp}(\G) \textrm{ is non-empty}\\[0.5cm]
\phantom{aaaaaaaaa} 0  \qquad \phantom{aaaaaaaaaaa}  {\rm if} ~~ V_{\lp}(\G)= \emptyset \,,
\end{cases}
\end{equation}
where $ \Tp_v (\G) $ is a graph in $\gra_{(r+1)+n}$ obtained from $\G$ by 
\begin{itemize}

\item shifting labels on all vertices of $\G$ up by $1$;

\item removing the loop based at the vertex $v$;

\item attaching to $v$ the piece 
$$
\begin{tikzpicture}[scale=0.5, >=stealth']
\tikzstyle{w}=[circle, draw, minimum size=4, inner sep=1]
\tikzstyle{b}=[circle, draw, fill, minimum size=4, inner sep=1]
\node [b] (b1) at (0,0) {};
\draw (-0.3,0.4) node[anchor=center] {{\small $1$}};
\draw (0.5,0) circle (0.5);
\draw (b1) edge (-1,0);
\end{tikzpicture}
$$
\item declaring that the loop 
based at first neutral vertex takes the spot 
of the removed loop in $E(\G)$ and 
the edge connecting the first neutral vertex 
to $v$ is the smallest in $E(\Tp_v(\G))$\,.

\end{itemize}

Let $\G$ be graph in $\gra_{r+n}$ for which $\Av_r(\G)\in \sgraphs_{\lp}(n)$
and let $V^r_{\tp}(\G)$ denote the set of trivalent vertices (if any) which 
support loops and carry labels $\le r$\,. We denote by $h$ the 
linear map of degree $-1$
$$
h: \sgraphs_{\lp}(n) \to  \sgraphs_{\lp}(n)
$$
defined by formula
\begin{equation}
\label{chi-tp}
h(\Av_r(\G)) : =
\begin{cases}
\displaystyle - \sum_{v \in V^r_{\lp}(\G)} \Av_{r-1} \big( \Tp^*_v (\G) \big)\,,  \qquad {\rm if} ~~   
V^r_{\tp}(\G) \textrm{ is non-empty}\\[0.5cm]
\phantom{aaaaaaaaa} 0  \qquad \phantom{aaaaaaaaaaa}  {\rm if} ~~ V^r_{\tp}(\G)= \emptyset \,,
\end{cases}
\end{equation}
where $ \Tp^*_v (\G)$ is a vector in $\Gra(r-1+n)$ obtained from $\G$ by 
\begin{itemize}

\item switching the label on $v$ with the label $1$ 
on the first vertex of $\G$ (provided $v$ is not the first vertex); 
 
\item changing the order of the edges of $\G$ such 
that the single edge $\e_v$ connecting $v$ to another vertex 
becomes the smallest one (this step may produce the sign factor
$(-1)$ in front of $\G$); 

\item removing the edge $\e_v$ together with the vertex $v$ 
and attaching the vacated loop to the other end of $\e_v$\,;

\item shifting labels on all the remaining vertices down by $1$\,.

\end{itemize}

For example, if $\G$ is the graph depicted on figure \ref{fig:with-loops} then
\begin{equation}
\label{chi-example}
h (\Av_3 (\G)) = -\Av_2(\G'),
\end{equation}
where $\G'$ is the vector in $\Gra(5)$ depicted on figure \ref{fig:G-pr}.
\begin{figure}[htp]
\centering
\begin{tikzpicture}[scale=0.5, >=stealth']
\tikzstyle{w}=[circle, draw, minimum size=4, inner sep=1]
\tikzstyle{b}=[circle, draw, fill, minimum size=4, inner sep=1]
\draw (-5,2) node[anchor=center] {{$ - $}};
\node [b] (b1) at (0,0) {};
\draw (0.3,0.5) node[anchor=center] {{\small $2$}};
\node [b] (b2) at (0,3) {};
\draw (0,3.6) node[anchor=center] {{\small $1$}};
\node [w] (b4) at (3,3) {};
\draw (3.5, 3) node[anchor=center] {{\small $3$}};
\node [w] (b5) at (3,0) {};
\draw (3.5,0) node[anchor=center] {{\small $4$}};

\node [w] (b6) at (-3,3) {};
\draw (-3,2.4) node[anchor=center] {{\small $5$}};
\draw (b6) .. controls (-3.5, 3.5) and (-3.5,4.5) .. (-3,4.5) ..
controls (-2.5, 4.5) and (-2.5, 3.5) .. (b6);
\draw (-3, 5) node[anchor=center] {{\small $viii$}};

\draw (b4) .. controls (2.25, 3.75) and (2.25,4.5) .. (3,4.5) ..
controls (3.75, 4.5) and (3.75, 3.75) .. (b4);
\draw (3,5) node[anchor=center] {{\small $vi$}};

\draw (b1) edge (b2);
\draw (-0.4,1.5) node[anchor=center] {{\small $ii$}};

\draw (b2) edge (b4);
\draw (1.5,3.5) node[anchor=center] {{\small $iii$}};

\draw (b4) edge (b5);
\draw (3.4,1.5) node[anchor=center] {{\small $iv$}};

\draw (b5) edge (b1);
\draw (1.5,-0.5) node[anchor=center] {{\small $v$}};

\draw (-0.4,-0.4) circle (0.6);
\draw (-1.1,-1.1) node[anchor=center] {{\small $i$}};

\draw (b2) edge (b6);
\draw (-1.5, 3.5) node[anchor=center] {{\small $vii$}};
\end{tikzpicture}
~\\[0.3cm]
\caption{The vector $\G'$ defining $h(\Av_3(\G))$} \label{fig:G-pr}
\end{figure} 

Figure \ref{fig:getting-Gpr} illustrates intermediate steps in the construction of 
$\G'$\,.
\begin{figure}[htp]
\centering 
\begin{minipage}[t]{0.45\linewidth}
\centering
\begin{tikzpicture}[scale=0.5, >=stealth']
\tikzstyle{w}=[circle, draw, minimum size=4, inner sep=1]
\tikzstyle{b}=[circle, draw, fill, minimum size=4, inner sep=1]
\node [b] (b1) at (0,0) {};
\draw (0.3,0.5) node[anchor=center] {{\small $1$}};
\node [b] (b2) at (0,3) {};
\draw (0,3.6) node[anchor=center] {{\small $2$}};
\node [w] (b4) at (3,3) {};
\draw (3, 3.6) node[anchor=center] {{\small $4$}};
\node [w] (b5) at (3,0) {};
\draw (3.5,0) node[anchor=center] {{\small $5$}};
\node [b] (b3) at (6,3) {};
\draw (6,2.4) node[anchor=center] {{\small $3$}};
\node [w] (b6) at (-3,3) {};
\draw (-3,2.4) node[anchor=center] {{\small $6$}};
\draw (8,2) node[anchor=center] {{$\longrightarrow$}};
\draw (b6) .. controls (-3.5, 3.5) and (-3.5,4.5) .. (-3,4.5) ..
controls (-2.5, 4.5) and (-2.5, 3.5) .. (b6);
\draw (-3, 5) node[anchor=center] {{\small $ix$}};

\draw (b3) .. controls (5.25, 3.75) and (5.25,4.5) .. (6,4.5) ..
controls (6.75, 4.5) and (6.75, 3.75) .. (b3);
\draw (6,5) node[anchor=center] {{\small $vii$}};

\draw (b1) edge (b2);
\draw (-0.4,1.5) node[anchor=center] {{\small $ii$}};

\draw (b2) edge (b4);
\draw (1.5,3.5) node[anchor=center] {{\small $iii$}};

\draw (b4) edge (b5);
\draw (3.4,1.5) node[anchor=center] {{\small $iv$}};

\draw (b5) edge (b1);
\draw (1.5,-0.5) node[anchor=center] {{\small $v$}};

\draw (-0.4,-0.4) circle (0.6);
\draw (-1.1,-1.1) node[anchor=center] {{\small $i$}};

\draw (b4) edge (b3);
\draw (4.5,3.5) node[anchor=center] {{\small $vi$}};

\draw (b2) edge (b6);
\draw (-1.5, 3.5) node[anchor=center] {{\small $viii$}};
\end{tikzpicture}
\end{minipage}
\begin{minipage}[t]{0.45\linewidth}
\centering
\begin{tikzpicture}[scale=0.5, >=stealth']
\tikzstyle{w}=[circle, draw, minimum size=4, inner sep=1]
\tikzstyle{b}=[circle, draw, fill, minimum size=4, inner sep=1]
\node [b] (b1) at (0,0) {};
\draw (0.3,0.5) node[anchor=center] {{\small $3$}};
\node [b] (b2) at (0,3) {};
\draw (0,3.6) node[anchor=center] {{\small $2$}};
\node [w] (b4) at (3,3) {};
\draw (3, 3.6) node[anchor=center] {{\small $4$}};
\node [w] (b5) at (3,0) {};
\draw (3.5,0) node[anchor=center] {{\small $5$}};
\node [b] (b3) at (6,3) {};
\draw (6,2.4) node[anchor=center] {{\small $1$}};
\node [w] (b6) at (-3,3) {};
\draw (-3,2.4) node[anchor=center] {{\small $6$}};
\draw (8,2) node[anchor=center] {{$\longrightarrow$}};
\draw (b6) .. controls (-3.5, 3.5) and (-3.5,4.5) .. (-3,4.5) ..
controls (-2.5, 4.5) and (-2.5, 3.5) .. (b6);
\draw (-3, 5) node[anchor=center] {{\small $ix$}};

\draw (b3) .. controls (5.25, 3.75) and (5.25,4.5) .. (6,4.5) ..
controls (6.75, 4.5) and (6.75, 3.75) .. (b3);
\draw (6,5) node[anchor=center] {{\small $vii$}};

\draw (b1) edge (b2);
\draw (-0.4,1.5) node[anchor=center] {{\small $ii$}};

\draw (b2) edge (b4);
\draw (1.5,3.5) node[anchor=center] {{\small $iii$}};

\draw (b4) edge (b5);
\draw (3.4,1.5) node[anchor=center] {{\small $iv$}};

\draw (b5) edge (b1);
\draw (1.5,-0.5) node[anchor=center] {{\small $v$}};

\draw (-0.4,-0.4) circle (0.6);
\draw (-1.1,-1.1) node[anchor=center] {{\small $i$}};

\draw (b4) edge (b3);
\draw (4.5,3.5) node[anchor=center] {{\small $vi$}};

\draw (b2) edge (b6);
\draw (-1.5, 3.5) node[anchor=center] {{\small $viii$}};
\end{tikzpicture}
\end{minipage} 
~\\[0.3cm] 
\begin{minipage}[t]{0.45\linewidth}
\centering
\begin{tikzpicture}[scale=0.5, >=stealth']
\tikzstyle{w}=[circle, draw, minimum size=4, inner sep=1]
\tikzstyle{b}=[circle, draw, fill, minimum size=4, inner sep=1]
\draw (-5,2) node[anchor=center] {{$ - $}};
\node [b] (b1) at (0,0) {};
\draw (0.3,0.5) node[anchor=center] {{\small $3$}};
\node [b] (b2) at (0,3) {};
\draw (0,3.6) node[anchor=center] {{\small $2$}};
\node [w] (b4) at (3,3) {};
\draw (3, 3.6) node[anchor=center] {{\small $4$}};
\node [w] (b5) at (3,0) {};
\draw (3.5,0) node[anchor=center] {{\small $5$}};
\node [b] (b3) at (6,3) {};
\draw (6,2.4) node[anchor=center] {{\small $1$}};
\node [w] (b6) at (-3,3) {};
\draw (-3,2.4) node[anchor=center] {{\small $6$}};
\draw (8,2) node[anchor=center] {{$\longrightarrow$}};
\draw (b6) .. controls (-3.5, 3.5) and (-3.5,4.5) .. (-3,4.5) ..
controls (-2.5, 4.5) and (-2.5, 3.5) .. (b6);
\draw (-3, 5) node[anchor=center] {{\small $ix$}};

\draw (b3) .. controls (5.25, 3.75) and (5.25,4.5) .. (6,4.5) ..
controls (6.75, 4.5) and (6.75, 3.75) .. (b3);
\draw (6,5) node[anchor=center] {{\small $vii$}};

\draw (b1) edge (b2);
\draw (-0.4,1.5) node[anchor=center] {{\small $iii$}};

\draw (b2) edge (b4);
\draw (1.5,3.5) node[anchor=center] {{\small $iv$}};

\draw (b4) edge (b5);
\draw (3.4,1.5) node[anchor=center] {{\small $v$}};

\draw (b5) edge (b1);
\draw (1.5,-0.5) node[anchor=center] {{\small $vi$}};

\draw (-0.4,-0.4) circle (0.6);
\draw (-1.1,-1.1) node[anchor=center] {{\small $ii$}};

\draw (b4) edge (b3);
\draw (4.5,3.5) node[anchor=center] {{\small $i$}};

\draw (b2) edge (b6);
\draw (-1.5, 3.5) node[anchor=center] {{\small $viii$}};
\end{tikzpicture}
\end{minipage}
\begin{minipage}[t]{0.45\linewidth}
\centering
\begin{tikzpicture}[scale=0.5, >=stealth']
\tikzstyle{w}=[circle, draw, minimum size=4, inner sep=1]
\tikzstyle{b}=[circle, draw, fill, minimum size=4, inner sep=1]
\draw (-5,2) node[anchor=center] {{$ - $}};
\node [b] (b1) at (0,0) {};
\draw (0.3,0.5) node[anchor=center] {{\small $3$}};
\node [b] (b2) at (0,3) {};
\draw (0,3.6) node[anchor=center] {{\small $2$}};
\node [w] (b4) at (3,3) {};
\draw (3.5, 3) node[anchor=center] {{\small $4$}};
\node [w] (b5) at (3,0) {};
\draw (3.5,0) node[anchor=center] {{\small $5$}};

\node [w] (b6) at (-3,3) {};
\draw (-3,2.4) node[anchor=center] {{\small $6$}};
\draw (b6) .. controls (-3.5, 3.5) and (-3.5,4.5) .. (-3,4.5) ..
controls (-2.5, 4.5) and (-2.5, 3.5) .. (b6);
\draw (-3, 5) node[anchor=center] {{\small $viii$}};

\draw (b4) .. controls (2.25, 3.75) and (2.25,4.5) .. (3,4.5) ..
controls (3.75, 4.5) and (3.75, 3.75) .. (b4);
\draw (3,5) node[anchor=center] {{\small $vi$}};

\draw (b1) edge (b2);
\draw (-0.4,1.5) node[anchor=center] {{\small $ii$}};

\draw (b2) edge (b4);
\draw (1.5,3.5) node[anchor=center] {{\small $iii$}};

\draw (b4) edge (b5);
\draw (3.4,1.5) node[anchor=center] {{\small $iv$}};

\draw (b5) edge (b1);
\draw (1.5,-0.5) node[anchor=center] {{\small $v$}};

\draw (-0.4,-0.4) circle (0.6);
\draw (-1.1,-1.1) node[anchor=center] {{\small $i$}};

\draw (b2) edge (b6);
\draw (-1.5, 3.5) node[anchor=center] {{\small $vii$}};
\end{tikzpicture}
\end{minipage}
~\\[0.3cm]
\begin{minipage}[t]{\linewidth}
\centering
\begin{tikzpicture}[scale=0.5, >=stealth']
\tikzstyle{w}=[circle, draw, minimum size=4, inner sep=1]
\tikzstyle{b}=[circle, draw, fill, minimum size=4, inner sep=1]
\draw (-7,2) node[anchor=center] {{$\longrightarrow$}};
\draw (-5,2) node[anchor=center] {{$ - $}};
\node [b] (b1) at (0,0) {};
\draw (0.3,0.5) node[anchor=center] {{\small $2$}};
\node [b] (b2) at (0,3) {};
\draw (0,3.6) node[anchor=center] {{\small $1$}};
\node [w] (b4) at (3,3) {};
\draw (3.5, 3) node[anchor=center] {{\small $3$}};
\node [w] (b5) at (3,0) {};
\draw (3.5,0) node[anchor=center] {{\small $4$}};

\node [w] (b6) at (-3,3) {};
\draw (-3,2.4) node[anchor=center] {{\small $5$}};
\draw (b6) .. controls (-3.5, 3.5) and (-3.5,4.5) .. (-3,4.5) ..
controls (-2.5, 4.5) and (-2.5, 3.5) .. (b6);
\draw (-3, 5) node[anchor=center] {{\small $viii$}};

\draw (b4) .. controls (2.25, 3.75) and (2.25,4.5) .. (3,4.5) ..
controls (3.75, 4.5) and (3.75, 3.75) .. (b4);
\draw (3,5) node[anchor=center] {{\small $vi$}};

\draw (b1) edge (b2);
\draw (-0.4,1.5) node[anchor=center] {{\small $ii$}};

\draw (b2) edge (b4);
\draw (1.5,3.5) node[anchor=center] {{\small $iii$}};

\draw (b4) edge (b5);
\draw (3.4,1.5) node[anchor=center] {{\small $iv$}};

\draw (b5) edge (b1);
\draw (1.5,-0.5) node[anchor=center] {{\small $v$}};

\draw (-0.4,-0.4) circle (0.6);
\draw (-1.1,-1.1) node[anchor=center] {{\small $i$}};

\draw (b2) edge (b6);
\draw (-1.5, 3.5) node[anchor=center] {{\small $vii$}};
\end{tikzpicture}
\end{minipage}
~\\[0.3cm]
\caption{Intermediate steps in the construction of $\G'$} \label{fig:getting-Gpr}
\end{figure}

Let $\G$ be a graph in $\gra_{r+n}$ such that
$\Av_r(\G)\in \sgraphs_{\lp}(n)$\,. Using the fact that 
$\G$ has no bivalent neutral vertices, 
it is not hard to show that the operations $\pa^{\tp}$ and $h$
satisfy the identity
\begin{equation}
\label{diff-tp-chi}
\pa^{\tp} \circ h (\Av_r(\G)) + h \circ \pa^{\tp}(\Av_r(\G)) = 
\la_{\G} \Av_r(\G)\,,
\end{equation}
where $\la_{\G}$ is the number of loops of $\G$\,. 

Therefore, the cochain complex 
$$
(\sgraphs_{\lp}(n), \pa^{\tp})
$$
is acyclic and hence the embedding \eqref{snl-in-sgraphs}
induces a quasi-isomorphism of cochain complexes: 
$$
\Gr (\snlgraphs(n))  ~\stackrel{\sim}{\longrightarrow}~
\Gr (\sgraphs(n))\,.
$$

On the other hand, both filtrations  \eqref{filtr-sgraphs}
and \eqref{silly-snl} are cocomplete and locally bounded from 
the left. Thus Lemma \ref{lem:q-iso} from Appendix \ref{app:q-iso}
implies that  the embedding \eqref{snl-in-sgraphs} is a quasi-isomorphism. 
Hence so is the embedding 
$$
\snlGraphs(n)  \hookrightarrow \sGraphs(n)\,.
$$

Proposition \ref{prop:get-rid-of-loops} is proved. 
\end{proof}

\subsection{The suboperads $\Graphs_{\nl} \subset \Graphs \subset \fGraphs \subset \Tw\Gra$} 
\label{sec:tele-flat}

In this subsection we introduce yet another series of 
suboperads of $\Tw\Gra$
$$
\Graphs_{\nl} \subset \Graphs \subset \fGraphs \subset \Tw\Gra\,.
$$
We will show that the embeddings 
$$
\Graphs_{\nl} \hookrightarrow \Graphs\,,
$$
$$
 \Graphs \hookrightarrow \fGraphs
$$
are quasi-isomorphisms of dg operads.

We denote by $\fGraphs(n)$ the subspace of $\Tw\Gra(n)$
which consists of linear combinations \eqref{sum} satisfying 
\begin{pty}
\label{P:no-black-comp}
For every $r$, each graph in the linear combination $\ga_r$
has no connected components which involve exclusively
neutral vertices.
\end{pty}
For example, it means that 
$$
\fGraphs(0) = \bfzero\,.
$$

We denote by $\Graphs(n)$ the subspace of  $\fGraphs(n)$
which consists of sums of graphs with neutral vertices having 
valencies $\ge 3$\,. 

Finally, $\Graphs_{\nl}(n)$ is the subspace of $\Graphs(n)$
which consists of sums of graphs without loops.

It is easy to see that for every $n$,  $\Graphs_{\nl}(n)$, 
$\Graphs(n)$, and  $\fGraphs(n)$ are subcomplexes
of $\Tw\Gra(n)$\,. Moreover, collections
\begin{equation}
\label{fGraphs}
\fGraphs = \{ \fGraphs(n) \}_{n \ge 0}\,,
\end{equation}
\begin{equation}
\label{Graphs}
\Graphs = \{ \Graphs(n) \}_{n \ge 0}\,,
\end{equation}
and   
\begin{equation}
\label{Graphs-nl}
\Graphs_{\nl} = \{ \Graphs_{\nl}(n) \}_{n \ge 0}
\end{equation}
are suboperads of $\Tw\Gra$\,.

We claim that 
\begin{prop}
\label{prop:Graphs-nl-Graphs-fGraphs}
The embeddings
\begin{equation}
\label{Graphs-fGraphs}
\emb_1 : \Graphs \hookrightarrow \fGraphs
\end{equation}
and 
\begin{equation}
\label{Graphs-nl-Graphs}
\emb_2 : \Graphs_{\nl}  \hookrightarrow \Graphs
\end{equation}
are quasi-isomorphisms of dg operads. 
\end{prop}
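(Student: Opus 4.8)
The plan is to mirror, at the level of the $\fGraphs$-family, the two reductions already carried out for the $\sfGraphs$-family in Sections \ref{sec:tele-sharp} and \ref{sec:remove-loops}, using exactly the same machinery: spectral-sequence / filtration arguments combined with the Euler characteristic trick (Remark \ref{rem:Euler-trick}) to replace infinite products of graphs by finite sums, and then Lemma \ref{lem:q-iso} from Appendix \ref{app:q-iso} to conclude. I will treat the two embeddings separately, in the order $\emb_2$ first (removing loops) then $\emb_1$ (passing from $\fGraphs$ to $\Graphs$), and in each case I will introduce the corresponding ``finite-sums'' subcomplexes
$$
\graphs_{\nl}(n) := \Graphs_{\nl}(n) \cap \Tw^{\oplus}\Gra(n)\,, \qquad
\graphs(n) := \Graphs(n) \cap \Tw^{\oplus}\Gra(n)\,, \qquad
\fgraphs(n) := \fGraphs(n) \cap \Tw^{\oplus}\Gra(n)\,,
$$
and reduce, via the Euler characteristic decomposition (Proposition \ref{prop:Euler}, Corollary \ref{cor:Euler}), to proving that the embeddings $\graphs_{\nl}(n) \hookrightarrow \graphs(n)$ and $\graphs(n) \hookrightarrow \fgraphs(n)$ are quasi-isomorphisms of cochain complexes.

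For the embedding $\emb_2$ the plan is to reuse \emph{verbatim} the argument of Proposition \ref{prop:get-rid-of-loops}. First I would equip $\graphs(n)$ with the ascending filtration by the quantity $\tp_r(\G) - |\Av_r(\G)|$, where $\tp_r(\G)$ counts loops based at \emph{trivalent} neutral vertices; the differential $\pa^{\tp}$ on the associated graded keeps only the loop-creating terms of $\pa^{\Tw}$. The restriction to $\graphs_{\nl}(n)$ is the silly filtration with zero differential, and $\Gr(\graphs(n))$ splits as $\Gr(\graphs_{\nl}(n)) \oplus \graphs_{\lp}(n)$, where $\graphs_{\lp}(n)$ is spanned by the $\Av_r(\G)$ with $\G$ carrying at least one loop. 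The homotopy operator $h$ of \eqref{chi-tp}--\eqref{diff-tp-chi}, which satisfies $\pa^{\tp}\circ h + h\circ \pa^{\tp} = \la_{\G}\cdot\mathrm{id}$ on $\graphs_{\lp}(n)$, was defined without using Property \ref{P:no-cab:no-poly} (only the absence of bivalent neutral vertices), so it applies here unchanged and shows $\graphs_{\lp}(n)$ is acyclic. Both filtrations are cocomplete and locally bounded from the left, so Lemma \ref{lem:q-iso} finishes this half.

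For the embedding $\emb_1$, passing from $\fgraphs(n)$ to $\graphs(n)$, the plan is again to copy the structure of Proposition \ref{prop:sGra-sfGra}: filter $\fgraphs(n)$ by $\nu_2(\G) - |\ga|$, where $\nu_2(\G)$ is the number of bivalent neutral vertices, so that the associated-graded differential $\pa^{\Gr}$ retains only the terms of $\pa^{\Tw}$ that raise $\nu_2$. I would then need the analogue of Lemma \ref{lem:Gr} for this filtration on $\fgraphs(n)$. The key point is that the difference between $\fGraphs$ and $\sfGraphs$ is precisely Property \ref{P:no-cab:no-poly} versus Property \ref{P:no-black-comp}: $\fGraphs$ forbids \emph{all} connected components made purely of neutral vertices, whereas $\sfGraphs$ allows such components provided they contain a vertex of valency $\ge 3$. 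In the frame/bookkeeping language of Subsection \ref{sec:Gr-sfgraphs}, this means I would work with the analogously-defined groupoid of frames $\Frame_{r,n}^{\flat}$ \emph{without} any purely-neutral connected components at all (the ``$\G_{\bul}$, $\G^{-}_l$, $\G^{\dia}_m$'' components being already excluded in the $\sharp$-setting are now, together with all higher-valence purely-neutral components, excluded here), and the associated graded $\Gr(\fgraphs(n))$ splits as a direct sum of tensor products of copies of $\und{T}_{\,a}$, $\bsi T_a$, and $L$ indexed by such frames, exactly as in Claim \ref{cl:Gr-sfgraphs}. Since $\und{T}_{\,a}$ is acyclic and, for a frame with $e_{\bul}=0$, the cohomology of $\bs^{2r}\big((\bsi T_a)^{\otimes e_{-}}\otimes L^{\otimes e_{\c}}\big)_{\Aut(\Gim)}$ is one-dimensional and concentrated in the lowest degree $2r - e_{-} - e_{\c}$, spanned by the image of $\bs^{2r}(\bsi 1)^{\otimes e_{-}}\otimes(\ml_1)^{\otimes e_{\c}}$, which lands precisely in $\graphs(n)$, one obtains the vanishing $H^k(\cF^m\fgraphs(n)/\cF^{m-1}\fgraphs(n)) = 0$ for $m > -k$ together with $\graphs(n)^k = \cF^{-k}\fgraphs(n)^k \cap \ker\pa^{\Gr}$. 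The restriction of the filtration to $\graphs(n)$ is again the silly filtration, both filtrations are cocomplete and locally bounded from the left, and Lemma \ref{lem:q-iso} concludes.

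The main obstacle I anticipate is purely bookkeeping rather than conceptual: one must check carefully that the frame-combinatorics of Subsection \ref{sec:Gr-sfgraphs} — in particular the computation of the kernel of the reconstruction map $F_{\Gim}$ (Claim \ref{cl:ker-F-Gim}) and the resulting coinvariant description (Claim \ref{cl:Gr-sfgraphs}) — goes through with the \emph{stronger} exclusion of purely-neutral components, i.e. that no component of $\Gr(\fgraphs(n))$ is lost or double-counted when one forbids components with all vertices of valency $\ge 3$ as well. Once one observes that such purely-neutral components of the \emph{frame} would, after subdivision, still be purely-neutral components of the reconstructed graph, so that forbidding them in $\fGraphs$ is equivalent to forbidding the corresponding frames, the direct-sum decomposition and hence the entire argument transfer without change. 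As with Propositions \ref{prop:sGra-sfGra} and \ref{prop:get-rid-of-loops}, the fact that the maps are operad morphisms is automatic once they are shown to be quasi-isomorphisms of the underlying cochain complexes arity by arity.
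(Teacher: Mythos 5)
Your argument is correct, but it takes a substantially longer route than the paper. The paper disposes of both embeddings in a few lines: since $\pa^{\Tw}$ neither creates nor destroys connected components consisting exclusively of neutral vertices, each of $\sfGraphs(n)$ and $\sGraphs(n)$ splits as a direct sum of complexes according to the presence or absence of such components, so $\Cone(\emb_1)$ is a direct summand of $\Cone(\emb^{\sharp}_1)$ and $\Cone(\emb_2)$ is a direct summand of $\Cone(\emb^{\sharp}_2)$; Propositions \ref{prop:sGra-sfGra} and \ref{prop:get-rid-of-loops} together with Claim \ref{cl:Cone} (a map is a quasi-isomorphism iff its cone is acyclic, and a direct summand of an acyclic complex is acyclic) then finish the proof. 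You instead re-run the two filtration arguments from scratch on the $\fgraphs$-family: for $\emb_2$ you correctly note that the homotopy $h$ of \eqref{chi-tp} only needs the absence of bivalent neutral vertices, and for $\emb_1$ you correctly identify that the frame decomposition of Claim \ref{cl:Gr-sfgraphs} restricts to the sub-sum over frames without purely-neutral components, which is again a direct summand — so your Lemma-\ref{lem:Gr}-analogue holds and Lemma \ref{lem:q-iso} applies. Nothing in your argument fails; the observation you make at the end (subdivision preserves purely-neutral components, so forbidding them in graphs is equivalent to forbidding the corresponding frames) is exactly the point that makes the associated-graded pieces match up. What your approach buys is a self-contained verification; what it costs is repeating two long spectral-sequence arguments whose conclusions can be imported wholesale via the cone trick. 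It is worth internalizing the cone-plus-direct-summand maneuver, since the paper uses it repeatedly (e.g.\ in Proposition \ref{prop:conn-versions} and in the proof of Theorem \ref{thm:Xi}).
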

\begin{proof}
It clear that the cone
$$
\Cone(\emb_1) =  \Graphs ~ \oplus ~ \bs \, \fGraphs
$$ 
of the embedding \eqref{Graphs-fGraphs} is a direct summand 
in the cone $\Cone(\emb^{\sharp}_1)$ of 
$$
\emb^{\sharp}_1 : \sGraphs \hookrightarrow \sfGraphs\,.
$$ 

Thus the desired statement about the embedding $\emb_1$
follows from Proposition \ref{prop:sGra-sfGra} above and 
Claim \ref{cl:Cone} given in Appendix \ref{app:q-iso}.

Similarly the cone 
$$
\Cone(\emb_2) = \Graphs_{\nl}  ~ \oplus ~ \Graphs
$$
of the embedding \eqref{Graphs-nl-Graphs} is a direct 
summand in the cone $\Cone(\emb^{\sharp}_2)$ of 
$$
\emb^{\sharp}_2 : \snlGraphs \hookrightarrow \sGraphs\,.
$$ 

Thus, using Proposition \ref{prop:get-rid-of-loops} above and 
Claim \ref{cl:Cone} given in Appendix \ref{app:q-iso}, 
it is easy to prove that $\emb_2$ is a quasi-isomorphism.

\end{proof}

\subsection{The master diagram for the dg operad $\Tw\Gra$}
Let $\cO$ be a (dg) operad which receives a morphism 
from $\La\Lie_{\infty}$\,. Let us observe that we have the obvious embedding
\begin{equation}
\label{cO-TwcO}
\emb_{\cO} : \cO \hookrightarrow \Tw\cO
\end{equation}
$$
\emb_{\cO} (v) (\bs^{-2r} \,1) = \begin{cases}
 v \qquad {\rm if} ~~ r = 0\,, \\
 0 \qquad {\rm otherwise}\,,
\end{cases}
$$
which is compatible with the operad structure but may not be 
compatible with the differentials. 

We denote by $\G_{\cc}\in \Tw\Gra(2)$ (resp. $\G_{\ww} \in \Tw\Gra(2)$) the images of 
$\G_{\ed}$ and $\G_{\bb}$ with respect to the embedding
$$
\emb_{\Gra} : \Gra \to \Tw\Gra\,.
$$ 
Namely, 
\begin{equation}
\label{G-cc}
\G_{\cc}  = 
\begin{tikzpicture}[scale=0.5, >=stealth']
\tikzstyle{w}=[circle, draw, minimum size=4, inner sep=1]
\tikzstyle{b}=[circle, draw, fill, minimum size=4, inner sep=1]
\node [w] (b1) at (0,0) {};
\draw (0,0.6) node[anchor=center] {{\small $1$}};
\node [w] (b2) at (1,0) {};
\draw (1,0.6) node[anchor=center] {{\small $2$}};
\draw (b1) edge (b2);
\end{tikzpicture}\,,
\end{equation}
and
\begin{equation}
\label{G-ww}
\G_{\ww}   = 
\begin{tikzpicture}[scale=0.5, >=stealth']
\tikzstyle{w}=[circle, draw, minimum size=4, inner sep=1]
\tikzstyle{b}=[circle, draw, fill, minimum size=4, inner sep=1]
\node [w] (b1) at (0,0) {};
\draw (0,0.6) node[anchor=center] {{\small $1$}};
\node [w] (b2) at (1,0) {};
\draw (1,0.6) node[anchor=center] {{\small $2$}};
\end{tikzpicture}\,.
\end{equation}

Although $\emb_{\Gra}$ is not compatible with the differential $\pa^{\Tw}$,
the vectors $\G_{\cc}, \G_{\ww} \in \Tw\Gra(2)$ are $\pa^{\Tw}$-closed
(see Exercise \ref{exer:Gcc-Gww} below).

Therefore, the composition of embeddings $\io$ \eqref{io-Ger-Gra} and
$\emb_{\Gra}$ 
\begin{equation}
\label{io-pr-Ger-Graphs}
\io' = \emb_{\Gra} \circ \io : \Ger \hookrightarrow \Tw\Gra
\end{equation}
is a morphism of dg operads. 
Furthermore, it is obvious that $\io'$ lands in the suboperad $\Graphs_{\nl}$\,.
 
It turns out that the map $\io'$ satisfies the following 
remarkable property\footnote{For a more detailed proof of 
this fact we refer the reader to paper \cite{Volic} by P. Lambrechts and I. Volic.}:
\begin{thm}[M. Kontsevich, \cite{K-mot}, Section 3.3.4]
\label{thm:Ger-nlGraphs}
The embedding 
\begin{equation}
\label{io-pr}
\io' : \Ger \hookrightarrow \Graphs_{\nl}
\end{equation}
induces an isomorphism
$$
\Ger \cong  H^{\bul}(\Graphs_{\nl})\,.
$$
\end{thm}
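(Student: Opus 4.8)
The plan is to identify $\Graphs_{\nl}$ with a variant of the Kontsevich graph complex computing the rational homotopy type of the little discs operad $E_2$, whose cohomology is $\Ger$, and to run Kontsevich's argument for the formality of the little discs operad in the combinatorial setting. First I would recall that, by Propositions \ref{prop:Graphs-nl-Graphs-fGraphs} and \ref{prop:sGra-sfGra}--\ref{prop:get-rid-of-loops}, the embeddings $\Graphs_{\nl}\hookrightarrow\Graphs\hookrightarrow\fGraphs\hookrightarrow\Tw\Gra$ give (together with the other displayed embeddings) a chain of suboperads whose inclusions are quasi-isomorphisms, so one is free to compute $H^{\bullet}(\Graphs_{\nl})$ in whichever model is most convenient; in particular it suffices to treat $\Graphs_{\nl}(n)$ arity by arity, and one may use the Euler characteristic trick (Corollary \ref{cor:Euler} and Remark \ref{rem:Euler-trick}) to reduce every cohomology question to one about finite sums of graphs of fixed Euler characteristic, which are finite-dimensional complexes.

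Next I would set up the comparison with configuration spaces. Let $\mathrm{Conf}_n(\bbR^2)$ be the configuration space of $n$ ordered points in the plane; its cohomology is $\Ger(n)$ and the symmetric collection $\{H^{\bullet}(\mathrm{Conf}_n(\bbR^2))\}$ with the Poincar\'e residue / insertion structure is exactly the operad $\Ger$. Kontsevich's construction assigns to a graph $\G$ with $n$ external (operational) and $r$ internal (neutral) vertices and $e$ edges the differential form on $\mathrm{Conf}_n(\bbR^2)$ obtained by pulling back the product of $e$ copies of the normalized angle form via the edge maps $\mathrm{Conf}_{r+n}\to\mathrm{Conf}_2$, and integrating over the internal points (a fibre integration over the fibres of $\mathrm{Conf}_{r+n}(\bbR^2)\to\mathrm{Conf}_n(\bbR^2)$ suitably compactified \`a la Fulton--MacPherson). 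One checks: (i) convergence, which is where the hypotheses defining $\Graphs_{\nl}$ enter --- no loops, all neutral vertices of valency $\ge 3$, no all-neutral components --- since these are precisely the conditions guaranteeing that the fibre integral converges and that boundary contributions from collapsing subconfigurations vanish; (ii) that the operator $\pa^{\Tw}$ on $\Graphs_{\nl}$ corresponds under this map to the de Rham differential plus the boundary terms of the fibre integral (Stokes), so that integration is a morphism of dg operads $\omega\colon\Graphs_{\nl}\to\Omega^{\bullet}_{PA}(\mathrm{Conf}_{\bullet}(\bbR^2))$ into the operad of piecewise-algebraic forms; and (iii) that composing $\omega$ with $\io'\colon\Ger\hookrightarrow\Graphs_{\nl}$ recovers the standard cocycle representatives (the angle forms for $\G_{\ed}$), so that $\omega\circ\io'$ realizes the de Rham class isomorphism $\Ger\xrightarrow{\sim} H^{\bullet}(\mathrm{Conf}_{\bullet}(\bbR^2))$.

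Granting that, the theorem follows: $\io'$ is injective by Proposition \ref{prop:Ger-Gra} (the graphs on figure \ref{fig:lines} are linearly independent in $\Gra(n)\subset\Tw\Gra(n)$ and they are $\pa^{\Tw}$-closed since they lie in $\Gra(n)$), and on cohomology the composite $H^{\bullet}(\omega)\circ H^{\bullet}(\io')$ is the iso $\Ger\xrightarrow{\sim}H^{\bullet}(\mathrm{Conf}_{\bullet})$, so $H^{\bullet}(\io')$ is split injective; for surjectivity one invokes that $\omega$ itself is a quasi-isomorphism. The quasi-isomorphism of $\omega$ is the substantive input and I would either cite it directly (this is exactly Kontsevich's formality theorem for $E_2$, with the careful treatment of the graph complex / PA-forms due to Lambrechts--Voli\'c, cited in the footnote to the statement) or, for a self-contained argument, prove it by a spectral-sequence / induction-on-number-of-edges argument: filter $\Graphs_{\nl}(n)$ by the first Betti number (loop order) of graphs, observe that the loop-order-zero part consists of forests whose cohomology is visibly $\Ger(n)$ via the spanning graphs of figure \ref{fig:lines}, and show that the higher-loop-order pieces are acyclic by an explicit contracting homotopy that removes a distinguished edge in a cycle --- this is structurally parallel to the loop-removal homotopy $h$ used in the proof of Proposition \ref{prop:get-rid-of-loops}.

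\textbf{Main obstacle.} The hard part will be establishing that $\omega$ is a quasi-isomorphism rather than merely a map of dg operads: convergence of the configuration-space integrals and the vanishing of the ``bad'' boundary strata (hidden faces) is where all the combinatorial hypotheses cutting out $\Graphs_{\nl}$ from $\Tw\Gra$ are used in an essential way, and making the fibre-integration formalism rigorous requires the technology of semi-algebraic / piecewise-algebraic differential forms. If one wants to avoid that machinery entirely, the alternative --- the loop-order filtration argument sketched above --- is elementary but has its own technical core, namely proving acyclicity of each fixed positive-loop-order, fixed-Euler-characteristic subcomplex; this is a purely combinatorial statement about graph complexes and is the genuine content underlying Kontsevich's theorem. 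Either way, $\io'$ being a map of complexes is trivial and its injectivity is already done in Proposition \ref{prop:Ger-Gra}; the weight of the proof lies entirely in computing $H^{\bullet}(\Graphs_{\nl})$.
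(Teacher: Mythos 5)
Your proposal matches the paper's treatment: the paper does not prove this theorem itself but defers entirely to Kontsevich \cite{K-mot} and Lambrechts--Voli\'c \cite{Volic} for the quasi-isomorphism $\io' : \Ger\hookrightarrow\graphs_{\nl}$ and then, exactly as you indicate, upgrades to $\Graphs_{\nl}$ via the Euler characteristic trick (Remark \ref{rem:Graphs-nl-graphs-nl}), so the configuration-space-integral argument you sketch is precisely the content of the cited references. One small quibble: your parenthetical reason for closedness of the graphs of figure \ref{fig:lines} is off --- merely lying in $\Gra(n)\subset\Tw\Gra(n)$ does not make a vector $\pa^{\Tw}$-closed (the first two families of terms in \eqref{diff-TwGra} need not cancel for an arbitrary graph with no neutral vertices); closedness holds because these vectors lie in the image of the dg operad map $\io'$ from $\Ger$, which carries the zero differential.
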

\begin{rem}
\label{rem:Graphs-nl-graphs-nl}
It is obvious that the map \eqref{io-pr} lands in the 
suboperad  $\graphs_{\nl} \subset \Graphs_{\nl}$ with
$$
\graphs_{\nl}(n) = \Graphs_{\nl}(n) \cap \Tw^{\oplus}\Gra(n)\,.
$$
The arguments given in \cite[Section 3.3.4]{K-mot} or  \cite{Volic} 
allow us to prove that the embedding 
$$
\io' : \Ger \hookrightarrow \graphs_{\nl}
$$
is a quasi-isomorphism of dg operads. 
The desired statement about \eqref{io-pr} can be 
easily deduced from this fact using the Euler characteristic trick.
\end{rem}

We now assemble all the above results about suboperads of $\Tw\Gra$
into the following theorem:
\begin{thm}
\label{thm:master-TwGra}
The suboperads $\sfGraphs$, $\sGraphs$, $\snlGraphs$, 
$\fGraphs$, $\Graphs$ and $\Graphs_{\nl}$ of $\Tw\Gra$
introduced in Sections \ref{sec:tele-sharp} and \ref{sec:tele-flat} 
fit into the following commutative diagram: 
\begin{equation}
\label{master-TwGra}
\xymatrix@M=0.5pc{
~ & \snlGraphs \ar@{^{(}->}[r]^{\sim} & \sGraphs \ar@{^{(}->}[r]^{\sim} & \sfGraphs \ar@{^{(}->}[r] & \Tw\Gra \\
\Ger  \ar@{^{(}->}[r]^{\sim~~} & \Graphs_{\nl} \ar@{^{(}->}[u] \ar@{^{(}->}[r]^{\sim} & 
\Graphs \ar@{^{(}->}[u]  \ar@{^{(}->}[r]^{\sim} & 
\fGraphs \ar@{^{(}->}[u] & ~~
}
\end{equation}
Here the arrow $\hookrightarrow$ denotes an embedding and 
the arrow $\stackrel{\sim}{\hookrightarrow}$ denotes an embedding 
which induces an isomorphism on the level of cohomology. \phantom{aaa}$\Box$
\end{thm}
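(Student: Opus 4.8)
The plan is to assemble the diagram \eqref{master-TwGra} entirely from results already proved in Sections~\ref{sec:tele-sharp} and \ref{sec:tele-flat}, so that almost nothing new needs to be checked. First I would recall that the six collections $\snlGraphs$, $\sGraphs$, $\sfGraphs$, $\Graphs_{\nl}$, $\Graphs$, $\fGraphs$ are suboperads of $\Tw\Gra$ by Propositions~\ref{prop:sfGraphs} and \ref{prop:Graphs-nl-Graphs-fGraphs}, and that each horizontal or vertical arrow in \eqref{master-TwGra} is simply the inclusion of one subspace of $\Tw\Gra(n)$ into another, hence is automatically a morphism of dg operads and the diagram commutes on the nose (all arrows are inclusions of subspaces of the same ambient operad, so any two composites with the same source and target coincide). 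The content of the theorem is therefore only the claim that the arrows decorated with $\sim$ are quasi-isomorphisms.

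Next I would dispatch the four arrows marked $\sim$ one at a time, in the order they appear. The arrow $\snlGraphs \hookrightarrow \sGraphs$ is the map $\emb^{\sharp}_2$, a quasi-isomorphism by Proposition~\ref{prop:get-rid-of-loops}. The arrow $\sGraphs \hookrightarrow \sfGraphs$ is $\emb^{\sharp}_1$, a quasi-isomorphism by Proposition~\ref{prop:sGra-sfGra}. The two arrows $\Graphs_{\nl}\hookrightarrow\Graphs$ and $\Graphs\hookrightarrow\fGraphs$ are $\emb_2$ and $\emb_1$, quasi-isomorphisms by Proposition~\ref{prop:Graphs-nl-Graphs-fGraphs}. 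Finally, the arrow $\Ger\hookrightarrow\Graphs_{\nl}$ is the map $\io'$ of \eqref{io-pr}, which is a quasi-isomorphism by Theorem~\ref{thm:Ger-nlGraphs} (Kontsevich); note that $\io'=\emb_{\Gra}\circ\io$ is indeed a dg operad map because $\G_{\cc}$ and $\G_{\ww}$ are $\pa^{\Tw}$-closed, so $\io'$ lands in $\Graphs_{\nl}$ and commutes with differentials.

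The only remaining point is to confirm that the three vertical inclusions $\Graphs_{\nl}\hookrightarrow\snlGraphs$, $\Graphs\hookrightarrow\sGraphs$, $\fGraphs\hookrightarrow\sfGraphs$ are well defined, i.e.\ that each of $\Graphs_{\nl}(n)$, $\Graphs(n)$, $\fGraphs(n)$ is genuinely a subspace of $\snlGraphs(n)$, $\sGraphs(n)$, $\sfGraphs(n)$ respectively. This is a purely bookkeeping comparison of the defining conditions: Property~\ref{P:no-black-comp} (no purely neutral connected component) is strictly stronger than Property~\ref{P:no-cab:no-poly} (purely neutral components must contain a trivalent-or-higher vertex), so $\fGraphs(n)\subset\sfGraphs(n)$; intersecting both sides with "neutral valencies $\ge 3$'' gives $\Graphs(n)\subset\sGraphs(n)$, and intersecting further with "no loops'' gives $\Graphs_{\nl}(n)\subset\snlGraphs(n)$. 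With all arrows identified and their properties cited, the theorem follows; there is essentially no obstacle here, since the theorem is by design a repackaging of the four quasi-isomorphism statements established earlier. One could additionally remark that commutativity of the two squares and the triangle is trivial because all maps are inclusions, and that the composite $\Ger\hookrightarrow\sfGraphs$, being a zig-zag of quasi-isomorphisms, realizes the Kontsevich isomorphism $\Ger\cong H^{\bul}(\Graphs_{\nl})$ compatibly inside $\Tw\Gra$.
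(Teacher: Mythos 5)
Your proposal is correct and matches the paper's treatment exactly: Theorem \ref{thm:master-TwGra} is stated in the paper as a summary of Propositions \ref{prop:sfGraphs}, \ref{prop:sGra-sfGra}, \ref{prop:get-rid-of-loops}, \ref{prop:Graphs-nl-Graphs-fGraphs} and Theorem \ref{thm:Ger-nlGraphs}, with no further argument needed since all arrows are inclusions of subspaces of $\Tw\Gra$. Your additional bookkeeping check that the vertical inclusions are well defined is the only nontrivial verification, and you carry it out correctly.
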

We refer to \eqref{master-TwGra} as the master diagram for the dg operad 
$\Tw\Gra$\,.

Theorem \ref{thm:master-TwGra} has the following obvious corollary
\begin{cor}
\label{cor:Ger-fGraphs}
The embedding 
\begin{equation}
\label{io-pr-Ger-fGraphs}
\io' : \Ger \hookrightarrow \fGraphs
\end{equation}
induces an isomorphism on the level of cohomology. $\Box$
\end{cor}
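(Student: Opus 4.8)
The plan is to deduce Corollary \ref{cor:Ger-fGraphs} directly from the master diagram \eqref{master-TwGra} of Theorem \ref{thm:master-TwGra}. The map $\io' : \Ger \hookrightarrow \fGraphs$ factors as the composition of the three horizontal arrows on the bottom row of the diagram:
\begin{equation}
\label{io-pr-factorization}
\Ger \stackrel{\sim}{\hookrightarrow} \Graphs_{\nl} \stackrel{\sim}{\hookrightarrow} \Graphs \stackrel{\sim}{\hookrightarrow} \fGraphs\,,
\end{equation}
where the first arrow is the Kontsevich quasi-isomorphism of Theorem \ref{thm:Ger-nlGraphs}, and the second and third arrows are $\emb_2$ and $\emb_1$, which are quasi-isomorphisms of dg operads by Proposition \ref{prop:Graphs-nl-Graphs-fGraphs}. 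Since a composition of quasi-isomorphisms is a quasi-isomorphism, the map $\io'$ induces an isomorphism on cohomology.

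First I would recall that $\io'$ is indeed a morphism of dg operads: by \eqref{io-pr-Ger-Graphs} it is the composition $\emb_{\Gra}\circ \io$, and although $\emb_{\Gra}$ is not compatible with $\pa^{\Tw}$ in general, the vectors $\G_{\cc} = \io'(\{a_1,a_2\})$ and $\G_{\ww} = \io'(a_1 a_2)$ are $\pa^{\Tw}$-closed (Exercise \ref{exer:Gcc-Gww}), so $\io'$ respects the differentials; moreover its image lands in the suboperad $\Graphs_{\nl}$, hence in $\fGraphs$. Then I would simply observe that the factorization \eqref{io-pr-factorization} holds on the nose as maps of dg operads, since all the arrows in the bottom row of \eqref{master-TwGra} are inclusions of suboperads and $\io'$ obviously takes values in $\Graphs_{\nl}$ with the same formulas.

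The only genuine content is already packaged in the two results being cited: Theorem \ref{thm:Ger-nlGraphs}, which is the (hard) theorem of Kontsevich identifying $\Ger$ with $H^\bullet(\Graphs_{\nl})$, and Proposition \ref{prop:Graphs-nl-Graphs-fGraphs}, whose proof in turn rests on Propositions \ref{prop:sGra-sfGra} and \ref{prop:get-rid-of-loops} (removal of bivalent neutral vertices and removal of loops) together with Claim \ref{cl:Cone} on cones. So there is no real obstacle at the level of the corollary itself; the main subtlety to be careful about is purely bookkeeping, namely checking that the chain of maps in \eqref{io-pr-factorization} really does compose to $\io'$ and that each arrow is the one appearing in \eqref{master-TwGra}. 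Once that is verified, the corollary is immediate: $H^\bullet(\io')$ is a composite of three isomorphisms, hence an isomorphism, which is exactly the assertion of Corollary \ref{cor:Ger-fGraphs}.
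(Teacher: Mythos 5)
Your proposal is correct and is essentially the paper's own argument: Corollary \ref{cor:Ger-fGraphs} is stated there as an immediate consequence of the master diagram \eqref{master-TwGra}, whose bottom row is exactly your factorization $\Ger \stackrel{\sim}{\hookrightarrow} \Graphs_{\nl} \stackrel{\sim}{\hookrightarrow} \Graphs \stackrel{\sim}{\hookrightarrow} \fGraphs$ built from Theorem \ref{thm:Ger-nlGraphs} and Proposition \ref{prop:Graphs-nl-Graphs-fGraphs}. Your additional checks (that $\io'$ respects the differentials via Exercise \ref{exer:Gcc-Gww} and lands in $\Graphs_{\nl}$) are the same bookkeeping the paper records before stating the corollary.
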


Let us observe that the map $\io'$ lands in the suboperad 
$\fgraphs \subset  \fGraphs$ for which
\begin{equation}
\label{fgraphs-n}
\fgraphs(n) : =  \fGraphs(n) \cap \Tw^{\oplus}\Gra(n)\,.
\end{equation}

Furthermore, using the Euler characteristic trick, it is not hard
to deduce from  Corollary \ref{cor:Ger-fGraphs} that
\begin{cor}
\label{cor:Ger-fgraphs}
The embedding 
\begin{equation}
\label{io-pr-Ger-fgraphs}
\io' : \Ger \hookrightarrow \fgraphs
\end{equation}
induces an isomorphism on the level of cohomology. $\Box$
\end{cor}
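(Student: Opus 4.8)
The plan is to deduce Corollary \ref{cor:Ger-fgraphs} from Corollary \ref{cor:Ger-fGraphs} by the Euler characteristic trick (see Remark \ref{rem:Euler-trick}), exactly as the parenthetical hint in the statement suggests. The point is that $\fgraphs(n) = \fGraphs(n)\cap \Tw^{\oplus}\Gra(n)$ is the subcomplex of \emph{finite} linear combinations of graphs, whereas $\fGraphs(n)$ allows infinite sums, and we want to promote the cohomology isomorphism from the latter to the former.

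First I would observe that, just as in \eqref{Euler}, the cochain complex $\fGraphs(n)$ decomposes as a product over the Euler characteristic,
\[
\fGraphs(n) = \prod_{\chi\in\bbZ} \fGraphs(n)\cap \Tw\Gra_{\chi}(n)\,,
\]
since the differential $\pa^{\Tw}$ preserves the Euler characteristic of graphs by Proposition \ref{prop:Euler} and Corollary \ref{cor:Euler}. On the other hand, Proposition \ref{prop:Euler} also tells us that each $\fGraphs(n)\cap\Tw\Gra_{\chi}(n)$ has finite-dimensional graded components, so for a fixed $\chi$ the condition of being a finite sum is automatic in each fixed degree; consequently
\[
\fgraphs(n)\cap\Tw\Gra_{\chi}(n) = \fGraphs(n)\cap\Tw\Gra_{\chi}(n)\,,
\]
and the two complexes differ only in that $\fgraphs(n)=\bigoplus_{\chi}\fgraphs(n)\cap\Tw\Gra_{\chi}(n)$ uses the direct sum while $\fGraphs(n)$ uses the product. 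Since cohomology commutes with arbitrary direct sums and, here, with the product (the $\chi$-summands being bounded below in each fixed degree so that taking cohomology distributes over the product — or one simply works degreewise, where the product is finite), we get $H^{\bul}(\fgraphs(n))\cong H^{\bul}(\fGraphs(n))$, and the embedding $\fgraphs(n)\hookrightarrow\fGraphs(n)$ realizes this isomorphism.

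Next I would note that $\io'\colon\Ger\to\fGraphs$ of \eqref{io-pr-Ger-fGraphs} lands inside $\fgraphs$, because $\Ger(n)$ is finite dimensional and $\io'$ sends a vector in $\Ger(n)$ to a vector in $\Gra(n)\subset\Tw^{\oplus}\Gra(n)$ (no neutral vertices appear). Hence we have a commutative triangle of dg operads with $\io'\colon\Ger\to\fgraphs$, the embedding $\fgraphs\hookrightarrow\fGraphs$, and $\io'\colon\Ger\to\fGraphs$ on the outside. By Corollary \ref{cor:Ger-fGraphs} the outer map is a quasi-isomorphism, and by the previous paragraph the embedding $\fgraphs\hookrightarrow\fGraphs$ is a quasi-isomorphism; a two-out-of-three argument then shows $\io'\colon\Ger\hookrightarrow\fgraphs$ induces an isomorphism on cohomology.

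The only genuinely delicate point — and the one I would be most careful about — is the interchange of cohomology with the infinite product $\prod_{\chi}$ in passing from $\fgraphs(n)$ to $\fGraphs(n)$; this is where one must invoke the local finiteness from Proposition \ref{prop:Euler}, namely that in each cohomological degree $k$ only finitely many $\chi$ contribute (indeed $\fGraphs(n)\cap\Tw\Gra_\chi(n)^k$ forces $r=n+k-\chi\ge 0$ and bounds $e$, so for the subspace to be nonzero $\chi$ ranges over a finite set depending on $n$ and $k$), which makes the product degreewise finite. Everything else is formal. I would present this compactly rather than re-deriving the Euler decomposition from scratch, since the analogous argument has already been carried out in the proof of Proposition \ref{prop:sGra-sfGra} and in Remark \ref{rem:Graphs-nl-graphs-nl}.
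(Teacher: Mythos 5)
Your overall route is the one the paper intends: the paper itself gives no proof of this corollary beyond the remark that it follows from Corollary \ref{cor:Ger-fGraphs} ``using the Euler characteristic trick,'' and your reduction via the decomposition $\fGraphs(n)=\prod_{\chi}\fGraphs(n)\cap\Tw\Gra_{\chi}(n)$, the identification of $\fgraphs(n)$ with the corresponding direct sum, and a two-out-of-three argument is exactly the right skeleton. The observation that $\io'$ lands in $\fgraphs$ and the use of Proposition \ref{prop:Euler} to identify the $\chi$-components of $\fgraphs(n)$ and $\fGraphs(n)$ are both correct.

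However, the justification you single out as ``the only genuinely delicate point'' is wrong as stated. You claim that in each cohomological degree $k$ only finitely many $\chi$ contribute, citing $r=n+k-\chi\ge 0$ and the bound on $e$. But both of these give only \emph{upper} bounds on $\chi$ (equivalently, lower bounds on nothing relevant): there is no upper bound on $r$ in a fixed degree. Indeed, for fixed $n$ and $k$ one can take graphs with $r$ neutral vertices and $e=2r-k$ edges for arbitrarily large $r$ (a single connected simple graph through all vertices does the job, and such graphs are generically $r$-even), so $\chi=n+k-r$ runs through infinitely many values and $\fGraphs(n)^k$ is an honestly infinite product of nonzero finite-dimensional spaces. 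Consequently the embedding $\bigoplus_{\chi}C_{\chi}\hookrightarrow\prod_{\chi}C_{\chi}$ is \emph{not} a quasi-isomorphism for formal degreewise-finiteness reasons; in general it induces only the injection $\bigoplus_{\chi}H^k(C_{\chi})\hookrightarrow\prod_{\chi}H^k(C_{\chi})$. The gap is easily repaired, but by a different mechanism: since cohomology commutes with arbitrary products and with direct sums, and since $\io'$ respects the $\chi$-decomposition with $\Ger(n)$ supported on only finitely many $\chi$, Corollary \ref{cor:Ger-fGraphs} says precisely that each component map $\Ger(n)_{\chi}\to C_{\chi}$ is a quasi-isomorphism (and $H^{\bul}(C_{\chi})=0$ for the remaining $\chi$); this immediately gives the quasi-isomorphism into the direct sum. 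Equivalently, one may note that $\prod_{\chi}H^k(C_{\chi})\cong\Ger(n)^k$ is finite dimensional, which forces all but finitely many $H^k(C_{\chi})$ to vanish and hence the sum of cohomologies to equal their product. Either of these replaces your false finiteness claim; with that substitution the proof is complete.
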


\begin{exer}
\label{exer:Gcc-Gww}
Prove that the vectors  $\G_{\cc}, \G_{\ww} \in \Tw\Gra(2)$
defined in \eqref{G-cc} and \eqref{G-ww}, respectively, satisfy the 
conditions
$$
\pa^{\Tw} \G_{\cc} = \pa^{\Tw} \G_{\ww} = 0\,.
$$  
\end{exer}

\section{The full graph complex $\fGC$ revisited} 
\label{sec:fGC-revisited}

Let us recall that $\Gra(0)= \bfzero$\,. Hence, due to Remark \ref{rem:Tw-cO-0}, 
we have a  tautological isomorphism 
\begin{equation}
\label{fGC-TwGra0}
\fGC \cong \bs^{-2} \Tw\Gra(0) 
\end{equation}
between the full graph complex $\fGC$ \eqref{fGC} and the 
cochain complex $\bs^{-2} \Tw\Gra(0)$\,.

Here we will use the isomorphism \eqref{fGC-TwGra0}  
together with the results of Sections \ref{sec:tele-sharp} and
\ref{sec:remove-loops} to deduce various useful facts about the 
full graph complex $\fGC$\,.

Recall that vectors of $\fGC$ are infinite sums 
\begin{equation}
\label{sum-fGC-here}
\ga = \sum_{n=1}^{\infty} \ga_n
\end{equation}
of $S_n$-invariant vectors $\ga_n \in \bs^{2n-2}\Gra(n)$\,.

We denote by 
\begin{equation}
\label{fGC-ge-3}
\fGC_{\ge 3} \subset \fGC 
\end{equation}
the subspace of sums \eqref{sum-fGC-here} satisfying
\begin{pty}
\label{P:ge-3}
For every $n$, each connected component of a graph in
$\ga_n$ has at least one vertex of valency $\ge 3$.
\end{pty}

We also denote by $\GC$ the subspace of sums \eqref{sum}  
which involve exclusively graphs whose vertices all have valencies $\ge 3$\,.
It is obvious that $\GC \subset \fGC_{\ge 3}$\,.

Comparing $\fGC_{\ge 3}$ and  $\GC$ with the suboperads
$\sfGraphs$ and  $\sGraphs$ from  Section \ref{sec:tele-sharp}
we see that  
\begin{equation}
\label{fGC-ge-3-sfGraphs}
\fGC_{\ge 3} \cong \bs^{-2}\sfGraphs(0)\,, \qquad 
\GC \cong  \bs^{-2}\sGraphs(0)\,.
\end{equation}

In particular, $\GC$ and $\fGC_{\ge 3}$ are  subcomplexes 
of $\fGC$\,.

Let us denote by $\GC_{\nl}$ the subspace of vectors in $\GC$ 
involving exclusively graphs without loops. It is clear that 
$\GC_{\nl}$ is a subcomplex of $\GC$. Moreover,
\begin{equation}
\label{GC-nl}
\GC_{\nl} \cong \bs^{-2} \snlGraphs(0)\,,
\end{equation}
where $\snlGraphs$ is the suboperad of $\Tw\Gra$ introduced 
in Section \ref{sec:remove-loops}.

Thus,   Propositions \ref{prop:sGra-sfGra}, \ref{prop:get-rid-of-loops} 
imply that 
\begin{cor}
\label{cor:GC-nl-GC}
The embeddings 
\begin{equation}
\label{emb-GC}
\emb_{\GC} ~:~ \GC \hookrightarrow \fGC_{\ge 3}
\end{equation}
and
\begin{equation}
\label{emb-GC-nl}
\emb_{\nl} ~:~ \GC_{\nl} \hookrightarrow \GC
\end{equation}
are quasi-isomorphisms of cochain complexes. ~~~$\Box$
\end{cor}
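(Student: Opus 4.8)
The plan is to reduce both statements to Propositions \ref{prop:sGra-sfGra} and \ref{prop:get-rid-of-loops} via the tautological identifications already recorded in \eqref{fGC-TwGra0}, \eqref{fGC-ge-3-sfGraphs}, and \eqref{GC-nl}. First I would make explicit that these identifications are compatible with the embeddings in question. Recall that, since $\Gra(0) = \bfzero$, Remark \ref{rem:Tw-cO-0} gives the tautological isomorphism $\fGC \cong \bs^{-2}\Tw\Gra(0)$ which matches the $n$-th factor $\bs^{2n-2}(\Gra(n))^{S_n}$ of $\fGC$ with the $r$-th factor $\bs^{-2}\bs^{2r}(\Gra(r))^{S_r}$ of $\bs^{-2}\Tw\Gra(0)$ for $r=n$. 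Under this isomorphism, a vector of $\Tw\Gra(0)$ involves only neutral vertices (there are no operational vertices in arity $0$), so Property \ref{P:no-cab:no-poly} defining $\sfGraphs(0)$ becomes exactly Property \ref{P:ge-3}, and the defining condition of $\sGraphs(0)$ (resp. $\snlGraphs(0)$) becomes exactly the defining condition of $\GC$ (resp. $\GC_{\nl}$). Hence \eqref{fGC-ge-3-sfGraphs} and \eqref{GC-nl} hold as identifications of subcomplexes of $\fGC$, and moreover the inclusions $\emb_{\GC}$ and $\emb_{\nl}$ correspond, respectively, to the arity-$0$ components of $\emb^{\sharp}_1 : \sGraphs \hookrightarrow \sfGraphs$ and $\emb^{\sharp}_2 : \snlGraphs \hookrightarrow \sGraphs$, up to the overall degree shift by $\bs^{-2}$.

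Second I would invoke the two propositions. Proposition \ref{prop:sGra-sfGra} states that $\emb^{\sharp}_1 : \sGraphs(n) \hookrightarrow \sfGraphs(n)$ is a quasi-isomorphism for every $n$, in particular for $n = 0$. Proposition \ref{prop:get-rid-of-loops} states that $\emb^{\sharp}_2 : \snlGraphs \hookrightarrow \sGraphs$ is a quasi-isomorphism of dg operads, hence a quasi-isomorphism in each arity, in particular in arity $0$. Since the functor $\bs^{-2}$ of desuspension by two is exact and therefore preserves quasi-isomorphisms, applying $\bs^{-2}$ to the arity-$0$ components of $\emb^{\sharp}_1$ and $\emb^{\sharp}_2$ and transporting along the identifications above shows that $\emb_{\GC} : \GC \hookrightarrow \fGC_{\ge 3}$ and $\emb_{\nl} : \GC_{\nl} \hookrightarrow \GC$ are quasi-isomorphisms of cochain complexes, which is the assertion of the corollary.

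I do not expect a genuine obstacle here: the only thing that requires care is the bookkeeping of the first paragraph, namely checking that ``no operational vertices'' in arity $0$ collapses Property \ref{P:no-cab:no-poly} and the valency conditions of $\sGraphs$, $\snlGraphs$ to the literal definitions of $\fGC_{\ge 3}$, $\GC$, $\GC_{\nl}$, and that the inclusions really do match up under \eqref{fGC-TwGra0}. Once that is spelled out, the statement is a formal consequence of the already-proved Propositions \ref{prop:sGra-sfGra} and \ref{prop:get-rid-of-loops} together with exactness of $\bs^{-2}$.
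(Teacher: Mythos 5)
Your proposal is correct and follows exactly the paper's route: the corollary is deduced from the identifications $\fGC_{\ge 3} \cong \bs^{-2}\sfGraphs(0)$, $\GC \cong \bs^{-2}\sGraphs(0)$, $\GC_{\nl} \cong \bs^{-2}\snlGraphs(0)$ (equations \eqref{fGC-ge-3-sfGraphs} and \eqref{GC-nl}) together with Propositions \ref{prop:sGra-sfGra} and \ref{prop:get-rid-of-loops}, which is precisely why the paper states it with an immediate $\Box$. Your extra bookkeeping — that in arity $0$ all vertices are neutral, so Property \ref{P:no-cab:no-poly} and the valency/loop conditions collapse to the defining conditions of $\fGC_{\ge 3}$, $\GC$, $\GC_{\nl}$ — is exactly the content of the ``comparing'' remark preceding the corollary.
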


Let us denote by $\fGC_{\ge 3, \, \conn}$, $\GC_{\conn}$, and 
$\GC_{\nl,\, \conn}$ the ``connected'' versions of the subcomplexes 
$\fGC_{\ge 3}$, $\GC$ and $\GC_{\nl}$\,, respectively. Namely, 
\begin{equation}
\label{conn-versions}
\begin{array}{c}
\displaystyle \fGC_{\ge 3, \,\conn} :  = \fGC_{\ge 3} \cap \fGC_{\conn}\,,    \\[0.3cm]
\displaystyle \GC_{\conn} :  = \GC \cap \fGC_{\conn}\,,    \\[0.3cm]
\displaystyle \GC_{\nl \,\conn} :  = \GC_{\nl} \cap \fGC_{\conn}\,,
\end{array}
\end{equation}
where $\fGC_{\conn}$ is the subcomplex of $\fGC$ introduced 
in Section \ref{sec:fGC-conn}.

For the subcomplexes \eqref{conn-versions} we have 
\begin{prop}
\label{prop:conn-versions}
The embeddings 
\begin{equation}
\label{emb-GC-conn}
\emb_{\GC, \,\conn} ~:~ \GC_{\conn} \hookrightarrow \fGC_{\ge 3, \conn}
\end{equation}
and
\begin{equation}
\label{emb-GC-nl-conn}
\emb_{\nl,\, \conn} ~:~ \GC_{\nl, \, \conn} \hookrightarrow \GC_{\conn}
\end{equation}
are quasi-isomorphisms of cochain complexes. 
\end{prop}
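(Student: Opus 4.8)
The plan is to reduce both assertions to Corollary~\ref{cor:GC-nl-GC} by exploiting the fact that all four cochain complexes involved are built from their connected parts as completed symmetric algebras, and that this structure is compatible with the embeddings $\emb_{\GC}$ and $\emb_{\nl}$.

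First I would record that the differential on $\fGC$ preserves the number of connected components of a graph. Indeed, by Proposition~\ref{prop:dfGC} the differential $\pa$ is expressed through insertions $\G \circ_i \G_{\ed}$ of the connected graph $\G_{\ed}$ into a vertex of $\G$ (which keeps the ambient connected component connected) and through $\G_{\ed} \circ_1 \G$ (which merely attaches one new vertex to a single component of $\G$); in either case the number of connected components is unchanged. The same computation applies verbatim to the subcomplexes $\fGC_{\ge 3}$, $\GC$, and $\GC_{\nl}$. Since $\pa$ is moreover a locally finite operation, each of these complexes decomposes as a direct product of subcomplexes
$$
\fGC = \prod_{k \ge 1} \fGC^{(k)}, \quad
\fGC_{\ge 3} = \prod_{k \ge 1} \fGC_{\ge 3}^{(k)}, \quad
\GC = \prod_{k \ge 1} \GC^{(k)}, \quad
\GC_{\nl} = \prod_{k \ge 1} \GC_{\nl}^{(k)},
$$
where the superscript $(k)$ singles out the linear combinations of graphs having exactly $k$ connected components. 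Under the identification \eqref{fGC-fGC-conn} (and its evident analogues for $\fGC_{\ge 3}$, $\GC$, $\GC_{\nl}$) the summand $(\,\cdot\,)^{(k)}$ corresponds to the weight-$k$ part of the completed symmetric algebra; in particular the weight-$1$ stratum $(\,\cdot\,)^{(1)}$ is tautologically the connected part itself, with no suspension correction: $\fGC^{(1)} = \fGC_{\conn}$, $\fGC_{\ge 3}^{(1)} = \fGC_{\ge 3,\,\conn}$, $\GC^{(1)} = \GC_{\conn}$, $\GC_{\nl}^{(1)} = \GC_{\nl,\,\conn}$.

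Next I would observe that the embeddings $\emb_{\GC} : \GC \hookrightarrow \fGC_{\ge 3}$ and $\emb_{\nl} : \GC_{\nl} \hookrightarrow \GC$ of Corollary~\ref{cor:GC-nl-GC} manifestly preserve the number of connected components, hence restrict, for every $k$, to chain maps $\GC^{(k)} \to \fGC_{\ge 3}^{(k)}$ and $\GC_{\nl}^{(k)} \to \GC^{(k)}$; and for $k = 1$ these restrictions are precisely $\emb_{\GC,\,\conn}$ and $\emb_{\nl,\,\conn}$. Since cohomology commutes with arbitrary direct products, a morphism between two such products of cochain complexes is a quasi-isomorphism if and only if each of its components is a quasi-isomorphism. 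Applying this to $\emb_{\GC}$ and $\emb_{\nl}$, which are quasi-isomorphisms by Corollary~\ref{cor:GC-nl-GC}, we conclude in particular that their weight-$1$ components $\emb_{\GC,\,\conn}$ and $\emb_{\nl,\,\conn}$ are quasi-isomorphisms, which is exactly the assertion of the proposition.

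The argument is essentially bookkeeping, and the only point requiring genuine care is the passage to the connected parts via the completed symmetric-algebra description: one must verify that the "number of connected components" grading really yields an honest direct-product decomposition into subcomplexes (which follows from $\pa$ being a locally finite operation not mixing components) and that, under \eqref{fGC-fGC-conn}, the weight-$1$ stratum is identified with $\fGC_{\conn}$ itself — both of which are immediate from the definitions. No new estimates, filtrations, or spectral-sequence arguments are needed beyond those already used to establish Corollary~\ref{cor:GC-nl-GC}.
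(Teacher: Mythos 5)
Your proof is correct and rests on the same idea as the paper's: the connected embeddings are direct summands of the full embeddings $\emb_{\GC}$ and $\emb_{\nl}$ with respect to the decomposition by number of connected components, so Corollary \ref{cor:GC-nl-GC} transfers to the weight-one pieces. The paper merely packages this via cones (the cone of the connected embedding is a direct summand of the acyclic cone of the full embedding, then Claim \ref{cl:Cone}), whereas you invoke the product decomposition and the fact that cohomology commutes with products; the two formulations are interchangeable.
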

\begin{proof}
It is easy to see that the cone
of the embedding  $\emb_{\GC, \,\conn}$ (resp. $\emb_{\nl,\, \conn}$) is a direct summand in 
the cone of the embedding $\emb_{\GC}$ (resp. $\emb_{\nl}$).

Thus the desired statements follow from Corollary \ref{cor:GC-nl-GC}
above and Claim \ref{cl:Cone} from Appendix \ref{app:q-iso}.
\end{proof}

Let us observe that, if all vertices of 
a connected graph $\G$ have valencies $\le 2$,
then $\G$ is isomorphic to one of the graphs   
in the list: $\G_{\bul}$, $\G^{-}_l$ (see figure \ref{fig:G-l}), or $\G^{\dia}_m$
(see figure \ref{fig:dia-m}). Hence, $\fGC_{\conn}$ decomposes as 
\begin{equation}
\label{fGC-conn-decomp}
\fGC_{\conn} =   \fGC_{\ge 3, \, \conn}~ \oplus ~ \cK_{\dia} ~ \oplus ~ \cK_{\cab}\,,
\end{equation}
where $\cK_{\cab}$ (resp. $\cK_{\dia}$) is the subcomplex of cables 
(resp. polygons) introduced in Subsection \ref{sec:cables} 
(resp. Subsection \ref{sec:dia}).

Therefore, using Proposition \ref{prop:cables} and isomorphism \eqref{H-cK-dia}
we deduce that 
\begin{equation}
\label{H-fGC-conn}
H^{\bul}(\fGC_{\conn}) \cong  H^{\bul} (\fGC_{\ge 3, \,\conn}) ~ \oplus ~ 
\bigoplus_{q\ge 1} \bs^{4q-1} \, \bbK\,.
\end{equation}

Thus we arrive at the main result of this section.
\begin{thm}[T. Willwacher, \cite{Thomas}]
\label{thm:fGC-decomp}
Let $\fGC_{\conn}$ be the ``connected part'' of the
full graph complex $\fGC$ \eqref{fGC}. Moreover, 
let  $\GC_{\nl,\, \conn}$ 
be the subcomplex of vectors in $\fGC_{\conn}$ involving exclusively 
graphs $\G$ satisfying these two properties: 
\begin{itemize}

\item $\G$ does not have loops;

\item each vertex of $\G$ has valency $\ge 3$\,.

\end{itemize}

Then 
\begin{equation}
\label{HfGC-fGC-conn-here}
H^{\bul}(\fGC) \cong \bs^{-2}  \wh{S}\big(\bs^2 \, H^{\bul}(\fGC_{\conn})\big)\,,
\end{equation}
and
\begin{equation}
\label{H-fGC-conn-better}
H^{\bul}(\fGC_{\conn}) \cong  H^{\bul} (\GC_{\nl, \,\conn}) ~ \oplus ~ 
\bigoplus_{q\ge 1} \bs^{4q-1} \, \bbK \,,
\end{equation}
where $\wh{S}$ is the notation for the completed symmetric algebra.
\end{thm}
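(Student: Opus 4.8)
The plan is to assemble the two displayed isomorphisms from results already established in the excerpt. The first isomorphism \eqref{HfGC-fGC-conn-here} is essentially formal: from \eqref{fGC-fGC-conn-here} we have $\fGC = \bs^{-2}\wh{S}(\bs^2\,\fGC_{\conn})$ as cochain complexes, where $\wh{S}$ is the completed symmetric algebra and the differential on the right is the unique derivation extending $\pa$ on $\fGC_{\conn}$. First I would apply the K\"unneth theorem (over the field $\bbK$ of characteristic zero) together with the fact that taking cohomology commutes with passing to $S_n$-coinvariants (since $\bbK$ has characteristic zero, so symmetrization is exact). Because the filtration by ``number of connected components'' is complete and exhaustive, and because each graded piece is a finite symmetric power, one gets $H^{\bul}(\wh{S}(\bs^2 \fGC_{\conn})) \cong \wh{S}(\bs^2 H^{\bul}(\fGC_{\conn}))$; shifting back by $\bs^{-2}$ yields \eqref{HfGC-fGC-conn-here}. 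The only care needed here is to justify the interchange of cohomology with the completion, which follows because on each fixed (degree, Euler characteristic) stratum the sum is finite --- this is exactly the Euler characteristic trick of Remark~\ref{rem:Euler-trick}.

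For the second isomorphism \eqref{H-fGC-conn-better}, I would proceed in three steps. First, use the decomposition \eqref{fGC-conn-decomp}, namely
$$
\fGC_{\conn} = \fGC_{\ge 3,\,\conn} \oplus \cK_{\dia} \oplus \cK_{\cab},
$$
which holds because any connected graph all of whose vertices have valency $\le 2$ is one of $\G_{\bul}$, $\G^{-}_l$, or $\G^{\dia}_m$ (Remark~\ref{rem:bival-uni}). Second, invoke Proposition~\ref{prop:cables} ($\cK_{\cab}$ is acyclic) and the computation \eqref{H-cK-dia} of $H^{\bul}(\cK_{\dia}) \cong \bigoplus_{q\ge 1}\bs^{4q-1}\bbK$, so that $H^{\bul}(\fGC_{\conn}) \cong H^{\bul}(\fGC_{\ge 3,\,\conn}) \oplus \bigoplus_{q\ge1}\bs^{4q-1}\bbK$. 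Third, apply Proposition~\ref{prop:conn-versions}: the embeddings $\GC_{\nl,\,\conn}\hookrightarrow\GC_{\conn}\hookrightarrow\fGC_{\ge 3,\,\conn}$ are quasi-isomorphisms, hence $H^{\bul}(\fGC_{\ge 3,\,\conn})\cong H^{\bul}(\GC_{\nl,\,\conn})$. Substituting gives \eqref{H-fGC-conn-better}.

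I expect the main obstacle to be purely bookkeeping rather than conceptual: one must check that the direct sum decomposition \eqref{fGC-conn-decomp} is genuinely a decomposition \emph{of cochain complexes}, i.e. that the differential $\pa = \ad_{\G_{\ed}}$ does not mix the three summands. This is where Proposition~\ref{prop:dfGC} (in particular equation \eqref{pa-dfGC-simpler}) is used: for a connected graph with at least one edge, applying $\pa$ and discarding univalent-vertex contributions keeps one inside the appropriate summand, and a direct check on the polygons $\G^{\dia}_m$ and cables $\G^{-}_l$ (Exercises~\ref{exer:polygons} and \ref{exer:cables}) confirms that $\cK_{\dia}$ and $\cK_{\cab}$ are each stable under $\pa$ while $\pa$ of a graph in $\fGC_{\ge 3,\,\conn}$ lands back in $\fGC_{\ge 3,\,\conn}$ (the last assertion being exactly the content of $\fGC_{\ge 3,\,\conn}\cong\bs^{-2}\sfGraphs(0)$ from \eqref{fGC-ge-3-sfGraphs} together with Proposition~\ref{prop:sfGraphs}). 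Everything else is a routine application of the earlier propositions and the K\"unneth theorem.
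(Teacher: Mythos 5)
Your proposal is correct and follows essentially the same route as the paper: Künneth applied to $\fGC = \bs^{-2}\wh{S}(\bs^2\,\fGC_{\conn})$ for \eqref{HfGC-fGC-conn-here}, and the decomposition \eqref{fGC-conn-decomp} combined with Proposition \ref{prop:cables}, the computation \eqref{H-cK-dia}, and Proposition \ref{prop:conn-versions} for \eqref{H-fGC-conn-better}. The extra care you take in checking that the three summands are genuinely subcomplexes is already covered by the cited propositions, so no further work is needed.
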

\begin{proof}
The first decomposition \eqref{HfGC-fGC-conn-here} is obtained
by applying the K\"unneth theorem to \eqref{fGC-fGC-conn}.
The second decomposition \eqref{H-fGC-conn-better} is
obtained by applying Proposition  \ref{prop:conn-versions}
to the isomorphism \eqref{H-fGC-conn}.
\end{proof}

\begin{exer}
\label{exer:tri-val}
Using equation \eqref{pa-dfGC-simpler}, prove that 
for every even trivalent graph $\G\in \gra_n$ the vector 
\begin{equation}
\label{Av-G}
\Av(\G) = \sum_{\si \in S_n} \si (\G)
\end{equation}
is a cocycle in $\fGC$\,. Show that the tetrahedron 
depicted on figure \ref{fig:tetra} represents a non-trivial 
(degree zero) cocycle in $\fGC$\,. 
\begin{figure}[htp]
\centering 
\begin{tikzpicture}[scale=0.5, >=stealth']
\tikzstyle{w}=[circle, draw, minimum size=4, inner sep=1]
\tikzstyle{b}=[circle, draw, fill, minimum size=4, inner sep=1]
\node [b] (v1) at (2,0) {};
\draw (2,-0.6) node[anchor=center] {{\small $1$}};
\node [b] (v2) at (4,3) {};
\draw (4,3.6) node[anchor=center] {{\small $2$}};
\node [b] (v3) at (0,3) {};
\draw (0,3.6) node[anchor=center] {{\small $3$}};
\node [b] (v4) at (2,1.8) {};
\draw (2, 2.4) node[anchor=center] {{\small $4$}};
\draw (v1) edge (v2);
\draw (v1) edge (v3);
\draw (v1) edge (v4);
\draw (v2) edge (v3);
\draw (v2) edge (v4);
\draw (v3) edge (v4);
\end{tikzpicture}
~\\[0.3cm]
\caption{We may choose this order on the 
set of edges: $ (1,2) < (1,3) < (1,4) < (2,3)< (2,4) < (3,4)$ } \label{fig:tetra}
\end{figure} 
\end{exer}

\section{Deformation complex of $\Ger$}
\label{sec:Def-Ger}
Let us consider the following graded Lie algebra 
\begin{equation}
\label{Def-Ger}
\Conv(\Ger^{\vee}_{\c}, \Ger)\,.
\end{equation}

Due to \eqref{Ger-vee} we have 
\begin{equation}
\label{Def-Ger-new}
\Conv(\Ger^{\vee}_{\c}, \Ger) = \prod_{n\ge 2} 
\big( \Ger(n) \otimes \La^{-2} \Ger(n) \big)^{S_n}\,.
\end{equation}

The operad $ \La^{-2} \Ger$ is generated by the vectors
$b_1 b_2$ and $\{b_1, b_2\}$ in $ \La^{-2} \Ger(2) $\,. 
Moreover, the vectors $b_1 b_2$ and $\{b_1, b_2\}$ 
carry the degrees $2$ and $1$, respectively:
\begin{equation}
\label{deg-b1b2}
|b_1 b_2 | = 2\,, \qquad 
|\{b_1, b_2\}| = 1\,.
\end{equation}
 
Following Section \ref{sec:HA}, the canonical map 
$\Cobar(\Ger^{\vee}) \to \Ger$ \eqref{Ger-infty-Ger} corresponds to 
the Maurer-Cartan element\footnote{From now on we omit the subscript $\Ger$ in
the notation for the Maurer-Cartan element $\al_{\Ger}$\,.}
\begin{equation}
\label{MC-Def-Ger}
\al = a_1 a_2 \otimes \{b_1, b_2\} + 
\{a_1, a_2\} \otimes b_1 b_2  \in \Conv(\Ger^{\vee}_{\c}, \Ger)\,.
\end{equation}
Thus, using this Maurer-Cartan element, we can equip  the graded Lie algebra \eqref{Def-Ger-new}
with the differential 
\begin{equation}
\label{diff-Def-Ger}
\pa = [\al, ~]\,.
\end{equation}
According to \cite{MV-nado}, the cochain complex  \eqref{Def-Ger-new}
with the differential  \eqref{diff-Def-Ger}
``governs'' deformations of the operad structure on $\Ger$\,. So we 
refer to \eqref{Def-Ger-new}  as the {\it deformation complex} 
of the operad $\Ger$\,.

\begin{exer}
\label{ex:al-al-zero}
Verify the identity 
$$
[\al, \al] = 0
$$
by a direct computation.
\end{exer}

For our purposes it is convenient to extend the 
deformation complex of $\Ger$ to 
\begin{equation}
\label{Def-Ger-ext}
\Conv(\Ger^{\vee}, \Ger) = \prod_{n\ge 1} 
\big( \Ger(n) \otimes \La^{-2} \Ger(n) \big)^{S_n}\,.
\end{equation}

Vectors in the cochain complex 
\eqref{Def-Ger-ext} are formal infinite sum
\begin{equation}
\label{sum-Ger-Ger}
\sum_{n=1}^{\infty} \ga_n\,, 
\end{equation}
where each $\ga_n$ is an $S_n$-invariant vector in 
$\Ger(n) \otimes \La^{-2} \Ger(n)$\,. For example,
$$
a_1 a_2 \otimes \{b_1, b_2\} 
$$
is a degree $1$ vector in \eqref{Def-Ger-ext}.

It is obvious that 
$$
\Conv(\Ger^{\vee}, \Ger) = \bbK \L a_1 \otimes b_1  \R ~\oplus ~ 
\Conv(\Ger^{\vee}_{\c}, \Ger) 
$$
and 
$$
\pa (a_1 \otimes b_1) = \al\,.
$$
Thus 
\begin{equation}
\label{H-Def-Ger}
H^{\bul}  \Big( \Conv(\Ger^{\vee}_{\c}, \Ger) \Big) = 
H^{\bul}  \Big( \Conv(\Ger^{\vee}, \Ger) \Big) ~ \oplus
~ \bs\, \bbK\,,
\end{equation}
where the additional degree $1$ class is represented by 
the Maurer-Cartan element \eqref{MC-Def-Ger}.

Using the map $\io$ \eqref{io-Ger-Gra}, we 
embed  $\Conv(\Ger^{\vee}, \Ger) $ into
the vector space
\begin{equation}
\label{Gra-Gra}
\prod_{n\ge 1} 
 \Gra(n) \otimes \La^{-2} \Gra(n) 
\end{equation}
and represent vectors in (\ref{Gra-Gra}) by formal 
linear combinations of labeled graphs with two types 
of edges:  solid edges for left tensor factors and
dashed edges for right tensor factors.

For example, the Maurer-Cartan element \eqref{MC-Def-Ger}
corresponds to the linear combination of graphs depicted on 
figure \ref{fig:MC-Def-Ger}  and the vector 
$$
\{a_1, a_2\} \otimes \{b_1, b_2\} 
$$
corresponds to the graph depicted on 
figure \ref{fig:brack-brack}
\begin{figure}[htp]
\centering
\begin{tikzpicture}[scale=0.5, >=stealth']
\tikzstyle{w}=[circle, draw, minimum size=4, inner sep=1]
\tikzstyle{b}=[circle, draw, fill, minimum size=4, inner sep=1]
\node [b] (v1) at (1,1) {};
\draw (1,1.6) node[anchor=center] {{\small $1$}};
\node [b] (v2) at (3,1) {};
\draw (3,1.6) node[anchor=center] {{\small $2$}};
\draw  (v1) edge (v2);
\node at (4,1) {\large $+$};
\node [b] (v3) at (5,1) {};
\draw (5,1.6) node[anchor=center] {{\small $1$}};
\node [b] (v4) at (7,1) {};
\draw (7,1.6) node[anchor=center] {{\small $2$}};
\draw  (v3) edge[dashed] (v4);
\end{tikzpicture}
\caption{\label{fig:MC-Def-Ger} The Maurer-Cartan element in the deformation 
complex of $\Ger$}
\end{figure} 
\begin{figure}[htp]
\centering
\begin{tikzpicture}[scale=0.5, >=stealth']
\tikzstyle{w}=[circle, draw, minimum size=4, inner sep=1]
\tikzstyle{b}=[circle, draw, fill, minimum size=4, inner sep=1]
\node [b] (v1) at (1,1) {};
\draw (1,1.6) node[anchor=center] {{\small $1$}};
\node [b] (v2) at (3,1) {};
\draw (3,1.6) node[anchor=center] {{\small $2$}};
\draw  (v1) .. controls (1.5, 1.5) and (2.5,1.5) .. (v2);
\draw [dashed] (v1) .. controls (1.5, 0.5) and (2.5,0.5) .. (v2);
\end{tikzpicture}
\caption{\label{fig:brack-brack} The graph corresponding to 
the vector $\{a_1, a_2\} \otimes \{b_1, b_2\}$ }
\end{figure} 

\begin{defi}
\label{dfn:GerGer-conn}
We say that a monomial $X \in \Ger(n) \otimes \La^{-2} \Ger(n) $
is {\rm connected} if its image in \eqref{Gra-Gra} is a linear combination 
of connected graphs.  
We denote by $\Conv(\Ger^{\vee}, \Ger)_{\conn}$ the subspace of 
$\Conv(\Ger^{\vee}, \Ger)$ which consists of sums \eqref{sum-Ger-Ger}
involving exclusively connected monomials. 
\end{defi}
\begin{example}
\label{exam:connected}
According to the above definition the monomials
$$
\{a_1, a_2\} \otimes \{b_1, b_2\}, \qquad  a_1 a_2 \otimes \{b_1, b_2\}, 
\qquad \{a_1, a_2\} \otimes  b_1 b_2, \qquad 
a_2 \{a_1, a_3\} \otimes b_1 \{b_2, b_3\}
$$
are connected while the monomials 
$$
a_1 a_2 \otimes  b_1 b_2, \qquad 
a_2 \{a_1, a_3\} \otimes b_2 \{b_1, b_3\}
$$
are disconnected. 
\end{example}

It is not hard to see that $\Conv(\Ger^{\vee}, \Ger)_{\conn}$ is 
a subcomplex of  $\Conv(\Ger^{\vee}, \Ger)$\,.
Furthermore, we have 
\begin{equation}
\label{Def-Ger-as-Sym}
\Conv(\Ger^{\vee}, \Ger) = 
\bs^{-2} \wh{S}\big(\bs^2 \Conv(\Ger^{\vee}, \Ger)_{\conn}\big)\,,
\end{equation}
where $\wh{S}$ stands for the completed symmetric algebra.

\begin{rem}
\label{rem:degree-bound}
A simple degree bookkeeping shows that for every 
monomial $X \in  \Ger(n) \otimes \La^{-2} \Ger(n)$ 
$$
|X| \ge 0\,.
$$
Thus, 
\begin{equation}
\label{degree-bound}
H^{< 0}( \Conv(\Ger^{\vee}, \Ger)_{\conn}) =
H^{< 0}( \Conv(\Ger^{\vee}, \Ger) ) = \bfzero.
\end{equation}
\end{rem}

\subsection{Decomposition of $\Conv(\Ger^{\vee}, \Ger)$ with respect to the Euler characteristic}

Let us denote by $\mb(v)$ the total number of Lie brackets in 
the Gerstenhaber  monomial $v\in \Ger(n)$ or $v \in \La^{-2}\Ger(n)$\,.
Using the embedding of $\Ger(n) \otimes \La^{-2} \Ger(n)$ into 
$\Gra(n)\otimes \La^{-2} \Gra(n)$ we introduce the notion of 
Euler characteristic for monomials in $\Ger(n) \otimes \La^{-2} \Ger(n)$: 
\begin{defi}
\label{dfn:Euler-GerGer}
Let $v$ and $w$ be monomials in 
$\Ger(n)$ and $\La^{-2}\Ger(n)$, respectively.
We call the number 
$$
\chi (v \otimes w) : = n -\mb(v) - \mb(w) 
$$
the \emph{Euler characteristic} of the monomial 
$v \otimes w \in \Ger(n) \otimes \La^{-2}\Ger(n)$\,.
\end{defi}    

We observe that for every sum 
$$
\sum_i v_i \otimes w_i \in \Big( \Ger(n) \otimes \La^{-2}\Ger(n) \Big)^{S_n}
$$
of monomials with the same Euler characteristic $\chi$, each 
monomial in
the linear combination
$$
\pa   \left( \sum_i v_i \otimes w_i \right) 
$$
also has Euler characteristic $\chi$\,.
Thus sums \eqref{sum-Ger-Ger} in which each $\ga_n$ is a linear 
combination of monomials of Euler characteristic $\chi$
form a subcomplex of $\Conv(\Ger^{\vee}, \Ger)$\,.
We denote this subcomplex by 
\begin{equation}
\label{ConvGerGer-chi}
\Conv(\Ger^{\vee}, \Ger)_{\chi}\,.
\end{equation}

We claim that 
\begin{prop}
\label{prop:Euler-GerGer}
For every pair of integers $m$, $\chi$ the 
subspace $\Conv(\Ger^{\vee}, \Ger)^m_{\chi}$ of degree $m$ 
vectors in $\Conv(\Ger^{\vee}, \Ger)_{\chi}$ is a subspace in 
$$
\Ger(n) \otimes \La^{-2}\Ger(n)  
$$
where 
\begin{equation}
\label{n-deg-chi}
n = m -\chi + 2\,.
\end{equation}
In particular,  $\Conv(\Ger^{\vee}, \Ger)^m_{\chi}$ is finite dimensional. 
\end{prop}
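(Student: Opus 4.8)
The plan is to read off the degree and arity of a homogeneous monomial $v \otimes w \in \Ger(n) \otimes \La^{-2}\Ger(n)$ purely in terms of the three combinatorial quantities $n$, $\mb(v)$, and $\mb(w)$, and then to solve the resulting linear system for $n$ in terms of $m$ and $\chi$. Recall that each variable $a_i$ (resp. $b_i$) carries degree $0$, a solid edge (i.e.\ a Lie bracket in $\Ger$) contributes degree $-1$ per bracket, and each dashed edge (i.e.\ a Lie bracket in $\La^{-2}\Ger$) contributes degree $+1$ per bracket because of the degree shift built into $\La^{-2}$. Concretely, from the description of $\Ger(n)$ in Subsection~\ref{sec:Ger} a monomial $v \in \Ger(n)$ built from $\La\Lie$-words (each bracket lowering degree by $1$) has $|v| = -\mb(v)$, while from \eqref{deg-b1b2} the generators of $\La^{-2}\Ger(2)$ have $|b_1 b_2| = 2$ and $|\{b_1, b_2\}| = 1$, so a monomial $w \in \La^{-2}\Ger(n)$ has $|w| = 2(\text{number of product factors}) + (\text{appropriate bracket count})$; combining these one finds $|w| = n - \mb(w)$ when $w$ uses all $n$ variables exactly once. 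I would record these two degree formulas carefully as the first step, double-checking the sign and shift conventions against the explicit examples in the excerpt (e.g.\ $\{a_1,a_2\}\otimes b_1 b_2$ and $a_1 a_2 \otimes \{b_1, b_2\}$, both of which sit in degree $1$).

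The second step is to assemble these into $m = |v \otimes w| = |v| + |w| = -\mb(v) + n - \mb(w)$, and to combine this with the definition $\chi(v\otimes w) = n - \mb(v) - \mb(w)$ from Definition~\ref{dfn:Euler-GerGer}. Subtracting, $m - \chi = (-\mb(v) + n - \mb(w)) - (n - \mb(v) - \mb(w)) = 0$? That cannot be right, so the bookkeeping must actually read $|w| = n - \mb(w)$ only after accounting for the global $\La^{-2}$ shift on the whole operad space, not factor by factor; the correct statement, consistent with $\La^{-2}\Ger(n) = \bs^{-2+\dots}$, is $|w| = (n - \mb(w)) - 2 \cdot(\text{something})$, and the net effect is the asserted $n = m - \chi + 2$. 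I would pin down the constant by testing on the Maurer--Cartan element: $a_1 a_2 \otimes \{b_1, b_2\} \in \Ger(2)\otimes\La^{-2}\Ger(2)$ has $m = 1$, $\mb(a_1 a_2) = 0$, $\mb(\{b_1,b_2\}) = 1$, hence $\chi = 2 - 0 - 1 = 1$, and indeed $m - \chi + 2 = 1 - 1 + 2 = 2 = n$. Likewise $\{a_1,a_2\}\otimes\{b_1,b_2\}$ has $m = 0$, $\chi = 2 - 1 - 1 = 0$, and $m - \chi + 2 = 2 = n$. So the formula \eqref{n-deg-chi} is forced, and the real content is just this degree--arity identity.

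The third and final step is finite-dimensionality. Once $n = m - \chi + 2$ is fixed, $\Conv(\Ger^{\vee},\Ger)^m_\chi$ is a subspace of the single space $\Ger(n) \otimes \La^{-2}\Ger(n)$, and by Exercise~\ref{exer:Ger-n-basis} we have $\dim \Ger(n) = n!$, hence $\dim\big(\Ger(n)\otimes\La^{-2}\Ger(n)\big) = (n!)^2 < \infty$; taking $S_n$-invariants only shrinks this, so $\Conv(\Ger^{\vee},\Ger)^m_\chi$ is finite dimensional. I should also remark that when $m - \chi + 2 \le 0$ the space is simply zero (the product $\prod_{n\ge 1}$ starts at $n = 1$), which is consistent with the claim.

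I expect the \emph{only} genuine obstacle to be getting every sign and degree-shift convention exactly right in the first step — in particular, correctly propagating the $\La^{-2}$ suspension through the tensor factor $\La^{-2}\Ger(n)$ and through the definition \eqref{Ger-vee} of $\Ger^\vee$, so that the degree of a monomial $w$ comes out as claimed and the combination $m - \chi$ lands on the constant $-2$ rather than something else. Everything after that is a one-line substitution plus the citation of $\dim\Ger(n) = n!$. A clean way to organize the writeup is: (i) state $|v| = -\mb(v)$ for $v\in\Ger(n)$ with proof by induction on the number of product factors and brackets using the presentation \eqref{monomial-Ger}; (ii) state the analogous degree formula for $w\in\La^{-2}\Ger(n)$, again by induction, being explicit about the $\La^{-2}$ shift; (iii) subtract to obtain \eqref{n-deg-chi}; (iv) conclude finite-dimensionality from Exercise~\ref{exer:Ger-n-basis}.
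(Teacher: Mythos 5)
There is a genuine gap, and it sits exactly where you suspected: the degree formula for $w \in \La^{-2}\Ger(n)$. The operadic suspension $\La^{-2}$ shifts the arity-$n$ component by $\bs^{2n-2}$ (since $\La(n) = \bs^{1-n}\sgn_n$, one has $\La^{-2}(n) \cong \bs^{2n-2}\bbK$), so the correct formula is $|w| = 2n - 2 - \mb(w)$, equivalently the paper's $|w| = n + t_w - 2$ where $t_w = n - \mb(w)$ is the number of Lie words. The correction to your guess $|w| = n - \mb(w)$ is therefore $n-2$, which \emph{depends on $n$}; your proposed fix $|w| = (n-\mb(w)) - 2\cdot(\text{constant})$ is structurally wrong, since any constant correction would make $m - \chi$ constant rather than equal to $n-2$. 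Worse, your two consistency checks ($a_1a_2 \otimes \{b_1,b_2\}$ and $\{a_1,a_2\}\otimes\{b_1,b_2\}$) both live in arity $n=2$, which is precisely where the correction $n-2$ vanishes, so they cannot detect the error; a single test at $n=3$, e.g.\ $|b_1b_2b_3| = 4 \neq 3 = n - \mb$, would have exposed it. With the correct formula one gets $m = -\mb(v) + 2n - 2 - \mb(w)$ and $\chi = n - \mb(v) - \mb(w)$, hence $m - \chi = n - 2$, which is \eqref{n-deg-chi}.

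Apart from this, your outline matches the paper's argument: the paper parametrizes by the Lie-word counts $t_v, t_w$ and writes $|v| = t_v - n$, $|w| = n + t_w - 2$, $\mb(v) = n - t_v$, $\mb(w) = n - t_w$, obtaining $m = t_v + t_w - 2$ and $\chi = t_v + t_w - n$ before subtracting — the same bookkeeping in different coordinates. Your step (i) ($|v| = -\mb(v)$) is correct, and the finite-dimensionality conclusion from $\dim\Ger(n) = n!$ is fine. But since the entire content of the proposition is this degree--arity identity, the proof is not complete until the degree of $w$, including the $n$-dependent suspension, is actually established rather than reverse-engineered from the desired conclusion.
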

\begin{proof}
Let $v \otimes w$ be a monomial in $\Ger(n) \otimes \La^{-2}\Ger(n)$
of degree $m$ and Euler characteristic $\chi$\,. 

Let $t_v$ (resp. $t_w$) be the number of Lie words in the monomial $v$ 
(resp. in the monomial $w$). For example, if $v = \{a_2, a_4 \} a_1 a_7 \{\{ a_3, a_5\}, a_6\}$ 
then $t_v = 4$\,.

It is not hard to see that 
\begin{equation}
\label{deg-v-w}
|v| = t_v - n, \qquad  |w| = n+ t_w - 2\,,
\end{equation}
and
\begin{equation}
\label{b-vw-t-vw}
\mb(v) = n - t_v\,, \qquad 
\mb(w) = n - t_w\,.
\end{equation}

Hence 
\begin{equation}
\label{deg-v-otimes-w}
m = t_v + t_w - 2\,.
\end{equation}
and 
\begin{equation}
\label{chi-tv-tw}
\chi = t_v + t_w - n\,.
\end{equation}

Using equations \eqref{deg-v-otimes-w} 
and \eqref{chi-tv-tw} we deduce that 
$$
n = m - \chi + 2\,. 
$$

Thus a combination ``degree and Euler characteristic'' determines 
the arity $n$ uniquely via equation \eqref{n-deg-chi}. Furthermore, 
since $\Ger(n) \otimes \La^{-2}\Ger(n) $ is finite dimensional,
so is  $\Conv(\Ger^{\vee}, \Ger)^m_{\chi}$\,. 

The proposition is proved. 
\end{proof}

Proposition \ref{prop:Euler-GerGer} has the following useful corollary.
\begin{cor}
\label{cor:Euler-GerGer}
The cochain complex $\Conv(\Ger^{\vee}, \Ger)$ splits into the
following product of its subcomplexes: 
\begin{equation}
\label{Euler-GerGer}
\Conv(\Ger^{\vee}, \Ger) = \prod_{\chi \in \bbZ} \Conv(\Ger^{\vee}, \Ger)_{\chi}\,.
\end{equation}
\end{cor}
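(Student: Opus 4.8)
The plan is to follow the proof of Corollary \ref{cor:Euler}, with Proposition \ref{prop:Euler-GerGer} playing the role of Proposition \ref{prop:Euler}. Recall that a homogeneous degree $m$ vector of $\Conv(\Ger^{\vee}, \Ger) = \prod_{n\ge 1}\big(\Ger(n)\otimes\La^{-2}\Ger(n)\big)^{S_n}$ is an infinite sum $\ga = \sum_{n\ge 1}\ga_n$ with each $\ga_n$ a degree $m$ vector of $\big(\Ger(n)\otimes\La^{-2}\Ger(n)\big)^{S_n}$. Combining equations \eqref{deg-v-otimes-w} and \eqref{chi-tv-tw} from the proof of Proposition \ref{prop:Euler-GerGer}, every degree $m$ monomial of arity $n$ has Euler characteristic $\chi = m - n + 2$; thus the Euler characteristic of the summand $\ga_n$ is determined by $n$ (and the fixed degree $m$), so $\ga_n$ belongs to $\Conv(\Ger^{\vee},\Ger)_{m-n+2}$.

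First I would establish the inclusion $\Conv(\Ger^{\vee}, \Ger) \subset \prod_{\chi\in\bbZ}\Conv(\Ger^{\vee}, \Ger)_{\chi}$: grouping the summands of $\ga = \sum_n\ga_n$ according to the value $\chi = m - n + 2$ exhibits $\ga$ as the evident element of the product, the index $\chi$ ranging over those integers with $m - \chi + 2 \ge 1$. For the opposite inclusion I would reverse the bookkeeping: a degree $m$ element of $\prod_{\chi}\Conv(\Ger^{\vee},\Ger)_{\chi}$ is a family $\big(g^{(\chi)}\big)_{\chi\in\bbZ}$ with $g^{(\chi)}$ of degree $m$ in $\Conv(\Ger^{\vee},\Ger)_{\chi}$, and by Proposition \ref{prop:Euler-GerGer} each $g^{(\chi)}$ is supported entirely in arity $n = m - \chi + 2$; hence the formal sum $\sum_{\chi}g^{(\chi)}$ has at most one term in each arity and so is a legitimate vector of $\Conv(\Ger^{\vee},\Ger)$ of the shape \eqref{sum-Ger-Ger}. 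Since the differential $\pa = [\al,~]$ preserves the Euler characteristic, both inclusions are compatible with the differentials, and the decomposition \eqref{Euler-GerGer} follows.

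This is pure combinatorial bookkeeping and I do not expect any genuine obstacle. The one point to keep straight is that, in contrast with the $\Tw\Gra$ situation of Corollary \ref{cor:Euler} where the sum is over the auxiliary parameter $r$, here the pair (arity, degree) and the pair (Euler characteristic, degree) are in bijective correspondence, so the passage between the product over arities and the product over Euler characteristics is a mere reindexing.
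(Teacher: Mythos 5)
Your proposal is correct and follows essentially the same route as the paper's own proof (which is deliberately terse, invoking equation \eqref{n-deg-chi} for the first inclusion and leaving the reverse one "similar"); you simply spell out both directions of the reindexing $n = m - \chi + 2$. No issues.
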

\begin{proof}
Let 
$$
\ga = \sum_{n=1}^{\infty} \ga_n, \qquad \ga_n \in \Big( \Ger(n)\otimes \La^{-2}\Ger(n) \Big)^{S_n}
$$
be a homogeneous vector of degree $m$ in  $\Conv(\Ger^{\vee}, \Ger)$\,. 

Equation \eqref{n-deg-chi} implies that for every $n$
$$
\ga_n \in \Conv(\Ger^{\vee}, \Ger)_{\chi}
$$
with $\chi = m + 2 - n$\,. Thus, 
$$
\Conv(\Ger^{\vee}, \Ger) 
\subset \prod_{\chi \in \bbZ} \Conv(\Ger^{\vee}, \Ger)_{\chi}\,.
$$

The other inclusion 
$$
\prod_{\chi \in \bbZ} \Conv(\Ger^{\vee}, \Ger)_{\chi}
\subset 
\Conv(\Ger^{\vee}, \Ger) 
$$
is proved similarly. 
\end{proof}

The combination of Proposition \ref{prop:Euler-GerGer}
and Corollary \ref{cor:Euler-GerGer} will allow us to reduce questions 
about cocycles in $\Conv(\Ger^{\vee}, \Ger) $ to the corresponding 
questions about cocycles in its subcomplex\footnote{The functor 
$\Conv^{\oplus}$ was introduced in Section \ref{sec:Conv-oplus}.} 
$\Conv^{\oplus}(\Ger^{\vee}, \Ger) $\,.

\subsection{We are getting rid of Lie words of length $1$}
\label{sec:thm-Xi}
Let us recall that, for a monomial $w \in \La^{-2} \Ger(n)$, 
the notation $\mL_1(w)$ is reserved for
the number of Lie words in $w$ of 
length $=1$\,. For example, $\mL_1(b_1 b_2) = 2$ and
$\mL_1(\{b_1, b_2\}) =0$\,. 

Let us also recall that for the collection $\{ \La^{-2}\Ger^{\hrt}(n)\}_{n \ge 0}$
\eqref{Ger-hrt}
$$
\La^{-2}\Ger^{\hrt}(0) = \bs^{-2} \bbK
$$
and
$$
\La^{-2}\Ger^{\hrt}(n), \qquad n \ge 1
$$
is the $S_n$-submodule of $\La^{-2}\Ger(n)$ spanned by 
monomials $w \in \La^{-2}\Ger(n)$ for which $\mL_1(w) = 0$\,. 

Using this collection, we introduce the subspace of $\Conv(\Ger^{\vee}, \Ger)$
\begin{equation}
\label{Xi-dfn}
\Xi : = \prod_{n \ge 2} \Big( \Ger(n) \otimes \La^{-2}\Ger^{\hrt}(n) \Big)^{S_n}\,, 
\end{equation}
which will play an important role in establishing a link between 
the deformation complex \eqref{Def-Ger} of $\Ger$ and the 
full graph complex $\fGC$ \eqref{fGC}.

We reserve the notation $\Xi_{\conn}$ for the ``connected part'' of $\Xi$:
\begin{equation}
\label{Xi-conn}
\Xi_{\conn} : = \Xi \cap \Conv(\Ger^{\vee}, \Ger)_{\conn}\,.
\end{equation}
Furthermore, 
\begin{equation}
\label{Xi-oplus}
\Xi^{\oplus} : = \Xi \cap \Conv^{\oplus}(\Ger^{\vee}, \Ger)
\end{equation}
and
\begin{equation}
\label{Xi-oplus-conn}
\Xi^{\oplus}_{\conn} : = \Xi_{\conn} \cap \Conv^{\oplus}(\Ger^{\vee}, \Ger)
 \cap \Conv(\Ger^{\vee}, \Ger)_{\conn}
\end{equation}

We claim that 
\begin{prop}
\label{prop:Xi-is-sub}
The subspaces $\Xi$, $\Xi_{\conn}$, $\Xi^{\oplus}$, and $\Xi^{\oplus}_{\conn}$ are subcomplexes 
of 
$$
\Conv(\Ger^{\vee}, \Ger)\,.
$$
\end{prop}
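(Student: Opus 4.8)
The plan is to reduce the statement to the single assertion $\pa(\Xi)\subseteq\Xi$ and then to prove the latter by transporting it to the associated graded complex of Section~\ref{sec:Ger-cO} (specialized to $\cO=\Ger$), where it becomes the fact, already recorded in the excerpt, that $\Ger(m)$ sits inside $\Tw\Ger(m)$ as a $\pa^{\Tw}$-closed subspace. First I would dispose of three of the four cases. The differential on $\Conv(\Ger^{\vee},\Ger)$ is $\pa=[\al,\ \cdot\ ]$ with $\al\in\Conv^{\oplus}(\Ger^{\vee},\Ger)$, so by Section~\ref{sec:Conv-oplus} (where $\Conv^{\oplus}$ is shown to be a Lie subalgebra) the subspace $\Conv^{\oplus}(\Ger^{\vee},\Ger)$ is a subcomplex; the subspace $\Conv(\Ger^{\vee},\Ger)_{\conn}$ is a subcomplex by the remark made just before Definition~\ref{dfn:Euler-GerGer}. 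Hence $\Xi^{\oplus}=\Xi\cap\Conv^{\oplus}$, $\Xi_{\conn}=\Xi\cap\Conv(\Ger^{\vee},\Ger)_{\conn}$ and $\Xi^{\oplus}_{\conn}=\Xi\cap\Conv^{\oplus}\cap\Conv(\Ger^{\vee},\Ger)_{\conn}$ are intersections of subcomplexes, so they are subcomplexes as soon as $\Xi$ is one.

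It therefore remains to prove $\pa(\Xi)\subseteq\Xi$. Since a general element of $\Xi$ is a (possibly infinite) sum over $n$ of elements $\sum_{\si\in S_n}\si(v)\otimes\si(w)$ with $v\in\Ger(n)$, $w\in\La^{-2}\Ger(n)$ and $\mL_1(w)=0$, and since $\pa$ raises arity by exactly one (because $\al$ has arity $2$), it suffices to prove the following single‑orbit statement: for such $v,w$, every monomial occurring in $Y':=\pa\bigl(\sum_{\si}\si(v\otimes w)\bigr)$ again has no Lie word of length $1$ in its right tensor factor. Note $Y:=\sum_{\si}\si(v\otimes w)$ lies in $\Conv^{\oplus}(\Ger^{\vee},\Ger)$ and is concentrated in a single arity.

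For this I would invoke the ascending filtration $\cF^{\bullet}\Conv^{\oplus}(\Ger^{\vee},\Ger)$ of Section~\ref{sec:Ger-cO}, indexed by $\mL_1(w_i)-|v_i\otimes w_i|$. Because $\mL_1(w)=0$, the element $Y$ lies in $\cF^{-|Y|}$, and in degree $|Y|$ the subspace $\cF^{-|Y|-1}$ is trivial, so the class of $Y$ in $\cF^{-|Y|}/\cF^{-|Y|-1}$ is $Y$ itself. Under the isomorphism $\Ups_{\Ger}$ of Proposition~\ref{prop:Gr-Ger-cO} (with $\cO=\Ger$), the class of $Y$ lies in the $r=0$ summand
\[
\bigoplus_{m\ge 2}\Bigl(\Ger(m)\otimes\La^{-2}\Ger^{\hrt}(m)\Bigr)^{S_m}\ \subseteq\ \Gr\,\Conv^{\oplus}(\Ger^{\vee},\Ger),
\]
on which the induced differential $\pa^{\Gr}$ is, via $\Ups_{\Ger}$, the map $\pa^{\Tw}\otimes\id$ coming from $\Tw\Ger$. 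But $\pa^{\Tw}$ vanishes on $\Ger(m)=\bs^{0}\Ger(m)^{S_0}\subseteq\Tw\Ger(m)$: this is the consequence of Proposition~\ref{prop:cG-n} recorded right after its proof, namely that $i\colon\Ger\hookrightarrow\Tw\Ger$ is a map of dg operads. Hence $\pa^{\Gr}$ kills the class of $Y$, which means $\pa Y\in\cF^{-|Y|-1}\Conv^{\oplus}(\Ger^{\vee},\Ger)$; in degree $|Y|+1$ this forces every monomial of $\pa Y$ to have $\mL_1=0$, i.e. $\pa Y\in\Xi^{\oplus}\subseteq\Xi$. This is the single‑orbit statement, and with the reduction of the previous paragraph it gives $\pa(\Xi)\subseteq\Xi$, whence the proposition. (One may replace the single‑orbit reduction by the Euler‑characteristic decomposition of Corollary~\ref{cor:Euler-GerGer} together with Proposition~\ref{prop:Euler-GerGer}, which lets one pass degreewise between $\Conv^{\oplus}$ and the product $\Conv$.)

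The only point requiring genuine care — and, I expect, the only real content of the argument — is the identification, under $\Ups_{\Ger}$, of the $r=0$ summand of $\Gr\,\Conv^{\oplus}(\Ger^{\vee},\Ger)$ with $\Xi^{\oplus}$ as a graded vector space, together with the matching of $\pa^{\Gr}$ there with $\pa^{\Tw}\otimes\id$: one must observe that $\Ups_{\Ger}$ restricted to $r=0$ is the identity (immediate from \eqref{Ups-cO-def}, since $\Sh_{0,m}$ is trivial and the word $b_1\cdots b_0$ is empty), that $\Tw^{\oplus}\Ger=\Tw\Ger$ here, and that the $r=0$ part of $\Tw\Ger(m)$ is $\pa^{\Gr}$‑closed not because $\pa^{\Tw}$ maps it into itself but because $\pa^{\Tw}$ annihilates it. Everything else — the filtration bookkeeping and the passage through the three intersection cases — is formal.
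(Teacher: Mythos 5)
Your proof is correct, but it takes a genuinely different route from the one in the paper. The paper argues directly: it writes out $[\{a_1,a_2\}\otimes b_1b_2,\, v_n\otimes w_n]$ explicitly (noting that the bracket with $a_1a_2\otimes\{b_1,b_2\}$ obviously preserves $\Xi$) and observes that the unwanted monomials --- those acquiring a Lie word of length one --- cancel pairwise by the defining identities of a Gerstenhaber algebra; the reduction of the remaining three cases to the case of $\Xi$ is left implicit. You instead transport the question to the associated graded complex of Section \ref{sec:Ger-cO}: since an arity-homogeneous $Y\in\Xi^{\oplus}$ sits in the bottom filtration level $\cF^{-|Y|}$ of its degree, the statement $\pa Y\in\Xi^{\oplus}$ is equivalent to $\pa^{\Gr}[Y]=0$, which via $\Ups_{\Ger}$ (Proposition \ref{prop:Gr-Ger-cO}) becomes the already-recorded fact that $\pa^{\Tw}$ annihilates $\Ger(n)\subset\Tw\Ger(n)$. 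This is legitimate --- Proposition \ref{prop:Gr-Ger-cO} and the remark following Proposition \ref{prop:cG-n} precede this point in the paper and do not depend on it --- and you correctly isolate the crux, namely that the $r=0$ summand must be \emph{annihilated} by $\pa^{\Tw}$, not merely preserved, for the conclusion $\pa Y\in\cF^{-|Y|-1}$ to follow. What the paper's computation buys is self-containedness; what yours buys is a conceptual explanation of the cancellation (it is the same cancellation that makes $\Ger\hookrightarrow\Tw\Ger$ a map of dg operads), at the cost of invoking the considerably heavier Proposition \ref{prop:Gr-Ger-cO} as a black box. Your reduction of $\Xi_{\conn}$, $\Xi^{\oplus}$, and $\Xi^{\oplus}_{\conn}$ to intersections of known subcomplexes is exactly what the paper leaves tacit.
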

\begin{proof}
Let 
\begin{equation}
\label{a-vector}
X = \sum_{n=2}^{\infty} v_n \otimes w_n 
\end{equation}
be a vector in $\Xi$\,. 

The bracket 
$$
\big[ \, a_1 a_2 \otimes \{b_1, b_2\}\,,\, X \,\big] 
$$
is obviously a vector in $\Xi$\,. So we need to prove that 
the vector 
\begin{equation}
\label{X-brack-prod}
\big[\, \{ a_1,  a_2\} \otimes b_1 b_2 \,,\, X\, \big] 
\end{equation}
belongs $\Xi$\,.

We have 
$$
\big[\, \{ a_1,  a_2\} \otimes b_1 b_2 \,,\, v_n \otimes w_n\, \big] = 
$$
\begin{equation}
\label{no-unwanted}
\sum_{i = 1}^{n+1} \vs_{i, n+1}  \big( \{v_n, a_{n+1}\} \big) \otimes 
\vs_{i, n+1}  \big( w_n\, b_{n+1} \big) - 
\end{equation}
$$
(-1)^{|v_n|} \sum_{\si \in \Sh_{2, n-1}} \si \big( v_n \circ_1  \{ a_1,  a_2\} \big)  \otimes 
\si \big( w_n \circ_1  b_1 b_2 \big)\,,
$$
where $ \vs_{i, n+1}$ is the cycle $(i, i+1, \dots, n+1)$ in $S_{n+1}$\,.

Using the defining identities of the Gerstenhaber algebra, it is not hard to prove
that unwanted terms in \eqref{no-unwanted} cancel each other. 
\end{proof}

We will need the following Theorem.
\begin{thm}
\label{thm:Xi}
The embeddings
\begin{equation}
\label{Xi-in-Conv}
\Xi \hookrightarrow  \Conv(\Ger^{\vee}, \Ger)\,,
\end{equation}
\begin{equation}
\label{Xi-in-Conv-conn}
\Xi_{\conn} \hookrightarrow  \Conv(\Ger^{\vee}, \Ger)_{\conn}\,,
\end{equation}
and
\begin{equation}
\label{Xi-in-Conv-oplus}
\Xi^{\oplus} \hookrightarrow  \Conv^{\oplus}(\Ger^{\vee}, \Ger)
\end{equation}
are quasi-isomorphisms of cochain complexes. 
\end{thm}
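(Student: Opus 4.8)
The plan is to reduce the statement about the three embeddings \eqref{Xi-in-Conv}, \eqref{Xi-in-Conv-conn}, \eqref{Xi-in-Conv-oplus} to a single homological computation by exhibiting, on $\Conv(\Ger^{\vee}, \Ger)$, an ascending filtration whose associated graded complex splits off $\Xi$ as a summand with acyclic complement. The natural candidate is the filtration by the number $\mL_1$ of Lie words of length $1$ in the second tensor factor, exactly as in Subsection \ref{sec:Ger-cO}: set $\cF^m \Conv(\Ger^{\vee}, \Ger)$ to consist of sums $\sum_i v_i\otimes w_i$ with $\mL_1(w_i) - |v_i\otimes w_i| \le m$ for all $i$. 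By the observation preceding \eqref{filtr-Ger-cO-oplus} (applied with $\cO = \Ger$), the differential $\pa = [\al,\,-]$ either preserves $\mL_1$ or raises it by one, so this is indeed a filtration, and the associated graded differential $\pa^{\Gr}$ keeps only the terms that raise $\mL_1$ by one — which is precisely the differential that appears in Proposition \ref{prop:Gr-Ger-cO}. I would then invoke Proposition \ref{prop:Gr-Ger-cO} with $\cO = \Ger$, $\io = \id_{\Ger}$, to identify
\[
\Gr\, \Conv^{\oplus}(\Ger^{\vee}, \Ger) \cong \bigoplus_{n \ge 0} \Big( \Tw^{\oplus}\Ger(n) \otimes \La^{-2}\Ger^{\hrt}(n) \Big)^{S_n}
\]
with the differential $\pa^{\Tw}$ coming from $\Tw\Ger$.

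Next I would use Theorem \ref{thm:Ger-TwGer}: the embedding $i:\Ger\hookrightarrow\Tw\Ger$ is a quasi-isomorphism, and in fact (via Proposition \ref{prop:cG-n} and the cocycle description there) every vector of $\Ger(n)\subset\Tw\Ger(n)$ is $\pa^{\Tw}$-closed and the inclusion $\Ger(n)\hookrightarrow\Tw\Ger(n)$ computes the cohomology of $\Tw\Ger(n)$. Tensoring with the fixed complexes $\La^{-2}\Ger^{\hrt}(n)$ (which carry zero differential) and passing to $S_n$-invariants — using that taking invariants over a finite group is exact in characteristic zero and commutes with cohomology, together with the Künneth theorem — shows that the inclusion
\[
\bigoplus_{n\ge 2}\Big(\Ger(n)\otimes\La^{-2}\Ger^{\hrt}(n)\Big)^{S_n}\;\hookrightarrow\;\bigoplus_{n\ge 0}\Big(\Tw^{\oplus}\Ger(n)\otimes\La^{-2}\Ger^{\hrt}(n)\Big)^{S_n}
\]
is a quasi-isomorphism. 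The left-hand side is exactly $\Xi^{\oplus}$, sitting inside $\Gr$ as the $\mL_1 = 0$ part concentrated in the degree where the "silly" filtration on $\Xi^{\oplus}$ lives. Thus on the associated graded level the embedding $\Xi^{\oplus}\hookrightarrow\Conv^{\oplus}(\Ger^{\vee},\Ger)$ induces an isomorphism on cohomology. Since the filtration on $\Conv^{\oplus}(\Ger^{\vee}, \Ger)$ restricts to the silly filtration on $\Xi^{\oplus}$, both filtrations are cocomplete, and by Proposition \ref{prop:Euler-GerGer} together with Corollary \ref{cor:Euler-GerGer} the complexes decompose into finite-dimensional pieces indexed by degree and Euler characteristic (so the filtrations are locally bounded on each piece), Lemma \ref{lem:q-iso} from Appendix \ref{app:q-iso} applies and gives that \eqref{Xi-in-Conv-oplus} is a quasi-isomorphism.

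To pass from \eqref{Xi-in-Conv-oplus} to \eqref{Xi-in-Conv}, I would use the Euler characteristic decomposition \eqref{Euler-GerGer}: both $\Conv(\Ger^{\vee},\Ger)$ and $\Xi$ are products over $\chi\in\bbZ$ of their respective $\chi$-components, each $\chi$-component agrees with the corresponding piece of $\Conv^{\oplus}$ resp. $\Xi^{\oplus}$ (a component of fixed $\chi$ and degree is finite-dimensional, hence lies in $\Conv^{\oplus}$), so \eqref{Xi-in-Conv-oplus} already implies the quasi-isomorphism on each $\chi$-summand, and a product of quasi-isomorphisms of complexes is a quasi-isomorphism. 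For \eqref{Xi-in-Conv-conn} I would observe, as in the proofs of Proposition \ref{prop:conn-versions} and Proposition \ref{prop:Graphs-nl-Graphs-fGraphs}, that the cone of $\Xi_{\conn}\hookrightarrow\Conv(\Ger^{\vee},\Ger)_{\conn}$ is a direct summand of the cone of $\Xi\hookrightarrow\Conv(\Ger^{\vee},\Ger)$ — this follows from the symmetric-algebra decompositions \eqref{Def-Ger-as-Sym} and the analogous one for $\Xi$, since $\mL_1$ and the differential are compatible with the connected-component decomposition — and then apply Claim \ref{cl:Cone} from Appendix \ref{app:q-iso}. The main obstacle I anticipate is the careful verification that the $\mL_1$-filtration on $\Conv(\Ger^{\vee},\Ger)$ really does restrict to the silly filtration on $\Xi$ and that Proposition \ref{prop:Gr-Ger-cO} is being applied correctly with $\cO=\Ger$ — in particular checking that the identification of $\pa^{\Gr}$ with the twisted differential $\pa^{\Tw}$ survives the passage to $S_n$-invariants and matches the bookkeeping of signs and arities; everything else is an assembly of results already in the excerpt.
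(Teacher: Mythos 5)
Your proposal follows essentially the same route as the paper's proof: the ascending $\mL_1$-filtration restricting to the silly filtration on $\Xi^{\oplus}$, the identification of the associated graded complex via Proposition \ref{prop:Gr-Ger-cO} with $\cO=\Ger$, Theorem \ref{thm:Ger-TwGer} plus K\"unneth and exactness of invariants in characteristic zero, Lemma \ref{lem:q-iso}, the Euler characteristic trick to pass from $\Xi^{\oplus}$ to $\Xi$, and the cone argument for the connected version. The argument is correct as outlined.
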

\begin{proof}
We will prove that the embedding \eqref{Xi-in-Conv-oplus}
is a quasi-isomorphism of cochain complexes. 
Then we will deduce that the embeddings \eqref{Xi-in-Conv}
and \eqref{Xi-in-Conv-conn} are also quasi-isomorphisms.

Let us recall from Section \ref{sec:Ger-cO} that 
$ \Conv^{\oplus}(\Ger^{\vee}, \Ger)$ has 
the following ascending filtration
\begin{equation}
\label{filtr-Conv-Ger-oplus}
\dots \subset \cF^{m-1}\, \Conv^{\oplus}(\Ger^{\vee}, \Ger) \subset 
\cF^m\, \Conv^{\oplus}(\Ger^{\vee}, \Ger) \subset \dots\,,
\end{equation}
where $\cF^m\, \Conv^{\oplus}(\Ger^{\vee}, \Ger)$ consists of 
sums
$$
\sum_i v_i \otimes w_i \in  \bigoplus_n \big( \Ger(n) \otimes \La^{-2} \Ger(n) \big)^{S_n}
$$
which satisfy 
$$
\mL_1 (w_i) - |\, v_i \otimes w_i \,| \le  m\,, \qquad \forall ~~ i\,.
$$

The restriction of \eqref{filtr-Conv-Ger-oplus} to the subcomplex $\Xi^{\oplus}$
gives us the ``silly'' filtration 
\begin{equation}
\label{silly-Xi}
\cF^m \big( \Xi^{\oplus} \big)^k =
\begin{cases}
 \big( \Xi^{\oplus} \big)^k \qquad {\rm if} ~~ m \ge -k \,, \\
 \bfzero  \qquad {\rm if}~~ m < -k
\end{cases}
\end{equation}
with the zero differential on the associated graded complex 
\begin{equation}
\label{Gr-Xi-oplus}
\Gr\, \Xi^{\oplus} ~~\cong~~  \bigoplus_{n \ge 2} \Big( 
\Ger(n) \otimes \La^{-2}\Ger^{\hrt}(n)  \Big)^{S_n}\,.
\end{equation}

Due to Proposition \ref{prop:Gr-Ger-cO} in 
Section \ref{sec:Ger-cO}, the 
formula\footnote{Recall that, due to Exercise \ref{exer:TwGer-sum},  
$\Tw\Ger= \Tw^{\oplus}\Ger$\,.} 
\begin{equation}
\label{Ups-Ger-def}
\Ups_{\Ger} \left( \sum_i v_i \otimes w_i \right) : =
\sum_{\si \in \Sh_{r,n}} \sum_i
\si (v_i(a_1, \dots, a_{r+n})) \otimes
\si (b_1 \dots b_r \, w_i(b_{r+1}, \dots, b_{r+n})) 
\end{equation}
$$
 \sum_i v_i \otimes w_i  \in  \Big(\bs^{2r} \Ger(r+n)^{S_r}
\otimes \La^{-2}\Ger^{\hrt}(n)  \Big)^{S_n}
$$ 
defines an isomorphism of cochain complexes
\begin{equation}
\label{Ups-Ger}
\Ups_{\Ger} ~: ~ \bigoplus_{n \ge 1}  \Big( \Tw\Ger(n)
\otimes \La^{-2}\Ger^{\hrt}(n)  \Big)^{S_n} ~~\to~~ 
\Gr\, \Conv^{\oplus}(\Ger^{\vee}, \Ger)\,,
\end{equation}
where the differential on the source comes from the differential 
$\pa^{\Tw}$ on $\Tw\Ger$\,. 

It is easy to see that the natural map 
\begin{equation}
\label{GrXi-to-GrConv}
\Gr\, \Xi^{\oplus} \hookrightarrow  \bigoplus_{n \ge 1}  \Big( \Tw\Ger(n)
\otimes \La^{-2}\Ger^{\hrt}(n)  \Big)^{S_n} 
\end{equation}
induced by the embedding \eqref{Ger-TwGer} fits into the 
commutative diagram 
$$
\xymatrix@M=0.7pc{
\Gr\, \Xi^{\oplus}  \ar@{^{(}->}[d] \ar@{^{(}->}[rd] & ~ \\
 \displaystyle\bigoplus_{n \ge 1}  \Big( \Tw\Ger(n)
\otimes \La^{-2}\Ger^{\hrt}(n)  \Big)^{S_n} \ar[r]^{\phantom{aaaaaaa}\Ups_{\Ger}}  & \Gr\, \Conv^{\oplus}(\Ger^{\vee}, \Ger)\,,
}
$$
where the slanted arrow is the canonical embedding of $\Gr\, \Xi^{\oplus} $
into $\Gr\, \Conv^{\oplus}(\Ger^{\vee}, \Ger)$\,.

On the other hand, using K\"unneth's theorem and  
Theorem \ref{thm:Ger-TwGer} together with the fact that, 
in characteristic zero,  the cohomology commutes with taking
invariants we deduce that the embedding \eqref{GrXi-to-GrConv}
is a quasi-isomorphism of cochain complexes. 

Therefore the embedding \eqref{Xi-in-Conv-oplus} induces a 
quasi-isomorphism of the associated graded complexes. 

Thus, since the filtrations \eqref{filtr-Conv-Ger-oplus} and \eqref{silly-Xi} are
locally bounded and cocomplete, we deduce from Lemma \ref{lem:q-iso}
that  the embedding \eqref{Xi-in-Conv-oplus} is also a quasi-isomorphism 
of cochain complexes. 

Combining this fact with Proposition \ref{prop:Euler-GerGer}
and Corollary \ref{cor:Euler-GerGer} we conclude that 
the embedding \eqref{Xi-in-Conv} is a quasi-isomorphism of 
cochain complexes. 

Since the cone of the map \eqref{Xi-in-Conv-conn} is the 
direct summand in the cone of the map \eqref{Xi-in-Conv}, 
the embedding  \eqref{Xi-in-Conv-conn} is also a quasi-isomorphism 
by Claim  \ref{cl:Cone}.

Theorem \ref{thm:Xi} is proved. 
\end{proof}

\section{Tamarkin's rigidity in the stable setting}
\label{sec:rigidity} 
Let us consider the Lie algebra 
\begin{equation}
\label{Ger-Gra}
\Conv(\Ger^{\vee}, \Gra) = \prod_{n \ge 1}
\big( \Gra(n) \otimes \La^{-2}\Ger(n) \big)^{S_n}\,.
\end{equation}

The map of operads \eqref{io-Ger-Gra} induces a homomorphism 
of Lie algebras 
\begin{equation}
\label{io-star}
\io_* : \Conv(\Ger^{\vee}, \Ger) \to \Conv(\Ger^{\vee}, \Gra)\,.   
\end{equation}
In particular, the vector\footnote{The vectors $\G_{\ed}, \G_{\bb} \in \Gra(2)$  
are defined in \eqref{binary}.} 
\begin{equation}
\label{MC-Ger-Gra}
\io_* (\al) = \G_{\ed} \otimes b_1 b_2  + 
\G_{\bb} \otimes \{b_1, b_2\}
\end{equation}
is a Maurer-Cartan element in \eqref{Ger-Gra} and the formula 
\begin{equation}
\label{diff-Ger-Gra}
\pa = [\io_* (\al),~]
\end{equation}
defines a differential on the Lie algebra \eqref{Ger-Gra}. 

Using map \eqref{io-Ger-Gra} once again
we can embed (\ref{Ger-Gra}) into \eqref{Gra-Gra}. 
Thus, by analogy with \eqref{Def-Ger-as-Sym}, we have 
\begin{equation}
\label{Ger-Gra-as-Sym}
\Conv(\Ger^{\vee}, \Gra) = 
\bs^{-2} \wh{S}(\bs^2 \Conv(\Ger^{\vee}, \Gra)_{\conn})
\end{equation}
where $ \Conv(\Ger^{\vee}, \Gra)_{\conn}$ is the subcomplex 
of $\Conv(\Ger^{\vee}, \Gra)$ which consists of formal  
linear combinations of connected monomials in  
$\Conv(\Ger^{\vee}, \Gra)$. 

The goal of this section is to prove the following 
theorem\footnote{Another version of this theorem is proved in \cite{stable11}.}
\begin{thm}
\label{thm:rigidity} 
For the cooperad $\Ger^{\vee}$ and the operad $\Gra$ we have 
\begin{equation}
\label{H-Conv-Ger-Gra}
H^{m} \big( \Conv(\Ger^{\vee}, \Gra) \big) = 
\begin{cases}
 \bbK  \qquad {\rm if} \quad  m = 1   \\
 \bfzero \qquad \qquad {\rm otherwise}\,.
\end{cases}
\end{equation}
Furthermore,  $H^1 \big( \Conv(\Ger^{\vee}, \Gra) \big)$ is spanned 
by the cohomology class of the vector $\G_{\ed} \otimes b_1 b_2$\,. 
\end{thm}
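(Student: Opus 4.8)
The plan is to reduce the computation of $H^\bullet(\Conv(\Ger^\vee,\Gra))$ to a computation that has already been carried out, namely the cohomology of the deformation complex $\Conv(\Ger^\vee,\Ger)$ together with the structure of the dg operad $\Tw\Gra$ and its suboperad $\fGraphs$. Concretely, I would use the map of operads $\io\colon\Ger\hookrightarrow\Gra$ from Exercise \ref{exer:Ger-Gra} to put ourselves in the setting of Section \ref{sec:Ger-cO}, with $\cO=\Gra$ and the morphism $\io\colon\Ger\to\Gra$. Then the cochain complex $\Conv^\oplus(\Ger^\vee,\Gra)$ carries the ascending filtration ``by Lie words of length $1$'' from \eqref{filtr-Ger-cO-oplus}, and by Proposition \ref{prop:Gr-Ger-cO} the associated graded complex is isomorphic (via $\Ups_\Gra$) to $\bigoplus_{n\ge 1}\big(\Tw^\oplus\Gra(n)\otimes\La^{-2}\Ger^\hrt(n)\big)^{S_n}$ with the differential coming from $\pa^\Tw$ on $\Tw\Gra$.

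First I would replace $\Tw^\oplus\Gra$ by $\fgraphs$ inside this associated graded complex. Using Corollary \ref{cor:Ger-fgraphs}, the embedding $\io'\colon\Ger\hookrightarrow\fgraphs$ is a quasi-isomorphism of dg operads; since we are in characteristic zero, taking $S_n$-invariants is exact, and by K\"unneth's theorem tensoring with the finite-dimensional complex $\La^{-2}\Ger^\hrt(n)$ (zero differential) preserves quasi-isomorphism. Hence on the associated graded level the relevant complex is quasi-isomorphic to $\bigoplus_{n\ge 1}\big(\Ger(n)\otimes\La^{-2}\Ger^\hrt(n)\big)^{S_n}$, which — retracing the isomorphism $\Ups$ — is exactly $\Gr\,\Xi^\oplus$ with zero differential. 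Thus the natural map $\Xi^\oplus\hookrightarrow\Conv^\oplus(\Ger^\vee,\Gra)$ induces a quasi-isomorphism on associated graded complexes. Since the ``Lie words of length $1$'' filtration is cocomplete and locally bounded, and its restriction to $\Xi^\oplus$ is the silly filtration, Lemma \ref{lem:q-iso} from Appendix \ref{app:q-iso} gives that $\Xi^\oplus\hookrightarrow\Conv^\oplus(\Ger^\vee,\Gra)$ is a quasi-isomorphism. Finally, by Proposition \ref{prop:Euler} and Corollary \ref{cor:Euler} (the Euler characteristic trick, applied here to $\Conv(\Ger^\vee,\Gra)$ which decomposes into finite-dimensional pieces by a degree-and-$\chi$ bookkeeping identical to that of Proposition \ref{prop:Euler-GerGer}), the same statement passes to the completed complex: $\Xi\hookrightarrow\Conv(\Ger^\vee,\Gra)$ is a quasi-isomorphism.

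On the other hand, the cohomology of $\Xi$ is accessible because the differential on $\Gr\,\Xi$ is zero — so $H^\bullet(\Xi)\cong H^\bullet(\Gr\,\Xi)$ only up to the filtration, but more usefully I would argue directly: the embedding $\Ger\hookrightarrow\fgraphs$ being a quasi-isomorphism and the analysis of Section \ref{sec:Ger-cO} show that $H^\bullet(\Conv(\Ger^\vee,\Gra))\cong H^\bullet(\Conv(\Ger^\vee,\Ger))$. So it remains to know that $H^\bullet(\Conv(\Ger^\vee,\Ger))$ is one-dimensional, concentrated in degree $1$, spanned by the class of $\al$ — equivalently that $H^\bullet(\Conv(\Ger^\vee_\c,\Ger))$ is two-dimensional in degrees $0$ and $1$ (with the degree-$1$ class $\al$). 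This is Tamarkin's rigidity theorem for the operad $\Ger$; I would either cite the version stated in the excerpt (the statement that $\Conv(\Ger^\vee,\Gra)$ ``does not have nonzero cohomology classes coming from arities $\ge 3$'' is exactly what is being proved, and Tamarkin rigidity for $\Ger$ is referenced as \cite{stable11}), or derive it from the computation of $H^\bullet(\Conv(\Ger^\vee,\Ger))$ via the Harrison-homology input of Appendix \ref{app:Harr} together with the decomposition of $\La^{-2}\Ger$ into products of $\La\Lie$-words. Tracking the class of $\G_\ed\otimes b_1b_2$: it is $\io_*$ of the Maurer-Cartan element $\{a_1,a_2\}\otimes b_1b_2$ summand of $\al$, and under the chain of quasi-isomorphisms $\Xi\to\Conv(\Ger^\vee,\Gra)\leftarrow$ its preimage, it maps to the generator of $H^1$; checking it is nonzero amounts to verifying it is not a coboundary, which follows from the fact that $\Conv(\Ger^\vee,\Gra)$ has no degree-$0$ vectors of the appropriate Euler characteristic that could bound it (the degree bound from the analogue of Remark \ref{rem:degree-bound}).

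\textbf{Main obstacle.} The hard part will be the rigidity input itself — that $H^\bullet(\Conv(\Ger^\vee,\Ger))$ is spanned by the single class of $\al$. Everything else is bookkeeping: the filtration argument, the Euler-characteristic finiteness, and the transport of the cohomology along $\io'\colon\Ger\hookrightarrow\fgraphs$ are all essentially formal given the master diagram \eqref{master-TwGra} and the lemmas in the appendices. But establishing Tamarkin's rigidity — showing there are genuinely no ``higher'' cohomology classes in the deformation complex of $\Ger$ — requires the honest homological computation (Harrison homology of the cofree cocommutative coalgebra, Appendix \ref{app:Harr}, combined with a careful analysis of how $\La\Lie$-words assemble), and that is where the real content lies. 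I would structure the write-up so that this computation is isolated as a standalone proposition (matching Corollary \ref{cor:Conv-Ger-Gra-conn} referenced in the introduction) and invoked cleanly here.
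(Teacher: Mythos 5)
There is a genuine gap, and it is fatal to the overall strategy. Your reduction hinges on two steps that do not hold. First, in the associated graded of the ``Lie words of length $1$'' filtration, Proposition \ref{prop:Gr-Ger-cO} identifies $\Gr\,\Conv^{\oplus}(\Ger^{\vee},\Gra)$ with $\bigoplus_n\big(\Tw^{\oplus}\Gra(n)\otimes\La^{-2}\Ger^{\hrt}(n)\big)^{S_n}$, with the \emph{full} $\Tw^{\oplus}\Gra(n)$, not $\fgraphs(n)$. The embedding $\fgraphs\hookrightarrow\Tw^{\oplus}\Gra$ is \emph{not} a quasi-isomorphism (it is not marked $\sim$ in the master diagram \eqref{master-TwGra}): already in arity $0$ one has $\fGraphs(0)=\bfzero$ while $\Tw\Gra(0)\cong\bs^{2}\fGC$ carries the entire graph cohomology. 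So you cannot replace $\Tw^{\oplus}\Gra$ by $\Ger$ via Corollary \ref{cor:Ger-fgraphs} at this stage; that replacement is only legitimate after restricting to $\ker(\mR)$, which is exactly what the paper does later in Proposition \ref{prop:Xi-ker-mR} — and that proposition \emph{uses} Theorem \ref{thm:rigidity} as an input (via Corollary \ref{cor:Conv-Ger-Gra-conn}), so your route is also circular. Second, your final input, ``$H^{\bullet}(\Conv(\Ger^{\vee},\Ger))$ is one-dimensional, spanned by the class of $\al$,'' is false: by Theorem \ref{thm:main} this cohomology contains a shifted copy of $H^{\bullet}(\fGC_{\conn})$, and by Remark \ref{rem:grt-H0} it contains $\grt$ and is infinite dimensional. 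The rigidity phenomenon is specific to the $\Gra$-valued convolution algebra and cannot be deduced from the $\Ger$-valued one; indeed, if your chain of quasi-isomorphisms $\Xi\simeq\Conv(\Ger^{\vee},\Gra)$ and $\Xi\simeq\Conv(\Ger^{\vee},\Ger)$ were both correct, Theorem \ref{thm:rigidity} would force $H^{0}(\fGC)=\bfzero$, contradicting the existence of the tetrahedron class (Exercise \ref{exer:tri-val}).

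The paper's actual proof filters $\Conv^{\oplus}(\Ger^{\vee},\Gra)$ by the \emph{total} number of Lie brackets $\mb(w)$ in the second tensor factor (not by $\mL_{1}$), so that the associated graded differential is just $[\G_{\bb}\otimes\{b_1,b_2\},\,-\,]$. It then computes the cohomology of this associated graded directly: $\Gra(n)$ is rewritten as coinvariants of $T_n(S(V_e))$ under $S_e\ltimes(S_2)^{e}$ (Section \ref{sec:Gra-n-SVe}), the complex is identified with coinvariants of the Harrison-type complex $\La^{-2}\Ger(S(V_e))$ whose cohomology is $\bs^{-2}S(\bs^{2}V_e)$ by Theorem \ref{thm:Harr}, and the oddness of the edge variables $\rho_i$ kills every class with $e>1$ in the coinvariants, leaving exactly the class of $\G_{\ed}\otimes b_1b_2$. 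You correctly sensed that Harrison homology is the essential homological input and that the Euler characteristic trick handles the passage from $\Conv^{\oplus}$ to the completed complex, but the reduction you propose in between does not work; you need the bracket-counting filtration and the direct coinvariant computation rather than a transport of the problem to $\Conv(\Ger^{\vee},\Ger)$.
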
 
The proof of this theorem is given below in Subsection \ref{sec:thm:rigidity}. 
It is based on auxiliary constructions which 
are described in Subsections \ref{sec:Euler-Ger-Gra},  \ref{sec:Gra-n-SVe}, 
\ref{sec:Ger-SVe}, and \ref{sec:Gr-Ger-Gra}. 

Before proceeding to these constructions, we will give 
a couple of useful corollaries of Theorem \ref{thm:rigidity} and
discuss its relation to Tamarkin's rigidity from 
\cite[Subsection 5.4.5.]{Hinich}.

First, we claim that
\begin{cor}
\label{cor:Conv-Ger-Gra-conn}
For the cochain complex $\Conv(\Ger^{\vee}, \Gra)_{\conn}$
we have
\begin{equation}
\label{H-Conv-Ger-Gra-conn}
H^{m} \big( \Conv(\Ger^{\vee}, \Gra)_{\conn} \big) = 
\begin{cases}
 \bbK  \qquad {\rm if} \quad  m = 1   \\
 \bfzero \qquad \qquad {\rm otherwise}\,.
\end{cases}
\end{equation}
Furthermore,  $H^1 \big( \Conv(\Ger^{\vee}, \Gra)_{\conn} \big)$ is spanned 
by the cohomology class of the vector $\G_{\ed} \otimes b_1 b_2$\,. 
\end{cor}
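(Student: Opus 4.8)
The plan is to deduce Corollary~\ref{cor:Conv-Ger-Gra-conn} from Theorem~\ref{thm:rigidity} by exactly the same ``connected part'' bookkeeping used throughout the paper (e.g. in the passage from \eqref{fGC-fGC-conn} to the cohomology of $\fGC_{\conn}$, or in \eqref{Def-Ger-as-Sym}). The starting observation is the decomposition \eqref{Ger-Gra-as-Sym},
$$
\Conv(\Ger^{\vee}, \Gra) = \bs^{-2} \wh{S}\big(\bs^2\, \Conv(\Ger^{\vee}, \Gra)_{\conn}\big),
$$
which expresses the full convolution complex as a completed symmetric algebra on (a shift of) its connected part. The differential $\pa = [\io_*(\al), \, ]$ is a derivation of this symmetric algebra structure (the Lie bracket \eqref{Conv-bullet} on $\Conv(\Ger^{\vee},\Gra)$ and the cocommutative coproduct dualizing the $\wh{S}$ are compatible in the standard way), so \eqref{Ger-Gra-as-Sym} is in fact an isomorphism of cochain complexes once $\bs^2\,\Conv(\Ger^{\vee},\Gra)_{\conn}$ is given its induced differential.

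The main step is then a K\"unneth-type argument: applying the (graded, characteristic zero) K\"unneth theorem and the fact that cohomology commutes with symmetric powers over a field of characteristic zero, one gets
$$
H^{\bul}\big(\Conv(\Ger^{\vee}, \Gra)\big) \cong \bs^{-2}\, \wh{S}\big(\bs^2\, H^{\bul}(\Conv(\Ger^{\vee}, \Gra)_{\conn})\big).
$$
Now feed in Theorem~\ref{thm:rigidity}: the left-hand side is one-dimensional, concentrated in degree $1$, and spanned by the class of $\G_{\ed}\otimes b_1 b_2$. On the right-hand side, $\bs^2\, H^{\bul}(\Conv(\Ger^{\vee},\Gra)_{\conn})$ sits in degrees $\geq \text{(something)}$, and a degree count shows the only way a completed symmetric algebra on it can be one-dimensional and concentrated in the degree corresponding to $\G_{\ed}\otimes b_1 b_2$ is for the generating space itself to be exactly that one-dimensional space: any generator in another degree, or a second generator, would contribute extra monomials (products of generators, or the generator itself in a new degree) to $\wh{S}$, contradicting $\dim = 1$. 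Since $\G_{\ed}\otimes b_1 b_2$ is a connected monomial, its class lies in $H^1(\Conv(\Ger^{\vee},\Gra)_{\conn})$, which therefore equals $\bbK$ in degree $1$ and vanishes elsewhere, giving \eqref{H-Conv-Ger-Gra-conn} with the stated generator.

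The step I expect to require the most care is the degree/counting argument on $\wh{S}$: one must be sure that no cancellation or coincidence of degrees among symmetric powers of a hypothetical extra generator could conspire to reproduce a one-dimensional total cohomology. Here Remark~\ref{rem:degree-bound}-style degree bounds (every connected monomial in $\Ger(n)\otimes\La^{-2}\Ger(n)$ has nonnegative degree, and similarly for $\Gra$) plus the Euler-characteristic decomposition of Corollary~\ref{cor:Euler-GerGer} pin down the grading tightly enough: products of two or more generators land in strictly higher total degree than either factor, so the symmetric algebra on anything of positive dimension beyond the single class of $\G_{\ed}\otimes b_1 b_2$ would be infinite-dimensional. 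An alternative, perhaps cleaner, route avoiding the direct symmetric-algebra manipulation is to argue that the cone of the inclusion $\Conv(\Ger^{\vee},\Gra)_{\conn} \hookrightarrow \Conv(\Ger^{\vee},\Gra)$ is a direct summand in a complex built out of the connected part in a way controlled by Claim~\ref{cl:Cone}; but given that \eqref{Ger-Gra-as-Sym} is already in hand, the K\"unneth argument is the most direct, and that is the one I would write up.
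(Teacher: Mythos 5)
Your conclusion is right and your overall strategy (reduce to Theorem \ref{thm:rigidity} via the connected/disconnected decomposition) is the same as the paper's, but the mechanism you use is considerably heavier than what is needed, and one of your supporting claims is false. The paper's proof is two lines: $\Conv(\Ger^{\vee},\Gra)_{\conn}$ is a \emph{direct summand} of $\Conv(\Ger^{\vee},\Gra)$ as a cochain complex, so by Theorem \ref{thm:rigidity} every cocycle in the connected part is cohomologous, \emph{within the connected part} (project the primitive of the coboundary onto the connected summand), to $\la\,\G_{\ed}\otimes b_1b_2$; and this cocycle is non-trivial in $\Conv(\Ger^{\vee},\Gra)$, hence a fortiori in the subcomplex. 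Note that this direct-summand observation is exactly the $S^1$-piece of the decomposition \eqref{Ger-Gra-as-Sym} you invoke, so your K\"unneth argument, once you look at the $S^1$ component of $\wh{S}\big(\bs^2 H^{\bul}(\Conv(\Ger^{\vee},\Gra)_{\conn})\big)$, collapses to the same statement: $H^{\bul}(\Conv(\Ger^{\vee},\Gra)_{\conn})$ sits as a direct summand of $H^{\bul}(\Conv(\Ger^{\vee},\Gra))=\bbK$, so it has dimension at most one and is concentrated in degree $1$, and it is nonzero because $\G_{\ed}\otimes b_1b_2$ is a connected non-trivial cocycle. The elaborate ``inverse problem'' on the completed symmetric algebra, with its worry about coincidences of degrees among symmetric powers, is not needed.

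The concrete flaw: you assert a Remark \ref{rem:degree-bound}-style bound ``every connected monomial \dots has nonnegative degree, and similarly for $\Gra$.'' The $\Gra$ half is false. A graph $\G\in\gra_n$ contributes degree $-e(\G)$, so connected monomials in $\Gra(n)\otimes\La^{-2}\Ger(n)$ have arbitrarily negative degree; already $\G_{\lp}\otimes b_1$ has degree $-1$. As written, the degree-counting step of your write-up therefore does not stand. Fortunately it is not load-bearing: the argument via the $S^1$-summand above (equivalently, the paper's direct-summand argument) closes the gap without any degree estimate on $\Gra$.
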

\begin{proof}
Due to Theorem \ref{thm:rigidity}, every cocycle $c \in \Conv(\Ger^{\vee}, \Gra)$
is cohomologous to a cocycle of the form
\begin{equation}
\label{la-Ged-b1b2}
\la\, \G_{\ed} \otimes b_1 b_2\,, \qquad \la \in \bbK
\end{equation}

On the other hand, the subcomplex $\Conv(\Ger^{\vee}, \Gra)_{\conn}$ 
is a direct summand in $\Conv(\Ger^{\vee}, \Gra)$. Therefore every 
cocycle  $c \in \Conv(\Ger^{\vee}, \Gra)_{\conn}$
is cohomologous to a cocycle of the form \eqref{la-Ged-b1b2}. 

Thus, since the cocycle 
$$ 
\G_{\ed} \otimes b_1 b_2 \in \Conv(\Ger^{\vee}, \Gra)_{\conn}
$$
is non-trivial the desired statement about cohomology of 
$\Conv(\Ger^{\vee}, \Gra)_{\conn}$ follows. 
\end{proof}

Following the terminology of Section \ref{sec:Conv-exact} the
Lie algebra  $\Conv(\Ger^{\vee}, \Gra)$ is equipped with 
the descending filtration ``by arity'': 
\begin{equation}
\label{arity-Ger-Gra}
\cF_m \Conv(\Ger^{\vee}, \Gra) : = 
\{
f\in \Conv(\Ger^{\vee}, \Gra) ~|~ f(w) = 0 \quad \forall ~ w \in \Ger(n), ~~ n \le m  
\}\,.
\end{equation}
In other words,
\begin{equation}
\label{arity-Ger-Gra-new}
\cF_m \Conv(\Ger^{\vee}, \Gra) = 
\prod_{n \ge m+1} \Big( \Gra(n) \otimes \La^{-2} \Ger(n)
\Big)^{S_n} \,.
\end{equation}
Furthermore, since the Maurer-Cartan element \eqref{MC-Ger-Gra}
belongs to $\cF_1 \Conv(\Ger^{\vee}, \Gra)$, the differential 
\eqref{diff-Ger-Gra} is compatible with the filtration \eqref{arity-Ger-Gra}.

Theorem \ref{thm:rigidity} implies that 
\begin{cor}
\label{cor:F2-Ger-Gra}
The cochain complex 
$$
\cF_2 \Conv(\Ger^{\vee}, \Gra) = 
\prod_{n \ge 3} \Big( \Gra(n) \otimes \La^{-2} \Ger(n)
\Big)^{S_n} 
$$
with the differential \eqref{diff-Ger-Gra} is acyclic. 
\end{cor}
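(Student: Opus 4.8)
The statement to prove is Corollary \ref{cor:F2-Ger-Gra}: the subcomplex $\cF_2 \Conv(\Ger^{\vee}, \Gra)$, consisting of the arity $\ge 3$ part of $\Conv(\Ger^{\vee}, \Gra)$, is acyclic. The plan is to bootstrap this from Theorem \ref{thm:rigidity} using the short exact sequence of complexes relating $\cF_2$ to the full convolution complex and to the low-arity pieces.

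First I would record that the filtration \eqref{arity-Ger-Gra} gives a short exact sequence of cochain complexes
\begin{equation*}
0 \to \cF_2 \Conv(\Ger^{\vee}, \Gra) \to \Conv(\Ger^{\vee}, \Gra) \to \Conv(\Ger^{\vee}, \Gra)\big/\cF_2 \Conv(\Ger^{\vee}, \Gra) \to 0\,,
\end{equation*}
where the quotient is precisely the part of arities $n \le 2$, namely $\big(\Gra(1)\otimes \La^{-2}\Ger(1)\big)^{S_1} \oplus \big(\Gra(2)\otimes \La^{-2}\Ger(2)\big)^{S_2}$ equipped with the induced differential. Since the differential \eqref{diff-Ger-Gra} raises arity by one, this quotient complex is concentrated in arities $1$ and $2$ with $\pa$ mapping the arity-$1$ part into the arity-$2$ part and vanishing on the arity-$2$ part (as there is no arity-$3$ component in the quotient). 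So I would compute the cohomology of this small complex directly: it has two terms, and the only possibly nonzero differential is $\pa : \big(\Gra(1)\otimes \La^{-2}\Ger(1)\big)^{S_1} \to \big(\Gra(2)\otimes \La^{-2}\Ger(2)\big)^{S_2}$. A concrete bookkeeping identifies the source: $\Gra(1)$ is spanned by the single-vertex graph $\G_{\bul}$ together with graphs with loops, and $\La^{-2}\Ger(1) = \La^{-2}\bbK$, so the arity-$1$ part is one-dimensional in the relevant (loop-free) piece, spanned by $\G_{\bul}\otimes b_1$; its differential is $[\io_*(\al), \G_{\bul}\otimes b_1]$, which by Exercise \ref{exer:G-bul-G-lp}-type reasoning equals $\G_{\ed}\otimes b_1 b_2$ up to sign. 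Hence $\pa$ on the arity-$1$ part is injective with image spanned by $\G_{\ed}\otimes b_1 b_2$, and the cohomology of the quotient complex is: zero in degrees $\ne 1$, and in degree $1$ it is the arity-$2$ part modulo $\bbK\,\G_{\ed}\otimes b_1 b_2$.

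Next I would invoke Theorem \ref{thm:rigidity}: $H^m\big(\Conv(\Ger^{\vee}, \Gra)\big)$ is $\bbK$ for $m=1$, spanned by the class of $\G_{\ed}\otimes b_1 b_2$, and zero otherwise. Feeding both computations into the long exact sequence in cohomology associated to the short exact sequence above, one sees that the connecting and restriction maps force $H^m\big(\cF_2 \Conv(\Ger^{\vee}, \Gra)\big) = 0$ for all $m$. Concretely: in degrees $\ne 1$ both the middle and quotient terms vanish, so $H^m(\cF_2) = 0$ immediately; in degree $1$ the map $H^1(\Conv(\Ger^{\vee}, \Gra)) \to H^1(\text{quotient})$ sends the generator $[\G_{\ed}\otimes b_1 b_2]$ to the class of $\G_{\ed}\otimes b_1 b_2$ in the quotient, but that class is \emph{zero} in the quotient cohomology by our computation above (it is the image of $\pa(\G_{\bul}\otimes b_1)$ when $\G_{\bul}\otimes b_1$ is viewed mod $\cF_2$... wait — one must be careful, $\G_{\bul}\otimes b_1$ lies in arity $1$ which survives in the quotient, and indeed $\pa$ of it in the quotient is $\G_{\ed}\otimes b_1 b_2$, so that class does die). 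Thus the long exact sequence segment $H^0(\text{quotient}) \to H^1(\cF_2) \to H^1(\Conv) \to H^1(\text{quotient})$ reads $0 \to H^1(\cF_2) \to \bbK \to (\text{arity-}2)/\bbK$, and one checks the last map is injective — or rather, that the class $[\G_{\ed}\otimes b_1 b_2]$ maps to something that pins down $H^1(\cF_2)$. The cleanest route is actually to observe that every cocycle of $\cF_2 \Conv$ is a cocycle of $\Conv$, hence cohomologous in $\Conv$ to $\la\,\G_{\ed}\otimes b_1 b_2$; but $\G_{\ed}\otimes b_1 b_2$ has arity $2$, not $\ge 3$, so if $\la \ne 0$ the coboundary realizing this cohomology has an arity-$1$ component, which is impossible since $\cF_2\Conv$ sits in arities $\ge 3$ and the coboundary of an arity-$\ge 2$ element... — this needs the filtration argument to make rigorous, which is exactly the long exact sequence. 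I would present it via the long exact sequence.

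The main obstacle I anticipate is the careful treatment of the low-arity truncation: one must correctly identify $\Gra(1)$ and $\Gra(2)$ together with their $S_1$- and $S_2$-invariants and the precise action of $\pa = [\io_*(\al), -]$ on these finite-dimensional spaces, including sign conventions, and verify that the single nontrivial class of Theorem \ref{thm:rigidity} lives in the arity-$2$ part and is killed upon restriction to the quotient complex. Everything else — the short exact sequence, the long exact sequence, and the vanishing bookkeeping — is formal homological algebra once that identification is in hand. An alternative, perhaps smoother, packaging is to note that $\Conv(\Ger^{\vee}, \Gra) = \bbK\L \G_{\bul}\otimes b_1\R \oplus \cF_1'$ with $\pa(\G_{\bul}\otimes b_1) = \io_*(\al)$ up to the loop-graph summand, mirroring the decomposition \eqref{H-Def-Ger}, and then peel off arity $2$ analogously; I would choose whichever of these two presentations makes the sign bookkeeping shortest.
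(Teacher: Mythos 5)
Your overall strategy -- the short exact sequence $0 \to \cF_2 \Conv(\Ger^{\vee}, \Gra) \to \Conv(\Ger^{\vee}, \Gra) \to Q \to 0$ with $Q$ the arity $\le 2$ quotient, combined with Theorem \ref{thm:rigidity} -- is the same as the paper's, and the "cleanest route" you sketch at the end is essentially the paper's actual proof. But your execution contains a computational error that reverses the key fact on which that proof turns. You assert that $\pa(\G_{\bul}\otimes b_1)$ equals $\G_{\ed}\otimes b_1 b_2$ up to sign, and conclude that the class of $\G_{\ed}\otimes b_1 b_2$ \emph{dies} in the quotient. In fact $\G_{\bul}\otimes b_1 = \io_*(a_1\otimes b_1)$, and since $\io_*$ is a map of dg Lie algebras and $\pa(a_1\otimes b_1)=\al$ (see Section \ref{sec:Def-Ger}), one has
$$
\pa(\G_{\bul}\otimes b_1) \;=\; \io_*(\al) \;=\; \G_{\ed}\otimes b_1 b_2 \;+\; \G_{\bb}\otimes\{b_1,b_2\}\,,
$$
which is \emph{not} proportional to $\G_{\ed}\otimes b_1 b_2$. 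Hence $\G_{\ed}\otimes b_1 b_2$ represents a \emph{non-trivial} cocycle in $Q$ (this is exactly the paper's middle step), and it is precisely this non-vanishing that forces $\la=0$ when one writes a cocycle $c\in\cF_2$ as $c=\la\,\G_{\ed}\otimes b_1 b_2+\pa(c_1)$ and projects to $Q$. With your (incorrect) version, the restriction map $H^1(\Conv(\Ger^\vee,\Gra))\to H^1(Q)$ would be zero, the long exact sequence would give no control over $\la$, and the argument could not close.

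A second, related problem is your claim that $H^{m}(Q)=0$ for $m\ne 1$. The arity-$2$ part of $Q$ is the $8$-dimensional space $\big(\Gra(2)\otimes\La^{-2}\Ger(2)\big)^{S_2}$, spread over degrees $-1$ through $2$ (e.g. $\G_{\bb}\otimes b_1 b_2$ in degree $2$, $\G_{\ed}\otimes\{b_1,b_2\}$ in degree $0$); every one of these elements is a cocycle in $Q$ because the differential raises arity by exactly one, while the arity-$1$ part of $Q$ is only $2$-dimensional (spanned by $\G_{\bul}\otimes b_1$ in degree $0$ and $\G_{\lp}\otimes b_1$ in degree $-1$) and so can kill at most a $2$-dimensional subspace. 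So the full long-exact-sequence route requires an honest finite computation of $H^{\bul}(Q)$ in every degree, which your proposal does not carry out (and which is delicate: for instance $\pa(\G_{\lp}\otimes b_1)$ contains a genuine $\G_{\ed}\otimes\{b_1,b_2\}$ term, because inserting a graph into the vertex supporting a loop can separate the loop's endpoints). The paper deliberately avoids this by extracting only the single fact it needs -- that $[\G_{\ed}\otimes b_1 b_2]\ne 0$ in the quotient -- rather than the full cohomology of $Q$; you should do the same.
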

\begin{proof}
Let $c$ be a cocycle in $\cF_2 \Conv(\Ger^{\vee}, \Gra)$\,. 

Due to Theorem \ref{thm:rigidity} there exists a vector 
$c_1 \in \Conv(\Ger^{\vee}, \Gra)$ and a scalar $\la\in \bbK$
such that 
\begin{equation}
\label{c-la-c1}
c = \la\, \G_{\ed} \otimes b_1 b_2  +  \pa(c_1)\,.
\end{equation}

On the other hand, it is easy to see that $\G_{\ed} \otimes b_1 b_2 $
represents a non-trivial cocycle in the quotient 
$$ 
\Conv(\Ger^{\vee}, \Gra) ~\big/~ \cF_2 \Conv(\Ger^{\vee}, \Gra)\,.
$$
Hence $\la=0$ and $c$ is exact.
\end{proof}

\subsection{Why rigidity?}
Let $\PV_d$ be the  graded vector space of polyvector fields on the affine space 
$\bbK^d$\,. This graded vector space carries the canonical structure of 
a Gerstenhaber algebra. The multiplication is the exterior multiplication of 
polyvector fields and the Lie bracket is the well-known Schouten bracket.   

Let recall from \cite{Thomas} or \cite[Section 3.5]{stable1} that the operad 
$\Gra$ acts on $\PV_d$\,. Moreover, the vector $\G_{\ed}$ (resp. $\G_{\bb}$) gives us 
the Schouten bracket (resp. the exterior multiplication) on $\PV_d$\,. 

Let us suppose that we are interested in $\Ger_{\infty}$-structures $\cQ$ on
$\PV_d$ which satisfy these two properties:
\begin{itemize}

\item  $\cQ$ factors through the canonical map 
$\Gra \to \End_{\PV_d}$;

\item the binary operations of $\cQ$ on $\PV_d$ coincide with the  
Schouten bracket and the exterior multiplication.

\end{itemize}

Using Corollary \ref{cor:F2-Ger-Gra}, it is not hard to 
prove that any $\Ger_{\infty}$-structure $\cQ$ on
$\PV_d$ satisfying the above properties is homotopy 
equivalent to the canonical Gerstenhaber algebra structure on 
$\PV_d$\,. 

This property is an analog of the rigidity\footnote{This rigidity property  is one 
of the corner stones of Tamarkin's proof \cite{Hinich}, \cite{Dima-Proof} of 
Kontsevich's formality theorem \cite{K}.}
of the Gerstenhaber algebra $\PV_d$ of polyvector fields in 
the homotopy category.  We refer the reader to \cite{stable11} for 
more details.

\subsection{Decomposition of $\Conv(\Ger^{\vee}, \Gra)$  with respect to the Euler characteristic}
\label{sec:Euler-Ger-Gra}

Let $\chi$ be an integer and let $c$ be a vector 
$$
c \in  \Conv(\Ger^{\vee}, \Gra)
$$
for which the image
$$
1 \otimes \io (c) \in  \prod_{n\ge 1} 
 \Gra(n) \otimes \La^{-2} \Gra(n) 
$$ 
is a (possibly infinite) sum of graphs whose 
Euler characteristic\footnote{As above, both solid and dashed edges 
enter with the same weight.} equals $\chi$. 
We denote by 
$$
\Conv(\Ger^{\vee}, \Gra)_{\chi}
$$ 
the subspace of such vectors. 
For example, both summands in  $\io \otimes \io (\al)$
have Euler characteristic $1$\,. Hence
$$
\io_*(\al) \in \Conv(\Ger^{\vee}, \Gra)_{1}\,.
$$
  
It is not hard to see that, for every integer $\chi$, 
the subspace $\Conv(\Ger^{\vee}, \Gra)_{\chi}$ 
is a subcomplex of $\Conv(\Ger^{\vee}, \Gra)$\,. 

Let us recall that we represent vectors in  the space
\begin{equation}
\label{Gra-Gra-n}
 \Gra(n) \otimes \La^{-2} \Gra(n) 
\end{equation}
by linear combinations of labeled graphs with two types 
of edges:  solid edges for left tensor factors and dashed 
edges for right tensor factors. 

Let us denote by 
\begin{equation}
\label{Gra-Gra-n-e}
\big( \Gra(n) \otimes \La^{-2} \Gra(n)  \big)_e
\end{equation}
the subspace of \eqref{Gra-Gra-n} which is spanned 
by graphs whose total number of edges (solid and dashed)
equals $e$\,.  It is obvious that the subspace \eqref{Gra-Gra-n-e}
is finite dimensional.

We have the following proposition.
\begin{prop}
\label{prop:Euler-GerGra}
For every pair of integers $m$, $\chi$ the 
subspace $\Conv(\Ger^{\vee}, \Gra)^m_{\chi}$ of degree $m$ 
vectors in $\Conv(\Ger^{\vee}, \Gra)_{\chi}$ is isomorphic 
to the subspace of  \eqref{Gra-Gra-n-e} with 
\begin{equation}
\label{n-deg-chi-Gra}
n = m -\chi + 2
\end{equation}
and
\begin{equation}
\label{e-deg-chi}
e = m - 2 \chi + 2\,.
\end{equation}
In particular,  $\Conv(\Ger^{\vee}, \Gra)^m_{\chi}$ is finite dimensional. 
\end{prop}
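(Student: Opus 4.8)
The plan is to mimic the proof of Proposition \ref{prop:Euler-GerGer} almost verbatim, since the only new ingredient here is the presence of graphs (rather than Gerstenhaber monomials) in the target, and the bookkeeping of edges. First I would pick a monomial $\G \otimes w$ representing a basis vector of $\Conv(\Ger^{\vee}, \Gra)^m_{\chi}$, where $\G$ is a labeled graph in $\gra_n$ (with solid edges) and $w$ is a Gerstenhaber monomial in $\La^{-2}\Ger(n)$ whose image under $\io$ is a sum of labeled graphs with dashed edges. I would then record the two basic degree/Euler identities. Writing $e_s = e(\G)$ for the number of solid edges and letting $t_w$ denote the number of Lie words in $w$, the standard computation (as in \eqref{deg-v-w} and \eqref{b-vw-t-vw}) gives $|w| = n + t_w - 2$ and $\mb(w) = n - t_w$; since each solid edge carries degree $-1$ in $\Gra(n)$, we have $|\G| = -e_s$, so the total degree is
$$
m = |\G \otimes w| = -e_s + n + t_w - 2 .
$$
For the Euler characteristic, each Lie bracket in $w$ contributes one dashed edge under $\io$, so $\io(w)$ is a sum of graphs with $\mb(w) = n - t_w$ dashed edges, and the total number of edges of each graph in $1 \otimes \io(\G \otimes w)$ is $e = e_s + (n - t_w)$, while its Euler characteristic is $\chi = n - e = n - e_s - (n - t_w) = t_w - e_s$.

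Next I would solve these relations. From $m = -e_s + n + t_w - 2$ and $\chi = t_w - e_s$ we get $m - \chi = n + (t_w - e_s) - 2 - (t_w - e_s) = \ldots$ — more carefully, subtracting the expression for $\chi$ from that for $m$ yields $m - \chi = (-e_s + n + t_w - 2) - (t_w - e_s) = n - 2$, hence $n = m - \chi + 2$, which is \eqref{n-deg-chi-Gra}. Then $e = e_s + n - t_w$; to pin $e$ down I would use $\chi = n - e$ directly, giving $e = n - \chi = (m - \chi + 2) - \chi = m - 2\chi + 2$, which is \eqref{e-deg-chi}. So a combination of degree and Euler characteristic forces both the arity $n$ and the total edge number $e$, and therefore the degree-$m$ part of $\Conv(\Ger^{\vee},\Gra)_\chi$ is identified with the $S_n$-invariants of the subspace $\big(\Gra(n) \otimes \La^{-2}\Gra(n)\big)_e$ — more precisely with the image of $\big(\Gra(n)\otimes\La^{-2}\Ger(n)\big)^{S_n}$ inside it — which is finite dimensional because there are only finitely many labeled graphs on $n$ vertices with $e$ edges.

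The only genuinely careful point, and the one I would phrase most explicitly, is the claim that the map $1\otimes\io$ does not lose degree or Euler-characteristic information: since $\io : \Ger \to \Gra$ is injective by Proposition \ref{prop:Ger-Gra}, the induced map on $\Conv(\Ger^{\vee}, \Ger) \to \Conv(\Ger^{\vee},\Gra)$ and the further embedding into \eqref{Gra-Gra} are injective, so "the subspace of \eqref{Gra-Gra-n-e}" in the statement should be read as the image of $\Conv(\Ger^{\vee},\Gra)^m_\chi$ under this embedding, and finite dimensionality of the latter follows. I do not expect any real obstacle; the argument is a linear-algebra degree count parallel to the proof of Proposition \ref{prop:Euler-GerGer}, and the mild subtlety is simply making sure the dashed-edge count $\mb(w)$ and the solid-edge count $e(\G)$ are combined consistently in both the degree formula and the Euler-characteristic formula before eliminating $e_s$ and $t_w$.
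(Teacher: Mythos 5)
Your proposal is correct and follows essentially the same route as the paper's proof: both express $m$ and $\chi$ in terms of $n$, $e(\G)$, $t_w$ and $\mb(w)$ via the identities $|w|=n+t_w-2$ and $\mb(w)=n-t_w$, eliminate to get $n=m-\chi+2$ and $e=m-2\chi+2$, and conclude finite dimensionality from the injectivity of $\io$ together with the finiteness of the set of graphs with fixed vertex and edge counts. No gaps.
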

\begin{proof}
Let $\G$ be an graph in $\gra_n$ representing a vector in $\Gra(n)$ 
and $w$ be a monomial in $\La^{-2}\Ger(n)$\,. As above, 
$t_w$ denotes the total number of Lie monomials  and 
$\mb(w)$ denotes the the total number of brackets in $w$\,.

Let us suppose that $\G \otimes w$ carries degree $m$
and $\G \otimes \io(w)$ has Euler characteristic $\chi$. 
In other words, 
$$
m = - e_{\G} + |w|
$$
and
\begin{equation}
\label{chi-G-w}
\chi = n - e_{\G} - \mb(w)\,,
\end{equation}
where $e_{\G}$ is the number of edges of $\G$\,.

Due to equations \eqref{deg-v-w} and \eqref{b-vw-t-vw}
\begin{equation}
\label{degw-tw-bw-n}
|w| = n+ t_w - 2\,, \qquad t_w = n - \mb(w)\,. 
\end{equation}
 
Therefore, 
$$
m = n+ t_w - 2 - e_{\G} = 2n -\mb(w) -e_{\G} -2 = 
n + \chi - 2\,.
$$

Thus 
\begin{equation}
\label{n-deg-chi-Gra-new}
n = m -\chi + 2\,.
\end{equation}

Using \eqref{chi-G-w} and \eqref{n-deg-chi-Gra-new} we deduce 
that 
$$
e_{\G} + \mb(w) =  m - 2\chi + 2\,.
$$

Hence  $\G \otimes \io(w)$ is a vector in  \eqref{Gra-Gra-n-e} 
with numbers $n$ and $e$ given by equations 
\eqref{n-deg-chi-Gra} and \eqref{e-deg-chi}.

Thus, the proposition follows from the fact that
the map
$$
\io :  \La^{-2}\Ger(n) \to \La^{-2}\Gra(n)
$$
is injective.
\end{proof}

Proposition \ref{prop:Euler-GerGra} has the following useful corollary.
\begin{cor}
\label{cor:Euler-GerGra}
The cochain complex $\Conv(\Ger^{\vee}, \Gra)$ splits into the
following product of its subcomplexes:
\begin{equation}
\label{Euler-Ger-Gra}
\Conv(\Ger^{\vee}, \Gra) = \prod_{\chi \in \bbZ}
\Conv(\Ger^{\vee}, \Gra)_{\chi}\,.
\end{equation} 
\end{cor}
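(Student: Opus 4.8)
\textbf{Proof proposal for Corollary \ref{cor:Euler-GerGra}.}
The plan is to mirror exactly the argument already used for Corollary \ref{cor:Euler-GerGer} (the analogous statement for $\Conv(\Ger^{\vee}, \Ger)$), substituting $\Gra$ for $\Ger$ in the target and invoking Proposition \ref{prop:Euler-GerGra} in place of Proposition \ref{prop:Euler-GerGer}. Concretely, I would prove the two inclusions
$$
\Conv(\Ger^{\vee}, \Gra) \subset \prod_{\chi \in \bbZ} \Conv(\Ger^{\vee}, \Gra)_{\chi}
\qquad \text{and} \qquad
\prod_{\chi \in \bbZ} \Conv(\Ger^{\vee}, \Gra)_{\chi} \subset \Conv(\Ger^{\vee}, \Gra)
$$
separately.

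For the first inclusion, take a homogeneous vector $\ga = \sum_{n \ge 1} \ga_n$ of degree $m$ in $\Conv(\Ger^{\vee}, \Gra)$, where each $\ga_n$ is an $S_n$-invariant element of $\Gra(n) \otimes \La^{-2}\Ger(n)$. By Proposition \ref{prop:Euler-GerGra}, equation \eqref{n-deg-chi-Gra}, a degree $m$ element living in arity $n$ is forced to have Euler characteristic $\chi = m - n + 2$; hence $\ga_n$ belongs to $\Conv(\Ger^{\vee}, \Gra)_{\chi}$ with that specific $\chi$, and since distinct arities give distinct $\chi$, the sum $\ga = \sum_n \ga_n$ is precisely the decomposition of $\ga$ into its components in the $\Conv(\Ger^{\vee}, \Gra)_{\chi}$. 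This shows $\ga$ lies in the product on the right.

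For the reverse inclusion, start with an element $\ga$ of the product $\prod_{\chi} \Conv(\Ger^{\vee}, \Gra)_{\chi}$; I would first reduce to the homogeneous case by projecting onto a fixed degree $m$. For fixed $m$, Proposition \ref{prop:Euler-GerGra} shows that for each $\chi$ the degree $m$ part of $\Conv(\Ger^{\vee}, \Gra)_{\chi}$ sits in the single arity $n = m - \chi + 2$, so the degree $m$ components across all $\chi$ are supported in pairwise distinct arities; assembling them gives a well-defined element $\sum_{n} \ga_n$ of $\Conv(\Ger^{\vee}, \Gra) = \prod_{n \ge 1}(\Gra(n) \otimes \La^{-2}\Ger(n))^{S_n}$. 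That the decomposition is by subcomplexes has already been recorded in Subsection \ref{sec:Euler-Ger-Gra} (the differential \eqref{diff-Ger-Gra} preserves Euler characteristic because the Maurer-Cartan element $\io_*(\al)$ lies in $\Conv(\Ger^{\vee}, \Gra)_1$ and insertion of a graph with $e$ edges shifts the number of vertices and the number of edges by matching amounts). I do not anticipate a genuine obstacle here: this is a bookkeeping argument identical in structure to Corollary \ref{cor:Euler-GerGer}, and the only point requiring any care is the observation that degree plus Euler characteristic pins down the arity uniquely (equation \eqref{n-deg-chi-Gra}), which is exactly what makes the product over $\chi$ and the product over $n$ coincide.
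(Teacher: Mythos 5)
Your proposal is correct and follows exactly the route the paper intends: the paper itself only says the proof is "very similar to that of Corollary \ref{cor:Euler-GerGer}" and leaves it as Exercise \ref{exer:Euler-GerGra}, and your argument is precisely that bookkeeping — the relation $\chi = m - n + 2$ from Proposition \ref{prop:Euler-GerGra} forces each arity component of a degree-$m$ vector into a single $\Conv(\Ger^{\vee}, \Gra)_{\chi}$, and conversely identifies the product over $\chi$ with the product over $n$. Nothing further is needed.
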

\begin{proof}
The proof of this statement is very similar to 
that of Corollary \ref{cor:Euler-GerGer}. So we leave it 
as an exercise for the reader.
\end{proof}
\begin{exer}
\label{exer:Euler-GerGra}
Prove Corollary \ref{cor:Euler-GerGra}. 
\end{exer}

Just as for $\Tw\Gra$ and  $\Conv(\Ger^{\vee}, \Ger)$, 
the combination of Proposition \ref{prop:Euler-GerGra}
and Corollary \ref{cor:Euler-GerGra} will allow us to reduce questions 
about cocycles in $\Conv(\Ger^{\vee}, \Gra) $ to the corresponding 
questions about cocycles in its subcomplex
\begin{equation}
\label{Ger-Gra-oplus}
\Conv^{\oplus}(\Ger^{\vee}, \Gra) : =  \bigoplus_{n \ge 1}
\big( \Gra(n) \otimes \La^{-2}\Ger(n) \big)^{S_n}\,.
\end{equation}

\subsection{An alternative description of $\Gra(n)$} 
\label{sec:Gra-n-SVe}

Let $e$ be a positive integer and 
\begin{equation}
\label{rhos}
\{ \rho_1, \rho'_1, \rho_2, \rho'_2, \dots, \rho_e, \rho'_e  \}
\end{equation}
be a set of auxiliary variables with degrees $|\rho_i |=-1$ and $|\rho'_i| = 0$\,.

We will need the symmetric algebra 
\begin{equation}
\label{S-V-e}
S(V_e) = \bbK \oplus V_e \oplus S^2 (V_e) \oplus S^3 (V_e) \oplus \dots  
\end{equation}
of the vector space 
\begin{equation}
\label{V-e}
V_e = \bbK\L \rho_1, \rho'_1, \rho_2, \rho'_2, \dots, \rho_e, \rho'_e  \R\,,
\end{equation}
spanned by elements on the set \eqref{rhos}.

We view $S(V_e)$ as the cocommutative coalgebra 
with the standard comultiplication.  

Let us denote by $T_n \big(S(V_e) \big) $ the subspace 
\begin{equation}
\label{T-n-SVe}
T_n \big(S(V_e) \big) \subset \big( S(V_e) \big)^{\otimes n}
\end{equation}
of $ \big(S(V_e) \big)^{\otimes n}$ which is spanned by 
monomials 
\begin{equation}
\label{XXX}
X_1 \otimes X_2 \otimes \dots \otimes X_n
\end{equation}
in which each variable from the set \eqref{rhos} appears 
exactly once. 

For example, if $e=3$ then
$$
\rho'_1 \rho_2 \otimes \rho'_2 \rho_1 \otimes 1 \otimes \rho_3 \rho'_3  \in 
T_4 \big(S(V_e) \big)\,, 
\qquad 
\rho'_1 \rho_2 \rho'_3 \otimes \rho'_2 \rho_1 \rho_3 
\in T_2 \big(S(V_e) \big)\,,
$$
and 
$$
\rho_1 \rho_2 \rho_3 \otimes 1 \otimes \rho'_2 \rho_1 \otimes \rho_3 \rho'_3  \notin
T_4 \big(S(V_e) \big)\,, \qquad 
\rho_1 \rho_2 \rho'_3 \otimes  \rho'_1 \rho_3 
\notin T_2 \big(S(V_e) \big)\,.
$$

It makes sense to include the degenerate case $e=0$ in our 
consideration. If $e=0$ then the set \eqref{rhos} is empty,
$$
S(V_e) = \bbK\,,
$$
and 
$$
T_n \big(S(V_e) \big) = \bbK^{\otimes n} \cong \bbK\,.
$$

Given a monomial \eqref{XXX} we form 
a labeled graph $\G'$ with $n$ vertices and $e$ \und{directed} edges following 
these two steps: 
\begin{itemize}
\item  we declare that edge $i$ originates at the $j$-th vertex if
the factor $X_j$ involves the variable $\rho_i$; 

\item we declare that edge $i$ terminates at the $k$-th vertex if
the factor $X_k$ involves the variable $\rho'_i$.

\end{itemize}

Since each variable in the set \eqref{rhos} appears in the monomial 
\eqref{XXX} exactly once, these two steps give us a 
labeled graph with $n$ vertices and with $e$ directed edges. 

Notice that we use indices of the variables \eqref{rhos} 
to keep track of edges of $\G'$. 
This bijection between the set of edges of $\G'$ and 
natural numbers $1,2, \dots, e$ plays a purely 
auxiliary role and we do not keep it 
for $\G'$ as a piece of additional data.  

It is more important to observe that 
the set $E(\G')$ of edges of $\G'$ is equipped with an order
up to an even permutation. This order is 
defined by the following rule: 
\begin{itemize}

\item if the initial vertex of edge $i_1$ carries a 
smaller label than the initial vertex of edge $i_2$
then we set $i_1 < i_2$; 

\item if edges $i_1$ and $i_2$ originate from the same 
vertex (say, vertex $j$) and $\rho_{i_1}$ stands to 
the left from $\rho_{i_2}$ in the factor $X_j$ then we also 
set $i_1 < i_2$\,.
  
\end{itemize}

For example, the monomial $\rho'_1 \rho_2 \otimes 
\rho'_2 \rho_1 \otimes 1 \otimes \rho_3 \rho'_3$
corresponds to the labeled directed graph $\G'$ depicted 
on figure \ref{fig:Gpr-exam}.  
\begin{figure}[htp]
\centering
\begin{tikzpicture}[scale=0.5, >=stealth']
\tikzstyle{w}=[circle, draw, minimum size=4, inner sep=1]
\tikzstyle{b}=[circle, draw, fill, minimum size=4, inner sep=1]
\node [b] (v1) at (0.5,0) {};
\draw (0.5,0.6) node[anchor=center] {{\small $1$}};
\node [b] (v2) at (3.5,0) {};
\draw (3.5,0.6) node[anchor=center] {{\small $2$}};
\node [b] (v3) at (6,0) {};
\draw (6,-0.6) node[anchor=center] {{\small $3$}};
\node [b] (v4) at (8,0) {};
\draw (8,-0.6) node[anchor=center] {{\small $4$}};
\draw [->] (v1) .. controls (1.5, 0.5) and (2.5,0.5) .. (v2);
\draw (2, 0.8) node[anchor=center] {{\small $i$}};
\draw [<-] (v1) .. controls (1.5, -0.5) and (2.5,-0.5) .. (v2);
\draw (2, -0.8) node[anchor=center] {{\small $ii$}};
\draw (8,0.5) circle (0.5);
\draw (8, 1.4) node[anchor=center] {{\small $iii$}};
\end{tikzpicture}
\caption{\label{fig:Gpr-exam} The directed labeled graph corresponding 
to the monomial $\rho'_1 \rho_2 \otimes \rho'_2 \rho_1 \otimes 1 \otimes \rho_3 \rho'_3$}
\end{figure}

Let us denote by $\G$ the undirected graph (with an order
on the set of edges up to an even permutation) which is obtained 
from $\G'$ by forgetting the directions. It is clear that 
the formula 
\begin{equation}
\label{Te-dfn}
\Te (X_1\otimes X_2 \otimes \dots \otimes X_n) = \G
\end{equation}
defines a surjective map of graded vector spaces 
\begin{equation}
\label{Te}
\Te :  \bigoplus_{e \ge 0} T_n \big(S(V_e) \big)  \onto \Gra(n)\,.
\end{equation}
For example, $\Te(\rho'_1 \rho_2 \otimes \rho'_2 \rho_1 \otimes 1 \otimes \rho_3 \rho'_3) = 0$
because the graph $\G$ corresponding to the monomial $\rho'_1 \rho_2 \otimes \rho'_2 \rho_1 \otimes 1 \otimes \rho_3 \rho'_3$ has double edges.

To describe the kernel of \eqref{Te} we recall, 
from  Subsection \ref{sec:Gr-sfgraphs},  the group
\eqref{group-rearrange}
\begin{equation}
\label{group-Tn-SVe}
S_e  \ltimes (S_2)^{e}
\end{equation}
with the multiplication law defined by equation \eqref{group-law}.

Let us equip the graded vector space \eqref{T-n-SVe} with 
a left action of the group \eqref{group-Tn-SVe}.

For this purpose, we declare that elements $\tau \in S_e$ and  
$$
\si_j = (\id, \dots, \id, \underbrace{(12)}_{j\textrm{-th spot}}, \id, \dots \id) \in 
 (S_2)^{e}
$$
act on generators \eqref{rhos} as
$$
\tau(\rho_i) = \rho_{\tau(i)}\,, \qquad 
\tau(\rho'_i) = \rho'_{\tau(i)}\,,
$$
and
$$
\si_j (\rho_i) = \begin{cases}
\rho_i  \qquad {\rm if} ~~ i \neq j \\
 \rho'_i \qquad {\rm if}~~ i = j\,, 
\end{cases}
\qquad 
\si_j (\rho'_i) = \begin{cases}
\rho'_i  \qquad {\rm if} ~~ i \neq j \\
 \rho_i \qquad {\rm if}~~ i = j\,,
\end{cases}
$$
respectively.
 
Next, we extend the action of elements $\{\si_j\}_{1 \le j \le e}$
to the space  \eqref{T-n-SVe} by incorporating appropriate 
sign factor which appear if odd variables $\rho_1, \dots, \rho_e$
``move around''. Finally, we declare that elements of $S_e$
act by automorphisms (of the commutative algebra) $S(V_e)$ 
and then extend the action of $S_e$ to  the space  \eqref{T-n-SVe} 
by the formula: 
$$
\tau \big( X_1 \otimes X_2 \otimes \dots \otimes X_n \big) = 
\tau(X_1)  \otimes \tau(X_2) \otimes  \dots \otimes \tau(X_n)\,. 
$$

For example, the transposition $(23)\in S_3$ sends the 
vector 
 $\rho'_1 \rho_2 \otimes \rho'_2 \rho_1 \otimes 1\otimes  \rho_3 \rho'_3$
to the vector  
$$
\rho'_1 \rho_3 \otimes \rho'_3 \rho_1 \otimes 1 \otimes \rho_2 \rho'_2
$$ 
and the element $\si_1$ sends the vector
 $\rho'_1 \rho_2 \otimes \rho'_2 \rho_1 \otimes 1\otimes  \rho_3 \rho'_3$
to the vector 
$$
-\rho_1 \rho_2 \otimes \rho'_2 \rho'_1 \otimes 1\otimes  \rho_3 \rho'_3\,.
$$
The sign factor in the action of $\si_1$ appeared because the variables 
$\rho_1$ and $\rho_2$ changed their order.

Due to Exercise \ref{exer:Ker-Te} below the kernel of $\Te$
\eqref{Te} is spanned by vectors of the form
\begin{equation}
\label{ker-Te}
(X_1\otimes X_2 \otimes \dots \otimes X_n) - 
g (X_1\otimes X_2 \otimes \dots \otimes X_n)\,,
\qquad g \in S_e  \ltimes (S_2)^{e}\,.
\end{equation}

Hence, we conclude that 
\begin{prop}
\label{prop:T-n-Gra-n}
The map $\Te$ \eqref{Te} induces an isomorphism 
of graded vector spaces 
$$
\Gra(n) \cong   \bigoplus_{e \ge 0} \Big(T_n \big(S(V_e) \big) \Big)_{ S_e  \ltimes (S_2)^{e}}\,,
$$
where $ \Big(T_n \big(S(V_e) \big) \Big)_{ S_e  \ltimes (S_2)^{e}}$ 
denotes the space of coinvariants in \eqref{T-n-SVe}\,. $\Box$ 
\end{prop}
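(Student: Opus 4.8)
\textbf{Proof plan for Proposition \ref{prop:T-n-Gra-n}.}
The statement is the natural wrap-up of the preceding discussion, so the plan is essentially to assemble the two pieces that have already been prepared: surjectivity of $\Te$ (established right after \eqref{Te}) and the description of $\ker \Te$ promised by Exercise \ref{exer:Ker-Te}. First I would recall that $\Te$ \eqref{Te} is a $\bbK$-linear surjection from $\bigoplus_{e \ge 0} T_n \big(S(V_e) \big)$ onto $\Gra(n)$, and that each direct summand $T_n\big(S(V_e)\big)$ is precisely the part carrying graphs with $e$ edges. The action of $S_e \ltimes (S_2)^e$ constructed just above preserves each summand $T_n\big(S(V_e)\big)$, since permuting edge labels and reversing edge directions does not change the number of edges; hence it suffices to work one value of $e$ at a time.

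Next I would argue that $\Te$ is constant on $\big(S_e \ltimes (S_2)^e\big)$-orbits: applying $\tau \in S_e$ only relabels the edges of the associated directed graph $\G'$, which does not affect the underlying undirected graph $\G$ nor its edge-order-up-to-even-permutation class (the order rule in Subsection \ref{sec:Gra-n-SVe} is defined intrinsically from the vertex labels and the algebra factors, and $\tau$ acts by an algebra automorphism, so it induces exactly the corresponding permutation of edges); and applying a generator $\si_j \in (S_2)^e$ reverses the $j$-th edge of $\G'$, which changes nothing about $\G$ except possibly a reordering of $E(\G)$ that is compensated by the sign factor built into the $(S_2)^e$-action. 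Therefore $\Te$ descends to a surjective map on coinvariants
\begin{equation}
\label{Te-bar}
\bar\Te :  \bigoplus_{e \ge 0} \Big(T_n \big(S(V_e) \big) \Big)_{ S_e  \ltimes (S_2)^{e}}  \onto \Gra(n)\,.
\end{equation}

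Finally I would invoke Exercise \ref{exer:Ker-Te}, which asserts that $\ker \Te$ is spanned by vectors of the form \eqref{ker-Te}, i.e. exactly the subspace by which one quotients to form the coinvariants. This immediately gives that $\bar\Te$ in \eqref{Te-bar} is injective, hence an isomorphism, completing the proof. The only substantive point — and the step I would flag as the real content — is the claim that the sign conventions in the $(S_2)^e$-action and the edge-order-up-to-even-permutation convention in $\Gra(n)$ match up so that $\Te$ is genuinely orbit-invariant; this is the same bookkeeping that underlies Claim \ref{cl:ker-F-Gim} in Subsection \ref{sec:Gr-sfgraphs}, and I would handle it by the identical mechanism: an automorphism of the decorated object induces a permutation of edges, and the decoration already carries the compensating sign, so the two cancel. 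Everything else is a direct translation of the already-proved surjectivity and the kernel computation of Exercise \ref{exer:Ker-Te} into the language of coinvariants.
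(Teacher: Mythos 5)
Your proposal is correct and follows the same route as the paper, which simply combines the surjectivity of $\Te$ noted at its definition with the kernel description of Exercise \ref{exer:Ker-Te} to conclude that the induced map on coinvariants is an isomorphism. Your additional remarks on orbit-invariance and sign bookkeeping are already implicit in the statement that every vector $X - g(X)$ lies in $\ker\Te$, so they add detail but not a different argument.
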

\begin{exer}
\label{exer:Ker-Te}
Prove that the kernel of the map $\Te$
\eqref{Te} is spanned by vectors of the form
\eqref{ker-Te}. {\it Hint: First, prove that, if a monomial \eqref{XXX} 
corresponds to a graph with multiple edges, then this monomial 
belongs to the span of vectors of the form \eqref{ker-Te}. 
Second, consider linear combinations of monomials 
\eqref{XXX} each of which does not belong to the kernel of $\Te$\,.}
\end{exer}

\begin{example}
\label{exam:ker-Te}
We mentioned above that 
$$
\Te(\rho'_1 \rho_2 \otimes \rho'_2 \rho_1 \otimes 1 \otimes \rho_3 \rho'_3) = 0\,.
$$
For the monomial $\rho'_1 \rho_2 \otimes \rho'_2 \rho_1 \otimes 1 \otimes \rho_3 \rho'_3$
we have
$$
\si_1(\rho'_1 \rho_2 \otimes \rho'_2 \rho_1 \otimes 1 \otimes \rho_3 \rho'_3) = 
- \rho_1 \rho_2 \otimes \rho'_2 \rho'_1 \otimes 1 \otimes \rho_3 \rho'_3
$$
$$
\si_2 ( \rho_1 \rho_2 \otimes \rho'_2 \rho'_1 \otimes 1 \otimes \rho_3 \rho'_3 ) = 
\rho_1 \rho'_2 \otimes \rho_2 \rho'_1 \otimes 1 \otimes \rho_3 \rho'_3
$$
and 
$$
\vs_{12} (\rho_1 \rho'_2 \otimes \rho_2 \rho'_1 \otimes 1 \otimes \rho_3 \rho'_3) = 
\rho_2 \rho'_1 \otimes \rho_1 \rho'_2 \otimes 1 \otimes \rho_3 \rho'_3\,,
$$
where $\vs_{12}$ is the transposition $(12)$ in $S_3$\,.

Hence, 
$$
\vs_{12} \si_2 \si_1
(\rho'_1 \rho_2 \otimes \rho'_2 \rho_1 \otimes 1 \otimes \rho_3 \rho'_3) = 
- (\rho'_1 \rho_2 \otimes \rho'_2 \rho_1 \otimes 1 \otimes \rho_3 \rho'_3)\,.
$$

Thus 
$$
\rho'_1 \rho_2 \otimes \rho'_2 \rho_1 \otimes 1 \otimes \rho_3 \rho'_3
= \frac{1}{2} 
\big(\,
\rho'_1 \rho_2 \otimes \rho'_2 \rho_1 \otimes 1 \otimes \rho_3 \rho'_3 
- \vs_{12} \si_2 \si_1
(\rho'_1 \rho_2 \otimes \rho'_2 \rho_1 \otimes 1 \otimes \rho_3 \rho'_3) 
\, \big)\,.
$$

In other words, the monomial $\rho'_1 \rho_2 \otimes \rho'_2 \rho_1 
\otimes 1 \otimes \rho_3 \rho'_3$
belongs to the subspace spanned by vectors of the form
\eqref{ker-Te}.
\end{example}

\subsection{An auxiliary cochain complex $\La^{-2}\Ger \big(S(V_e) \big)$}
\label{sec:Ger-SVe}

Let us consider the free $\La^{-2}\Ger$-algebra generated by 
$S(V_e)$
\begin{equation}
\label{Ger-S-Ve}
\La^{-2}\Ger(S(V_e))\,.
\end{equation}

Using the reduced  comultiplication: 
\begin{equation}
\label{wt-D}
\wt{\D} (X) = \D(X) - 1 \otimes X - X \otimes 1 
\end{equation}
on $S(V_e)$ we introduce on \eqref{Ger-S-Ve} the 
degree $1$ derivation $\de$ defined by the formula
\begin{equation}
\label{de-Ger}
\de (X) = - \sum_{i} \{ X'_i , X''_i \}
\end{equation}
where $X \in S(V_e)$ and $X'_i$, $X''_i$ are 
tensor factors in 
$$
\wt{\D} (X)  = \sum_i  X'_i \otimes X''_i\,.
$$

For example, since
$$
\wt{\D} (v) = 0 \qquad \forall~~ v \in V_e \subset S(V_e)
$$
we have 
$$
\de(v) = 0 \qquad \forall~~ v \in V_e \subset S(V_e)\,.
$$

The Jacobi identity implies that 
$$
\de^2 = 0\,.
$$

Thus $\de$ is a differential on \eqref{Ger-S-Ve}.

It is clear that the free $\La^{-1}\Lie$-algebra 
$ \La^{-1}\Lie(S(V_e))$ is a subcomplex 
of  \eqref{Ger-S-Ve}. Furthermore,
\begin{equation}
\label{Ger-S-Ve-new}
\La^{-2}\Ger(S(V_e)) \cong 
\bs^{-2} S\big( \bs^2  \La^{-1}\Lie(S(V_e))\big)
\end{equation}
as cochain complexes. 

On the other hand, Theorem \ref{thm:Harr}
from Appendix\footnote{Since formulas 
\eqref{de-Ger} and \eqref{de-Harr} for the differentials 
differ only by the overall sign factor, Theorem \ref{thm:Harr} can be
applied in this case.} 
\ref{app:Harr} implies that for 
every cocycle $c \in  \La^{-1}\Lie(S(V_e))$
there exists a vector $c_1 \in  \La^{-1}\Lie(S(V_e))$
and a vector $v \in V_e$ such that 
$$
c = v + \de (c_1)\,.
$$
Furthermore, each non-zero vector $v \in V_e$ is 
a non-trivial cocycle in $ \La^{-1}\Lie(S(V_e))$\,.

Therefore, due to K\"unneth's theorem,  
\begin{equation}
\label{H-Ger-S-Ve}
H^{\bul} \big(\, \La^{-2}\Ger(S(V_e)), \de  \,\big) \cong \bs^{-2} S\big(\bs^2 V_e \big)
\end{equation}
and the space 
$$
H^{\bul} \big(\, \La^{-2}\Ger(S(V_e)), \de  \,\big)
$$
is spanned by the cohomology classes of the vectors
\begin{equation}
\label{they-span}
b_1 \dots b_n \otimes (v_1\otimes \dots \otimes v_n)\,,
\end{equation}
where $v_i\in V_e$ and $b_1 \dots b_n$ is the 
generator of $\La^{-2} \Com(n) \subset \La^{-2} \Ger(n)$\,.

Thus we arrive at the following statement.
\begin{prop}
\label{prop:Ger-SVe}
For any cocycle 
$$
c \in \La^{-2}\Ger(S(V_e))
$$
there exists a vector $c_1 \in \La^{-2}\Ger(S(V_e))$
such that the difference 
$$
c - \de (c_1) 
$$
belongs to the linear span of \eqref{they-span}. 
Furthermore, a vector 
$$
Y \in \bs^{-2} S\big(\bs^2 V_e \big)
$$
is $\de$-exact if and only if $Y = 0$\,. ~~~~~~ $\Box$
\end{prop}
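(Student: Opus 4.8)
The plan is to prove Proposition~\ref{prop:Ger-SVe} by assembling the two halves of the statement from the ingredients already at hand. For the first assertion (every cocycle is cohomologous, modulo the image of $\de$, to an element of the linear span of \eqref{they-span}), I would proceed as follows. First, recall the decomposition \eqref{Ger-S-Ve-new}, which presents the cochain complex $\big(\La^{-2}\Ger(S(V_e)),\de\big)$ as $\bs^{-2} S\big(\bs^2\,\La^{-1}\Lie(S(V_e))\big)$, where the right-hand side carries the symmetric-algebra differential induced from the Lie-algebra differential $\de$ on $\La^{-1}\Lie(S(V_e))$. Next, invoke the Harrison-homology computation (Theorem~\ref{thm:Harr} in Appendix~\ref{app:Harr}, which applies here because the sign of the differential \eqref{de-Ger} matches \eqref{de-Harr} up to an overall sign): for every cocycle $c\in\La^{-1}\Lie(S(V_e))$ there is a $c_1\in\La^{-1}\Lie(S(V_e))$ and a $v\in V_e$ with $c=v+\de(c_1)$, and every nonzero $v\in V_e$ represents a nontrivial cohomology class. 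This gives $H^{\bul}\big(\La^{-1}\Lie(S(V_e))\big)\cong V_e$, spanned by the classes of the generators $v\in V_e$.

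Then I would apply the K\"unneth theorem to the symmetric-algebra presentation \eqref{Ger-S-Ve-new}. Since we work over a field of characteristic zero, $S^n$ of a cochain complex has cohomology $S^n$ of its cohomology; taking the direct sum over $n$ and reinstating the degree shifts $\bs^{\pm 2}$ yields \eqref{H-Ger-S-Ve}, namely $H^{\bul}\big(\La^{-2}\Ger(S(V_e)),\de\big)\cong\bs^{-2}S\big(\bs^2 V_e\big)$, with the cohomology spanned by the classes of the explicit vectors $b_1\dots b_n\otimes(v_1\otimes\dots\otimes v_n)$ listed in \eqref{they-span} (here $b_1\dots b_n$ is the canonical generator of $\La^{-2}\Com(n)\subset\La^{-2}\Ger(n)$). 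Concretely: given a cocycle $c$, write its components in the symmetric-power filtration, use the K\"unneth isomorphism together with the $\La^{-1}\Lie$-level statement to replace each Lie-algebra tensor factor by a representative in $V_e$ up to a $\de$-coboundary, and collect the coboundary contributions into a single vector $c_1$; the remainder lies in the span of \eqref{they-span}. This establishes the first claim.

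For the second assertion — that a vector $Y\in\bs^{-2}S\big(\bs^2 V_e\big)$ is $\de$-exact if and only if $Y=0$ — I would argue as follows. The ``if'' direction is trivial. For the ``only if'' direction, observe that the subspace $\bs^{-2}S\big(\bs^2 V_e\big)\subset\La^{-2}\Ger(S(V_e))$ consists precisely of products of generators $b_1\cdots b_n$ decorated with elements of $V_e\subset S(V_e)$; on such elements the reduced comultiplication \eqref{wt-D} vanishes (since $\wt\D(v)=0$ for $v\in V_e$), so these vectors are $\de$-closed, and they map isomorphically onto the cohomology \eqref{H-Ger-S-Ve}. Hence a nonzero $Y$ represents a nonzero cohomology class and cannot be exact. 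Equivalently, $\bs^{-2}S\big(\bs^2 V_e\big)$ is a choice of harmonic representatives: it is a sub-complex with zero differential that splits off the cohomology, so its intersection with the image of $\de$ is zero.

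The main obstacle, such as it is, will be bookkeeping the degree shifts and signs: tracking how the $\bs^{\pm 2}$ shifts in \eqref{Ger-S-Ve-new} interact with the K\"unneth isomorphism and with the internal gradings of $V_e$ (recall $|\rho_i|=-1$, $|\rho'_i|=0$), and verifying that the overall-sign discrepancy between \eqref{de-Ger} and \eqref{de-Harr} genuinely does not affect the applicability of Theorem~\ref{thm:Harr} (it does not, since negating the differential gives an isomorphic complex). Once those are pinned down, both statements follow directly; no new computation of Lie-algebra or Harrison cohomology is needed, as everything is imported from Appendix~\ref{app:Harr} and the standard K\"unneth theorem.
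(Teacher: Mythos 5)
Your proposal follows essentially the same route as the paper: the identification \eqref{Ger-S-Ve-new} of $\La^{-2}\Ger(S(V_e))$ with the (shifted) symmetric algebra on $\La^{-1}\Lie(S(V_e))$, the import of Theorem \ref{thm:Harr} (with the observation that the overall sign discrepancy between \eqref{de-Ger} and \eqref{de-Harr} is harmless), and the K\"unneth theorem to land on \eqref{H-Ger-S-Ve} with the explicit representatives \eqref{they-span}. The argument is correct and matches the paper's derivation, including the justification of the second assertion via the fact that the span of \eqref{they-span} is a $\de$-closed complement mapping isomorphically onto cohomology.
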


\subsubsection{A equivalent description of \eqref{Ger-S-Ve} in terms of invariants}
\label{sec:cG-Ve}

Let $\cG'(V_e)$ denote the following graded vector space
\begin{equation}
\label{cGpr-Ve}
\cG'(V_e) : =    \bigoplus_n
\Big( \La^{-2}\Ger(n) \otimes  \big(S(V_e) \big)^{\otimes\, n} \Big)^{S_n}\,.
\end{equation}

Since our base field has characteristic zero, this graded vector space is 
isomorphic to 
\begin{equation}
\label{Ger-SVe-coinv}
\La^{-2}\Ger(S(V_e)) =   \bigoplus_n
\Big( \La^{-2}\Ger(n) \otimes  \big(S(V_e) \big)^{\otimes\, n} \Big)_{S_n}\,.
\end{equation}

For example, one may define an isomorphism $\mI$ from 
\eqref{Ger-SVe-coinv} to \eqref{cGpr-Ve}  by the formula: 
\begin{equation}
\label{mI-dfn}
\mI (w; X_1 \otimes \dots \otimes X_n) = 
\sum_{\si \in S_n}  (-1)^{\ve(\si)}(\si(w); X_{\si^{-1}(1)} \otimes \dots \otimes X_{\si^{-1}(n)})\,,
\end{equation}
where $w \in \La^{-2}\Ger(n)$, $X_i \in S(V_e)$, the sign 
factor $(-1)^{\ve(\si)}$ comes from the usual Koszul rule,
 and 
$(w; X_1 \otimes \dots \otimes X_n)$ represents a vector in
$$
\Big( \La^{-2}\Ger(n) \otimes  \big(S(V_e) \big)^{\otimes\, n} \Big)_{S_n}\,.
$$

Let 
$$
\sum_t (w_t; X^t_1\otimes \dots \otimes X^t_n) 
$$
be a vector in 
$$
\Big( \La^{-2}\Ger(n) \otimes  \big(S(V_e) \big)^{\otimes\, n} \Big)^{S_n}
$$
and let $\de'$ be a degree $1$ operation on 
$\cG'(V_e)$ given by the equation: 
\begin{equation}
\label{de-pr}
\de'  \left( \sum_t (w_t; X^t_1\otimes \dots \otimes X^t_n) \right)= 
\end{equation}
$$
\sum_t \sum_{\si \in \Sh_{n,1}}
\si (\{w_t, b_{n+1}\};  X^t_1\otimes \dots \otimes X^t_n \otimes 1)  
$$
$$
- \sum_t 
\sum_{\tau \in \Sh_{2,n-1}} (-1)^{|w_t|}
\tau (w_t \circ_1 \{ b_1, b_2 \} ; \D  X^t_1\otimes  X^t_2 
\otimes \dots \otimes X^t_n)\,.
$$

A direct but tedious computation shows that
\begin{equation}
\label{mI-diff}
\mI \circ \de = 2 \de' \circ \mI\,. 
\end{equation}

In other words, $\de'$ \eqref{de-pr} is a differential on 
\eqref{cGpr-Ve} and the cohomology of the cochain complex 
\begin{equation}
\label{cG-Ve-comp}
\big( \cG'(V_e),  \de' \big)
\end{equation}
is isomorphic to the cohomology of  \eqref{Ger-S-Ve} with 
the differential \eqref{de-Ger}\,.

For our purpose, we need to switch to yet another 
cochain complex $\cG(V_e)$ isomorphic to \eqref{cG-Ve-comp}. 
This new cochain complex is obtained from $ \cG'(V_e)$ by 
exchanging the order of the tensor factors. Namely, 
\begin{equation}
\label{cG-Ve}
\cG(V_e) : =    \bigoplus_n
\Big( \big(S(V_e) \big)^{\otimes\, n} \otimes 
\La^{-2}\Ger(n) \Big)^{S_n}\,.
\end{equation}

The differential $\ti{\de}$ induced  on \eqref{cG-Ve} by 
the natural isomorphism between \eqref{cGpr-Ve} and \eqref{cG-Ve} 
is given by the formula:
\begin{equation}
\label{ti-de}
\ti{\de}  \left( \sum_t (X^t_1\otimes \dots \otimes X^t_n; w_t) \right)= 
\end{equation}
$$
\sum_t \sum_{\si \in \Sh_{n,1}} (-1)^{ |X^t_1| + \dots + |X^t_n| }
\si ( X^t_1\otimes \dots \otimes X^t_n \otimes 1; 
\{w_t, b_{n+1}\} )  
$$
$$
- \sum_t 
\sum_{\tau \in \Sh_{2,n-1}} (-1)^{|w_t|+ |X^t_1| + \dots + |X^t_n| }
\tau (\D  X^t_1\otimes  X^t_2 
\otimes \dots \otimes X^t_n ;
w_t \circ_1 \{ b_1, b_2 \} )\,.
$$

Thus Proposition \ref{prop:Ger-SVe} implies the following 
statement.
\begin{cor}
\label{cor:cG-Ve}
For any cocycle 
$$
c \in \cG(V_e) 
$$
there exists a vector $c_1 \in \cG(V_e)$
such that the difference 
$$
c - \ti{\de} (c_1) 
$$
belongs to the linear span of vectors of the form
\begin{equation}
\label{H-cG-Ve}
\sum_{\si \in S_n}
 (v_1\otimes \dots \otimes v_n; b_1 \dots b_n),
\end{equation}
where $v_1, v_2, \dots, v_n \in V_e$ and $b_1 \dots b_n$ is the 
generator of $\La^{-2} \Com(n) \subset \La^{-2} \Ger(n)$\,.
Furthermore, a linear combination $Y$ of vectors of the form
\eqref{H-cG-Ve}
is $\ti{\de}$-exact  if and only if $Y = 0$\,. ~~~~~~ $\Box$
\end{cor}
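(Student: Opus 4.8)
The plan is to deduce Corollary~\ref{cor:cG-Ve} from Proposition~\ref{prop:Ger-SVe} by transporting the latter along the chain of isomorphisms of cochain complexes
\[
\big(\La^{-2}\Ger(S(V_e)), \de\big) \ \xrightarrow{\ \mI\ }\ \big(\cG'(V_e), 2\de'\big) \ \xrightarrow{\ \mathrm{sw}\ }\ \big(\cG(V_e), 2\ti{\de}\big),
\]
where $\mI$ is the map \eqref{mI-dfn}, the symbol $\mathrm{sw}$ denotes the obvious (sign-corrected) isomorphism exchanging the order of the two tensor factors, and $\de$, $\de'$, $\ti{\de}$ are the differentials from \eqref{de-Ger} and \eqref{ti-de}. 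By \eqref{mI-diff} the first arrow is an isomorphism of cochain complexes, and $\mathrm{sw}$ intertwines $\de'$ with $\ti{\de}$ since $\ti{\de}$ is by construction the differential induced on $\cG(V_e)$ by this very isomorphism. Because $\bbK$ has characteristic zero, a vector is $\de'$-closed (resp.\ $\de'$-exact) if and only if it is $2\de'$-closed (resp.\ $2\de'$-exact), so replacing the differential by twice itself changes nothing. Writing $\Phi := \mathrm{sw}\circ\mI$, we obtain a $\bbK$-linear bijection $\Phi\colon \La^{-2}\Ger(S(V_e)) \to \cG(V_e)$ with $\Phi\circ\de = 2\,\ti{\de}\circ\Phi$, so that $\Phi$ and $\Phi^{-1}$ carry cocycles to cocycles and coboundaries to coboundaries.

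The one computation I would actually carry out is the identification, under $\Phi$, of the subspace spanned by the vectors \eqref{they-span}, namely $\bs^{-2}S(\bs^2 V_e)\subset \La^{-2}\Ger(S(V_e))$ (it sits inside $\La^{-2}\Ger(S(V_e))$ via \eqref{Ger-S-Ve-new} and the inclusion $V_e\hookrightarrow \La^{-1}\Lie(S(V_e))$), with the subspace spanned by the vectors \eqref{H-cG-Ve}. Applying \eqref{mI-dfn} to $b_1\cdots b_n \otimes (v_1\otimes\cdots\otimes v_n)$ and using that $b_1\cdots b_n\in\La^{-2}\Com(n)$ is fixed by $S_n$ with no sign (because $|b_1 b_2| = 2$ is even, cf.\ \eqref{deg-b1b2}), one gets $\sum_{\si\in S_n}(-1)^{\ve(\si)}\big(b_1\cdots b_n;\, v_{\si^{-1}(1)}\otimes\cdots\otimes v_{\si^{-1}(n)}\big)$; after applying $\mathrm{sw}$ this is exactly the graded symmetrization $\sum_{\si\in S_n}\si\cdot(v_1\otimes\cdots\otimes v_n;\, b_1\cdots b_n)$ of \eqref{H-cG-Ve}. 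Letting the $v_i$ range over $V_e$ and $n$ over the positive integers, these vectors span $\Phi\big(\bs^{-2}S(\bs^2 V_e)\big)$.

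Granting this, both assertions are formal. If $c\in\cG(V_e)$ is $\ti{\de}$-closed, then $\Phi^{-1}(c)$ is $\de$-closed, so by Proposition~\ref{prop:Ger-SVe} there is $c_1'\in\La^{-2}\Ger(S(V_e))$ with $\Phi^{-1}(c) - \de(c_1')$ in the span of \eqref{they-span}; applying $\Phi$ and taking $c_1$ to be the appropriate scalar multiple of $\Phi(c_1')$ yields $c - \ti{\de}(c_1)$ in the span of \eqref{H-cG-Ve}. Conversely, if a linear combination $Y$ of vectors \eqref{H-cG-Ve} is $\ti{\de}$-exact, then $\Phi^{-1}(Y)$ lies in the span of \eqref{they-span} and is $\de$-exact, hence $\Phi^{-1}(Y) = 0$ by the second part of Proposition~\ref{prop:Ger-SVe}, so $Y = 0$. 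The only place where genuine attention is required is the sign bookkeeping in the middle paragraph, and even there nothing beyond the Koszul rule and the symmetry of $b_1\cdots b_n$ enters; I would record it as a short lemma and then conclude.
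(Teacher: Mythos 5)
Your proposal is correct and follows exactly the route the paper intends: the paper sets up the isomorphism $\mI$ with $\mI\circ\de = 2\de'\circ\mI$ and the tensor-factor swap inducing $\ti{\de}$, and then states the corollary as an immediate transport of Proposition \ref{prop:Ger-SVe}; you have simply written out the composition $\Phi = \mathrm{sw}\circ\mI$ and the identification of the span of \eqref{they-span} with the span of \eqref{H-cG-Ve}, which is the only computation actually needed (and your sign bookkeeping, using that $b_1\cdots b_n$ is $S_n$-invariant of even degree, is right).
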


\subsection{The associated graded complex $\Gr\, \Conv^{\oplus}(\Ger^{\vee}, \Gra)$}
\label{sec:Gr-Ger-Gra}

Let us recall that $\mb(w)$ denotes the total number of Lie brackets 
in a monomial $w\in \La^{-2} \Ger(n)$\,.

Let 
$$
\sum_{i} v_i \otimes w_i 
$$
be a vector in 
$$
\Big( \Gra(n) \otimes  \La^{-2} \Ger(n) \Big)^{S_n}
$$
such that the number $k_{\mb} = \mb(w_i)$ is the same for every 
monomial $w_i$\,.

It is obvious that for every monomial $w'_j$ in 
$$
\pa (v \otimes w) = \sum_{j} v'_j \otimes w'_j
$$
we have $\mb(w'_j) = k_{\mb}$ or  $\mb(w'_j) = k_{\mb} + 1$\,. 

This observation allows us to introduce an ascending filtration 
\begin{equation}
\label{Ger-Gra-filtr}
\dots \subset 
\cF^{m-1}_{\mb} \Conv^{\oplus}(\Ger^{\vee}, \Gra)  \subset \cF^{m}_{\mb} \Conv^{\oplus}(\Ger^{\vee}, \Gra)  
\subset \dots 
\end{equation}
where $\cF^{m}_{\mb} \Conv^{\oplus}(\Ger^{\vee}, \Gra)$ is spanned by 
homogeneous vectors
$$
\ga = \sum_{i} v_i \otimes w_i  \in  \Conv(\Ger^{\vee}, \Gra)
$$
in which each monomial $w_i$ satisfies the inequality 
$$
\mb(w_i) - |\ga| \le m\,.
$$

It is clear that the differential $\pa^{\Gr_{\mb}}$ on the associated graded 
complex
\begin{equation}
\label{Gr-Ger-Gra}
\Gr_{\mb}\, \Conv^{\oplus}(\Ger^{\vee}, \Gra)
\end{equation}
is obtained from the differential $\pa$ \eqref{diff-Ger-Gra}
by keeping only the terms which raise the number of Lie brackets 
in the second tensor factors. Namely, 
\begin{equation}
\label{diffGr-Ger-Gra}
\pa^{\Gr_{\mb}} = \big[\, \G_{\bb}\otimes \{b_1, b_2\}, ~ \big]\,.
\end{equation}

Our goal is  to give a convenient 
description of the associated graded 
complex \eqref{Gr-Ger-Gra} using the map $\Te$ \eqref{Te} introduced in 
Subsection \ref{sec:Gra-n-SVe} and the cochain 
complex \eqref{cG-Ve} introduced in Subsection \ref{sec:cG-Ve}.

First, we observe that, as a graded vector space, 
\begin{equation}
\label{Gr-Ger-Gra-new}
\Gr_{\mb}\, \Conv^{\oplus}(\Ger^{\vee}, \Gra)
\cong 
\bigoplus_{n \ge 1} \Big( \Gra(n) \otimes \La^{-2}\Ger(n) \Big)^{S_n}\,.
\end{equation}

Thus, due to Proposition \ref{prop:T-n-Gra-n}, the 
map $\Te$ \eqref{Te} induces an isomorphism of graded vector 
spaces
\begin{equation}
\label{Gr-Ger-Gra-desc}
\bigoplus_{n \ge 1} 
\bigoplus_{e \ge 0} 
 \left( \, \Big(  T_n \big(S(V_e) \big)  \otimes \La^{-2}\Ger(n) 
 \Big)_{ S_e  \ltimes (S_2)^{e}} \,\right)^{S_n}  \cong
\Gr_{\mb}\, \Conv^{\oplus}(\Ger^{\vee}, \Gra)
\end{equation}

Since the action of the group $ S_e  \ltimes (S_2)^{e}$ commutes 
with the action of $S_n$ we conclude that $\Te$ induces an isomorphism 
of graded vector spaces: 
\begin{equation}
\label{Te-pr}
\Te' ~:~ \bigoplus_{e \ge 0} 
\left(
\bigoplus_{n \ge 1}  \Big( T_n \big(S(V_e) \big)  \otimes \La^{-2}\Ger(n) 
\Big)^{S_n}
\right)_{S_e  \ltimes (S_2)^{e}} ~\to~
\Gr_{\mb}\, \Conv^{\oplus}(\Ger^{\vee}, \Gra)\,.
\end{equation}

On the other hand,
\begin{equation}
\label{sub-cG-Ve}
\bigoplus_{n \ge 1}  \Big( T_n \big(S(V_e) \big)  \otimes \La^{-2}\Ger(n) 
\Big)^{S_n}
\end{equation}
is a subspace in the cochain complex $\cG(V_e)$ \eqref{cG-Ve}.

We claim that 
\begin{prop}
\label{prop:Gr-Ger-Gra}
The subspace \eqref{sub-cG-Ve} is a direct summand in
the cochain complex  $\cG(V_e)$  \eqref{cG-Ve} with the differential $\ti{\de}$
\eqref{ti-de}. Furthermore, the isomorphism \eqref{Te-pr} 
is compatible with the differentials. 
\end{prop}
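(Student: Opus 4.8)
The statement has two parts, and the plan is to treat them in order. First I would establish that the subspace \eqref{sub-cG-Ve} is a direct summand of $\cG(V_e)$ as a \emph{cochain complex}, i.e.\ that it is closed under $\ti{\de}$ and admits a complementary subcomplex. Recall that $T_n\big(S(V_e)\big)$ is spanned by monomials $X_1\otimes\dots\otimes X_n$ in which \emph{each} of the $2e$ variables $\rho_1,\rho'_1,\dots,\rho_e,\rho'_e$ appears exactly once. The key combinatorial observation is that this ``each variable exactly once'' condition is preserved by both pieces of the differential $\ti{\de}$ in \eqref{ti-de}: the first term appends a trivial tensor factor $1$ (which contains no variables) and applies $\{\,\cdot\,,b_{n+1}\}$ in the $\La^{-2}\Ger$ slot only; the second term replaces $X^t_1$ by the two factors of $\wt\D X^t_1$ (strictly, $\D X^t_1$, but the $1\otimes X^t_1$ and $X^t_1\otimes 1$ pieces fall in the complement and can be discarded, or rather they also preserve the condition), which merely redistributes the variables occurring in $X^t_1$ among two new factors without creating or destroying occurrences. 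Hence $\ti{\de}$ preserves both the subspace where every variable appears exactly once (giving \eqref{sub-cG-Ve}) and, for each fixed multidegree in the variables, its complement, so \eqref{sub-cG-Ve} is a direct summand of cochain complexes. I would record this as a short lemma with the occurrence-counting argument spelled out on generators.

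Second, I would verify that the isomorphism $\Te'$ in \eqref{Te-pr} intertwines the differentials $\ti{\de}$ (descended to coinvariants under $S_e\ltimes(S_2)^e$) and $\pa^{\Gr_{\mb}}=[\G_{\bb}\otimes\{b_1,b_2\},\,\cdot\,]$ from \eqref{diffGr-Ger-Gra}. The strategy is to chase a generator: take a monomial $X_1\otimes\dots\otimes X_n\otimes w$ in $T_n(S(V_e))\otimes\La^{-2}\Ger(n)$, push it through $\Te$ to get a graph $\G\in\gra_n$ tensored with $w$, and compare $\pa^{\Gr_{\mb}}\big(\Te(\cdots)\otimes w\big)$ term by term with $\Te'$ applied to $\ti\de$ of the original. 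The bracket $[\G_{\bb}\otimes\{b_1,b_2\},\,\cdot\,]$ has two kinds of contributions matching exactly the two sums in \eqref{ti-de}: inserting $\G_{\bb}$ into the graph via $\circ_i$ (no new edge, new vertex) corresponds on the $S(V_e)$ side to splitting a tensor factor according to $\wt\D$ (no new variable, new tensor slot), while composing a graph with $\G_{\bb}$ on the outside (adjoining an isolated new vertex $n+1$) corresponds to appending the trivial factor $1$ and bracketing $\{w,b_{n+1}\}$ in $\La^{-2}\Ger$. One has to check the sign factors agree; the signs in \eqref{ti-de} ($(-1)^{|X^t_1|+\dots+|X^t_n|}$ etc.) are precisely the Koszul signs generated when the degree-$1$ operator $\{b_1,b_2\}$ in the $\Gra$/$\La^{-2}\Ger$ bracket moves past the $S(V_e)$-factors, which in turn correspond to edges; the factor of $2$ relating $\de$ and $\de'$ (equation \eqref{mI-diff}) is absorbed when passing to coinvariants under the $(S_2)^e$ that flips edge orientations, since each unordered edge of $\G$ is the image of two ordered configurations.

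The main obstacle I expect is the sign bookkeeping in this second part: one must track the sign of $\Te$ itself (it is only defined up to an even permutation of edges, i.e.\ well-defined on coinvariants), the Koszul signs coming from the odd generators $\rho_i$ moving around under the $(S_2)^e$-action, and the signs in the two differentials, and confirm they all cancel to give strict compatibility rather than compatibility up to a global sign. The safest route is to first fix, once and for all, representatives and an edge-ordering convention (as done implicitly in Subsection~\ref{sec:Gra-n-SVe}), verify compatibility on a small example such as $e=1$, $n=2$, and then argue the general case by the functoriality of the splitting $\wt\D$ and equivariance, rather than by a brute-force index computation. Given Corollary~\ref{cor:cG-Ve} and the already-established isomorphism $\Te$, once Proposition~\ref{prop:Gr-Ger-Gra} is in hand the computation of $H^{\bul}\big(\Gr_{\mb}\,\Conv^{\oplus}(\Ger^{\vee},\Gra)\big)$ follows by taking $(S_e\ltimes(S_2)^e)$- and $S_n$-invariants of the known cohomology \eqref{H-cG-Ve}, which is the input needed for the spectral-sequence argument proving Theorem~\ref{thm:rigidity}.
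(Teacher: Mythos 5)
Your proposal is correct and follows essentially the same route as the paper: the direct-summand statement comes from observing that both terms of $\ti{\de}$ preserve the ``each variable appears exactly once'' condition (so \eqref{sub-cG-Ve} and its complement are both subcomplexes), and compatibility of $\Te'$ with the differentials is checked on generators by matching the two sums in \eqref{ti-de} with the two insertion types appearing in $\big[\G_{\bb}\otimes\{b_1,b_2\},\,\cdot\,\big]$, exactly as in the paper's equation \eqref{master-eq}. Two small slips in your side remarks do not affect the argument: the $1\otimes X^t_1$ and $X^t_1\otimes 1$ pieces of $\D X^t_1$ remain \emph{inside} the subspace (they match the reconnections of $\si(\G)\circ_1\G_{\bb}$ in which one new vertex is isolated), as you then note, and the factor of $2$ from \eqref{mI-diff} plays no role here since the proposition compares $\pa^{\Gr_{\mb}}$ directly with $\ti{\de}$ rather than with $\de$.
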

\begin{proof}
Let us recall, from Subsection \ref{sec:Gra-n-SVe}, that $V_e$ is the graded 
vector space of finite linear combinations of variables from the set  \eqref{rhos}.

The subspace \eqref{sub-cG-Ve} is spanned by vectors
of the form
\begin{equation}
\label{span-sub-cG-Ve}
\sum_{\si \in S_n} \si ( X_1 \otimes X_2 \otimes \dots \otimes X_n \,;\, w)
\end{equation}
where $w\in \La^{-2}\Ger(n)$ and 
\begin{equation}
\label{XXX-here}
X_1 \otimes X_2 \otimes \dots \otimes X_n
\end{equation}
is a monomial in $ \big(S(V_e) \big)^{\otimes\, n}$ satisfying 
\begin{pty}
\label{P:rhos}
Each variable from the set \eqref{rhos} appears in  
\eqref{XXX-here} exactly once. 
\end{pty}

It is clear that this subspace is closed with respect to $\ti{\de}$ 
\eqref{ti-de}. Moreover, the cochain complex $\cG(V_e)$
splits into the direct sum of \eqref{sub-cG-Ve}  and
the subcomplex spanned 
by vectors of the form \eqref{span-sub-cG-Ve} for which 
\eqref{XXX-here} does \und{not} satisfy Property \ref{P:rhos}.

To prove equation
\begin{equation}
\label{Te-pr-diff}
\pa^{\Gr_{\mb}} \circ \Te' =  \Te' \circ \ti{\de}
\end{equation}
we consider a monomial \eqref{XXX-here} satisfying 
Property \ref{P:rhos} and a vector $w\in \La^{-2}\Ger(n)$\,.
We denote by $\G$ the graph in $\gra_n$ which corresponds 
to  the  monomial \eqref{XXX-here}.
 
Going through the construction of the map $\Te$ it 
is easy to verify that 
\begin{equation}
\label{master-eq}
(\Te \otimes 1)  \circ \ti{\de} \left( \sum_{\si \in S_n} (-1)^{\ve(\si)}
X_{\si^{-1}(1)} \otimes X_{\si^{-1}(2)} \otimes \dots \otimes X_{\si^{-1}(n)} 
\otimes \si(w) \right)= 
\end{equation}
$$
\sum_{\la \in \Sh_{n, 1}} (-1)^{|\G|}
\sum_{\si \in S_n} \la(\G_{\bb} \circ_1 \si(\G)) \otimes \la \big(\{ \si(w), b_{n+1} \}\big) -
$$
$$
- \sum_{\tau \in \Sh_{2, n-1}}  (-1)^{|\G| + |w|}
\sum_{\si \in S_n} \tau\big( \si(\G) \circ_1 \G_{\bb}\big)  \otimes  
\tau \big( \si(w) \circ_1 \{b_1, b_2\} \big)\,.
$$

Since the right hand side of \eqref{master-eq} equals
$$
\left[ ~ \G_{\bb} \otimes \{b_1, b_2 \} ~, ~\sum_{\si \in S_n} \si(\G) \otimes \si(w) ~\right]
$$
equation \eqref{Te-pr-diff} follows.
\end{proof}

Combining Corollary \ref{cor:cG-Ve} with Proposition \ref{prop:Gr-Ger-Gra}
we deduce 
\begin{cor}
\label{cor:H-Gr-GerGra}
For the associated graded complex \eqref{Gr-Ger-Gra} we 
have 
\begin{equation}
\label{H-Gr-GerGra}
H^{\bul} \big( 
\Gr_{\mb}\, \Conv^{\oplus}(\Ger^{\vee}, \Gra)
\big) = \begin{cases}
\bbK  \qquad {\rm if} ~~ \bul = 1  \\
 \bfzero \qquad {\rm otherwise}\,.
\end{cases}
\end{equation}
Furthermore, 
$$
H^{1} \big( 
\Gr_{\mb}\, \Conv^{\oplus}(\Ger^{\vee}, \Gra)
\big) 
$$
is spanned by the cohomology class of the vector 
represented by $\G_{\ed} \otimes b_1 b_2$\,.
\end{cor}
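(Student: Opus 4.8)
The plan is to deduce Corollary~\ref{cor:H-Gr-GerGra} directly from Proposition~\ref{prop:Gr-Ger-Gra} together with Corollary~\ref{cor:cG-Ve}, using the Euler-characteristic machinery to reduce everything to finite-dimensional situations where K\"unneth's theorem applies term by term.

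First I would recall that, by Proposition~\ref{prop:Gr-Ger-Gra}, the isomorphism $\Te'$ \eqref{Te-pr} identifies $\Gr_{\mb}\,\Conv^{\oplus}(\Ger^{\vee},\Gra)$ with
$$
\bigoplus_{e \ge 0}
\left(
\bigoplus_{n \ge 1}  \Big( T_n \big(S(V_e) \big)  \otimes \La^{-2}\Ger(n)
\Big)^{S_n}
\right)_{S_e  \ltimes (S_2)^{e}}
$$
as a cochain complex, where the differential on each inner summand is the restriction of $\ti{\de}$ \eqref{ti-de}. Since the base field has characteristic zero, taking $S_e \ltimes (S_2)^e$-coinvariants is exact, so it commutes with cohomology; likewise, taking $S_n$-invariants commutes with cohomology. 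Moreover, by Proposition~\ref{prop:Gr-Ger-Gra}, the subspace \eqref{sub-cG-Ve} is a \emph{direct summand} of the cochain complex $\cG(V_e)$, so its cohomology is computed by the restriction of $\ti{\de}$. Thus the cohomology of $\Gr_{\mb}\,\Conv^{\oplus}(\Ger^{\vee},\Gra)$ is obtained by applying Corollary~\ref{cor:cG-Ve} to each $\cG(V_e)$, restricting to the summand \eqref{sub-cG-Ve}, then passing to $S_e \ltimes (S_2)^e$-coinvariants and summing over $e$.

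Next I would run Corollary~\ref{cor:cG-Ve} through this filter. That corollary says every $\ti{\de}$-cocycle in $\cG(V_e)$ is cohomologous to a linear combination of the ``harmonic'' vectors $\sum_{\si \in S_n}(v_1 \otimes \cdots \otimes v_n; b_1 \cdots b_n)$ with $v_i \in V_e$, and that such a linear combination is $\ti{\de}$-exact only if it vanishes. Restricting to the direct summand \eqref{sub-cG-Ve} (monomials in which each of the $2e$ variables $\rho_1,\rho'_1,\dots,\rho_e,\rho'_e$ appears exactly once), the surviving harmonic classes are those of $\sum_\si(v_1 \otimes \cdots \otimes v_n; b_1\cdots b_n)$ in which the tensor factors $v_1 \otimes \cdots \otimes v_n$ collectively use each of the $2e$ variables exactly once. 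Such a monomial $v_1 \otimes \cdots \otimes v_n$ corresponds, under $\Te$, to a graph whose $n$ vertices are connected by $e$ edges, each edge carrying a direction but no vertex carrying a multiplicity condition; the relevant point is that, after applying $\Te'$ and passing to $S_e \ltimes (S_2)^e$-coinvariants, this harmonic class is exactly the image of $\Av(\G) \otimes b_1 \cdots b_n$ for $\G \in \gra_n$ a graph with $e$ edges. The degree of such a class is $|\G| + |b_1 \cdots b_n| = -e + (n + 0 - 2) = n - e - 2$. Here I will use the degree computations from the proof of Proposition~\ref{prop:Euler-GerGra}: for a monomial $v \otimes w$ of the type we are left with (so $w = b_1 \cdots b_n$, i.e. $\mb(w) = n-1$... actually $\mb(b_1\cdots b_n)=0$, hence all of $w$'s combinatorial weight is in the graph $v$). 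I would want to verify that for a \emph{harmonic} class to be nonzero one must have $e = n-3$, forcing degree $n - e - 2 = 1$. This is where I expect the main obstacle.

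The hard part will be pinning down exactly which harmonic classes survive in the summand \eqref{sub-cG-Ve} and why they all land in degree $1$, with a one-dimensional total space spanned by $\G_{\ed} \otimes b_1 b_2$. The mechanism: a nonzero harmonic element of $\cG(V_e)$ restricted to \eqref{sub-cG-Ve} must be an $S_n$-invariant, $(S_e \ltimes (S_2)^e)$-invariant linear combination of $\sum_\si(v_1 \otimes \cdots \otimes v_n; b_1\cdots b_n)$; translating through $\Te'$, this is $\Av(\G) \otimes b_1 \cdots b_n$ for some graph $\G$ with $e$ edges on $n$ vertices. But the differential $\ti{\de}$ restricted to the full ``each variable once'' summand corresponds, by \eqref{master-eq}, precisely to $\pa^{\Gr_{\mb}} = [\,\G_{\bb} \otimes \{b_1,b_2\},\ -\,]$, which by \eqref{diffGr-Ger-Gra} inserts a bracket of length $1$. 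One checks $\pa^{\Gr_{\mb}}(\Av(\G) \otimes b_1\cdots b_n)$ is a sum of terms with $w$ having one Lie word of length $1$; requiring this to vanish (the cocycle condition within $\Gr_{\mb}$, which is automatic here since every $\ti{\de}$-image of a ``longer'' $w$ is killed) — so in fact every $\Av(\G) \otimes b_1\cdots b_n$ is a $\pa^{\Gr_{\mb}}$-cocycle, and I must instead argue that such a class is $\pa^{\Gr_{\mb}}$-\emph{exact} unless the graph $\G$ has no internal structure to ``unfold.'' The clean route is: Corollary~\ref{cor:cG-Ve} already tells us the harmonic classes give a basis of cohomology modulo exactness, and the ``each variable once'' constraint plus $S_n$- and $(S_e\ltimes(S_2)^e)$-invariance leaves only $e=1$, $n=2$, yielding the single class of the monomial $\rho_1 \otimes \rho'_1 \otimes b_1 b_2$, whose image under $\Te'$ is $\G_{\ed} \otimes b_1 b_2$. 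I would make this counting argument precise — that $V_e$-valued harmonic tensors that are $S_n$-invariant and use each of $\rho_1,\rho'_1,\dots$ exactly once exist only for $e=1,n=2$ — and then invoke exactness of coinvariants/invariants and K\"unneth to conclude \eqref{H-Gr-GerGra}, with $H^1$ spanned by the class of $\G_{\ed} \otimes b_1 b_2$, completing the proof.
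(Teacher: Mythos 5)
Your overall route coincides with the paper's: identify $\Gr_{\mb}\,\Conv^{\oplus}(\Ger^{\vee},\Gra)$ via $\Te'$ with the sum over $e$ of the $S_e\ltimes(S_2)^e$-coinvariants of the direct summands \eqref{sub-cG-Ve}, use that (co)invariants are exact in characteristic zero, and read off the answer from the harmonic classes of Corollary \ref{cor:cG-Ve}. The gap is in the decisive last step, where you must show that only $e=1$ contributes. A harmonic class lying in the summand \eqref{sub-cG-Ve} forces $n=2e$ and, up to scalar, equals the $S_{2e}$-symmetrization of $(\rho_1\otimes\rho'_1\otimes\cdots\otimes\rho_e\otimes\rho'_e;\,b_1\cdots b_{2e})$. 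Your claim that such $S_n$-invariant tensors ``exist only for $e=1$, $n=2$'' is false as a counting statement: for every $e\ge 1$ this symmetrization is a nonzero $S_n$-invariant vector in \eqref{sub-cG-Ve} (its $(2e)!$ terms are distinct monomials). What actually kills it for $e\ge 2$ is the passage to $S_e\ltimes(S_2)^e$-coinvariants: because the variables $\rho_i$ are odd, a transposition of two edges in $S_e$ sends the symmetrized vector to its negative, so the vector vanishes in the coinvariants. Equivalently, its image under $\Te'$ is $\Av(\G)\otimes b_1\cdots b_{2e}$ with $\G$ a disjoint union of $e$ edges, and such a graph is odd in the sense of Section \ref{sec:fGC-first} for $e\ge 2$, so $\Av(\G)=0$. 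Without this sign argument the vanishing of the cohomology outside degree $1$ is unproved.

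A second, related problem is your degree bookkeeping, which should have flagged the issue. By \eqref{deg-v-w} the generator $b_1\cdots b_n$ of $\La^{-2}\Com(n)$ has $t_w=n$ Lie words and hence degree $2n-2$ (indeed $|b_1b_2|=2$ by \eqref{deg-b1b2}), not $n-2$; so the harmonic class for a given $e$ sits in degree $-e+2(2e)-2=3e-2$, which is $1$ precisely for $e=1$ and is $\ge 4$ for $e\ge 2$. Your formula $n-e-2$ assigns degree $-1$ to the very class you claim spans $H^1$, and the condition ``$e=n-3$'' is spurious. The correct computation makes it transparent that the $e\ge 2$ classes must be shown to vanish (they would otherwise contribute in degrees $\ge 4$), which is exactly the sign argument described above. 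The detour in your middle paragraph about exactness of $\Av(\G)\otimes b_1\cdots b_n$ is unnecessary: Corollary \ref{cor:cG-Ve} already gives a complete description of cocycles modulo coboundaries, so only the identification of the harmonic classes and their vanishing in the coinvariants remain.
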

\begin{proof}
Since the cochain complex  \eqref{sub-cG-Ve} is a direct summand in 
$\cG(V_e)$ and the cohomology commutes with taking coinvariants, Corollary 
\ref{cor:cG-Ve} implies that the cohomology of
the cochain complex 
\begin{equation}
\label{sub-fixed-e}
\left(
\bigoplus_{n \ge 1}  \Big( T_n \big(S(V_e) \big)  \otimes \La^{-2}\Ger(n) 
\Big)^{S_n}
\right)_{S_e  \ltimes (S_2)^{e}}
\end{equation}
is spanned by the classes of vectors of the form 
\begin{equation}
\label{H-sub-fixed-e}
\sum_{\si \in S_{2e}}
 \si ( \rho_1\otimes \rho'_1\otimes \rho_2 \otimes 
 \rho'_2\otimes \dots \otimes \rho_e \otimes \rho'_e ~ ; ~ b_1 \dots b_{2e}),
\end{equation}
$b_1 \dots b_{2e}$ is the generator of $\La^{-2} \Com(2e) \subset \La^{-2} \Ger(2e)$\,. 

Since variables $\rho_1, \dots, \rho_e$ are odd, 
it is not hard to see that  \eqref{H-sub-fixed-e} represents the zero 
vector in the coinvariants \eqref{sub-fixed-e} whenever $e > 1$\,.

On the other hand, the map $\Te'$ \eqref{Te-pr} sends the vector 
\begin{equation}
\label{non-zero}
( \rho_1\otimes \rho'_1 ; b_1 b_2) + 
( \rho'_1\otimes \rho_1 ; b_1 b_2)
\end{equation}
to the non-trivial cocycle 
$$
2 \G_{\ed} \otimes b_1 b_2\,.
$$

Hence, the corollary follows from Proposition \ref{prop:Gr-Ger-Gra}.
\end{proof}

\subsection{Proof of Theorem  \ref{thm:rigidity}}
\label{sec:thm:rigidity}  

Let us denote by $\cH$ the subcomplex 
of  $ \Conv^{\oplus}(\Ger^{\vee}, \Gra)$ 
\begin{equation}
\label{cH}
\cH = \bbK \L \, \G_{\ed} \otimes b_1b_2 \, \R 
\end{equation}
spanned by the single cocycle $ \G_{\ed} \otimes b_1b_2 $\,.

By construction, the cochain complex $\cH$ carries 
the zero differential. Moreover, restricting  
\eqref{Ger-Gra-filtr} on $\cH$ we get the ``silly'' filtration 
\begin{equation}
\label{silly-filtr-cH}
\cF^{m} \cH^k  = 
\begin{cases}
 \cH^k \qquad {\rm if} ~~ m \ge -k\,,  \\
 \bfzero  \qquad {\rm otherwise}
\end{cases} 
\end{equation}
with 
\begin{equation}
\label{Gr-cH-cH}
\Gr \,\cH \cong \cH\,.
\end{equation}

Corollary \ref{cor:H-Gr-GerGra} implies that the embedding 
\begin{equation}
\label{cH-GerGra}
\cH \hookrightarrow \Conv^{\oplus}(\Ger^{\vee}, \Gra)
\end{equation}
induces a quasi-isomorphism on the level of 
associated graded complexes. 

Since the filtrations on $ \Conv^{\oplus}(\Ger^{\vee}, \Gra)$ and
$\cH$ are bounded from the left and cocomplete,  Lemma \ref{lem:q-iso}
from Appendix \ref{app:q-iso} implies that the embedding \eqref{cH-GerGra}
is quasi-isomorphism of cochains complexes. 

Combining this fact with Proposition \ref{prop:Euler-GerGra} and 
Corollary \ref{cor:Euler-GerGra}  we conclude that 
the embedding 
\begin{equation}
\label{cH-GerGra-big}
\cH \hookrightarrow \Conv(\Ger^{\vee}, \Gra)
\end{equation}
is also a quasi-isomorphism of cochain complexes. 

Since $\cH$ is spanned by the cocycle  $ \G_{\ed} \otimes b_1b_2 $, 
Theorem  \ref{thm:rigidity} is proved. ~~~~~ $\Box$

\section{Deformation complex of $\Ger$ versus Kontsevich's graph complex}
\label{sec:DefGer-fGC}

This section is the culmination of our text. Using the results proved 
above, we establish here 
a link between the (extended) deformation complex $\Conv(\Ger^{\vee}, \Ger)$ \eqref{Def-Ger-ext} 
of the operad $\Ger$ and full graph complex $\fGC$ (See Definition \ref{dfn:fGC}.) 

First, recall that, due to decompositions  \eqref{fGC-fGC-conn}
and \eqref{Def-Ger-as-Sym}, the cohomology of the cochain complex 
$\fGC$ (resp.  $\Conv(\Ger^{\vee}, \Ger)$) can be expressed in terms 
of the cohomology of its ``connected part'' $\fGC_{\conn}$  
(resp.  $\Conv(\Ger^{\vee}, \Ger)_{\conn}$). Namely, 
\begin{equation}
\label{HfGC-fGC-conn}
H^{\bul}(\fGC) \cong \bs^{-2}  \wh{S}\big(\bs^2 \, H^{\bul}(\fGC_{\conn})\big)\,,
\end{equation}
and
\begin{equation}
\label{HGerGer-conn}
H^{\bul}\big(  \Conv(\Ger^{\vee}, \Ger) \big) \cong \bs^{-2}  \wh{S}
\Big(\bs^2 \, H^{\bul}\big(    \Conv(\Ger^{\vee}, \Ger)_{\conn}  \big)  \Big)\,.
\end{equation}

Let us denote by $\mR$ the natural map of graded vector spaces
\begin{equation}
\label{mR}
\mR ~: ~ \Conv(\Ger^{\vee}, \Gra)_{\conn} ~ \to ~  \fGC_{\conn} = \Conv(\La^2\coCom, \Gra)_{\conn} 
\end{equation}
given by the formula
$$
\mR(f) = f \Big|_{\La^2\coCom}\,.  
$$
It is not hard to see that $\mR$ is a map of cochain complexes. 
We observe that the map of dg Lie algebras $\io_*$ \eqref{io-star}
$$
\io_* : \Conv(\Ger^{\vee}, \Ger) \to \Conv(\Ger^{\vee}, \Gra)\,. 
$$
satisfies the following property
$$
\mR(\io_*(X)) = 0\,, \qquad \forall~~ X \in \Xi_{\conn}\,,
$$
where $\Xi_{\conn}$ is defined in \eqref{Xi-conn}.

Therefore, restricting $\io_*$ to the subcomplex $\Xi_{\conn}$
we get a map of cochain complexes 
\begin{equation}
\label{psi}
\psi := \io_* \Big|_{\Xi_{\conn}} ~:~ \Xi_{\conn} ~\to~ \ker \mR\,.
\end{equation}

We claim that 
\begin{prop}
\label{prop:Xi-ker-mR}
The map $\psi$ \eqref{psi} is a quasi-isomorphism of cochain complexes.
\end{prop}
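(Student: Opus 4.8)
\emph{Proof proposal.} The plan is to prove the statement first for the ``finite sum'' version of the map and then bootstrap to $\psi$ by the Euler characteristic trick. Write $\Xi^{\oplus}_{\conn} := \Xi_{\conn}\cap\Conv^{\oplus}(\Ger^{\vee},\Ger)$ and $\ker^{\oplus}\mR := \ker\mR\cap\Conv^{\oplus}(\Ger^{\vee},\Gra)$. Since $\io$ is a degree-zero operad map which does not touch the $\La^{-2}\Ger$-tensor factor, $\io_*$ carries $\Xi^{\oplus}_{\conn}$ into $\ker^{\oplus}\mR$; I would show that $\psi^{\oplus}:=\io_*|_{\Xi^{\oplus}_{\conn}}\colon \Xi^{\oplus}_{\conn}\to\ker^{\oplus}\mR$ is a quasi-isomorphism. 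Because each bracket of a Gerstenhaber word becomes exactly one solid edge under $\io$ (so $e(\io(v))=\mb(v)$), the map $\io_*$ preserves the Euler characteristic of a monomial; hence $\psi$ respects the decompositions of $\Xi_{\conn}$ and $\ker\mR$ by Euler characteristic provided by Corollaries \ref{cor:Euler-GerGer} and \ref{cor:Euler-GerGra}, each fixed-$\chi$, fixed-degree summand being finite dimensional (Propositions \ref{prop:Euler-GerGer} and \ref{prop:Euler-GerGra}). Thus $\psi=\prod_\chi\psi_\chi$ with $\psi^{\oplus}=\bigoplus_\chi\psi_\chi$, and a quasi-isomorphism on $\psi^{\oplus}$ upgrades to one on $\psi$ since cohomology commutes with both products and direct sums.

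To treat $\psi^{\oplus}$ I would use the filtration by Lie words of length $1$ from Section \ref{sec:Ger-cO}. This filtration restricts to $\ker^{\oplus}\mR$, and by Proposition \ref{prop:Gr-Ger-cO} applied to $\cO=\Gra$ its associated graded is identified, via $\Ups_{\Gra}$, with $\bigoplus_{n\ge 0}\big(\Tw^{\oplus}\Gra(n)\otimes\La^{-2}\Ger^{\hrt}(n)\big)^{S_n}$ carrying the differential induced by $\pa^{\Tw}$. Rearranging a monomial $v\otimes w$ into the $\Tw$-picture leaves the number of brackets of $w$ unchanged, so passing from $\Conv^{\oplus}(\Ger^{\vee},\Gra)_{\conn}$ to $\ker\mR$ (i.e. discarding the purely commutative summand, $\mb(w)=0$) keeps exactly the pieces with arity $n\ge 1$; since $\La^{-2}\Ger^{\hrt}(1)=\bfzero$, this is the ``$n\ge 2$'' part. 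On $\Xi^{\oplus}_{\conn}$ the same filtration is the ``silly'' one, so its associated graded is $\bigoplus_{n\ge 2}\big(\Ger(n)\otimes\La^{-2}\Ger^{\hrt}(n)\big)^{S_n}_{\conn}$ with zero differential, exactly as in the proof of Theorem \ref{thm:Xi}; under these identifications $\Gr\psi^{\oplus}$ is induced by $\io\colon\Ger\hookrightarrow\Gra$ followed by $\Gra(n)\hookrightarrow\Tw^{\oplus}\Gra(n)$, that is by $\io'=\emb_{\Gra}\circ\io$.

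The decisive point is that restricting to \emph{connected} monomials forces the $\Tw^{\oplus}\Gra$-factor into $\fgraphs$: the dashed structure coming from $\La^{-2}\Ger^{\hrt}(n)$ joins only operational vertices, so a graph in the $\Tw^{\oplus}\Gra$-factor having an all-neutral connected component would make the combined graph disconnected (for $n\ge 2$). Hence the target associated graded is $\bigoplus_{n\ge 2}\big(\fgraphs(n)\otimes\La^{-2}\Ger^{\hrt}(n)\big)^{S_n}_{\conn}$, and $\Gr\psi^{\oplus}$ is the map induced by $\io'\colon\Ger\hookrightarrow\fgraphs$. By Corollary \ref{cor:Ger-fgraphs} this is a quasi-isomorphism of dg operads; tensoring with the (zero-differential) complex $\La^{-2}\Ger^{\hrt}(n)$, taking $S_n$-invariants (exact in characteristic zero), and extracting the connected part of the combined object all preserve quasi-isomorphisms, so $\Gr\psi^{\oplus}$ is a quasi-isomorphism. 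Both filtrations are locally bounded from the left and cocomplete, so Lemma \ref{lem:q-iso} gives that $\psi^{\oplus}$ is a quasi-isomorphism, and the Euler characteristic reduction of the first paragraph then yields the claim for $\psi$. (For the ``connected part preserves quasi-isomorphisms'' step one can, if desired, invoke the symmetric-algebra decompositions together with Claim \ref{cl:Cone}, or re-run the Euler characteristic reduction componentwise.)

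I expect the main obstacle to be the bookkeeping in the middle step: verifying precisely that the restriction of the length-$1$-Lie-word filtration to $\ker\mR$ has associated graded equal to the ``$n\ge 2$'' part of the source of $\Ups_{\Gra}$, and that $\Ups_{\Gra}$ is compatible with the connectedness decomposition. The clean conceptual input that makes everything work — and which replaces the (false) hope of comparing $\Ger$ with $\Tw\Gra$ directly, the latter being far from a quasi-isomorphism — is the observation that combined-connectedness automatically produces an element of $\fgraphs$, so that Corollary \ref{cor:Ger-fgraphs}, rather than any statement about all of $\Tw\Gra$, is the relevant ingredient.
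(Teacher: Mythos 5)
Your proposal is correct and follows essentially the same route as the paper's proof: reduction to the $\oplus$-version via the Euler characteristic trick, the filtration by Lie words of length one together with Proposition \ref{prop:Gr-Ger-cO} and $\Ups_{\Gra}$, the identification of the associated graded of $\ker(\mR)\cap\Conv^{\oplus}(\Ger^{\vee},\Gra)_{\conn}$ with $\bigoplus_{n\ge 2}\big(\fgraphs(n)\otimes\La^{-2}\Ger^{\hrt}(n)\big)^{S_n}_{\conn}$, and the appeal to Corollary \ref{cor:Ger-fgraphs} followed by Lemma \ref{lem:q-iso}. Your explicit justification that combined-connectedness forces the $\Tw^{\oplus}\Gra$-factor into $\fgraphs$ is exactly the point the paper leaves as ``it is not hard to see.''
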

Let us postpone the proof of this proposition to Subsection \ref{sec:ker-mR}
and deduce a link between the cohomology of $\Conv(\Ger^{\vee}, \Ger)_{\conn}$
and the cohomology of $\fGC_{\conn}$\,.

Recall that, due to Corollary \ref{cor:Conv-Ger-Gra-conn}, 
$H^{\bul}\big( \Conv(\Ger^{\vee}, \Gra)_{\conn} \big)$
is spanned by the cohomology class of the vector $\G_{\ed} \otimes b_1 b_2$\,.

Therefore, if we set 
\begin{equation}
\label{Conv-plus}
\Conv(\Ger^{\vee}, \Gra)^+_{\conn} = \bbK \oplus 
\Conv(\Ger^{\vee}, \Gra)_{\conn}
\end{equation}
and extend the differential $\pa$ to $\Conv^+(\Ger^{\vee}, \Gra)_{\conn} $
by declaring that for $1\in \bbK$
\begin{equation}
\label{Conv-plus-pa}
\pa(1) = \G_{\ed} \otimes b_1 b_2\,,
\end{equation}
then we get an acyclic cochain complex 
$$
\left( \Conv(\Ger^{\vee}, \Gra)^+_{\conn}, \pa \right)\,.
$$

Similarly, we ``add'' to the graph complex $\fGC_{\conn}$ 
a one-dimensional vector space 
\begin{equation}
\label{fGC-plus}
\fGC^{+}_{\conn} = \bbK \oplus \fGC_{\conn}
\end{equation}
and extend the differential by declaring that for 
$1\in \bbK$
$$
\pa (1) =  \G_{\ed} \,. 
$$

Due to Exercise \ref{exer:G-bul-G-lp} from Section \ref{sec:fGC-first}
we have
$$
\pa \G_{\bul} = \G_{\ed}\,,
$$
where  $\G_{\bul}$ is the graph with the 
single vertex and no edges. Therefore, 
\begin{equation}
\label{fGC-conn-fGC-plus}
H^{\bul}\big( \fGC^{+}_{\conn} \big) \cong 
H^{\bul}\big( \fGC_{\conn} \big) \oplus  \bbK\L \phi \R\,,
\end{equation}
where $\phi$ is the cohomology class represented by the 
cocycle
$$
\G_{\bul} ~ - ~1 \in  \fGC^{+}_{\conn} \,.
$$

The map $\mR$ \eqref{mR} extends in the obvious way to 
the morphism of cochain complexes: 
\begin{equation}
\label{mR-plus}
\mR^+ ~: ~ \Conv(\Ger^{\vee}, \Gra)^+_{\conn} ~ \to ~  \fGC^+_{\conn}\,.
\end{equation}
Furthermore, 
$$
\ker(\mR^+) = \ker(\mR)\,.
$$

Thus we arrive at the diagram 
\begin{equation}
\label{SES}
\xymatrix@M=0.5pc{
~ &  \Conv(\Ger^{\vee}, \Ger)_{\conn}  & ~ & ~ & ~ \\
~ & \Xi_{\conn} \ar[d]^{\psi}  \ar@{^{(}->}[u]^{\emb_{\Xi}}   & ~ & ~ & ~ \\
\bfzero \ar[r] &
 \ker(\mR) \ar[r] &
\Conv(\Ger^{\vee}, \Gra)^+_{\conn} \ar[r] & 
 \fGC^+_{\conn} \ar[r] & \bfzero
}
\end{equation}

The bottom row of this diagram is an exact sequence of cochain complexes. 
The top vertical arrow $\emb_{\Xi}$ is a quasi-isomorphism due to  Theorem \ref{thm:Xi}. 
The vertical arrow $\psi$ is also a quasi-isomorphism due to Proposition 
\ref{prop:Xi-ker-mR}. Finally the cochain complex $\Conv(\Ger^{\vee}, \Gra)^+_{\conn}$
in the middle of the exact sequence is acyclic.

Using diagram \eqref{SES}, we can now prove the main theorem 
of these notes.
\begin{thm}[T. Willwacher, \cite{Thomas}]
\label{thm:main} 
If $\fGC_{\conn}$ is the ``connected part'' of the full graph complex 
$\fGC$ \eqref{fGC} and $\Conv(\Ger^{\vee}, \Ger)_{\conn}$ is
the ``connected part'' of the extended deformation 
complex $\Conv(\Ger^{\vee}, \Ger)$ \eqref{Def-Ger-ext} of the operad 
$\Ger$ then
\begin{equation}
\label{fGC-Def-Ger}
H^{\bul + 1} \big( \Conv(\Ger^{\vee}, \Ger)_{\conn} \big) \cong
H^{\bul}  \big( \fGC_{\conn} \big) ~ \oplus ~ \bbK\,. 
\end{equation}
\end{thm}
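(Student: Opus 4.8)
The plan is to read off the statement from the commutative diagram \eqref{SES} by a standard diagram chase, using the three facts recorded immediately above it: that $\emb_{\Xi}$ is a quasi-isomorphism (Theorem \ref{thm:Xi}), that $\psi$ is a quasi-isomorphism (Proposition \ref{prop:Xi-ker-mR}), and that the cochain complex $\Conv(\Ger^{\vee}, \Gra)^{+}_{\conn}$ sitting in the middle of the bottom short exact sequence is acyclic (a consequence of Corollary \ref{cor:Conv-Ger-Gra-conn} together with the definition \eqref{Conv-plus-pa} of the extended differential).

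First I would form the long exact sequence in cohomology attached to the short exact sequence of cochain complexes
\[
\bfzero \to \ker(\mR) \to \Conv(\Ger^{\vee}, \Gra)^{+}_{\conn} \to \fGC^{+}_{\conn} \to \bfzero
\]
that constitutes the bottom row of \eqref{SES}. Since $\Conv(\Ger^{\vee}, \Gra)^{+}_{\conn}$ is acyclic, each of its cohomology groups vanishes in this long exact sequence, so the connecting homomorphism furnishes an isomorphism $H^{\bul}(\fGC^{+}_{\conn}) \cong H^{\bul+1}(\ker(\mR))$ for all $\bul$; here $\ker(\mR)=\ker(\mR^{+})$ occupies the sub-object slot of the sequence, which is precisely what produces the degree shift appearing in the statement. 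Next I would use the two vertical quasi-isomorphisms of \eqref{SES}: $\psi$ gives $H^{\bul}(\Xi_{\conn}) \erarrow H^{\bul}(\ker(\mR))$ and $\emb_{\Xi}$ gives $H^{\bul}(\Xi_{\conn}) \erarrow H^{\bul}(\Conv(\Ger^{\vee}, \Ger)_{\conn})$, whence $H^{\bul+1}(\ker(\mR)) \cong H^{\bul+1}(\Conv(\Ger^{\vee}, \Ger)_{\conn})$. Combining the two displayed isomorphisms yields $H^{\bul+1}(\Conv(\Ger^{\vee}, \Ger)_{\conn}) \cong H^{\bul}(\fGC^{+}_{\conn})$, and finally \eqref{fGC-conn-fGC-plus} identifies $H^{\bul}(\fGC^{+}_{\conn})$ with $H^{\bul}(\fGC_{\conn}) \oplus \bbK$, the extra one-dimensional summand being represented by the cocycle $\G_{\bul} - 1$ (a cocycle because $\pa \G_{\bul} = \G_{\ed}$, by Exercise \ref{exer:G-bul-G-lp}). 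Chaining everything gives \eqref{fGC-Def-Ger}.

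Since all the substantive inputs have already been established—Theorem \ref{thm:Xi}, Proposition \ref{prop:Xi-ker-mR}, and the acyclicity of $\Conv(\Ger^{\vee}, \Gra)^{+}_{\conn}$—the remaining work is purely formal, and the only points that need care are bookkeeping ones: verifying that $\ker(\mR^{+})$ really equals $\ker(\mR)$ so that the bottom row of \eqref{SES} is exact as written, and tracking the single degree shift so that it lands on ``$\bul+1$'' rather than ``$\bul$'' or ``$\bul-1$''. In that sense there is no genuine obstacle at this stage; the true content of the theorem is concentrated in Proposition \ref{prop:Xi-ker-mR} (whose proof is carried out in Subsection \ref{sec:ker-mR}), which we are entitled to invoke here.
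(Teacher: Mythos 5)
Your proposal is correct and is essentially identical to the paper's own proof: both use the long exact sequence of the bottom row of \eqref{SES}, the acyclicity of $\Conv(\Ger^{\vee}, \Gra)^{+}_{\conn}$ to obtain the connecting isomorphism $H^{\bul}(\fGC^{+}_{\conn}) \cong H^{\bul+1}(\ker(\mR))$, the quasi-isomorphisms $\psi$ and $\emb_{\Xi}$, and finally the identification \eqref{fGC-conn-fGC-plus}. No gaps.
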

\begin{proof}
Since the cochain complex $\Conv(\Ger^{\vee}, \Gra)^+_{\conn}$ in \eqref{SES} is acyclic,
the connecting homomorphism induces an isomorphism 
$$
H^{\bul}  \big( \fGC^+_{\conn} \big) \cong  H^{\bul+1}\big(\ker(\mR)\big)\,.
$$

On the other hand, 
$$
H^{\bul}\big(\ker(\mR)\big) \cong H^{\bul}\big( \Xi_{\conn} \big) 
\cong H^{\bul}\big(  \Conv(\Ger^{\vee}, \Ger)_{\conn}  \big)
$$
because both $\psi$ and $\emb_{\Xi_{\conn}}$ are quasi-isomorphisms.

Therefore, 
\begin{equation}
\label{fGC-plus-Def-Ger}
H^{\bul + 1} \big( \Conv(\Ger^{\vee}, \Ger)_{\conn} \big) \cong
H^{\bul}  \big( \fGC^+_{\conn} \big) \,. 
\end{equation}

Thus, using the isomorphism \eqref{fGC-conn-fGC-plus},  we arrive at the desired 
result \eqref{fGC-Def-Ger}.
\end{proof}
\begin{rem}
\label{rem:1-aa-brack}
The above proof gives us a concrete isomorphism from 
\begin{equation}
\label{H-fGC-plus}
H^{\bul}  \big( \fGC_{\conn} \big) ~ \oplus ~ \bbK
\end{equation}
to 
\begin{equation}
\label{H-Def-Ger-conn}
H^{\bul + 1} \big( \Conv(\Ger^{\vee}, \Ger)_{\conn} \big)\,. 
\end{equation}

Chasing through diagram  \eqref{SES}, it is not hard to see that the 
vector $1\in \bbK$ in the second summand of \eqref{H-fGC-plus} is sent,
via this isomorphism, to the class represented by the cocycle 
$$
a_1 a_2 \otimes \{b_1, b_2\}
$$
or the cocycle 
$$
- \{ a_1,  a_2 \} \otimes b_1 b_2
$$
in  $\Conv(\Ger^{\vee}, \Ger)_{\conn}$\,. 
\end{rem}
\begin{rem}
\label{rem:grt-H0}
According to \cite{Dima-GT}, the Lie algebra $\grt$ of the Grothendieck-Teichmueller 
group $\GRT$ embeds into $H^0 \big( \Conv(\Ger^{\vee}, \Ger) \big)$\,. Since 
$\grt$ is infinite dimensional \cite{Drinfeld}, the spaces  $H^0 \big( \Conv(\Ger^{\vee}, \Ger) \big)$
and $H^0(\fGC)$ are also infinite dimensional.
\end{rem}

\subsection{Proof of Proposition \ref{prop:Xi-ker-mR}}   
\label{sec:ker-mR}

Let us prove that the map 
\begin{equation}
\label{psi-oplus}
\psi \Big|_{ \Xi^{\oplus}_{\conn} } ~:~ \Xi^{\oplus}_{\conn} \to 
\ker(\mR) \cap  \Conv(\Ger^{\vee}, \Gra)^{\oplus}_{\conn}
\end{equation}
is a quasi-isomorphism of cochain complexes.

For this purpose we apply the general construction of 
Section \ref{sec:Ger-cO} to the case when $\cO = \Gra$\,.

Following Section \ref{sec:Ger-cO}, the cochain complex 
$\Conv(\Ger^{\vee}, \Gra)^{\oplus}$ carries the ascending filtration 
\begin{equation}
\label{filtr-Ger-Gra-oplus}
\dots \subset \cF^{m-1}\, \Conv^{\oplus}(\Ger^{\vee}, \Gra) \subset 
\cF^m\, \Conv^{\oplus}(\Ger^{\vee}, \Gra) \subset \dots\,,
\end{equation}
where $\cF^m\, \Conv^{\oplus}(\Ger^{\vee}, \Gra)$ consists of 
sums
$$
\sum_i v_i \otimes w_i \in  \bigoplus_n \big( \Gra(n) \otimes \La^{-2} \Ger(n) \big)^{S_n}
$$
which satisfy 
$$
\mL_1 (w_i) - |\, v_i \otimes w_i \,| \le  m\,, \qquad \forall ~~ i\,.
$$

Furthermore, due to Proposition \ref{prop:Gr-Ger-cO}, the formula
\begin{equation}
\label{Ups-Gra-def}
\Ups_{\Gra} \left( \sum_i v_i \otimes w_i \right) : =
\sum_{\si \in \Sh_{r,n}} \sum_i
\si (v_i) \otimes
\si (b_1 \dots b_r \, w_i(b_{r+1}, \dots, b_{r+n})) 
\end{equation}
$$
 \sum_i v_i \otimes w_i  \in  \Big(\bs^{2r} \Gra(r+n)^{S_r}
\otimes \La^{-2}\Ger^{\hrt}(n)  \Big)^{S_n}
$$ 
defines an isomorphism of cochain complexes
\begin{equation}
\label{Ups-Gra}
\Ups_{\Gra} ~: ~ \bigoplus_{n \ge 0}  \Big( \Tw^{\oplus}\Gra(n)
\otimes \La^{-2}\Ger^{\hrt}(n)  \Big)^{S_n} ~~\to~~ 
\Gr\, \Conv^{\oplus}(\Ger^{\vee}, \Gra)\,,
\end{equation}
where the differential on 
$$
 \bigoplus_{n \ge 0}  \Big( \Tw^{\oplus}\Gra(n)
\otimes \La^{-2}\Ger^{\hrt}(n)  \Big)^{S_n}
$$
comes from the differential $\pa^{\Tw}$ on $\Tw^{\oplus}\Gra(n)$\,.

Let us restrict the filtration \eqref{filtr-Ger-Gra-oplus} to the 
subcomplex 
$$
\ker(\mR) \cap  \Conv(\Ger^{\vee}, \Gra)^{\oplus}_{\conn}
$$
and recall that the $n$-th space
\begin{equation}
\label{fgraphs-n-here}
\fgraphs(n) : =  \fGraphs(n) \cap \Tw^{\oplus}\Gra(n)
\end{equation}
of the dg operad $\fgraphs$ is spanned  by vectors of the 
form
$$
\sum_{\si \in S_r}  \si (\G)\,,
$$ 
where the graph $\G \in \gra_{r+n}$ has no connected components 
which involve exclusively neutral vertices (i.e. vertices with labels $\le r$)\,. 

It is not hard to see that the restriction of $\Ups_{\Gra}$ to 
$$
 \bigoplus_{n \ge 2}  \Big( \fgraphs(n)
\otimes \La^{-2}\Ger^{\hrt}(n)  \Big)^{S_n}_{\conn}
$$
gives us an isomorphism 
\begin{equation}
\label{Ups-ker-mR-}
\Ups' :  \bigoplus_{n \ge 2}  \Big( \fgraphs(n)
\otimes \La^{-2}\Ger^{\hrt}(n)  \Big)^{S_n}_{\conn}
 ~ \to ~ 
 \Gr \left(
\ker(\mR) \cap  \Conv(\Ger^{\vee}, \Gra)^{\oplus}_{\conn}
\right)
\end{equation}
of cochain complexes. 

On the other hand, Corollary \ref{cor:Ger-fgraphs} implies that 
the natural embedding 
\begin{equation}
\label{Xi-to-fgraphs-Gerhrt}
\Xi^{\oplus} \hookrightarrow  \bigoplus_{n \ge 2}  \Big( \fgraphs(n)
\otimes \La^{-2}\Ger^{\hrt}(n)  \Big)^{S_n}
\end{equation}
is a quasi-isomorphism of cochain complexes.

Therefore, since the cone of the embedding 
\begin{equation}
\label{Xi-to-fgraphs-Gerhrt-conn}
\Xi^{\oplus}_{\conn} \hookrightarrow  \bigoplus_{n \ge 2}  \Big( \fgraphs(n)
\otimes \La^{-2}\Ger^{\hrt}(n)  \Big)^{S_n}_{\conn}
\end{equation}
is a direct summand in the cone of the embedding \eqref{Xi-to-fgraphs-Gerhrt}, 
the map \eqref{Xi-to-fgraphs-Gerhrt-conn} is also a quasi-isomorphism.

This observation allows us to conclude that 
the map \eqref{psi-oplus} induces a quasi-isomorphism 
on the level of associated graded complexes. 

Since the filtration   \eqref{filtr-Ger-Gra-oplus} is locally bounded 
and cocomplete, Lemma \ref{lem:q-iso} implies that  \eqref{psi-oplus} 
is indeed a quasi-isomorphism of cochain complexes. 

Thus, using the Euler characteristic trick, 
we conclude that the map  $\psi$ \eqref{psi} is also a quasi-isomorphism 
of cochain complexes. 

Proposition \ref{prop:Xi-ker-mR} is proved.  ~~~ $\Box$

\appendix

\section{Lemma on a quasi-isomorphism of filtered complexes}
\label{app:q-iso}

Let us recall that a cone $\Cone(f)$ of a morphism of 
cochain complexes $f : C \to K$ is the cochain complex 
$$
C \oplus \bs K
$$
with the differential 
$$
\pa^{\Cone}(v_1 + \bs v_2) = \pa(v_1) + \bs f(v_1) - \bs \pa (v_2)\,, 
$$
where we denote by $\pa$ the differentials on both 
complexes $C$ and $K$\,.

Let us also recall a claim which follows easily from 
Lemma 3 in \cite[Section III.3.2]{GelManin}:
\begin{claim}
\label{cl:Cone}
A morphism $f: C \to K$ of cochain complexes is 
a quasi-isomorphism  if and only if the cochain complex $\Cone(f)$
is acyclic. $\Box$
\end{claim}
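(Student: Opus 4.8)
The plan is to recognize $\Cone(f)$ as sitting in a canonical short exact sequence of cochain complexes and then read off the equivalence from the associated long exact sequence in cohomology. First I would check that $\bs K \subset \Cone(f)$ is a subcomplex: restricted to elements with vanishing $C$-component, the cone differential becomes $\pa^{\Cone}(\bs v_2) = - \bs\, \pa(v_2)$, so $\bs K$ is closed under $\pa^{\Cone}$, and the quotient $\Cone(f)\big/\bs K$ is visibly $C$ equipped with its own differential $\pa$. This yields the short exact sequence
$$
\bfzero \,\to\, \bs K \,\to\, \Cone(f) \,\to\, C \,\to\, \bfzero\,.
$$

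Next I would write the associated long exact sequence in cohomology, using the tautological identification $H^{n}(\bs K) \cong H^{n-1}(K)$:
$$
\cdots \,\to\, H^{n-1}(K) \,\to\, H^{n}(\Cone(f)) \,\to\, H^{n}(C) \,\stackrel{\delta}{\longrightarrow}\, H^{n}(K) \,\to\, H^{n+1}(\Cone(f)) \,\to\, \cdots\,.
$$
The one genuine computation is the identification of the connecting homomorphism $\delta$ with $\pm f_*$: starting from a cocycle $c \in C^{n}$, lift it to $c + \bfzero \in \Cone(f)^{n}$, apply $\pa^{\Cone}$ to obtain $\bs\, f(c) \in \bs K$, and conclude $\delta[c] = \pm [f(c)]$. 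This is a single-line diagram chase; the only care needed is matching the suspension and sign conventions fixed in the Notation subsection.

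Finally, exactness gives the statement directly: $\Cone(f)$ is acyclic, i.e.\ $H^{n}(\Cone(f)) = \bfzero$ for all $n$, if and only if $\delta \colon H^{n}(C) \to H^{n}(K)$ is an isomorphism for every $n$, which by the previous paragraph is equivalent to $f_* \colon H^{n}(C) \to H^{n}(K)$ being an isomorphism for every $n$, i.e.\ to $f$ being a quasi-isomorphism. The main (and only) obstacle is the sign/degree bookkeeping in pinning down $\delta = \pm f_*$; everything else is purely formal. Alternatively, one may simply cite Lemma~3 of \cite[Section III.3.2]{GelManin}, from which the claim follows immediately.
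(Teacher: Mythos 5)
Your argument is correct: the short exact sequence $\bfzero \to \bs K \to \Cone(f) \to C \to \bfzero$, the identification of the connecting homomorphism with $\pm f_*$, and the resulting long exact sequence give exactly the claimed equivalence. The paper itself offers no proof beyond citing Lemma~3 of \cite[Section III.3.2]{GelManin}; your long-exact-sequence argument is precisely the standard proof behind that citation, so the two approaches coincide.
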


Let $C$ be a cochain complex equipped with an ascending 
filtration: 
$$
\dots \subset \cF^{m-1} C \subset  \cF^{m} C \subset 
 \cF^{m+1} C  \subset  \dots\,.
$$
We say that the filtration on $C$ is {\it cocomplete} if 
\begin{equation}
\label{cocomplete}
C = \bigcup_{m}  \cF^{m} C\,.
\end{equation}
Furthermore, we say that the filtration on $C$ is {\it locally bounded from 
the left} if for every 
degree $d$ there exists an integers $m_d$ such that 
\begin{equation}
\label{loc-finite-left}
\cF^{m_d} C^d = \bfzero\,.
\end{equation}

Let us denote by $\Gr(C)$ the associated graded cochain complex 
\begin{equation}
\label{Gr-C}
\Gr(C) : = \bigoplus_m \cF^m C  \big/  \cF^{m-1} C\,.
\end{equation}

We will need the following claim.
\begin{claim}
\label{cl:Gr-acyclic}
Let $C$ be a cochain complex equipped with 
a cocomplete ascending filtration which is locally 
bounded from the left. 
If $\Gr(C)$ is acyclic 
then so is $C$\,.
\end{claim}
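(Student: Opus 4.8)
The plan is to prove the contrapositive-free version directly: show that every cocycle of $C$ is a coboundary, exploiting the filtration to run an induction. The key auxiliary statement I would isolate is this: \emph{for every integer $m$ and every degree $d$, every cocycle $c\in\cF^m C^d$ can be written as $c=\pa b$ for some $b\in\cF^m C^{d-1}$.} Granting this, let $c\in C^d$ be an arbitrary cocycle; since the filtration is cocomplete we have $c\in\cF^m C^d$ for some $m$, so the auxiliary statement makes $c$ exact. As $d$ is arbitrary, $H^\bullet(C)=\bfzero$.

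To establish the auxiliary statement I would fix the degree $d$ and induct on $m$. The hypothesis that the filtration is locally bounded from the left supplies the base of the induction: there is an integer $m_d$ with $\cF^{m_d}C^d=\bfzero$, and since the filtration is ascending this forces $\cF^m C^d=\bfzero$ for every $m\le m_d$, so the claim holds vacuously there. For the inductive step, take a cocycle $c\in\cF^m C^d$ with $m>m_d$ and pass to its image $\bar c$ in the associated graded piece $\cF^m C^d/\cF^{m-1}C^d\subseteq\Gr(C)^d$. Since $\pa c=0$, the class $\bar c$ is a cocycle in $\Gr(C)$, and acyclicity of $\Gr(C)$ yields a class $\bar b\in\cF^m C^{d-1}/\cF^{m-1}C^{d-1}$ with $\pa\bar b=\bar c$. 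Lifting $\bar b$ to some $b_0\in\cF^m C^{d-1}$, the element $c-\pa b_0$ is again a cocycle and now lies in $\cF^{m-1}C^d$; applying the inductive hypothesis at filtration level $m-1$ (same degree $d$) produces $b_1\in\cF^{m-1}C^{d-1}\subseteq\cF^m C^{d-1}$ with $\pa b_1=c-\pa b_0$. Then $b:=b_0+b_1\in\cF^m C^{d-1}$ satisfies $\pa b=c$, completing the step.

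I do not anticipate any real obstacle: this is the hands-on version of the convergence of the spectral sequence of a filtered complex. The only points that need a touch of care are that the induction is genuinely well-founded — which is precisely what local boundedness from the left guarantees, and the reason the induction is on $m$ with the degree held fixed — and that the inductive hypothesis is invoked only at the strictly smaller level $m-1$ and in the same degree $d$, so no circularity arises. It is worth remarking that both standing hypotheses are used: cocompleteness to reduce an arbitrary cocycle to one living in some $\cF^m C$, and local boundedness from the left to start the induction; neither is redundant.
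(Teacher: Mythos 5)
Your proof is correct and follows essentially the same route as the paper's: both use acyclicity of $\Gr(C)$ to strip off the top filtration layer of a cocycle one level at a time and invoke local boundedness from the left to terminate. The only difference is cosmetic — you package the descent as an induction on the filtration index via an auxiliary statement, while the paper writes it as a finite iteration producing correction terms $w_m, w_{m-1}, \dots, w_{k_d}$ whose sum is the desired primitive.
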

\begin{proof}
Let $v$ be cocycle in $C$ of degree $d$\,. 
Our goal is to show that there exists a vector 
$w \in C^{d-1}$ such that 
$$
v = \pa w\,.
$$
 
Since the filtration on $C$ is cocomplete there 
exists an integer $m$ such that 
$$
v \in \cF^m C^d\,.
$$ 

Therefore $v$ represents a cocycle in the quotient
$$
\cF^m C^d \big/  \cF^{m-1} C^d\,.
$$

On the other hand, $\Gr(C)$ is acyclic. Hence there exists 
a vector $w_m \in \cF^m C^{d-1} $ such that 
\begin{equation}
\label{in-1m}
v - \pa (w_m) \in   \cF^{m-1} C^d\,.
\end{equation}

The latter implies that the vector $v - \pa (w_m)$ represents 
a cocycle in  the quotient
$$
\cF^{m-1} C^d \big/  \cF^{m-2} C^d\,.
$$

Hence, there exists a vector $w_{m-1} \in \cF^{m-1} C^{d-1} $ such that 
\begin{equation}
\label{in-2m}
v - \pa (w_m) - \pa (w_{m-1})\in   \cF^{m-2} C^d\,.
\end{equation}

Continuing this process, we conclude that there 
exists a sequence of vectors 
$$
w_k \in  \cF^{k} C^{d-1}, \qquad k \le m
$$
such that for every $k < m $ we have 
\begin{equation}
\label{in-1k}
v - \pa (w_m + w_{m-1} + \dots + w_k) \in   \cF^{k-1} C^d\,.
\end{equation}

Since the filtration on $C$ is locally bounded from the left there exists
an integer $k_d < m $ such that $  \cF^{k_d-1} C^d =\bfzero$ 
and we get 
$$
v - \pa (w_m + w_{m-1} + \dots + w_{k_d}) = 0\,. 
$$

The desired statement is proved. 
\end{proof}

We are now ready to prove the following generalization of 
Claim \ref{cl:Gr-acyclic}.
\begin{lem}
\label{lem:q-iso}
Let $C$ and $K$ be cochain complexes 
equipped with cocomplete ascending filtrations
which are locally bounded from the left. 
Let $f: C \to K$ be a morphism of cochain complexes 
compatible with the filtrations. 
If the induced map of cochain complexes
$$
\Gr(f) : \Gr (C)  \to \Gr(K)
$$
is a quasi-isomorphism then so is $f$\,.
\end{lem}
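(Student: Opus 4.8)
Lemma \ref{lem:q-iso} is the standard ``filtered comparison'' statement, and the plan is to reduce it to Claims \ref{cl:Cone} and \ref{cl:Gr-acyclic} proved just above via the cone construction. First I would form the cone $\Cone(f) = C \oplus \bs K$. The key observation is that the filtrations on $C$ and $K$, together with the fact that $f$ is compatible with them, induce an ascending filtration on $\Cone(f)$ by the formula $\cF^m \Cone(f) = \cF^m C \oplus \bs\, \cF^m K$. I would check (a one-line computation with $\pa^{\Cone}(v_1 + \bs v_2) = \pa v_1 + \bs f(v_1) - \bs\pa v_2$, using that $f$ preserves the filtration) that $\pa^{\Cone}$ preserves this filtration, so $\cF^\bullet \Cone(f)$ is indeed a filtration of cochain complexes.

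Next I would verify the two hypotheses needed to apply Claim \ref{cl:Gr-acyclic} to $\Cone(f)$. Cocompleteness: since $C = \bigcup_m \cF^m C$ and $K = \bigcup_m \cF^m K$, and a finite direct sum commutes with directed unions, $\Cone(f) = \bigcup_m \cF^m\Cone(f)$. Local boundedness from the left: in each fixed degree $d$ we have $\Cone(f)^d = C^d \oplus K^{d-1}$, and choosing $m$ below both $m_d$ (for $C$ in degree $d$) and $m_{d-1}$ (for $K$ in degree $d-1$) gives $\cF^m\Cone(f)^d = \bfzero$. Then I would identify the associated graded: because direct sums and quotients commute, $\Gr(\Cone(f)) \cong \Gr(C) \oplus \bs\,\Gr(K)$, and unwinding the induced differential shows this is exactly $\Cone(\Gr(f))$, the cone of the induced map $\Gr(f): \Gr(C) \to \Gr(K)$. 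This identification of ``graded of the cone'' with ``cone of the graded'' is the one small point that deserves care, since one must track that the connecting component of $\pa^{\Cone}$ descends to $\Gr(f)$ on the associated graded rather than to something else; it does, precisely because $f$ is filtration-preserving so the term $\bs f(v_1)$ lands in the correct filtration piece.

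Finally I would assemble the conclusion. By hypothesis $\Gr(f)$ is a quasi-isomorphism, so by Claim \ref{cl:Cone} the complex $\Cone(\Gr(f)) \cong \Gr(\Cone(f))$ is acyclic. Since $\Cone(f)$ carries a cocomplete, locally-left-bounded filtration whose associated graded is acyclic, Claim \ref{cl:Gr-acyclic} shows $\Cone(f)$ is acyclic. Applying Claim \ref{cl:Cone} in the other direction, $f$ is a quasi-isomorphism, as desired.

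I do not expect a serious obstacle here; the only genuinely substantive input is already packaged in Claims \ref{cl:Cone} and \ref{cl:Gr-acyclic}, so the ``hard part'' is really just the bookkeeping that the cone of a filtered map is naturally filtered with the expected properties and with $\Gr(\Cone(f)) \cong \Cone(\Gr(f))$. One should be mildly attentive to the degree shift $\bs$ when checking local boundedness from the left (it forces the use of $m_{d-1}$ rather than $m_d$ for the $K$-summand), but this is routine.
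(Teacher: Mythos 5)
Your proposal is correct and follows the same route as the paper's proof: filter $\Cone(f)$ by $\cF^m C \oplus \bs\,\cF^m K$, identify $\Gr(\Cone(f))$ with $\Cone(\Gr(f))$, and conclude via Claims \ref{cl:Cone} and \ref{cl:Gr-acyclic}. Your extra attention to the degree shift in checking local boundedness from the left is a detail the paper leaves implicit, but the argument is the same.
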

\begin{proof}
Let us introduce the obvious ascending filtration on the 
cone of $f$
$$
\dots \subset \cF^{m-1} \Cone(f) \subset
\cF^{m} \Cone(f)  \subset  \cF^{m+1} \Cone(f) \subset \dots\,,
$$
\begin{equation}
\label{filtr-Cone}
\cF^m \Cone(f) = \cF^m C \, \oplus \, \bs \cF^m K \,.
\end{equation}
The differential $\pa^{\Cone}$ is compatible with the filtration 
\eqref{filtr-Cone} because $f$ is compatible with the filtrations 
on $C$ and $K$\,.

It is obvious that the filtration \eqref{filtr-Cone} is cocomplete 
and locally bounded from the left. Furthermore, 
it is not hard to see that 
$$
\Gr (\Cone(f)) = \Cone(\Gr(f))\,.
$$

Therefore, Claim \ref{cl:Cone} implies that $\Gr(\Cone(f))$ is 
acyclic. 

Combining this observation with Claim \ref{cl:Gr-acyclic} we 
conclude that $\Cone(f)$ is also acyclic. Therefore, applying Claim 
 \ref{cl:Cone} once again, we deduce the statement of the lemma.
\end{proof}

\begin{rem}
\label{rem:lem:q-iso}
Lemma \ref{lem:q-iso} is often used in the literature under the folklore 
name ``standard spectral sequence argument''. Unfortunately, a clean 
proof of this fact based on the use of a spectral sequence is 
very cumbersome. 
\end{rem}

\section{Harrison complex of the cocommutative coalgebra $S(V)$}
\label{app:Harr}

Let $V$ be a finite dimensional graded vector space. 
We consider the symmetric algebra 
\begin{equation}
\label{S-V}
S(V)
\end{equation}
as the cocommutative coalgebra with the
standard comultiplication: 
$$
\D (v_1 \dots v_n) = 1 \otimes (v_1 \dots v_n) + 
$$
\begin{equation}
\label{Delta}
\sum_{p=1}^{n-1}
\sum_{\si \in \Sh_{p, n-p}} 
(-1)^{\ve(\si, v_1,\dots, v_n)} \, 
v_{\si(1)} \dots v_{\si(p)} \otimes v_{\si(p+1)} \dots v_{\si(n)}
+  (v_1 \dots v_n) \otimes 1\,,
\end{equation}
where $v_1,\dots, v_n$ are homogeneous vectors in $V$
and the sign factor $(-1)^{\ve(\si, v_1,\dots, v_n)}$ is determined 
by the standard Koszul rule. 

We denote by $\wt{\D}$ the reduced comultiplication
which is define by the formula
\begin{equation}
\label{wt-Delta}
\wt{\D} (X) = \D (X) - X \otimes 1 - 1 \otimes X
\end{equation}

For example, $\wt{\D}(1) = - 1\otimes 1$ and 
$\wt{\D}(v) = 0$ for all $v \in V$\,.

Let us consider the free $\La^{-1}\Lie$-algebra
\begin{equation}
\label{Lainv-Lie-SV}
\La^{-1}\Lie(S(V))
\end{equation}
generated by $S(V)$\,. 

Let us denote by $X'_{i}$ and $X''_i$ the tensor factors of
$$
\wt{\D} (X) = \sum_i X'_i \otimes X''_i
$$
for a vector $X \in S(V)$ and introduce the degree $1$ derivation $\de$
of the free $\La^{-1}\Lie$-algebra \eqref{Lainv-Lie-SV} by setting
\begin{equation}
\label{de-Harr}
\de(X) = \sum_i \{X'_i, X''_i\}\,.
\end{equation}

Due to the Jacobi identity  
$$
\de^2 = 0\,.
$$
Hence $\de$ is a differential on  \eqref{Lainv-Lie-SV}
and we call 
\begin{equation}
\label{Harr-comp}
\big( \La^{-1}\Lie(S(V)), \de \big)
\end{equation}
the {\it Harrison complex} of $S(V)$\,.

It is easy to see that each non-zero vector $v \in V \subset  \La^{-1}\Lie(S(V))$
is a non-trivial cocycle in \eqref{Harr-comp}. 

The following theorem and its various 
versions\footnote{For a version of Theorem \ref{thm:Harr} 
we refer the reader to  \cite[Section 3.5]{Loday}. Another 
version of this theorem can also be deduced from statements
in \cite[Appendix B]{Quillen}.} are often referred to as ``well-known''. 
\begin{thm}
\label{thm:Harr}
For the Harrison complex \eqref{Harr-comp} we have 
$$
H^{\bul}\big( \La^{-1}\Lie(S(V)), \de \big) \cong V\,.
$$
More precisely, for every cocycle $c$ in  \eqref{Harr-comp}
there exists a vector $v \in V \subset  \La^{-1}\Lie(S(V))$
and a vector $c_1$ in  \eqref{Harr-comp} such that 
$$
c = v  + \de (c_1)
$$
Furthermore, a vector  $v \in V \subset  \La^{-1}\Lie(S(V))$
is an exact cocycle in  \eqref{Harr-comp}  if and only if $v = 0$\,.
\end{thm}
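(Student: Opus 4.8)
\emph{Plan.} The cochain complex \eqref{Harr-comp} carries an extra grading by \emph{weight}: to a Lie monomial in $\La^{-1}\Lie(S(V))$ whose leaves are decorated by $X_1, \dots, X_k$ with $X_i \in S^{n_i}(V)$ we assign the weight $n_1 + \dots + n_k$. Since the reduced comultiplication $\wt{\D}$ sends $S^n(V)$ into $\bigoplus_{p} S^p(V) \otimes S^{n-p}(V)$, the differential $\de$ \eqref{de-Harr} preserves the weight, so \eqref{Harr-comp} splits as a direct sum of subcomplexes
\[
\La^{-1}\Lie(S(V)) = \bigoplus_{w \ge 0} \La^{-1}\Lie(S(V))_{w}\,,
\]
and each piece is finite dimensional in every cohomological degree (adjoining a leaf decorated by $1 \in S^0(V)$ raises the degree by one bracket). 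The goal is then to show that $H^{\bul}\big(\La^{-1}\Lie(S(V))_{w}\big)$ is $V$ for $w=1$ and vanishes for $w\ne 1$. The weight $1$ part consists exactly of Lie monomials with a single leaf decorated by $V=S^1(V)$ and all other leaves decorated by $1$; the single-leaf monomials $v\in V$ are $\de$-closed because $\wt{\D}(v)=0$, and a nonzero $v$ is never $\de$-exact for degree reasons (a weight $1$ element of degree $|v|-1$ would have to contain $-1$ brackets). Thus the ``easy half'' of the theorem is immediate and it remains to show that the inclusion $V \hookrightarrow \La^{-1}\Lie(S(V))$ is a quasi-isomorphism.

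\emph{Weights $0$ and $1$.} For $w=0$ the complex is $\La^{-1}\Lie(\bbK\cdot 1)$, the free $\La^{-1}\Lie$-algebra on one generator $1$ of degree $0$; since $\bbK$ has characteristic zero this is two dimensional, spanned by $1$ (degree $0$) and $\{1,1\}$ (degree $1$), all longer brackets being killed by graded antisymmetry and the Jacobi identity. From $\wt{\D}(1) = -1\otimes 1$ we get $\de(1) = -\{1,1\}$, so the two-term complex $\bbK\cdot 1 \to \bbK\cdot\{1,1\}$ is acyclic. For $w=1$ one fixes a basis of $V$ and checks, by the same kind of elementary bookkeeping — the differential acts only on the $1$-leaves and leaves the distinguished $V$-leaf untouched — that the weight $1$ subcomplex is the tensor product of $V$ with a complex quasi-isomorphic to $\bbK$; hence $H^{\bul}\big(\La^{-1}\Lie(S(V))_1\big) = V$, spanned by the classes of the single-leaf cocycles.

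\emph{Weights $\ge 2$: the main obstacle.} The real content is the acyclicity of $\La^{-1}\Lie(S(V))_{w}$ for $w\ge 2$, i.e. of $\bigoplus_{w\ge 2}\La^{-1}\Lie(S(V))_{w}$. I would deduce it from the Koszulness of the polynomial algebra $S(V)$ (equivalently, of the operad $\Com$): up to the degree shifts built into $\La^{-1}$, the complex \eqref{Harr-comp} is the Harrison cochain complex of $S(V)$ with coefficients in $\bbK$, whose cohomology is the Andr\'e--Quillen cohomology of the ``smooth'' algebra $S(V)$, concentrated in degree $0$ where it equals $V$. Concretely one can either (i) use the explicit Koszul resolution $S(V)\otimes \bigwedge^{\bul} V \to \bbK$ to identify the reduced bar homology of $S(V)$ with the exterior algebra and then pass to its Lie-coprimitive part, or (ii) reduce to $\dim V \le 1$ by the transitivity (base-change) property of Harrison cohomology applied to $S(V_1\oplus V_2)\cong S(V_1)\otimes S(V_2)$ and finish with the finite, explicitly contractible computation for $S(\bbK)$ with an even generator and the trivial case of an odd generator. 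In either route the technical heart is the construction of a contracting homotopy on $\bigoplus_{w\ge 2}\La^{-1}\Lie(S(V))_{w}$; once it is in place, combining the three weight ranges gives $H^{\bul}\big(\La^{-1}\Lie(S(V))\big)\cong V$, and the ``moreover'' assertions of Theorem \ref{thm:Harr} follow from the weight-$1$ identification of the cohomology with the span of the single-leaf cocycles $v\in V$.
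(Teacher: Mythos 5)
Your weight decomposition is sound (the differential $\de$ preserves the total symmetric degree of the leaf decorations, since $\wt{\D}$ maps $S^n(V)$ into sums of $S^p(V)\otimes S^{n-p}(V)$), and the weight-$0$ computation is correct. But the proof stops exactly where the theorem begins. The acyclicity in weights $\ge 2$ — and, in truth, the identification of the weight-$1$ cohomology with $V$, which is not ``elementary bookkeeping'': the space of Lie words with one $V$-leaf and $k$ leaves decorated by $1$ grows with $k$, and contracting that tower onto $V$ is a genuine computation of the same nature as the general case — is precisely the content of the theorem, and neither of your two sketched routes is carried out. In route (i), ``identify the reduced bar homology of $S(V)$ with the exterior algebra and then pass to its Lie-coprimitive part'' hides two separate nontrivial steps: the first is HKR-type input, while the second — extracting the free-Lie summand from the full tensor complex in a way compatible with the differential — is a structural statement requiring its own argument. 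Route (ii) rests on a base-change/transitivity property of Harrison cohomology that you neither prove nor make precise. Writing ``the technical heart is the construction of a contracting homotopy; once it is in place\dots'' concedes that the heart is missing.

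For comparison, the paper closes exactly this gap as follows: embed $\bs\,\La^{-1}\Lie(S(V)) = \Lie(\bs\,S(V))$ into the tensor algebra $T(\bs\,S(V))$ and extend $\de$ there; recognize the restricted dual of $\big(T(\bs\,S(V)),\de\big)$ as the Hochschild chain complex $C_{-\bul}(S(V'),\bbK)$, whose homology is $S(\bsi V')$ by Hochschild--Kostant--Rosenberg; dualize back to describe all cocycles of $T(\bs\,S(V))$; and then handle your missing ``Lie-coprimitive'' step via the PBW isomorphism $T(\bs\,S(V))\cong S\big(\Lie(\bs\,S(V))\big)$, which is compatible with $\de$ and exhibits $\Lie(\bs\,S(V))$ as a direct summand, the $S^{m\ge 2}$ summands absorbing the rest of the HKR classes. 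Any complete version of your argument must supply an equivalent of these last two steps.
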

\begin{proof}
To prove this theorem we embed 
the suspension 
\begin{equation}
\label{sHarr-comp}
\bs \,\La^{-1}\Lie(S(V)) = \Lie (\bs\, S(V))
\end{equation}
of  \eqref{Harr-comp} 
into the tensor algebra 
\begin{equation}
\label{T-bsS-V}
T (\bs\, S(V)) 
\end{equation}
generated by $\bs\, S(V)$\,. 

The differential $\de$ on \eqref{sHarr-comp} can be 
extended to \eqref{T-bsS-V} in the obvious way: 
\begin{equation}
\label{de-on-T}
\de (\bs X) = 2\, \bs \otimes \bs \big(\wt{\D}(X) \big)\,. 
\end{equation}

To compute the cohomology of $\big(T (\bs\, S(V)) , \de \big)$
we consider the restricted dual complex
\begin{equation}
\label{T-dual}
(T (\bsi S(V')), \de' )\,,
\end{equation}
where $V'$ is the linear dual of $V$\,.

Since $T (\bs\, S(V))$ is a free associative algebra, it is convenient 
to view \eqref{T-dual} as the cofree coassociative coalgebra 
with the comultiplication given by deconcatenation. Furthermore, since 
$\de$ is a derivation of \eqref{T-bsS-V}, $\de'$ is coderivation. 
Therefore, $\de'$ is uniquely determined by its composition 
$p \circ \de'$ with the projection 
$$
p : T (\bsi S(V')) \to \bsi S(V')\,.
$$

It is easy to see that 
\begin{equation}
\label{p-de-pr}
p \circ \de' ( \bsi X_1 \otimes \dots \otimes \bsi X_n) = 
\begin{cases}
 (-1)^{|X_1|-1} \,2 \, \bsi \mu (X_1, X_2) \qquad {\rm if} ~~ n=2  \\
 0  \qquad {\rm otherwise}\,.
\end{cases}
\end{equation}
Here $X_1, \dots, X_n$ are homogeneous vectors in $S(V')$ 
and the map 
$$
\mu : S(V') \otimes S(V') \to S(V')
$$
is defined by the formula
\begin{equation}
\label{mu-S-Vpr}
\mu(X_1, X_2) = X_1 X_2 - \ve(X_1) X_2 - X_1 \ve(X_2)\,,
\end{equation}
where $\ve$ is the augmentation $\ve : S(V') \to \bbK$ of 
$S(V')$\,.

Using \eqref{p-de-pr}, it is not hard to see that 
\eqref{T-dual} is the Hochschild chain complex 
with the reversed grading and with rescaled differential
$$
C_{-\bul}(S(V'), \bbK)\,.
$$ 

Hence, due to the Hochschild-Kostant-Rosenberg theorem 
\cite{HKR}, we have 
\begin{equation}
\label{H-T-dual}
H^{\bul} (T (\bsi S(V')), \de' ) \cong S(\bsi V')\,. 
\end{equation} 

If we view $S(\bsi V')$ as the subspace of 
$T(\bsi V')$ which is, in turn, a subspace of \eqref{T-dual}, then 
the  Hochschild-Kostant-Rosenberg theorem can be restated 
as follows. For every cocycle 
$c$ in \eqref{T-dual} there exists a vector 
$X \in S(\bsi V') $  and a vector $c_1$ in \eqref{T-dual} such that 
$$
c = X + \de' (c_1)\,.
$$
Every vector $X \in S(\bsi V') $ is a cocycle in \eqref{T-dual}
and $X \in S(\bsi V')$ is an exact cocycle  if and only if $X = 0$\,.

Let us now go back to the cochain complex  \eqref{T-bsS-V}
with the differential \eqref{de-on-T}\,.  Let us consider 
$S(\bs \,V)$ as the subspace of 
$$
T(\bs\, V) \subset T(\bs \,S(V))\,.
$$ 
It is clear that every vector in $S(\bs\, V)$ is a cocycle 
in  \eqref{T-bsS-V}\,.

Dualizing the above statement about cocycles in 
\eqref{T-dual} we deduce the following.
\begin{claim}
\label{cl:T-bsS-V}
For every cocycle $c \in T(\bs\, S(V))$ there exists a vector 
$X \in S(\bs\, V)$ and a vector $c_1 \in  T(\bs\, S(V))$ such that 
$$
c = X + \de (c_1)\,.
$$
Furthermore, a vector  $X \in S(\bs\, V)$ is a trivial 
cocycle in  \eqref{T-bsS-V}  if and only if $X = 0$\,. $\Box$
\end{claim}  
  
Let us now observe that, due to the PBW theorem, 
we have the isomorphism of graded vector spaces
\begin{equation}
\label{PBW}
T (\bs\, S(V)) \cong S \big( \Lie (\bs\, S(V)) \big)  
\end{equation}
Moreover, the differential $\de$ is compatible with 
this isomorphism. In other words, the cochain 
complex  \eqref{T-bsS-V} is isomorphic to the 
symmetric algebra of the cochain complex 
\eqref{sHarr-comp}. 

Since  the cochain complex $S \big( \Lie (\bs\, S(V)) \big) $
splits into the direct sum 
$$
S \big( \Lie (\bs\, S(V)) \big) = \bbK  ~\oplus~  \Lie (\bs\, S(V)) 
~\oplus ~ \bigoplus_{m\ge 2} S^m \big( \Lie (\bs\, S(V)) \big)
$$
the statement of the theorem follows easily from Claim 
\ref{cl:T-bsS-V}.
\end{proof}

\section{Filtered dg Lie algebras. The Goldman-Millson theorem}
\label{app:GM}
In this section we prove a version of the  Goldman-Millson theorem
\cite{GM} which is often used in applications. 

We consider a Lie algebra $\cL$ in the category $\Ch_{\bbK}$
equipped with a descending filtration
\begin{equation}
\label{cL-filtr}
\cL  = \cF_1 \cL \supset   \cF_2 \cL \supset \cF_3 \cL \supset \dots
\end{equation}
which is compatible with the Lie bracket (and the differential). 

We assume that $\cL$ is complete with respect to this filtration. 
Namely,
\begin{equation}
\label{cL-complete}
\cL = \lim_{k} \cL\, \big/ \, \cF_k \cL\,. 
\end{equation}
We call such Lie algebras {\it filtered}.

Condition \eqref{cL-complete} and equality $\cL = \cF_1 \cL$
guarantee that the subalgebra 
$\cL^0$ of degree zero elements in $\cL$ is
a pro-nilpotent Lie algebra (in the category of $\bbK$-vector spaces). 
Hence,  $\cL^0$ can exponentiated to a pro-unipotent 
group which we denote by 
\begin{equation}
\label{the-group}
\exp(\cL^0)\,.
\end{equation}

We recall that a {\it Maurer-Cartan element}  of $\cL$ is 
a degree $1$ vector $\al \in \cL$ satisfying the 
equation
\begin{equation}
\label{eq:MC}
\pa \al + \frac{1}{2} [\al,\al] =0\,,
\end{equation}
where $\pa$ denotes the differential on $\cL$\,.

For a vector $\xi \in \cL^0$ and a Maurer-Cartan element $\al$ 
we consider the new degree $1$ vector $\wt{\al} \in \cL$
which is given by the formula 
\begin{equation}
\label{xi-acts}
\wt{\al} = \exp(\ad_{\xi})\, \al -
\frac{\exp(\ad_{\xi}) - 1}{\ad_{\xi}} \pa \xi\,,
\end{equation}
where the expressions 
$$
 \exp(\ad_{\xi}) \qquad \textrm{and} \qquad
\frac{\exp(\ad_{\xi}) - 1}{\ad_{\xi}}
$$
are defined in the obvious way using the Taylor 
expansions of the functions 
$$
e^x \qquad \textrm{and} \qquad   \frac{e^x -1}{x}
$$
around the point $x = 0$\,, respectively. 

Conditions \eqref{cL-complete} and  $\cL = \cF_1 \cL$ guarantee that the
right hand side of equation \eqref{xi-acts} is defined. 
 
It is known (see, e.g. \cite[Appendix B]{BDW} or \cite{GM}) that, 
for every Maurer-Cartan element $\al$ and for every degree zero 
vector $\xi \in \cL$, the vector $\wt{\al}$ in \eqref{xi-acts}
is also a Maurer-Cartan element. Furthermore, formula \eqref{xi-acts} 
defines an action of the group \eqref{the-group} on 
the set of Maurer-Cartan elements of $\cL$\,. 

The transformation groupoid $\MC(\cL)$ corresponding to this 
action is called the {\it Deligne groupoid}  of the Lie 
algebra $\cL$\,. This groupoid and its higher versions 
were studied extensively by E. Getzler in \cite{Ezra} 
and \cite{Ezra-infty}.   

\begin{rem}
\label{rem:well}
The transformation groupoid $\MC(\cL)$ may be 
defined without imposing the assumption $\cL = \cF_1 \cL$. 
In this more general case, the group \eqref{the-group}
should be replaced by 
$$
\exp(\cF_1 \cL^0)\,.
$$
\end{rem}

Let 
$$
\vf : \cL \to \wt{\cL} 
$$ 
be a homomorphism of two filtered dg Lie algebras. 

It is obvious that for every Maurer-Cartan element $\al \in \cL$ the 
vector $\vf(\al)$ is a Maurer-Cartan element of $\wt{\cL}$\,. Moreover
the assignment
$$
\al \to \vf(\al) 
$$
extends to the functor 
\begin{equation}
\label{vf-star}
\vf_* : \MC(\cL) \to \MC(\wt{\cL})
\end{equation}
between the corresponding Deligne groupoids. 

The following statement is a version of the famous 
Goldman-Millson theorem \cite{GM}.
\begin{thm}
\label{thm:GM}
Let $\vf : \cL \to \wt{\cL}$ be a quasi-isomorphism of 
filtered dg Lie algebras. If the restriction 
$$
\vf \Big|_{\cF_m \cL} ~ :~ \cF_m \cL \to  \cF_m \wt{\cL} 
$$ 
is a quasi-isomorphism for all $m$ then the functor 
\eqref{vf-star} induces a bijection 
\begin{equation}
\label{vf-pi0}
\vf_* : \pi_0 \big(\MC(\cL) \big) \to 
 \pi_0 \big(\MC(\wt{\cL}) \big)
\end{equation}
from the isomorphism classes of Maurer-Cartan elements in $\cL$
to the isomorphism classes of Maurer-Cartan elements in $\wt{\cL}$\,.
\end{thm}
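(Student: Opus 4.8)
The plan is to reduce Theorem \ref{thm:GM} to a statement about nilpotent dg Lie algebras and then to argue by induction on the nilpotency degree, using the classical obstruction calculus for Maurer--Cartan elements along square-zero central extensions. First I would record two consequences of the hypotheses. Writing $\Gr_m\cL := \cF_m\cL/\cF_{m+1}\cL$, the long exact sequences in cohomology attached to the short exact sequences $0\to \cF_{m+1}\cL \to \cF_m\cL \to \Gr_m\cL \to 0$, together with the five lemma and the assumption that $\vf$ is a quasi-isomorphism on every $\cF_m\cL$, show that $\vf$ induces quasi-isomorphisms $\Gr_m\cL \to \Gr_m\wt{\cL}$ and $\cL/\cF_m\cL \to \wt{\cL}/\cF_m\wt{\cL}$ for all $m$. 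Moreover, since the filtration is compatible with the bracket (so $[\cF_i\cL, \cF_j\cL]\subseteq \cF_{i+j}\cL$), each $\Gr_m\cL$ is an abelian dg Lie algebra and each extension $0\to \Gr_m\cL \to \cL/\cF_{m+1}\cL \to \cL/\cF_m\cL \to 0$ is central and square-zero.

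Next I would prove the finite-level assertion: $\vf$ induces a bijection $\pi_0(\MC(\cL/\cF_m\cL)) \to \pi_0(\MC(\wt{\cL}/\cF_m\wt{\cL}))$ for every $m$, by induction on $m$. The base case $m=2$ amounts to $\vf$ inducing an isomorphism $H^1(\Gr_1\cL)\cong H^1(\Gr_1\wt{\cL})$: for an abelian dg Lie algebra $\mathfrak{a}$ the gauge action of $\xi\in\mathfrak{a}^0$ reads $\wt{\al}=\al-\pa\xi$, so $\pi_0(\MC(\mathfrak{a}))=H^1(\mathfrak{a})$, and $\Gr_1\cL=\cF_1\cL/\cF_2\cL$ is abelian.

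For the inductive step I would invoke, for the central square-zero extension above, the standard facts: given $\bar\al\in\MC(\cL/\cF_m\cL)$ and any lift $\hat\al\in (\cL/\cF_{m+1}\cL)^1$, the element $\mathrm{obs}(\bar\al):=\pa\hat\al+\tfrac12[\hat\al,\hat\al]$ is a $2$-cocycle in $\Gr_m\cL$ whose class in $H^2(\Gr_m\cL)$ depends only on the gauge class of $\bar\al$ and vanishes precisely when $\bar\al$ lifts to $\MC(\cL/\cF_{m+1}\cL)$; when it vanishes the set of gauge classes of lifts carries a transitive action of $H^1(\Gr_m\cL)$ with stabilizers governed by $H^0(\Gr_m\cL)$. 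All these constructions are natural in the dg Lie algebra, so they assemble into a commutative ladder of exact sequences of pointed sets (and groups in low degrees) relating the towers for $\cL$ and $\wt{\cL}$. Since $\vf$ induces isomorphisms on $H^0$, $H^1$ and $H^2$ of the $\Gr_m$'s and, by the inductive hypothesis, a bijection on $\pi_0$ at level $m$, a diagram chase --- a ``five lemma for pointed sets'' --- yields simultaneously injectivity and surjectivity of $\vf_*$ at level $m+1$.

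Finally I would pass to the limit: since $\cL$ and $\wt{\cL}$ are complete, a Maurer--Cartan element of $\cL$ is the same as a compatible system of Maurer--Cartan elements of the $\cL/\cF_m\cL$, and likewise for gauge equivalences; the tower of gauge groups $\exp((\cL/\cF_m\cL)^0)$ consists of surjections, which provides the Mittag--Leffler condition identifying $\pi_0(\MC(\cL))$ with $\lim_m \pi_0(\MC(\cL/\cF_m\cL))$ (and the same for $\wt{\cL}$), whence the bijectivity of $\vf_*$ from the finite-level bijections. The main obstacle I anticipate is the inductive step: setting up the exact sequences of pointed sets with the correct $H^1$-actions and tracking how gauge equivalences at level $m$ interact with the choice of lifts to level $m+1$, so that the diagram chase genuinely gives a bijection rather than merely a surjection or injection. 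The passage to the limit is comparatively routine, but I would state the Mittag--Leffler remark explicitly rather than gloss over it, since it is what rules out a spurious $\varprojlim^1$ contribution.
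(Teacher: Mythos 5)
Your finite-level induction (obstruction calculus along the central square-zero extensions $0\to \cF_m\cL/\cF_{m+1}\cL \to \cL/\cF_{m+1}\cL \to \cL/\cF_m\cL\to 0$, together with a five-lemma for pointed sets) is sound and is the classical Goldman--Millson route; it is organized quite differently from the paper's proof, which never forms the quotients $\cL/\cF_m\cL$ at all. The gap is in your passage to the limit. The identification $\pi_0\big(\MC(\cL)\big)\cong \lim_m \pi_0\big(\MC(\cL/\cF_m\cL)\big)$ does not follow from surjectivity of the tower of gauge groups $\exp\big((\cL/\cF_{m+1}\cL)^0\big)\to \exp\big((\cL/\cF_m\cL)^0\big)$. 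That surjectivity makes the functors $\MC(\cL/\cF_{m+1}\cL)\to\MC(\cL/\cF_m\cL)$ isofibrations and hence gives \emph{surjectivity} of the comparison map; but its \emph{injectivity} is obstructed by $\varprojlim^1$ of the tower of isotropy groups $\mathrm{Stab}(\bar{\al}_m)\subset\exp\big((\cL/\cF_m\cL)^0\big)$, since the set of gauge transformations carrying $\bar{\al}_m$ to $\bar{\al}'_m$ is a torsor under this stabilizer and a limit of nonempty torsors over a tower of groups can be empty. The transition maps between stabilizers need not be surjective even when the ambient groups surject, so the Mittag--Leffler condition you invoke is attached to the wrong tower. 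Concretely, the inductive step you would need --- if two Maurer--Cartan elements of $\cL/\cF_{m+1}\cL$ agree modulo $\cF_m$ and are gauge equivalent, then they are gauge equivalent by an element of $\cF_m\cL/\cF_{m+1}\cL$ --- amounts to $\pa$-exactness of their difference in $\cF_m\cL/\cF_{m+1}\cL$, which gauge equivalence by an arbitrary group element does not provide.

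Two repairs are available. Either push your obstruction analysis one degree lower, show that $\vf$ induces isomorphisms of the isotropy groups at each finite level, and compare the resulting Milnor-type exact sequences for $\cL$ and $\wt{\cL}$; or collapse the two stages into one, constructing for a given $\beta\in\MC(\wt{\cL})$ the element $\al=\lim_m\al_m\in\cL$ and the gauge transformation \emph{simultaneously}, with the $m$-th correction $\xi_m$ chosen in $\cF_m\wt{\cL}^0$ so that the partial products $\CH\big(\xi_m,\dots,\CH(\xi_2,\xi_1)\dots\big)$ converge by completeness. The latter is exactly what the paper does: it uses only the induced quasi-isomorphisms on the quotients $\cF_m/\cF_{m+1}$ and the completeness of the filtrations, proves surjectivity of \eqref{vf-pi0} by successive approximation inside $\wt{\cL}$ itself, and leaves injectivity (which admits the same treatment) as an exercise.
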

\begin{proof}
Using the conditions of the theorem and Exercise \ref{exer:5-lem} given below, 
it is not hard to see that $\vf$ induces a quasi-isomorphism 
$$
\Gr(\vf) : \cF_m \cL  \big/ \cF_{m+1} \cL \to 
\cF_m \wt{\cL}  \big/ \cF_{m+1} \wt{\cL}
$$ 
for all $m$\,.

In order to prove that the map \eqref{vf-pi0} is surjective we need 
to show that for every Maurer-Cartan element $\beta\in \wt{\cL}$ there exists 
a vector $\xi \in \tcL^0$ and a Maurer-Cartan element $\al \in \cL$ such that 
\begin{equation}
\label{al-xi-beta}
 \exp(\xi) (\beta) = \vf(\al)\,.
\end{equation}

The Maurer-Cartan equation $\pa \beta + [\beta, \beta]/2 = 0$ implies that 
$\beta$ represents a cocycle in 
$$
 \cF_1 \wt{\cL}  \big/ \cF_{2} \wt{\cL} \,.
$$
Hence there exists $\al_1 \in \cF_1 \cL^1$ 
and $\xi_1 \in \cF_1 \tcL^0$ such that 
\begin{equation}
\label{pa-al-1}
\pa \al_1 \in \cF_2 \cL
\end{equation}
and 
\begin{equation}
\label{beta-al-1}
\beta - \pa \xi_1 - \vf(\al_1) \in \cF_2 \wt{\cL}\,.
\end{equation}

Let us denote by $\beta_1$ the Maurer-Cartan element 
$$
\beta_1 = \exp(\xi_1) (\beta)\,.
$$

Inclusion \eqref{beta-al-1} implies that 
\begin{equation}
\label{beta-1-al-1}
\beta_1 - \vf(\al_1) \in \cF_2 \wt{\cL}\,.
\end{equation} 

We showed that there exists a vector  $\xi_1 \in \cF_1 \tcL^0$
and a vector $\al_1 \in  \cF_1 \cL^1$ such that
for 
$$
\beta_1 = \exp(\xi_1) (\beta)
$$
we have inclusion \eqref{beta-1-al-1} and 
the inclusion 
\begin{equation}
\label{induc-1}
\pa \al_1  + \frac{1}{2} [\al_1, \al_1] \in \cF_2 \cL\,,
\end{equation}
which follows from \eqref{pa-al-1}.
Inclusions  \eqref{beta-1-al-1} and \eqref{induc-1}
form the base of our induction.

Now we assume that there exist vectors 
$$
\xi_k \in \cF_k \tcL^0\,, \qquad 1 \le k \le m
$$
and $\al_m\in \cF_1\cL$ such that 
\begin{equation}
\label{induc-m}
\pa \al_m  + \frac{1}{2} [\al_m, \al_m] \in \cF_{m+1} \cL\,,
\end{equation}
and 
\begin{equation}
\label{induction-m}
\beta_m - \vf(\al_m) \in \cF_{m+1} \tcL\,,
\end{equation}
where 
\begin{equation}
\label{beta-m}
\beta_m =  \exp(\xi_m)  \dots \exp(\xi_1) (\beta)\,.
\end{equation}

Let us consider the vector
\begin{equation}
\label{vector}
\big( \pa \vf(\al_m)  + \frac{1}{2}
 [\vf(\al_m), \vf(\al_m)] \big) - 
 \pa (\vf(\al_m) - \beta_m)
\end{equation}
in  $\cF_{m+1} \tcL^2$\,.

Using the Maurer-Cartan equation for $\beta_m$
we can rewrite \eqref{vector} as
$$
\big( \pa \vf(\al_m)  + \frac{1}{2}
 [\vf(\al_m), \vf(\al_m)] \big) - 
 \pa (\vf(\al_m) - \beta_m) =
\frac{1}{2} \big(  [\vf(\al_m), \vf(\al_m)] - [\beta_m, \beta_m]  \big)=
$$
$$
\frac{1}{2} \big(  [\vf(\al_m), \vf(\al_m)] - [\vf(\al_m), \beta_m] + [\vf(\al_m), \beta_m] 
- [\beta_m, \beta_m]  \big) =
$$
$$
\frac{1}{2} \big(  [\vf(\al_m), \vf(\al_m)- \beta_m]  + [\vf(\al_m)- \beta_m, \beta_m]  \big)\,.
$$
Thus \eqref{induction-m} implies that  vector \eqref{vector} belongs to 
$\cF_{m+2} \tcL^2$\,. 

On the other hand, applying the differential $\pa$ to the vector 
$$
\big( \pa \vf(\al_m)  + \frac{1}{2}
 [\vf(\al_m), \vf(\al_m)] \big)
$$
and using \eqref{induc-m} together with the Jacobi identity we conclude that 
$$
\pa \, \big( \pa \vf(\al_m)  + \frac{1}{2}
 [\vf(\al_m), \vf(\al_m)] \big) \in \cF_{m+2} \wt{\cL}^3\,.
$$

Combining this observation with the fact that  
vector \eqref{vector} belongs to $\cF_{m+2} \tcL^2$ we
deduce that
$$
\vf \big( \pa \al_m + \frac{1}{2} [\al_m, \al_m] \big)
$$
represents an exact cocycle in 
$$
 \cF_{m+1} \wt{\cL}  \big/ \cF_{m+2} \wt{\cL} \,.
$$

Therefore, there exists a vector $\ga_{m+1} \in \cF_{m+1} \cL^1$
such that 
\begin{equation}
\label{alm-MC}
\pa \ga_{m+1} + \pa \al_m + \frac{1}{2} [\al_m, \al_m]  \in \cF_{m+2}\cL\,.
\end{equation}

Let us denote by $\al'_{m+1}$ the vector 
$$
\al'_{m+1} = \al_m + \ga_{m+1}\,.
$$

Combining \eqref{alm-MC} with the fact that vector \eqref{vector} belongs to 
$\cF_{m+2} \tcL^2$ we conclude that 
$$
 \pa (\beta_m - \vf(\al'_{m+1})) \in  \cF_{m+2} \tcL\,.
$$
In other words, $\beta_m - \vf(\al'_{m+1})$ represents a 
cocycle in 
$$
 \cF_{m+1} \wt{\cL}  \big/ \cF_{m+2} \wt{\cL} \,.
$$

Therefore, there exists a vector $\xi_{m+1} \in \cF_{m+1} \tcL^0$
and a vector $\ga'_{m+1} \in \cF_{m+1}\cL^1$ such that
\begin{equation}
\label{pa-al-prpr}
\pa \ga'_{m+1}  \in  \cF_{m+2}\cL^2
\end{equation}
and  
\begin{equation}
\label{alm-m1}
\beta_m - \pa \xi_{m+1} - \vf(\al_{m+1}) - \vf(\ga'_{m+1}) \in \cF_{m+2} \tcL\,.
\end{equation}

We set 
$$
\al_{m+1}= \al'_{m+1} + \ga'_{m+1}
$$
and
$$
\beta_{m+1} = \exp(\xi_{m+1})(\beta_m)\,.
$$

Combining \eqref{alm-MC} together with \eqref{pa-al-prpr} and  
\eqref{alm-m1} we see that $\al_{m+1}$, $\beta_{m+1}$ and 
$\xi_{m+1}$ satisfy the inductive assumption for $m$ replaced by $m+1$. 

Thus, we conclude that, there exist sequences of vectors
$$
\al_m \in \cF_1 \cL^1\,, \qquad  \al_{m+1} - \al_m  \in \cF_{m+1} \cL^1\,,  \qquad 
m \ge 1
$$
and
$$
\xi_m  \in  \cF_m \tcL^0\,, \qquad m \ge 1
$$
such that inclusions \eqref{induc-m} and \eqref{induction-m}
hold for all $m$. 

Since the filtrations on $\cL$ and $\tcL$ are complete the sequence $\{\al_m\}_{m \ge 1}$
converges to a vector $\al\in \cL^1$ and the sequence
$$
\Big\{\, \CH\Big( \xi_m, \dots, \CH\big(\xi_3, \CH (\xi_2, \xi_1)\big) \dots \Big) 
\, \Big\}_{m \ge 1}
$$
converges to a vector $\xi \in \tcL^0$ such that 
$$
\pa \al + \frac{1}{2} [\al, \al] = 0
$$
and  
$$
\exp(\xi) (\beta) = \vf(\al)\,.
$$

We proved that the map   \eqref{vf-pi0} is surjective.

Due to Exercise \ref{exer:GM1/2} below the map  \eqref{vf-pi0}
is also injective. Thus the theorem is proved. 
\end{proof}

\begin{exer}
\label{exer:5-lem}
If the rows in the commutative diagram of cochain complexes 
\begin{center}
\begin{tikzpicture}[scale=.75]
\node at (-4,0){$0$};
\node at (-2,0){$A$};
\node at (0,0){$B$};
\node at (2,0){$C$};
\node at (4,0){$0$};
\draw[->] (-3.5,0) -- (-2.5,0);
\draw[->] (-1.5,0) -- (-0.5,0);
\draw[->] (0.5,0) -- (1.5,0);
\draw[->] (2.5,0) -- (3.5,0);

\node at (-4,-2){$0$};
\node at (-2,-2){$A'$};
\node at (0,-2){$B'$};
\node at (2,-2){$C'$};
\node at (4,-2){$0$};
\draw[->] (-3.5,-2) -- (-2.5,-2);
\draw[->] (-1.5,-2) -- (-0.5,-2);
\draw[->] (0.5,-2) -- (1.5,-2);
\draw[->] (2.5,-2) -- (3.5,-2);

\draw[->] (-2,-0.5) -- (-2,-1.5);
\draw[->] (0,-0.5) -- (0,-1.5);
\draw[->] (2,-0.5) -- (2,-1.5);
\end{tikzpicture}
\end{center}
are exact and any 2 vertical maps are quasi-isomorphisms, then show
that the third vertical map is also a quasi-isomorphism.
{\it Hint: Consider the 5-lemma (Sec.\ II.5 in \cite{GelManin}).}
\end{exer}

\begin{exer}
\label{exer:GM1/2}
Prove that the map \eqref{vf-pi0} is injective. 
\end{exer}

\section{Solutions to selected exercises}
\label{app:solutions}

~\\
{\bf Solution of Exercise \ref{exer:cobar-conv}.}
We need only to consider generators of $\Op(\bs\, \cC_{\c})$
i.e.\ $(\bq_n,\bs X)$, where
$\bq_n$ is the standard $n$-corolla, and $X \in \cC_{\c}(n)$. 

By definition, 
\begin{equation}
\label{soleq0}
F (\pa^{\Cobar} (\bq_n,\bs X)) = \pa^{\cO}F((\bq_n,\bs X))
\end{equation}
if and only if
\begin{equation}
\label{soleq1}
\alpha_{F}(\pa^{\cC}X) + \pa^{\cO}\alpha_{F}(X) - F(\pa'' (\bq_n,\bs X)) =0,
\end{equation}
where $\alpha_{F} \in \Conv(\cC,\cO)$ is the degree
1 map $\alpha_{F}(X) = F ((\bq_n,\bs X))$, and
\begin{equation}
\label{soleq2}
\pa'' (\bq_n,\bs X) = - \sum_{z \in  \pi_0(\Tree_2(n))} ( \bs \otimes \bs) (\bt_{z};\Delta_{\bt_{z}}(X)).
\end{equation}
By definition of the differential on $\Conv(\cC,\cO)$, Eq. \eqref{soleq1} holds if and only if
\begin{equation}\label{soleq3}
\bigl(\pa \alpha_{F} \bigr)(X) - F(\pa'' (\bq_n,\bs X)) =0,
\end{equation}
Next, expanding the right-hand side of Eq. \eqref{soleq2} gives:
\[
\pa'' (\bq_n,\bs X) = - \sum_{z \in  \pi_0(\Tree_2(n))} \sum_{\al} (-1)^{| X^{1}_{\al} |}(\bt_{z} ; \bs X^{1}_{\al}
\otimes \bs X^{2}_{\al}),
\]
where $X^{1}_{\al}$ and $X^{2}_{\al}$ are tensor factors in 
$$
\D_{\bt_{z}}(X)  = \sum_{\al} X^{1}_{\al} \otimes X^{2}_{\al}\,.
$$

Let $p_z$ be the number of edges terminating at the 
second nodal vertex of $\bt_z$ and let
\[
\tilde{\mu}_{\bt_{z}} \colon \Op(\bs\, \cC_{\c})(n-p_z+1) 
\otimes \Op(\bs\, \cC_{\c})(p_z) \to \Op(\bs\, \cC_{\c})(n)
\]
be the multiplication map for the tree $\bt_{z}$.  
By definition of multiplication for the free operad, we have
\[
(\bt_{z} ; \bs X^{1}_{\al} \otimes \bs X^{2}_{\al} ) = 
\tilde{\mu}_{\bt_{z}} \big( (\bq_{n-p_z+1}, \bs X^{1}_{\al}) \otimes (\bq_{p_z},
\bs X^{2}_{\al}) \big)
\]
Since $F$ is a map of operads, we have the following equalities:
\begin{align*}
F(\pa'' (\bq_n,\bs X)) &=  - \sum_{z \in  \pi_0(\Tree_2(n))} \sum_{\al}
(-1)^{| X^{1}_{\al} |}
F \bigl ( 
\tilde{\mu}_{\bt_{z}} \bigl ((\bq_{n-p_z+1}, \bs X^{1}_{\al}) \otimes (\bq_{p_z},
\bs X^{2}_{\al}) \bigr)
\bigr)\\
&=
- \sum_{z \in  \pi_0(\Tree_2(n))} \sum_{\al}
(-1)^{\vert X^{1}_{\al} \vert}
\mu_{\bt_{z}} \bigl ( F (\bq_{n-p_z+1}, \bs X^{1}_{\al}) \otimes F(\bq_{p_z},
\bs X^{2}_{\al}) \bigr)\\
&= -
\sum_{z \in  \pi_0(\Tree_2(n))} \sum_{\al}
(-1)^{\vert X^{1}_{\al} \vert}
\mu_{\bt_{z}} \bigl ( \alpha_{F} (X^{1}_{\al}) \otimes \alpha_{F} (X^{2}_{\al}) \bigr)\\
&= -
\sum_{z \in  \pi_0(\Tree_2(n))}
\mu_{\bt_{z}} \bigl ( \alpha_{F} \otimes \alpha_{F} \circ \Delta_{\bt_{z}}(X) \bigr)\\
&= -\alpha_{F} \bullet \alpha_{F} (X)\\
&=-\frac{1}{2}[\alpha_{F},\alpha_{F}](X).
\end{align*}
By substituting this last equality into Eq.\ \eqref{soleq3}, we see
Eq.\ \eqref{soleq0} holds  if and only if the Maurer-Cartan equation
\[
\pa \alpha_{F}  + \frac{1}{2}[\alpha_{F},\alpha_{F}]=0
\]
holds for $\alpha_{F}$. $\tri$


~\\
~\\
{\bf Solution of Exercise \ref{exer:if}.}
Assume the Maurer-Cartan elements $\alpha_{F}$ and $\alpha_{\wt{F}}$
corresponding to the maps $F,  \wt{F} \colon \Cobar(\cC) \to \cO$ are
isomorphic as objects of the Deligne groupoid. By definition (see Eq. \eqref{xi-acts}) this 
implies that there exists a degree 0 element $\xi \in
\Conv(\cC_{\c}, \cO)$ such that
\[
\alpha_{\wt{F}} = \exp(\ad_{\xi})\, \alpha_{F} -
\frac{\exp(\ad_{\xi}) - 1}{\ad_{\xi}} \pa  \xi\ .
\]
Define $\alpha(t) \in \Conv(\cC_{\c}, \cO)[[t]]$  to be:
\[
\alpha(t) = \exp(-t \ad_{\xi})\, \alpha_{F} -
\frac{\exp(-t \ad_{\xi}) - 1}{\ad_{\xi}} \pa  \xi\ .
\]
Since $\alpha_{F}$ and $\xi$ are elements of
$\cF_{1}\Conv(\cC_{\c}, \cO)$, and the bracket and differential are compatible with the
filtration, we conclude that
\[
\alpha(t) \in \Conv(\cC_{\c}, \cO)\{t\}.
\]
Note $\alpha(0)=\alpha_{F}$ and $\alpha(1)=\alpha_{\wt{F}}$.
Differentiation of $\alpha(t)$ gives:
\begin{align*}
\frac{d  \alpha(t)}{d t} & = -\ad_{\xi} \bigl (\exp(-t \ad_{\xi})\,
\alpha_{F} \bigr) + \exp(-t \ad_{\xi}) \pa  \xi\ \\
& = -\ad_{\xi} \bigl (\exp(-t \ad_{\xi})\,
\alpha_{F} \bigr) + \exp(-t \ad_{\xi}) \pa  \xi - \pa 
\xi + \pa  \xi\\
&=-\ad_{\xi} \bigl (\exp(-t \ad_{\xi})\,
\alpha_{F} \bigr) - \frac{-\ad_{\xi}}{\ad_{\xi}} \biggl( \exp(-t \ad_{\xi}) \pa  \xi - \pa 
\xi \biggr) + \pa  \xi\\
&=-\ad_{\xi} \biggl (\exp(-t \ad_{\xi})\,
\alpha_{F}  - \frac{\exp(-t \ad_{\xi}) - 1}{\ad_{\xi}} \pa  \xi \biggr) + \pa  \xi\\
& = \pa  \xi - [\xi,\alpha(t)].
\end{align*}
Thus, applying Prop.\ C.1 of \cite{stable1}, we conclude that 
\[
\pa  \alpha(t) + \frac{1}{2}[\alpha(t),\alpha(t)]=0
\]
for all $t$.

Hence, equations \eqref{al-H}, \eqref{MC-al-H1}, and \eqref{MC-al-H0},
which are described in the ``only if'' part of the proof, imply that
\[
\al_{H} = \alpha(t) + \xi d t \in \Conv(\cC_{\c}, \cO^{I})
\]
is a Maurer-Cartan element that corresponds to a homotopy $H \colon \Cobar(\cC) \to \cO^I$
between $F$ and $\wt{F}$.  $\tri$

~\\
~\\
{\bf Solution of Exercise \ref{exer:cG-n-TwGer}.}
The space 
$$
\bs^{2r} \big( \Ger(r+n) \big)^{S_r}
$$
is spanned by vectors of the form 
\begin{equation}
\label{Av-w}
\Av(w)  = \sum_{\si \in S_r} \si (w)
\end{equation}
where $w$ is a monomial in $\bs^{2r}\Ger(r+n)$\,.

It is clear that 
$$
f^{-1} \big( \Av(w)\big)= w(\underbrace{a, a, \dots, a}_{r~ \textrm{times}}, a_{1}, \dots, a_{n})\,.  
$$ 
So our goal is to show that 
\begin{equation}
\label{goal-pa-de}
\pa^{\Tw} \big( \Av(w) \big) = 
\end{equation}
$$
\sum_{\si \in S_{r+1}} \sum_{i=1}^{r}
\frac{(-1)^{e_i}}{2}\, w (a_{\si(1)}, \dots, a_{\si(i-1)}, \{a_{\si(i)}, a_{\si(i+1)}\}, a_{\si(i+2)}, 
\dots, a_{\si(r+1)},  a_{r+2}, \dots, a_{r+1+n})\,,
$$
where the sign factor $(-1)^{e_i}$ comes from swapping the odd operator 
$\{a_{\si(i)}, ~\}$ with the corresponding number of brackets.

Following the definition of $\pa^{\Tw}$ \eqref{diff-Tw-Ger} we get 
$$
\pa^{\Tw} \big( \Av(w) \big)  =
\sum_{\tau \in \Sh_{1,r}}
\sum_{\si \in S_{2, \dots, r+1}} 
\tau\big( 
\{a_1, w (a_{\si(2)}, \dots, a_{\si(r+1)}, a_{r+2}, \dots, a_{r+1+n})\}
\big)  
$$
$$
- \sum_{i=1}^{n} \sum_{\si \in S_r}
\sum_{\tau' \in \Sh_{r,1}} (-1)^{e_{r+i}}  
\tau' \big( 
w (a_{\si(1)}, \dots, a_{\si(r)}, a_{r+2},  
\dots, a_{r+i}, \{a_{r+1}, a_{r+i+1}\}, a_{r+i+2},
\dots, a_{r+1+n})
\big)  
$$
$$
-(-1)^{|w|} \sum_{\tau \in \Sh_{2, r-1}}
\tau \big(  w \circ_1 \{a_1,a_2\}\big) =
$$
\begin{equation}
\label{pa-Av-w}
\sum_{\si \in S_{r+1}}
\{a_{\si(1)}, w (a_{\si(2)}, \dots, a_{\si(r+1)}, a_{r+2}, \dots, a_{r+1+n})\}
\end{equation}
$$
- \sum_{i=1}^{n} \sum_{\si \in S_{r+1}} (-1)^{e_{r+i}}  
w (a_{\si(1)}, \dots, a_{\si(r)}, a_{r+2},  
\dots, a_{r+i}, \{a_{\si(r+1)}, a_{r+i+1}\}, a_{r+i+2},
\dots, a_{r+1+n})
$$
$$
- \sum_{\tau \in \Sh_{2, r-1}}
 \sum_{\si \in S_{3, \dots, r+1}}  
\sum_{i=1}^{r}  (-1)^{e_i}
\tau \circ \si \big( 
w(a_{3}, \dots, a_{i+1}, \{a_1, a_2 \},  a_{i+2}, \dots, a_{r +1 +n})  
\big)\,,
$$
where we used the obvious identity 
\begin{equation}
\label{Av-w-new}
\Av(w) = \sum_{\si \in S_{2, \dots, r}}  
\sum_{i=1}^{r} 
w(a_{\si(2)}, \dots, a_{\si(i)}, a_1,  a_{\si(i+1)}, \dots,  a_{\si(r)}, a_{r+1}, \dots, a_{r+n})\,. 
\end{equation}

Using the defining identities of Gerstenhaber algebra
we simplify \eqref{pa-Av-w} further
\begin{equation}
\label{pa-Av-w-new}
\pa^{\Tw} \big( \Av(w) \big)  =
\end{equation}
$$
\sum_{\si \in S_{r+1}} \sum_{i=2}^{r+1}
(-1)^{e_i}
w (a_{\si(2)}, \dots, a_{\si(i-1)}, \{a_{\si(1)}, a_{\si(i)}\}, 
a_{\si(i+1)}, \dots, a_{\si(r+1)}, a_{r+2}, \dots, a_{r+1+n})\}
$$
$$
- \sum_{\si \in S_{r+1}}^{\si(1) < \si(2)}
\sum_{i=1}^{r}  (-1)^{e_i}
w(a_{\si(3)}, \dots, a_{\si(i+1)}, \{a_{\si(1)}, a_{\si(2)} \},  a_{\si(i+2)}, 
\dots, a_{\si(r+1)}, a_{r+2}, 
\dots, a_{r +1 +n}) = 
$$
$$
\sum_{\si \in S_{r+1}} \sum_{i=2}^{r+1}
(-1)^{e_i}
w (a_{\si(2)}, \dots, a_{\si(i-1)}, \{a_{\si(1)}, a_{\si(i)}\}, 
a_{\si(i+1)}, \dots, a_{\si(r+1)}, a_{r+2}, \dots, a_{r+1+n})\}
$$
$$
- \sum_{\si \in S_{r+1}}
\sum_{i=1}^{r}  \frac{(-1)^{e_i}}{2}
w(a_{\si(3)}, \dots, a_{\si(i+1)}, \{a_{\si(1)}, a_{\si(2)} \},  a_{\si(i+2)}, 
\dots, a_{\si(r+1)}, a_{r+2}, 
\dots, a_{r +1 +n}) = 
$$
$$
 \sum_{\si \in S_{r+1}}
\sum_{i=1}^{r}  \frac{(-1)^{e_i}}{2}
w(a_{\si(3)}, \dots, a_{\si(i+1)}, \{a_{\si(1)}, a_{\si(2)} \},  a_{\si(i+2)}, 
\dots, a_{\si(r+1)}, a_{r+2}, 
\dots, a_{r +1 +n})=
$$
$$
\sum_{\si \in S_{r+1}} \sum_{i=1}^{r}
\frac{(-1)^{e_i}}{2}\, w (a_{\si(1)}, \dots, a_{\si(i-1)}, \{a_{\si(i)}, a_{\si(i+1)}\}, a_{\si(i+2)}, 
\dots, a_{\si(r+1)},  a_{r+2}, \dots, a_{r+1+n})\,.
$$

Thus equation \eqref{goal-pa-de} indeed holds and the desired 
statement follows.  $\tri$


~\\
~\\
{\bf Solution of Exercise \ref{exer:Gcc-Gww}.}
According to the formula for $\pa^{\Tw}$ given in Eq.\ \eqref{diff-TwGra} we have 
\begin{equation} 
\label{diff_Gww_1}
\pa^{\Tw} \G_{\ww} = \Av_1 \big( \G_{\ed} \circ_{2} \G_{\ww} \big) - \Av_1 \bigl ( \G_{\ww}
\circ_{1} \G_{\ed} + \vs_{1,2} ( \G_{\ww} \circ_{2} \G_{\ed}) \bigr),
\end{equation}
where $\vs_{1,2}$ is the cycle $(12) \in S_{3}$. Recall that, in the 
right hand side of \eqref{diff_Gww_1}, both graphs $\G_{\ed}$ and 
$\G_{\ww}$ are viewed as vectors in $\Gra(2)$, while the final result of 
the computation is treated as a vector in $\Tw\Gra(2)$\,. In particular, 
the colors of vertices play a role only for the final result of the computation.
(See also Remark \ref{rem:neutral-in-gra}.)

Expanding the terms on the right hand side gives the following equalities:
\begin{center}
\begin{tikzpicture}[scale=.5]
\tikzstyle{w}=[circle, draw, minimum size=4, inner sep=1]
\tikzstyle{b}=[circle, draw, fill, minimum size=4, inner sep=1]
\node at (-4,0){$\G_{\ed} \circ_{2} \G_{\ww}=$};

\node[b] (a1) at (0,0) {};
\draw (0,-0.6) node[anchor=center] {{\small $1$}};
\node[w] (a2) at (2,0) {};
\draw (2,-0.6) node[anchor=center] {{\small $2$}};
\node[w] (a3) at (4,0) {};
\draw (4,-0.6) node[anchor=center] {{\small $3$}};

\draw (a1) edge (a2);

\node at (6,0){$+$};

\node[b] (a4) at (8,0) {};
\draw (8,-0.6) node[anchor=center] {{\small $1$}};
\node[w] (a5) at (10,0) {};
\draw (10,-0.6) node[anchor=center] {{\small $3$}};
\node[w] (a6) at (12,0) {};
\draw (12,-0.6) node[anchor=center] {{\small $2$}};

\draw (a4) edge (a5);

\end{tikzpicture}

\begin{tikzpicture}[scale=.5]
\tikzstyle{w}=[circle, draw, minimum size=4, inner sep=1]
\tikzstyle{b}=[circle, draw, fill, minimum size=4, inner sep=1]
\node at (-4,0){$\G_{\ww} \circ_{1} \G_{\ed}=$};

\node[b] (a1) at (0,0) {};
\draw (0,-0.6) node[anchor=center] {{\small $1$}};
\node[w] (a2) at (2,0) {};
\draw (2,-0.6) node[anchor=center] {{\small $2$}};
\node[w] (a3) at (4,0) {};
\draw (4,-0.6) node[anchor=center] {{\small $3$}};

\draw (a1) edge (a2);

\end{tikzpicture}

\begin{tikzpicture}[scale=.5]
\tikzstyle{w}=[circle, draw, minimum size=4, inner sep=1]
\tikzstyle{b}=[circle, draw, fill, minimum size=4, inner sep=1]
\node at (-4,0){$\vs_{1,2} (\G_{\ww} \circ_{2} \G_{\ed})=$};

\node[b] (a1) at (0,0) {};
\draw (0,-0.6) node[anchor=center] {{\small $1$}};
\node[w] (a2) at (2,0) {};
\draw (2,-0.6) node[anchor=center] {{\small $3$}};
\node[w] (a3) at (4,0) {};
\draw (4,-0.6) node[anchor=center] {{\small $2$}};

\draw (a1) edge (a2);
\end{tikzpicture}
\end{center}

Hence, all terms cancel on the right hand side of Eq.\
\eqref{diff_Gww_1}, and therefore $\pa^{\Tw} \G_{\ww}=0$.

Next, applying the differential $\pa^{\Tw}$ to $\G_{\cc}\in \Tw\Gra(2)$, 
we get
\begin{equation}
\label{diff_Gww_2}
\pa^{\Tw} \G_{\cc} =\Av_1 \bigl( \G_{\ed} \circ_{2} \G_{\cc} \bigr) + \Av_1 \bigl ( \G_{\cc}
\circ_{1} \G_{\ed} + \vs_{1,2} ( \G_{\cc} \circ_{2} \G_{\ed}) \bigr).
\end{equation}
We expand the terms on the right hand side, being mindful of
the ordering on edges, and Remark \ref{rem:neutral-in-gra}.
\begin{center}
\begin{tikzpicture}[scale=.5]
\tikzstyle{w}=[circle, draw, minimum size=4, inner sep=1]
\tikzstyle{b}=[circle, draw, fill, minimum size=4, inner sep=1]
\node at (-4,0){$\G_{\ed} \circ_{2} \G_{\cc}=$};

\node[b] (a1) at (0,0) {};
\draw (0,-0.6) node[anchor=center] {{\small $1$}};
\node[b] (a2) at (2,0) {};
\draw (2,-0.6) node[anchor=center] {{\small $2$}};

\draw (1,+0.6) node[anchor=center] {{\small $i$}};
\draw (a1) edge (a2);

\node at (4,0) {$\circ_{2}$};

\node[w] (a3) at (6,0) {};
\draw (6,-0.6) node[anchor=center] {{\small $1$}};
\node[w] (a4) at (8,0) {};
\draw (8,-0.6) node[anchor=center] {{\small $2$}};

\draw (7,+0.6) node[anchor=center] {{\small $ii$}};
\draw (a3) edge (a4);

\node at (10,0){$=$};

\node[b] (b1) at (12,0) {};
\draw (12,-0.6) node[anchor=center] {{\small $1$}};
\node[w] (b2) at (14,0) {};
\draw (14,-0.6) node[anchor=center] {{\small $2$}};
\node[w] (b3) at (16,0) {};
\draw (16,-0.6) node[anchor=center] {{\small $3$}};

\draw (13,+0.6) node[anchor=center] {{\small $i$}};
\draw (15,+0.6) node[anchor=center] {{\small $ii$}};
\draw (b1) edge (b2);
\draw (b2) edge (b3);

\node at (18,0){$+$};

\node[b] (b4) at (20,0) {};
\draw (20,-0.6) node[anchor=center] {{\small $1$}};
\node[w] (b5) at (22,0) {};
\draw (22,-0.6) node[anchor=center] {{\small $3$}};
\node[w] (b6) at (24,0) {};
\draw (24,-0.6) node[anchor=center] {{\small $2$}};

\draw (21,+0.6) node[anchor=center] {{\small $i$}};
\draw (23,+0.6) node[anchor=center] {{\small $ii$}};
\draw (b4) edge (b5);
\draw (b5) edge (b6);

\end{tikzpicture}

\begin{tikzpicture}[scale=.5]
\tikzstyle{w}=[circle, draw, minimum size=4, inner sep=1]
\tikzstyle{b}=[circle, draw, fill, minimum size=4, inner sep=1]
\node at (-4,0){$\G_{\cc} \circ_{1} \G_{\ed}=$};

\node[w] (a1) at (0,0) {};
\draw (0,-0.6) node[anchor=center] {{\small $1$}};
\node[w] (a2) at (2,0) {};
\draw (2,-0.6) node[anchor=center] {{\small $2$}};

\draw (1,+0.6) node[anchor=center] {{\small $i$}};
\draw (a1) edge (a2);

\node at (4,0) {$\circ_{1}$};

\node[b] (a3) at (6,0) {};
\draw (6,-0.6) node[anchor=center] {{\small $1$}};
\node[b] (a4) at (8,0) {};
\draw (8,-0.6) node[anchor=center] {{\small $2$}};

\draw (7,+0.6) node[anchor=center] {{\small $ii$}};
\draw (a3) edge (a4);

\node at (10,0){$=$};

\node[b] (b1) at (12,0) {};
\draw (12,-0.6) node[anchor=center] {{\small $1$}};
\node[w] (b2) at (14,0) {};
\draw (14,-0.6) node[anchor=center] {{\small $2$}};
\node[w] (b3) at (16,0) {};
\draw (16,-0.6) node[anchor=center] {{\small $3$}};

\draw (13,+0.6) node[anchor=center] {{\small $ii$}};
\draw (15,+0.6) node[anchor=center] {{\small $i$}};
\draw (b1) edge (b2);
\draw (b2) edge (b3);

\node at (18,0){$+$};

\node[w] (b4) at (20,0) {};
\draw (20,-0.6) node[anchor=center] {{\small $2$}};
\node[b] (b5) at (22,0) {};
\draw (22,-0.6) node[anchor=center] {{\small $1$}};
\node[w] (b6) at (24,0) {};
\draw (24,-0.6) node[anchor=center] {{\small $3$}};

\draw (21,+0.6) node[anchor=center] {{\small $ii$}};
\draw (23,+0.6) node[anchor=center] {{\small $i$}};
\draw (b4) edge (b5);
\draw (b5) edge (b6);

\end{tikzpicture}
\end{center}
\begin{tikzpicture}[scale=.5]
\tikzstyle{w}=[circle, draw, minimum size=4, inner sep=1]
\tikzstyle{b}=[circle, draw, fill, minimum size=4, inner sep=1]
\node at (-4,0){$\vs_{1,2}(\G_{\cc} \circ_{2} \G_{\ed})=$};

\node at (1,0){$\vs_{1,2}($};
\node[w] (a1) at (2,0) {};
\draw (2,-0.6) node[anchor=center] {{\small $1$}};
\node[w] (a2) at (4,0) {};
\draw (4,-0.6) node[anchor=center] {{\small $2$}};

\draw (3,+0.6) node[anchor=center] {{\small $i$}};
\draw (a1) edge (a2);

\node at (6,0) {$\circ_{2}$};

\node[b] (a3) at (8,0) {};
\draw (8,-0.6) node[anchor=center] {{\small $1$}};
\node[b] (a4) at (10,0) {};
\draw (10,-0.6) node[anchor=center] {{\small $2$}};

\draw (9,+0.6) node[anchor=center] {{\small $ii$}};
\draw (a3) edge (a4);

\node at (11,0){$)=$};

\node[w] (b1) at (12,0) {};
\draw (12,-0.6) node[anchor=center] {{\small $2$}};
\node[w] (b2) at (14,0) {};
\draw (14,-0.6) node[anchor=center] {{\small $3$}};
\node[b] (b3) at (16,0) {};
\draw (16,-0.6) node[anchor=center] {{\small $1$}};

\draw (13,+0.6) node[anchor=center] {{\small $i$}};
\draw (15,+0.6) node[anchor=center] {{\small $ii$}};
\draw (b1) edge (b2);
\draw (b2) edge (b3);

\node at (18,0){$+$};

\node[w] (b4) at (20,0) {};
\draw (20,-0.6) node[anchor=center] {{\small $2$}};
\node[b] (b5) at (22,0) {};
\draw (22,-0.6) node[anchor=center] {{\small $1$}};
\node[w] (b6) at (24,0) {};
\draw (24,-0.6) node[anchor=center] {{\small $3$}};

\draw (21,+0.6) node[anchor=center] {{\small $i$}};
\draw (23,+0.6) node[anchor=center] {{\small $ii$}};
\draw (b4) edge (b5);
\draw (b5) edge (b6);

\end{tikzpicture}

By definition of the operad $\Gra$, we have
the following equalities in $\Gra(1 +2)$: 

\begin{center}
\begin{tikzpicture}[scale=.5]
\tikzstyle{w}=[circle, draw, fill, minimum size=4, inner sep=1]
\tikzstyle{b}=[circle, draw, fill, minimum size=4, inner sep=1]

\node[b] (b1) at (0,0) {};
\draw (0,-0.6) node[anchor=center] {{\small $1$}};
\node[w] (b2) at (2,0) {};
\draw (2,-0.6) node[anchor=center] {{\small $2$}};
\node[w] (b3) at (4,0) {};
\draw (4,-0.6) node[anchor=center] {{\small $3$}};

\draw (1,+0.6) node[anchor=center] {{\small $i$}};
\draw (3,+0.6) node[anchor=center] {{\small $ii$}};
\draw (b1) edge (b2);
\draw (b2) edge (b3);

\node at (6,0){$=$};

\node at (7,0){$-$};
\node[b] (b1) at (8,0) {};
\draw (8,-0.6) node[anchor=center] {{\small $1$}};
\node[w] (b2) at (10,0) {};
\draw (10,-0.6) node[anchor=center] {{\small $2$}};
\node[w] (b3) at (12,0) {};
\draw (12,-0.6) node[anchor=center] {{\small $3$}};

\draw (9,+0.6) node[anchor=center] {{\small $ii$}};
\draw (11,+0.6) node[anchor=center] {{\small $i$}};
\draw (b1) edge (b2);
\draw (b2) edge (b3);

\end{tikzpicture}

\begin{tikzpicture}[scale=.5]
\tikzstyle{w}=[circle, draw, fill, minimum size=4, inner sep=1]
\tikzstyle{b}=[circle, draw, fill, minimum size=4, inner sep=1]

\node[b] (b1) at (0,0) {};
\draw (0,-0.6) node[anchor=center] {{\small $1$}};
\node[w] (b2) at (2,0) {};
\draw (2,-0.6) node[anchor=center] {{\small $3$}};
\node[w] (b3) at (4,0) {};
\draw (4,-0.6) node[anchor=center] {{\small $2$}};

\draw (1,+0.6) node[anchor=center] {{\small $i$}};
\draw (3,+0.6) node[anchor=center] {{\small $ii$}};
\draw (b1) edge (b2);
\draw (b2) edge (b3);

\node at (6,0){$=$};

\node at (7,0){$-$};
\node[w] (b1) at (8,0) {};
\draw (8,-0.6) node[anchor=center] {{\small $2$}};
\node[w] (b2) at (10,0) {};
\draw (10,-0.6) node[anchor=center] {{\small $3$}};
\node[b] (b3) at (12,0) {};
\draw (12,-0.6) node[anchor=center] {{\small $1$}};

\draw (9,+0.6) node[anchor=center] {{\small $i$}};
\draw (11,+0.6) node[anchor=center] {{\small $ii$}};
\draw (b1) edge (b2);
\draw (b2) edge (b3);

\end{tikzpicture}

\begin{tikzpicture}[scale=.5]
\tikzstyle{w}=[circle, draw, fill, minimum size=4, inner sep=1]
\tikzstyle{b}=[circle, draw, fill, minimum size=4, inner sep=1]

\node[w] (b1) at (0,0) {};
\draw (0,-0.6) node[anchor=center] {{\small $2$}};
\node[b] (b2) at (2,0) {};
\draw (2,-0.6) node[anchor=center] {{\small $1$}};
\node[w] (b3) at (4,0) {};
\draw (4,-0.6) node[anchor=center] {{\small $3$}};

\draw (1,+0.6) node[anchor=center] {{\small $ii$}};
\draw (3,+0.6) node[anchor=center] {{\small $i$}};
\draw (b1) edge (b2);
\draw (b2) edge (b3);

\node at (6,0){$=$};

\node at (7,0){$-$};
\node[w] (b1) at (8,0) {};
\draw (8,-0.6) node[anchor=center] {{\small $2$}};
\node[b] (b2) at (10,0) {};
\draw (10,-0.6) node[anchor=center] {{\small $1$}};
\node[w] (b3) at (12,0) {};
\draw (12,-0.6) node[anchor=center] {{\small $3$}};

\draw (9,+0.6) node[anchor=center] {{\small $i$}};
\draw (11,+0.6) node[anchor=center] {{\small $ii$}};
\draw (b1) edge (b2);
\draw (b2) edge (b3);

\end{tikzpicture}
\end{center}

Thus, in $\Tw\Gra(2)$, we have:

\begin{center}
\begin{tikzpicture}[scale=.5]
\tikzstyle{w}=[circle, draw, minimum size=4, inner sep=1]
\tikzstyle{b}=[circle, draw, fill, minimum size=4, inner sep=1]

\node[b] (b1) at (0,0) {};
\draw (0,-0.6) node[anchor=center] {{\small $1$}};
\node[w] (b2) at (2,0) {};
\draw (2,-0.6) node[anchor=center] {{\small $2$}};
\node[w] (b3) at (4,0) {};
\draw (4,-0.6) node[anchor=center] {{\small $3$}};

\draw (1,+0.6) node[anchor=center] {{\small $i$}};
\draw (3,+0.6) node[anchor=center] {{\small $ii$}};
\draw (b1) edge (b2);
\draw (b2) edge (b3);

\node at (6,0){$=$};

\node at (7,0){$-$};
\node[b] (b1) at (8,0) {};
\draw (8,-0.6) node[anchor=center] {{\small $1$}};
\node[w] (b2) at (10,0) {};
\draw (10,-0.6) node[anchor=center] {{\small $2$}};
\node[w] (b3) at (12,0) {};
\draw (12,-0.6) node[anchor=center] {{\small $3$}};

\draw (9,+0.6) node[anchor=center] {{\small $ii$}};
\draw (11,+0.6) node[anchor=center] {{\small $i$}};
\draw (b1) edge (b2);
\draw (b2) edge (b3);

\end{tikzpicture}

\begin{tikzpicture}[scale=.5]
\tikzstyle{w}=[circle, draw, minimum size=4, inner sep=1]
\tikzstyle{b}=[circle, draw, fill, minimum size=4, inner sep=1]

\node[b] (b1) at (0,0) {};
\draw (0,-0.6) node[anchor=center] {{\small $1$}};
\node[w] (b2) at (2,0) {};
\draw (2,-0.6) node[anchor=center] {{\small $3$}};
\node[w] (b3) at (4,0) {};
\draw (4,-0.6) node[anchor=center] {{\small $2$}};

\draw (1,+0.6) node[anchor=center] {{\small $i$}};
\draw (3,+0.6) node[anchor=center] {{\small $ii$}};
\draw (b1) edge (b2);
\draw (b2) edge (b3);

\node at (6,0){$=$};

\node at (7,0){$-$};
\node[w] (b1) at (8,0) {};
\draw (8,-0.6) node[anchor=center] {{\small $2$}};
\node[w] (b2) at (10,0) {};
\draw (10,-0.6) node[anchor=center] {{\small $3$}};
\node[b] (b3) at (12,0) {};
\draw (12,-0.6) node[anchor=center] {{\small $1$}};

\draw (9,+0.6) node[anchor=center] {{\small $i$}};
\draw (11,+0.6) node[anchor=center] {{\small $ii$}};
\draw (b1) edge (b2);
\draw (b2) edge (b3);

\end{tikzpicture}

\begin{tikzpicture}[scale=.5]
\tikzstyle{w}=[circle, draw, minimum size=4, inner sep=1]
\tikzstyle{b}=[circle, draw, fill, minimum size=4, inner sep=1]

\node[w] (b1) at (0,0) {};
\draw (0,-0.6) node[anchor=center] {{\small $2$}};
\node[b] (b2) at (2,0) {};
\draw (2,-0.6) node[anchor=center] {{\small $1$}};
\node[w] (b3) at (4,0) {};
\draw (4,-0.6) node[anchor=center] {{\small $3$}};

\draw (1,+0.6) node[anchor=center] {{\small $ii$}};
\draw (3,+0.6) node[anchor=center] {{\small $i$}};
\draw (b1) edge (b2);
\draw (b2) edge (b3);

\node at (6,0){$=$};

\node at (7,0){$-$};
\node[w] (b1) at (8,0) {};
\draw (8,-0.6) node[anchor=center] {{\small $2$}};
\node[b] (b2) at (10,0) {};
\draw (10,-0.6) node[anchor=center] {{\small $1$}};
\node[w] (b3) at (12,0) {};
\draw (12,-0.6) node[anchor=center] {{\small $3$}};

\draw (9,+0.6) node[anchor=center] {{\small $i$}};
\draw (11,+0.6) node[anchor=center] {{\small $ii$}};
\draw (b1) edge (b2);
\draw (b2) edge (b3);

\end{tikzpicture}
\end{center}

Hence, all terms on the right hand side of Eq.\ \eqref{diff_Gww_2}
cancel, and therefore $\pa^{\Tw} \G_{\cc} =0$. $\tri$
~\\
~\\

\bibliographystyle{amsplain}





\end{document}